\numberwithin{equation}{section}
\theoremstyle{plain}
\newtheorem{lemma}[equation]{Lemma}
\newtheorem{proposition}[equation]{Proposition}
\newtheorem{theorem}[equation]{Theorem}
\newtheorem{corollary}[equation]{Corollary}
\theoremstyle{definition}
\newtheorem{remark}[equation]{Remark}
\newtheorem{definition}[equation]{Definition}
\newtheorem{example}[equation]{Example}
\newtheorem*{definition*}{Definition}
\begin{document}

\title{Continuous representations of semisimple Lie groups concerning homogeneous holomorphic vector bundles over elliptic adjoint orbits}
\author{Nobutaka Boumuki\thanks{This work was supported by JSPS KAKENHI Grant Number JP 17K05229.}}

\maketitle

\begin{address}
   {Nobutaka Boumuki\endgraf Division of Mathematical Sciences\endgraf Faculty of Science and Technology\endgraf Oita University\endgraf 700 Dannoharu, Oita-shi, Oita 870-1192, JAPAN}{boumuki@oita-u.ac.jp}
\end{address}

\chapter*{Preface}\label{ch-0}
   Our interest lies in continuous representations of real semisimple Lie groups concerning homogeneous holomorphic vector bundles over elliptic adjoint orbits, especially the representation $\varrho:G\to GL(\mathcal{V}_{G/L})$ below.\par

   Let $G_\mathbb{C}$ be a connected complex semisimple Lie group, let $G$ be a connected closed subgroup of $G_\mathbb{C}$ whose Lie algebra $\operatorname{Lie}(G)=\frak{g}$ is a real form of $\frak{g}_\mathbb{C}$, and let $T$ be a non-zero, element of $\frak{g}$ such that (1) the linear transformation $\operatorname{ad}T:\frak{g}\to\frak{g}$, $X\mapsto[T,X]$, is semisimple and (2) all the eigenvalues of $\operatorname{ad}T$ are purely imaginary. 
   Consider the adjoin orbit $\operatorname{Ad}G(T)=G/L$ of $G$ through $T$, where $L:=\{g\in G \,|\, \operatorname{Ad}g(T)=T\}$. 
   These $T$ and $G/L$ are called an {\it elliptic element} and an {\it elliptic adjoint orbit} (or an {\it elliptic orbit} for short), respectively.
   It is shown that $G/L$ can be embedded into a complex flag manifold $G_\mathbb{C}/Q^-$ (which is also called a K\"{a}hler C-space or a generalized flag manifold) via $\iota:G/L\to G_\mathbb{C}/Q^-$, $gL\mapsto gQ^-$, and furthermore the image $\iota(G/L)$ is a domain in $G_\mathbb{C}/Q^-$. 
   Identifying $G/L$ with $\iota(G/L)$ we induce a $G$-invariant complex structure $J$ on $G/L$ from $G_\mathbb{C}/Q^-$.
   Then, the elliptic orbit $G/L$ is a homogeneous complex manifold of $G$.
\begin{center}
\unitlength=1mm
\begin{picture}(50,25)
\put(5,0){$G/L$}
\put(15,1){\vector(1,0){22}}
\put(40,0){$G_\mathbb{C}/Q^-$}
\put(25,3){$\iota$}
\put(9,20){\vector(0,-1){15}}
\put(44,20){\vector(0,-1){15}}
\put(0,23){$\iota^\sharp(G_\mathbb{C}\times_\rho{\sf V})$}
\put(37,23){$G_\mathbb{C}\times_\rho{\sf V}$}
\end{picture}
\end{center}
   Take a finite-dimensional complex vector space ${\sf V}$ and a holomorphic homomorphism $\rho:Q^-\to GL({\sf V})$, $q\mapsto\rho(q)$, where $GL({\sf V})$ is the general linear group on ${\sf V}$.
   Denote by $G_\mathbb{C}\times_\rho{\sf V}$ the homogeneous holomorphic vector bundle over the complex flag manifold $G_\mathbb{C}/Q^-$ associated with $\rho$, and by $\iota^\sharp(G_\mathbb{C}\times_\rho{\sf V})$ its restriction to the domain $G/L\subset G_\mathbb{C}/Q^-$.
   In this setting, one may assume that
\[
   \mathcal{V}_{G/L}
   :=\left\{\begin{array}{@{}c|c@{}}
   \psi:GQ^-\to{\sf V} 
   & \begin{array}{@{}l@{}} \mbox{(i) $\psi$ is holomorphic},\\ \mbox{(ii) $\psi(xq)=\rho(q)^{-1}\bigl(\psi(x)\bigr)$ for all $(x,q)\in GQ^-\times Q^-$}\end{array}
   \end{array}\right\}
\]
is the complex vector space of holomorphic cross-sections of the bundle $\iota^\sharp(G_\mathbb{C}\times_\rho{\sf V})$; and can define a continuous representation $\varrho$ of $G$ on $\mathcal{V}_{G/L}$ by 
\[
   \mbox{$\bigl(\varrho(g)\psi\bigr)(x):=\psi(g^{-1}x)$ for $g\in G$, $\psi\in\mathcal{V}_{G/L}$, and $x\in GQ^-$}.
\]
   Here the topology for $\mathcal{V}_{G/L}$ is the topology of uniform convergence on compact sets.

\section*{Notation}
   Throughout this note we utilize the following notation, where $G$ is a Lie group and $\frak{g}$ is a Lie algebra:
\begin{enumerate}[(n1)] 
\item
   $\operatorname{Lie}(G)$ : the Lie algebra of $G$, i.e., the real Lie algebra of left invariant vector fields on $G$,
\item
   $G_0$ : the identity component of $G$, 
\item 
   $L_g$ (resp.\ $R_g$) : the left (resp.\ right) translation of $G$ by an element $g\in G$,  
\item
   $\operatorname{ad}$, $\operatorname{Ad}$ : the adjoint representations of a Lie algebra and a Lie group, respectively,
\item
   $Z(G)$ : the center of $G$, 
\item 
   $C_G(A):=\{g\in G \,|\, \mbox{$gag^{-1}=a$ for all $a\in A$}\}$ for a subset $A\subset G$,
\item 
   $C_G(\frak{a}):=\{g\in G \,|\, \mbox{$\operatorname{Ad}g(X)=X$ for all $X\in\frak{a}$}\}$ for a subset $\frak{a}\subset\operatorname{Lie}(G)$,   
\item
   $C_G(X):=\{g\in G \,|\, \operatorname{Ad}g(X)=X\}$ for an element $X\in\operatorname{Lie}(G)$,
\item 
   $N_G(\frak{m}):=\{g\in G \,|\, \operatorname{Ad}g(\frak{m})\subset\frak{m}\}$ for a vector subspace $\frak{m}\subset\operatorname{Lie}(G)$,
\item 
   $\frak{c}_\frak{g}(X):=\{Z\in\frak{g} \,|\, \operatorname{ad}X(Z)=0\}$ for an element $X\in\frak{g}$, which is the kernel of the linear mapping $\operatorname{ad}X:\frak{g}\to\frak{g}$, 
\item 
   $\operatorname{ad}X(\frak{g})$ : the image of the linear mapping $\operatorname{ad}X:\frak{g}\to\frak{g}$ above, 
\item 
   $B_\frak{g}$ : the Killing form of $\frak{g}$,
\item
   $\mathbb{N}$, $\mathbb{Z}$, $\mathbb{Q}$, $\mathbb{R}$, $\mathbb{C}$ : the sets of natural numbers, integers, rational numbers, real numbers, and complex numbers, respectively, where $\mathbb{N}$ does not contain the zero,
\item
   $\mathbb{Z}_{\geq 0}$ : the set of non-negative integers, 
\item
   $\mathbb{R}^+$ : the set of positive real numbers, 
\item 
   $\mathbb{K}=\mathbb{R}$ or $\mathbb{C}$,     
\item
   $GL({\sf V})$ : the general linear group on a vector space ${\sf V}$ over $\mathbb{K}$,
\item 
   $\frak{m}\oplus\frak{n}$ : the direct sum of vector spaces $\frak{m}$ and $\frak{n}$,
\item
   $A\amalg B$ : the disjoint union of sets $A$ and $B$, 
\item
   $\mathcal{C}^\infty(M)$ : the associative algebra of real-valued smooth functions on a smooth manifold $M$, 
\item 
   $T_pM$ : the tangent vector space of a smooth manifold $M$ at a point $p\in M$,
\item
   $\frak{X}(M)$ : the real Lie algebra of smooth vector fields on a smooth manifold $M$, which is also a $\mathcal{C}^\infty(M)$-module,  
\item
   $f|_A$ : the restriction of a mapping $f$ to a set $A$,
\item
   $\operatorname{id}_A$ or $\operatorname{id}$ : the identity mapping of a set $A$, 
\item
   $c_A$ : the characteristic function of a set $A$,
\item
   $\overline{A}{}^X$ or $\overline{A}$ : the closure of a subset $A$ in a topological space $X$.  
\end{enumerate}
   In addition, for a Lie group $G$ we usually denote its Lie algebra by the corresponding Fraktur small letter $\frak{g}$.

\section*{Remark}
   We say that a Lie group $G$ is {\it semisimple}, {\it nilpotent}, or {\it parabolic}, respectively, whenever the Lie algebra $\frak{g}$ is.

\tableofcontents

\chapter{Homogeneous spaces}\label{ch-1}
   In this chapter we review fundamental facts about homogeneous spaces. 
   We deal with (real) homogeneous spaces in Section \ref{sec-1.1} and complex homogeneous spaces in Section \ref{sec-1.2}.
   Finally in Section \ref{sec-1.3} we show that homogeneous spaces are principal fiber bundles.

\section{Real case}\label{sec-1.1}
   Let $G$ be a (real) Lie group which satisfies the second countability axiom, and let $H$ be a closed subgroup of $G$. 
   Consider the left quotient space $G/H=\{gH \,|\, g\in G\}$ of $G$ by $H$ and define a surjective mapping $\pi:G\to G/H$ (which is called the {\it projection}\index{projection@projection\dotfill} of $G$ onto $G/H$) as follows:
\begin{equation}\label{eq-1.1.1}
   \mbox{$\pi(g):=gH$ for $g\in G$}.
\end{equation}
   Provide $G/H$ with the quotient topology relative to this $\pi$. 
   Then, $G/H$ is called a {\it homogeneous space}\index{homogeneous space@homogeneous space\dotfill}, and one has 
\begin{theorem}\label{thm-1.1.2}
   There exists a real analytic structure $\mathcal{S}=\{(U_\alpha,\psi_\alpha)\}_{\alpha\in A}$ on the homogeneous space $G/H$ so that 
\begin{enumerate}
\item[{\rm (1)}]
   $\pi:G\to G/H$, $g\mapsto gH$, is a surjective, open, real analytic mapping,
\item[{\rm (2)}]
   $\mu:G\times G/H\to G/H$, $(g_1,g_2H)\mapsto g_1g_2H$, is a real analytic mapping.
\end{enumerate} 
   Moreover, for each $\alpha\in A$ there exists a real analytic mapping $\sigma_\alpha:U_\alpha\to G$ such that $\pi\bigl(\sigma_\alpha(x)\bigr)=x$ for all $x\in U_\alpha$.
\end{theorem}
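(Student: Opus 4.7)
The plan is to construct the real analytic structure on $G/H$ using local sections arising from a vector-space complement to $\frak{h}$ in $\frak{g}$. First I choose a subspace $\frak{m}\subset\frak{g}$ with $\frak{g}=\frak{m}\oplus\frak{h}$, and consider the real analytic mapping $\Phi:\frak{m}\times\frak{h}\to G$ defined by $\Phi(X,Y):=\exp(X)\exp(Y)$. Since its differential at $(0,0)$ is the identity of $\frak{g}\cong\frak{m}\oplus\frak{h}$, the inverse function theorem gives open neighborhoods $U_0\subset\frak{m}$ of $0$ and $V_0\subset\frak{h}$ of $0$ such that $\Phi$ restricts to a real analytic diffeomorphism from $U_0\times V_0$ onto an open neighborhood $W$ of $e$ in $G$. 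Using that $H$ is a closed subgroup (so it is a Lie subgroup whose own exponential is a local diffeomorphism at $0\in\frak{h}$), I shrink $U_0$ further so that $\exp(U_0)\cap H=\{e\}$, and I may also assume $U_0$ is symmetric and $\exp(U_0)\cdot\exp(U_0)\subset\Phi(U_0\times V_0)$ by continuity of multiplication.

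Next I build the atlas. For each $g\in G$ set $U_g:=\pi\bigl(g\exp(U_0)\bigr)$ and define $\psi_g:U_g\to\frak{m}$ by $\psi_g\bigl(g\exp(X)H\bigr):=X$ for $X\in U_0$. Well-definedness and injectivity reduce to the following: if $X_1,X_2\in U_0$ with $g\exp(X_1)H=g\exp(X_2)H$, then $\exp(-X_2)\exp(X_1)\in H\cap\Phi(U_0\times V_0)=\exp(V_0)$ (by the choice of $U_0$), and the uniqueness of the $(U_0\times V_0)$-factorization of $\Phi$ forces $X_1=X_2$. The quotient topology makes $\psi_g$ a homeomorphism, since $\pi$ is open (because $\pi^{-1}\pi(\Omega)=\Omega H$ is open for every open $\Omega\subset G$). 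Let $\mathcal{S}:=\{(U_g,\psi_g)\}_{g\in G}$.

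The key verification is analyticity of transition maps and of $\pi$ and $\mu$. On an overlap, writing $\operatorname{pr}_\frak{m}:\frak{m}\oplus\frak{h}\to\frak{m}$ for the projection, one has
\[
   \psi_{g_2}\circ\psi_{g_1}^{-1}(X)
   =\operatorname{pr}_\frak{m}\bigl(\Phi^{-1}(g_2^{-1}g_1\exp(X))\bigr)
\]
on the open set where the right-hand side makes sense, which is a composition of real analytic maps. The same formula shows that $\pi|_{g_1W}$ reads in the coordinate chart $\psi_{g_1}$ as $\operatorname{pr}_\frak{m}\circ\Phi^{-1}\circ L_{g_1^{-1}}$, hence $\pi$ is analytic; and the surjective open statement has already been noted. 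For $\mu$, writing $(g_1,g_2H)=(g_1,g_2\exp(X)H)$ and applying $\psi_{g_1g_2}$ one reduces analyticity of $\mu$ to analyticity of $(g,X)\mapsto\operatorname{pr}_\frak{m}\bigl(\Phi^{-1}(g\exp(X))\bigr)$, which is clear. Finally the required section is $\sigma_g:=\exp\circ\,\psi_g^{-1}$ precomposed with left translation by $g$, explicitly $\sigma_g\bigl(g\exp(X)H\bigr):=g\exp(X)$, so $\pi\circ\sigma_g=\operatorname{id}_{U_g}$.

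The main obstacle is the existence of the slice $U_0$ with $\exp(U_0)\cap H=\{e\}$: this is where one must use that $H$ is a closed subgroup, not merely a subgroup. The delicate point is to show that a sequence $X_n\to 0$ in $\frak{m}\setminus\{0\}$ with $\exp(X_n)\in H$ cannot exist; this uses closedness of $H$ together with the fact that $\exp$ maps a neighborhood of $0$ in $\frak{h}$ homeomorphically onto a neighborhood of $e$ in $H$, applied to the $\frak{h}$-part of the $(\frak{m},\frak{h})$-decomposition of $\Phi^{-1}(\exp(X_n))$. Once this slice is secured, the rest of the proof is a routine (if lengthy) translation of Lie-group facts into coordinates via $\Phi$.
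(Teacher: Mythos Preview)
Your approach is the same as the paper's: choose a complement $\frak{m}$ to $\frak{h}$, use the local diffeomorphism $\Phi(X,Y)=\exp(X)\exp(Y)$ to build a slice $\exp(U_0)$ transverse to the $H$-cosets, and translate by $G$ to get charts. The slice argument (shrinking $U_0$ so that $\pi$ is injective on $\exp(U_0)$) is also handled the same way, via the fact that $\exp(V_0)$ is a neighborhood of $e$ in the closed subgroup $H$.

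There is, however, a genuine gap in your verification of the transition maps. Your formula
\[
   \psi_{g_2}\circ\psi_{g_1}^{-1}(X)=\operatorname{pr}_{\frak m}\bigl(\Phi^{-1}(g_2^{-1}g_1\exp(X))\bigr)
\]
is only valid on the subset of $\psi_{g_1}(U_{g_1}\cap U_{g_2})$ where $g_2^{-1}g_1\exp(X)\in W=\Phi(U_0\times V_0)$, and this subset is in general strictly smaller than the overlap: if $g_1\exp(X)H=g_2\exp(X')H$ then $g_2^{-1}g_1\exp(X)=\exp(X')h$ with $h\in H$, but nothing forces $h\in\exp(V_0)$. (Already for $G=\mathbb{R}$, $H=\mathbb{Z}$, $U_0=(-\tfrac12,\tfrac12)$, the overlap $U_{g_1}\cap U_{g_2}$ can have two components and your formula only covers one.) The paper closes this gap by introducing the map $\chi:(\exp V)H\to V$, $\chi(\exp(X)h)=X$, and proving it is real analytic on the \emph{entire} tube $(\exp V)H$ (Lemma~1.1.11); the key observation is the right-invariance $\chi=\chi\circ R_h$, so that near any point one may first right-translate by a suitable $h_0^{-1}\in H$ back into $W$ and then apply $\operatorname{pr}_{\frak m}\circ\Phi^{-1}$. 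Concretely, if $g_2^{-1}g_1\exp(X_0)=\exp(X_0')h_0$, then for $X$ near $X_0$ one has
\[
   \psi_{g_2}\circ\psi_{g_1}^{-1}(X)=\operatorname{pr}_{\frak m}\bigl(\Phi^{-1}(g_2^{-1}g_1\exp(X)\,h_0^{-1})\bigr),
\]
which is visibly analytic. Once you insert this right-translation step, your argument goes through and matches the paper's.
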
   

   The main purpose of this section is to demonstrate Theorem \ref{thm-1.1.2}. 
\begin{remark}\label{rem-1.1.3}
   The condition (2) in Theorem \ref{thm-1.1.2} implies that $\pi:G\to G/H$ is real analytic, since $\pi(g)=\mu\bigl(g,\pi(e)\bigr)$ for all $g\in G$. 
   Here, $e$ is the unit element of $G$. 
\end{remark} 

\begin{remark}[Uniqueness]\label{rem-1.1.4}
\begin{enumerate}
\item[]
\item[(i)]
   Suppose $G/H$ to admit another real analytic structure $\mathcal{S}'$ so that $\mu:G\times G/H\to G/H$, $(g_1,g_2H)\mapsto g_1g_2H$, is real analytic, where the topology for $G/H$ is the quotient one relative to $\pi$. 
   Then, $(G/H,\mathcal{S})$ is $G$-equivariant real analytic diffeomorphic to $(G/H,\mathcal{S}')$ via the identity mapping of $G/H$.
   cf.\ Subsection \ref{subsec-1.1.4}.
\item[(ii)]
   Let $r\in\mathbb{N}\cup\{0,\infty,\omega\}$. 
   Suppose that $G$ acts transitively on a differentiable manifold $M$ of class $C^r$ as a differentiable transformation group of class $C^r$, $G\times M\ni(g,x)\mapsto g\cdot x\in M$.
   Let us denote by $H'$ the isotropy subgroup of $G$ at an $x_0\in M$.
   Then, $G/H'\ni gH'\mapsto g\cdot x_0\in M$ is a $G$-equivariant diffeomorphism of class $C^r$ of $G/H'=(G/H',\mathcal{S})$ onto $M$.\footnote{e.g.\ \begin{CJK}{UTF8}{min}定理 3.7.11 in 杉浦\end{CJK} \cite[p.131]{Su2}.} 
\end{enumerate}
\end{remark}   

\subsection{Topological properties of $G/H$}\label{subsec-1.1.1}
   Recall that the topology for $G/H$ is the quotient topology relative to $\pi$.

\begin{lemma}\label{lem-1.1.5}
\begin{enumerate}
\item[]
\item[{\rm (1)}]
   $\pi:G\to G/H$, $g\mapsto gH$, is a surjective, open, continuous mapping.
\item[{\rm (2)}]
   For any open subset $U\subset G/H$, there exists an open subset $O\subset G$ such that $\pi(O)=U$.
\end{enumerate}
\end{lemma}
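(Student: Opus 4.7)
The plan is to exploit the definition of the quotient topology together with the fact that right translations $R_h: G \to G$ are homeomorphisms for every $h \in H$. Surjectivity of $\pi$ is immediate from (\ref{eq-1.1.1}), and continuity is built into the definition of the quotient topology: a set $U \subset G/H$ is declared open precisely when $\pi^{-1}(U)$ is open in $G$, so $\pi$ is automatically continuous.

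For openness, I would take an arbitrary open set $O \subset G$ and verify that $\pi(O)$ is open in $G/H$, which by the definition of the quotient topology reduces to showing that $\pi^{-1}(\pi(O))$ is open in $G$. The key identity is
\[
   \pi^{-1}(\pi(O)) = O \cdot H = \bigcup_{h \in H} R_h(O),
\]
since $g \in \pi^{-1}(\pi(O))$ iff $gH = g'H$ for some $g' \in O$, iff $g \in O h$ for some $h \in H$. Each $R_h(O)$ is open because $R_h$ is a homeomorphism of $G$, so the union is open, establishing that $\pi$ is an open mapping.

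Finally, for part (2), given an open subset $U \subset G/H$ I would simply take $O := \pi^{-1}(U)$, which is open in $G$ by the continuity established above, and satisfies $\pi(O) = U$ by the surjectivity of $\pi$.

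There is essentially no main obstacle here: every step is formal once one recognises that the whole statement is a general fact about the action of a subgroup on a topological group, with the single nontrivial input being that right translations are homeomorphisms. The only point deserving care is the set-theoretic identity $\pi^{-1}(\pi(O)) = OH$, which must be justified from the definition of cosets before invoking the union-of-open-sets argument.
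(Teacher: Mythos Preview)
Your proposal is correct and follows essentially the same approach as the paper: the paper also reduces openness to the identity $\pi^{-1}(\pi(O))=OH=\bigcup_{h\in H}R_h(O)$ and uses that each $R_h$ is a homeomorphism, and for part~(2) it likewise takes $O:=\pi^{-1}(U)$.
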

\begin{proof}
   (1).
   It suffices to confirm that $\pi:G\to G/H$, $g\mapsto gH$, is an open mapping. 
   For any open subset $O'$ of $G$, we deduce 
\[
   \pi^{-1}\bigl(\pi(O')\bigr)
   =O'H
   =\bigcup_{h\in H}R_h(O')
\]
by a direct computation, where $R_h$ stands for the right translation of the Lie group $G$ by $h$. 
   Since each $R_h(O')$ is open in $G$, the union $\bigcup_{h\in H}R_h(O')=\pi^{-1}\bigl(\pi(O')\bigr)$ is also open in $G$. 
   Therefore $\pi(O')$ is an open subset of $G/H$.\par

   (2). 
   Since $\pi:G\to G/H$ is continuous, $O:=\pi^{-1}(U)$ is an open subset of $G$. 
   Furthermore, one has $\pi(O)=U$ because $\pi:G\to G/H$ is surjective.
\end{proof}

\begin{lemma}\label{lem-1.1.6}
\begin{enumerate}
\item[]
\item[{\rm (1)}]
   $G/H$ is a Hausdorff space. 
\item[{\rm (2)}]
   $\mu:G\times G/H\to G/H$, $(g_1,g_2H)\mapsto g_1g_2H$, is a continuous mapping.   
\item[{\rm (3)}] 
   $G/H$ satisfies the second countability axiom. 
\end{enumerate}
\end{lemma}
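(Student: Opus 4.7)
The plan is to reduce all three parts to properties of the projection $\pi:G\to G/H$ already established in Lemma \ref{lem-1.1.5}, especially surjectivity, continuity, and openness.

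For (1), I would use the standard criterion: since $\pi$ is a continuous, open, surjective map, the quotient $G/H$ is Hausdorff if and only if the graph of the equivalence relation
\[
   R:=\{(g_1,g_2)\in G\times G \,|\, g_1H=g_2H\}
\]
is closed in $G\times G$. Note that $R=\{(g_1,g_2)\in G\times G \,|\, g_1^{-1}g_2\in H\}=\Phi^{-1}(H)$, where $\Phi:G\times G\to G$, $(g_1,g_2)\mapsto g_1^{-1}g_2$, is continuous because $G$ is a topological group, and $H$ is closed in $G$ by hypothesis. Hence $R$ is closed and $G/H$ is Hausdorff.

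For (2), the idea is to exploit the universal property of quotient maps. The composite
\[
   \widetilde{\mu}:=\pi\circ m:G\times G\to G/H,\quad (g_1,g_2)\mapsto g_1g_2H,
\]
is continuous, where $m:G\times G\to G$ is the multiplication. The map $\operatorname{id}_G\times\pi:G\times G\to G\times G/H$ is continuous, surjective, and open (openness follows from Lemma \ref{lem-1.1.5}(1): if $O_1\times O_2\subset G\times G$ is a basic open set, then $(\operatorname{id}_G\times\pi)(O_1\times O_2)=O_1\times\pi(O_2)$ is open in $G\times G/H$), hence it is a quotient map. Since $\widetilde{\mu}=\mu\circ(\operatorname{id}_G\times\pi)$ and $\widetilde{\mu}$ is continuous, the universal property of quotients forces $\mu$ to be continuous.

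For (3), the second countability of $G/H$ is transported from $G$ through the open map $\pi$. If $\{V_n\}_{n\in\mathbb{N}}$ is a countable basis of the topology of $G$, I claim $\{\pi(V_n)\}_{n\in\mathbb{N}}$ is a countable basis of the topology of $G/H$. Each $\pi(V_n)$ is open by Lemma \ref{lem-1.1.5}(1). Given any open $U\subset G/H$ and any $x\in U$, choose $g\in G$ with $\pi(g)=x$; by continuity of $\pi$ the set $\pi^{-1}(U)$ is an open neighborhood of $g$ in $G$, so some $V_n$ with $g\in V_n\subset\pi^{-1}(U)$ exists, whence $x=\pi(g)\in\pi(V_n)\subset U$.

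The only mildly delicate step is the openness of $\operatorname{id}_G\times\pi$ used in (2); everything else is a routine transfer of properties along the open continuous surjection $\pi$.
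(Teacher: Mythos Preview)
Your proof is correct and follows essentially the same route as the paper: part (3) is identical, part (1) spells out the closed-equivalence-relation criterion that the paper invokes in one line, and part (2) packages the same key ingredient (openness of $\pi$, hence of $\operatorname{id}_G\times\pi$) via the universal property of quotient maps rather than the paper's direct neighborhood argument. These are stylistic variations of the same idea rather than genuinely different proofs.
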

\begin{proof}
   (1) follows by $H$ being a closed subset of $G$ and Lemma \ref{lem-1.1.5}-(1).\par

   (2). 
   Take any $(g_1,g_2H)\in G\times G/H$ and any open neighborhood $U$ of $\mu(g_1,g_2H)=\pi(g_1g_2)\in G/H$. 
   Lemma \ref{lem-1.1.5}-(1) implies that $\pi^{-1}(U)$ is an open neighborhood of $g_1g_2\in G$, so that there exist open subsets $O_1$, $O_2\subset G$ satisfying $g_1\in O_1$, $g_2\in O_2$ and $O_1O_2\subset\pi^{-1}(U)$.
   Then, $O_1\times\pi(O_2)$ is an open neighborhood of $(g_1,g_2H)\in G\times G/H$, and it follows that $\mu\bigl(O_1\times\pi(O_2)\bigr)\subset U$.\par      

   (3). 
   Since $G$ satisfies the second countability axiom, there exists a countable open base $\{O_n\}_{n\in\mathbb{N}}$ for the topological space $G$. 
   Lemma \ref{lem-1.1.5} implies that $\{\pi(O_n)\}_{n\in\mathbb{N}}$ is a countable open base for the topological space $G/H$.
\end{proof}

   Lemma \ref{lem-1.1.6}-(2) leads to
\begin{corollary}\label{cor-1.1.7}
   Fix a $g\in G$ and define a transformation $\tau_g$ of $G/H$ by 
\[
   \mbox{$\tau_g(aH):=gaH$ for $aH\in G/H$}.
\]
   Then for each $g\in G$, $\tau_g$ is a homeomorphic transformation of $G/H$, and $\tau_g\circ\pi=\pi\circ L_g$ on $G$. 
   Here $L_g$ stands for the left translation of the Lie group $G$ by $g$.
\end{corollary}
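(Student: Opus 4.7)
The plan is to verify the two claims in order: first the commutation relation $\tau_g\circ\pi=\pi\circ L_g$, and then the homeomorphism property of $\tau_g$. Neither step should present genuine difficulty; the work is really just unpacking definitions and invoking Lemma \ref{lem-1.1.6}-(2).

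First I would check that $\tau_g$ is well-defined on cosets: if $aH=a'H$, then $a'=ah$ for some $h\in H$, whence $ga'H=gahH=gaH$. Then the identity $\tau_g\circ\pi=\pi\circ L_g$ is a one-line computation, since for every $a\in G$ one has $(\tau_g\circ\pi)(a)=\tau_g(aH)=gaH=\pi(ga)=(\pi\circ L_g)(a)$.

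For the homeomorphism statement, I would first observe that $\tau_g$ is bijective with inverse $\tau_{g^{-1}}$, because $\tau_{g^{-1}}\circ\tau_g(aH)=\tau_{g^{-1}}(gaH)=g^{-1}gaH=aH$ and similarly in the other order. Continuity of $\tau_g$ follows directly from Lemma \ref{lem-1.1.6}-(2): indeed $\tau_g(aH)=\mu(g,aH)$, so fixing the first slot of the continuous map $\mu:G\times G/H\to G/H$ at $g$ yields a continuous map $G/H\to G/H$. Applying the same argument to $g^{-1}$ shows that the inverse $\tau_{g^{-1}}$ is continuous, hence $\tau_g$ is a homeomorphism.

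The only step that requires any thought is the continuity of $\tau_g$, and even there the real work was already done in Lemma \ref{lem-1.1.6}-(2) (alternatively, one could argue via the universal property of the quotient topology: since $\pi\circ L_g$ is continuous and $\pi$ is a quotient map by Lemma \ref{lem-1.1.5}-(1), the induced map $\tau_g$ on $G/H$ is continuous). There is no substantive obstacle here; the corollary is essentially a restatement of Lemma \ref{lem-1.1.6}-(2) in the language of translations.
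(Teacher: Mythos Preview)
Your proof is correct and follows essentially the same approach as the paper: the paper simply states that the corollary follows from Lemma \ref{lem-1.1.6}-(2), and you have spelled out exactly how (well-definedness, the commutation identity, bijectivity via $\tau_{g^{-1}}$, and continuity of $\tau_g$ and its inverse by fixing a slot in $\mu$). Your writeup is a faithful expansion of what the paper leaves implicit.
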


\subsection{Local cross-sections}\label{subsec-1.1.2}
   Choose a real vector subspace $\frak{m}\subset\frak{g}$ such that 
\[
   \frak{g}=\frak{m}\oplus\frak{h},
\] 
and define a real analytic mapping $\varphi:\frak{m}\times\frak{h}\to G$ by $\varphi(X,Y):=\exp X\exp Y$ for $(X,Y)\in\frak{m}\times\frak{h}$. 
   Then,  
     
\begin{lemma}\label{lem-1.1.8}
   There exist two open neighborhoods $V_1$ of $0\in\frak{m}$ and $B_1$ of $0\in\frak{h}$ such that 
\begin{enumerate}
\item[{\rm (1)}]
   $\varphi:(X,Y)\mapsto\exp X\exp Y$ is a real analytic diffeomorphism of $V_1\times B_1$ onto an open neighborhood of $e\in G$,
\item[{\rm (2)}]
   $\exp B_1$ is an open neighborhood of $e\in H$.
\end{enumerate} 
\end{lemma}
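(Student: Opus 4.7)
The plan is to apply the real-analytic inverse function theorem to $\varphi$ at the origin $(0,0)\in\frak{m}\times\frak{h}$. The map $\varphi$ is real analytic as a composition of two exponentials and the group multiplication. Computing the differential $d\varphi_{(0,0)}:\frak{m}\oplus\frak{h}\to T_eG=\frak{g}$ along the two coordinate directions, one obtains
\[
d\varphi_{(0,0)}(X,0)=\tfrac{d}{dt}\Big|_{t=0}\exp(tX)=X,\qquad d\varphi_{(0,0)}(0,Y)=\tfrac{d}{dt}\Big|_{t=0}\exp(tY)=Y,
\]
so $d\varphi_{(0,0)}(X,Y)=X+Y$. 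By the hypothesis $\frak{g}=\frak{m}\oplus\frak{h}$, this is a linear isomorphism, and the inverse function theorem produces open neighborhoods $V$ of $0\in\frak{m}$ and $B$ of $0\in\frak{h}$ such that $\varphi|_{V\times B}$ is a real-analytic diffeomorphism onto an open neighborhood of $e\in G$. This handles (1) with $V_1:=V$ and $B_1$ chosen below.

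For (2) I would appeal to Cartan's closed-subgroup theorem: since $H$ is closed in $G$, it is a Lie subgroup with Lie algebra $\frak{h}$, and $\exp|_{\frak{h}}:\frak{h}\to H$ is a local diffeomorphism at $0$, where $H$ carries the subspace topology inherited from $G$. Hence there exists an open neighborhood $W\subset\frak{h}$ of $0$ on which $\exp$ restricts to a diffeomorphism onto an open neighborhood of $e$ in $H$. Setting $B_1:=B\cap W$ makes $\exp B_1$ an open subset of $\exp W$, and therefore open in $H$, while $B_1\subset B$ ensures that $\varphi|_{V_1\times B_1}$ remains a real-analytic diffeomorphism onto an open subset of $G$, so that (1) is preserved.

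The only step with genuine content is the computation of $d\varphi_{(0,0)}$; everything else is a mechanical application of the inverse function theorem together with Cartan's theorem on closed subgroups. A minor subtlety worth noting is that the topology on $H$ used in (2) must be the subspace topology from $G$, which is precisely what Cartan's theorem supplies and which is compatible with the ambient picture in which $V_1\times B_1$ maps into $G$. I therefore anticipate no real obstacle.
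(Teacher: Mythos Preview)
Your proof is correct and follows essentially the same approach as the paper: compute $d\varphi_{(0,0)}(X,Y)=X+Y$, invoke the inverse function theorem, and then shrink $B_1$ so that $\exp B_1$ is open in $H$. The paper leaves the shrinking step implicit (``by substituting a sufficiently small open neighborhood $B_1'$''), whereas you justify it explicitly via Cartan's closed-subgroup theorem, which is a welcome clarification but not a different argument.
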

\begin{proof}
   It turns out that $\varphi(0,0)=e$ and the differential $(d\varphi)_{(0,0)}$ of $\varphi$ at $(0,0)$ is a real linear isomorphism of the tangent vector space $T_{(0,0)}(\frak{m}\times\frak{h})$ onto $T_eG$. 
   Thus the inverse mapping theorem assures the existence of open neighborhoods $V_1$ of $0\in\frak{m}$ and $B_1$ of $0\in\frak{h}$ satisfying (1); besides, one may assume that the (2) holds for this $B_1$ by substituting a sufficiently small open neighborhood $B_1'$ of $0\in\frak{h}$ for $B_1$ (if necessary).
\end{proof}

   Let $V_1$, $B_1$ have the properties in Lemma \ref{lem-1.1.8}.
   In this setting, we assert 
\begin{proposition}\label{prop-1.1.9}
   There exists an open neighborhood $V$ of $0\in\frak{m}$ so that 
\begin{enumerate}
\item[{\rm (1)}]
   $0\in V\subset V_1$, 
\item[{\rm (2)}]
   $N:=\exp V$ is a regular submanifold of $G$,
\item[{\rm (3)}]
   $\exp:V\to N$ is a real analytic diffeomorphism,
\item[{\rm (4)}]
   $\pi(N)$ is an open subset of $G/H$,
\item[{\rm (5)}]
   $\pi:N\to\pi(N)$ is homeomorphic.
\end{enumerate}   
\end{proposition}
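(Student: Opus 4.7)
The plan is to start from the chart $\varphi:V_1\times B_1\to\varphi(V_1\times B_1)$ supplied by Lemma~\ref{lem-1.1.8} and shrink the $\frak{m}$-factor $V_1$ in stages so that all five properties hold simultaneously. Properties (1)--(4) come essentially for free from Lemma~\ref{lem-1.1.8}; the real content sits in (5).

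For any open $V\subset V_1$, the set $N:=\exp V$ equals the slice $\varphi(V\times\{0\})$ in the chart $\varphi$. This immediately delivers (2) (a coordinate slice is a regular submanifold of $G$) and (3) (the restriction of the diffeomorphism $\varphi$ to $V\times\{0\}$ is a real analytic diffeomorphism onto $N$). For (4), the inclusion $\exp B_1\subset H$ yields $\pi(N)=\pi(\exp V\cdot\exp B_1)=\pi(\varphi(V\times B_1))$; since $\varphi(V\times B_1)$ is open in $G$, Lemma~\ref{lem-1.1.5}(1) makes $\pi(N)$ open in $G/H$.

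The main obstacle is (5). I will first establish the auxiliary fact that there is an open neighborhood $V_2$ of $0\in\frak{m}$ with $V_2\subset V_1$ on which $\exp X\in H$ forces $X=0$: for $X\in\frak{m}$ small enough, $\exp X$ lies in the open neighborhood $\exp B_1$ of $e$ in $H$ provided by Lemma~\ref{lem-1.1.8}(2), so $\exp X=\exp Y=\varphi(0,Y)$ for some $Y\in B_1$, and comparing with $\exp X=\varphi(X,0)$ the uniqueness in Lemma~\ref{lem-1.1.8}(1) gives $X=0$. Using continuity at $(0,0)$ of the map $F(X_1,X_2):=\exp(-X_1)\exp(X_2)$, I will then shrink $V$ to an open neighborhood of $0$ in $V_2$ for which $F(V\times V)\subset\varphi(V_2\times B_1)$.

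With this $V$, injectivity of $\pi|_N$ is immediate: if $\pi(\exp X_1)=\pi(\exp X_2)$ with $X_1,X_2\in V$, then $F(X_1,X_2)\in H$, and writing $F(X_1,X_2)=\exp A\exp B$ uniquely with $(A,B)\in V_2\times B_1$, $\exp A\in H$ forces $A=0$ by the auxiliary fact, so $\varphi(X_1,B)=\exp X_2=\varphi(X_2,0)$ and uniqueness in Lemma~\ref{lem-1.1.8}(1) gives $X_1=X_2$. For the continuous inverse I will use the continuous map $f:=\exp\circ\operatorname{pr}_{\frak{m}}\circ\varphi^{-1}:\varphi(V_1\times B_1)\to\exp V_1$; by the same argument $f|_{\varphi(V\times B_1)}$ is constant on the $\pi$-fibers, and since $\pi$ is open (Lemma~\ref{lem-1.1.5}(1)) the restriction $\pi|_{\varphi(V\times B_1)}:\varphi(V\times B_1)\to\pi(N)$ is a quotient map, so $f$ descends to a continuous $\bar f:\pi(N)\to N$ that is easily checked to be the inverse of $\pi|_N$.
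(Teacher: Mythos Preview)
Your proof is correct and handles (1)--(4) exactly as the paper does. For (5) you take a genuinely different route. The paper chooses a \emph{compact} $C\subset V_1$ containing an open neighborhood of $0$, with $\exp(-C)\exp C$ inside an open set $O$ satisfying $O\cap H=\exp B_1$; it proves $\pi|_{\exp C}$ is injective by the same chart argument and then invokes the fact that a continuous bijection from the compact set $\exp C$ into the Hausdorff space $G/H$ (Lemma~\ref{lem-1.1.6}(1)) is automatically a homeomorphism, taking $V$ to be any open neighborhood of $0$ inside $C$. You instead avoid both compactness and the Hausdorff property: you shrink $V$ using only continuity of $(X_1,X_2)\mapsto\exp(-X_1)\exp X_2$, and you produce the continuous inverse explicitly by factoring $\exp\circ\operatorname{pr}_{\frak m}\circ\varphi^{-1}$ through the open (hence quotient) map $\pi|_{\varphi(V\times B_1)}$. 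Your approach is slightly more hands-on but has the mild advantage of not depending on Lemma~\ref{lem-1.1.6}(1); the paper's compactness trick is shorter once that lemma is available. Your injectivity step is also a bit more elaborate than needed---the paper arranges directly that the difference lands in $\exp B_1$, bypassing your auxiliary fact about $V_2$---but your version is perfectly valid.
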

\begin{proof}
   Taking Lemma \ref{lem-1.1.8}-(2) and the topology for $H$ into account, we see that there exists an open neighborhood $O$ of $e\in G$ satisfying 
\begin{equation}\label{eq-a}\tag{a}
   \exp B_1=(O\cap H).
\end{equation} 
   Since the mapping $G\times G\ni(g_1,g_2)\mapsto g_1^{-1}g_2\in G$ is continuous, one can choose a compact subset $C\subset V_1$ containing an open neighborhood of $0\in\frak{m}$ and satisfying 
\begin{equation}\label{eq-b}\tag{b}
   \exp(-C)\exp C\subset O.
\end{equation} 
   In this setting, $\pi:\exp C\to\pi(\exp C)$ is a homeomorphism because if $\pi(\exp X_1)=\pi(\exp X_2)$ with $X_1,X_2\in C$, then it follows from \eqref{eq-b}, \eqref{eq-a} that $\exp(-X_2)\exp X_1\in(O\cap H)=\exp B_1$. 
   This and Lemma \ref{lem-1.1.8}-(1) yield $X_1=X_2$; consequently $\pi:\exp C\to\pi(\exp C)$ is injective, and so it is homeomorphic due to Lemma \ref{lem-1.1.6}-(1).\par
 
   Now, let $V$ be an open neighborhood of $0\in\frak{m}$ such that $V\subset C$, and let $N:=\exp V$. 
   Then, it turns out that 
\begin{enumerate}
\item[(i)]
   $0\in V\subset C\subset V_1$,
\item[(ii)]
   $V\times B_1$ is an open neighborhood of $(0,0)\in V_1\times B_1$ ($\because$ (i)), 
\item[(iii)]
   $\varphi(V\times B_1)$ is an open neighborhood of $e\in G$ ($\because$ (ii), Lemma \ref{lem-1.1.8}-(1)), 
\item[(iv)]
   $\varphi:V\times B_1\to\varphi(V\times B_1)$, $(X,Y)\mapsto\exp X\exp Y$, is a real analytic diffeomorphism ($\because$ (ii), Lemma \ref{lem-1.1.8}-(1)),
\item[(v)]
   $V\times\{0\}$ is a regular submanifold of $V\times B_1$,
\item[(vi)]
   $N=\exp V=\varphi(V\times\{0\})$ is a regular submanifold of $\varphi(V\times B_1)$ ($\because$ (iv), (v)),
\item[(vii)]
   $\varphi:V\times\{0\}\to N$, $(X,0)\mapsto\exp X$, is a real analytic diffeomorphism ($\because$ (iv), (v), (vi)).
\end{enumerate}
   Therefore (1), (2) and (3) hold for the $V$.
   From $\exp B_1\subset H$ we obtain  
\[
   \pi(\varphi(V\times B_1)\bigr)=\pi(N\exp B_1)=\pi(N),
\]
which assures (4) because the subset $\varphi(V\times B_1)\subset G$ is open and the projection $\pi:G\to G/H$ is an open mapping. 
   The last (5) comes from $N\subset\exp C$ and $\pi:\exp C\to\pi(\exp C)$ being homeomorphic.   
\end{proof}

   Let $V$ have the properties in Proposition \ref{prop-1.1.9}, and let $N:=\exp V$. 
   Proposition \ref{prop-1.1.9}-(4) and Lemma \ref{lem-1.1.5}-(1) imply that $\pi^{-1}\bigl(\pi(N)\bigr)=(\exp V)H$ is an open neighborhood of $e\in G$.
   For any $g\in(\exp V)H$ there exists a unique $(X,h)\in V\times H$ satisfying 
\[
   g=(\exp X)h
\]
because $\pi(g)=\pi(\exp X)\in\pi(N)$ and Proposition \ref{prop-1.1.9}-(5), (3) yield $(\exp|_V)^{-1}\bigl((\pi|_N)^{-1}(\pi(g))\bigr)=X$; therefore $X$ is uniquely determined by $g$, and so is $h$.
   Then, one can define a mapping $\chi:(\exp V)H\to V$ as follows: 
\begin{equation}\label{eq-1.1.10}
   \chi(g):=X 
\end{equation}
for $g=(\exp X)h\in(\exp V)H$ with $(X,h)\in V\times H$.
\begin{lemma}\label{lem-1.1.11}
   The above $\chi:(\exp V)H\to V$, $g\mapsto\chi(g)$, is a real analytic mapping such that 
\begin{enumerate}
\item[{\rm (1)}]
   $\chi=\chi\circ R_h$ for all $h\in H$,
\item[{\rm (2)}]
   $\pi(g)=\pi\bigl(\exp\chi(g)\bigr)$ for all $g\in (\exp V)H$.
\end{enumerate}
   Here we refer to Proposition {\rm \ref{prop-1.1.9}} for $V$.
\end{lemma}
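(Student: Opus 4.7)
Properties (1) and (2) fall out immediately from the uniqueness of the decomposition $g = (\exp X)h$ with $(X,h) \in V \times H$ that was used to define $\chi$ in \eqref{eq-1.1.10}: if $g = (\exp X)h'$, then $gh = (\exp X)(h'h)$ exhibits the analogous decomposition of $gh$, so uniqueness forces $\chi(gh) = X = \chi(g)$; and $\pi(g) = (\exp X)h H = (\exp X)H = \pi(\exp\chi(g))$.

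The substantive content is the real analyticity of $\chi$ as a map between the open subsets $(\exp V)H \subset G$ and $V \subset \frak{m}$. My plan is to produce, around every point of $(\exp V)H$, a real analytic local formula for $\chi$ built from maps we already know are real analytic. Write $\operatorname{pr}_\frak{m} : \frak{m} \oplus \frak{h} \to \frak{m}$ for the linear projection. Near the identity, the natural candidate is $\operatorname{pr}_\frak{m} \circ \varphi^{-1}$: by Lemma \ref{lem-1.1.8}, $\varphi$ is a real analytic diffeomorphism of $V_1 \times B_1$ onto its image, hence (since $V \subset V_1$ by Proposition \ref{prop-1.1.9}) of the open subset $V \times B_1$ onto the open set $\varphi(V \times B_1) \subset G$; and for $g \in \varphi(V \times B_1)$, uniqueness identifies $\chi(g)$ with the $\frak{m}$-component of $\varphi^{-1}(g)$.

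For an arbitrary base point $g_0 = (\exp X_0)h_0 \in (\exp V)H$, I translate the chart at the identity by $h_0$: set $U_{g_0} := R_{h_0}\bigl(\varphi(V \times B_1)\bigr)$. Then $U_{g_0}$ is open in $G$, contains $g_0$, and lies inside $(\exp V)H$ since $(\exp X)(\exp Y)h_0 = (\exp X)\bigl((\exp Y)h_0\bigr)$ with $(\exp Y)h_0 \in H$. For any $g \in U_{g_0}$, writing $gh_0^{-1} = \varphi(X,Y)$ with $(X,Y) \in V \times B_1$, uniqueness forces
\[
  \chi(g) = \operatorname{pr}_\frak{m}\bigl(\varphi^{-1}(gh_0^{-1})\bigr) = \operatorname{pr}_\frak{m} \circ \varphi^{-1} \circ R_{h_0^{-1}}(g),
\]
a composition of real analytic maps. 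Since every point of $(\exp V)H$ is contained in some such $U_{g_0}$, $\chi$ is real analytic on its entire domain.

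The main difficulty, modest as it is, is conceptual rather than computational: elements of $(\exp V)H$ far from $e$ need not be of the form $(\exp X)(\exp Y)$ with $Y$ small enough to lie in $B_1$, so one cannot apply $\varphi^{-1}$ to them directly. The trick is to notice that right translation by a single fixed $h_0 \in H$ moves an entire open neighborhood of $g_0$ into $\varphi(V \times B_1)$, and that property (1)—or equivalently, the uniqueness argument repeated verbatim—then guarantees that the pulled-back formula still computes $\chi$.
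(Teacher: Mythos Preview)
Your proof is correct and follows essentially the same approach as the paper: properties (1) and (2) are immediate from the uniqueness of the decomposition, and real analyticity is obtained by right-translating an arbitrary point into the chart domain $\varphi(V\times B_1)$ and composing the projection $\operatorname{pr}_\frak{m}$ with $\varphi^{-1}$ and the right translation. The paper writes this as $\operatorname{proj}\circ\varphi^{-1}\circ R_{h^{-1}}$ on a neighborhood $O$ with $R_{h^{-1}}(O)\subset W=\varphi(V\times B_1)$, which is exactly your $U_{g_0}$ construction.
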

\begin{proof}
   From the definition \eqref{eq-1.1.10} of $\chi$ it is immediate that (1) and (2) hold for $\chi$. 
   Let us prove that $\chi:(\exp V)H\to V$ is real analytic. 
   In view of Lemma \ref{lem-1.1.8} we see that 
\begin{enumerate}
\item[(i)]
   $W:=\exp V\exp B_1$ is an open neighborhood of $e\in G$, 
\item[(ii)]
   $\varphi:V\times B_1\to W$, $(X,Y)\mapsto\exp X\exp Y$, is a real analytic diffeomorphism, 
\item[(iii)]
   $W\subset(\exp V)H$.
\end{enumerate}   
   Take any $g=(\exp X)h\in(\exp V)H$ with $(X,h)\in V\times H$.
   It is natural that $R_{h^{-1}}(g)\in \exp V\subset W$, and hence there exists an open neighborhood $O$ of $g\in(\exp V)H$ such that 
\[
   R_{h^{-1}}(O)\subset W,
\] 
where we recall that $(\exp V)H$ is an open subset of $G$.
   Considering a real analytic mapping $\operatorname{proj}:V\times B_1\to V$, $(X,Y)\mapsto X$, we conclude that $\operatorname{proj}\circ\varphi^{-1}:W\to V$ is a real analytic mapping. 
   Therefore 
\[
   \mbox{$\operatorname{proj}\circ\varphi^{-1}\circ R_{h^{-1}}:O\to V$ is a real analytic mapping}
\]
because the right translation $R_{h^{-1}}:G\to G$ is real analytic. 
   This enables us to conclude that $\chi:O\to V$ is real analytic, because (iii), $\operatorname{proj}\circ\varphi^{-1}=\chi$ on $W$ and $\chi=\chi\circ R_{h^{-1}}$ on $(\exp V)H$ imply that $\chi=\operatorname{proj}\circ\varphi^{-1}\circ R_{h^{-1}}$ on $O$.
\end{proof}

\subsection{Proof of Theorem 1.1.2}\label{subsec-1.1.3}
   From now on, let us demonstrate Theorem \ref{thm-1.1.2}.
\begin{proof}[Proof of Theorem $\ref{thm-1.1.2}$]
   Take an open neighborhood $V$ of $0\in\frak{m}$ having the properties in Proposition \ref{prop-1.1.9}, and put $N:=\exp V$.
   Proposition \ref{prop-1.1.9}-(4), (5), (3) enables us to define an open neighborhood $U$ of $\pi(e)\in G/H$ by 
\[
   U:=\pi(N),
\]
and moreover, define two homeomorphisms $\sigma:U\to N$ and  $\psi:U\to V$ by 
\begin{equation}\label{eq-a}\tag{a}
\begin{array}{ll}
   \sigma:=(\pi|_N)^{-1}, &
   \psi:=(\exp|_V)^{-1}\circ\sigma,
\end{array}
\end{equation}
respectively. 
\begin{center}
\unitlength=1mm
\begin{picture}(64,29)
\put(1,24){$\frak{m}\supset V$}
\put(17,28){$\exp$}
\put(14,27){$\vector(1,0){15}$}
\put(29,24){$\vector(-1,0){15}$}
\put(15,20){$(\exp|_V)^{-1}$}
\put(32,24){$N=\exp V\subset G$}
\put(32,1){$U=\pi(N)\subset G/H$}
\put(38,12){$\pi$}
\put(41,22){$\vector(0,-1){17}$}
\put(44,5){$\vector(0,1){17}$}
\put(45,12){$\sigma=(\pi|_N)^{-1}$}
\put(30,3){$\vector(-1,1){18}$}
\put(17,11){$\psi$}
\end{picture}
\end{center}
   Let us fix a real basis $\{X_i\}_{i=1}^n$ of the vector space $\frak{m}$, identify $\frak{m}$ with $\mathbb{R}^n$, and set  
\begin{equation}\label{eq-b}\tag{b}
\begin{array}{lll}
   U_g:=\tau_g(U), &
   \mbox{$\psi_g(x):=\psi\bigl(\tau_g^{-1}(x)\bigr)$ for $x\in U_g$} &
   (g\in U).
\end{array}
\end{equation}
   Then, Lemma \ref{lem-1.1.6}-(1) and Corollary \ref{cor-1.1.7} imply that 
\begin{enumerate}
\item[1.]
   $G/H$ is an $n$-dimensional topological manifold, 
\item[2.]
   each pair $(U_g,\psi_g)$ is a coordinate neighborhood of $G/H$ with $\pi(g)\in U_g$ ($g\in G$),
\item[3.]
   $\mathcal{S}:=\{(U_g,\psi_g)\}_{g\in G}$ is an atlas of $G/H$.
\end{enumerate}
   Our first aim is to show that 
\begin{equation}\label{eq-1}\tag*{\textcircled{1}}
   \mbox{the above $\mathcal{S}=\{(U_g,\psi_g)\}_{g\in G}$ defines a real analytic structure in $G/H$}.
\end{equation} 
   Suppose that $U_{g_1}\cap U_{g_2}\neq\emptyset$ ($g_1,g_2\in G$).
   For any $X\in\psi_{g_2}(U_{g_1}\cap U_{g_2})\subset V$, it follows from $\tau_{g_2}\circ\pi=\pi\circ L_{g_2}$, $N=\exp V$, \eqref{eq-a} and \eqref{eq-b} that $\pi(g_2\exp X)=\psi_{g_2}^{-1}(X)\in U_{g_1}\cap U_{g_2}$, so that $\pi(g_1^{-1}g_2\exp X)\in \tau_{g_1^{-1}}(U_{g_1}\cap U_{g_2})\subset U=\pi(N)$; and furthermore, $g_1^{-1}g_2\exp X\in\pi^{-1}(U)=(\exp V)H$ and Lemma \ref{lem-1.1.11}-(2) yield 
\[
\begin{split}
   (\psi_{g_1}\circ\psi_{g_2}^{-1})(X)
  &=\psi\bigl(\pi(g_1^{-1}g_2\exp X)\bigr)
   =\psi\bigl(\pi\bigl(\exp\chi(g_1^{-1}g_2\exp X)\bigr)\bigr)\\
  &=\bigl((\exp|_V)^{-1}\circ(\pi|_N)^{-1}\bigr)\bigl(\pi\bigl(\exp\chi(g_1^{-1}g_2\exp X)\bigr)\bigr)
   =(\exp|_V)^{-1}\bigl(\exp\chi(g_1^{-1}g_2\exp X)\bigr)\\
  &=\chi(g_1^{-1}g_2\exp X)
   =\bigl(\chi\circ L_{g_1^{-1}g_2}\circ(\exp|_V)\bigr)(X).
\end{split} 
\] 
   Accordingly $\psi_{g_1}\circ\psi_{g_2}^{-1}=\chi\circ L_{g_1^{-1}g_2}\circ(\exp|_V)$, and thus $\psi_{g_1}\circ\psi_{g_2}^{-1}:\psi_{g_2}(U_{g_1}\cap U_{g_2})\to\psi_{g_1}(U_{g_1}\cap U_{g_2})$ is real analytic, because all the mappings $\chi:\pi^{-1}(U)\to V$, $L_{g_1^{-1}g_2}:G\to G$ and $\exp:V\to N$ are real analytic due to Lemma \ref{lem-1.1.11}, Proposition \ref{prop-1.1.9}-(3). 
   We have shown \ref{eq-1}. 
   Henceforth, $G/H$ is a real analytic manifold having the atlas $\mathcal{S}=\{(U_g,\psi_g)\}_{g\in G}$.\par
   
   Our second aim is to verify that 
\begin{equation}\label{eq-2}\tag*{\textcircled{2}}
   \mbox{$\pi:G\to G/H$, $g\mapsto gH$, is a surjective, open, real analytic mapping}.
\end{equation} 
   By virtue of Lemma \ref{lem-1.1.5}-(1) it suffices to verify that the projection $\pi:G\to G/H$ is real analytic. 
   Let $B_1$ denote the open neighborhood of $0\in\frak{h}$ given in Lemma \ref{lem-1.1.8}, and let $W:=\exp V\exp B_1$. 
   Here, we know that $W$ is an open neighborhood of $e\in G$ and $\varphi:V\times B_1\to W$, $(X,Y)\mapsto\exp X\exp Y$, is a real analytic diffeomorphism (cf.\ the proof of Lemma \ref{lem-1.1.11}).
   For an arbitrary $g\in G$, it follows that 
\begin{enumerate}
\item[4.]
   $gW$ is an open neighborhood of $g\in G$, 
\item[5.]
   $\pi(gW)\subset U_g$, 
\item[6.]
   $(gW,\varphi^{-1}\circ L_{g^{-1}})$ is a coordinate neighborhood of $G$, 
\end{enumerate}
where we identify $\frak{h}$ with $\mathbb{R}^k$ by fixing a real basis $\{Y_j\}_{j=1}^k\subset\frak{h}$.
   For any $(X,Y)\in(\varphi^{-1}\circ L_{g^{-1}})(gW)\subset V\times B_1$ we obtain 
\[
   \bigl(\psi_g\circ\pi\circ(\varphi^{-1}\circ L_{g^{-1}})^{-1}\bigr)(X,Y)
   =\psi_g\bigl(\pi(g\exp X\exp Y)\bigr)
   =X
\]
from \eqref{eq-b} and \eqref{eq-a}.
   Consequently $\psi_g\circ\pi\circ(\varphi^{-1}\circ L_{g^{-1}})^{-1}:(\varphi^{-1}\circ L_{g^{-1}})(gW)\to\psi_g(U_g)$, $(X,Y)\mapsto X$, is real analytic, and so $\pi:gW\to G/H$ is real analytic.\par
   
   Now, let us define a continuous mapping $\sigma_g:U_g\to G$ by 
\begin{equation}\label{eq-c}\tag{c}
   \mbox{$\sigma_g(x):=L_g\bigl(\sigma(\tau_g^{-1}(x))\bigr)$ for $x\in U_g$ ($g\in G$)}.
\end{equation}
   Our third aim is to prove the following proposition: for each $g\in G$   
\begin{equation}\label{eq-3}\tag*{\textcircled{3}}
   \mbox{$\sigma_g:U_g\to G$ is a real analytic mapping such that $\sigma_g(U_g)\subset gW$ and $\pi\circ\sigma_g=\operatorname{id}$ on $U_g$}.   
\end{equation}
   It is immediate from \eqref{eq-c} and \eqref{eq-b} that $\sigma_g(U_g)=L_g(\sigma(U))\subset L_g(N)\subset gW$.
   For any $x\in U_g=\tau_g(U)$, there exists an $X\in V$ satisfying $x=\tau_g\bigl(\pi(\exp X)\bigr)$, and then it follows from \eqref{eq-c} and \eqref{eq-a} that $\pi\bigl(\sigma_g(x)\bigr)=\pi\bigl(L_g\bigl(\sigma(\pi(\exp X))\bigr)\bigr)=\pi(g\exp X)=x$; hence $\pi\circ\sigma_g=\operatorname{id}$ on $U_g$. 
   Let us demonstrate that $\sigma_g:U_g\to G$ is real analytic. 
   For any $X\in\psi_g(U_g)\subset V$, we deduce 
\[
   \bigl((\varphi^{-1}\circ L_{g^{-1}})\circ\sigma_g\circ\psi_g^{-1}\bigr)(X)
   =\bigl((\varphi^{-1}\circ L_{g^{-1}})\circ\sigma_g\bigr)\bigl(\pi(g\exp X)\bigr)
   =(\varphi^{-1}\circ L_{g^{-1}})(g\exp X)
   =(X,0)
\] 
by \eqref{eq-b}, \eqref{eq-a} and \eqref{eq-c}.
   This implies that $(\varphi^{-1}\circ L_{g^{-1}})\circ\sigma_g\circ\psi_g^{-1}:\psi_g(U_g)\to(\varphi^{-1}\circ L_{g^{-1}})(gW)$, $X\mapsto(X,0)$, is real analytic, so that $\sigma_g:U_g\to G$ is a real analytic mapping.\par 

   Our last aim is to conclude that 
\begin{equation}\label{eq-4}\tag*{\textcircled{4}}
   \mbox{$\mu:G\times G/H\to G/H$, $(g_1,g_2H)\mapsto g_1g_2H$, is a real analytic mapping}.
\end{equation}
   We denote by $f$ the multiplication in $G$, namely $f:G\times G\to G$, $(g_1,g_2)\mapsto g_1g_2$. 
   Let us take any $(g_1,g_2H)\in G\times G/H$.
   Then, $G\times U_{g_2}$ is an open neighborhood of $(g_1,g_2H)\in G\times G/H$. 
   For any $(g,x)\in G\times U_{g_2}$ we assert that   
\[
   \pi\bigl(f(g,\sigma_{g_2}(x))\bigr)
   =\pi\bigl(g\sigma_{g_2}(x)\bigr)
   =\tau_{g}\bigl(\pi(\sigma_{g_2}(x))\bigr)
   =\tau_g(x)
   =\mu(g,x)
\]
because of $\pi\circ\sigma_{g_2}=\operatorname{id}$ on $U_{g_2}$.
   This assures that $\mu:G\times U_{g_2}\to G/H$, $(g,x)\mapsto\mu(g,x)$, is a real analytic mapping, since all the mappings $\pi:G\to G/H$, $f:G\times G\to G$ and $\sigma_{g_2}:U_{g_2}\to G$ are real analytic due to \ref{eq-2}, \ref{eq-3}.\par
   
   Theorem \ref{thm-1.1.2} comes from \ref{eq-1}, \ref{eq-2}, \ref{eq-3} and \ref{eq-4}. 
\end{proof} 

   Lemma \ref{lem-1.1.6}-(3) and the proof of Theorem \ref{thm-1.1.2} lead to  
\begin{corollary}\label{cor-1.1.12}
   The homogeneous space $G/H$ is an $n$-dimensional real analytic manifold which satisfies the second countability axiom, where $n=\dim_\mathbb{R}G-\dim_\mathbb{R}H$.
\end{corollary}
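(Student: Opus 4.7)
The plan is to assemble the corollary directly from results already in hand: there is essentially nothing new to prove, only a bookkeeping of dimensions and a citation of the two preceding results. Specifically, Theorem \ref{thm-1.1.2} supplies the real analytic structure $\mathcal{S}=\{(U_\alpha,\psi_\alpha)\}_{\alpha\in A}$ on $G/H$, and Lemma \ref{lem-1.1.6}-(3) gives the second countability axiom. The only content left is the dimension count.

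For the dimension, I would unpack the construction in the proof of Theorem \ref{thm-1.1.2}. There we chose a complementary subspace $\frak{m}\subset\frak{g}$ with $\frak{g}=\frak{m}\oplus\frak{h}$, so that
\[
   \dim_\mathbb{R}\frak{m}=\dim_\mathbb{R}\frak{g}-\dim_\mathbb{R}\frak{h}=\dim_\mathbb{R}G-\dim_\mathbb{R}H=:n.
\]
Fixing a real basis $\{X_i\}_{i=1}^n$ of $\frak{m}$ identifies $\frak{m}$ with $\mathbb{R}^n$, and the chart $\psi_g:U_g\to V\subset\frak{m}=\mathbb{R}^n$ was built from this identification. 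Hence each $(U_g,\psi_g)$ is a coordinate neighborhood of dimension $n$, and item 1.\ in the proof of Theorem \ref{thm-1.1.2} already recorded that $G/H$ is an $n$-dimensional topological manifold; combined with the real analytic atlas $\mathcal{S}$, this makes $G/H$ an $n$-dimensional real analytic manifold with $n=\dim_\mathbb{R}G-\dim_\mathbb{R}H$.

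There is no real obstacle; the corollary is a consolidation step. The only place where one should be a little careful is to make the logical dependence explicit: the manifold structure and the chart dimension come from Theorem \ref{thm-1.1.2} (via the proof, not just the statement), whereas second countability has to be quoted separately from Lemma \ref{lem-1.1.6}-(3) because Theorem \ref{thm-1.1.2} as stated does not include it.
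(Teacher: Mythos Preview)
Your proposal is correct and matches the paper's own approach exactly: the paper simply records that the corollary follows from Lemma~\ref{lem-1.1.6}-(3) together with the proof of Theorem~\ref{thm-1.1.2}, which is precisely the bookkeeping you spell out.
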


   The following lemma will be needed later (e.g.\ Chapter \ref{ch-9}):
\begin{lemma}\label{lem-1.1.13}
   Equip the homogeneous space $G/H$ with the real analytic structure $\mathcal{S}$ in Theorem {\rm \ref{thm-1.1.2}}, and define a mapping $F:\frak{g}\to T_{\pi(e)}(G/H)$ by 
\[
   \mbox{$F(X):=(d\pi)_eX_e$ for $X\in\frak{g}$}.
\] 
   Then, $F$ is a surjective, linear mapping and $\frak{h}$ coincides with the kernel $\ker(F)$.
\end{lemma}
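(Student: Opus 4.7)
The plan is to separate the statement into three pieces (linearity, surjectivity, kernel) and use the chart produced in the proof of Theorem \ref{thm-1.1.2} together with the decomposition $\frak{g}=\frak{m}\oplus\frak{h}$ of Subsection \ref{subsec-1.1.2}. Linearity is immediate: the evaluation $\frak{g}\to T_eG$, $X\mapsto X_e$, is $\mathbb{R}$-linear by the left-invariance of $X$, and $(d\pi)_e:T_eG\to T_{\pi(e)}(G/H)$ is linear; hence $F=(d\pi)_e\circ(\,\cdot\,)_e$ is linear.

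For surjectivity I would use the chart $\psi:U\to V\subset\frak{m}$ of \eqref{eq-a} with $\psi=(\exp|_V)^{-1}\circ(\pi|_N)^{-1}$, so that its inverse is $\psi^{-1}=\pi\circ\exp|_V:V\to U$. Since $\psi$ is a real analytic diffeomorphism by Proposition \ref{prop-1.1.9}, the differential of $\pi\circ\exp|_V$ at $0\in V$ is a real linear isomorphism of $T_0V\cong\frak{m}$ onto $T_{\pi(e)}(G/H)$. Under the canonical identification $(d\exp)_0:T_0\frak{g}\to T_eG$, $X\mapsto X_e$, this differential is precisely $F|_\frak{m}$. Hence $F|_\frak{m}:\frak{m}\to T_{\pi(e)}(G/H)$ is already an $\mathbb{R}$-linear isomorphism, which in particular proves surjectivity of $F$ and shows $\dim_\mathbb{R}T_{\pi(e)}(G/H)=\dim_\mathbb{R}\frak{m}$ (consistent with Corollary \ref{cor-1.1.12}).

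For the kernel, the inclusion $\frak{h}\subset\ker(F)$ is the easy half: for $Y\in\frak{h}$ one has $\exp(tY)\in H$ for all $t\in\mathbb{R}$, so the curve $t\mapsto\pi(\exp(tY))$ is constantly equal to $\pi(e)$; differentiating at $t=0$ gives $F(Y)=(d\pi)_eY_e=0$. For the reverse inclusion, write an arbitrary $X\in\ker(F)$ as $X=X_\frak{m}+X_\frak{h}$ with $X_\frak{m}\in\frak{m}$ and $X_\frak{h}\in\frak{h}$; then $F(X_\frak{m})=F(X)-F(X_\frak{h})=0$, and the injectivity of $F|_\frak{m}$ established above forces $X_\frak{m}=0$, whence $X=X_\frak{h}\in\frak{h}$.

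There is no real obstacle here; the only point that requires care is the canonical identification $T_0\frak{g}\cong\frak{g}$ (and its compatibility with $(d\exp)_0$ being the identity when one also identifies $T_eG$ with $\frak{g}$ via $X\mapsto X_e$), which is exactly what allows the chain rule applied to $\pi\circ\exp|_V$ to be read as a statement about $F|_\frak{m}$.
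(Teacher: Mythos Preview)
Your proof is correct and follows essentially the same line as the paper's: both rely on the local model of $\pi$ coming from the chart in Theorem~\ref{thm-1.1.2}, both verify $\frak{h}\subset\ker(F)$ via the curve $t\mapsto\pi(\exp tY)$, and both use the decomposition $\frak{g}=\frak{m}\oplus\frak{h}$. The only minor difference is in the reverse inclusion $\ker(F)\subset\frak{h}$: the paper argues by a dimension count (surjectivity of $F$ plus $\dim_\mathbb{R}T_{\pi(e)}(G/H)=\dim_\mathbb{R}\frak{g}-\dim_\mathbb{R}\frak{h}$ from Corollary~\ref{cor-1.1.12}), whereas you argue directly from the injectivity of $F|_\frak{m}$ that you already established. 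Your route is slightly more self-contained in that it extracts the isomorphism $F|_\frak{m}:\frak{m}\to T_{\pi(e)}(G/H)$ explicitly, which is in fact used later (e.g.\ in the proof of Theorem~\ref{thm-10.2.2}); the paper's dimension count is a hair shorter but less constructive.
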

\begin{proof}
   It is clear that $F:\frak{g}\to T_{\pi(e)}(G/H)$, $X\mapsto(d\pi)_eX_e$, is a linear mapping. 
   In the proof of Theorem \ref{thm-1.1.2} we have shown that $\pi:G\to G/H$ is real analytic. 
   By the arguments we conclude that the linear mapping $(d\pi)_e:T_eG\to T_{\pi(e)}(G/H)$, $v\mapsto(d\pi)_ev$, is surjective. 
   Accordingly $F$ is surjective linear because the mapping $\frak{g}\ni X\mapsto X_e\in T_eG$ is a linear isomorphism.\par
   
   Now, let us prove that $\frak{h}=\ker(F)$. 
   For any $Z\in\frak{h}$ and $f\in\mathcal{C}^\infty(G/H)$ one obtains $\bigl((d\pi)_eZ_e\bigr)f=d/dt\big|_{t=0}f\bigl(\pi(\exp tZ)\bigr)=d/dt\big|_{t=0}f\bigl(\pi(e)\bigr)=0$; and hence $F(Z)=(d\pi)_eZ_e=0$. 
   This gives rise to 
\[
  \frak{h}\subset\ker(F).
\] 
   Furthermore, since $F:\frak{g}\to T_{\pi(e)}(G/H)$ is surjective linear, Corollary \ref{cor-1.1.12} implies that 
\[
   \dim_\mathbb{R}\ker(F)=\dim_\mathbb{R}\frak{g}-\dim_\mathbb{R}T_{\pi(e)}(G/H)=\dim_\mathbb{R}\frak{h},
\]
so that $\frak{h}=\ker(F)$ holds.   
\end{proof}

\subsection{Supplementation}\label{subsec-1.1.4}
   Let us confirm the proposition in Remark \ref{rem-1.1.4}-(i) for the sake of completeness.
   
   Suppose $G/H$ to admit another real analytic structure $\mathcal{S}'$ so that $\mu:G\times G/H\to G/H$, $(g_1,g_2H)\mapsto g_1g_2H$, is real analytic, where the topology for $G/H$ is the quotient one relative to $\pi$. 
   We denote by $M$ the real analytic manifold $G/H$ having the atlas $\mathcal{S}'$. 
   Since the topology for $G/H$ is the same as that for $M$, the identity mapping $\operatorname{id}:G/H\to M$ is a $G$-equivariant homeomorphism. 
   For any $p\in G/H$, Theorem \ref{thm-1.1.2} allows us to have an open neighborhood $U_p$ of $p\in G/H$ and a real analytic mapping $\sigma_p:U_p\to G$ such that $\pi\bigl(\sigma_p(x)\bigr)=x$ for all $x\in U_p$.
   Then, $\operatorname{id}=\pi\circ\sigma_p$ on $U_p$, which implies that $\operatorname{id}:U_p\to M$ is real analytic because $\pi:G\to M$ is real analytic (cf.\ Remark \ref{rem-1.1.3}). 
   Consequently $\operatorname{id}:G/H\to M$ is $G$-equivariant homeomorphic and real analytic.
   Since $\operatorname{id}:G/H\to M$ is real analytic, one can consider the differential of $\operatorname{id}$ at each point, which is a real linear isomorphism. 
   Therefore the inverse mapping theorem assures that the inverse mapping $\operatorname{id}:M\to G/H$ is also real analytic. 
   For this reason  $G/H=(G/H,\mathcal{S})$ is $G$-equivariant real analytic diffeomorphic to $M=(G/H,\mathcal{S}')$ via $\operatorname{id}$.

\section{Complex case}\label{sec-1.2}

   Let $G$, $H$ be the same Lie groups as in Theorem \ref{thm-1.1.2}. 
   Suppose further that (s1) $G$ is a complex Lie group and (s2) $H$ is a complex Lie subgroup of $G$. 
   Then, one can show 

\begin{theorem}\label{thm-1.2.1}
   There exists a holomorphic structure $\mathcal{S}=\{(U_\alpha,\psi_\alpha)\}_{\alpha\in A}$ on the homogeneous space $G/H$ so that 
\begin{enumerate}
\item[{\rm (1)}]
   $\pi:G\to G/H$, $a\mapsto aH$, is a surjective, open, holomorphic mapping,
\item[{\rm (2)}]
   $\mu:G\times G/H\to G/H$, $(a_1,a_2H)\mapsto a_1a_2H$, is a holomorphic mapping.
\end{enumerate} 
   Moreover, for each $\alpha\in A$ there exists a holomorphic mapping $\sigma_\alpha:U_\alpha\to G$ such that $\pi\bigl(\sigma_\alpha(z)\bigr)=z$ for all $z\in U_\alpha$.
\end{theorem}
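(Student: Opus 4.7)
The plan is to repeat the proof of Theorem \ref{thm-1.1.2} \emph{mutatis mutandis}, replacing ``real analytic'' by ``holomorphic'' throughout. The key observations that make this transfer work are that, since $G$ is now a complex Lie group, the Lie algebra $\mathfrak{g}$ is a complex Lie algebra, the subalgebra $\mathfrak{h}=\operatorname{Lie}(H)$ is a complex Lie subalgebra (by (s2)), the exponential map $\exp:\mathfrak{g}\to G$ is holomorphic, and left translations $L_g$ are biholomorphisms of $G$. The topological facts from Subsection \ref{subsec-1.1.1} (Hausdorffness, second countability, and $\pi$ being open and continuous) are unchanged, and Theorem \ref{thm-1.1.2} already endows $G/H$ with a real analytic structure $\mathcal{S}_{\mathbb{R}}$; our task is to produce a holomorphic atlas refining this structure.

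First I would choose a \emph{complex} vector subspace $\mathfrak{m}\subset\mathfrak{g}$ with $\mathfrak{g}=\mathfrak{m}\oplus\mathfrak{h}$ (which exists because $\mathfrak{h}$ is a complex subspace), and set $\varphi(X,Y):=\exp X\,\exp Y$ for $(X,Y)\in\mathfrak{m}\times\mathfrak{h}$; this $\varphi$ is holomorphic. Applying the holomorphic inverse mapping theorem at $(0,0)$ (the differential $(d\varphi)_{(0,0)}$ is a $\mathbb{C}$-linear isomorphism) yields the holomorphic version of Lemma \ref{lem-1.1.8}: open neighborhoods $V_1\subset\mathfrak{m}$ and $B_1\subset\mathfrak{h}$ of the origins such that $\varphi:V_1\times B_1\to\varphi(V_1\times B_1)$ is biholomorphic and $\exp B_1$ is open in $H$. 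The same compactness/continuity argument as in Proposition \ref{prop-1.1.9} then produces an open $V\subset V_1$ such that $N:=\exp V$ is a complex regular submanifold of $G$, $\exp:V\to N$ is biholomorphic, $\pi(N)$ is open in $G/H$, and $\pi:N\to\pi(N)$ is a homeomorphism; this step uses only the topology already established in Subsection \ref{subsec-1.1.1}.

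Next I would define $\chi:(\exp V)H\to V$ by $\chi((\exp X)h):=X$ exactly as in \eqref{eq-1.1.10}, and prove the holomorphic analog of Lemma \ref{lem-1.1.11}: locally $\chi=\operatorname{proj}\circ\varphi^{-1}\circ R_{h^{-1}}$ on a neighborhood of each point, and every factor is holomorphic, so $\chi$ is holomorphic. Setting $\sigma:=(\pi|_N)^{-1}$, $\psi:=(\exp|_V)^{-1}\circ\sigma$, $U_g:=\tau_g(U)$, and $\psi_g(x):=\psi(\tau_g^{-1}(x))$ as in \eqref{eq-b} of Subsection \ref{subsec-1.1.3}, the same computation
\[
   \psi_{g_1}\circ\psi_{g_2}^{-1}=\chi\circ L_{g_1^{-1}g_2}\circ(\exp|_V)
\]
on $\psi_{g_2}(U_{g_1}\cap U_{g_2})$ shows that the transition maps of $\mathcal{S}:=\{(U_g,\psi_g)\}_{g\in G}$ are holomorphic, since $\chi$, $L_{g_1^{-1}g_2}$ and $\exp$ are all holomorphic. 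Thus $\mathcal{S}$ is a holomorphic atlas on $G/H$.

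Finally, the proofs of \ref{eq-2}, \ref{eq-3} and \ref{eq-4} carry over literally: one reads off $\psi_g\circ\pi\circ(\varphi^{-1}\circ L_{g^{-1}})^{-1}(X,Y)=X$ to conclude $\pi$ is holomorphic; one defines $\sigma_g(x):=L_g(\sigma(\tau_g^{-1}(x)))$, verifies $\sigma_g(U_g)\subset gW$ and $\pi\circ\sigma_g=\operatorname{id}$ on $U_g$, and reads off $(\varphi^{-1}\circ L_{g^{-1}})\circ\sigma_g\circ\psi_g^{-1}(X)=(X,0)$ to conclude $\sigma_g$ is holomorphic; and then $\mu(g,x)=\pi(f(g,\sigma_{g_2}(x)))$ on $G\times U_{g_2}$ shows $\mu$ is holomorphic. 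I do not expect any genuine obstacle here, since every real analytic ingredient used in Section \ref{sec-1.1} has a holomorphic counterpart available under assumptions (s1), (s2); the only thing to watch is that $\mathfrak{m}$ must be chosen as a complex subspace so that the tangential identifications and the biholomorphic inverse function theorem apply.
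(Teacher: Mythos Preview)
Your proposal is correct and is exactly the approach the paper takes: its entire proof reads ``We get the conclusion by substituting the words `complex' for the words `real' in Subsections \ref{subsec-1.1.2} and \ref{subsec-1.1.3}.'' You have simply spelled out that substitution in detail, including the one point worth noting---that $\mathfrak{m}$ must be chosen as a \emph{complex} subspace so that the holomorphic inverse function theorem applies.
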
   
\begin{proof}
   We get the conclusion by substituting the words ``complex'' for the words ``real'' in Subsections \ref{subsec-1.1.2} and \ref{subsec-1.1.3}.  
\end{proof}

\begin{remark}[Uniqueness]\label{rem-1.2.2}
   Suppose $G/H$ to admit another holomorphic structure $\mathcal{S}'$ so that $\mu:G\times G/H\to G/H$, $(a_1,a_2H)\mapsto a_1a_2H$, is holomorphic, where the topology for $G/H$ is the quotient one relative to $\pi$. 
   Then, $(G/H_,\mathcal{S})$ is $G$-equivariant biholomorphic to $(G/H,\mathcal{S}')$ via the identity mapping of $G/H$.
\end{remark}   

\begin{remark}\label{rem-1.2.3}
   For a complex Lie group $G$ satisfying the second countability axiom and a closed complex Lie subgroup $H$ of $G$, we always consider the complex homogeneous space $G/H$ to be a homogeneous complex manifold of $G$ with respect to the invariant complex structure $J$ induced by the $\mathcal{S}$ in Theorem \ref{thm-1.2.1}.    
\end{remark}   

\section{Principal fiber bundles and homogeneous spaces}\label{sec-1.3}
   Let $G$ be a Lie group which satisfies the second countability axiom, and $H$ a closed subgroup of $G$. 
   Denote by $\pi$ the projection of $G$ onto $G/H$, and consider the homogeneous space $G/H$ as a real analytic manifold having the atlas $\mathcal{S}=\{(U_\alpha,\psi_\alpha)\}_{\alpha\in A}$ in Theorem \ref{thm-1.1.2}. 
   In addition, let $\sigma_\alpha:U_\alpha\to G$ be the real analytic mapping in Theorem \ref{thm-1.1.2} ($\alpha\in A$).
   In this setting, we will show that this $(G,\pi,G/H)$ is a principal fiber bundle.\par  

   For an $\alpha\in A$, it follows that $\pi^{-1}(U_\alpha)$ is an open subset of $G$. 
   Then we set 
\begin{equation}\label{eq-1.3.1}
   \mbox{$\zeta_\alpha(g):=\bigl(\sigma_\alpha(\pi(g))\bigr)^{-1}g$ for $g\in\pi^{-1}(U_\alpha)$}.
\end{equation} 
   Since $\pi(g)\in U_\alpha$ and $\pi\bigl(\sigma_\alpha(x)\bigr)=x$ for all $x\in U_\alpha$, it is natural that $\sigma_\alpha(\pi(g))H=\pi\bigl(\sigma_\alpha(\pi(g))\bigr)=\pi(g)=gH$, and therefore $\zeta_\alpha(g)=\bigl(\sigma_\alpha(\pi(g))\bigr)^{-1}g$ belongs to $H$. 
   Moreover, the following lemma holds:

\begin{lemma}\label{lem-1.3.2}
   For each $\alpha\in A$, the $\zeta_\alpha:\pi^{-1}(U_\alpha)\to H$, $g\mapsto\zeta_\alpha(g)$, is a real analytic mapping such that 
\begin{enumerate}
\item[{\rm (1)}]
   $\zeta_\alpha(gh)=\zeta_\alpha(g)h$ for all $(g,h)\in \pi^{-1}(U_\alpha)\times H$,
\item[{\rm (2)}]
   $\zeta_\alpha\bigl(\sigma_\alpha(x)\bigr)=e$ for all $x\in U_\alpha$.
\end{enumerate} 
\end{lemma}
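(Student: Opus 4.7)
The plan is to read off all three assertions directly from the defining formula $\zeta_\alpha(g)=\bigl(\sigma_\alpha(\pi(g))\bigr)^{-1}g$, using only Theorem \ref{thm-1.1.2} and the fact, already verified in the paragraph preceding the lemma, that $\zeta_\alpha(g)\in H$ for every $g\in\pi^{-1}(U_\alpha)$.

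For the real analyticity, I would first consider $\zeta_\alpha$ as a map into $G$ and exhibit it as a composition of real analytic maps, namely
\[
   g\;\longmapsto\;\bigl(\sigma_\alpha(\pi(g))\bigr)^{-1}\cdot g,
\]
which is built from $\pi\colon G\to G/H$ (real analytic by Theorem \ref{thm-1.1.2}-(1)), $\sigma_\alpha\colon U_\alpha\to G$ (real analytic by the last clause of Theorem \ref{thm-1.1.2}), and the inversion and multiplication maps of the Lie group $G$. Since the image of $\zeta_\alpha$ lies in $H$, and $H$ is a closed subgroup of $G$, $H$ is a regular submanifold of $G$, so real analyticity into $G$ is equivalent to real analyticity into $H$. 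This is the only step where one has to be a little careful: one must invoke the fact that a closed Lie subgroup carries its subspace-topology real analytic structure as a regular submanifold, so that a real analytic map $\pi^{-1}(U_\alpha)\to G$ whose image lands in $H$ is automatically real analytic as a map into $H$.

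For property (1), I would note that for $(g,h)\in\pi^{-1}(U_\alpha)\times H$ we have $\pi(gh)=ghH=gH=\pi(g)$, hence $\sigma_\alpha(\pi(gh))=\sigma_\alpha(\pi(g))$, and so
\[
   \zeta_\alpha(gh)
   =\bigl(\sigma_\alpha(\pi(gh))\bigr)^{-1}(gh)
   =\bigl(\sigma_\alpha(\pi(g))\bigr)^{-1}g\cdot h
   =\zeta_\alpha(g)h.
\]
For property (2), for any $x\in U_\alpha$ one has $\pi(\sigma_\alpha(x))=x$ by Theorem \ref{thm-1.1.2}, whence
\[
   \zeta_\alpha\bigl(\sigma_\alpha(x)\bigr)
   =\bigl(\sigma_\alpha(\pi(\sigma_\alpha(x)))\bigr)^{-1}\sigma_\alpha(x)
   =\bigl(\sigma_\alpha(x)\bigr)^{-1}\sigma_\alpha(x)
   =e.
\]

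I do not anticipate any real obstacle here; properties (1) and (2) are one-line computations, and the only subtle point is the smooth-structure remark for the codomain $H$ in the real analyticity argument, which is handled by Cartan's closed subgroup theorem.
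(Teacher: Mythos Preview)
Your proposal is correct. The paper actually states Lemma~\ref{lem-1.3.2} without proof, presumably because the argument is exactly the routine verification you give: properties (1) and (2) are immediate from the formula \eqref{eq-1.3.1} and $\pi\circ\sigma_\alpha=\operatorname{id}$, and real analyticity follows since $\zeta_\alpha$ is a composition of the real analytic maps $\pi$, $\sigma_\alpha$, inversion, and multiplication, together with the fact that the closed subgroup $H$ is a regular real analytic submanifold of $G$.
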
 

   Now, we see that 
\begin{enumerate}
\item   
   $H$ acts real analytically and freely on $G$ to the right, $G\times H\ni(g,h)\mapsto R_h(g)=gh\in G$, 
\item
   $R_{h_1h_2}(g)=R_{h_2}\bigl(R_{h_1}(g)\bigr)$ for all $h_1,h_2\in H$ and $g\in G$, 
\item
   $\pi:G\to G/H$, $g\mapsto gH$, is a surjective, real analytic mapping, 
\item
   for given $g_1,g_1\in G$, $\pi(g_1)=\pi(g_2)$ if and only if there exists an $h\in H$ such that $g_2=R_h(g_1)$,
\item
   $\{U_\alpha:\alpha\in A\}$ is an open covering of $G/H$. 
\end{enumerate}
   Furthermore, \eqref{eq-1.3.1} and Lemma \ref{lem-1.3.2} enable one to see that for each $\alpha\in A$,
\begin{enumerate}
\item[6.]
   $\theta_\alpha:\pi^{-1}(U_\alpha)\to U_\alpha\times H$, $g\mapsto\bigl(\pi(g),\zeta_\alpha(g)\bigr)$, is a real analytic diffeomorphism, $\theta_\alpha^{-1}(y,h)=\sigma_\alpha(y)h$ for all $(y,h)\in U_\alpha\times H$,
\item[7.]
   $\zeta_\alpha(gh)=R_h\bigl(\zeta_\alpha(g)\bigr)$ for all $(g,h)\in\pi^{-1}(U_\alpha)\times H$.   
\end{enumerate}
   These lead to  
\begin{proposition}\label{prop-1.3.3}
   $(G,\pi,G/H)$ is a real analytic, principal fiber bundle over $G/H$ with group $H$.
\end{proposition}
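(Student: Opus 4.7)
The plan is to verify directly the axioms of a real analytic principal fiber bundle using the seven numbered ingredients that the excerpt has already assembled in the paragraphs preceding Proposition \ref{prop-1.3.3}. Exhibiting $(G,\pi,G/H)$ as such a bundle amounts to producing, on one side, a free real analytic right action of $H$ on $G$ whose orbits are the fibres of $\pi$, and, on the other side, an open covering of $G/H$ together with $H$-equivariant real analytic local trivialisations of $\pi$ over its members.

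For the first half I would invoke items 1--4 of the enumerated list: right translation defines a free real analytic right action of $H$ on $G$, the projection $\pi$ is a surjective real analytic mapping by Theorem \ref{thm-1.1.2}, and two elements of $G$ have the same image under $\pi$ exactly when they differ by a right multiplication from $H$. Together these say that the fibres of $\pi$ coincide with the $H$-orbits.

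For the second half, items 5--7 do essentially all the work. Item 5 supplies the open cover $\{U_\alpha\}_{\alpha\in A}$ of $G/H$; for each $\alpha$, item 6 furnishes a real analytic diffeomorphism $\theta_\alpha:\pi^{-1}(U_\alpha)\to U_\alpha\times H$ whose inverse is $(y,h)\mapsto\sigma_\alpha(y)h$ and which satisfies $\operatorname{pr}_1\circ\theta_\alpha=\pi$ by the very definition $\theta_\alpha(g)=\bigl(\pi(g),\zeta_\alpha(g)\bigr)$. The $H$-equivariance condition $\theta_\alpha(gh)=\bigl(\pi(g),\zeta_\alpha(g)h\bigr)$ is precisely item 7, equivalently Lemma \ref{lem-1.3.2}-(1).

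The proof should therefore be a single short paragraph in which one collects these items and observes that each clause of the definition of a real analytic principal fiber bundle is literally one of them. There is no substantive obstacle: the work has already been carried out in Lemma \ref{lem-1.3.2} and in the construction of the local sections $\sigma_\alpha$ in Theorem \ref{thm-1.1.2}. The only moment that invites a brief pause is to confirm, should the reader's preferred textbook demand it, that the transition functions $U_\alpha\cap U_\beta\ni y\mapsto\zeta_\alpha\bigl(\sigma_\beta(y)\bigr)\in H$ are real analytic; but this is immediate from the composition of real analytic mappings supplied by Lemma \ref{lem-1.3.2} and Theorem \ref{thm-1.1.2}.
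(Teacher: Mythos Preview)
Your proposal is correct and follows essentially the same approach as the paper: the paper simply lists items 1--7 immediately before the proposition and states ``These lead to'' Proposition~\ref{prop-1.3.3}, without writing out any further argument. Your write-up is a slightly more explicit unpacking of that same list against the axioms of a principal fiber bundle, which is exactly what is intended.
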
   

\begin{remark}\label{rem-1.3.4}
   The principal fiber bundle $(G,\pi,G/H)$ in Proposition \ref{prop-1.3.3} is able to be holomorphic, provided that the $G$ is a complex Lie group and the $H$ is a complex Lie subgroup of $G$.
\end{remark}

\chapter{Homogeneous vector bundles over homogeneous spaces}\label{ch-2}
   In this chapter we deal with homogeneous vector bundles over homogeneous spaces.
   The setting of Chapter \ref{ch-2} is as follows:
\begin{itemize}
\item 
   $G$ is a Lie group which satisfies the second countability axiom, 
\item 
   $H$ is a closed subgroup of $G$, 
\item
   $\pi$ is the projection of $G$ onto the left quotient space $G/H$,
\item
   $\mathcal{S}=\{(U_\alpha,\psi_\alpha)\}_{\alpha\in A}$ is the real analytic structure on $G/H$ given in Theorem \ref{thm-1.1.2}, 
\item
   $\sigma_\alpha:U_\alpha\to G$ is the real analytic mapping in Theorem \ref{thm-1.1.2} ($\alpha\in A$).
\end{itemize}
   The topology for $G/H$ is the quotient topology relative to $\pi:g\mapsto gH$, and the homogeneous space $G/H$ is an $n$-dimensional real analytic manifold having the atlas $\mathcal{S}$.

\section{Definition of homogeneous vector bundle}\label{sec-2.1}
   First of all, we are going to recall the definition of homogeneous vector bundle. 
   Let ${\sf V}$ be a finite-dimensional real vector space, and let $\rho:H\to GL({\sf V})$, $h\mapsto\rho(h)$, be a continuous (group) homomorphism,\footnote{Remark.\ Since $\rho:H\to GL({\sf V})$, $h\mapsto\rho(h)$, is a continuous homomorphism, it is a real analytic mapping. 
      e.g.\ \begin{CJK}{UTF8}{min}定理 2.3.7 in 杉浦\end{CJK} \cite[p.48]{Su2}.} where we fix a real basis $\{{\sf e}_i\}_{i=1}^m$ of ${\sf V}$ and identify ${\sf V}$ with $\mathbb{R}^m$, and we consider the vector space ${\sf V}$ and the general linear group $GL({\sf V})$ as a real analytic manifold and a Lie group, respectively.
   For two elements $(g_1,{\sf v}_1),(g_2,{\sf v}_2)\in G\times {\sf V}$ we say that $(g_1,{\sf v}_1)$ is {\it equivalent} to $(g_2,{\sf v}_2)$, if there exists an $h\in H$ satisfying 
\begin{equation}\label{eq-2.1.1}
\begin{array}{ll}
   g_2=g_1h, & {\sf v}_2=\rho(h)^{-1}({\sf v}_1). 
\end{array}
\end{equation} 
   This gives rise to an equivalence relation on $G\times{\sf V}$.
   We denote by $[(g,{\sf v})]$ the equivalence class of an element $(g,{\sf v})\in G\times{\sf V}$, put $G\times_\rho{\sf V}:=\big\{[(g,{\sf v})] : (g,{\sf v})\in G\times{\sf V}\big\}$, and define two surjective mappings $\varpi:G\times{\sf V}\to G\times_\rho{\sf V}$ and $\Pr:G\times_\rho{\sf V}\to G/H$ by 
\begin{equation}\label{eq-2.1.2}
\begin{array}{ll}
   \mbox{$\varpi(g,{\sf v}):=[(g,{\sf v})]$ for $(g,{\sf v})\in G\times{\sf V}$}, &
   \mbox{$\Pr\bigl([(g,{\sf v})]\bigr):=\pi(g)$ for $[(g,{\sf v})]\in G\times_\rho{\sf V}$},
\end{array}
\end{equation} 
respectively.
   Provide $G\times_\rho{\sf V}$ with the quotient topology relative to this $\varpi$. 
    
\begin{definition}[{cf.\ Bott \cite[p.207]{Bo}}]\label{def-2.1.3}
   In the setting above, $G\times_\rho{\sf V}=(G\times_\rho{\sf V},\Pr,G/H)$ is called a {\it homogeneous vector bundle over $G/H$ associated with $\rho$}\index{homogeneous vector bundle@homogeneous vector bundle\dotfill} or called an {\it associated fiber bundle of the principal fiber bundle $(G,\pi,G/H)$ with fiber ${\sf V}$}.
\end{definition} 

\begin{center}
\unitlength=1mm
\begin{picture}(43,20)
\put(1,17){$G\times{\sf V}$}
\put(17,19){$\varpi$}
\put(14,18){$\vector(1,0){15}$}
\put(32,17){$G\times_\rho{\sf V}$}
\put(34,1){$G/H$}
\put(32,10){$\Pr$}
\put(37,15){$\vector(0,-1){10}$}
\end{picture}
\end{center}

   We will confirm that 
\begin{enumerate}
\item
   $G\times_\rho{\sf V}$ is a real analytic manifold, cf.\ Section \ref{sec-2.2},
\item
   $(G\times_\rho{\sf V},\Pr,G/H)$ is a fiber bundle with fiber ${\sf V}$ and group $\rho(H)$ ($\subset GL({\sf V})$), cf.\ Section \ref{sec-2.3}.
\end{enumerate}
   In addition, we will study the real vector space $\Gamma^r(G\times_\rho{\sf V})$ of differentiable cross-sections of the bundle $(G\times_\rho{\sf V},\Pr,G/H)$, cf.\ Section \ref{sec-2.4}.

\section{Real analytic structures on homogeneous vector bundles}\label{sec-2.2}
   Our purpose of this section is to define a real analytic structure $\mathscr{S}=\{(\Pr^{-1}(U_\alpha),\varphi_\alpha)\}_{\alpha\in A}$ on $G\times_\rho{\sf V}$. 
   Here, $G\times_\rho{\sf V}=(G\times_\rho{\sf V},\Pr,G/H)$ is a homogeneous vector bundle over $G/H$ associated with $\rho$.    
   In order to accomplish the purpose, we first define a real analytic mapping $\Phi_\alpha$ needed later.
   By use of $\zeta_\alpha:\pi^{-1}(U_\alpha)\to H$ in \eqref{eq-1.3.1}, we define a real analytic mapping $\Phi_\alpha:\pi^{-1}(U_\alpha)\times{\sf V}\to U_\alpha\times{\sf V}$ as follows:
\begin{equation}\label{eq-2.2.1}
   \mbox{$\Phi_\alpha(g,{\sf v}):=\bigl(\pi(g),\rho(\zeta_\alpha(g)){\sf v}\bigr)$ for $(g,{\sf v})\in\pi^{-1}(U_\alpha)\times{\sf V}$}
\end{equation} 
($\alpha\in A$).

\subsection{Topological properties of $G\times_\rho{\sf V}=(G\times_\rho{\sf V},\Pr,G/H)$}\label{subsec-2.2.1}
   We want to deduce that $G\times_\rho{\sf V}$ is a topological manifold (see Proposition \ref{prop-2.2.9}).
   Recalling that the topologies for $G\times_\rho{\sf V}$ and $G/H$ are the quotient topologies relative to $\varpi:G\times{\sf V}\to G\times_\rho{\sf V}$, $(g,{\sf v})\mapsto[(g,{\sf v})]$ and $\pi:G\to G/H$, $g\mapsto gH$, respectively, we first prove
\begin{lemma}\label{lem-2.2.2}
   $\Pr:G\times_\rho{\sf V}\to G/H$, $[(g,{\sf v})]\mapsto\pi(g)$, is a surjective, continuous mapping.
\end{lemma}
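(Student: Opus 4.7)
The plan is to prove surjectivity and continuity separately, using the universal property of the quotient topology for the continuity part.

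For surjectivity, I would observe that given any $gH \in G/H$, one can pick any ${\sf v} \in {\sf V}$ and form the equivalence class $[(g,{\sf v})] \in G\times_\rho{\sf V}$; then by the very definition in \eqref{eq-2.1.2} we have $\Pr\bigl([(g,{\sf v})]\bigr) = \pi(g) = gH$. Thus every fiber over $G/H$ is nonempty, which is immediate from the construction.

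For continuity, the key identity is
\[
   \Pr\circ\varpi = \pi\circ\mathrm{pr}_G,
\]
where $\mathrm{pr}_G : G\times{\sf V}\to G$ denotes the first projection; this is clear from \eqref{eq-2.1.2} since $(\Pr\circ\varpi)(g,{\sf v}) = \Pr\bigl([(g,{\sf v})]\bigr) = \pi(g) = (\pi\circ\mathrm{pr}_G)(g,{\sf v})$. Now $\mathrm{pr}_G$ is continuous as a coordinate projection, and $\pi$ is continuous by Lemma \ref{lem-1.1.5}-(1); hence $\Pr\circ\varpi$ is continuous. Since the topology for $G\times_\rho{\sf V}$ is the quotient topology relative to $\varpi$, the universal property of the quotient topology yields that $\Pr$ itself is continuous. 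Alternatively, for any open $U\subset G/H$, the preimage $\varpi^{-1}\bigl(\Pr^{-1}(U)\bigr) = \pi^{-1}(U)\times{\sf V}$ is open in $G\times{\sf V}$, which means $\Pr^{-1}(U)$ is open in $G\times_\rho{\sf V}$ by the definition of the quotient topology.

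There is no real obstacle here; the only subtle point is keeping the two quotient topologies straight and invoking the universal property correctly. The whole argument reduces to the factorization $\Pr\circ\varpi = \pi\circ\mathrm{pr}_G$ together with the continuity of $\pi$ from Lemma \ref{lem-1.1.5}.
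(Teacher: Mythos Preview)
Your proof is correct and matches the paper's approach: the paper also skips surjectivity as obvious and proves continuity by noting that $\varpi^{-1}\bigl(\Pr^{-1}(U)\bigr)=\pi^{-1}(U)\times{\sf V}$ is open, which is exactly your ``alternatively'' argument. Your factorization $\Pr\circ\varpi=\pi\circ\mathrm{pr}_G$ is just a slightly more conceptual rephrasing of the same computation.
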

\begin{proof}
   We only verify that $\Pr:G\times_\rho{\sf V}\to G/H$ is continuous. 
   Let $U$ be any open subset of $G/H$. 
   By a direct computation we obtain 
\begin{equation}\label{eq-2.2.3}
   \varpi^{-1}\bigl(\Pr{}^{-1}(U)\bigr)=\pi^{-1}(U)\times{\sf V};
\end{equation} 
besides, $\pi^{-1}(U)\times{\sf V}$ is an open subset of $G\times{\sf V}$. 
   Hence $\varpi^{-1}\bigl(\Pr{}^{-1}(U)\bigr)$ is open in $G\times{\sf V}$, and so $\Pr^{-1}(U)$ is open in $G\times_\rho{\sf V}$.
\end{proof}

\begin{corollary}\label{cor-2.2.4}
   $\{\Pr^{-1}(U_\alpha):\alpha\in A\}$ is an open covering of $G\times_\rho{\sf V}$.
\end{corollary}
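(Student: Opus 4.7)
The plan is to reduce the statement to two observations, both essentially immediate consequences of Lemma \ref{lem-2.2.2} together with the fact that $\{U_\alpha\}_{\alpha\in A}$ is an atlas, hence an open covering, of $G/H$.

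First I would verify openness of each member of the proposed covering. By Lemma \ref{lem-2.2.2}, the map $\Pr:G\times_\rho{\sf V}\to G/H$ is continuous, and each $U_\alpha$ is open in $G/H$ by virtue of being a chart domain in the atlas $\mathcal{S}$ of Theorem \ref{thm-1.1.2}. Therefore $\Pr^{-1}(U_\alpha)$ is open in $G\times_\rho{\sf V}$ for every $\alpha\in A$. (If one prefers to avoid appealing directly to continuity, the same conclusion follows by combining the identity \eqref{eq-2.2.3} with the openness of $\pi^{-1}(U_\alpha)\times{\sf V}$ in $G\times{\sf V}$ and the definition of the quotient topology on $G\times_\rho{\sf V}$.)

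Next I would check that the family exhausts $G\times_\rho{\sf V}$. Take any equivalence class $[(g,{\sf v})]\in G\times_\rho{\sf V}$; then $\Pr\bigl([(g,{\sf v})]\bigr)=\pi(g)\in G/H$. Since the atlas $\mathcal{S}=\{(U_\alpha,\psi_\alpha)\}_{\alpha\in A}$ covers $G/H$, there exists some $\alpha\in A$ with $\pi(g)\in U_\alpha$, and then by definition $[(g,{\sf v})]\in\Pr^{-1}(U_\alpha)$. Hence $G\times_\rho{\sf V}=\bigcup_{\alpha\in A}\Pr^{-1}(U_\alpha)$.

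There is no real obstacle here: the statement is a bookkeeping corollary, whose only content beyond Lemma \ref{lem-2.2.2} is the trivial set-theoretic fact that the preimage of a covering under a surjection is again a covering. The main point of isolating it as a corollary is to set up the chart index set $A$ for the real analytic structure $\mathscr{S}=\{(\Pr^{-1}(U_\alpha),\varphi_\alpha)\}_{\alpha\in A}$ on $G\times_\rho{\sf V}$ that Section \ref{sec-2.2} is aiming to construct.
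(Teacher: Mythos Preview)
Your proof is correct and follows essentially the same approach as the paper: the paper's proof is a one-liner invoking Lemma \ref{lem-2.2.2} and the fact that $\{U_\alpha\}_{\alpha\in A}$ covers $G/H$, and you have simply unpacked those two ingredients explicitly. (A minor remark: surjectivity of $\Pr$ is irrelevant here---preimages of a covering under \emph{any} map form a covering of the domain---so your parenthetical ``under a surjection'' is unnecessary, though harmless.)
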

\begin{proof}
   Since $\{U_\alpha:\alpha\in A\}$ is an open covering of $G/H$, Lemma \ref{lem-2.2.2} enables us to get the conclusion.
\end{proof}

   For an $\alpha\in A$, it follows from $\Pr\bigl([(g,{\sf v})]\bigr)=\pi(g)$ that $\pi(g)\in U_\alpha$ for all $[(g,{\sf v})]\in\Pr^{-1}(U_\alpha)$. 
   Then one can set
\begin{equation}\label{eq-2.2.5}
   \mbox{$\phi_\alpha\bigl([(g,{\sf v})]\bigr):=\bigl(\pi(g),\rho(\zeta_\alpha(g)){\sf v}\bigr)$ for $[(g,{\sf v})]\in\Pr^{-1}(U_\alpha)$}.
\end{equation} 
   Here it is necessary to confirm that this \eqref{eq-2.2.5} is well-defined. 
   Let us confirm that. 
   Suppose that $(g_1,{\sf v}_1)$ is equivalent to $(g_2,{\sf v}_2)$ with $[(g_1,{\sf v}_1)]\in\Pr^{-1}(U_\alpha)$. 
   By the definition \eqref{eq-2.1.1} of equivalence relation, there exists an $h\in H$ such that $g_2=g_1h$, ${\sf v}_2=\rho(h)^{-1}({\sf v}_1)$.
   In this case it turns out that $\pi(g_2)=\pi(g_1h)=\pi(g_1)$. 
   Moreover, Lemma \ref{lem-1.3.2}-(1) implies 
\[
   \rho(\zeta_\alpha(g_2)){\sf v}_2
   =\rho(\zeta_\alpha(g_1h)){\sf v}_2
   =\rho(\zeta_\alpha(g_1)h){\sf v}_2
   =\rho(\zeta_\alpha(g_1))\bigl(\rho(h){\sf v}_2\bigr)
   =\rho(\zeta_\alpha(g_1)){\sf v}_1
\]
since $\rho:H\to GL({\sf V})$ is a homomorphism. 
   Therefore \eqref{eq-2.2.5} is well-defined. 
   Now, let us prove 
\begin{proposition}\label{prop-2.2.6}
   For each $\alpha\in A$, the mapping $\phi_\alpha:\Pr^{-1}(U_\alpha)\to U_\alpha\times{\sf V}$, $[(g,{\sf v})]\mapsto\bigl(\pi(g),\rho(\zeta_\alpha(g)){\sf v}\bigr)$, is a homeomorphism. 
   In addition, $\phi_\alpha^{-1}(x,{\sf v})=[(\sigma_\alpha(x),{\sf v})]$ for all $(x,{\sf v})\in U_\alpha\times{\sf V}$. 
   cf.\ \eqref{eq-1.3.1}.
\end{proposition}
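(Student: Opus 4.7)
My plan is to exhibit the inverse $\tilde\phi_\alpha(x,{\sf v}):=[(\sigma_\alpha(x),{\sf v})]$ explicitly, and then obtain bicontinuity by exploiting the universal property of the quotient topology together with the real analytic auxiliary map $\Phi_\alpha$ defined in \eqref{eq-2.2.1}. The well-definedness of $\tilde\phi_\alpha$ as a map into $\Pr^{-1}(U_\alpha)$ is immediate from $\Pr([(\sigma_\alpha(x),{\sf v})])=\pi(\sigma_\alpha(x))=x\in U_\alpha$.

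To check that $\tilde\phi_\alpha$ and $\phi_\alpha$ are mutually inverse, I would rely on the formula \eqref{eq-1.3.1} for $\zeta_\alpha$ and Lemma \ref{lem-1.3.2}-(2). From $\zeta_\alpha(\sigma_\alpha(x))=e$ one obtains $(\phi_\alpha\circ\tilde\phi_\alpha)(x,{\sf v})=(\pi(\sigma_\alpha(x)),\rho(e){\sf v})=(x,{\sf v})$. Conversely, setting $h:=\zeta_\alpha(g)=(\sigma_\alpha(\pi(g)))^{-1}g\in H$ one has $\sigma_\alpha(\pi(g))h=g$ and $\rho(h)^{-1}\bigl(\rho(\zeta_\alpha(g)){\sf v}\bigr)={\sf v}$; hence by the equivalence relation \eqref{eq-2.1.1}, the pairs $(\sigma_\alpha(\pi(g)),\rho(\zeta_\alpha(g)){\sf v})$ and $(g,{\sf v})$ are equivalent, giving $(\tilde\phi_\alpha\circ\phi_\alpha)([(g,{\sf v})])=[(g,{\sf v})]$.

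For the continuity of $\phi_\alpha$, I would invoke the universal property of the quotient topology. Because $\Pr^{-1}(U_\alpha)$ is open in $G\times_\rho{\sf V}$ by Lemma \ref{lem-2.2.2} and $\varpi^{-1}(\Pr^{-1}(U_\alpha))=\pi^{-1}(U_\alpha)\times{\sf V}$ by \eqref{eq-2.2.3}, the restriction of $\varpi$ to $\pi^{-1}(U_\alpha)\times{\sf V}$ is itself a quotient map onto $\Pr^{-1}(U_\alpha)$. Comparing \eqref{eq-2.1.2}, \eqref{eq-2.2.1} and \eqref{eq-2.2.5} yields $\phi_\alpha\bigl(\varpi(g,{\sf v})\bigr)=\Phi_\alpha(g,{\sf v})$ on $\pi^{-1}(U_\alpha)\times{\sf V}$, and since $\Phi_\alpha$ is continuous, so is $\phi_\alpha$. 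For the reverse direction, $\tilde\phi_\alpha=\varpi\circ(\sigma_\alpha\times\operatorname{id}_{\sf V})$ is a composition of continuous maps (note that $\sigma_\alpha$ is real analytic by Theorem \ref{thm-1.1.2}), so $\phi_\alpha^{-1}$ is continuous.

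No real analysis appears beyond the explicit formulas: the main (and essentially only) thing to be careful about is the quotient-topology bookkeeping in the continuity argument for $\phi_\alpha$, namely justifying that the restriction of $\varpi$ to the preimage of an open subset remains a quotient map so that the universal property applies.
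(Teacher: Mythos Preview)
Your proof is correct and follows essentially the same approach as the paper's: both define the candidate inverse $[(\sigma_\alpha(x),{\sf v})]$, verify bijectivity via \eqref{eq-1.3.1} and Lemma~\ref{lem-1.3.2}-(2), deduce continuity of $\phi_\alpha$ from the factorization $\Phi_\alpha=\phi_\alpha\circ\varpi$ and the quotient topology, and obtain continuity of the inverse as the composition $\varpi\circ(\sigma_\alpha\times\operatorname{id}_{\sf V})$. Your extra care in noting that the restriction of $\varpi$ to the saturated open set $\pi^{-1}(U_\alpha)\times{\sf V}$ remains a quotient map is a welcome justification that the paper leaves implicit.
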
   
\begin{proof}
   Let $\phi_\alpha'(x,{\sf v}):=[(\sigma_\alpha(x),{\sf v})]$ for $(x,{\sf v})\in U_\alpha\times{\sf V}$.\par

   (Bijective). 
   For any $[(g,{\sf v})]\in\Pr^{-1}(U_\alpha)$ we have
\[
\begin{split}
   \phi_\alpha'\bigl(\phi_\alpha\bigl([(g,{\sf v})]\bigr)\bigr)
  &=\phi_\alpha'\bigl(\pi(g),\rho(\zeta_\alpha(g)){\sf v}\bigr)
   =\big[\bigl(\sigma_\alpha(\pi(g)),\rho(\zeta_\alpha(g)){\sf v}\bigr)\big]
   \stackrel{\eqref{eq-1.3.1}}{=}\big[\bigl(\sigma_\alpha(\pi(g)),\rho\bigl((\sigma_\alpha(\pi(g)))^{-1}g\bigr){\sf v}\bigr)\big]\\
  &\stackrel{\eqref{eq-2.1.1}}{=}[(g,{\sf v})].
\end{split} 
\]
   For any $(x,{\sf v})\in U_\alpha\times{\sf V}$ we deduce
\[
   \phi_\alpha\bigl(\phi_\alpha'(x,{\sf v})\bigr)
   =\phi_\alpha\bigl([(\sigma_\alpha(x),{\sf v})]\bigr)
   =\bigl(\pi(\sigma_\alpha(x)),\rho(\zeta_\alpha(\sigma_\alpha(x))){\sf v}\bigr)
   =(x,{\sf v})
\]
by $\pi\circ\sigma_\alpha=\operatorname{id}$ on $U_\alpha$ and Lemma \ref{lem-1.3.2}-(2). 
   Therefore $\phi_\alpha$ is bijective and $\phi_\alpha^{-1}=\phi_\alpha'$.\par

   (Continuous 1). 
   Let us show that $\phi_\alpha:\Pr^{-1}(U_\alpha)\to U_\alpha\times{\sf V}$, $[(g,{\sf v})]\mapsto\bigl(\pi(g),\rho(\zeta_\alpha(g)){\sf v}\bigr)$, is continuous.
   It follows from \eqref{eq-2.2.3} that $\varpi\bigl(\pi^{-1}(U_\alpha)\times{\sf V}\bigr)\subset\Pr^{-1}(U_\alpha)$, and it follows from \eqref{eq-2.1.2}, \eqref{eq-2.2.1} and \eqref{eq-2.2.5} that  
\[
   \Phi_\alpha=\phi_\alpha\circ\varpi
\]
on $\pi^{-1}(U_\alpha)\times{\sf V}$.
   Consequently $\phi_\alpha:\Pr^{-1}(U_\alpha)\to U_\alpha\times{\sf V}$ is continuous, because $\Phi_\alpha:\pi^{-1}(U_\alpha)\times{\sf V}\to U_\alpha\times{\sf V}$ is continuous and the topology for $G\times_\rho{\sf V}$ is the quotient topology relative to $\varpi$.\par
   
   (Continuous 2). 
   The inverse mapping $\phi_\alpha^{-1}:U_\alpha\times{\sf V}\to\Pr^{-1}(U_\alpha)$, $(x,{\sf v})\mapsto[(\sigma_\alpha(x),{\sf v})]$, is also continuous because it is the composition of two continuous mappings $U_\alpha\times{\sf V}\ni(x,{\sf v})\mapsto(\sigma_\alpha(x),{\sf v})\in\pi^{-1}(U_\alpha)\times{\sf V}$ and $G\times{\sf V}\ni(g,{\sf v})\mapsto\varpi(g,{\sf v})\in G\times_\rho{\sf V}$.   
\end{proof}

   Proposition \ref{prop-2.2.6} leads to
\begin{corollary}\label{cor-2.2.7}
   For each $\alpha\in A$, $\psi_\alpha(U_\alpha)\times{\sf V}$ is an open subset of $\mathbb{R}^{n+m}$, the mapping $\varphi_\alpha:=(\psi_\alpha\times\operatorname{id}_{\sf V})\circ\phi_\alpha:\Pr^{-1}(U_\alpha)\to\psi_\alpha(U_\alpha)\times{\sf V}$, $[(g,{\sf v})]\mapsto\bigl(\psi_\alpha(\pi(g)),\rho(\zeta_\alpha(g)){\sf v}\bigr)$, is homeomorphic, and $\varphi_\alpha^{-1}(X,{\sf v})=\big[\bigl(\sigma_\alpha(\psi_\alpha^{-1}(X)),{\sf v}\bigr)\big]$ for all $(X,{\sf v})\in\psi_\alpha(U_\alpha)\times{\sf V}$. 
   Here $m=\dim_\mathbb{R}{\sf V}$.
\end{corollary}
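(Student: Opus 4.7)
The plan is to observe that $\varphi_\alpha$ factors as the composition of two homeomorphisms already at hand, namely $\phi_\alpha$ from Proposition \ref{prop-2.2.6} and the product map $\psi_\alpha\times\operatorname{id}_{\sf V}$ coming from the chart $(U_\alpha,\psi_\alpha)$ on $G/H$. Everything then reduces to identifying the target as an open subset of $\mathbb{R}^{n+m}$ and chasing the formula for the inverse through the decomposition.

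First I would note that Corollary \ref{cor-1.1.12} gives $G/H$ the structure of an $n$-dimensional real analytic manifold, so the chart map $\psi_\alpha:U_\alpha\to\psi_\alpha(U_\alpha)\subset\mathbb{R}^n$ is a homeomorphism onto an open subset of $\mathbb{R}^n$. Since we have identified ${\sf V}$ with $\mathbb{R}^m$ via the fixed basis $\{{\sf e}_i\}_{i=1}^m$, the product $\psi_\alpha(U_\alpha)\times{\sf V}$ is open in $\mathbb{R}^n\times\mathbb{R}^m=\mathbb{R}^{n+m}$, which takes care of the first assertion.

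Next, $\psi_\alpha\times\operatorname{id}_{\sf V}:U_\alpha\times{\sf V}\to\psi_\alpha(U_\alpha)\times{\sf V}$ is a homeomorphism by the standard product-of-homeomorphisms fact, and by Proposition \ref{prop-2.2.6} the map $\phi_\alpha:\Pr^{-1}(U_\alpha)\to U_\alpha\times{\sf V}$ is a homeomorphism. Composing, $\varphi_\alpha=(\psi_\alpha\times\operatorname{id}_{\sf V})\circ\phi_\alpha$ is a homeomorphism of $\Pr^{-1}(U_\alpha)$ onto $\psi_\alpha(U_\alpha)\times{\sf V}$, and the explicit formula $[(g,{\sf v})]\mapsto(\psi_\alpha(\pi(g)),\rho(\zeta_\alpha(g)){\sf v})$ follows directly from \eqref{eq-2.2.5}.

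Finally, for the inverse formula I would just compute, for $(X,{\sf v})\in\psi_\alpha(U_\alpha)\times{\sf V}$,
\[
   \varphi_\alpha^{-1}(X,{\sf v})
   =\phi_\alpha^{-1}\bigl((\psi_\alpha\times\operatorname{id}_{\sf V})^{-1}(X,{\sf v})\bigr)
   =\phi_\alpha^{-1}\bigl(\psi_\alpha^{-1}(X),{\sf v}\bigr)
   =\big[\bigl(\sigma_\alpha(\psi_\alpha^{-1}(X)),{\sf v}\bigr)\big],
\]
using the inverse formula for $\phi_\alpha$ recorded in Proposition \ref{prop-2.2.6}. I do not anticipate any real obstacle here; the statement is purely bookkeeping, and the only mildly delicate point is being careful that ``open in $\mathbb{R}^{n+m}$'' uses the identifications $\frak{m}\cong\mathbb{R}^n$ (already fixed in the proof of Theorem \ref{thm-1.1.2}) and ${\sf V}\cong\mathbb{R}^m$ consistently.
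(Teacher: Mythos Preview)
Your proof is correct and matches the paper's approach exactly: the paper simply states that Proposition~\ref{prop-2.2.6} leads to this corollary without further detail, and your argument is precisely the expected unpacking of that implication via the composition $\varphi_\alpha=(\psi_\alpha\times\operatorname{id}_{\sf V})\circ\phi_\alpha$.
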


\begin{center}
\unitlength=1mm
\begin{picture}(82,31)
\put(1,26){$G\times_\rho{\sf V}\supset\Pr^{-1}(U_\alpha)$}
\put(39,30){$\phi_\alpha$}
\put(33,29){$\vector(1,0){15}$}
\put(48,26){$\vector(-1,0){15}$}
\put(39,22.5){$\phi_\alpha^{-1}$}
\put(51,26){$U_\alpha\times{\sf V}\subset G/H\times{\sf V}$}
\put(51,1){$\psi_\alpha(U_\alpha)\times{\sf V}\subset\mathbb{R}^{n+m}$}
\put(61,14){$\psi_\alpha\times\operatorname{id}_{\sf V}$}
\put(60,24){$\vector(0,-1){19}$}
\put(49,5){$\vector(-1,1){18}$}
\put(41,13){$\varphi_\alpha^{-1}$}
\put(26,23){$\vector(1,-1){19}$}
\put(31,13){$\varphi_\alpha$}
\end{picture}
\end{center}

\begin{corollary}\label{cor-2.2.8}
   $G\times_\rho{\sf V}$ is a Hausdorff space.
\end{corollary}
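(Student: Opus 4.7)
The plan is to separate two distinct points $p_1,p_2\in G\times_\rho{\sf V}$ into two cases according to whether their images under $\Pr$ agree, and to leverage the Hausdorff property of $G/H$ (Lemma \ref{lem-1.1.6}-(1)) and of ${\sf V}\cong\mathbb{R}^m$, together with the continuity of $\Pr$ (Lemma \ref{lem-2.2.2}) and the local trivializations $\phi_\alpha$ (Proposition \ref{prop-2.2.6}).

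First, I would take distinct $p_1=[(g_1,{\sf v}_1)]$ and $p_2=[(g_2,{\sf v}_2)]$ in $G\times_\rho{\sf V}$. If $\Pr(p_1)\neq\Pr(p_2)$ in $G/H$, then since $G/H$ is Hausdorff by Lemma \ref{lem-1.1.6}-(1), there exist disjoint open neighborhoods $U\ni\Pr(p_1)$ and $U'\ni\Pr(p_2)$ in $G/H$. By the continuity of $\Pr$ (Lemma \ref{lem-2.2.2}) the preimages $\Pr^{-1}(U)$ and $\Pr^{-1}(U')$ are disjoint open neighborhoods of $p_1$ and $p_2$ in $G\times_\rho{\sf V}$, which is what we want.

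The second case is when $\Pr(p_1)=\Pr(p_2)=:x$. Choose $\alpha\in A$ with $x\in U_\alpha$; then both $p_1,p_2$ lie in the open set $\Pr^{-1}(U_\alpha)$ (Corollary \ref{cor-2.2.4}), and by Proposition \ref{prop-2.2.6} the mapping $\phi_\alpha:\Pr^{-1}(U_\alpha)\to U_\alpha\times{\sf V}$ is a homeomorphism. Their images $\phi_\alpha(p_1)$ and $\phi_\alpha(p_2)$ share the first coordinate $x$ but, by injectivity of $\phi_\alpha$, differ in the ${\sf V}$-coordinate. Since ${\sf V}$ is Hausdorff, we can separate those two ${\sf V}$-coordinates by disjoint open subsets $W_1,W_2\subset{\sf V}$; then $\phi_\alpha^{-1}(U_\alpha\times W_1)$ and $\phi_\alpha^{-1}(U_\alpha\times W_2)$ are disjoint open neighborhoods of $p_1,p_2$ inside $\Pr^{-1}(U_\alpha)$, and hence inside $G\times_\rho{\sf V}$ because $\Pr^{-1}(U_\alpha)$ is open there.

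I do not expect any serious obstacle: the only thing to be careful about is that we are working with the quotient topology on $G\times_\rho{\sf V}$, where Hausdorffness is not automatic, so the argument must genuinely exhibit disjoint open separating sets rather than invoking any submersion-type claim. The local homeomorphisms $\phi_\alpha$ supplied by Proposition \ref{prop-2.2.6} are exactly what is needed to reduce the ``same fiber'' case to the product topology on $U_\alpha\times{\sf V}$, and the ``different fiber'' case is handled directly by pulling back separators from $G/H$.
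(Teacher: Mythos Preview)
Your proof is correct and follows essentially the same approach as the paper: both split into the cases $\Pr(p_1)\neq\Pr(p_2)$ and $\Pr(p_1)=\Pr(p_2)$, using the Hausdorffness of $G/H$ with continuity of $\Pr$ in the first case and the local trivialization over $U_\alpha$ in the second. The only cosmetic difference is that the paper invokes Corollary~\ref{cor-2.2.7} to declare $\Pr^{-1}(U_\alpha)$ Hausdorff, whereas you work directly with $\phi_\alpha$ from Proposition~\ref{prop-2.2.6} and explicitly construct the separating sets $\phi_\alpha^{-1}(U_\alpha\times W_i)$; both amount to the same thing.
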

\begin{proof}
   For $[(g_1,{\sf v}_1)],[(g_2,{\sf v}_2)]\in G\times_\rho{\sf V}$ we suppose that $[(g_1,{\sf v}_1)]\neq[(g_2,{\sf v}_2)]$. 
   Let us investigate two cases $\pi(g_1)\neq\pi(g_2)$ and $\pi(g_1)=\pi(g_2)$, individually.\par

   $\bullet$ In case of $\pi(g_1)\neq\pi(g_2)$, there exist open neighborhoods $U_1$ of $\pi(g_1)$ and $U_2$ of $\pi(g_2)\in G/H$ such that $U_1\cap U_2=\emptyset$ because $G/H$ is a Hausdorff space.
   Then, Lemma \ref{lem-2.2.2} implies that $\Pr^{-1}(U_1)$, $\Pr^{-1}(U_2)$ are open neighborhoods of $[(g_1,{\sf v}_1)]$, $[(g_2,{\sf v}_2)]\in G\times_\rho{\sf V}$ and $\Pr^{-1}(U_1)\cap \Pr^{-1}(U_2)=\emptyset$.\par
   
   $\bullet$ In case of $\pi(g_1)=\pi(g_2)$, Corollary \ref{cor-2.2.4} and $\Pr\bigl([(g_1,{\sf v}_1)]\bigr)=\pi(g_1)=\pi(g_2)=\Pr\bigl([(g_2,{\sf v}_2)]\bigr)$ assure the existence of an element $\alpha\in A$ satisfying $[(g_1,{\sf v}_1)],[(g_2,{\sf v}_2)]\in\Pr^{-1}(U_\alpha)$.
   So, one has $[(g_1,{\sf v}_1)],[(g_2,{\sf v}_2)]\in\Pr^{-1}(U_\alpha)$ and $[(g_1,{\sf v}_1)]\neq[(g_2,{\sf v}_2)]$.
   Then there exist open subsets $W_1, W_2\subset\Pr^{-1}(U_\alpha)$ such that $[(g_1,{\sf v}_1)]\in W_1$, $[(g_2,{\sf v}_2)]\in W_2$ and 
\[
   W_1\cap W_2=\emptyset,
\]
because Corollary \ref{cor-2.2.7} implies that $\Pr^{-1}(U_\alpha)$ is a Hausdorff space.
   Remark here that both $W_1$ and $W_2$ are open subsets of $G\times_\rho{\sf V}$, since $\Pr^{-1}(U_\alpha)$ is open in $G\times_\rho{\sf V}$.
\end{proof}

   Corollaries \ref{cor-2.2.8}, \ref{cor-2.2.4} and \ref{cor-2.2.7} allow us to assert
\begin{proposition}\label{prop-2.2.9}
   The following three items hold$:$
\begin{enumerate}
\item[{\rm (1)}]
   $G\times_\rho{\sf V}$ is an $(n+m)$-dimensional topological manifold, where $m=\dim_\mathbb{R}{\sf V}$, 
\item[{\rm (2)}]
   each pair $(\Pr^{-1}(U_\alpha),\varphi_\alpha)$ is a coordinate neighborhood of $G\times_\rho{\sf V}$ $(\alpha\in A)$,
\item[{\rm (3)}]
   $\mathscr{S}:=\{(\Pr^{-1}(U_\alpha),\varphi_\alpha)\}_{\alpha\in A}$ is an atlas of $G\times_\rho{\sf V}$.
\end{enumerate}
   Here we refer to Corollary {\rm \ref{cor-2.2.7}} for $\varphi_\alpha$.
\end{proposition}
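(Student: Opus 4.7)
The plan is to assemble Proposition~\ref{prop-2.2.9} directly from the preceding corollaries, which have done essentially all of the substantive work; the only real content that remains is to verify second countability, since Hausdorffness and the local Euclidean structure are immediate.

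First I would record that $G\times_\rho{\sf V}$ is Hausdorff by invoking Corollary~\ref{cor-2.2.8}. Next, for each $\alpha\in A$, Corollary~\ref{cor-2.2.7} tells us that $\psi_\alpha(U_\alpha)\times{\sf V}$ is an open subset of $\mathbb{R}^n\times\mathbb{R}^m=\mathbb{R}^{n+m}$ and that $\varphi_\alpha:\Pr^{-1}(U_\alpha)\to\psi_\alpha(U_\alpha)\times{\sf V}$ is a homeomorphism; and Corollary~\ref{cor-2.2.4} tells us that $\{\Pr^{-1}(U_\alpha):\alpha\in A\}$ is an open covering of $G\times_\rho{\sf V}$. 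These two facts together give (2) and the locally Euclidean property, as well as (3) once second countability is established, since an atlas on a topological manifold is just a covering by coordinate neighborhoods.

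The remaining point for (1) is second countability of $G\times_\rho{\sf V}$. By Corollary~\ref{cor-1.1.12} the base $G/H$ satisfies the second countability axiom, so from the open covering $\{U_\alpha\}_{\alpha\in A}$ I would extract a countable subfamily $\{U_{\alpha_k}\}_{k\in\mathbb{N}}$ that still covers $G/H$. Applying $\Pr^{-1}$ and using Lemma~\ref{lem-2.2.2} gives a countable open covering $\{\Pr^{-1}(U_{\alpha_k})\}_{k\in\mathbb{N}}$ of $G\times_\rho{\sf V}$. Each $\Pr^{-1}(U_{\alpha_k})$ is homeomorphic to the open subset $\psi_{\alpha_k}(U_{\alpha_k})\times{\sf V}$ of $\mathbb{R}^{n+m}$ via $\varphi_{\alpha_k}$, hence is second countable; gluing countably many countable bases over this countable cover produces a countable base for $G\times_\rho{\sf V}$.

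I do not expect any genuine obstacle here, as the work is entirely bookkeeping on top of Corollaries~\ref{cor-2.2.4}, \ref{cor-2.2.7} and \ref{cor-2.2.8}; if anything, the mildly delicate point is to recall that extracting a countable subcover from $\{U_\alpha\}_{\alpha\in A}$ is legitimate (Lindel\"of property of the second countable space $G/H$) so that the base of $G\times_\rho{\sf V}$ can be taken countable. Once this is done, the three assertions (1), (2), (3) fall out simultaneously.
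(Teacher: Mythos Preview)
Your argument is correct and, for the core content, identical to the paper's: the paper simply cites Corollaries~\ref{cor-2.2.4}, \ref{cor-2.2.7} and \ref{cor-2.2.8} and declares the proposition proved.

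The one difference is that you additionally prove second countability as part of (1). In the paper's conventions this is not part of the definition of ``topological manifold'' (only Hausdorff and locally Euclidean are required), and second countability is instead established separately in Proposition~\ref{prop-2.2.10}-(ii). There the paper uses a slightly different route: rather than extracting a countable subcover of $\{U_\alpha\}$ from the base, it observes that $G\times{\sf V}$ is second countable and that $\varpi$ is a continuous surjection, hence $G\times_\rho{\sf V}$ is Lindel\"of, and a Lindel\"of topological manifold is second countable. Your Lindel\"of-on-the-base argument is equally valid and perhaps more direct; it simply front-loads a fact the paper chooses to postpone.
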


   We end this subsection with proving 
\begin{proposition}\label{prop-2.2.10}
\begin{enumerate}
\item[]
\item[{\rm (i)}]
   $\Pr:G\times_\rho{\sf V}\to G/H$, $[(g,{\sf v})]\mapsto\pi(g)$, is a surjective, open, continuous mapping.
\item[{\rm (ii)}]
   $G\times_\rho{\sf V}$ satisfies the second countability axiom.
\item[{\rm (iii)}] 
   $\varpi:G\times{\sf V}\to G\times_\rho{\sf V}$, $(g,{\sf v})\mapsto[(g,{\sf v})]$, is a surjective, open, continuous mapping.
\item[{\rm (iv)}]
   $\nu:G\times(G\times_\rho{\sf V})\to G\times_\rho{\sf V}$, $(g_1,[(g_2,{\sf v})])\mapsto[(g_1g_2,{\sf v})]$, is a continuous mapping.
\end{enumerate}
\end{proposition}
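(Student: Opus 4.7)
The plan is to handle (iii) first, because (i), (ii), and (iv) all reduce neatly to the openness of $\varpi$ together with standard quotient-topology machinery. For (iii), surjectivity and continuity are immediate from the construction (the topology on $G\times_\rho{\sf V}$ is the quotient topology relative to $\varpi$). For openness, given any open $O\subset G\times{\sf V}$ I would compute
\[
   \varpi^{-1}(\varpi(O))=\bigcup_{h\in H}\Psi_h(O),
\]
where $\Psi_h:G\times{\sf V}\to G\times{\sf V}$ is $(g,{\sf v})\mapsto(gh,\rho(h)^{-1}({\sf v}))$. Each $\Psi_h$ is a homeomorphism, since it is the pairing of the right translation $R_h$ on $G$ with the linear automorphism $\rho(h)^{-1}$ on ${\sf V}$. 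Hence the right-hand side is open in $G\times{\sf V}$, so $\varpi(O)$ is open in the quotient topology.

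For (i), surjectivity and continuity were already shown in Lemma \ref{lem-2.2.2}. For openness, I would use the factorization $\Pr\circ\varpi=\pi\circ\operatorname{pr}_G$, where $\operatorname{pr}_G:G\times{\sf V}\to G$ is the first projection. For any open $W\subset G\times_\rho{\sf V}$, surjectivity of $\varpi$ yields $W=\varpi(\varpi^{-1}(W))$, and so
\[
   \Pr(W)=\pi\bigl(\operatorname{pr}_G(\varpi^{-1}(W))\bigr).
\]
Now $\varpi^{-1}(W)$ is open in $G\times{\sf V}$, $\operatorname{pr}_G$ is an open map, and $\pi$ is open by Lemma \ref{lem-1.1.5}-(1); hence $\Pr(W)$ is open.

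For (ii), I would exploit (iii): both $G$ and ${\sf V}$ are second countable, so their product $G\times{\sf V}$ is as well. Fixing a countable open basis $\{O_n\}_{n\in\mathbb{N}}$ of $G\times{\sf V}$, the family $\{\varpi(O_n)\}_{n\in\mathbb{N}}$ consists of open subsets of $G\times_\rho{\sf V}$ by the openness of $\varpi$. A short verification using the surjectivity of $\varpi$ (given $p\in U\subset G\times_\rho{\sf V}$ open, pick any $(g,{\sf v})\in\varpi^{-1}(U)$, choose $O_n$ with $(g,{\sf v})\in O_n\subset\varpi^{-1}(U)$, and observe $p\in\varpi(O_n)\subset U$) shows that $\{\varpi(O_n)\}$ is a countable basis.

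For (iv), the key identity is
\[
   \nu\circ(\operatorname{id}_G\times\varpi)=\varpi\circ(\mathrm{mult}\times\operatorname{id}_{\sf V}),
\]
where $\mathrm{mult}:G\times G\to G$ is the group multiplication; both sides send $(g_1,g_2,{\sf v})$ to $[(g_1g_2,{\sf v})]$. The right-hand side is continuous, since $\mathrm{mult}$ and $\varpi$ are. By (iii), $\varpi$ is a continuous open surjection, hence so is $\operatorname{id}_G\times\varpi$, and in particular it is a quotient map. A standard argument (the preimage of any open $U\subset G\times_\rho{\sf V}$ under $\nu$ is obtained by applying the open surjection $\operatorname{id}_G\times\varpi$ to an open set) then forces $\nu$ to be continuous. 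The main obstacle is really the bookkeeping in (iii)—verifying the orbit description $\varpi^{-1}(\varpi(O))=\bigcup_h\Psi_h(O)$ and confirming that each $\Psi_h$ is a homeomorphism—after which every remaining part is a short application of quotient-topology and product-topology facts.
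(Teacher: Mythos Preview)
Your proof is correct and follows essentially the same approach as the paper's; the only differences are cosmetic. You reorder by establishing (iii) first and then push a countable basis forward through $\varpi$ for (ii) (the paper instead argues via the Lindel\"of property of a manifold), and for (iv) you invoke the universal property of the quotient map $\operatorname{id}_G\times\varpi$ where the paper carries out the equivalent explicit neighborhood chase.
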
   
\begin{proof}
   (i). 
   By Lemma \ref{lem-2.2.2} it suffices to confirm that $\Pr:G\times_\rho{\sf V}\to G/H$ is an open mapping.
   For any open subset $W\subset G\times_\rho{\sf V}$ we see that $\varpi^{-1}(W)$ is open in $G\times{\sf V}$. 
   Considering an open mapping $\operatorname{proj}:G\times{\sf V}\to G$, $(g,{\sf v})\mapsto g$, we conclude that $\operatorname{proj}\bigl(\varpi^{-1}(W)\bigr)$ is open in $G$, so that
\[
   \mbox{$\operatorname{proj}\bigl(\varpi^{-1}(W)\bigr)H=\bigcup_{h\in H}R_h\bigl(\operatorname{proj}(\varpi^{-1}(W))\bigr)$ is an open subset of $G$}.
\] 
   A direct computation yields $\operatorname{proj}\bigl(\varpi^{-1}(W)\bigr)H=\pi^{-1}\bigl(\Pr(W)\bigr)$. 
   Accordingly $\pi^{-1}\bigl(\Pr(W)\bigr)\subset G$ is open, and hence $\Pr(W)$ is an open subset of $G/H$.\par

   (ii). 
   Since $G\times_\rho{\sf V}$ is a topological manifold, it is enough to show that $G\times_\rho{\sf V}$ is a Lindel\"{o}f space.
   Let $\{W_\lambda:\lambda\in\Lambda\}$ be an arbitrary open covering of $G\times_\rho{\sf V}$. 
   Needless to say, $\{\varpi^{-1}(W_\lambda):\lambda\in\Lambda\}$ is an open covering of $G\times{\sf V}$. 
   Since both $G$ and ${\sf V}$ satisfy the second countability axiom, the product $G\times{\sf V}$ also satisfies the same axiom. 
   Therefore one can find a countable subset $\{\varpi^{-1}(W_k):k\in\mathbb{N}\}$ of $\{\varpi^{-1}(W_\lambda):\lambda\in\Lambda\}$ so that
\[
   G\times{\sf V}=\bigcup_{k\in\mathbb{N}}\varpi^{-1}(W_k).
\]
   Then $G\times_\rho{\sf V}=\varpi(G\times{\sf V})=\varpi\bigl(\bigcup_{k\in\mathbb{N}}\varpi^{-1}(W_k)\bigr)\subset\bigcup_{k\in\mathbb{N}}W_k$, which implies that $G\times_\rho{\sf V}$ is a Lindel\"{o}f space.\par

   (iii). 
   We only verify that $\varpi:G\times{\sf V}\to G\times_\rho{\sf V}$, $(g,{\sf v})\mapsto[(g,{\sf v})]$,  is an open mapping. 
   For any non-empty open subset $Q\subset G\times{\sf V}$, we are going to show that $\varpi^{-1}\bigl(\varpi(Q)\bigr)$ is an open subset of $G\times{\sf V}$. 
   For any $(g,{\sf v})\in\varpi^{-1}\bigl(\varpi(Q)\bigr)$, it follows that $\varpi(g,{\sf v})\in\varpi(Q)$, so that there exists a $(a,{\sf w})\in Q$ satisfying 
\[
   \varpi(g,{\sf v})=\varpi(a,{\sf w}).
\]
   Since $(a,{\sf w})\in Q$ and $Q\subset G\times{\sf V}$ is open, there exist open subsets $O\subset G$ and $B\subset{\sf V}$ such that $(a,{\sf w})\in O\times B\subset Q$. 
   Moreover, a direct computation, together with \eqref{eq-2.1.1}, yields
\[
   \varpi^{-1}\bigl(\varpi(O\times B)\bigr)=\bigcup_{h\in H}\bigl(R_h(O)\times\rho(h)^{-1}(B)\bigr),
\]
which implies that $\varpi^{-1}\bigl(\varpi(O\times B)\bigr)$ is an open subset of $G\times{\sf V}$ because each $R_h(O)\times\rho(h)^{-1}(B)$ is open in $G\times{\sf V}$. 
   Consequently, $\varpi^{-1}\bigl(\varpi(O\times B)\bigr)$ is an open neighborhood of $(g,{\sf v})\in G\times{\sf V}$ and $\varpi^{-1}\bigl(\varpi(O\times B)\bigr)\subset\varpi^{-1}\bigl(\varpi(Q)\bigr)$. 
   This implies that $\varpi^{-1}\bigl(\varpi(Q)\bigr)$ is an open subset of $G\times{\sf V}$.\par
   
   (iv). 
   Take any $(a_1,[(a_2,{\sf w})])\in G\times(G\times_\rho{\sf V})$ and any open neighborhood $W$ of $\nu(a_1,[(a_2,{\sf w})])=[(a_1a_2,{\sf w})]\in G\times_\rho{\sf V}$.
   Since $\varpi^{-1}(W)$ is an open neighborhood of $(a_1a_2,{\sf w})\in G\times{\sf V}$ and since $\hat{\nu}:G\times(G\times{\sf V})\to G\times{\sf V}$, $(g_1,(g_2,{\sf v}))\mapsto(g_1g_2,{\sf v})$, is a continuous mapping, there exist open subsets $O_1,O_2\subset G$ and $B\subset{\sf V}$ such that $a_1\in O_1$, $a_2\in O_2$, ${\sf w}\in B$ and 
\[
   \hat{\nu}\bigl(O_1,(O_2\times B)\bigr)\subset\varpi^{-1}(W).
\] 
   Then $\varpi(O_2\times B)$ is an open subset of $G\times_\rho{\sf V}$ due to (iii), and so $O_1\times\varpi(O_2\times B)$ is an open neighborhood of $(a_1,[(a_2,{\sf w})])\in G\times(G\times_\rho{\sf V})$; besides, 
\[
   \nu\bigl(O_1\times\varpi(O_2\times B)\bigr)
   \subset\varpi\bigl(\hat{\nu}\bigl(O_1,(O_2\times B)\bigr)\bigr)
   \subset W.
\] 
   Accordingly $\nu:G\times(G\times_\rho{\sf V})\to G\times_\rho{\sf V}$, $(g_1,[(g_2,{\sf v})])\mapsto[(g_1g_2,{\sf v})]$, is continuous.  
\end{proof}

\subsection{A real analytic structure on  $G\times_\rho{\sf V}$}\label{subsec-2.2.2}
   We have shown that $\mathscr{S}=\{(\Pr^{-1}(U_\alpha),\varphi_\alpha)\}_{\alpha\in A}$ is an atlas of $G\times_\rho{\sf V}$ (cf.\ Proposition \ref{prop-2.2.9}).
   In this subsection we aim to confirm that the $\mathscr{S}$ defines a real analytic structure in $G\times_\rho{\sf V}$.

\begin{lemma}\label{lem-2.2.11}
   Suppose that $\Pr^{-1}(U_\alpha)\cap\Pr^{-1}(U_\beta)\neq\emptyset$ $(\alpha,\beta\in A)$. 
   Then, 
\begin{enumerate}
\item[{\rm (1)}]
   $\varphi_\beta\bigl(\Pr^{-1}(U_\alpha)\cap\Pr^{-1}(U_\beta)\bigr)=\psi_\beta(U_\alpha\cap U_\beta)\times{\sf V}$.
\item[{\rm (2)}]
   $(\varphi_\alpha\circ\varphi_\beta^{-1})(X,{\sf v})=\left(\psi_\alpha\bigl(\psi_\beta^{-1}(X)\bigr),\rho\left(\bigl(\sigma_\alpha(\psi_\beta^{-1}(X))\bigr)^{-1}\sigma_\beta\bigl(\psi_\beta^{-1}(X)\bigr)\right){\sf v}\right)$ for all $(X,{\sf v})\in\psi_\beta(U_\alpha\cap U_\beta)\times{\sf V}$.
\item[{\rm (3)}] 
   $\varphi_\alpha\circ\varphi_\beta^{-1}$ is a real analytic diffeomorphism of $\varphi_\beta\bigl(\Pr^{-1}(U_\alpha)\cap\Pr^{-1}(U_\beta)\bigr)$ onto $\varphi_\alpha\bigl(\Pr^{-1}(U_\alpha)\cap\Pr^{-1}(U_\beta)\bigr)$.   
\end{enumerate}   
\end{lemma}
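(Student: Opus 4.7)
My plan is to handle the three parts in order: part (1) is a set-theoretic bookkeeping, part (2) is an unwinding of the definitions, and part (3) assembles real analyticity from building blocks already established in the paper.

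For part (1), I would start from the obvious identity $\Pr^{-1}(U_\alpha)\cap\Pr^{-1}(U_\beta)=\Pr^{-1}(U_\alpha\cap U_\beta)$. From Corollary \ref{cor-2.2.7}, $\varphi_\beta:\Pr^{-1}(U_\beta)\to\psi_\beta(U_\beta)\times{\sf V}$ is a homeomorphism with inverse $\varphi_\beta^{-1}(X,{\sf v})=[(\sigma_\beta(\psi_\beta^{-1}(X)),{\sf v})]$, so $\Pr\bigl(\varphi_\beta^{-1}(X,{\sf v})\bigr)=\pi\bigl(\sigma_\beta(\psi_\beta^{-1}(X))\bigr)=\psi_\beta^{-1}(X)$. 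Consequently $\varphi_\beta^{-1}(X,{\sf v})\in\Pr^{-1}(U_\alpha)$ iff $\psi_\beta^{-1}(X)\in U_\alpha$, i.e., iff $X\in\psi_\beta(U_\alpha\cap U_\beta)$; this gives the claimed image.

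For part (2), I would take $(X,{\sf v})\in\psi_\beta(U_\alpha\cap U_\beta)\times{\sf V}$, set $x:=\psi_\beta^{-1}(X)\in U_\alpha\cap U_\beta$, and $g:=\sigma_\beta(x)$. Since $\pi(g)=x\in U_\alpha$, we have $g\in\pi^{-1}(U_\alpha)$, so definition \eqref{eq-1.3.1} applies and gives
\[
   \zeta_\alpha(g)=\bigl(\sigma_\alpha(\pi(g))\bigr)^{-1}g=\bigl(\sigma_\alpha(x)\bigr)^{-1}\sigma_\beta(x).
\]
Plugging into the formula for $\varphi_\alpha$ from Corollary \ref{cor-2.2.7} yields
\[
   (\varphi_\alpha\circ\varphi_\beta^{-1})(X,{\sf v})=\Bigl(\psi_\alpha(\pi(g)),\rho(\zeta_\alpha(g)){\sf v}\Bigr)=\Bigl(\psi_\alpha\bigl(\psi_\beta^{-1}(X)\bigr),\rho\bigl((\sigma_\alpha(\psi_\beta^{-1}(X)))^{-1}\sigma_\beta(\psi_\beta^{-1}(X))\bigr){\sf v}\Bigr),
\]
which is exactly the stated expression.

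For part (3), I would observe that the formula in (2) writes $\varphi_\alpha\circ\varphi_\beta^{-1}$ as a composition of mappings already known to be real analytic: the chart transition $\psi_\alpha\circ\psi_\beta^{-1}$ on $G/H$ (Theorem \ref{thm-1.1.2} and its proof), the local cross-sections $\sigma_\alpha,\sigma_\beta$ (Theorem \ref{thm-1.1.2}), multiplication and inversion in $G$, the homomorphism $\rho:H\to GL({\sf V})$ which is real analytic by the footnote in Section \ref{sec-2.1}, and finally the bilinear evaluation $GL({\sf V})\times{\sf V}\to{\sf V}$. Composing gives real analyticity of $\varphi_\alpha\circ\varphi_\beta^{-1}$ on $\psi_\beta(U_\alpha\cap U_\beta)\times{\sf V}$; swapping $\alpha$ and $\beta$ yields real analyticity of the inverse, whence the transition is a real analytic diffeomorphism. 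The only subtle point, which I would flag explicitly in the write-up, is the verification at the start of part (2) that $\sigma_\beta(\psi_\beta^{-1}(X))\in\pi^{-1}(U_\alpha)$ so that $\zeta_\alpha$ is defined there; apart from this, the proof is routine.
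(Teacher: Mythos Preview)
Your proposal is correct and follows essentially the same approach as the paper. For part (1) you argue via the explicit inverse $\varphi_\beta^{-1}$ rather than checking both inclusions directly as the paper does, and for part (3) you spell out the real analytic ingredients whereas the paper simply says ``(3) comes from (1) and (2)''; these are minor presentational differences, not substantive ones.
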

\begin{proof}
   (1). 
   First, let us demonstrate that $\varphi_\beta\bigl(\Pr^{-1}(U_\alpha)\cap\Pr^{-1}(U_\beta)\bigr)\subset\psi_\beta(U_\alpha\cap U_\beta)\times{\sf V}$. 
   For any $(X_1,{\sf v}_1)\in\varphi_\beta\bigl(\Pr^{-1}(U_\alpha)\cap\Pr^{-1}(U_\beta)\bigr)=\varphi_\beta\bigl(\Pr^{-1}(U_\alpha\cap U_\beta)\bigr)$, there exists a $[(a_1,{\sf w}_1)]\in\Pr^{-1}(U_\alpha\cap U_\beta)$ satisfying $(X_1,{\sf v}_1)=\varphi_\beta\bigl([(a_1,{\sf w}_1)]\bigr)$. 
   From $\varphi_\beta=(\psi_\beta\times\operatorname{id}_{\sf V})\circ\phi_\beta$ we obtain 
\[
   (X_1,{\sf v}_1)
   =\varphi_\beta\bigl([(a_1,{\sf w}_1)]\bigr)
   =\bigl(\psi_\beta(\pi(a_1)),\rho(\zeta_\beta(a_1)){\sf w}_1\bigr).
\]
   This, combined with $\pi(a_1)=\Pr\bigl([(a_1,{\sf w}_1)]\bigr)\in U_\alpha\cap U_\beta$, implies that $(X_1,{\sf v}_1)\in\psi_\beta(U_\alpha\cap U_\beta)\times{\sf V}$; hence $\varphi_\beta\bigl(\Pr^{-1}(U_\alpha)\cap\Pr^{-1}(U_\beta)\bigr)\subset\psi_\beta(U_\alpha\cap U_\beta)\times{\sf V}$.
   Next, let us show that the converse inclusion also holds.
   For any $(X_2,{\sf v}_2)\in\psi_\beta(U_\alpha\cap U_\beta)\times{\sf V}$, it follows from Corollary \ref{cor-2.2.7} that 
\[
   (X_2,{\sf v}_2)
   =\varphi_\beta\bigl(\varphi_\beta^{-1}(X_2,{\sf v}_2)\bigr)
   =\varphi_\beta\bigl(\big[\bigl(\sigma_\beta(\psi_\beta^{-1}(X_2)),{\sf v}_2\bigr)\big]\bigr).
\] 
   Furthermore $\Pr\bigl(\big[\bigl(\sigma_\beta(\psi_\beta^{-1}(X_2)),{\sf v}_2\bigr)\big]\bigr)=\pi\bigl(\sigma_\beta(\psi_\beta^{-1}(X_2))\bigr)=\psi_\beta^{-1}(X_2)\in U_\alpha\cap U_\beta$, and so $\big[\bigl(\sigma_\beta(\psi_\beta^{-1}(X_2)),{\sf v}_2\bigr)\big]\in\Pr^{-1}(U_\alpha\cap U_\beta)$. 
   Consequently we have $(X_2,{\sf v}_2)\in\varphi_\beta\bigl(\Pr^{-1}(U_\alpha\cap U_\beta)\bigr)$. 
   Hence, $\psi_\beta(U_\alpha\cap U_\beta)\times{\sf V}\subset\varphi_\beta\bigl(\Pr^{-1}(U_\alpha)\cap\Pr^{-1}(U_\beta)\bigr)$.\par

   (2). 
   For any $(X,{\sf v})\in\psi_\beta(U_\alpha\cap U_\beta)\times{\sf V}$, Corollary \ref{cor-2.2.7} enables us to have
\[
\begin{split}
   (\varphi_\alpha\circ\varphi_\beta^{-1})(X,{\sf v})
  &=\varphi_\alpha\bigl(\big[\bigl(\sigma_\beta(\psi_\beta^{-1}(X)),{\sf v}\bigr)\big]\bigr)
   =\bigl(\psi_\alpha(\pi(\sigma_\beta(\psi_\beta^{-1}(X)))),\rho\bigl(\zeta_\alpha(\sigma_\beta(\psi_\beta^{-1}(X)))\bigr){\sf v}\bigr)\\
  &\stackrel{\eqref{eq-1.3.1}}{=}\bigl(\psi_\alpha(\pi(\sigma_\beta(\psi_\beta^{-1}(X)))),\rho\bigl(\bigl(\sigma_\alpha(\pi(\sigma_\beta(\psi_\beta^{-1}(X))))\bigr)^{-1}\sigma_\beta(\psi_\beta^{-1}(X))\bigr){\sf v}\bigr)\\
  &=\bigl(\psi_\alpha(\psi_\beta^{-1}(X)),\rho\bigl( \bigl(\sigma_\alpha(\psi_\beta^{-1}(X))\bigr)^{-1}\sigma_\beta(\psi_\beta^{-1}(X))\bigr){\sf v}\bigr)
\end{split} 
\]
since $\psi_\beta^{-1}(X)\in U_\alpha\cap U_\beta$ and $\pi\circ\sigma_\beta=\operatorname{id}$ on $U_\beta$.\par

   (3) comes from (1) and (2).
\end{proof}

   By Proposition \ref{prop-2.2.9} and Lemma \ref{lem-2.2.11} we conclude 
\begin{theorem}\label{thm-2.2.12}
   The atlas $\mathscr{S}=\{(\Pr^{-1}(U_\alpha),\varphi_\alpha)\}_{\alpha\in A}$ in Proposition {\rm \ref{prop-2.2.9}} defines a real analytic structure in $G\times_\rho{\sf V}$.
\end{theorem}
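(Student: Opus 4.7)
The plan is to verify that $\mathscr{S}$ satisfies the two remaining conditions for being a real analytic atlas, namely (i) the coordinate domains cover $G\times_\rho{\sf V}$ with each chart being a homeomorphism onto an open subset of $\mathbb{R}^{n+m}$, and (ii) all transition functions are real analytic diffeomorphisms on their natural domains. Condition (i) is already in hand: Proposition \ref{prop-2.2.9} states that $\mathscr{S}$ is an atlas, in particular that each $\varphi_\alpha$ is a coordinate neighborhood and $\{\Pr^{-1}(U_\alpha)\}_{\alpha\in A}$ covers $G\times_\rho{\sf V}$. So the only remaining work concerns the transition maps.

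For condition (ii), suppose $\Pr^{-1}(U_\alpha)\cap\Pr^{-1}(U_\beta)\neq\emptyset$. I would quote Lemma \ref{lem-2.2.11}-(1) to identify the relevant intersection in coordinates as $\psi_\beta(U_\alpha\cap U_\beta)\times{\sf V}$, which is an open subset of $\mathbb{R}^{n+m}$ because $\psi_\beta(U_\alpha\cap U_\beta)$ is open in $\mathbb{R}^n$ (the transition $\psi_\alpha\circ\psi_\beta^{-1}$ of $G/H$ is a real analytic diffeomorphism by Theorem \ref{thm-1.1.2}). Then Lemma \ref{lem-2.2.11}-(3) asserts exactly what is needed: $\varphi_\alpha\circ\varphi_\beta^{-1}$ is a real analytic diffeomorphism between the two corresponding open subsets.

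At this point the theorem follows immediately: an atlas whose transition maps are all real analytic diffeomorphisms defines a real analytic structure on the underlying topological manifold, which here is $G\times_\rho{\sf V}$ endowed with the quotient topology (cf.\ Corollaries \ref{cor-2.2.8}, \ref{cor-2.2.4}, and Proposition \ref{prop-2.2.10}-(ii) for the Hausdorff and second-countability properties required of the underlying space).

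There is really no serious obstacle, since the computational heart of the argument, the explicit formula for $\varphi_\alpha\circ\varphi_\beta^{-1}$ and the verification of its real analyticity, has been absorbed into Lemma \ref{lem-2.2.11}. The one point worth underlining, had it not already been handled, is that real analyticity of the fiber component
\[
   X\mapsto\rho\!\left(\bigl(\sigma_\alpha(\psi_\beta^{-1}(X))\bigr)^{-1}\sigma_\beta(\psi_\beta^{-1}(X))\right){\sf v}
\]
uses the real analyticity of $\sigma_\alpha,\sigma_\beta$ from Theorem \ref{thm-1.1.2}, of the group operations in $G$, and of the continuous homomorphism $\rho:H\to GL({\sf V})$ (which is automatically real analytic, as noted in the footnote in Section \ref{sec-2.1}). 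Thus the proof reduces to a one-line invocation of Proposition \ref{prop-2.2.9} together with Lemma \ref{lem-2.2.11}-(3).
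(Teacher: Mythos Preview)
Your proposal is correct and matches the paper's approach exactly: the paper's proof is literally the one-line invocation of Proposition \ref{prop-2.2.9} and Lemma \ref{lem-2.2.11} that you describe. Your additional commentary on why the fiber component is real analytic is accurate and merely unpacks what Lemma \ref{lem-2.2.11}-(3) already contains.
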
 

   Theorem \ref{thm-2.2.12} and Proposition \ref{prop-2.2.10}-(ii) lead to
\begin{corollary}\label{cor-2.2.13}
   $G\times_\rho{\sf V}$ is an $(n+m)$-dimensional real analytic manifold which satisfies the second countability axiom, where $m=\dim_\mathbb{R}{\sf V}$.
\end{corollary}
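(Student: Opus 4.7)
The statement to establish is a direct assembly of results already in hand, so the plan is simply to cite them in the correct order. First, Theorem~\ref{thm-2.2.12} tells us that the atlas $\mathscr{S}=\{(\Pr^{-1}(U_\alpha),\varphi_\alpha)\}_{\alpha\in A}$ from Proposition~\ref{prop-2.2.9} endows $G\times_\rho{\sf V}$ with a real analytic structure. Second, Proposition~\ref{prop-2.2.9}-(1) identifies the dimension as $n+m$, where $n=\dim_\mathbb{R}G-\dim_\mathbb{R}H$ is the dimension of the base $G/H$ (cf.\ Corollary~\ref{cor-1.1.12}) and $m=\dim_\mathbb{R}{\sf V}$.

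The only remaining ingredient is the second countability axiom for $G\times_\rho{\sf V}$, which is precisely the content of Proposition~\ref{prop-2.2.10}-(ii). So the argument is just a one-line citation: combine Theorem~\ref{thm-2.2.12} with Proposition~\ref{prop-2.2.10}-(ii) (together with the dimension count in Proposition~\ref{prop-2.2.9}-(1)) and the corollary follows immediately.

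There is no real obstacle here, since all topological and analytic work was done in Subsections~\ref{subsec-2.2.1} and \ref{subsec-2.2.2}. If one wanted to expand, one could note that the Lindel\"of argument in the proof of Proposition~\ref{prop-2.2.10}-(ii) already exploits the second countability of both $G$ and ${\sf V}$ via the continuous open surjection $\varpi:G\times{\sf V}\to G\times_\rho{\sf V}$, but this is not needed for the formal deduction. The corollary is therefore essentially a summary statement closing the section.
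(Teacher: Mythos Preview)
Your proposal is correct and matches the paper's own argument: the corollary is stated as an immediate consequence of Theorem~\ref{thm-2.2.12} and Proposition~\ref{prop-2.2.10}-(ii), with the dimension already recorded in Proposition~\ref{prop-2.2.9}. There is nothing to add.
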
   

   We end this section with confirming
\begin{proposition}\label{prop-2.2.14}
   $\varpi:G\times{\sf V}\to G\times_\rho{\sf V}$, $(g,{\sf v})\mapsto[(g,{\sf v})]$, is a surjective, open, real analytic mapping.
   Here $G\times_\rho{\sf V}$ is a real analytic manifold having the atlas $\mathscr{S}=\{(\Pr^{-1}(U_\alpha),\varphi_\alpha)\}_{\alpha\in A}$ in Proposition {\rm \ref{prop-2.2.9}}.
\end{proposition}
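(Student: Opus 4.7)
The plan is to reduce the claim to a chart-by-chart computation using the atlases already in place on $G\times{\sf V}$ and $G\times_\rho{\sf V}$. Surjectivity, openness, and continuity are already recorded in Proposition~\ref{prop-2.2.10}-(iii), so the only new content is real analyticity.

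First I would cover $G\times{\sf V}$ by the open sets $\varpi^{-1}(\Pr^{-1}(U_\alpha))=\pi^{-1}(U_\alpha)\times{\sf V}$ ($\alpha\in A$), which is legitimate because $\{U_\alpha\}$ covers $G/H$ and by \eqref{eq-2.2.3}. Fix $\alpha\in A$ and recall the real analytic mapping $\Phi_\alpha:\pi^{-1}(U_\alpha)\times{\sf V}\to U_\alpha\times{\sf V}$ defined in \eqref{eq-2.2.1}; real analyticity of $\Phi_\alpha$ follows from the fact that $\pi$ is real analytic (Theorem~\ref{thm-1.1.2}), $\zeta_\alpha$ is real analytic (Lemma~\ref{lem-1.3.2}), $\rho$ is real analytic (footnote to Section~\ref{sec-2.1}), and the evaluation map $GL({\sf V})\times{\sf V}\to{\sf V}$ is real analytic.

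Next I would invoke the identity $\Phi_\alpha=\phi_\alpha\circ\varpi$ on $\pi^{-1}(U_\alpha)\times{\sf V}$, which was already noted in the ``Continuous 1'' portion of the proof of Proposition~\ref{prop-2.2.6}. Composing with the real analytic diffeomorphism $\psi_\alpha\times\operatorname{id}_{\sf V}$, and using the definition of $\varphi_\alpha$ from Corollary~\ref{cor-2.2.7}, this gives
\[
   \varphi_\alpha\circ\varpi=(\psi_\alpha\times\operatorname{id}_{\sf V})\circ\Phi_\alpha
   \quad\text{on}\ \pi^{-1}(U_\alpha)\times{\sf V}.
\]
The right-hand side is a composition of real analytic mappings between open subsets of Euclidean spaces, hence real analytic. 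Since $(\Pr^{-1}(U_\alpha),\varphi_\alpha)$ is a chart of $G\times_\rho{\sf V}$ (Proposition~\ref{prop-2.2.9}) and $\varpi$ sends $\pi^{-1}(U_\alpha)\times{\sf V}$ into $\Pr^{-1}(U_\alpha)$, the displayed equality is precisely the local expression of $\varpi$ in the charts of $\mathscr{S}$ and the product charts on $G\times{\sf V}$; therefore $\varpi$ is real analytic on each $\pi^{-1}(U_\alpha)\times{\sf V}$. Varying $\alpha\in A$ exhausts $G\times{\sf V}$, so $\varpi$ is globally real analytic.

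There is no serious obstacle here: the whole point is that the real analytic structure on $G\times_\rho{\sf V}$ was designed so that the charts $\varphi_\alpha$ absorb the twist coming from $\zeta_\alpha$ and $\rho$. The only thing to verify carefully is the chart identity $\varphi_\alpha\circ\varpi=(\psi_\alpha\times\operatorname{id}_{\sf V})\circ\Phi_\alpha$, and that has essentially been established already in Section~\ref{subsec-2.2.1}.
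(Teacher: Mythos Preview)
Your proof is correct and follows essentially the same approach as the paper: both reduce to Proposition~\ref{prop-2.2.10}-(iii) for surjectivity and openness, then verify real analyticity chart-by-chart via the identity $\varphi_\alpha\circ\varpi=(\psi_\alpha\times\operatorname{id}_{\sf V})\circ\Phi_\alpha$ on $\pi^{-1}(U_\alpha)\times{\sf V}$. The paper simply writes out this composite explicitly as $(g,{\sf v})\mapsto(\psi_\alpha(\pi(g)),\rho(\zeta_\alpha(g)){\sf v})$ rather than naming the intermediate map $\Phi_\alpha$, but the content is identical.
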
 
\begin{proof}
   We only demonstrate that $\varpi:G\times{\sf V}\to G\times_\rho{\sf V}$ is real analytic (cf.\ Proposition \ref{prop-2.2.10}-(iii)).
   For any $\alpha\in A$ and $(g,{\sf v})\in\pi^{-1}(U_\alpha)\times{\sf V}$, one has $\varpi(\pi^{-1}(U_\alpha)\times{\sf V})\subset\Pr^{-1}(U_\alpha)$, and 
\[
   (\varphi_\alpha\circ\varpi)(g,{\sf v})
   =\varphi_\alpha\bigl([(g,{\sf v})]\bigr)
   =\bigl(\psi_\alpha(\pi(g)),\rho(\zeta_\alpha(g)){\sf v}\bigr)
\] 
due to Corollary \ref{cor-2.2.7}.
   This mapping $G\times{\sf V}\supset\pi^{-1}(U_\alpha)\times{\sf V}\ni(g,{\sf v})\mapsto\bigl(\psi_\alpha(\pi(g)),\rho(\zeta_\alpha(g)){\sf v}\bigr)\in\varphi_\alpha(\Pr^{-1}(U_\alpha))\subset\mathbb{R}^{n+m}$ is real analytic; hence $\varpi:\pi^{-1}(U_\alpha)\times{\sf V}\to G\times_\rho{\sf V}$ is real analytic for each $\alpha\in A$. 
   Therefore $\varpi:G\times{\sf V}\to G\times_\rho{\sf V}$ is a real analytic mapping, since $G=\bigcup_{\alpha\in A}\pi^{-1}(U_\alpha)$. 
\end{proof}

\section{Fiber bundles}\label{sec-2.3}
   For a homogeneous vector bundle $G\times_\rho{\sf V}=(G\times_\rho{\sf V},\Pr,G/H)$ over $G/H$ associated with $\rho:H\to GL({\sf V})$, we will demonstrate that it is a fiber bundle with fiber ${\sf V}$ and group $\rho(H)$ ($\subset GL({\sf V})$), cf.\ Theorem \ref{thm-2.3.5}. 

\subsection{Vector space structures on fibers}\label{subsec-2.3.1}   
   First, let us show that for every $x_0\in G/H$, the fiber $\Pr^{-1}(\{x_0\})$ can be a real vector space which is real linear isomorphic to ${\sf V}$.
   Since $G/H=\bigcup_{\alpha\in A}U_\alpha$, there exists an $\alpha\in A$ such that $x_0\in U_\alpha$.
   Then, $f_\alpha:{\sf V}\to\Pr^{-1}(\{x_0\})$, ${\sf v}\mapsto[(\sigma_\alpha(x_0),{\sf v})]$, is a homeomorphism due to Proposition \ref{prop-2.2.6}.\footnote{$f_\alpha^{-1}\bigl([(g,{\sf v})]\bigr)=\rho(\zeta_\alpha(g)){\sf v}$ for all $[(g,{\sf v})]\in\Pr^{-1}(\{x_0\})$.} 
   Setting 
\begin{equation}\label{eq-2.3.1}
\left\{
\begin{array}{l}
   [(g_1,{\sf v}_1)]+[(g_2,{\sf v}_2)]:=\big[\bigl(\sigma_\alpha(x_0),\rho\bigl((\sigma_\alpha(x_0))^{-1}g_1\bigr){\sf v}_1+\rho\bigl((\sigma_\alpha(x_0))^{-1}g_2\bigr){\sf v}_2\bigr)\big],\\
   \lambda[(g,{\sf v})]:=\big[\bigl(\sigma_\alpha(x_0),\lambda\rho\bigl((\sigma_\alpha(x_0))^{-1}g\bigr){\sf v}\bigr)\big]
\end{array}\right.
\end{equation}
for $[(g_1,{\sf v}_1)], [(g_2,{\sf v}_2)], [(g,{\sf v})]\in\Pr^{-1}(\{x_0\})$ and $\lambda\in\mathbb{R}$, one can assert that the fiber $\Pr^{-1}(\{x_0\})$ is a real vector space, where we note that 
\[
   \mbox{$[(g,{\sf v})]=\big[\bigl(\sigma_\alpha(x_0),\rho\bigl((\sigma_\alpha(x_0))^{-1}g\bigr){\sf v}\bigr)\big]$ for all $[(g,{\sf v})]\in\Pr^{-1}(\{x_0\})$}.
\]
   With respect to this vector space $\Pr^{-1}(\{x_0\})$, the homeomorphism $f_\alpha:{\sf V}\to\Pr^{-1}(\{x_0\})$, ${\sf v}\mapsto[(\sigma_\alpha(x_0),{\sf v})]$, is real linear isomorphic, and hence the vector space structure on $\Pr^{-1}(\{x_0\})$ is independent of the choice of $\alpha\in A$ satisfying $x_0\in U_\alpha$.
   
\begin{remark}\label{rem-2.3.2}
\begin{enumerate}
\item[] 
\item[(1)]
   For each $x_0\in G/H$ we see that  
\[
   \Pr\bigl(\lambda_1[(g_1,{\sf v}_1)]+\lambda_2[(g_2,{\sf v}_2)]\bigr)=x_0
\] 
for all $[(g_1,{\sf v}_1)], [(g_2,{\sf v}_2)]\in\Pr^{-1}(\{x_0\})$ and $\lambda_1, \lambda_2\in\mathbb{R}$ because of \eqref{eq-2.3.1}.  
\item[(2)]
   Hereafter, for each $x_0\in G/H$ we regard the fiber $\Pr^{-1}(\{x_0\})$ as an $m$-dimensional real vector space by means of \eqref{eq-2.3.1}.
   Here $m=\dim_\mathbb{R}{\sf V}$.
\end{enumerate}   
\end{remark} 

\subsection{Transition functions}\label{subsec-2.3.2} 
    Let us set
\begin{equation}\label{eq-2.3.3}
   \mbox{$g_{\alpha\beta}(y):=(\sigma_\alpha(y))^{-1}\sigma_\beta(y)$ for $y\in U_\alpha\cap U_\beta$}
\end{equation}
whenever $U_\alpha\cap U_\beta\neq\emptyset$ ($\alpha,\beta\in A$). 
   Since $\pi\bigl(\sigma_\alpha(y)\bigr)=y=\pi\bigl(\sigma_\beta(y)\bigr)$ we have $g_{\alpha\beta}(y)\in H$, and therefore $g_{\alpha\beta}:U_\alpha\cap U_\beta\to H$ is a real analytic mapping, where we remark that $H$ is a regular submanifold of $G$.
   It is easy to prove 
\begin{proposition}\label{prop-2.3.4}
   For the real analytic mapping $g_{\alpha\beta}:U_\alpha\cap U_\beta\to H$, $y\mapsto(\sigma_\alpha(y))^{-1}\sigma_\beta(y)$, the following two items hold$:$
\begin{enumerate}
\item[{\rm (a)}]   
   $g_{\alpha\alpha}(x)=e$ for all $x\in U_\alpha$.
\item[{\rm (b)}]  
   $g_{\alpha\beta}(z)g_{\beta\gamma}(z)g_{\gamma\alpha}(z)=e$ for all $z\in U_\alpha\cap U_\beta\cap U_\gamma$.
\end{enumerate}
\end{proposition}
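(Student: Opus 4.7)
The plan is to verify both identities by direct substitution into the defining formula \eqref{eq-2.3.3} for $g_{\alpha\beta}$; no auxiliary machinery should be required, since the ``cocycle'' shape of the statement is built into the way the transition functions are defined from the local sections $\sigma_\alpha$.

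For (a), I would simply plug $\alpha=\beta$ into \eqref{eq-2.3.3} to get $g_{\alpha\alpha}(x)=(\sigma_\alpha(x))^{-1}\sigma_\alpha(x)=e$ for every $x\in U_\alpha$. The only thing worth noting is that the expression makes sense because $\sigma_\alpha(x)\in G$ and $G$ is a group.

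For (b), the key observation is that the product telescopes:
\[
   g_{\alpha\beta}(z)g_{\beta\gamma}(z)g_{\gamma\alpha}(z)
   =(\sigma_\alpha(z))^{-1}\sigma_\beta(z)\cdot(\sigma_\beta(z))^{-1}\sigma_\gamma(z)\cdot(\sigma_\gamma(z))^{-1}\sigma_\alpha(z),
\]
where all three factors are well-defined because $z\in U_\alpha\cap U_\beta\cap U_\gamma$. Cancelling the adjacent inverses in $G$ collapses the product to $(\sigma_\alpha(z))^{-1}\sigma_\alpha(z)=e$, which is the desired identity.

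There is no real obstacle here; the proposition is essentially an unpacking of the definition \eqref{eq-2.3.3}. The only point requiring mild care is to record that each $g_{\alpha\beta}(z)$ indeed lies in $H$ (already noted just before the statement via $\pi\circ\sigma_\alpha=\pi\circ\sigma_\beta=\operatorname{id}$ on $U_\alpha\cap U_\beta$), so that the cancellations take place in $G$ and the final value $e$ is legitimately viewed as the unit of $H$.
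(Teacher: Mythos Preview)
Your proposal is correct and matches the paper's approach: the paper states the proposition as ``easy to prove'' and gives no further argument, so the intended proof is exactly the direct substitution and telescoping cancellation you wrote out.
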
 

\subsection{Proof of Theorem \ref{thm-2.3.5}}\label{subsec-2.3.3}   
   Now, we are in a position to demonstrate
 
\begin{theorem}\label{thm-2.3.5}
   Provide $G\times_\rho{\sf V}$ with the real analytic structure $\mathscr{S}=\{(\Pr^{-1}(U_\alpha),\varphi_\alpha)\}_{\alpha\in A}$ in Proposition {\rm \ref{prop-2.2.9}}. 
   Then, the following two items hold$:$
\begin{enumerate}
\item[{\rm (1)}]
   $\Pr:G\times_\rho{\sf V}\to G/H$, $[(g,{\sf v})]\mapsto\pi(g)=gH$, is a surjective, open, real analytic mapping.
\item[{\rm (2)}] 
   For each $\alpha\in A$, the mapping $\phi_\alpha^{-1}:U_\alpha\times{\sf V}\to\Pr^{-1}(U_\alpha)$, $(x,{\sf v})\mapsto[(\sigma_\alpha(x),{\sf v})]$, is a real analytic diffeomorphism$;$ besides, $\phi_\alpha\bigl([(g,{\sf v})]\bigr)=\bigl(\pi(g),\rho(\zeta_\alpha(g)){\sf v}\bigr)$ for all $[(g,{\sf v})]\in\Pr^{-1}(U_\alpha)$.
   cf.\ \eqref{eq-1.3.1}.
\end{enumerate} 
   Moreover, for each $x_0\in U_\alpha$ it follows that 
\begin{enumerate}
\item[{\rm (3)}]
   $\Pr\bigl(\phi_\alpha^{-1}(x_0,{\sf v})\bigr)=x_0$ for all ${\sf v}\in{\sf V}$, 
\item[{\rm (4)}] 
   the mapping ${\sf V}\ni{\sf v}\mapsto\phi_\alpha^{-1}(x_0,{\sf v})\in\Pr^{-1}(\{x_0\})$ is a real linear isomorphism.
\end{enumerate}
   In addition, suppose that $U_\alpha\cap U_\beta\neq\emptyset$ $(\alpha,\beta\in A)$.
   Then, 
\begin{enumerate}
\item[{\rm (5)}] 
   $(\phi_\alpha\circ\phi_\beta^{-1})(y,{\sf v})=\bigl(y,\rho(g_{\alpha\beta}(y)){\sf v}\bigr)$ for all $(y,{\sf v})\in(U_\alpha\cap U_\beta)\times{\sf V}$.   
\end{enumerate} 
   Here we refer to {\rm \eqref{eq-2.3.3}} for $g_{\alpha\beta}$. 
\end{theorem}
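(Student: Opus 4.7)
The strategy is that Theorem \ref{thm-2.3.5} is essentially a repackaging of results already established in Chapter \ref{ch-2}, so the proof reduces mostly to bookkeeping through the charts. First, for (1), the surjectivity, openness and continuity of $\Pr$ are already Proposition \ref{prop-2.2.10}-(i). To upgrade continuity to real analyticity I would read $\Pr$ through the coordinate neighborhoods: for each $\alpha\in A$, Corollary \ref{cor-2.2.7} gives $\varphi_\alpha^{-1}(X,{\sf v})=\bigl[\bigl(\sigma_\alpha(\psi_\alpha^{-1}(X)),{\sf v}\bigr)\bigr]$, so that
\[
   (\psi_\alpha\circ\Pr\circ\varphi_\alpha^{-1})(X,{\sf v})
   =\psi_\alpha\bigl(\pi(\sigma_\alpha(\psi_\alpha^{-1}(X)))\bigr)
   =X
\]
by $\pi\circ\sigma_\alpha=\operatorname{id}$ on $U_\alpha$; this is the first-factor projection on $\psi_\alpha(U_\alpha)\times{\sf V}$, which is real analytic.

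For (2), Proposition \ref{prop-2.2.6} already provides the bijectivity of $\phi_\alpha$ together with the explicit formulas for $\phi_\alpha$ and $\phi_\alpha^{-1}$. To get real analyticity of $\phi_\alpha$ and $\phi_\alpha^{-1}$, I would exploit the relation $\varphi_\alpha=(\psi_\alpha\times\operatorname{id}_{\sf V})\circ\phi_\alpha$ from Corollary \ref{cor-2.2.7}: since $\varphi_\alpha$ belongs to the real analytic atlas $\mathscr{S}$ of $G\times_\rho{\sf V}$ by Theorem \ref{thm-2.2.12}, and since $\psi_\alpha\times\operatorname{id}_{\sf V}$ is a real analytic diffeomorphism between open subsets of the real analytic manifold $(G/H)\times{\sf V}$ and $\mathbb{R}^{n+m}$, composing appropriately shows that $\phi_\alpha=(\psi_\alpha^{-1}\times\operatorname{id}_{\sf V})\circ\varphi_\alpha$ and $\phi_\alpha^{-1}=\varphi_\alpha^{-1}\circ(\psi_\alpha\times\operatorname{id}_{\sf V})$ are real analytic.

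The remaining three items are direct consequences. For (3), $\Pr\bigl(\phi_\alpha^{-1}(x_0,{\sf v})\bigr)=\pi(\sigma_\alpha(x_0))=x_0$ by the formula of (2) and $\pi\circ\sigma_\alpha=\operatorname{id}$. For (4), the map ${\sf v}\mapsto\phi_\alpha^{-1}(x_0,{\sf v})=[(\sigma_\alpha(x_0),{\sf v})]$ is exactly the homeomorphism $f_\alpha$ from Subsection \ref{subsec-2.3.1}; the vector space structure on $\Pr^{-1}(\{x_0\})$ was defined via \eqref{eq-2.3.1} precisely so that $f_\alpha$ becomes a real linear isomorphism. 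Finally for (5), a direct calculation using (2) and \eqref{eq-1.3.1} yields
\[
   (\phi_\alpha\circ\phi_\beta^{-1})(y,{\sf v})
   =\phi_\alpha\bigl([(\sigma_\beta(y),{\sf v})]\bigr)
   =\bigl(\pi(\sigma_\beta(y)),\rho(\zeta_\alpha(\sigma_\beta(y))){\sf v}\bigr)
   =\bigl(y,\rho\bigl((\sigma_\alpha(y))^{-1}\sigma_\beta(y)\bigr){\sf v}\bigr)
   =\bigl(y,\rho(g_{\alpha\beta}(y)){\sf v}\bigr),
\]
using $\pi\circ\sigma_\beta=\operatorname{id}$ on $U_\beta$ together with \eqref{eq-2.3.3}. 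There is no real obstacle here; the only subtle point is to ensure, in the proof of (2), that one distinguishes between the chart $\varphi_\alpha$ into $\mathbb{R}^{n+m}$ and the trivialization $\phi_\alpha$ into the product manifold $U_\alpha\times{\sf V}$, which is handled by composing with $\psi_\alpha\times\operatorname{id}_{\sf V}$.
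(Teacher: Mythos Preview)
Your proposal is correct and follows essentially the same route as the paper: both reduce (1) to Proposition \ref{prop-2.2.10}-(i) plus the chart computation $(\psi_\alpha\circ\Pr\circ\varphi_\alpha^{-1})(X,{\sf v})=X$, both obtain (2) from the factorization $\varphi_\alpha=(\psi_\alpha\times\operatorname{id}_{\sf V})\circ\phi_\alpha$ of Corollary \ref{cor-2.2.7}, and (3)--(5) are handled by the same direct computations with $\pi\circ\sigma_\alpha=\operatorname{id}$, the map $f_\alpha$ of Subsection \ref{subsec-2.3.1}, and \eqref{eq-1.3.1}, \eqref{eq-2.3.3}.
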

\begin{proof}
   (1). 
   By Proposition \ref{prop-2.2.10}-(i), it suffices to confirm that $\Pr:G\times_\rho{\sf V}\to G/H$ is real analytic. 
   For any $\alpha\in A$ and $(X,{\sf v})\in\varphi_\alpha\bigl(\Pr^{-1}(U_\alpha)\bigr)=\psi_\alpha(U_\alpha)\times{\sf V}$, Corollary \ref{cor-2.2.7}, \eqref{eq-2.1.2} and $\pi\circ\sigma_\alpha=\operatorname{id}$ on $U_\alpha$ imply that 
\[
   (\psi_\alpha\circ\Pr\circ\varphi_\alpha^{-1})(X,{\sf v})
   =(\psi_\alpha\circ\Pr)(\big[\bigl(\sigma_\alpha(\psi_\alpha^{-1}(X)),{\sf v}\bigr)\big])
   =\psi_\alpha\bigl(\pi(\sigma_\alpha(\psi_\alpha^{-1}(X)))\bigr)
   =X,
\]
so that $\Pr:\Pr^{-1}(U_\alpha)\to G/H$ is real analytic.
   Thus $\Pr:G\times_\rho{\sf V}\to G/H$ is real analytic.\par
   
   (2). 
   For any $(X,{\sf v})\in(\psi_\alpha\times\operatorname{id}_{\sf V})(U_\alpha\times{\sf V})=\psi_\alpha(U_\alpha)\times{\sf V}$, we deduce  
\[
   \bigl(\varphi_\alpha\circ\phi_\alpha^{-1}\circ(\psi_\alpha\times\operatorname{id}_{\sf V})^{-1}\bigr)(X,{\sf v})
   =(\varphi_\alpha\circ\varphi_\alpha^{-1})(X,{\sf v})
   =(X,{\sf v})
\]
by Corollary \ref{cor-2.2.7}. 
   Hence $\phi_\alpha^{-1}:U_\alpha\times{\sf V}\to\Pr^{-1}(U_\alpha)$ is a real analytic diffeomorphism.\par
   
   (3). 
   By a direct computation we have $\Pr\bigl(\phi_\alpha^{-1}(x_0,{\sf v})\bigr)=\Pr\bigl([(\sigma_\alpha(x_0),{\sf v})]\bigr)=\pi\bigl(\sigma_\alpha(x_0)\bigr)=x_0$.\par
   
   (4). 
   Recall that $f_\alpha:{\sf V}\to\Pr^{-1}(\{x_0\})$, ${\sf v}\mapsto[(\sigma_\alpha(x_0),{\sf v})]$, is real linear isomorphic, cf.\ Subsection \ref{subsec-2.3.1}.\par
   
   (5). 
   For any $(y,{\sf v})\in(U_\alpha\cap U_\beta)\times{\sf V}$, Proposition \ref{prop-2.2.6} enables us to obtain 
\[
\begin{split}
   (\phi_\alpha\circ\phi_\beta^{-1})(y,{\sf v})
  &=\phi_\alpha\bigl([(\sigma_\beta(y),{\sf v})]\bigr)
   =\bigl(\pi(\sigma_\beta(y)),\rho(\zeta_\alpha(\sigma_\beta(y))){\sf v}\bigr)\\
  &\stackrel{\eqref{eq-1.3.1}}{=}\bigl(\pi(\sigma_\beta(y)),\rho\bigl(\bigl(\sigma_\alpha(\pi(\sigma_\beta(y)))\bigr)^{-1}\sigma_\beta(y)\bigr){\sf v}\bigr)
  =\bigl(y,\rho\bigl((\sigma_\alpha(y))^{-1}\sigma_\beta(y)\bigr){\sf v}\bigr) \quad\mbox{($\because$ $\pi(\sigma_\beta(y))=y$)}\\
  &\stackrel{\eqref{eq-2.3.3}}{=}\bigl(y,\rho\bigl(g_{\alpha\beta}(y)\bigr){\sf v}\bigr).
\end{split}
\]   
\end{proof}

   We end Section \ref{sec-2.3} with proving 
\begin{proposition}\label{prop-2.3.6}
   $\nu:G\times(G\times_\rho{\sf V})\to G\times_\rho{\sf V}$, $(g_1,[(g_2,{\sf v})])\mapsto[(g_1g_2,{\sf v})]$, is a real analytic mapping.
   Here $G\times_\rho{\sf V}$ is a real analytic manifold having the atlas $\mathscr{S}=\{(\Pr^{-1}(U_\alpha),\varphi_\alpha)\}_{\alpha\in A}$ in Proposition {\rm \ref{prop-2.2.9}}.
\end{proposition}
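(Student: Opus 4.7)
The plan is to verify real analyticity of $\nu$ locally, by composing with the trivializations $\phi_\alpha^{-1}$ of Theorem \ref{thm-2.3.5}-(2). I would fix an arbitrary point $(a_1, [(a_2, {\sf w}_0)]) \in G \times (G \times_\rho {\sf V})$, choose $\beta \in A$ with $\pi(a_2) \in U_\beta$, and choose $\alpha \in A$ with $\pi(a_1 a_2) \in U_\alpha$. Since $\mu : G \times G/H \to G/H$ is continuous by Theorem \ref{thm-1.1.2}-(2), I can find open neighborhoods $W_1 \subset G$ of $a_1$ and $W_2 \subset U_\beta$ of $\pi(a_2)$ satisfying $\mu(W_1 \times W_2) \subset U_\alpha$. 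Then $W_1 \times \Pr^{-1}(W_2)$ is an open neighborhood of $(a_1, [(a_2, {\sf w}_0)])$ by Theorem \ref{thm-2.3.5}-(1), and covering $G \times (G \times_\rho {\sf V})$ by such products reduces matters to proving real analyticity of $\nu$ restricted to each such set.

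By Theorem \ref{thm-2.3.5}-(2) the mapping $(g_1, y, {\sf w}) \mapsto (g_1, [(\sigma_\beta(y), {\sf w})])$ is a real analytic diffeomorphism of $W_1 \times W_2 \times {\sf V}$ onto $W_1 \times \Pr^{-1}(W_2)$. Hence it suffices to show that the composition $F := \nu \circ (\operatorname{id}_{W_1} \times \phi_\beta^{-1})$ is real analytic on $W_1 \times W_2 \times {\sf V}$. Because $F$ takes values in $\Pr^{-1}(U_\alpha)$ by the construction of $W_1, W_2$, I can compose further with $\phi_\alpha$ and instead verify real analyticity of $\phi_\alpha \circ F : W_1 \times W_2 \times {\sf V} \to U_\alpha \times {\sf V}$.

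A direct computation, using \eqref{eq-1.3.1} and the definition of $\phi_\alpha$ together with $\pi(\sigma_\beta(y)) = y$, gives
\[
   (\phi_\alpha \circ F)(g_1, y, {\sf w}) = \bigl(\mu(g_1, y),\; \rho\bigl(\zeta_\alpha(g_1 \sigma_\beta(y))\bigr){\sf w}\bigr)
\]
for every $(g_1, y, {\sf w}) \in W_1 \times W_2 \times {\sf V}$. The first component is real analytic by Theorem \ref{thm-1.1.2}-(2). For the second component, the assignment $(g_1, y) \mapsto g_1 \sigma_\beta(y)$ is real analytic as a composition of the real analytic multiplication in $G$ with $\sigma_\beta$, and it lands in $\pi^{-1}(U_\alpha)$ by our choice of $W_1$ and $W_2$; then Lemma \ref{lem-1.3.2} provides real analyticity of $\zeta_\alpha$, the footnote after Definition \ref{def-2.1.3} provides real analyticity of $\rho$, and evaluating a linear map on ${\sf w}$ is real analytic in both arguments. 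Composing these, $\phi_\alpha \circ F$ is real analytic.

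The main obstacle is purely organizational: the trivialization $\zeta_\alpha$ is only defined on $\pi^{-1}(U_\alpha)$, so one must cut out the neighborhoods $W_1$ and $W_2$ carefully enough that $g_1 \sigma_\beta(y)$ stays inside this domain. This is precisely what the continuity of $\mu$ at $(a_1, \pi(a_2))$ supplies. After this localization is set up, the argument reduces to recognizing each factor in the coordinate expression as a composition of mappings whose real analyticity has already been established in earlier sections.
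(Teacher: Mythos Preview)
Your proof is correct and follows essentially the same localization strategy as the paper. The one organizational difference worth noting: instead of choosing a second chart $\phi_\alpha$ on the target side and then restricting to $W_1 \times \Pr^{-1}(W_2)$ so that the image lands in $\Pr^{-1}(U_\alpha)$, the paper simply composes with the globally defined real analytic map $\varpi$ (Proposition~\ref{prop-2.2.14}), writing
\[
   \nu|_{G\times\Pr^{-1}(U_\beta)} = \varpi\circ\hat{\nu}\circ\bigl(\operatorname{id}_G\times(\sigma_\beta\times\operatorname{id}_{\sf V})\bigr)\circ(\operatorname{id}_G\times\phi_\beta),
\]
where $\hat{\nu}(g_1,(g_2,{\sf v})) = (g_1g_2,{\sf v})$. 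This works on all of $G\times\Pr^{-1}(U_\beta)$ without any further shrinking, so the continuity-of-$\mu$ step and the second index $\alpha$ become unnecessary. Your route is the standard ``chart on both sides'' verification; the paper's is slightly more economical but not conceptually different.
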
 
\begin{proof}
   Fix any $\alpha\in A$.
   On the one hand; since $\hat{\nu}:G\times(G\times{\sf V})\to G\times{\sf V}$, $(g_1,(g_2,{\sf v}))\mapsto(g_1g_2,{\sf v})$, is a real analytic mapping, it follows from Proposition \ref{prop-2.2.14} and Theorem \ref{thm-2.3.5}-(2) that $\varpi\circ\hat{\nu}\circ\bigl(\operatorname{id}_G\times(\sigma_\alpha\times\operatorname{id}_{\sf V})\bigr)\circ(\operatorname{id}_G\times\phi_\alpha)$ is real analytic on $G\times\Pr^{-1}(U_\alpha)$. 
   On the other hand; for any $(g_1,[(g_2,{\sf v})])\in G\times\Pr^{-1}(U_\alpha)$ one has
\[ 
\begin{split}
  &\bigl(\varpi\circ\hat{\nu}\circ\bigl(\operatorname{id}_G\times(\sigma_\alpha\times\operatorname{id}_{\sf V})\bigr)\circ(\operatorname{id}_G\times\phi_\alpha)\bigr)
    (g_1,[(g_2,{\sf v})])
   =\bigl(\varpi\circ\hat{\nu}\circ\bigl(\operatorname{id}_G\times(\sigma_\alpha\times\operatorname{id}_{\sf V})\bigr)\bigr)
    (g_1,\bigl(\pi(g_2),\rho(\zeta_\alpha(g_2)){\sf v}\bigr))\\
  &=\bigl(\varpi\circ\hat{\nu}\bigr)
    (g_1,\bigl(\sigma_\alpha(\pi(g_2)),\rho(\zeta_\alpha(g_2)){\sf v}\bigr))
   =\varpi
    \bigl(g_1\sigma_\alpha(\pi(g_2)),\rho(\zeta_\alpha(g_2)){\sf v}\bigr) 
   \stackrel{\eqref{eq-1.3.1}}{=}\varpi
    \bigl(g_1\sigma_\alpha(\pi(g_2)),\rho(\bigl(\sigma_\alpha(\pi(g_2))\bigr)^{-1}g_2){\sf v}\bigr)\\
   &\stackrel{\eqref{eq-2.1.1}}{=}\varpi
    \bigl(g_1g_2,{\sf v}\bigr)
    =\nu(g_1,[(g_2,{\sf v})]).
\end{split}
\]   
   Hence $\nu:G\times\Pr^{-1}(U_\alpha)\to G\times_\rho{\sf V}$ is real analytic, and so $\nu:G\times(G\times_\rho{\sf V})\to G\times_\rho{\sf V}$ is real analytic.
\end{proof}

\section{Vector spaces of cross-sections of homogeneous vector bundles}\label{sec-2.4}
   For a homogeneous vector bundle $G\times_\rho{\sf V}=(G\times_\rho{\sf V},\Pr,G/H)$ we provide $G\times_\rho{\sf V}$ with the real analytic structure $\mathscr{S}=\{(\Pr^{-1}(U_\alpha),\varphi_\alpha)\}_{\alpha\in A}$ in Proposition \ref{prop-2.2.9} (cf.\ Theorem \ref{thm-2.2.12}).
   In this section we study the real vector space $\Gamma^r(G\times_\rho{\sf V})$ of differentiable cross-sections of the bundle $G\times_\rho{\sf V}$.\par

   For an $r\in\mathbb{N}\cup\{0,\infty,\omega\}$, let us set
\begin{equation}\label{eq-2.4.1}
   \Gamma^r(G\times_\rho{\sf V})
   :=\left\{\begin{array}{@{}c|c@{}}
   \gamma:G/H\to G\times_\rho{\sf V} 
   & \begin{array}{@{}l@{}} \mbox{(1) $\gamma$ is of class $C^r$},\\ \mbox{(2) $\Pr\bigl(\gamma(x)\bigr)=x$ for all $x\in G/H$}\end{array}
   \end{array}\right\}.
\end{equation}
   Then, for any $\gamma_i\in\Gamma^r(G\times_\rho{\sf V})$ and $x\in G/H$ ($i=1,2$), it follows from \eqref{eq-2.4.1}-(2) that $\gamma_i(x)\in\Pr^{-1}(\{x\})$. 
   Accordingly, since $\Pr^{-1}(\{x\})$ is a real vector space, one can define an element $\gamma_1+\gamma_2\in\Gamma^r(G\times_\rho{\sf V})$ as follows:
\begin{equation}\label{eq-2.4.2}
   \mbox{$(\gamma_1+\gamma_2)(x):=\gamma_1(x)+\gamma_2(x)$ for $x\in G/H$}.
\end{equation}
   Similarly, $\lambda\gamma\in\Gamma^r(G\times_\rho{\sf V})$ as follows: 
\begin{equation}\label{eq-2.4.3}
   \mbox{$(\lambda\gamma)(x):=\lambda\gamma(x)$ for $x\in G/H$},
\end{equation}
where $\gamma\in\Gamma^r(G\times_\rho{\sf V})$ and $\lambda\in\mathbb{R}$ (cf.\ Remark \ref{rem-2.3.2}). 
   Hereafter, we regard $\Gamma^r(G\times_\rho{\sf V})$ as a real vector space by means of \eqref{eq-2.4.2} and \eqref{eq-2.4.3}.\par

   The main purpose of this section is to verify Theorem \ref{thm-2.4.15} which implies that the real vector space $\Gamma^r(G\times_\rho{\sf V})$ is isomorphic to 
\begin{equation}\label{eq-2.4.4}
   \mathcal{V}^r(G\times_\rho{\sf V})
   :=\left\{\begin{array}{@{}c|c@{}}
   \xi:G\to{\sf V} 
   & \begin{array}{@{}l@{}} \mbox{(i) $\xi$ is of class $C^r$},\\ \mbox{(ii) $\xi(gh)=\rho(h)^{-1}\bigl(\xi(g)\bigr)$ for all $(g,h)\in G\times H$}\end{array}
   \end{array}\right\}.
\end{equation}
   Here $\mathcal{V}^r(G\times_\rho{\sf V})$ is a real vector space with respect to the following addition of vectors and scalar multiplication:
\begin{equation}\label{eq-2.4.5}
   \mbox{$(\xi_1+\xi_2)(g):=\xi_1(g)+\xi_2(g)$, $(\lambda\xi)(g):=\lambda\xi(g)$ for $g\in G$},
\end{equation}
where $\xi_1,\xi_1,\xi\in\mathcal{V}^r(G\times_\rho{\sf V})$ and $\lambda\in\mathbb{R}$.

\subsection{A linear mapping $F_1:\mathcal{V}^r(G\times_\rho{\sf V})\to\Gamma^r(G\times_\rho{\sf V})$}\label{subsec-2.4.1}
   Let $\xi$ be an arbitrary element of $\mathcal{V}^r(G\times_\rho{\sf V})$.
   From it we are going to construct an element of $\Gamma^r(G\times_\rho{\sf V})$. 
   Set $\Gamma_\xi$ as 
\begin{equation}\label{eq-2.4.6}
   \mbox{$\Gamma_\xi\bigl(\pi(g)\bigr):=[(g,\xi(g))]$ for $\pi(g)\in G/H$}.
\end{equation} 
   This \eqref{eq-2.4.6} is well-defined by virtue of \eqref{eq-2.4.4}-(ii) and \eqref{eq-2.1.1}. 
   So, $\Gamma_\xi$ is a mapping of $G/H$ into $G\times_\rho{\sf V}$. 
   Moreover,
\begin{lemma}\label{lem-2.4.7}
   In the setting of {\rm \eqref{eq-2.4.1}}, {\rm \eqref{eq-2.4.4}} and {\rm \eqref{eq-2.4.6}}$;$ 
\begin{enumerate}
\item[{\rm (1)}]
   $\Gamma_\xi$ belongs to $\Gamma^r(G\times_\rho{\sf V})$ for each $\xi\in\mathcal{V}^r(G\times_\rho{\sf V});$
\item[{\rm (2)}]
   $F_1:\mathcal{V}^r(G\times_\rho{\sf V})\to\Gamma^r(G\times_\rho{\sf V})$, $\xi\mapsto\Gamma_\xi$, is a real linear mapping.  
\end{enumerate}    
\end{lemma}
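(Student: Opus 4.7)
The proof splits into three tasks: (i) check $\Pr\bigl(\Gamma_\xi(x)\bigr)=x$, (ii) check that $\Gamma_\xi$ is of class $C^r$, and (iii) check $\mathbb{R}$-linearity of $\xi\mapsto\Gamma_\xi$. The first is immediate from \eqref{eq-2.4.6} and \eqref{eq-2.1.2}, since $\Pr\bigl(\Gamma_\xi(\pi(g))\bigr)=\Pr\bigl([(g,\xi(g))]\bigr)=\pi(g)$. Well-definedness of \eqref{eq-2.4.6} is already asserted by the excerpt and will be used silently.

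\textbf{Local smoothness of $\Gamma_\xi$.} I would argue chart by chart using the covering $\{U_\alpha\}_{\alpha\in A}$ of $G/H$. The key observation is that for $x\in U_\alpha$, one may take $g=\sigma_\alpha(x)$ as a representative over $x$ (since $\pi\circ\sigma_\alpha=\operatorname{id}$ on $U_\alpha$), whence
\[
   \Gamma_\xi(x)=\bigl[\bigl(\sigma_\alpha(x),\xi(\sigma_\alpha(x))\bigr)\bigr]=\phi_\alpha^{-1}\bigl(x,(\xi\circ\sigma_\alpha)(x)\bigr).
\]
Now $\sigma_\alpha:U_\alpha\to G$ is real analytic (Theorem~\ref{thm-1.1.2}) and $\xi:G\to{\sf V}$ is of class $C^r$ by \eqref{eq-2.4.4}-(i), so $\xi\circ\sigma_\alpha:U_\alpha\to{\sf V}$ is $C^r$. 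Hence $x\mapsto\bigl(x,(\xi\circ\sigma_\alpha)(x)\bigr)$ is a $C^r$-map into $U_\alpha\times{\sf V}$; composing with the real analytic diffeomorphism $\phi_\alpha^{-1}:U_\alpha\times{\sf V}\to\Pr^{-1}(U_\alpha)$ given by Theorem~\ref{thm-2.3.5}-(2) yields that $\Gamma_\xi|_{U_\alpha}$ is of class $C^r$. Since $\{U_\alpha\}$ covers $G/H$, $\Gamma_\xi\in\Gamma^r(G\times_\rho{\sf V})$.

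\textbf{Linearity of $F_1$.} Fix $\xi_1,\xi_2\in\mathcal{V}^r(G\times_\rho{\sf V})$, $\lambda\in\mathbb{R}$, and any $x\in G/H$ with a chart $\alpha\in A$ such that $x\in U_\alpha$. By Theorem~\ref{thm-2.3.5}-(4), the map $f_\alpha:{\sf V}\ni{\sf v}\mapsto[(\sigma_\alpha(x),{\sf v})]\in\Pr^{-1}(\{x\})$ is a real linear isomorphism. Choosing the representative $g=\sigma_\alpha(x)$ in \eqref{eq-2.4.6},
\[
   \Gamma_{\xi_1+\xi_2}(x)=f_\alpha\bigl(\xi_1(\sigma_\alpha(x))+\xi_2(\sigma_\alpha(x))\bigr)=f_\alpha\bigl(\xi_1(\sigma_\alpha(x))\bigr)+f_\alpha\bigl(\xi_2(\sigma_\alpha(x))\bigr)=\Gamma_{\xi_1}(x)+\Gamma_{\xi_2}(x),
\]
and similarly $\Gamma_{\lambda\xi}(x)=\lambda\Gamma_\xi(x)$. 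In view of \eqref{eq-2.4.2} and \eqref{eq-2.4.3} this gives $F_1(\xi_1+\xi_2)=F_1(\xi_1)+F_1(\xi_2)$ and $F_1(\lambda\xi)=\lambda F_1(\xi)$.

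\textbf{Expected obstacle.} Nothing is deep here; the only mild point is ensuring that the local formula $\Gamma_\xi(x)=\phi_\alpha^{-1}(x,\xi(\sigma_\alpha(x)))$ is consistent with the global definition \eqref{eq-2.4.6}, which rests on the well-definedness of $[(\,\cdot\,,\,\cdot\,)]$ (already granted) and on the identity $\pi\circ\sigma_\alpha=\operatorname{id}_{U_\alpha}$. Once that is unpacked, smoothness is a one-line composition and linearity is a one-line application of Theorem~\ref{thm-2.3.5}-(4).
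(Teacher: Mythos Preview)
Your proof is correct and follows essentially the same approach as the paper. The only cosmetic differences are that the paper writes the local smoothness computation in the full coordinate chart $\varphi_\alpha=(\psi_\alpha\times\operatorname{id}_{\sf V})\circ\phi_\alpha$ rather than stopping at the trivialization $\phi_\alpha$, and for linearity it works with an arbitrary lift $g$ and cites the fiberwise operations \eqref{eq-2.3.1} directly rather than choosing $g=\sigma_\alpha(x)$ and invoking Theorem~\ref{thm-2.3.5}-(4); these are equivalent formulations of the same argument.
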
 
\begin{proof}
   (1).
   Let $\xi$ be an arbitrary element of $\mathcal{V}^r(G\times_\rho{\sf V})$.
   It is easy to see that $\Pr\bigl(\Gamma_\xi(\pi(g))\bigr)\stackrel{\eqref{eq-2.4.6}}{=}\Pr\bigl([(g,\xi(g))]\bigr)\stackrel{\eqref{eq-2.1.2}}{=}\pi(g)$ for all $\pi(g)\in G/H$. 
   Thus, the rest of proof is to confirm that $\Gamma_\xi:G/H\to G\times_\rho{\sf V}$ is a differentiable mapping of class $C^r$.
   For each $\alpha\in A$, it follows from $\Pr\circ\Gamma_\xi=\operatorname{id}_{G/H}$ that $\Gamma_\xi(U_\alpha)\subset\Pr^{-1}(U_\alpha)$. 
   Then for any $X\in\psi_\alpha(U_\alpha)$ we have 
\[
\begin{split}
   (\varphi_\alpha\circ\Gamma_\xi\circ\psi_\alpha^{-1})(X)
  &=(\varphi_\alpha\circ\Gamma_\xi)\bigl(\psi_\alpha^{-1}(X)\bigr)
   =(\varphi_\alpha\circ\Gamma_\xi)\bigl(\pi\bigl(\sigma_\alpha(\psi_\alpha^{-1}(X))\bigr)\bigr)\quad \mbox{($\because$ $\pi\circ\sigma_\alpha=\operatorname{id}$ on $U_\alpha$)}\\
  &\stackrel{\eqref{eq-2.4.6}}{=}\varphi_\alpha\bigl(\big[\bigl(\sigma_\alpha(\psi_\alpha^{-1}(X)),\xi(\sigma_\alpha(\psi_\alpha^{-1}(X)))\bigr)\big]\bigr)\\
  &=\bigl(\psi_\alpha(\pi(\sigma_\alpha(\psi_\alpha^{-1}(X)))),\rho(\zeta_\alpha(\sigma_\alpha(\psi_\alpha^{-1}(X))))\xi(\sigma_\alpha(\psi_\alpha^{-1}(X)))\bigr)
  =\bigl(X,\xi(\sigma_\alpha(\psi_\alpha^{-1}(X)))\bigr)
\end{split} 
\] 
by Corollary \ref{cor-2.2.7} and Lemma \ref{lem-1.3.2}-(2).
   This mapping $\mathbb{R}^n\supset\psi_\alpha(U_\alpha)\ni X\mapsto\bigl(X,\xi(\sigma_\alpha(\psi_\alpha^{-1}(X)))\bigr)\in\varphi_\alpha\bigl(\Pr^{-1}(U_\alpha)\bigr)\subset\mathbb{R}^{n+m}$ is of class $C^r$ because both $\sigma_\alpha$ and $\psi_\alpha^{-1}$ are of class $C^\omega$ and $\xi$ is of class $C^r$. 
   Consequently $\Gamma_\xi:U_\alpha\to G\times_\rho{\sf V}$ is of class $C^r$ for each $\alpha\in A$, and hence $\Gamma_\xi:G/H\to G\times_\rho{\sf V}$ is a differentiable mapping of class $C^r$.\par

   (2). 
   For any $\xi_1,\xi_2\in\mathcal{V}^r(G\times_\rho{\sf V})$ and $\pi(g)\in G/H$, a direct computation yields
\[
\begin{split}
   \bigl(F_1(\xi_1+\xi_2)\bigr)(\pi(g))
  &=\Gamma_{\xi_1+\xi_2}(\pi(g))
   \stackrel{\eqref{eq-2.4.6}}{=}\big[\bigl(g,(\xi_1+\xi_2)(g)\bigr)\big]
   \stackrel{\eqref{eq-2.4.5}}{=}\big[\bigl(g,\xi_1(g)+\xi_2(g)\bigr)\big]
   \stackrel{\eqref{eq-2.3.1}}{=}\big[\bigl(g,\xi_1(g)\bigr)\big]+\big[\bigl(g,\xi_2(g)\bigr)\big]\\
  &\stackrel{\eqref{eq-2.4.6}}{=}\Gamma_{\xi_1}(\pi(g))+\Gamma_{\xi_2}(\pi(g))
   \stackrel{\eqref{eq-2.4.2}}{=}\bigl(\Gamma_{\xi_1}+\Gamma_{\xi_2}\bigr)(\pi(g))
   =\bigl(F_1(\xi_1)+F_1(\xi_2)\bigr)(\pi(g)).
\end{split}
\] 
   Thus $F_1(\xi_1+\xi_2)=F_1(\xi_1)+F_1(\xi_2)$ for all $\xi_1,\xi_2\in\mathcal{V}^r(G\times_\rho{\sf V})$.
   Similarly, $F_1(\lambda\xi)=\lambda F_1(\xi)$ for all $(\xi,\lambda)\in\mathcal{V}^r(G\times_\rho{\sf V})\times\mathbb{R}$.    
\end{proof}

\subsection{A mapping $F_2:\Gamma^r(G\times_\rho{\sf V})\to\mathcal{V}^r(G\times_\rho{\sf V})$}\label{subsec-2.4.2}
   Fix an arbitrary $\gamma\in\Gamma^r(G\times_\rho{\sf V})$. 
   From it we will construct an element of $\mathcal{V}^r(G\times_\rho{\sf V})$.
   For any $\alpha\in A$, Theorem \ref{thm-2.3.5}-(2) implies that $\phi_\alpha:\Pr^{-1}(U_\alpha)\to U_\alpha\times{\sf V}$, $[(g_1,{\sf v})]\mapsto\bigl(\pi(g_1),\rho(\zeta_\alpha(g_1)){\sf v}\bigr)$, is real analytic, so that we can define a real analytic mapping $\chi_\alpha:\Pr^{-1}(U_\alpha)\to{\sf V}$ by
\begin{equation}\label{eq-2.4.8}
   \mbox{$\chi_\alpha\bigl([(g_1,{\sf v})]\bigr):=\rho(\zeta_\alpha(g_1)){\sf v}$ for $[(g_1,{\sf v})]\in\Pr^{-1}(U_\alpha)$}.
\end{equation} 
   Furthermore, by use of this $\chi_\alpha$ we define a differentiable mapping $\Xi_{\gamma,\alpha}:\pi^{-1}(U_\alpha)\to{\sf V}$ of class $C^r$ as follows: 
\begin{equation}\label{eq-2.4.9}
   \mbox{$\Xi_{\gamma,\alpha}(g_1):=\rho(\zeta_\alpha(g_1))^{-1}\bigl(\chi_\alpha(\gamma(\pi(g_1)))\bigr)$ for $g_1\in\pi^{-1}(U_\alpha)$}.
\end{equation} 
   Then, Lemma \ref{lem-1.3.2}-(1) assures that 
\begin{equation}\label{eq-2.4.10}
   \mbox{$\Xi_{\gamma,\alpha}(g_1h)=\rho(h)^{-1}\bigl(\Xi_{\gamma,\alpha}(g_1)\bigr)$ for all $(g_1,h)\in\pi^{-1}(U_\alpha)\times H$}.
\end{equation} 
   Now, in terms of $\gamma\bigl(\pi(g_1)\bigr)\in G\times_\rho{\sf V}$, we obtain a $(a,{\sf w})\in G\times{\sf V}$ satisfying $\gamma\bigl(\pi(g_1)\bigr)=[(a,{\sf w})]$. 
   Then \eqref{eq-2.4.1}-(2) yields $\pi(g_1)=\Pr\bigl(\gamma(\pi(g_1))\bigr)=\Pr\bigl([(a,{\sf w})]\bigr)=\pi(a)$. 
   Therefore $g_1^{-1}a\in H$, ${\sf v}:=\rho(g_1^{-1}a){\sf w}\in{\sf V}$ and
\[
    \gamma\bigl(\pi(g_1)\bigr)=[(a,{\sf w})]\stackrel{\eqref{eq-2.1.1}}{=}[(g_1,{\sf v})].  
\] 
   This gives  $\Xi_{\gamma,\alpha}(g_1)\stackrel{\eqref{eq-2.4.9}}{=}\rho(\zeta_\alpha(g_1))^{-1}\bigl(\chi_\alpha(\gamma(\pi(g_1)))\bigr)=\rho(\zeta_\alpha(g_1))^{-1}\bigl(\chi_\alpha([(g_1,{\sf v})])\bigr)\stackrel{\eqref{eq-2.4.8}}{=}\rho(\zeta_\alpha(g_1))^{-1}\bigl(\rho(\zeta_\alpha(g_1)){\sf v}\bigr)={\sf v}$.
   Hence, it follows that 
\begin{equation}\label{eq-2.4.11}
   \mbox{$\gamma\bigl(\pi(g_1)\bigr)=\big[\bigl(g_1,\Xi_{\gamma,\alpha}(g_1)\bigr)\big]$ for all $g_1\in\pi^{-1}(U_\alpha)$}.
\end{equation} 
   In a similar way, we conclude that for any $g_2\in\pi^{-1}(U_\alpha)\cap\pi^{-1}(U_\beta)$, there exists a ${\sf v}_2\in{\sf V}$ such that $\gamma\bigl(\pi(g_2)\bigr)=[(g_2,{\sf v}_2)]$; moreover $\Xi_{\gamma,\alpha}(g_2)={\sf v}_2=\Xi_{\gamma,\beta}(g_2)$.
   Therefore one can define a differentiable mapping $\Xi_\gamma:G\to{\sf V}$ of class $C^r$ by 
\begin{equation}\label{eq-2.4.12}
   \mbox{$\Xi_\gamma(g):=\Xi_{\gamma,\alpha}(g)$ if $g\in\pi^{-1}(U_\alpha)$},
\end{equation} 
where we remark that $G=\bigcup_{\alpha\in A}\pi^{-1}(U_\alpha)$.
   Needless to say, it follows from \eqref{eq-2.4.10}, \eqref{eq-2.4.11} and \eqref{eq-2.4.12} that 
\begin{equation}\label{eq-2.4.13}
\left\{
\begin{array}{l}
   \mbox{$\Xi_\gamma(gh)=\rho(h)^{-1}\bigl(\Xi_\gamma(g)\bigr)$ for all $(g,h)\in G\times H$},\\
   \mbox{$\gamma\bigl(\pi(g)\bigr)=\big[\bigl(g,\Xi_\gamma(g)\bigr)\big]$ for all $g\in G$}.
\end{array}\right. 
\end{equation} 

   Summarizing the statements above, we conclude    
\begin{lemma}\label{lem-2.4.14}
   For each $\gamma\in\Gamma^r(G\times_\rho{\sf V})$, $\Xi_\gamma$ belongs to $\mathcal{V}^r(G\times_\rho{\sf V})$.
   Therefore, one can get a mapping $F_2:\Gamma^r(G\times_\rho{\sf V})\to\mathcal{V}^r(G\times_\rho{\sf V})$ by setting $F_2(\gamma):=\Xi_\gamma$ for $\gamma\in\Gamma^r(G\times_\rho{\sf V})$.
   Moreover, $\gamma\bigl(\pi(g)\bigr)=\big[\bigl(g,(F_2(\gamma))(g)\bigr)\big]$ for all $(\gamma,g)\in\Gamma^r(G\times_\rho{\sf V})\times G$.
   Here we refer to {\rm \eqref{eq-2.4.12}}, {\rm \eqref{eq-2.4.9}} for $\Xi_\gamma$.
\end{lemma}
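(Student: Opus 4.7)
The plan is to organize the observations already established in the paragraphs immediately preceding the lemma statement into a clean argument. Concretely, for a fixed $\gamma\in\Gamma^r(G\times_\rho{\sf V})$ I must verify three things about $\Xi_\gamma$: it is well-defined as a map $G\to{\sf V}$, it is of class $C^r$, and it satisfies the $H$-equivariance (ii) of \eqref{eq-2.4.4}. Once that is done, the additional identity $\gamma(\pi(g))=\bigl[(g,(F_2(\gamma))(g))\bigr]$ is just the second line of \eqref{eq-2.4.13}.

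First I would verify well-definedness, i.e., that for $g\in\pi^{-1}(U_\alpha)\cap\pi^{-1}(U_\beta)$ the two candidates $\Xi_{\gamma,\alpha}(g)$ and $\Xi_{\gamma,\beta}(g)$ coincide. For this I invoke the argument already given around \eqref{eq-2.4.11}: using $\gamma(\pi(g))\in\mathrm{Pr}^{-1}(\{\pi(g)\})$ together with \eqref{eq-2.1.1}, there is a unique ${\sf v}\in{\sf V}$ with $\gamma(\pi(g))=[(g,{\sf v})]$, and a direct computation with \eqref{eq-2.4.8}, \eqref{eq-2.4.9} shows $\Xi_{\gamma,\alpha}(g)={\sf v}=\Xi_{\gamma,\beta}(g)$. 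Hence \eqref{eq-2.4.12} consistently defines a mapping $\Xi_\gamma:G\to{\sf V}$, and since $G=\bigcup_{\alpha\in A}\pi^{-1}(U_\alpha)$ it is defined on all of $G$.

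Second, $\Xi_\gamma$ is of class $C^r$ because each restriction $\Xi_\gamma|_{\pi^{-1}(U_\alpha)}=\Xi_{\gamma,\alpha}$ is, being the composition of the real analytic maps $\zeta_\alpha$ (Lemma \ref{lem-1.3.2}), $\rho$ (continuous hence real analytic), $\pi$, and $\chi_\alpha$ (Theorem \ref{thm-2.3.5}-(2)), with the $C^r$ map $\gamma$. Third, the $H$-equivariance in \eqref{eq-2.4.4}-(ii) follows directly from \eqref{eq-2.4.10}: for $(g,h)\in G\times H$ one picks $\alpha$ with $g\in\pi^{-1}(U_\alpha)$ (then automatically $gh\in\pi^{-1}(U_\alpha)$ since $\pi(gh)=\pi(g)$), and applies \eqref{eq-2.4.10} together with \eqref{eq-2.4.12}. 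This yields $\Xi_\gamma\in\mathcal{V}^r(G\times_\rho{\sf V})$, which allows us to define $F_2(\gamma):=\Xi_\gamma$ and gives a map $F_2:\Gamma^r(G\times_\rho{\sf V})\to\mathcal{V}^r(G\times_\rho{\sf V})$. The closing identity is then exactly the second formula of \eqref{eq-2.4.13}.

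There is no real obstacle here: the substantive work (constructing the local pieces, checking equivariance locally, and identifying $\Xi_{\gamma,\alpha}(g)$ with the unique representative ${\sf v}$ of $\gamma(\pi(g))$ in the fiber over $g$) has been carried out in the text leading up to the lemma. The only point that requires a moment of care is the consistency on overlaps, and the argument based on the uniqueness of ${\sf v}$ in $\gamma(\pi(g))=[(g,{\sf v})]$ handles it uniformly, with no further case analysis needed.
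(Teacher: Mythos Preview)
Your proposal is correct and matches the paper's approach exactly: the paper presents Lemma~\ref{lem-2.4.14} as a summary of the observations established in the preceding paragraphs (well-definedness on overlaps via the unique representative ${\sf v}$, $C^r$ regularity of each local piece $\Xi_{\gamma,\alpha}$, the equivariance \eqref{eq-2.4.10}, and the identity \eqref{eq-2.4.13}), and that is precisely what you have organized.
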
 

   Now, let us verify 
\begin{theorem}\label{thm-2.4.15}
   In the setting of {\rm \eqref{eq-2.4.1}} and {\rm \eqref{eq-2.4.4}}$;$ there exists a real linear isomorphism $F:\Gamma^r(G\times_\rho{\sf V})\to\mathcal{V}^r(G\times_\rho{\sf V})$, $\gamma\mapsto F(\gamma)$, such that $\gamma\bigl(\pi(g)\bigr)=\big[\bigl(g,(F(\gamma))(g)\bigr)\big]$ for all $(\gamma,g)\in\Gamma^r(G\times_\rho{\sf V})\times G$.
   Here $r\in\mathbb{N}\cup\{0,\infty,\omega\}$.
\end{theorem}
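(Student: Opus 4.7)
The plan is to take $F:=F_2$ from Lemma \ref{lem-2.4.14} and show that it is a real linear isomorphism with inverse $F_1$ from Lemma \ref{lem-2.4.7}. The defining identity $\gamma(\pi(g))=\big[(g,(F(\gamma))(g))\big]$ is already recorded in Lemma \ref{lem-2.4.14}, so once bijectivity and linearity are settled, the theorem follows.

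First, I would verify that $F_2$ is real linear. Given $\gamma_1,\gamma_2\in\Gamma^r(G\times_\rho{\sf V})$ and $\lambda_1,\lambda_2\in\mathbb{R}$, one has by \eqref{eq-2.4.13} that $\gamma_i(\pi(g))=\big[(g,\Xi_{\gamma_i}(g))\big]$; combined with the vector space structure \eqref{eq-2.3.1} on the fibre $\Pr^{-1}(\{\pi(g)\})$ (taking the representative $\sigma_\alpha(\pi(g))=g$ is not available in general, but one may simply use $\sigma_\alpha(\pi(g))^{-1}g\in H$ and the fact that $\rho$ is a homomorphism to rewrite sums), one gets
\[
   (\lambda_1\gamma_1+\lambda_2\gamma_2)(\pi(g))
   =\big[\bigl(g,\lambda_1\Xi_{\gamma_1}(g)+\lambda_2\Xi_{\gamma_2}(g)\bigr)\big].
\]
Comparing with $(\lambda_1\gamma_1+\lambda_2\gamma_2)(\pi(g))=\big[(g,\Xi_{\lambda_1\gamma_1+\lambda_2\gamma_2}(g))\big]$ and invoking the uniqueness argument of the next paragraph yields $\Xi_{\lambda_1\gamma_1+\lambda_2\gamma_2}=\lambda_1\Xi_{\gamma_1}+\lambda_2\Xi_{\gamma_2}$, i.e.\ $F_2$ is linear.

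Second, I would show $F_1\circ F_2=\operatorname{id}_{\Gamma^r(G\times_\rho{\sf V})}$. Given $\gamma\in\Gamma^r(G\times_\rho{\sf V})$ and $g\in G$,
\[
   \bigl((F_1\circ F_2)(\gamma)\bigr)(\pi(g))
   =\Gamma_{\Xi_\gamma}(\pi(g))
   \stackrel{\eqref{eq-2.4.6}}{=}\big[(g,\Xi_\gamma(g))\big]
   \stackrel{\eqref{eq-2.4.13}}{=}\gamma(\pi(g)).
\]
Third, I would show $F_2\circ F_1=\operatorname{id}_{\mathcal{V}^r(G\times_\rho{\sf V})}$. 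For $\xi\in\mathcal{V}^r(G\times_\rho{\sf V})$, by \eqref{eq-2.4.13} applied to $\gamma:=\Gamma_\xi$ one has $\Gamma_\xi(\pi(g))=\big[(g,\Xi_{\Gamma_\xi}(g))\big]$, whereas \eqref{eq-2.4.6} gives $\Gamma_\xi(\pi(g))=\big[(g,\xi(g))\big]$. Hence $\big[(g,\xi(g))\big]=\big[(g,\Xi_{\Gamma_\xi}(g))\big]$; by the equivalence relation \eqref{eq-2.1.1} there exists $h\in H$ with $g=gh$ and $\Xi_{\Gamma_\xi}(g)=\rho(h)^{-1}\xi(g)$. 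Left multiplication by $g^{-1}$ forces $h=e$, and consequently $\Xi_{\Gamma_\xi}(g)=\xi(g)$.

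The main obstacle is essentially the uniqueness step used in the previous paragraph and in establishing linearity: one must extract from an identity $[(g,{\sf v}_1)]=[(g,{\sf v}_2)]$ in $G\times_\rho{\sf V}$ the conclusion ${\sf v}_1={\sf v}_2$. This is a short but genuine use of the group structure of $G$ (injectivity of left translation) together with the fact that $\rho(e)=\operatorname{id}$. Once this lemma is in hand, the proof reduces to assembling the pieces already provided by Lemmas \ref{lem-2.4.7} and \ref{lem-2.4.14}.
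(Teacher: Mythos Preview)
Your proposal is correct and follows the same overall strategy as the paper: set $F=F_2$, and verify that $F_1$ and $F_2$ are mutual inverses. The execution differs slightly. You work directly with the identities \eqref{eq-2.4.6} and \eqref{eq-2.4.13} together with the elementary cancellation ``$[(g,{\sf v}_1)]=[(g,{\sf v}_2)]\Rightarrow {\sf v}_1={\sf v}_2$'', whereas the paper unwinds both compositions down to the local-chart level, re-inserting the explicit formulas \eqref{eq-2.4.8}, \eqref{eq-2.4.9}, \eqref{eq-2.4.12} involving $\zeta_\alpha$ and $\chi_\alpha$. Your route is shorter precisely because \eqref{eq-2.4.13} already packages those local computations. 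One remark: your separate verification that $F_2$ is linear is correct but unnecessary---once $F_1\circ F_2=\operatorname{id}$ and $F_2\circ F_1=\operatorname{id}$ are established, linearity of $F_2$ is automatic from the linearity of $F_1$ recorded in Lemma~\ref{lem-2.4.7}-(2), which is how the paper handles it.
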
 
\begin{proof}
   By Lemmas \ref{lem-2.4.7} and \ref{lem-2.4.14} it is enough to confirm that (1) $F_1\circ F_2=\operatorname{id}$ on $\Gamma^r(G\times_\rho{\sf V})$ and (2) $F_2\circ F_1=\operatorname{id}$ on $\mathcal{V}^r(G\times_\rho{\sf V})$.\par
   
   (1).
   Let us take any $\alpha\in A$, $x\in U_\alpha$ and $\delta\in\Gamma^r(G\times_\rho{\sf V})$. 
   Since $\delta(x)\in\Pr^{-1}(U_\alpha)\subset\varpi(\pi^{-1}(U_\alpha)\times{\sf V})$, there exists a $(g_1,{\sf v})\in\pi^{-1}(U_\alpha)\times{\sf V}$ such that $\delta(x)=[(g_1,{\sf v})]$.
   Then we have $x=\pi(g_1)$, $\delta\bigl(\pi(g_1)\bigr)=[(g_1,{\sf v})]$ and
\[
\begin{split}
   \bigl(F_1(F_2(\delta))\bigr)(x)
  &=\Gamma_{F_2(\delta)}(x)
   \stackrel{\eqref{eq-2.4.6}}{=}\big[\bigl(g_1,(F_2(\delta))(g_1)\big)\big]
   =\big[\bigl(g_1,\Xi_\delta(g_1)\big)\big]
   \stackrel{\eqref{eq-2.4.9}, \eqref{eq-2.4.12}}{=}\big[\bigl(g_1,\rho(\zeta_\alpha(g_1))^{-1}\bigl(\chi_\alpha(\delta(\pi(g_1)))\bigr)\bigr)\big]\\
  &=\big[\bigl(g_1,\rho(\zeta_\alpha(g_1))^{-1}\bigl(\chi_\alpha([(g_1,{\sf v})])\bigr)\bigr)\big]
  \stackrel{\eqref{eq-2.4.8}}{=}[(g_1,{\sf v})]=\delta(x).
\end{split} 
\] 
   Therefore we see that $F_1(F_2(\delta))=\delta$ on $U_\alpha$ ($\alpha\in A$). 
   This, together with $G/H=\bigcup_{\alpha\in A}U_\alpha$, assures that $F_1(F_2(\delta))=\delta$ on $G/H$.
   For this reason $F_1\circ F_2=\operatorname{id}$ on $\Gamma^r(G\times_\rho{\sf V})$.\par
   
   (2). 
   By arguments similar to those stated in (1), one can conclude that (2) $F_2\circ F_1=\operatorname{id}$ on $\mathcal{V}^r(G\times_\rho{\sf V})$.
   However, let us confirm (2) for the sake of completeness.
   For any $\alpha\in A$, $g\in\pi^{-1}(U_\alpha)$ and $\eta\in\mathcal{V}^r(G\times_\rho{\sf V})$, we have
\[
\begin{split}
   \bigl(F_2(F_1(\eta))\bigr)(g)
  &=\Xi_{F_1(\eta)}(g)
   \stackrel{\eqref{eq-2.4.9}, \eqref{eq-2.4.12}}{=}\rho(\zeta_\alpha(g))^{-1}\bigl(\chi_\alpha\bigl((F_1(\eta))(\pi(g))\bigr)\bigr)
   =\rho(\zeta_\alpha(g))^{-1}\bigl(\chi_\alpha(\Gamma_\eta(\pi(g)))\bigr)\\
  &\stackrel{\eqref{eq-2.4.6}}{=}\rho(\zeta_\alpha(g))^{-1}\bigl(\chi_\alpha([(g,\eta(g))])\bigr)
   \stackrel{\eqref{eq-2.4.8}}{=}\eta(g).
\end{split} 
\] 
   Hence $F_2(F_1(\eta))=\eta$ on $\pi^{-1}(U_\alpha)$ ($\alpha\in A$), and $F_2(F_1(\eta))=\eta$ on $G=\bigcup_{\alpha\in A}\pi^{-1}(U_\alpha)$. 
   Consequently (2) holds.
\end{proof}

\section{The restrictions of bundles to open subsets and their cross-sections}\label{sec-2.5}
   Fix a homogeneous vector bundle $G\times_\rho{\sf V}=(G\times_\rho{\sf V},\Pr,G/H)$ and provide $G\times_\rho{\sf V}$ with the real analytic structure $\mathscr{S}=\{(\Pr^{-1}(U_\alpha),\varphi_\alpha)\}_{\alpha\in A}$ in Proposition \ref{prop-2.2.9}.
   For a non-empty open subset $U\subset G/H$ we define an open subset $(G\times_\rho{\sf V})_U$ of $G\times_\rho{\sf V}$ by 
\begin{equation}\label{eq-2.5.1}
   (G\times_\rho{\sf V})_U:=\Pr{}^{-1}(U);
\end{equation}
besides, we induce real analytic structures $\mathcal{S}_U$ on $U$ and $\mathscr{S}_U$ on $(G\times_\rho{\sf V})_U$ from $G/H$ and $G\times_\rho{\sf V}$, respectively. 
   Then, Proposition \ref{prop-2.3.4} and Theorem \ref{thm-2.3.5} tell us that $(G\times_\rho{\sf V})_U=\bigl((G\times_\rho{\sf V})_U,\Pr|_{(G\times_\rho{\sf V})_U},U\bigr)$ is a fiber bundle with fiber ${\sf V}$ and group $\rho(H)$ ($\subset GL({\sf V})$), which is called the {\it restriction of the bundle $(G\times_\rho{\sf V},\Pr,G/H)$ to $U$}.\index{restriction of a bundle@restriction of a bundle\dotfill}\par
   
   Let $\Gamma^r(G\times_\rho{\sf V})_U$ be the real vector space of differentiable cross-sections of the bundle $(G\times_\rho{\sf V})_U=\bigl((G\times_\rho{\sf V})_U,\Pr|_{(G\times_\rho{\sf V})_U},U\bigr)$, namely
\begin{equation}\label{eq-2.5.2}
   \Gamma^r(G\times_\rho{\sf V})_U
   =\left\{\begin{array}{@{}c|c@{}}
   \gamma:U\to(G\times_\rho{\sf V})_U 
   & \begin{array}{@{}l@{}} \mbox{(1) $\gamma$ is of class $C^r$},\\ \mbox{(2) $\Pr\bigl(\gamma(x)\bigr)=x$ for all $x\in U$}\end{array}
   \end{array}\right\}.
\end{equation}
   This $\Gamma^r(G\times_\rho{\sf V})_U$ corresponds to the following real vector space: 
\begin{equation}\label{eq-2.5.3}
   \mathcal{V}^r(G\times_\rho{\sf V})_U
   :=\left\{\begin{array}{@{}c|c@{}}
   \xi:\pi^{-1}(U)\to{\sf V} 
   & \begin{array}{@{}l@{}} \mbox{(i) $\xi$ is of class $C^r$},\\ \mbox{(ii) $\xi(gh)=\rho(h)^{-1}\bigl(\xi(g)\bigr)$ for all $(g,h)\in \pi^{-1}(U)\times H$}\end{array}
   \end{array}\right\}.
\end{equation}

\begin{proposition}\label{prop-2.5.4}
   In the setting of {\rm \eqref{eq-2.5.1}}, {\rm \eqref{eq-2.5.2}} and {\rm \eqref{eq-2.5.3}}$;$ there exists a real linear isomorphism $F:\Gamma^r(G\times_\rho{\sf V})_U\to\mathcal{V}^r(G\times_\rho{\sf V})_U$, $\gamma\mapsto F(\gamma)$, such that $\gamma\bigl(\pi(g)\bigr)=\big[\bigl(g,(F(\gamma))(g)\bigr)\big]$ for all $(\gamma,g)\in\Gamma^r(G\times_\rho{\sf V})_U\times\pi^{-1}(U)$.
   Here $r\in\mathbb{N}\cup\{0,\infty,\omega\}$ and $U$ is a non-empty open subset of $G/H$.
\end{proposition}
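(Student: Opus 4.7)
The plan is to repeat the construction of Theorem \ref{thm-2.4.15} verbatim, with $G/H$ replaced by the open subset $U$ and $G$ replaced by the open subset $\pi^{-1}(U)\subset G$. The key observation is that if $A_U:=\{\alpha\in A : U_\alpha\cap U\neq\emptyset\}$, then $\{U_\alpha\cap U\}_{\alpha\in A_U}$ is an open covering of $U$ and $\{\pi^{-1}(U_\alpha\cap U)\}_{\alpha\in A_U}$ is an open covering of $\pi^{-1}(U)$; moreover the local trivialization $\phi_\alpha$ from Theorem \ref{thm-2.3.5}-(2) restricts to a real analytic diffeomorphism $\phi_\alpha:\Pr^{-1}(U_\alpha\cap U)\to(U_\alpha\cap U)\times{\sf V}$, and the corresponding real analytic mapping $\chi_\alpha:\Pr^{-1}(U_\alpha)\to{\sf V}$ of \eqref{eq-2.4.8} restricts to $\Pr^{-1}(U_\alpha\cap U)$.

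First, I would construct $F_1:\mathcal{V}^r(G\times_\rho{\sf V})_U\to\Gamma^r(G\times_\rho{\sf V})_U$ by setting $(F_1(\xi))(\pi(g)):=[(g,\xi(g))]$ for $g\in\pi^{-1}(U)$. Well-definedness follows from condition (ii) of \eqref{eq-2.5.3} and the equivalence relation \eqref{eq-2.1.1}, exactly as for \eqref{eq-2.4.6}. Condition (2) of \eqref{eq-2.5.2} is immediate from \eqref{eq-2.1.2}, and the $C^r$-regularity of $F_1(\xi)$ on each $U_\alpha\cap U$ is obtained by the same coordinate computation as in Lemma \ref{lem-2.4.7}-(1), with $\psi_\alpha^{-1}(X)$ ranging over $U_\alpha\cap U$; then $R$-linearity is checked as in Lemma \ref{lem-2.4.7}-(2).

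Next, I would construct $F_2:\Gamma^r(G\times_\rho{\sf V})_U\to\mathcal{V}^r(G\times_\rho{\sf V})_U$ by defining, for each $\alpha\in A_U$ and $\gamma\in\Gamma^r(G\times_\rho{\sf V})_U$, a $C^r$ mapping $\Xi_{\gamma,\alpha}:\pi^{-1}(U_\alpha\cap U)\to{\sf V}$ via
\[
   \Xi_{\gamma,\alpha}(g):=\rho(\zeta_\alpha(g))^{-1}\bigl(\chi_\alpha(\gamma(\pi(g)))\bigr),
\]
in parallel with \eqref{eq-2.4.9}. The same argument following \eqref{eq-2.4.10} shows that $\gamma(\pi(g))=[(g,\Xi_{\gamma,\alpha}(g))]$ on $\pi^{-1}(U_\alpha\cap U)$, so $\Xi_{\gamma,\alpha}$ and $\Xi_{\gamma,\beta}$ agree on the overlap $\pi^{-1}(U_\alpha\cap U_\beta\cap U)$; these glue to a $C^r$ mapping $\Xi_\gamma:\pi^{-1}(U)\to{\sf V}$ satisfying the equivariance condition (ii) of \eqref{eq-2.5.3} thanks to Lemma \ref{lem-1.3.2}-(1). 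Setting $F_2(\gamma):=\Xi_\gamma$ produces a real linear mapping.

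Finally, I would verify $F_1\circ F_2=\operatorname{id}$ on $\Gamma^r(G\times_\rho{\sf V})_U$ and $F_2\circ F_1=\operatorname{id}$ on $\mathcal{V}^r(G\times_\rho{\sf V})_U$ by the same direct computations as in the proof of Theorem \ref{thm-2.4.15}, localized to $\pi^{-1}(U_\alpha\cap U)$ for each $\alpha\in A_U$. The identity $\gamma(\pi(g))=[(g,(F(\gamma))(g))]$ is then built into the construction. There is no substantive obstacle here; the content is exactly Theorem \ref{thm-2.4.15} applied to the restricted bundle, and the only point requiring care is checking that the coordinate neighborhoods $U_\alpha\cap U$ still cover $U$ and that the mappings $\chi_\alpha$, $\zeta_\alpha$, $\sigma_\alpha$ retain the properties used in Section \ref{sec-2.4} after restriction, which they do because they were real analytic on all of $\pi^{-1}(U_\alpha)$ or $U_\alpha$ to begin with.
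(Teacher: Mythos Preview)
Your proposal is correct and takes exactly the same approach as the paper: the paper's proof consists solely of the sentence ``Refer to Section \ref{sec-2.4} for the proof of this proposition,'' and you have faithfully spelled out how the constructions of $F_1$, $F_2$ and the verification $F_1\circ F_2=\operatorname{id}$, $F_2\circ F_1=\operatorname{id}$ from that section restrict to the open set $U$.
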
 
\begin{proof}
   Refer to Section \ref{sec-2.4} for the proof of this proposition.
\end{proof}

\chapter[Homogeneous holomorphic vector bundles]{Homogeneous holomorphic vector bundles over complex homogeneous spaces}\label{ch-3}
   In this chapter we deal with homogeneous holomorphic vector bundles over complex homogeneous spaces.
   The setting of Chapter \ref{ch-3} is as follows:
\begin{itemize}
\item 
   $G$ is a complex Lie group which satisfies the second countability axiom, 
\item 
   $H$ is a closed complex Lie subgroup of $G$, 
\item
   $\pi$ is the projection of $G$ onto the left quotient space $G/H$,
\item
   $\mathcal{S}=\{(U_\alpha,\psi_\alpha)\}_{\alpha\in A}$ is the holomorphic structure on $G/H$ given in Theorem \ref{thm-1.2.1}, 
\item
   $\sigma_\alpha:U_\alpha\to G$ is the holomorphic mapping in Theorem \ref{thm-1.2.1} ($\alpha\in A$).
\end{itemize}
   The topology for $G/H$ is the quotient topology relative to $\pi:g\mapsto gH$, and the homogeneous space $G/H$ is a complex manifold having the atlas $\mathcal{S}$.

\section{Definition of homogeneous holomorphic vector bundle}\label{sec-3.1}
   Let ${\sf V}$ be a finite-dimensional complex vector space, and let $\rho:H\to GL({\sf V})$, $h\mapsto\rho(h)$, be a holomorphic homomorphism, where we fix a complex basis $\{{\sf e}_i\}_{i=1}^m$ of ${\sf V}$ and identify ${\sf V}$ with $\mathbb{C}^m$, and consider ${\sf V}$ and $GL({\sf V})$ as a complex manifold and a complex Lie group, respectively. 
\begin{definition}\label{def-3.1.1}
   In the setting above, the homogeneous vector bundle $G\times_\rho{\sf V}=(G\times_\rho{\sf V},\Pr,G/H)$ over $G/H$ associated with $\rho$ is said to be {\it holomorphic}\index{homogeneous holomorphic vector bundle@homogeneous holomorphic vector bundle\dotfill}. 
   cf.\ Definition \ref{def-2.1.3}.
\end{definition}

    In the next section we state results about homogeneous holomorphic vector bundles.

\section{Results about homogeneous holomorphic vector bundles}\label{sec-3.2}
   Let $G\times_\rho{\sf V}=(G\times_\rho{\sf V},\Pr,G/H)$ be a homogeneous holomorphic vector bundle over $G/H$ associated with $\rho:H\to GL({\sf V})$.
   Referring to Chapter \ref{ch-2} we are going to state results about this bundle.

\begin{theorem}\label{thm-3.2.1}
   There exists a holomorphic structure $\mathscr{S}=\{(\Pr^{-1}(U_\alpha),\varphi_\alpha)\}_{\alpha\in A}$ on the homogeneous holomorphic vector bundle $G\times_\rho{\sf V}$ so that 
\begin{enumerate}
\item[{\rm (1)}]
   $\varpi:G\times{\sf V}\to G\times_\rho{\sf V}$, $(g,{\sf v})\mapsto[(g,{\sf v})]$, is a surjective, open, holomorphic mapping,
\item[{\rm (2)}]
   $\nu:G\times(G\times_\rho{\sf V})\to G\times_\rho{\sf V}$, $(g_1,[(g_2,{\sf v})])\mapsto[(g_1g_2,{\sf v})]$, is a holomorphic mapping, 
\item[{\rm (4)}]
   $\Pr:G\times_\rho{\sf V}\to G/H$, $[(g,{\sf v})]\mapsto\pi(g)=gH$, is a surjective, open, holomorphic mapping,
\item[{\rm (5)}]
   for each $\alpha\in A$, the mapping $\phi_\alpha^{-1}:U_\alpha\times{\sf V}\to\Pr^{-1}(U_\alpha)$, $(x,{\sf v})\mapsto[(\sigma_\alpha(x),{\sf v})]$, is a biholomorphism$;$ besides, $\phi_\alpha\bigl([(g,{\sf v})]\bigr)=\bigl(\pi(g),\rho(\zeta_\alpha(g)){\sf v}\bigr)$ for all $[(g,{\sf v})]\in\Pr^{-1}(U_\alpha)$.
   cf.\ \eqref{eq-1.3.1}.
\end{enumerate} 
   Moreover, for each $x_0\in U_\alpha$ it follows that 
\begin{enumerate}
\item[{\rm (6)}]
   $\Pr\bigl(\phi_\alpha^{-1}(x_0,{\sf v})\bigr)=x_0$ for all ${\sf v}\in{\sf V}$, 
\item[{\rm (7)}] 
   the mapping ${\sf V}\ni{\sf v}\mapsto\phi_\alpha^{-1}(x_0,{\sf v})\in\Pr^{-1}(\{x_0\})$ is a complex linear isomorphism, where the complex vector space structure on $\Pr^{-1}(\{x_0\})$ is defined by a similar way to \eqref{eq-2.3.1}.
\end{enumerate}
   In addition, suppose that $U_\alpha\cap U_\beta\neq\emptyset$ $(\alpha,\beta\in A)$.
   Then, 
\begin{enumerate}
\item[{\rm (8)}]
   $g_{\alpha\beta}:U_\alpha\cap U_\beta\to H$, $y\mapsto(\sigma_\alpha(y))^{-1}\sigma_\beta(y)$, is a holomorphic mapping such that 
   \begin{enumerate}
   \item[{\rm (8.a)}]   
      $g_{\alpha\alpha}(x)=e$ for all $x\in U_\alpha$,
   \item[{\rm (8.b)}]  
   $g_{\alpha\beta}(z)g_{\beta\gamma}(z)g_{\gamma\alpha}(z)=e$ for all $z\in U_\alpha\cap U_\beta\cap U_\gamma$.
   \end{enumerate}
\item[{\rm (9)}] 
   $(\phi_\alpha\circ\phi_\beta^{-1})(y,{\sf v})=\bigl(y,\rho(g_{\alpha\beta}(y)){\sf v}\bigr)$ for all $(y,{\sf v})\in(U_\alpha\cap U_\beta)\times{\sf V}$.   
\end{enumerate}
\end{theorem}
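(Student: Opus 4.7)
The plan is to repeat verbatim the constructions and arguments of Sections \ref{sec-2.2} and \ref{sec-2.3}, replacing each occurrence of ``real analytic'' by ``holomorphic''. The inputs for those constructions were (i) the atlas $\mathcal{S}$ on $G/H$ together with the local sections $\sigma_\alpha:U_\alpha\to G$ (Theorem \ref{thm-1.1.2}), and (ii) the real analyticity of the maps $\zeta_\alpha:\pi^{-1}(U_\alpha)\to H$ defined in \eqref{eq-1.3.1} (Lemma \ref{lem-1.3.2}). In our present complex setting, Theorem \ref{thm-1.2.1} furnishes a holomorphic atlas $\mathcal{S}$ on $G/H$ together with holomorphic sections $\sigma_\alpha:U_\alpha\to G$; and since $H$ is a closed complex Lie subgroup of $G$, the very definition $\zeta_\alpha(g):=(\sigma_\alpha(\pi(g)))^{-1}g$ presents $\zeta_\alpha$ as a composition of holomorphic maps into $H$, giving the holomorphic analog of Lemma \ref{lem-1.3.2}. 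Thus all the inputs now exist in the holomorphic category.

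First I would define, for each $\alpha\in A$, the homeomorphism $\phi_\alpha:\Pr^{-1}(U_\alpha)\to U_\alpha\times{\sf V}$ by exactly the formula \eqref{eq-2.2.5} and set $\varphi_\alpha:=(\psi_\alpha\times\operatorname{id}_{\sf V})\circ\phi_\alpha$ as in Corollary \ref{cor-2.2.7}. The computations of Lemma \ref{lem-2.2.11} then give the transition formula
\[
  (\varphi_\alpha\circ\varphi_\beta^{-1})(X,{\sf v})
   =\left(\psi_\alpha(\psi_\beta^{-1}(X)),\rho\left((\sigma_\alpha(\psi_\beta^{-1}(X)))^{-1}\sigma_\beta(\psi_\beta^{-1}(X))\right){\sf v}\right),
\]
and since $\psi_\alpha,\psi_\beta^{-1},\sigma_\alpha,\sigma_\beta$ and $\rho$ are all holomorphic (and group inversion and multiplication in $G$ are holomorphic because $G$ is a complex Lie group), the right-hand side is holomorphic in $(X,{\sf v})$. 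This proves that $\mathscr{S}=\{(\Pr^{-1}(U_\alpha),\varphi_\alpha)\}_{\alpha\in A}$ is a holomorphic atlas on $G\times_\rho{\sf V}$; a fortiori, Theorem \ref{thm-2.2.12} with ``holomorphic'' in place of ``real analytic'' yields the holomorphic structure we sought, and (5) follows from the observation that $\varphi_\alpha\circ\phi_\alpha^{-1}\circ(\psi_\alpha\times\operatorname{id}_{\sf V})^{-1}$ is the identity on $\psi_\alpha(U_\alpha)\times{\sf V}$.

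Next I would verify (1)--(4) and (9) by rerunning the coordinate computations from Section \ref{sec-2.3}. Continuity and openness of $\varpi$, $\nu$, $\Pr$ are exactly Proposition \ref{prop-2.2.10}; for holomorphicity, Proposition \ref{prop-2.2.14} and Proposition \ref{prop-2.3.6} computed in coordinates
\[
  (\varphi_\alpha\circ\varpi)(g,{\sf v})=\bigl(\psi_\alpha(\pi(g)),\rho(\zeta_\alpha(g)){\sf v}\bigr),
  \qquad
  (\psi_\alpha\circ\Pr\circ\varphi_\alpha^{-1})(X,{\sf v})=X,
\]
together with the analogous coordinate expression for $\nu$, each being a composition of holomorphic maps in the complex setting, deliver (1), (2) and (4). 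Statement (9) is the coordinate identity of Theorem \ref{thm-2.3.5}-(5), and (8) is simply Proposition \ref{prop-2.3.4} together with the observation that $g_{\alpha\beta}$ takes values in $H$ and is holomorphic as a map into $G$, hence into the complex submanifold $H$.

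For (6) and (7), statement (6) is the direct computation $\Pr(\phi_\alpha^{-1}(x_0,{\sf v}))=\pi(\sigma_\alpha(x_0))=x_0$. For (7), one first equips $\Pr^{-1}(\{x_0\})$ with the complex vector space structure via the formulas \eqref{eq-2.3.1}, where the scalars now range over $\mathbb{C}$ and the representation $\rho$ is $\mathbb{C}$-linear; the argument that this structure is independent of the chart used goes through exactly as in Subsection \ref{subsec-2.3.1}, and the map ${\sf v}\mapsto[(\sigma_\alpha(x_0),{\sf v})]=\phi_\alpha^{-1}(x_0,{\sf v})$ is then a complex linear isomorphism by construction. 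The only point deserving a touch of care---and the nearest thing to an obstacle---is checking that the transition functions $g_{\alpha\beta}$ really are holomorphic \emph{into} $H$ (not merely into $G$); this is where the hypothesis that $H$ be a \emph{complex} Lie subgroup of $G$ is used, via the holomorphic version of Lemma \ref{lem-1.3.2} mentioned above.
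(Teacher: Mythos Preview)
Your proposal is correct and takes essentially the same approach as the paper: the paper's proof is simply a one-line reference to Propositions \ref{prop-2.2.14}, \ref{prop-2.3.6}, \ref{prop-2.3.4} and Theorem \ref{thm-2.3.5}, indicating that the holomorphic case follows by rerunning the real-analytic arguments with ``holomorphic'' substituted throughout. Your write-up is more detailed but follows the same line, and your remark that the complex-subgroup hypothesis on $H$ is what makes $\zeta_\alpha$ and $g_{\alpha\beta}$ holomorphic into $H$ is a useful clarification.
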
 
\begin{proof}
   ref.\ Proposition \ref{prop-2.2.14}, Proposition \ref{prop-2.3.6}, Theorem \ref{thm-2.3.5} and Proposition \ref{prop-2.3.4}.
\end{proof}

   Provide $G\times_\rho{\sf V}$ with the holomorphic structure $\mathscr{S}=\{(\Pr^{-1}(U_\alpha),\varphi_\alpha)\}_{\alpha\in A}$ in Theorem \ref{thm-3.2.1}, and define complex vector spaces $\Gamma(G\times_\rho{\sf V})$ and $\mathcal{V}(G\times_\rho{\sf V})$ by   
\begin{equation}\label{eq-3.2.2}
   \Gamma(G\times_\rho{\sf V})
   :=\left\{\begin{array}{@{}c|c@{}}
   \gamma:G/H\to G\times_\rho{\sf V} 
   & \begin{array}{@{}l@{}} \mbox{(1) $\gamma$ is holomorphic},\\ \mbox{(2) $\Pr\bigl(\gamma(x)\bigr)=x$ for all $x\in G/H$}\end{array}
   \end{array}\right\}
\end{equation}
and  
\begin{equation}\label{eq-3.2.3}
   \mathcal{V}(G\times_\rho{\sf V})
   :=\left\{\begin{array}{@{}c|c@{}}
   \xi:G\to{\sf V} 
   & \begin{array}{@{}l@{}} \mbox{(i) $\xi$ is holomorphic},\\ \mbox{(ii) $\xi(gh)=\rho(h)^{-1}\bigl(\xi(g)\bigr)$ for all $(g,h)\in G\times H$}\end{array}
   \end{array}\right\},
\end{equation}
respectively.
   This $\Gamma(G\times_\rho{\sf V})$ is the complex vector space of holomorphic cross-sections of the homogeneous holomorphic vector bundle $G\times_\rho{\sf V}$, and 

\begin{theorem}\label{thm-3.2.4}
   In the setting of {\rm \eqref{eq-3.2.2}} and {\rm \eqref{eq-3.2.3}}$;$ there exists a complex linear isomorphism $F:\Gamma(G\times_\rho{\sf V})\to\mathcal{V}(G\times_\rho{\sf V})$, $\gamma\mapsto F(\gamma)$, such that $\gamma\bigl(\pi(g)\bigr)=\big[\bigl(g,(F(\gamma))(g)\bigr)\big]$ for all $(\gamma,g)\in\Gamma(G\times_\rho{\sf V})\times G$.
\end{theorem}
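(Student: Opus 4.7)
The plan is to adapt the proof of Theorem \ref{thm-2.4.15} almost verbatim, substituting ``holomorphic'' for ``of class $C^r$''. Since Theorem \ref{thm-3.2.1} furnishes everything needed in the holomorphic category---the holomorphic atlas $\mathscr{S}=\{(\Pr^{-1}(U_\alpha),\varphi_\alpha)\}_{\alpha\in A}$, holomorphic local sections $\sigma_\alpha:U_\alpha\to G$, holomorphic trivializations $\phi_\alpha:\Pr^{-1}(U_\alpha)\to U_\alpha\times{\sf V}$, and a holomorphic projection $\Pr$---the same two-sided inverse construction will produce the desired complex linear isomorphism.

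I would first construct $F_1:\mathcal{V}(G\times_\rho{\sf V})\to\Gamma(G\times_\rho{\sf V})$ by the formula $F_1(\xi)\bigl(\pi(g)\bigr):=\Gamma_\xi\bigl(\pi(g)\bigr):=[(g,\xi(g))]$ for $\pi(g)\in G/H$. Well-definedness is immediate from the equivariance condition \eqref{eq-3.2.3}-(ii) together with \eqref{eq-2.1.1}, and $\Pr\circ\Gamma_\xi=\operatorname{id}_{G/H}$ by \eqref{eq-2.1.2}. To see that $\Gamma_\xi:G/H\to G\times_\rho{\sf V}$ is holomorphic I would work locally: for each $\alpha\in A$, $\Gamma_\xi(U_\alpha)\subset\Pr^{-1}(U_\alpha)$, and the computation carried out inside the proof of Lemma \ref{lem-2.4.7}-(1) shows
\[
   (\varphi_\alpha\circ\Gamma_\xi\circ\psi_\alpha^{-1})(X)=\bigl(X,\xi(\sigma_\alpha(\psi_\alpha^{-1}(X)))\bigr),
\]
which is holomorphic in $X$ because $\sigma_\alpha$, $\psi_\alpha^{-1}$, and $\xi$ all are. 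Complex linearity of $F_1$ is a direct check using \eqref{eq-2.3.1} and \eqref{eq-2.4.5} adapted to the complex setting.

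Next I would construct $F_2:\Gamma(G\times_\rho{\sf V})\to\mathcal{V}(G\times_\rho{\sf V})$. For each $\alpha\in A$ define a holomorphic map $\chi_\alpha:\Pr^{-1}(U_\alpha)\to{\sf V}$ by $\chi_\alpha([(g_1,{\sf v})]):=\rho(\zeta_\alpha(g_1)){\sf v}$ (which is holomorphic because $\phi_\alpha$ in Theorem \ref{thm-3.2.1}-(5) is a biholomorphism), and then set $\Xi_{\gamma,\alpha}(g_1):=\rho(\zeta_\alpha(g_1))^{-1}\bigl(\chi_\alpha(\gamma(\pi(g_1)))\bigr)$ on $\pi^{-1}(U_\alpha)$. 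The same argument as in Subsection \ref{subsec-2.4.2} shows the local pieces $\Xi_{\gamma,\alpha}$ agree on overlaps (this is where $\phi_\alpha\circ\phi_\beta^{-1}(y,{\sf v})=(y,\rho(g_{\alpha\beta}(y)){\sf v})$ from Theorem \ref{thm-3.2.1}-(9) plus $\zeta_\alpha(gh)=\zeta_\alpha(g)h$ enter), hence glue to a global holomorphic map $\Xi_\gamma:G\to{\sf V}$ satisfying the equivariance $\Xi_\gamma(gh)=\rho(h)^{-1}\Xi_\gamma(g)$ and $\gamma(\pi(g))=[(g,\Xi_\gamma(g))]$. Setting $F_2(\gamma):=\Xi_\gamma$ gives a complex linear map into $\mathcal{V}(G\times_\rho{\sf V})$.

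The identities $F_1\circ F_2=\operatorname{id}$ and $F_2\circ F_1=\operatorname{id}$ follow by precisely the same one-line calculations as in the proof of Theorem \ref{thm-2.4.15}, using only the defining relations \eqref{eq-2.1.1}, \eqref{eq-2.1.2}, the definition of $\chi_\alpha$, and Lemma \ref{lem-1.3.2}-(2). The main obstacle---such as it is---is simply checking that each ingredient inherited from Chapter \ref{ch-2} has the holomorphic upgrade guaranteed by Theorem \ref{thm-3.2.1}; in particular that $\zeta_\alpha:\pi^{-1}(U_\alpha)\to H$ is holomorphic (which follows from its definition \eqref{eq-1.3.1} as $g\mapsto(\sigma_\alpha(\pi(g)))^{-1}g$ together with the holomorphicity of $\sigma_\alpha$, $\pi$, inversion, and multiplication in the complex Lie group $G$) and that $\chi_\alpha$ is holomorphic. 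Once these are in hand, no new ideas are required beyond those in Section \ref{sec-2.4}.
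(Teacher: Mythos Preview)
Your proposal is correct and takes essentially the same approach as the paper: the paper's proof of Theorem \ref{thm-3.2.4} is simply ``ref.\ Theorem \ref{thm-2.4.15}'', and you have spelled out precisely how that referral works by rerunning the $F_1$/$F_2$ construction with ``holomorphic'' in place of ``class $C^r$'' and invoking Theorem \ref{thm-3.2.1} for the needed holomorphic data.
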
 
\begin{proof}
   ref.\ Theorem \ref{thm-2.4.15}.
\end{proof}

   For a non-empty open subset $U\subset G/H$, the restriction $(G\times_\rho{\sf V})_U=\bigl((G\times_\rho{\sf V})_U,\Pr|_{(G\times_\rho{\sf V})_U},U\bigr)$ of the bundle $G\times_\rho{\sf V}$ to $U$ is a fiber bundle with ${\sf V}$ and $\rho(H)$ ($\subset GL({\sf V})$).
   Here we induce holomorphic structures $\mathcal{S}_U$ on $U$ and $\mathscr{S}_U$ on $(G\times_\rho{\sf V})_U$ from $G/H$ and $G\times_\rho{\sf V}$, respectively. 
   Let $\Gamma(G\times_\rho{\sf V})_U$ be the complex vector space of holomorphic cross-sections of the bundle $(G\times_\rho{\sf V})_U$, that is,  
\begin{equation}\label{eq-3.2.5}
   \Gamma(G\times_\rho{\sf V})_U
   :=\left\{\begin{array}{@{}c|c@{}}
   \gamma:U\to(G\times_\rho{\sf V})_U 
   & \begin{array}{@{}l@{}} \mbox{(1) $\gamma$ is holomorphic},\\ \mbox{(2) $\Pr\bigl(\gamma(x)\bigr)=x$ for all $x\in U$}\end{array}
   \end{array}\right\}.
\end{equation}
   This $\Gamma(G\times_\rho{\sf V})_U$ corresponds to the complex vector space 
\begin{equation}\label{eq-3.2.6}
   \mathcal{V}(G\times_\rho{\sf V})_U
   :=\left\{\begin{array}{@{}c|c@{}}
   \xi:\pi^{-1}(U)\to{\sf V} 
   & \begin{array}{@{}l@{}} \mbox{(i) $\xi$ is holomorphic},\\ \mbox{(ii) $\xi(gh)=\rho(h)^{-1}\bigl(\xi(g)\bigr)$ for all $(g,h)\in \pi^{-1}(U)\times H$}\end{array}
   \end{array}\right\}
\end{equation}
as follows (ref.\ Proposition \ref{prop-2.5.4}):

\begin{proposition}\label{prop-3.2.7}
   In the setting of {\rm \eqref{eq-3.2.5}} and {\rm \eqref{eq-3.2.6}}$;$ there exists a complex linear isomorphism $F:\Gamma(G\times_\rho{\sf V})_U\to\mathcal{V}(G\times_\rho{\sf V})_U$, $\gamma\mapsto F(\gamma)$, such that $\gamma\bigl(\pi(g)\bigr)=\big[\bigl(g,(F(\gamma))(g)\bigr)\big]$ for all $(\gamma,g)\in\Gamma(G\times_\rho{\sf V})_U\times\pi^{-1}(U)$.
   Here $U$ is a non-empty open subset of $G/H$.
\end{proposition}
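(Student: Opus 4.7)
The plan is to transcribe the argument of Theorem \ref{thm-2.4.15} (and its open-subset variant Proposition \ref{prop-2.5.4}) into the holomorphic category. Everything I need is already holomorphic: by Theorem \ref{thm-3.2.1}, the trivializations $\phi_\alpha$ are biholomorphisms, $\sigma_\alpha:U_\alpha\to G$ and $\zeta_\alpha:\pi^{-1}(U_\alpha)\to H$ are holomorphic, $\rho:H\to GL(\mathsf{V})$ is holomorphic by hypothesis, and $\pi:G\to G/H$ is holomorphic by Theorem \ref{thm-1.2.1}. So the whole task is to verify that the real-analytic bookkeeping of Section \ref{sec-2.4}, carried out on the open subsets $U$ and $\pi^{-1}(U)$, produces holomorphic rather than merely $C^\omega$ mappings.

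First I would construct $F_1:\mathcal{V}(G\times_\rho\mathsf{V})_U\to\Gamma(G\times_\rho\mathsf{V})_U$ by the analogue of \eqref{eq-2.4.6}: set $\bigl(F_1(\xi)\bigr)\bigl(\pi(g)\bigr):=[(g,\xi(g))]$ for $g\in\pi^{-1}(U)$. Well-definedness uses \eqref{eq-3.2.3}(ii) together with \eqref{eq-2.1.1}, exactly as before. To see that $F_1(\xi)$ is holomorphic on $U$, I restrict to $U\cap U_\alpha$ and compute in the chart $\varphi_\alpha$ of $\Pr^{-1}(U_\alpha)$: the local representation reads $X\mapsto\bigl(X,\xi(\sigma_\alpha(\psi_\alpha^{-1}(X)))\bigr)$, which is a composition of holomorphic maps (cf.\ the proof of Lemma \ref{lem-2.4.7}).

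Next I would construct $F_2:\Gamma(G\times_\rho\mathsf{V})_U\to\mathcal{V}(G\times_\rho\mathsf{V})_U$ as in Subsection \ref{subsec-2.4.2}. For each $\alpha\in A$, the map $\chi_\alpha:\Pr^{-1}(U_\alpha)\to\mathsf{V}$, $[(g,\mathsf{v})]\mapsto\rho(\zeta_\alpha(g))\mathsf{v}$, is holomorphic because it factors through the biholomorphism $\phi_\alpha$ followed by the projection to $\mathsf{V}$. Defining $\Xi_{\gamma,\alpha}(g):=\rho(\zeta_\alpha(g))^{-1}\bigl(\chi_\alpha(\gamma(\pi(g)))\bigr)$ on $\pi^{-1}(U\cap U_\alpha)$ gives a holomorphic map by composition, the equivariance \eqref{eq-2.4.10} follows from Lemma \ref{lem-1.3.2}-(1), and the same calculation as in \eqref{eq-2.4.11} shows that $\Xi_{\gamma,\alpha}$ and $\Xi_{\gamma,\beta}$ agree on overlaps, so they patch to a holomorphic $\Xi_\gamma\in\mathcal{V}(G\times_\rho\mathsf{V})_U$. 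Set $F_2(\gamma):=\Xi_\gamma$.

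Finally I would check $F_1\circ F_2=\operatorname{id}$ on $\Gamma(G\times_\rho\mathsf{V})_U$ and $F_2\circ F_1=\operatorname{id}$ on $\mathcal{V}(G\times_\rho\mathsf{V})_U$ by the pointwise calculations in the proof of Theorem \ref{thm-2.4.15}, and complex linearity of $F_1$, $F_2$ by the same bookkeeping as in Lemma \ref{lem-2.4.7}-(2), using \eqref{eq-2.3.1} for the fiber vector-space structure (now complex, by Theorem \ref{thm-3.2.1}-(7)). The required identity $\gamma(\pi(g))=[(g,(F(\gamma))(g))]$ is built into the definition of $F_2$ via the analogue of \eqref{eq-2.4.13}. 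The only conceptual point to verify — which I expect to be the mildest of obstacles rather than a real one — is that ``holomorphic'' survives every step, and this is immediate because each constituent ($\pi$, $\sigma_\alpha$, $\zeta_\alpha$, $\rho$, $\phi_\alpha$, $\varpi$) is holomorphic in the present setting; there is no analytic content beyond that of Section \ref{sec-2.4}.
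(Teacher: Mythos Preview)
Your proposal is correct and follows exactly the paper's approach: the paper's proof of Proposition~\ref{prop-3.2.7} simply refers to Proposition~\ref{prop-2.5.4}, whose proof in turn refers to Section~\ref{sec-2.4}, so the intended argument is precisely the holomorphic transcription of the $F_1$/$F_2$ construction that you have spelled out. Your only addition is making explicit why each constituent map remains holomorphic, which is the right thing to check and is indeed immediate from Theorems~\ref{thm-1.2.1} and~\ref{thm-3.2.1}.
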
 

\chapter{Topological vector spaces of mappings}\label{ch-4}
   The main purpose of Chapter \ref{ch-4} is to study topological vector spaces. 
   This chapter consists of four sections. 
   In Section \ref{sec-4.1} we first define an important metric (which is called the Fr\'{e}chet metric) on the vector space of continuous mappings of a certain topological space into a finite-dimensional vector space, next confirm that the metric topology for the vector space coincides with the topology of uniform convergence on compact sets, and finally conclude that the vector space is a Fr\'{e}chet space.
   In Sections \ref{sec-4.2} and \ref{sec-4.3} we apply the arguments in Section \ref{sec-4.1} to the real vector space of continuous cross-sections of a homogeneous vector bundle and the complex vector space of holomorphic cross-sections of a homogeneous holomorphic vector bundle, respectively.
   In the last section we give a proposition about complete metric spaces.

\section{A topological vector space of continuous mappings}\label{sec-4.1}
   Let $\mathbb{K}=\mathbb{R}$ or $\mathbb{C}$.
   Let $X$ be a locally compact Hausdorff space which satisfies the second countability axiom, let ${\sf V}$ be a finite-dimensional vector space over $\mathbb{K}$, and let
\begin{equation}\label{eq-4.1.1}
   \mathcal{C}(X,{\sf V}):=\{\xi:X\to{\sf V} \,|\, \mbox{$\xi$ is continuous}\},
\end{equation} 
where we fix a basis $\{{\sf e}_i\}_{i=1}^m$ of ${\sf V}$, identify ${\sf V}$ with $\mathbb{K}^m$, and consider ${\sf V}$ as a topological space. 
   For $\xi_1,\xi_2,\xi\in\mathcal{C}(X,{\sf V})$, $\alpha\in\mathbb{K}$, one defines the addition $\xi_1+\xi_2$ and the scalar multiplication $\alpha\xi$ by $(\xi_1+\xi_2)(x):=\xi_1(x)+\xi_2(x)$ and $(\alpha\xi)(x):=\alpha\xi(x)$ for $x\in X$, respectively. 
      In this setting we will first endow the vector space $\mathcal{C}(X,{\sf V})$ with a metric topology so that $\mathcal{C}(X,{\sf V})$ is a Hausdorff topological vector space, and afterwards show that the topological vector space $\mathcal{C}(X,{\sf V})$ is a Fr\'{e}chet space. 

\subsection{A metric topology, the Fr\'{e}chet metric}\label{subsec-4.1.1} 
   We want to set a metric $d$ on $\mathcal{C}(X,{\sf V})$.    
   For a non-empty compact subset $E\subset X$, we first define a function $d_E:\mathcal{C}(X,{\sf V})\times\mathcal{C}(X,{\sf V})\to\mathbb{R}$ by 
\begin{equation}\label{eq-4.1.2}
   d_E(\xi_1,\xi_2):=\sup\big\{\|\xi_1(y)-\xi_2(y)\|:y\in E\big\}
\end{equation} 
for $\xi_1,\xi_2\in\mathcal{C}(X,{\sf V})$. 
   Here $\|\cdot\|$ is an arbitrary norm on the vector space ${\sf V}$.\footnote{Remark.\ Two norms on ${\sf V}$ are always equivalent to each other because of $\dim_\mathbb{K}{\sf V}=m<\infty$.
   e.g.\ \begin{CJK}{UTF8}{min}補題 1.38 in 黒田\end{CJK} \cite[p.22]{Ku}.}
   Since $X$ satisfies the second countability axiom and is a locally compact Hausdorff space, there exist non-empty open subsets $O_n\subset X$ such that 
\begin{enumerate}
\item
   $X=\bigcup_{n=1}^\infty O_n$ (countable union),
\item 
   the closure $\overline{O_n}$ in $X$ is compact for each $n\in\mathbb{N}$.   
\end{enumerate} 
   Then, we put $E_n:=\overline{O_n}$ for $n\in\mathbb{N}$. 
   Taking \eqref{eq-4.1.2} into consideration we set  
\begin{equation}\label{eq-4.1.3}  
   d(\xi_1,\xi_2):=\sum_{n=1}^\infty\dfrac{1}{2^n}\dfrac{d_{E_n}(\xi_1,\xi_2)}{1+d_{E_n}(\xi_1,\xi_2)}
\end{equation} 
for $\xi_1,\xi_2\in\mathcal{C}(X,{\sf V})$. 

\begin{lemma}\label{lem-4.1.4}
   The $d$ in {\rm \eqref{eq-4.1.3}} is a metric on $\mathcal{C}(X,{\sf V})$ such that 
\begin{enumerate}
\item[{\rm (1)}]
   $d(\xi_1,\xi_2)\leq 1$ for all $\xi_1,\xi_2\in\mathcal{C}(X,{\sf V})$,
\item[{\rm (2)}]
   $d(\xi_1,\xi_2)=d(\xi_1+\xi_3,\xi_2+\xi_3)$ for all $\xi_1,\xi_2,\xi_3\in\mathcal{C}(X,{\sf V})$,
\item[{\rm (3)}]
   $d(\alpha\xi_1,\alpha\xi_2)\leq d(\xi_1,\xi_2)$ for all $\alpha\in\mathbb{K}$ with $|\alpha|\leq1$ and all $\xi_1,\xi_2\in\mathcal{C}(X,{\sf V})$.
\end{enumerate}
\end{lemma}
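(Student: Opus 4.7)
The plan is to verify the four metric axioms for $d$ and then check the three enumerated properties by reducing each to the corresponding statement for the semimetrics $d_{E_n}$ from \eqref{eq-4.1.2}. The key analytic tool is the auxiliary function $\phi(t) := t/(1+t) = 1 - 1/(1+t)$ on $[0,\infty)$, which is strictly increasing, bounded above by $1$, vanishes at $0$, and is \emph{subadditive}: $\phi(a+b) \leq \phi(a) + \phi(b)$ for all $a,b \geq 0$. Subadditivity follows from the identity $\phi(a+b) - \phi(a) = b/\bigl((1+a)(1+a+b)\bigr)$ together with the elementary inequality $(1+a)(1+a+b) \geq 1+b$ when $a,b \geq 0$, which gives $\phi(a+b) - \phi(a) \leq b/(1+b) = \phi(b)$.

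First I would check that the series defining $d(\xi_1,\xi_2)$ converges and is bounded by $1$: each summand is at most $1/2^n$ (because $\phi < 1$), so $d(\xi_1,\xi_2) \leq \sum_{n=1}^\infty 1/2^n = 1$, which simultaneously establishes property (1). Non-negativity and symmetry are immediate from the corresponding properties of each $d_{E_n}$. For the identity of indiscernibles, $d(\xi_1,\xi_2) = 0$ forces every $d_{E_n}(\xi_1,\xi_2) = 0$ (all summands are non-negative), so $\xi_1 = \xi_2$ on each $E_n = \overline{O_n}$; since $X = \bigcup_n O_n \subset \bigcup_n E_n$, this yields $\xi_1 = \xi_2$ on $X$.

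For the triangle inequality, each $d_{E_n}$ obeys the ordinary triangle inequality (as a supremum of norm-differences on $E_n$), so applying $\phi$ (which is increasing) and then invoking its subadditivity yields
\[
\phi\bigl(d_{E_n}(\xi_1,\xi_3)\bigr) \leq \phi\bigl(d_{E_n}(\xi_1,\xi_2) + d_{E_n}(\xi_2,\xi_3)\bigr) \leq \phi\bigl(d_{E_n}(\xi_1,\xi_2)\bigr) + \phi\bigl(d_{E_n}(\xi_2,\xi_3)\bigr)
\]
for every $n$; multiplying by $1/2^n$ and summing gives $d(\xi_1,\xi_3) \leq d(\xi_1,\xi_2) + d(\xi_2,\xi_3)$.

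Finally, for (2) translation invariance holds termwise because $d_{E_n}(\xi_1+\xi_3,\xi_2+\xi_3) = d_{E_n}(\xi_1,\xi_2)$ (the $\xi_3$ contributions cancel inside the norm); and for (3), when $|\alpha| \leq 1$ one has $d_{E_n}(\alpha\xi_1, \alpha\xi_2) = |\alpha|\,d_{E_n}(\xi_1,\xi_2) \leq d_{E_n}(\xi_1,\xi_2)$, so the monotonicity of $\phi$ delivers the inequality summand-by-summand and it passes to $d$. The main (indeed, the only non-formal) obstacle is the subadditivity of $\phi$ that underpins the triangle inequality; once this one-line estimate is secured, the rest is bookkeeping.
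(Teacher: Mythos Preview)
Your proof is correct and follows essentially the same approach as the paper: both reduce each metric axiom and each of (1)--(3) to the corresponding termwise statement for the semimetrics $d_{E_n}$, with the triangle inequality handled via monotonicity and subadditivity of $t\mapsto t/(1+t)$. The only cosmetic difference is that the paper verifies the key inequality $\frac{a+b}{1+a+b}\leq\frac{a}{1+a}+\frac{b}{1+b}$ by direct splitting rather than via your identity for $\phi(a+b)-\phi(a)$, but the content is identical.
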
 
\begin{proof}
   For any $\xi_1,\xi_2\in\mathcal{C}(X,{\sf V})$, we deduce $0\leq d_E(\xi_1,\xi_2), d(\xi_1,\xi_2)$ by \eqref{eq-4.1.2} and \eqref{eq-4.1.3}. 
   Furthermore,
\begin{equation}\label{eq-1}\tag*{\textcircled{1}}
   d(\xi_1,\xi_2)
   =\sum_{n=1}^\infty\dfrac{1}{2^n}\dfrac{d_{E_n}(\xi_1,\xi_2)}{1+d_{E_n}(\xi_1,\xi_2)}
   \leq\sum_{n=1}^\infty\dfrac{1}{2^n}
   =1<\infty.
\end{equation}
   Hence $d$ is a non-negative function on $\mathcal{C}(X,{\sf V})\times\mathcal{C}(X,{\sf V})$.
   It is immediate from \eqref{eq-4.1.2} and \eqref{eq-4.1.3} that 
\begin{enumerate}
\item
   $d(\xi_1,\xi_2)=d(\xi_1+\xi_3,\xi_2+\xi_3)$ for all $\xi_1,\xi_2,\xi_3\in\mathcal{C}(X,{\sf V})$,
\item
   $d(\alpha\xi_1,\alpha\xi_2)\leq d(\xi_1,\xi_2)$ for all $\alpha\in\mathbb{K}$ with $|\alpha|\leq1$ and all $\xi_1,\xi_2\in\mathcal{C}(X,{\sf V})$,   
\item
   $d(\xi_1,\xi_2)=d(\xi_2,\xi_1)$ for all $\xi_1,\xi_2\in\mathcal{C}(X,{\sf V})$,
\item
   $d(\xi,\xi)=0$ for all $\xi\in\mathcal{C}(X,{\sf V})$.
\end{enumerate}  
   Now, for $\xi_1',\xi_2'\in\mathcal{C}(X,{\sf V})$ we suppose that $d(\xi_1',\xi_2')=0$. 
   Then for each $k\in\mathbb{N}$, one has 
\[
   0
   \leq\dfrac{1}{2^k}\dfrac{d_{E_k}(\xi_1',\xi_2')}{1+d_{E_k}(\xi_1',\xi_2')}
   \leq\sum_{n=1}^\infty\dfrac{1}{2^n}\dfrac{d_{E_n}(\xi_1',\xi_2')}{1+d_{E_n}(\xi_1',\xi_2')}
   \stackrel{\eqref{eq-4.1.3}}{=}d(\xi_1',\xi_2')=0.
\]
   This implies that $d_{E_k}(\xi_1',\xi_2')=0$, so that $\xi_1'=\xi_2'$ on $E_k$ for all $k\in\mathbb{N}$. 
   Therefore $\xi_1'=\xi_2'$ on the whole $X$ in terms of $X=\bigcup_{n=1}^\infty O_n$ and $E_n=\overline{O_n}$. 
   Consequently, the rest of proof is to confirm the triangle inequality 
\begin{equation}\label{eq-2}\tag*{\textcircled{2}}
   \mbox{$d(\xi_1,\xi_3)\leq d(\xi_1,\xi_2)+d(\xi_2,\xi_3)$ for all $\xi_1,\xi_2,\xi_3\in\mathcal{C}(X,{\sf V})$}.
\end{equation}
   For any $\xi_1,\xi_2,\xi_3\in\mathcal{C}(X,{\sf V})$, it follows from \eqref{eq-4.1.2} that $d_{E_n}(\xi_1,\xi_3)\leq d_{E_n}(\xi_1,\xi_2)+d_{E_n}(\xi_2,\xi_3)$ for all $n\in\mathbb{N}$. 
   Therefore it follows from $0\leq d_{E_n}(\xi_i,\xi_j)$ that
\[
\begin{split}
   \dfrac{d_{E_n}(\xi_1,\xi_3)}{1+d_{E_n}(\xi_1,\xi_3)}
  &\leq\dfrac{d_{E_n}(\xi_1,\xi_2)+d_{E_n}(\xi_2,\xi_3)}{1+d_{E_n}(\xi_1,\xi_2)+d_{E_n}(\xi_2,\xi_3)}\\
  &=\dfrac{d_{E_n}(\xi_1,\xi_2)}{1+d_{E_n}(\xi_1,\xi_2)+d_{E_n}(\xi_2,\xi_3)}+\dfrac{d_{E_n}(\xi_2,\xi_3)}{1+d_{E_n}(\xi_1,\xi_2)+d_{E_n}(\xi_2,\xi_3)}
   \leq\dfrac{d_{E_n}(\xi_1,\xi_2)}{1+d_{E_n}(\xi_1,\xi_2)}+\dfrac{d_{E_n}(\xi_2,\xi_3)}{1+d_{E_n}(\xi_2,\xi_3)}
\end{split}
\]
for all $n\in\mathbb{N}$.
   This and \ref{eq-1} lead to \ref{eq-2}. 
   Indeed, 
\[
\begin{split}
   d(\xi_1,\xi_3)
   \stackrel{\eqref{eq-4.1.3}}{=}\sum_{n=1}^\infty\dfrac{1}{2^n}\dfrac{d_{E_n}(\xi_1,\xi_3)}{1+d_{E_n}(\xi_1,\xi_3)}
  &\leq\sum_{n=1}^\infty\Big(\dfrac{1}{2^n}\dfrac{d_{E_n}(\xi_1,\xi_2)}{1+d_{E_n}(\xi_1,\xi_2)}+\dfrac{1}{2^n}\dfrac{d_{E_n}(\xi_2,\xi_3)}{1+d_{E_n}(\xi_2,\xi_3)}\Big)\\
  &=\sum_{n=1}^\infty\dfrac{1}{2^n}\dfrac{d_{E_n}(\xi_1,\xi_2)}{1+d_{E_n}(\xi_1,\xi_2)}+\sum_{n=1}^\infty\dfrac{1}{2^n}\dfrac{d_{E_n}(\xi_2,\xi_3)}{1+d_{E_n}(\xi_2,\xi_3)}
   \stackrel{\eqref{eq-4.1.3}}{=}d(\xi_1,\xi_2)+d(\xi_2,\xi_3).
\end{split}
\]
\end{proof}

\begin{definition}\label{def-4.1.5}
   The metric $d$ in \eqref{eq-4.1.3} is called the {\it Fr\'{e}chet metric} on $\mathcal{C}(X,{\sf V})$.\index{Fr\'{e}chet metric@Fr\'{e}chet metric\dotfill}
\end{definition}

\begin{lemma}\label{lem-4.1.6}
   The metric space $(\mathcal{C}(X,{\sf V}),d)$ is complete.
   Here $d$ is the Fr\'{e}chet metric in \eqref{eq-4.1.3}.
\end{lemma}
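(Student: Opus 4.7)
The plan is to start with a $d$-Cauchy sequence $\{\xi_k\}\subset\mathcal{C}(X,{\sf V})$, produce a pointwise limit $\xi$ as uniform limits on each compact $E_n$, verify continuity, and finally show $\xi_k\to\xi$ in the metric $d$.

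First I would deduce \emph{uniform Cauchyness on each $E_n$}. The elementary observation is that for each fixed $n\in\mathbb{N}$,
\[
   \frac{1}{2^n}\,\frac{d_{E_n}(\xi_k,\xi_l)}{1+d_{E_n}(\xi_k,\xi_l)}
   \leq d(\xi_k,\xi_l),
\]
so if $d(\xi_k,\xi_l)\to 0$, the fraction $d_{E_n}(\xi_k,\xi_l)/(1+d_{E_n}(\xi_k,\xi_l))$ tends to $0$. Because $t\mapsto t/(1+t)$ is a homeomorphism of $[0,\infty)$ onto $[0,1)$ with continuous inverse near $0$, we conclude $d_{E_n}(\xi_k,\xi_l)\to 0$ as $k,l\to\infty$. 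Thus $\{\xi_k|_{E_n}\}$ is Cauchy in the sup-norm on $E_n$, and since $({\sf V},\|\cdot\|)$ is complete (it is finite-dimensional over $\mathbb{K}$), standard arguments give a continuous function $\xi^{(n)}:E_n\to{\sf V}$ with $\xi_k|_{E_n}\to\xi^{(n)}$ uniformly.

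Next I would paste these limits into a single continuous function $\xi:X\to{\sf V}$. On $E_n\cap E_m$ pointwise uniqueness of limits gives $\xi^{(n)}=\xi^{(m)}$, so defining $\xi(x):=\xi^{(n)}(x)$ whenever $x\in E_n$ is unambiguous, and $X=\bigcup_n O_n\subset\bigcup_n E_n$ assures $\xi$ is defined on all of $X$. Continuity of $\xi$ follows because each point $x\in X$ lies in some open $O_n\subset E_n$, and $\xi|_{O_n}=\xi^{(n)}|_{O_n}$ is continuous.

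Finally I would verify $d(\xi_k,\xi)\to 0$. Given $\varepsilon>0$, choose $N$ so that $\sum_{n>N}2^{-n}<\varepsilon/2$. On the finite collection $E_1,\dots,E_N$, uniform convergence $\xi_k\to\xi$ lets me pick $K$ so large that $d_{E_n}(\xi_k,\xi)<\varepsilon/2$ for all $n\leq N$ and all $k\geq K$; since $t/(1+t)\leq t$, the first $N$ terms of the defining series for $d(\xi_k,\xi)$ are bounded by $(\varepsilon/2)\sum_{n=1}^N 2^{-n}<\varepsilon/2$, and the tail is bounded by $\sum_{n>N}2^{-n}<\varepsilon/2$ using $d_{E_n}/(1+d_{E_n})\leq 1$. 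Adding gives $d(\xi_k,\xi)<\varepsilon$ for $k\geq K$.

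The main (mild) obstacle is just bookkeeping around the function $t\mapsto t/(1+t)$: one must use its monotonicity to pass between $d$-smallness and sup-norm smallness on each $E_n$, and use the convergent geometric series $\sum 2^{-n}$ to trade control on all infinitely many $E_n$ for control on finitely many. No deep ingredient beyond completeness of ${\sf V}$ and the standard fact that uniform limits of continuous functions are continuous is needed.
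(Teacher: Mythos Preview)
Your proof is correct and follows essentially the same approach as the paper: deduce uniform Cauchyness on each $E_n$ from the single-term inequality $\tfrac{1}{2^n}\tfrac{d_{E_n}}{1+d_{E_n}}\le d$, obtain a continuous limit via completeness of ${\sf V}$, and then prove $d$-convergence by splitting the series into a finite head and a geometric tail. The only cosmetic difference is that you package the continuity step via ``uniform limit of continuous functions on $E_n$ is continuous, and $O_n\subset E_n$ is open,'' whereas the paper defines the pointwise limit first and verifies continuity by an explicit $\varepsilon/3$ argument.
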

\begin{proof}
   Let $\{\xi_n\}_{n=1}^\infty$ be an arbitrary Cauchy sequence in $(\mathcal{C}(X,{\sf V}),d)$.\par
 
   Our first aim is to prove that for any $x\in X$, $\{\xi_n(x)\}_{n=1}^\infty$ is a Cauchy sequence in $({\sf V},\|\cdot\|)$. 
   Let us take any $\epsilon>0$. 
   By $x\in X=\bigcup_{n=1}^\infty O_n$ there exists a $k\in\mathbb{N}$ such that $x\in O_k\subset E_k$. 
   Since $\epsilon/(2^k(1+\epsilon))>0$ and $\{\xi_n\}_{n=1}^\infty$ is a Cauchy sequence in $(\mathcal{C}(X,{\sf V}),d)$, there exists an $M\in\mathbb{N}$ such that $n,m\geq M$ implies 
\[
   d(\xi_n,\xi_m)<\dfrac{1}{2^k}\dfrac{\epsilon}{1+\epsilon}.
\] 
   Then, it follows from \eqref{eq-4.1.3} that 
\[
   \dfrac{1}{2^k}\dfrac{d_{E_k}(\xi_n,\xi_m)}{1+d_{E_k}(\xi_n,\xi_m)}\leq  d(\xi_n,\xi_m)<\dfrac{1}{2^k}\dfrac{\epsilon}{1+\epsilon},
\]
so that $d_{E_k}(\xi_n,\xi_m)<\epsilon$; and we deduce $\|\xi_n(x)-\xi_m(x)\|\leq d_{E_k}(\xi_n,\xi_m)<\epsilon$ by virtue of \eqref{eq-4.1.2} and $x\in E_k$. 
   Hence $\{\xi_n(x)\}_{n=1}^\infty$ is a Cauchy sequence in $({\sf V},\|\cdot\|)$ for each $x\in X$.\par
   
   Since the normed vector space $({\sf V},\|\cdot\|)$ is complete, one can get a mapping $\xi:X\to{\sf V}$ by setting 
\[
   \mbox{$\displaystyle{\xi(x):=\lim_{n\to\infty}\xi_n(x)}$ for $x\in X$}.
\]
   Our second aim is to conclude that this $\xi:X\to{\sf V}$ is continuous.
   For any $\epsilon>0$ and $x_0\in X=\bigcup_{n=1}^\infty O_n$, there exists a $k\in\mathbb{N}$ such that $x_0\in O_k\subset E_k$. 
   Moreover, there exists an $N\in\mathbb{N}$ such that $n,m\geq N$ implies
\[
   d(\xi_n,\xi_m)<\dfrac{1}{2^k}\dfrac{(\epsilon/3)}{1+(\epsilon/3)}.
\] 
   Then, it follows that $d_{E_k}(\xi_n,\xi_m)<\epsilon/3$; and $\|\xi_n(y)-\xi_m(y)\|\leq d_{E_k}(\xi_n,\xi_m)<\epsilon/3$ for all $y\in E_k$, $n,m\geq N$.
   For this reason we see that $\|\xi_n(y)-\xi_N(y)\|<\epsilon/3$ for all $y\in E_k$, $n\geq N$. 
   This assures that 
\begin{equation}\label{eq-1}\tag*{\textcircled{1}} 
   \mbox{$\displaystyle{\|\xi(y)-\xi_N(y)\|=\big\|\lim_{n\to\infty}\xi_n(y)-\xi_N(y)\big\|\leq\epsilon/3}$ for all $y\in E_k$}.
\end{equation}
   Besides, since $\xi_N:X\to{\sf V}$ is continuous at $x_0$, there exists an open neighborhood $U_k$ of $x_0\in O_k$ such that $z\in U_k$ implies 
\begin{equation}\label{eq-2}\tag*{\textcircled{2}}
   \|\xi_N(x_0)-\xi_N(z)\|<\epsilon/3. 
\end{equation}
   The $U_k$ is an open neighborhood of $x_0\in X$, and $z\in U_k$ implies 
\[
   \|\xi(x_0)-\xi(z)\|
   \leq\|\xi(x_0)-\xi_N(x_0)\|+\|\xi_N(x_0)-\xi_N(z)\|+\|\xi_N(z)-\xi(z)\|
   <\epsilon
\]
because of \ref{eq-1}, \ref{eq-2} and $x_0\in U_k\subset E_k$.
   Consequently $\xi:X\to{\sf V}$ is continuous at $x_0$. 
   At this stage we can assert $\xi\in\mathcal{C}(X,{\sf V})$.\par

   Our third aim is to demonstrate $\displaystyle{\lim_{m\to\infty}d(\xi,\xi_m)=0}$.
   For any $\epsilon>0$, one can choose an $\ell\in\mathbb{N}$ such that 
\begin{equation}\label{eq-3}\tag*{\textcircled{3}}
   \dfrac{1}{2^\ell}<\dfrac{\epsilon}{2}.
\end{equation}
   For each $1\leq k\leq\ell$, one has $(\epsilon/2)\big/\bigl(2^k(1+(\epsilon/2))\bigr)>0$ and there exists an $N_k\in\mathbb{N}$ such that $n,m\geq N_k$ implies 
\[
   d(\xi_n,\xi_m)<\dfrac{1}{2^k}\dfrac{(\epsilon/2)}{1+(\epsilon/2)}
\]
because $\{\xi_n\}_{n=1}^\infty$ is a Cauchy sequence in $(\mathcal{C}(X,{\sf V}),d)$. 
   Then, it follows from \eqref{eq-4.1.3} that 
\[
   \dfrac{1}{2^k}\dfrac{d_{E_k}(\xi_n,\xi_m)}{1+d_{E_k}(\xi_n,\xi_m)}\leq  d(\xi_n,\xi_m)<\dfrac{1}{2^k}\dfrac{(\epsilon/2)}{1+(\epsilon/2)},  
\]
so that $d_{E_k}(\xi_n,\xi_m)<\epsilon/2$. 
   This and \eqref{eq-4.1.2} enable us to verify that $\|\xi_n(y)-\xi_m(y)\|\leq d_{E_k}(\xi_n,\xi_m)<\epsilon/2$ for all $y\in E_k$, $n,m\geq N_k$. 
   Therefore $\displaystyle{\|\xi(y)-\xi_m(y)\|=\big\|\lim_{n\to\infty}\xi_n(y)-\xi_m(y)\big\|\leq\epsilon/2}$ for all $y\in E_k$, $m\geq N_k$; and thus \eqref{eq-4.1.2} tells us that 
\begin{equation}\label{eq-4}\tag*{\textcircled{4}}
   \mbox{$d_{E_k}(\xi,\xi_m)\leq \epsilon/2$ for all $m\geq N_k$}.
\end{equation}
   Now, let $N:=\max\{N_j:1\leq j\leq\ell\}$. 
   Then, this $N$ belongs to $\mathbb{N}$ and we deduce  
\[
   \mbox{$d_{E_j}(\xi,\xi_m)\leq \epsilon/2$ for all $1\leq j\leq\ell$ and $m\geq N$}   
\]
by \ref{eq-4}. 
   Accordingly, $m\geq N$ implies 
\[
\begin{split}
   d(\xi,\xi_m)
  &\stackrel{\eqref{eq-4.1.3}}{=}\sum_{j=1}^\ell\dfrac{1}{2^j}\dfrac{d_{E_j}(\xi,\xi_m)}{1+d_{E_j}(\xi,\xi_m)}+\sum_{i=\ell+1}^\infty\dfrac{1}{2^i}\dfrac{d_{E_i}(\xi,\xi_m)}{1+d_{E_i}(\xi,\xi_m)}
   \leq\sum_{j=1}^\ell\dfrac{1}{2^j}d_{E_j}(\xi,\xi_m)+\sum_{i=\ell+1}^\infty\dfrac{1}{2^i}\\
  &\leq\sum_{j=1}^\ell\dfrac{1}{2^j}\dfrac{\epsilon}{2}+\sum_{i=\ell+1}^\infty\dfrac{1}{2^i}
  =\Big(1-\dfrac{1}{2^\ell}\Big)\dfrac{\epsilon}{2}+\dfrac{1}{2^\ell}
  <\epsilon \quad\mbox{($\because$ \ref{eq-3})}.
\end{split} 
\] 
   Hence $\displaystyle{\lim_{m\to\infty}d(\xi,\xi_m)=0}$ follows.  
\end{proof}

\begin{lemma}\label{lem-4.1.7}
   With respect to the Fr\'{e}chet metric $d$ in \eqref{eq-4.1.3}, 
\begin{enumerate}
\item[{\rm (1)}] 
   the addition $\mathcal{C}(X,{\sf V})\times\mathcal{C}(X,{\sf V})\ni(\xi_1,\xi_2)\mapsto\xi_1+\xi_2\in\mathcal{C}(X,{\sf V})$ is continuous,
\item[{\rm (2)}]
   the scalar multiplication $\mathbb{K}\times\mathcal{C}(X,{\sf V})\ni(\alpha,\xi)\mapsto\alpha\xi\in\mathcal{C}(X,{\sf V})$ is continuous.
\end{enumerate} 
   Therefore $\mathcal{C}(X,{\sf V})=(\mathcal{C}(X,{\sf V}),d)$ is a Hausdorff topological vector space over $\mathbb{K}$. 
\end{lemma}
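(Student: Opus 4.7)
The plan is to use heavily the three properties established in Lemma \ref{lem-4.1.4}, especially the translation invariance (2) and contraction property (3) under small scalars. I would handle (1) first, since it falls out immediately, and then tackle (2) via a split into two pieces.

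For the continuity of addition, I would argue it is uniformly continuous. By the triangle inequality for $d$ and the translation invariance of Lemma \ref{lem-4.1.4}-(2),
\[
   d(\xi_1+\xi_2,\eta_1+\eta_2)
   \leq d(\xi_1+\xi_2,\eta_1+\xi_2)+d(\eta_1+\xi_2,\eta_1+\eta_2)
   = d(\xi_1,\eta_1)+d(\xi_2,\eta_2).
\]
Hence given $\epsilon>0$, taking $\delta=\epsilon/2$ in the product metric on $\mathcal{C}(X,{\sf V})\times\mathcal{C}(X,{\sf V})$ works.

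For the continuity of scalar multiplication at $(\alpha_0,\xi_0)$, I would split as
\[
   d(\alpha\xi,\alpha_0\xi_0)
   \leq d(\alpha\xi,\alpha\xi_0)+d(\alpha\xi_0,\alpha_0\xi_0).
\]
For the first summand, I would restrict attention to $|\alpha|\leq K$ with $K:=|\alpha_0|+1$ and note that $d_{E_n}(\alpha\xi,\alpha\xi_0)=|\alpha|\,d_{E_n}(\xi,\xi_0)$. The key elementary inequality
\[
   \frac{|\alpha|\,t}{1+|\alpha|\,t}\leq K\cdot\frac{t}{1+t}\qquad(t\geq 0,\ |\alpha|\leq K,\ K\geq 1),
\]
which rearranges to $|\alpha|-K\leq |\alpha|t(K-1)$, summed over $n$ with weights $2^{-n}$ gives $d(\alpha\xi,\alpha\xi_0)\leq K\,d(\xi,\xi_0)$ (one may also dispatch the case $|\alpha|\leq 1$ directly by Lemma \ref{lem-4.1.4}-(3)). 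For the second summand I would use Lemma \ref{lem-4.1.4}-(2) to rewrite $d(\alpha\xi_0,\alpha_0\xi_0)=d\bigl((\alpha-\alpha_0)\xi_0,0\bigr)$ where $0$ denotes the zero function, and then observe that, because each $E_n$ is compact and $\xi_0$ is continuous, $M_n:=\sup_{y\in E_n}\|\xi_0(y)\|<\infty$, so
\[
   d\bigl((\alpha-\alpha_0)\xi_0,0\bigr)
   =\sum_{n=1}^\infty \frac{1}{2^n}\,\frac{|\alpha-\alpha_0|\,M_n}{1+|\alpha-\alpha_0|\,M_n}.
\]

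The mildly technical step is showing this series tends to $0$ as $\alpha\to\alpha_0$; this is a standard tail-truncation: given $\epsilon>0$, choose $N$ with $\sum_{n>N}2^{-n}<\epsilon/2$ (the partial tail is $\leq\epsilon/2$ uniformly in $\alpha$ thanks to Lemma \ref{lem-4.1.4}-(1)'s boundedness), and then force $|\alpha-\alpha_0|$ small enough that each of the finitely many terms $n\leq N$ is dominated by $|\alpha-\alpha_0|M_n<\epsilon/(2N)$. Combining, $d(\alpha\xi,\alpha_0\xi_0)\to 0$ as $(\alpha,\xi)\to(\alpha_0,\xi_0)$ in $\mathbb{K}\times\mathcal{C}(X,{\sf V})$. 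The Hausdorff property of the induced topology is automatic because $d$ is a metric, so together with (1) and (2) we conclude $\mathcal{C}(X,{\sf V})$ is a Hausdorff topological vector space over $\mathbb{K}$. I expect no real obstacle beyond bookkeeping the tail-truncation; the delicate point is only that the $|\alpha|$-factor in the first summand is controlled uniformly by restricting $\alpha$ to a bounded neighborhood of $\alpha_0$.
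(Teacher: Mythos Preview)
Your proof is correct. Part (1) is identical to the paper's argument. For part (2) you and the paper both use the same overall strategy---split via the triangle inequality and handle the leading seminorms directly while truncating the tail---but the decompositions differ: the paper writes $d(\alpha\xi,\beta\eta)\leq d(\alpha\xi,\beta\xi)+d(\beta\xi,\beta\eta)$ (varying the scalar first with the \emph{variable} vector $\xi$, then the vector), whereas you write $d(\alpha\xi,\alpha\xi_0)+d(\alpha\xi_0,\alpha_0\xi_0)$ (varying the vector first with the \emph{variable} scalar $\alpha$, then the scalar). Your choice forces you to prove the clean Lipschitz-type bound $d(\alpha\xi,\alpha\xi_0)\leq K\,d(\xi,\xi_0)$ for $|\alpha|\leq K$, which is a nice self-contained estimate; the paper instead must control $d_{E_j}(\xi,0)$ for the variable $\xi$ via $d_{E_j}(\xi,\eta)+d_{E_j}(\eta,0)$, which couples the two smallness conditions. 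Your second term is slightly simpler to handle since $\xi_0$ is fixed and the $M_n$ are absolute constants, while the paper's second term has the fixed scalar $\beta$ so the bound $d_{E_j}(\beta\xi,\beta\eta)=|\beta|\,d_{E_j}(\xi,\eta)$ is immediate. Both routes are standard and of comparable length.
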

\begin{proof}
   (1). 
   Take any $\eta_1,\eta_2\in\mathcal{C}(X,{\sf V})$ and $\epsilon>0$. 
   If $\xi_1,\xi_2\in\mathcal{C}(X,{\sf V})$ and $d(\xi_1,\eta_1), d(\xi_2,\eta_2)<\epsilon/2$, then the triangle inequality and Lemma \ref{lem-4.1.4}-(2) assure  
\[
   d(\xi_1+\xi_2,\eta_1+\eta_2)
   \leq d(\xi_1+\xi_2,\eta_1+\xi_2)+d(\eta_1+\xi_2,\eta_1+\eta_2)
   =d(\xi_1,\eta_1)+d(\xi_2,\eta_2)<\epsilon.
\]
   Hence the addition of vectors is a continuous mapping.\par

   (2). 
   Fix any $\beta\in\mathbb{K}$, $\eta\in\mathcal{C}(X,{\sf V})$ and $\epsilon>0$. 
   Since $\epsilon>0$ there exists an $N\in\mathbb{N}$ such that
\begin{equation}\label{eq-1}\tag*{\textcircled{1}}
   \dfrac{1}{2^N}<\dfrac{\epsilon}{4}.
\end{equation}
   By use of $\beta$, $\epsilon$ and the $N$, we define a positive real number $p$ as follows:
\begin{equation}\label{eq-2}\tag*{\textcircled{2}} 
   p:=\dfrac{\epsilon}{4N(1+|\beta|)}.
\end{equation}
   Now, let us suppose that $\alpha\in\mathbb{K}$ and $\xi\in\mathcal{C}(X,{\sf V})$ satisfy 
\begin{enumerate}
\item[(s1)]
   $|\alpha-\beta|<\dfrac{\epsilon}{4N\bigl(p+\max\{d_{E_j}(\eta,0):1\leq j\leq N\}\bigr)}$ and 
\item[(s2)]
   $d(\xi,\eta)<\dfrac{1}{2^N}\dfrac{p}{1+p}$,   
\end{enumerate} 
respectively.
   We want to get 
\begin{equation}\label{eq-a}\tag{a}
   d(\alpha\xi,\beta\eta)<\epsilon.
\end{equation}
   It follows from \eqref{eq-4.1.3} and (s2) that for any $1\leq j\leq N$,
\[
   \dfrac{1}{2^j}\dfrac{d_{E_j}(\xi,\eta)}{1+d_{E_j}(\xi,\eta)}
   \leq d(\xi,\eta)
   <\dfrac{1}{2^N}\dfrac{p}{1+p}
   \leq\dfrac{1}{2^j}\dfrac{p}{1+p},
\]
so that   
\begin{equation}\label{eq-3}\tag*{\textcircled{3}} 
   \mbox{$d_{E_j}(\xi,\eta)<p$ for all $1\leq j\leq N$}.
\end{equation}
   On the one hand; we obtain
\allowdisplaybreaks{
\begin{align*}
   d(\alpha\xi,\beta\xi)
  &\stackrel{\eqref{eq-4.1.3}}{=}\sum_{j=1}^N\dfrac{1}{2^j}\dfrac{d_{E_j}(\alpha\xi,\beta\xi)}{1+d_{E_j}(\alpha\xi,\beta\xi)}+\sum_{k=N+1}^\infty\dfrac{1}{2^k}\dfrac{d_{E_k}(\alpha\xi,\beta\xi)}{1+d_{E_k}(\alpha\xi,\beta\xi)}
   \leq\sum_{j=1}^Nd_{E_j}(\alpha\xi,\beta\xi)+\sum_{k=N+1}^\infty\dfrac{1}{2^k}\\
  &=\Big(\sum_{j=1}^Nd_{E_j}(\alpha\xi,\beta\xi)\Big)+\dfrac{1}{2^N}
   \stackrel{\eqref{eq-4.1.2}}{=}\Big(\sum_{j=1}^N|\alpha-\beta|d_{E_j}(\xi,0)\Big)+\dfrac{1}{2^N}
   \leq\Big(\sum_{j=1}^N|\alpha-\beta|\bigl(d_{E_j}(\xi,\eta)+d_{E_j}(\eta,0)\bigr)\Big)+\dfrac{1}{2^N}\\
  &\leq|\alpha-\beta|N\bigl(p+\max\{d_{E_j}(\eta,0):1\leq j\leq N\}\bigr)+\dfrac{1}{2^N} \quad\mbox{($\because$ \ref{eq-3})}\\
  &<\dfrac{\epsilon}{2} \quad\mbox{($\because$ (s1), \ref{eq-1})}.
\end{align*}}On the other hand; 
\[
\begin{split}
   d(\beta\xi,\beta\eta)
  &\stackrel{\eqref{eq-4.1.3}}{=}\sum_{j=1}^N\dfrac{1}{2^j}\dfrac{d_{E_j}(\beta\xi,\beta\eta)}{1+d_{E_j}(\beta\xi,\beta\eta)}+\sum_{k=N+1}^\infty\dfrac{1}{2^k}\dfrac{d_{E_k}(\beta\xi,\beta\eta)}{1+d_{E_k}(\beta\xi,\beta\eta)}
   \leq\Big(\sum_{j=1}^Nd_{E_j}(\beta\xi,\beta\eta)\Big)+\dfrac{1}{2^N}\\
  &\stackrel{\eqref{eq-4.1.2}}{=}\Big(\sum_{j=1}^N|\beta|d_{E_j}(\xi,\eta)\Big)+\dfrac{1}{2^N}
  \leq|\beta|Np+\dfrac{1}{2^N}\quad\mbox{($\because$ \ref{eq-3})}\\
  &\stackrel{\ref{eq-2}}{=}\dfrac{|\beta|\epsilon}{4(1+|\beta|)}+\dfrac{1}{2^N}
  <\dfrac{\epsilon}{2} \quad\mbox{($\because$ \ref{eq-1})}.
\end{split}
\] 
   These yield $d(\alpha\xi,\beta\xi)+d(\beta\xi,\beta\eta)<\epsilon$. 
   This, combined with the triangle inequality, enables us to conclude \eqref{eq-a}.
   So, the scalar multiplication $\mathbb{K}\times\mathcal{C}(X,{\sf V})\ni(\alpha,\xi)\mapsto\alpha\xi\in\mathcal{C}(X,{\sf V})$ is continuous.
\end{proof}

   We here give a supplementation about Hausdorff topological vector spaces.
\begin{proposition}\label{prop-4.1.8}
   Let $\mathcal{X}$ be a Hausdorff topological vector space over $\mathbb{K}$,\footnote{This proposition can hold even if $\mathcal{X}$ is a topological vector space satisfying the first separation axiom.} and let $\mathcal{Y}$ be a real or complex vector subspace of $\mathcal{X}$ according as $\mathbb{K}=\mathbb{R}$ or $\mathbb{K}=\mathbb{C}$.
   Suppose that $\dim_\mathbb{K}\mathcal{Y}<\infty$. 
   Then, 
\begin{enumerate}
\item[{\rm (1)}]
   the topological vector space $\mathcal{Y}$ is isomorphic to $\mathbb{K}^k$, where $k=\dim_\mathbb{K}\mathcal{Y}$, 
\item[{\rm (2)}]
   $\mathcal{Y}$ is a closed subset of $\mathcal{X}$.
\end{enumerate}   
\end{proposition}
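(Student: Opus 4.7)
The plan is to fix a basis $\{y_1,\dots,y_k\}$ of $\mathcal{Y}$ and consider the linear bijection $T\colon\mathbb{K}^k\to\mathcal{Y}$, $(\alpha_1,\dots,\alpha_k)\mapsto\sum_{i=1}^k\alpha_iy_i$. That $T$ is continuous is immediate from the continuity of the vector space operations in $\mathcal{X}$ (iterated use of scalar multiplication and addition). The whole issue for (1) is the continuity of $T^{-1}$, i.e.\ showing $T$ is open at $0$.

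For this I would use the compactness of the unit sphere $S:=\{\alpha\in\mathbb{K}^k:\|\alpha\|=1\}$ and the Hausdorff property of $\mathcal{X}$. Since $T$ is continuous, $T(S)$ is compact in $\mathcal{X}$, hence closed, and $0\notin T(S)$ because $T$ is injective. Therefore $\mathcal{X}\setminus T(S)$ is an open neighborhood of $0$, and by the standard fact that every neighborhood of $0$ in a topological vector space contains a balanced open neighborhood (proved from the continuity of scalar multiplication at $(0,0)\in\mathbb{K}\times\mathcal{X}$), I can choose a balanced open $W\ni 0$ with $W\cap T(S)=\emptyset$. The key claim is then $T^{-1}(W\cap\mathcal{Y})\subset\{\alpha:\|\alpha\|<1\}$: if $\alpha\in T^{-1}(W\cap\mathcal{Y})$ with $\|\alpha\|\ge 1$, then $\beta:=\alpha/\|\alpha\|\in S$ and $T(\beta)=(1/\|\alpha\|)T(\alpha)\in W$ by balancedness, contradicting $W\cap T(S)=\emptyset$. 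Scaling $W$ by arbitrary $\epsilon>0$ then yields $T^{-1}(\epsilon W\cap\mathcal{Y})\subset\{\alpha:\|\alpha\|<\epsilon\}$, so $T^{-1}$ is continuous at $0$, and by linearity everywhere. I expect the setup of the balanced neighborhood argument and the use of the Hausdorff property to be the main technical obstacle, since it is the only place where the hypotheses really do work.

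For (2), I would deduce closedness from (1) by a one-dimensional enlargement trick. Suppose, toward a contradiction, that there exists $x\in\overline{\mathcal{Y}}^{\mathcal{X}}\setminus\mathcal{Y}$. Form $\mathcal{Y}':=\mathcal{Y}+\mathbb{K}\,x$; since $x\notin\mathcal{Y}$ this is a $(k+1)$-dimensional subspace of $\mathcal{X}$. Applying (1) to $\mathcal{Y}'$ yields a TVS-isomorphism $T'\colon\mathbb{K}^{k+1}\to\mathcal{Y}'$ sending $\mathbb{K}^k\times\{0\}$ onto $\mathcal{Y}$, so $\mathcal{Y}$ is closed in $\mathcal{Y}'$ with respect to the subspace topology inherited from $\mathcal{X}$. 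But then $\overline{\mathcal{Y}}^{\mathcal{X}}\cap\mathcal{Y}'=\overline{\mathcal{Y}}^{\mathcal{Y}'}=\mathcal{Y}$, while $x$ lies in the left-hand side and not in $\mathcal{Y}$, a contradiction. Hence $\mathcal{Y}=\overline{\mathcal{Y}}^{\mathcal{X}}$, proving (2).
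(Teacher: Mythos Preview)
Your proof of (1) is correct and follows the same compactness-of-the-sphere idea as the paper: fix a basis, note $T$ is continuous, push the unit sphere $S$ forward to the compact (hence closed) set $T(S)\not\ni 0$, and trap a balanced neighborhood $W$ of $0$ away from $T(S)$ to force $T^{-1}(W\cap\mathcal{Y})$ into the open unit ball. The paper additionally invokes regularity of a Hausdorff topological vector space to separate $0$ from $T(S)$ by two open sets, but as you implicitly observe this is unnecessary: the complement of $T(S)$ is already an open neighborhood of $0$, and one only needs a balanced neighborhood inside it.

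Your argument for (2) is correct but takes a different route from the paper. The paper argues directly: given $x\in\overline{\mathcal{Y}}^{\mathcal{X}}$, it absorbs $x$ into a dilate $\nu O$ of an open set $O$ with $O\cap\mathcal{Y}=U$ (the balanced neighborhood from (1)), and then chases inclusions to land $x$ inside $f(\overline{B_{\nu\epsilon}})$, which is compact and hence closed in $\mathcal{X}$, so $x\in\mathcal{Y}$. Your one-dimensional enlargement trick is cleaner: assuming $x\in\overline{\mathcal{Y}}^{\mathcal{X}}\setminus\mathcal{Y}$, you apply (1) to the $(k+1)$-dimensional space $\mathcal{Y}'=\mathcal{Y}\oplus\mathbb{K}x$, observe that $\mathcal{Y}$ corresponds to a hyperplane in $\mathbb{K}^{k+1}$ and is therefore closed in $\mathcal{Y}'$, and derive a contradiction from $\overline{\mathcal{Y}}^{\mathcal{X}}\cap\mathcal{Y}'=\overline{\mathcal{Y}}^{\mathcal{Y}'}=\mathcal{Y}$. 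This avoids the explicit compactness bookkeeping and makes the logical dependence on (1) transparent; the paper's approach, on the other hand, is self-contained at the level of the single space $\mathcal{Y}$ and does not require re-invoking (1) for a larger subspace.
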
 
\begin{proof}
   (1).
   Fix a basis $\{e_i\}_{i=1}^k$ of $\mathcal{Y}$, and define a linear isomorphism $f:\mathbb{K}^k\to\mathcal{Y}$ by 
\[
   \mbox{$f(\alpha_1,\alpha_2,\dots,\alpha_k):=\sum_{i=1}^k\alpha_ie_i$ for $(\alpha_1,\alpha_2,\dots,\alpha_k)\in\mathbb{K}^k$}.
\] 
   Let us show that 
\begin{equation}\label{eq-1}\tag*{\textcircled{1}}
   \mbox{the linear isomorphism $f:\mathbb{K}^k\to\mathcal{Y}$ is homeomorphic}.
\end{equation}
   It is natural that $f:\mathbb{K}^k\to\mathcal{Y}$, $(\alpha_1,\alpha_2,\dots,\alpha_k)\mapsto\sum_{i=1}^k\alpha_ie_i$, is continuous because $f$ is the composition of the following three continuous mappings $f_1$, $f_2$ and $f_3$:
\begin{center}
\begin{tabular}{l}
   $f_1:\mathbb{K}^k\to(\mathbb{K}\times\mathcal{Y})\times(\mathbb{K}\times\mathcal{Y})\times\cdots\times(\mathbb{K}\times\mathcal{Y})$, $(\alpha_1,\alpha_2,\dots,\alpha_k)\mapsto(\alpha_1,e_1,\alpha_2,e_2,\dots,\alpha_k,e_k)$,\\
   $f_2:(\mathbb{K}\times\mathcal{Y})\times(\mathbb{K}\times\mathcal{Y})\times\cdots\times(\mathbb{K}\times\mathcal{Y})\to\mathcal{Y}\times\mathcal{Y}\times\cdots\times\mathcal{Y}$, $(\alpha_1,y_1,\alpha_2,y_2,\dots,\alpha_k,y_k)\mapsto(\alpha_1y_1,\alpha_2y_2,\dots,\alpha_ky_k)$,\\
   $f_3:\mathcal{Y}\times\mathcal{Y}\times\cdots\times\mathcal{Y}\to\mathcal{Y}$, $(y_1,y_2,\dots,y_k)\mapsto\sum_{i=1}^ky_i$.
\end{tabular}   
\end{center}
   We need to confirm that the inverse $f^{-1}:\mathcal{Y}\to\mathbb{K}^k$ is also continuous. 
   It suffices to confirm that $f^{-1}$ is continuous at the zero $0\in\mathcal{Y}$. 
   Let $\|\bm{x}\|:=\sqrt{|x_1|^2+|x_2|^2+\cdots+|x_k|^2}$ for $\bm{x}=(x_1,x_2,\dots,x_k)\in\mathbb{K}^k$. 
   We will show that for any $\epsilon>0$ there exists an open neighborhood $U$ of $0\in\mathcal{Y}$ satisfying 
\begin{equation}\label{eq-2}\tag*{\textcircled{2}}
   f^{-1}(U)\subset B_\epsilon,
\end{equation}
where $B_\epsilon:=\{\bm{x}\in\mathbb{K}^k : \|\bm{x}\|<\epsilon\}$. 
   It turns out that the sphere $S_\epsilon:=\{\bm{y}\in\mathbb{K}^k : \|\bm{y}\|=\epsilon\}$ is a compact subset of $\mathbb{K}^k$ and $\bm{0}\not\in S_\epsilon$. 
   Therefore, since $f:\mathbb{K}^k\to\mathcal{Y}$ is injective continuous and $\mathcal{Y}$ is a Hausdorff space, we conclude that $f(S_\epsilon)\subset\mathcal{Y}$ is closed and $0=f({\bf 0})\not\in f(S_\epsilon)$.
   Accordingly there exist two open subsets $U_1,U_2\subset\mathcal{Y}$ such that
\begin{equation}\label{eq-a}\tag{a}
\begin{array}{lll}
   0\in U_1, & f(S_\epsilon)\subset U_2, & U_1\cap U_2=\emptyset
\end{array}   
\end{equation}
because a Hausdorff topological vector space is a regular space.\footnote{More generally, a topological group satisfying the first separation axiom is a regular space. 
   e.g.\ \begin{CJK}{UTF8}{min}定理 1.8 in 村上\end{CJK} \cite[p.28]{Mu}.} 
   Since $U_1$ is an open neighborhood of $0\in\mathcal{Y}$ and the scalar multiplication $\mathbb{K}\times\mathcal{Y}\ni(\alpha,y)\mapsto\alpha y\in\mathcal{Y}$ is continuous at $(0,0)$, there exist a positive real number $p$ and an open neighborhood $V$ of $0\in\mathcal{Y}$ such that $\alpha v\in U_1$ for all $|\alpha|<p$ and $v\in V$.
   Setting $U:=\bigcup_{0<|\beta|<p}\beta V$, one deduces the following: 
\begin{equation}\label{eq-b}\tag{b}
   \mbox{(i) $U$ is an open subset of $\mathcal{Y}$, (ii) $0\in U\subset U_1$, (iii) $tu\in U$ for all $|t|\leq 1$ and $u\in U$},
\end{equation}
where we remark that each $\beta V$ is an open neighborhood of $0\in\mathcal{Y}$.
   Now, we are in a position to prove \ref{eq-2}.
   Let us use proof by contradiction.
   Suppose that there exists a $\bm{z}\in f^{-1}(U)$ which does not belong to $B_\epsilon$.
   Then, it follows from $\bm{z}\not\in B_\epsilon$ that $\|\bm{z}\|\geq\epsilon$. 
   Therefore the intermediate-value theorem enables us to obtain a real number $t_0$ such that $0\leq t_0\leq 1$ and $\|t_0\bm{z}\|=\epsilon$ (because the mapping $[0,1]\ni t\mapsto t\|\bm{z}\|=\|t\bm{z}\|\in\mathbb{R}$ is continuous).
   This $\|t_0\bm{z}\|=\epsilon$ implies $t_0\bm{z}\in S_\epsilon$, and so \eqref{eq-a} yields
\[
   f(t_0\bm{z})\in U_2.
\]
   However, from $f(\bm{z})\in U$, $0\leq t_0\leq 1$ and \eqref{eq-b} we deduce $f(t_0\bm{z})=t_0f(\bm{z})\in U\subset U_1$. 
   Hence $f(t_0\bm{z})\in U_1\cap U_2$, which contradicts \eqref{eq-a}.
   For this reason \ref{eq-2} holds, and $f^{-1}:\mathcal{Y}\to\mathbb{K}^k$ is continuous at $0$. 
   This assures \ref{eq-1}.\par

   (2).   
   Taking the above $U$ and $f$ into account, we are going to prove that $\mathcal{Y}\subset\mathcal{X}$ is closed from now on.
   Let $x$ be an arbitrary element of $\overline{\mathcal{Y}}{}^\mathcal{X}$ (the closure of $\mathcal{Y}$ in $\mathcal{X}$).
   By \eqref{eq-b}-(i) there exists an open subset $O\subset\mathcal{X}$ such that 
\[
   U=(O\cap\mathcal{Y}).
\] 
   Since $O$ is an open neighborhood of $0\in\mathcal{X}$ and the mapping $\mathbb{R}\ni t\mapsto tx\in\mathcal{X}$ is continuous at $0$, there exists a $\nu>0$ such that $(1/\nu)x\in O$.
   Then, one has 
\[
\begin{split}
   x\in(\nu O\cap\overline{\mathcal{Y}}{}^\mathcal{X})
   &\subset\overline{\nu O\cap\mathcal{Y}}{}^\mathcal{X} \quad \mbox{($\because$ $\nu O$ is open in $\mathcal{X}$)}\\
   &=\overline{\nu(O\cap\mathcal{Y})}{}^\mathcal{X}
    =\overline{\nu U}{}^\mathcal{X}
    \subset\overline{\nu f(B_\epsilon)}{}^\mathcal{X} \quad \mbox{($\because$ \ref{eq-2})}\\
   &=\overline{f(\nu B_\epsilon)}{}^\mathcal{X} 
    =\overline{f(B_{\nu\epsilon})}{}^\mathcal{X}
    \subset\overline{f(\overline{B_{\nu\epsilon}}{}^{\mathbb{K}^k})}{}^\mathcal{X} 
    =f(\overline{B_{\nu\epsilon}}{}^{\mathbb{K}^k})
    \subset f(\mathbb{K}^k)=\mathcal{Y},
\end{split}   
\] 
where $\overline{f(\overline{B_{\nu\epsilon}}{}^{\mathbb{K}^k})}{}^\mathcal{X}=f(\overline{B_{\nu\epsilon}}{}^{\mathbb{K}^k})$ follows by $f:\mathbb{K}^k\to(\mathcal{Y}\subset)\mathcal{X}$ being continuous, $f(\overline{B_{\nu\epsilon}}{}^{\mathbb{K}^k})\subset\mathcal{X}$ being compact and $\mathcal{X}$ being a Hausdorff space.
   Therefore $x\in\mathcal{Y}$, and $\overline{\mathcal{Y}}{}^\mathcal{X}\subset\mathcal{Y}$. 
   This implies that $\mathcal{Y}$ is a closed subset of $\mathcal{X}$.
\end{proof}

\subsection{A metric topology, a topology of uniform convergence on compact sets, and a locally convex topology}\label{sec-4.1.2} 
   In the previous subsection we have defined the Fr\'{e}chet metric $d$ on $\mathcal{C}(X,{\sf V})$. 
   So, one can consider the metric topology for $(\mathcal{C}(X,{\sf V}),d)$. 
   Recalling that $X=\bigcup_{n=1}^\infty O_n$ and each $E_n=\overline{O_n}$ is compact in $X$ (cf.\ Subsection \ref{subsec-4.1.1}), we prove two Lemmas \ref{lem-4.1.9} and \ref{lem-4.1.10}, and deduce Theorem \ref{thm-4.1.11} from them.
\begin{lemma}\label{lem-4.1.9}
   The metric topology for $(\mathcal{C}(X,{\sf V}),d)$ coincides with the topology of uniform convergence on compact sets.
\end{lemma}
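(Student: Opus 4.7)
The plan is to show that each metric open ball around an arbitrary $\xi_0\in\mathcal{C}(X,{\sf V})$ contains a basic neighborhood of the uniform-convergence topology at $\xi_0$, and vice versa. Since both topologies are translation-invariant on the vector space (the Fréchet metric is translation-invariant by Lemma \ref{lem-4.1.4}-(2), and the uniform-convergence topology obviously is), it is enough to compare the two systems of neighborhoods of $0\in\mathcal{C}(X,{\sf V})$.

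First I would show that every Fréchet-metric ball $\{\eta:d(0,\eta)<\epsilon\}$ contains a neighborhood of the form $\{\eta:d_{E_1}(0,\eta)<\delta,\ldots,d_{E_N}(0,\eta)<\delta\}$, which is a basic neighborhood for the uniform-convergence topology since each $E_n=\overline{O_n}$ is compact. Given $\epsilon>0$, choose $N\in\mathbb{N}$ with $\sum_{n>N}2^{-n}<\epsilon/2$ and $\delta<\epsilon/2$; then for $\eta$ in the above neighborhood,
\[
d(0,\eta)=\sum_{n=1}^N\frac{1}{2^n}\frac{d_{E_n}(0,\eta)}{1+d_{E_n}(0,\eta)}+\sum_{n>N}\frac{1}{2^n}\frac{d_{E_n}(0,\eta)}{1+d_{E_n}(0,\eta)}<\delta+\frac{\epsilon}{2}<\epsilon,
\]
using $t/(1+t)\leq t$ for $t\geq0$ and $t/(1+t)\leq 1$ in the tail.

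Second, I would show the converse: every basic uniform-convergence neighborhood $\{\eta:d_K(0,\eta)<\epsilon\}$ of $0$, where $K\subset X$ is compact, contains some metric ball around $0$. The key input here is that $\{O_n\}_{n\in\mathbb{N}}$ is an open cover of $X$, so compactness of $K$ gives an $N$ with $K\subset O_1\cup\cdots\cup O_N\subset E_1\cup\cdots\cup E_N$. Setting $E:=E_1\cup\cdots\cup E_N$, one has $d_K(0,\eta)\leq d_E(0,\eta)=\max_{1\le n\le N}d_{E_n}(0,\eta)$ by \eqref{eq-4.1.2}. From $d(0,\eta)<\delta$ and \eqref{eq-4.1.3} one gets $2^{-n}d_{E_n}(0,\eta)/(1+d_{E_n}(0,\eta))<\delta$ for every $n\leq N$, hence $d_{E_n}(0,\eta)<2^N\delta/(1-2^N\delta)$ provided $2^N\delta<1$. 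Picking $\delta>0$ small enough that $2^N\delta/(1-2^N\delta)<\epsilon$ then forces $d_K(0,\eta)<\epsilon$.

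The argument is essentially mechanical; the only place requiring attention is the second direction, where one must invert the monotone bijection $t\mapsto t/(1+t)$ quantitatively and use compactness of $K$ together with the specific fact that $\{O_n\}$ covers $X$ to replace the arbitrary compact $K$ by a finite union of the $E_n$'s appearing in the definition of $d$. Once these two inclusions of neighborhood bases are established, the identity map on $\mathcal{C}(X,{\sf V})$ is a homeomorphism between the two topologies, which is the content of Lemma \ref{lem-4.1.9}.
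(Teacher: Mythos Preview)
Your proof is correct and follows essentially the same approach as the paper: split the defining series for $d$ at a finite index to pass from $d$-balls to finite collections of seminorm balls, and use compactness of $K$ together with the cover $\{O_n\}$ to reduce an arbitrary compact set to a finite union of the $E_n$. The only cosmetic difference is that you invoke translation invariance (Lemma \ref{lem-4.1.4}-(2)) to reduce to neighborhoods of $0$, whereas the paper works directly at an arbitrary center $\xi_0$ and carries the offset $r=d(\xi,\xi_0)$ through the estimates; your reduction is a legitimate and slightly cleaner shortcut.
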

\begin{proof}
   First, let us demonstrate that the metric topology $\mathscr{D}_d$ for $(\mathcal{C}(X,{\sf V}),d)$ is coarser than the topology $\mathscr{D}_{\rm cu}$ of uniform convergence on compact sets, namely 
\[
   \mathscr{D}_d\subset\mathscr{D}_{\rm cu}.
\]   
   For given $\xi_0\in\mathcal{C}(X,{\sf V})$ and $\epsilon>0$, we set $O_d:=\{\xi\in\mathcal{C}(X,{\sf V}) \,|\, d(\xi,\xi_0)<\epsilon\}$ and take an arbitrary element $\xi\in O_d$. 
   We want to show that there exist a non-empty compact subset $E\subset X$ and a $\delta>0$ satisfying 
\[
   \{\eta\in\mathcal{C}(X,{\sf V}) \,|\, d_E(\eta,\xi)<\delta\}\subset O_d
\]
(see \eqref{eq-4.1.2} for $d_E$).
   Let $r:=d(\xi,\xi_0)$.  
   Since $\epsilon-r>0$ there exists an $m\in\mathbb{N}$ such that   
\[
   1/2^m<(\epsilon-r)/2.
\] 
   By use of $m$, $\epsilon$ and $r$ we put 
\[
\begin{array}{ll}
   E:=\bigcup_{j=1}^mE_j, & \delta:=(\epsilon-r)/(2m).
\end{array}
\] 
   Then, it turns out that $E$ is a non-empty compact subset of $X$ and $\delta>0$.
   Moreover, \eqref{eq-4.1.2} yields 
\[
   d_{E_1}(\xi_1,\xi_2)+\cdots+d_{E_m}(\xi_1,\xi_2)\leq md_E(\xi_1,\xi_2)
\]
for all $\xi_1,\xi_2\in\mathcal{C}(X,{\sf V})$. 
   Hence for any $\eta\in\mathcal{C}(X,{\sf V})$ with $d_E(\eta,\xi)<\delta$, we have  
\[
\begin{split}
   d(\eta,\xi)
   &\stackrel{\eqref{eq-4.1.3}}{=}\sum_{n=1}^\infty\dfrac{1}{2^n}\dfrac{d_{E_n}(\eta,\xi)}{1+d_{E_n}(\eta,\xi)}
   =\sum_{j=1}^m\dfrac{1}{2^j}\dfrac{d_{E_j}(\eta,\xi)}{1+d_{E_j}(\eta,\xi)}+\sum_{k=m+1}^\infty\dfrac{1}{2^k}\dfrac{d_{E_k}(\eta,\xi)}{1+d_{E_k}(\eta,\xi)}\\
   &\leq \sum_{j=1}^md_{E_j}(\eta,\xi)+\sum_{k=m+1}^\infty\dfrac{1}{2^k}
   =\Big(\sum_{j=1}^md_{E_j}(\eta,\xi)\Big)+\dfrac{1}{2^m}
   \leq md_E(\eta,\xi)+\dfrac{1}{2^m}\\
   &<m\delta+\dfrac{1}{2^m}<\epsilon-r.
\end{split}   
\]
   This and $d(\eta,\xi_0)\leq d(\eta,\xi)+d(\xi,\xi_0)=d(\eta,\xi)+r$ imply that $\{\eta\in\mathcal{C}(X,{\sf V}) \,|\, d_E(\eta,\xi)<\delta\}\subset O_d$, and thus $\mathscr{D}_d\subset\mathscr{D}_{\rm cu}$.\par
 
   Next, let us confirm that the converse inclusion $\mathscr{D}_{\rm cu}\subset\mathscr{D}_d$ also holds. 
   For given $\xi_0'\in\mathcal{C}(X,{\sf V})$, $\epsilon'>0$ and non-empty compact subset $E'\subset X$, we set $O_{\rm cu}:=\{\xi'\in\mathcal{C}(X,{\sf V}) \,|\, d_{E'}(\xi',\xi_0')<\epsilon'\}$ and fix any element $\xi'\in O_{\rm cu}$. 
   We are going to show that there exists a $\delta'>0$ satisfying 
\[
   \{\eta'\in\mathcal{C}(X,{\sf V}) \,|\, d(\eta',\xi')<\delta'\}\subset O_{\rm cu}.
\]
   Put $r':=d_{E'}(\xi',\xi_0')$. 
   Since $X=\bigcup_{n=1}^\infty O_n$, $E_n=\overline{O_n}$ and $E'$ is compact, there exist finite elements $n(1),\dots,n(k)\in\mathbb{N}$ such that $n(1)<\cdots<n(k)$ and $E'\subset\bigcup_{i=1}^k E_{n(i)}$.
   Then it follows from \eqref{eq-4.1.2} that
\[
   d_{E'}(\xi_1,\xi_2)\leq d_{E_{n(1)}}(\xi_1,\xi_2)+\cdots+d_{E_{n(k)}}(\xi_1,\xi_2)
\] 
for all $\xi_1,\xi_2\in\mathcal{C}(X,{\sf V})$. 
   Setting 
\[
   \delta':=\dfrac{1}{2^{n(k)}}\dfrac{((\epsilon'-r')/k)}{1+((\epsilon'-r')/k)},
\]  
we deduce $\delta'>0$. 
   In addition; if $\eta'\in\mathcal{C}(X,{\sf V})$ satisfies $d(\eta',\xi')<\delta'$, then it follows from \eqref{eq-4.1.3} that 
\[
   \dfrac{1}{2^{n(i)}}\dfrac{d_{E_{n(i)}}(\eta',\xi')}{1+d_{E_{n(i)}}(\eta',\xi')}
   \leq d(\eta',\xi')
   <\delta'
   =\dfrac{1}{2^{n(k)}}\dfrac{((\epsilon'-r')/k)}{1+((\epsilon'-r')/k)}
   \leq\dfrac{1}{2^{n(i)}}\dfrac{((\epsilon'-r')/k)}{1+((\epsilon'-r')/k)},
\]
so that $d_{E_{n(i)}}(\eta',\xi')<(\epsilon'-r')/k$ for all $1\leq i\leq k$. 
   Consequently, if $\eta'\in\mathcal{C}(X,{\sf V})$ satisfies $d(\eta',\xi')<\delta'$, then $d_{E'}(\eta',\xi')\leq\sum_{i=1}^kd_{E_{n(i)}}(\eta',\xi')<\epsilon'-r'$.
   This and $d_{E'}(\eta',\xi_0')\leq d_{E'}(\eta',\xi')+d_{E'}(\xi',\xi_0')=d_{E'}(\eta',\xi')+r'$ imply that $\{\eta'\in\mathcal{C}(X,{\sf V}) \,|\, d(\eta',\xi')<\delta'\}\subset O_{\rm cu}$, and so $\mathscr{D}_{\rm cu}\subset\mathscr{D}_d$.   
\end{proof}

\begin{lemma}\label{lem-4.1.10}
   The metric topology for $(\mathcal{C}(X,{\sf V}),d)$ coincides with the locally convex topology determined by a countable number of seminorms $\{p_n\}_{n\in\mathbb{N}}$, where $p_n(\xi):=d_{E_n}(\xi,0)$ for $n\in\mathbb{N}$, $\xi\in\mathcal{C}(X,{\sf V})$. 
   Here we refer to \eqref{eq-4.1.2} for $d_{E_n}$.
\end{lemma}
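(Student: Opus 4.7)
The plan is to compare neighborhood bases of the zero mapping, since both the metric topology (which is translation-invariant by Lemma \ref{lem-4.1.4}-(2)) and the locally convex topology generated by seminorms are translation-invariant topological vector space topologies on $\mathcal{C}(X,{\sf V})$. Thus it suffices to show that a basic $d$-neighborhood of $0$ contains a finite intersection of seminorm-neighborhoods of $0$, and conversely.

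First I would check that each $p_n(\xi):=d_{E_n}(\xi,0)=\sup\{\|\xi(y)\|:y\in E_n\}$ is a seminorm on $\mathcal{C}(X,{\sf V})$: nonnegativity, absolute homogeneity and the triangle inequality all follow at once from the corresponding properties of $\|\cdot\|$ on ${\sf V}$ and the definition \eqref{eq-4.1.2}. Recall that the locally convex topology $\mathscr{D}_{\rm lc}$ determined by $\{p_n\}_{n\in\mathbb{N}}$ has as a fundamental system of neighborhoods of $0$ the finite intersections of the sets $\{\xi : p_n(\xi)<\epsilon\}$ for $n\in\mathbb{N}$ and $\epsilon>0$.

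Next, to prove $\mathscr{D}_d\subset\mathscr{D}_{\rm lc}$, fix $\epsilon>0$ and consider the $d$-ball $O_d:=\{\xi\in\mathcal{C}(X,{\sf V}) : d(\xi,0)<\epsilon\}$. Choose $N\in\mathbb{N}$ so that $1/2^N<\epsilon/2$, and put $\delta:=\epsilon/(2N)$. If $\xi\in\mathcal{C}(X,{\sf V})$ satisfies $p_j(\xi)<\delta$ for every $1\leq j\leq N$, then using \eqref{eq-4.1.3} together with $d_{E_j}(\xi,0)/(1+d_{E_j}(\xi,0))\leq d_{E_j}(\xi,0)=p_j(\xi)$,
\[
   d(\xi,0)
   \leq\sum_{j=1}^N\frac{1}{2^j}p_j(\xi)+\sum_{k=N+1}^\infty\frac{1}{2^k}
   \leq N\delta+\frac{1}{2^N}
   <\epsilon,
\]
so $\bigcap_{j=1}^N\{\xi : p_j(\xi)<\delta\}\subset O_d$, proving $\mathscr{D}_d\subset\mathscr{D}_{\rm lc}$.

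Conversely, to prove $\mathscr{D}_{\rm lc}\subset\mathscr{D}_d$, fix $n\in\mathbb{N}$ and $\epsilon>0$, and set $\delta':=(1/2^n)\cdot\epsilon/(1+\epsilon)$. If $d(\xi,0)<\delta'$, then \eqref{eq-4.1.3} gives
\[
   \frac{1}{2^n}\frac{p_n(\xi)}{1+p_n(\xi)}
   \leq d(\xi,0)
   <\frac{1}{2^n}\frac{\epsilon}{1+\epsilon},
\]
whence $p_n(\xi)<\epsilon$ because $t\mapsto t/(1+t)$ is strictly increasing on $[0,\infty)$. Thus $\{\xi : d(\xi,0)<\delta'\}\subset\{\xi : p_n(\xi)<\epsilon\}$, which yields $\mathscr{D}_{\rm lc}\subset\mathscr{D}_d$ and completes the identification. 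No real obstacle is anticipated: the only care needed is choosing $N$ large enough to absorb the tail of the series $\sum 2^{-k}$ in the first inclusion, and exploiting the monotonicity of $t\mapsto t/(1+t)$ in the second.
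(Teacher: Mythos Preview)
Your proof is correct and follows essentially the same approach as the paper: for the inclusion $\mathscr{D}_d\subset\mathscr{D}_{\rm lc}$ both you and the paper split the defining series at an index $N$ with $2^{-N}<\epsilon/2$ and bound the finite part by $N\delta$. The only organizational difference is that for the reverse inclusion $\mathscr{D}_{\rm lc}\subset\mathscr{D}_d$ the paper simply invokes Lemma~\ref{lem-4.1.9} (since each $p_n$ is one of the seminorms defining the compact-open topology, which already coincides with $\mathscr{D}_d$), whereas you give the short direct estimate via the monotonicity of $t\mapsto t/(1+t)$; your route is self-contained, the paper's is marginally quicker given what has already been proved.
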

\begin{proof}
   We denote by $\mathscr{D}_{\rm loc}$ the locally convex topology determined by $\{p_n\}_{n\in\mathbb{N}}$, and utilize the same notation $\mathscr{D}_d$, $O_d=\{\xi\in\mathcal{C}(X,{\sf V}) \,|\, d(\xi,\xi_0)<\epsilon\}$ as in the proof of Lemma \ref{lem-4.1.9}. 
   We need to verify that $\mathscr{D}_{\rm loc}=\mathscr{D}_d$, but $\mathscr{D}_{\rm loc}\subset\mathscr{D}_d$ is a consequence of Lemma \ref{lem-4.1.9}. 
   Thus we are going to confirm $\mathscr{D}_d\subset\mathscr{D}_{\rm loc}$ only.\par
   
   For a given $\xi\in O_d$, we put $r:=d(\xi,\xi_0)$. 
   Since $\epsilon-r>0$ there exists an $N\in\mathbb{N}$ such that 
\[
   \dfrac{1}{2^N}<\dfrac{\epsilon-r}{2}.
\]
   Then, any $\eta\in\bigcap_{i=1}^N\{\eta\in\mathcal{C}(X,{\sf V}) \,|\, p_i(\eta-\xi)<(\epsilon-r)/(2N)\}$ satisfies 
\[
\begin{split}
   d(\eta,\xi)
  &\stackrel{\eqref{eq-4.1.3}}{=}\sum_{i=1}^N\dfrac{1}{2^i}\dfrac{d_{E_i}(\eta,\xi)}{1+d_{E_i}(\eta,\xi)}+\sum_{k=N+1}^\infty\dfrac{1}{2^k}\dfrac{d_{E_k}(\eta,\xi)}{1+d_{E_k}(\eta,\xi)}
   \leq\sum_{i=1}^Nd_{E_i}(\eta,\xi)+\sum_{k=N+1}^\infty\dfrac{1}{2^k}
   =\Big(\sum_{i=1}^Nd_{E_i}(\eta,\xi)\Big)+\dfrac{1}{2^N}\\
  &\stackrel{\eqref{eq-4.1.2}}{=}\Big(\sum_{i=1}^Np_i(\eta-\xi)\Big)+\dfrac{1}{2^N}
   <\epsilon-r;
\end{split} 
\]  
furthermore, $d(\eta,\xi_0)\leq d(\eta,\xi)+d(\xi,\xi_0)<\epsilon-r+r=\epsilon$. 
   Hence we see that $\bigcap_{i=1}^N\{\eta\in\mathcal{C}(X,{\sf V}) \,|\, p_i(\eta-\xi)<(\epsilon-r)/(2N)\}\subset O_d$, and $\mathscr{D}_d\subset\mathscr{D}_{\rm loc}$.
\end{proof}

   Summarizing statements above we conclude the following (see \eqref{eq-4.1.1}, \eqref{eq-4.1.2} for $\mathcal{C}(X,{\sf V})$, $d_{E_n}$):
\begin{theorem}\label{thm-4.1.11}
   With respect the Fr\'{e}chet metric $d$ in \eqref{eq-4.1.3}, 
\begin{enumerate}
\item[{\rm (1)}]
   the metric space $(\mathcal{C}(X,{\sf V}),d)$ is complete,
\item[{\rm (2)}] 
   the addition $\mathcal{C}(X,{\sf V})\times\mathcal{C}(X,{\sf V})\ni(\xi_1,\xi_2)\mapsto\xi_1+\xi_2\in\mathcal{C}(X,{\sf V})$ and the scalar multiplication $\mathbb{K}\times\mathcal{C}(X,{\sf V})\ni(\alpha,\xi)\mapsto\alpha\xi\in\mathcal{C}(X,{\sf V})$ are continuous,   
\item[{\rm (3)}]
   the metric topology for $(\mathcal{C}(X,{\sf V}),d)$ coincides with the topology of uniform convergence on compact sets$;$ besides, it also coincides with the locally convex topology determined by a countable number of seminorms $\{p_n\}_{n\in\mathbb{N}}$, where $p_n(\xi)=d_{E_n}(\xi,0)$ for $n\in\mathbb{N}$, $\xi\in\mathcal{C}(X,{\sf V})$.
\end{enumerate}
   Therefore $\mathcal{C}(X,{\sf V})$ is a Fr\'{e}chet space over $\mathbb{K}=\mathbb{R}$ or $\mathbb{C}$.
\end{theorem}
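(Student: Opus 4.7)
The plan is to assemble Theorem \ref{thm-4.1.11} directly from the lemmas already established in this subsection; virtually all the substantive work has been done, and the role of the theorem is to package the conclusions and to note that the three ingredients together exhibit $\mathcal{C}(X,{\sf V})$ as a Fr\'{e}chet space. I would organize the proof as a short three-step citation.

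First, for (1) I would simply invoke Lemma \ref{lem-4.1.6}, which asserts that $(\mathcal{C}(X,{\sf V}),d)$ is a complete metric space with respect to the Fr\'{e}chet metric defined in \eqref{eq-4.1.3}. No further argument is needed. Next, for (2) I would appeal to Lemma \ref{lem-4.1.7}, whose statement gives the continuity of both the addition $(\xi_1,\xi_2)\mapsto\xi_1+\xi_2$ and the scalar multiplication $(\alpha,\xi)\mapsto\alpha\xi$; combined with the Hausdorffness that follows from $d$ being a metric (Lemma \ref{lem-4.1.4}), this realizes $\mathcal{C}(X,{\sf V})$ as a Hausdorff topological vector space over $\mathbb{K}$.

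For (3), I would cite Lemma \ref{lem-4.1.9} for the identification of the metric topology with the topology of uniform convergence on compact sets, and Lemma \ref{lem-4.1.10} for its identification with the locally convex topology determined by the countable family of seminorms $\{p_n\}_{n\in\mathbb{N}}$ with $p_n(\xi)=d_{E_n}(\xi,0)$. These two lemmas together yield the ``besides'' clause in (3).

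Finally, to conclude that $\mathcal{C}(X,{\sf V})$ is a Fr\'{e}chet space, I would remark that a Fr\'{e}chet space is, by definition, a complete, Hausdorff, metrizable locally convex topological vector space, and all four properties have just been verified: metrizability from Lemma \ref{lem-4.1.4}, completeness from (1), Hausdorffness and the vector-space-topology axioms from (2), and local convexity from the seminorm description in (3). There is essentially no obstacle here beyond bookkeeping, since the three individual statements are already fully proved above; the only mild subtlety worth remarking on is that the countable family $\{p_n\}$ arises from the exhaustion $X=\bigcup_{n=1}^\infty O_n$ with $E_n=\overline{O_n}$ compact, which is precisely what makes the locally convex topology metrizable and thus justifies the ``Fr\'{e}chet'' designation.
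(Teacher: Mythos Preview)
Your proposal is correct and mirrors the paper's own proof, which consists of a one-line citation of Lemmas \ref{lem-4.1.6}, \ref{lem-4.1.7}, \ref{lem-4.1.9} and \ref{lem-4.1.10}. Your added remarks about Lemma \ref{lem-4.1.4} and the exhaustion $\{E_n\}$ are accurate elaborations but not required.
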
 
\begin{proof}
   cf.\ Lemmas \ref{lem-4.1.6}, \ref{lem-4.1.7}, \ref{lem-4.1.9} and \ref{lem-4.1.10}. 
\end{proof}

\section[Real vector spaces of continuous cross-sections]{Real vector spaces of continuous cross-sections of homogeneous vector bundles}\label{sec-4.2}
   The setting of Section \ref{sec-4.2} is as follows:
\begin{itemize}
\item 
   $G$ is a Lie group which satisfies the second countability axiom, 
\item 
   $H$ is a closed subgroup of $G$, 
\item
   $\pi$ is the projection of $G$ onto the left quotient space $G/H$,
\item
   $\mathcal{S}=\{(U_\alpha,\psi_\alpha)\}_{\alpha\in A}$ is the real analytic structure on $G/H$ given in Theorem \ref{thm-1.1.2}, 
\item 
   $G\times_\rho{\sf V}=(G\times_\rho{\sf V},\Pr,G/H)$ is a homogeneous vector bundle over $G/H$ associated with $\rho:H\to GL({\sf V})$, 
\item 
   $\mathscr{S}=\{(\Pr^{-1}(U_\alpha),\varphi_\alpha)\}_{\alpha\in A}$ is the real analytic structure on $G\times_\rho{\sf V}$ in Proposition \ref{prop-2.2.9}.      
\end{itemize}
   The topologies for $G/H$ and $G\times_\rho{\sf V}$ are the quotient topologies relative to $\pi:G\to G/H$, $g\mapsto gH$, and $\varpi:G\times{\sf V}\to G\times_\rho{\sf V}$, $(g,{\sf v})\mapsto[(g,{\sf v})]$, respectively, and the homogeneous space $G/H$ and the homogeneous vector bundle $G\times_\rho{\sf V}$ are real analytic manifolds having the atlases $\mathcal{S}$ and $\mathscr{S}$, respectively.\par
   
   Now, let $U$ be a non-empty open subset of $G/H$. 
   Since $\pi^{-1}(U)$ is open in $G$ and the Lie group $G$ satisfies the second countability axiom, we see that $\pi^{-1}(U)$ is a locally compact Hausdorff space and satisfies the same axiom. 
   For this reason we can apply the arguments and notation ``$d$, $d_{E_n}$, $\|\cdot\|$'' in Section \ref{sec-4.1} to $\mathcal{C}(\pi^{-1}(U),{\sf V})$. 
   Noting \eqref{eq-2.5.3} and 
\[
   \mathcal{V}^0(G\times_\rho{\sf V})_U\subset\mathcal{C}(\pi^{-1}(U),{\sf V}),
\]
we demonstrate  
\begin{proposition}\label{prop-4.2.1}
   With respect the Fr\'{e}chet metric $d$ in \eqref{eq-4.1.3}, 
\begin{enumerate}
\item[{\rm (1)}]
   the metric space $(\mathcal{V}^0(G\times_\rho{\sf V})_U,d)$ is complete,
\item[{\rm (2)}] 
   the addition $\mathcal{V}^0(G\times_\rho{\sf V})_U\times\mathcal{V}^0(G\times_\rho{\sf V})_U\ni(\xi_1,\xi_2)\mapsto\xi_1+\xi_2\in\mathcal{V}^0(G\times_\rho{\sf V})_U$ and the scalar multiplication $\mathbb{R}\times\mathcal{V}^0(G\times_\rho{\sf V})_U\ni(\lambda,\xi)\mapsto\lambda\xi\in\mathcal{V}^0(G\times_\rho{\sf V})_U$ are continuous,   
\item[{\rm (3)}]
   the metric topology for $(\mathcal{V}^0(G\times_\rho{\sf V})_U,d)$ coincides with the topology of uniform convergence on compact sets$;$ besides, it also coincides with the locally convex topology determined by a countable number of seminorms $\{p_n\}_{n\in\mathbb{N}}$, where $p_n(\xi):=d_{E_n}(\xi,0)$ for $n\in\mathbb{N}$, $\xi\in\mathcal{V}^0(G\times_\rho{\sf V})_U$.
\end{enumerate}
   Therefore $\mathcal{V}^0(G\times_\rho{\sf V})_U$ is a Fr\'{e}chet space over $\mathbb{R}$.
\end{proposition}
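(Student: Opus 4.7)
The plan is to realize $\mathcal{V}^0(G\times_\rho{\sf V})_U$ as a closed vector subspace of the Fr\'{e}chet space $\mathcal{C}(\pi^{-1}(U),{\sf V})$ and then transport the structural properties (1), (2), (3) from Theorem \ref{thm-4.1.11} via restriction of the Fr\'{e}chet metric $d$. The inclusion $\mathcal{V}^0(G\times_\rho{\sf V})_U\subset\mathcal{C}(\pi^{-1}(U),{\sf V})$ is already noted, and it is immediate from \eqref{eq-2.5.3} and \eqref{eq-4.1.1} together with the linearity of $\rho(h)^{-1}$ that $\mathcal{V}^0(G\times_\rho{\sf V})_U$ is a real vector subspace of $\mathcal{C}(\pi^{-1}(U),{\sf V})$.

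The crucial step is to verify that $\mathcal{V}^0(G\times_\rho{\sf V})_U$ is closed in $\mathcal{C}(\pi^{-1}(U),{\sf V})$ with respect to the metric topology. To this end I would take any sequence $\{\xi_n\}_{n\in\mathbb{N}}\subset\mathcal{V}^0(G\times_\rho{\sf V})_U$ converging to some $\xi\in\mathcal{C}(\pi^{-1}(U),{\sf V})$. By Lemma \ref{lem-4.1.9} the convergence $d(\xi_n,\xi)\to 0$ is uniform on compact sets, and in particular pointwise: for every $g\in\pi^{-1}(U)$ the finite set $\{g\}$ (or a compact neighborhood containing it within some $E_k$) gives $\xi_n(g)\to\xi(g)$ in $({\sf V},\|\cdot\|)$. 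Fix any $(g,h)\in\pi^{-1}(U)\times H$. Then $gh\in\pi^{-1}(U)$ as well, so pointwise convergence yields $\xi_n(gh)\to\xi(gh)$ and $\xi_n(g)\to\xi(g)$; since $\rho(h)^{-1}\in GL({\sf V})$ is continuous, $\rho(h)^{-1}(\xi_n(g))\to\rho(h)^{-1}(\xi(g))$. Passing $n\to\infty$ in the equivariance relation $\xi_n(gh)=\rho(h)^{-1}(\xi_n(g))$ gives $\xi(gh)=\rho(h)^{-1}(\xi(g))$, so $\xi\in\mathcal{V}^0(G\times_\rho{\sf V})_U$ as required.

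Once closedness is established, items (1), (2), (3) follow by standard inheritance arguments. For (1), a closed subset of a complete metric space is complete, so $(\mathcal{V}^0(G\times_\rho{\sf V})_U,d)$ is complete by Theorem \ref{thm-4.1.11}-(1). For (2), since $\mathcal{V}^0(G\times_\rho{\sf V})_U$ carries the subspace topology from $\mathcal{C}(\pi^{-1}(U),{\sf V})$, the restrictions of the addition and scalar multiplication maps remain continuous by Theorem \ref{thm-4.1.11}-(2) and the universal property of subspace topologies. For (3), both the topology of uniform convergence on compact sets and the locally convex topology determined by the seminorms $p_n(\xi)=d_{E_n}(\xi,0)$ restrict to the same subspace topology as the metric one (this is again purely formal from Theorem \ref{thm-4.1.11}-(3)), where the compact exhaustion $\pi^{-1}(U)=\bigcup_{n=1}^\infty O_n$ with $E_n=\overline{O_n}$ is supplied by the local compactness and second countability of $\pi^{-1}(U)$ as an open subset of $G$.

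I do not anticipate a genuine obstacle here: the argument is essentially bookkeeping, with the only content being the passage to the limit in the equivariance condition. That passage is completely painless because uniform convergence on compact sets implies pointwise convergence and $\rho(h)^{-1}$ is a fixed continuous linear map, so no uniformity in $h$ is ever needed. Thus the proposition reduces, via Theorem \ref{thm-4.1.11}, to a one-line closedness verification followed by a routine inheritance of the Fr\'{e}chet structure to the closed subspace.
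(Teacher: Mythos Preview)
Your proposal is correct and takes essentially the same approach as the paper: both arguments reduce to verifying that the equivariance condition $\xi(gh)=\rho(h)^{-1}(\xi(g))$ passes to pointwise limits, using the continuity of the fixed linear map $\rho(h)^{-1}$. The only cosmetic difference is that the paper works directly with a Cauchy sequence and its limit in $\mathcal{C}(\pi^{-1}(U),{\sf V})$, while you package the same computation as a closedness statement and then invoke the fact that a closed subspace of a complete metric space is complete; the paper itself remarks in a footnote that its argument shows exactly this closedness.
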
 
\begin{proof}
   Theorem \ref{thm-4.1.11}, together with $\mathcal{V}^0(G\times_\rho{\sf V})_U\subset\mathcal{C}(\pi^{-1}(U),{\sf V})$, enables us to show that $d$ is a metric on $\mathcal{V}^0(G\times_\rho{\sf V})_U$, and to conclude (2), (3). 
   Hence, we only prove that $(\mathcal{V}^0(G\times_\rho{\sf V})_U,d)$ is complete.\par

   Let $\{\eta_n\}_{n=1}^\infty$ be a given Cauchy sequence in $(\mathcal{V}^0(G\times_\rho{\sf V})_U,d)$. 
   By $\mathcal{V}^0(G\times_\rho{\sf V})_U\subset\mathcal{C}(\pi^{-1}(U),{\sf V})$ and Lemma \ref{lem-4.1.6}, there exists a unique $\eta\in\mathcal{C}(\pi^{-1}(U),{\sf V})$ such that $\displaystyle{\lim_{n\to\infty}d(\eta,\eta_n)=0}$. 
   In order to show $\eta\in\mathcal{V}^0(G\times_\rho{\sf V})_U$, it suffices to confirm that 
\[
   \mbox{$\eta(gh)=\rho(h)^{-1}\bigl(\eta(g)\bigr)$ for all $(g,h)\in \pi^{-1}(U)\times H$}
\]
because \eqref{eq-2.5.3}.
   For any $(g,h)\in \pi^{-1}(U)\times H$, it follows from $\displaystyle{\lim_{n\to\infty}d(\eta,\eta_n)=0}$ and $gh,g\in\pi^{-1}(U)$ that 
\[
\begin{array}{ll}
   \displaystyle{\lim_{n\to\infty}\|\eta(gh)-\eta_n(gh)\|=0}, & \displaystyle{\lim_{m\to\infty}\|\eta_m(g)-\eta(g)\|=0}
\end{array}
\]
(ref.\ the beginning of the proof of Lemma \ref{lem-4.1.6}); and therefore 
\[
\begin{split}
   \|\eta(gh)-\rho(h)^{-1}\bigl(\eta(g)\bigr)\|
  &\leq\|\eta(gh)-\eta_n(gh)\|+\|\eta_n(gh)-\rho(h)^{-1}\bigl(\eta(g)\bigr)\|\\
  &=\|\eta(gh)-\eta_n(gh)\|+\|\rho(h)^{-1}\bigl(\eta_n(g)-\eta(g)\bigr)\|
  \longrightarrow 0 \quad(n\to\infty)
\end{split}
\]
because of $\eta_n\in\mathcal{V}^0(G\times_\rho{\sf V})_U$ and because the mapping $\rho(h)^{-1}:{\sf V}\to{\sf V}$, ${\sf v}\mapsto\rho(h)^{-1}({\sf v})$, is continuous.
   Consequently, one has $\eta\in\mathcal{V}^0(G\times_\rho{\sf V})_U$, and the metric space $(\mathcal{V}^0(G\times_\rho{\sf V})_U,d)$ is complete.\footnote{This implies that $\mathcal{V}^0(G\times_\rho{\sf V})_U$ is a closed, real vector subspace of the Fr\'{e}chet space $\mathcal{C}(\pi^{-1}(U),{\sf V})$.}     
\end{proof}

\section[Complex vector spaces of holomorphic cross-sections]{Complex vector spaces of holomorphic cross-sections of homogeneous holomorphic vector bundles}\label{sec-4.3}
   The setting of Section \ref{sec-4.3} is as follows:
\begin{itemize}
\item 
   $G$ is a complex Lie group which satisfies the second countability axiom, 
\item 
   $H$ is a closed complex Lie subgroup of $G$, 
\item
   $\pi$ is the projection of $G$ onto the left quotient space $G/H$,
\item
   $\mathcal{S}=\{(U_\alpha,\psi_\alpha)\}_{\alpha\in A}$ is the holomorphic structure on $G/H$ given in Theorem \ref{thm-1.2.1}, 
\item 
   $G\times_\rho{\sf V}=(G\times_\rho{\sf V},\Pr,G/H)$ is a homogeneous holomorphic vector bundle over $G/H$ associated with $\rho:H\to GL({\sf V})$, 
\item 
   $\mathscr{S}=\{(\Pr^{-1}(U_\alpha),\varphi_\alpha)\}_{\alpha\in A}$ is the holomorphic structure on $G\times_\rho{\sf V}$ in Theorem \ref{thm-3.2.1}.      
\end{itemize}
   The topologies for $G/H$ and $G\times_\rho{\sf V}$ are the quotient topologies relative to $\pi:G\to G/H$, $g\mapsto gH$, and $\varpi:G\times{\sf V}\to G\times_\rho{\sf V}$, $(g,{\sf v})\mapsto[(g,{\sf v})]$, respectively, and the homogeneous space $G/H$ and the homogeneous holomorphic vector bundle $G\times_\rho{\sf V}$ are complex manifolds having the atlases $\mathcal{S}$ and $\mathscr{S}$, respectively. 
   Here we fix a complex basis $\{{\sf e}_i\}_{i=1}^m$ of ${\sf V}$, identify ${\sf V}$ with $\mathbb{C}^m$ and consider ${\sf V}$ as a complex manifold.\par
   
   The following arguments are similar to those in the previous section.
   For a non-empty open subset $U\subset G/H$, it follows from \eqref{eq-3.2.6} and \eqref{eq-4.1.1} that 
\[
   \mathcal{V}(G\times_\rho{\sf V})_U\subset\mathcal{C}(\pi^{-1}(U),{\sf V}),
\] 
and moreover
\begin{proposition}\label{prop-4.3.1}
   With respect the Fr\'{e}chet metric $d$ in \eqref{eq-4.1.3}, 
\begin{enumerate}
\item[{\rm (1)}]
   the metric space $(\mathcal{V}(G\times_\rho{\sf V})_U,d)$ is complete,
\item[{\rm (2)}] 
   the addition $\mathcal{V}(G\times_\rho{\sf V})_U\times\mathcal{V}(G\times_\rho{\sf V})_U\ni(\xi_1,\xi_2)\mapsto\xi_1+\xi_2\in\mathcal{V}(G\times_\rho{\sf V})_U$ and the scalar multiplication $\mathbb{C}\times\mathcal{V}(G\times_\rho{\sf V})_U\ni(\alpha,\xi)\mapsto\alpha\xi\in\mathcal{V}(G\times_\rho{\sf V})_U$ are continuous,   
\item[{\rm (3)}]
   the metric topology for $(\mathcal{V}(G\times_\rho{\sf V})_U,d)$ coincides with the topology of uniform convergence on compact sets$;$ besides, it also coincides with the locally convex topology determined by a countable number of seminorms.
\end{enumerate}
   Therefore $\mathcal{V}(G\times_\rho{\sf V})_U$ is a Fr\'{e}chet space over $\mathbb{C}$.
\end{proposition}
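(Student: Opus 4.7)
The plan is to run the proof of Proposition \ref{prop-4.2.1} essentially verbatim, with exactly one genuinely new ingredient: the fact that a locally uniform limit of holomorphic mappings is holomorphic. First I would observe, as in the paragraph preceding the statement, that $\mathcal{V}(G\times_\rho{\sf V})_U$ is a complex vector subspace of $\mathcal{C}(\pi^{-1}(U),{\sf V})$, and that $\pi^{-1}(U)$ is a locally compact Hausdorff space satisfying the second countability axiom (being an open subset of $G$). Consequently Theorem \ref{thm-4.1.11} applies to the ambient space $\mathcal{C}(\pi^{-1}(U),{\sf V})$ over $\mathbb{K}=\mathbb{C}$, and items (2) and (3) of the proposition transfer to the subspace $\mathcal{V}(G\times_\rho{\sf V})_U$ automatically from the induced topology.

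The only remaining content is completeness, item (1). For a Cauchy sequence $\{\eta_n\}_{n=1}^\infty$ in $(\mathcal{V}(G\times_\rho{\sf V})_U,d)$, Theorem \ref{thm-4.1.11}(1) supplies a unique $\eta\in\mathcal{C}(\pi^{-1}(U),{\sf V})$ with $d(\eta_n,\eta)\to 0$, and I must verify that $\eta$ in fact belongs to $\mathcal{V}(G\times_\rho{\sf V})_U$. This amounts to two checks: the equivariance condition \eqref{eq-3.2.6}-(ii), and holomorphy of $\eta$.

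The equivariance check is identical to the one used in the proof of Proposition \ref{prop-4.2.1}. For any fixed $(g,h)\in\pi^{-1}(U)\times H$, $d$-convergence implies pointwise convergence $\eta_n(g)\to\eta(g)$ and $\eta_n(gh)\to\eta(gh)$ (as shown at the beginning of the proof of Lemma \ref{lem-4.1.6}), and since $\rho(h)^{-1}:{\sf V}\to{\sf V}$ is continuous, passing to the limit in $\eta_n(gh)=\rho(h)^{-1}(\eta_n(g))$ gives $\eta(gh)=\rho(h)^{-1}(\eta(g))$.

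The hard part, and really the only new step, is verifying that $\eta:\pi^{-1}(U)\to{\sf V}$ is holomorphic. By Theorem \ref{thm-4.1.11}(3) the metric $d$ induces the topology of uniform convergence on compact sets, so $\eta_n\to\eta$ locally uniformly on the open subset $\pi^{-1}(U)$ of the complex manifold $G$. I would then invoke the classical Weierstrass-type theorem: a locally uniform limit of holomorphic mappings from an open subset of a complex manifold into a finite-dimensional complex vector space is holomorphic. The verification is routine after identifying ${\sf V}\cong\mathbb{C}^m$ via the fixed basis $\{{\sf e}_i\}_{i=1}^m$ and reading the situation through a holomorphic chart of $G$, which reduces the claim to componentwise application of Weierstrass's theorem for holomorphic functions on open subsets of $\mathbb{C}^N$. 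Once this is in hand, $\eta\in\mathcal{V}(G\times_\rho{\sf V})_U$, completeness is established, and the proposition follows, the Fréchet space assertion being an immediate consequence of (1)--(3).
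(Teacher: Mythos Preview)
Your proposal is correct and follows essentially the same route as the paper. The only cosmetic differences are that the paper routes the equivariance check through Proposition~\ref{prop-4.2.1} (obtaining the limit in $\mathcal{V}^0(G\times_\rho{\sf V})_U$ so that condition~(ii) comes for free) and spells out the Weierstrass step explicitly by reducing to one complex variable and invoking Cauchy's integral theorem together with Morera's theorem, rather than citing the locally-uniform-limit result as a black box.
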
 
\begin{proof}
   By Theorem \ref{thm-4.1.11} and $\mathcal{V}(G\times_\rho{\sf V})_U\subset\mathcal{C}(\pi^{-1}(U),{\sf V})$ we conclude that $d$ is a metric on $\mathcal{V}(G\times_\rho{\sf V})_U$, and that both (2) and (3) hold.\par

   Let us prove that $(\mathcal{V}(G\times_\rho{\sf V})_U,d)$ is complete. 
   Let $\{\xi_n\}_{n=1}^\infty$ be a given Cauchy sequence in $(\mathcal{V}(G\times_\rho{\sf V})_U,d)$. 
   By \eqref{eq-3.2.6} and \eqref{eq-2.5.3}, one has
\[
   \mathcal{V}(G\times_\rho{\sf V})_U\subset\mathcal{V}^0(G\times_\rho{\sf V})_U,
\] 
where we regard ${\sf V}$ as a real vector space here.
   Therefore $\{\xi_n\}_{n=1}^\infty\subset\mathcal{V}^0(G\times_\rho{\sf V})_U$ and Proposition \ref{prop-4.2.1}-(1) assure the existence of a unique $\xi\in\mathcal{V}^0(G\times_\rho{\sf V})_U$ satisfying $\displaystyle{\lim_{n\to\infty}d(\xi,\xi_n)=0}$. 
   So, we can get the conclusion if one confirms that 
\begin{equation}\label{eq-1}\tag*{\textcircled{1}}
   \mbox{the continuous mapping $\xi:\pi^{-1}(U)\to{\sf V}=\mathbb{C}^m$ is holomorphic}.
\end{equation}
   For an arbitrary $g\in\pi^{-1}(U)$, we take a holomorphic coordinate neighborhood $(P,\psi)$ of $g$ such that (i) $z^j\bigl(\psi(g)\bigr)=0$ for all $1\leq j\leq N:=\dim_\mathbb{C}\pi^{-1}(U)$ and (ii) $\psi$ is a homeomorphism of $P$ onto an open subset of $\mathbb{C}^N$ defined by $|z^1|<r,|z^2|<r,\dots,|z^N|<r$ for some $r>0$. 
   Let us express $\xi\circ\psi^{-1}:\psi(P)\to\mathbb{C}^m$ as
\[
   (\xi\circ\psi^{-1})(z^1,z^2,\dots,z^N)=\bigl(\xi^1(z^1,z^2,\dots,z^N),\dots,\xi^m(z^1,z^2,\dots,z^N)\bigr),
\]
and set $D:=\{z\in\mathbb{C}:|z|<r\}$.
   If one shows that for each $1\leq i\leq m$ and $1\leq j\leq N$
\begin{equation}\label{eq-1'}\tag*{\textcircled{1}$'$}
   \mbox{the continuous function $D\ni z^j\mapsto \xi^i(\cdots,z^j,\cdots)\in\mathbb{C}$ of one variable is holomorphic},
\end{equation}
then we can conclude \ref{eq-1} by $\psi(P)\ni(z^1,z^2,\dots,z^N)\mapsto\xi^i(z^1,z^2,\dots,z^N)\in\mathbb{C}$ being continuous and $\psi(P)=\underbrace{D\times D\times\cdots\times D}_{N}$. 
   In order to show \ref{eq-1'} we first express $\xi_n\circ\psi^{-1}:\psi(P)\to\mathbb{C}^m$ as
\[
   (\xi_n\circ\psi^{-1})(z^1,z^2,\dots,z^N)=\bigl(\xi_n^1(z^1,z^2,\dots,z^N),\dots,\xi_n^m(z^1,z^2,\dots,z^N)\bigr),
\]
$n\in\mathbb{N}$. 
   Notice that each $\xi_n^i(z^1,z^2,\dots,z^N):\psi(P)\to\mathbb{C}$ is a holomorphic function ($1\leq i\leq m$, $n\in\mathbb{N}$) by virtue of $\xi_n\in\mathcal{V}(G\times_\rho{\sf V})_U$.
   Remark that $\displaystyle{\lim_{n\to\infty}d(\xi,\xi_n)=0}$ and the topology for $(\mathcal{V}(G\times_\rho{\sf V})_U,d)$ coincides with the topology of uniform convergence on compact sets. 
   Substituting a sufficiently small $r'>0$ for $r$ (if necessary), one can assume that $\{\xi_n\circ\psi^{-1}\}_{n=1}^\infty$ is uniformly convergent to $\xi\circ\psi^{-1}$ on the set $\psi(P)$---that is, for any $\epsilon>0$ there exists a $K\in\mathbb{N}$ such that $k\geq K$ implies 
\[
   \mbox{$\big\|(\xi\circ\psi^{-1})(\bm{z})-(\xi_k\circ\psi^{-1})(\bm{z})\big\|<\epsilon$ for all $\bm{z}\in\psi(P)$},
\]
where $\|\bm{w}\|:=\sqrt{|w_1|^2+\cdots+|w_m|^2}$ for $\bm{w}=(w_1,\dots,w_m)\in\mathbb{C}^m$. 
   Consequently it follows that for each $1\leq i\leq m$
\[
   \mbox{$\{\xi^i_n(z^1,z^2,\dots,z^N)\}_{n=1}^\infty$ is uniformly convergent to $\xi^i(z^1,z^2,\dots,z^N)$ on $\psi(P)$},
\]
and in particular, for each $1\leq i\leq m$ and $1\leq j\leq N$
\begin{equation}\label{eq-a}\tag{a}
   \mbox{$\{\xi^i_n(\cdots,z^j,\cdots)\}_{n=1}^\infty$ is uniformly convergent to $\xi^i(\cdots,z^j,\cdots)$ on $D$}.  
\end{equation}
   Now, we are in a position to demonstrate \ref{eq-1'}. 
   Fix any $1\leq i\leq m$ and $1\leq j\leq N$.
   Let $C$ be any piecewise differentiable closed curve of class $C^1$ which is contained in $D$. 
   Then we have
\[
   \int_{C}\xi^i(\cdots,z^j,\cdots)dz^j
   \stackrel{\eqref{eq-a}}{=}\lim_{n\to\infty}\int_{C}\xi_n^i(\cdots,z^j,\cdots)dz^j
   =0
\]
because the function $D\ni z^j\mapsto\xi_n^i(\cdots,z^j,\cdots)\in\mathbb{C}$ is holomorphic for each $n\in\mathbb{N}$, its domain $D$ is a star region and Cauchy's integral theorem.\footnote{e.g.\ \begin{CJK}{UTF8}{min}定理 2.2 in 杉浦\end{CJK} \cite[p.249]{Su1}.}
   Accordingly we obtain \ref{eq-1'} from Morera's theorem.\footnote{e.g.\ \begin{CJK}{UTF8}{min}定理 3.4 in 杉浦\end{CJK} \cite[p.258]{Su1}.}
\end{proof}

\section{An appendix (complete metric spaces, the Baire category theorem)}\label{sec-4.4}
   In Sections \ref{sec-4.1}, \ref{sec-4.2} and \ref{sec-4.3} we have dealt with metric spaces $\mathcal{C}(X,{\sf V})$, $\mathcal{V}^0(G\times_\rho{\sf V})_U$ and $\mathcal{V}(G\times_\rho{\sf V})_U$, respectively. 
   To these spaces we can apply the following proposition:
\begin{proposition}\label{prop-4.4.1}
   Let $X=(X,d)$ be a complete, metric space.
   If $\{F_n\}_{n=1}^\infty$ is a sequence of closed subsets of $X$ and $X=\bigcup_{n=1}^\infty F_n$, then there exists an $N\in\mathbb{N}$ such that $F_N$ includes a non-empty open subset of $X$.  
\end{proposition}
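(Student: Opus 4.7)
The plan is to argue by contradiction, using the standard Baire category construction of a nested sequence of closed balls. Suppose, to the contrary, that every $F_n$ has empty interior in $X$. Since each $F_n$ is closed, the interior of $F_n$ being empty means that for any non-empty open ball $B\subset X$ there is a point of $B$ lying outside $F_n$; and since $F_n$ is closed, around such a point one can find an open ball whose closure avoids $F_n$ entirely.

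Starting from any non-empty open ball $B_0\subset X$ of radius at most $1$, I would inductively construct open balls $B_n=\{y\in X \,|\, d(y,x_n)<r_n\}$ with $0<r_n<1/n$ satisfying
\[
   \overline{B_n}\subset B_{n-1}\setminus F_n
\]
for every $n\in\mathbb{N}$. At the induction step, since $F_{n+1}$ has empty interior, $B_n\not\subset F_{n+1}$, so there exists $x_{n+1}\in B_n\setminus F_{n+1}$; because $F_{n+1}$ is closed and $B_n$ is open, a sufficiently small $r_{n+1}\in(0,1/(n+1))$ gives $\overline{B_{n+1}}\subset B_n\setminus F_{n+1}$.

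Next, the sequence $\{x_n\}_{n=1}^\infty$ is Cauchy in $(X,d)$, since for $m\geq n$ one has $x_m\in B_n$, hence $d(x_m,x_n)<r_n<1/n$. By completeness of $(X,d)$, there exists $x\in X$ with $x_n\to x$. Because $x_m\in\overline{B_n}$ for all $m\geq n$ and $\overline{B_n}$ is closed, $x\in\overline{B_n}$ for every $n$, and therefore $x\notin F_n$ for every $n\in\mathbb{N}$ by the construction $\overline{B_n}\cap F_n=\emptyset$. This contradicts the hypothesis $X=\bigcup_{n=1}^\infty F_n$, so some $F_N$ must contain a non-empty open subset of $X$.

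The argument is essentially routine once the inductive construction is set up; the only step that requires a little care is arranging at each stage that $\overline{B_{n+1}}$ (not merely $B_{n+1}$) sits inside $B_n\setminus F_n$, which is what guarantees that the limit point $x$ actually lies outside every $F_n$. This is handled by simply shrinking $r_{n+1}$ enough so that the closed ball of radius $r_{n+1}$ around $x_{n+1}$ still fits into the open set $B_n\setminus F_{n+1}$, using that this set is open and contains $x_{n+1}$.
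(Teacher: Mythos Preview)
Your proof is correct and follows essentially the same Baire-category argument as the paper: both proceed by contradiction, inductively construct a nested sequence of shrinking balls with centers avoiding successive $F_n$, and use completeness to produce a limit point lying outside every $F_n$. The only cosmetic difference is that you arrange $\overline{B_{n+1}}\subset B_n\setminus F_{n+1}$ directly via closures, whereas the paper achieves the same containment by the half-radius trick $B(a_{n+1},r_{n+1})\subset B(a_n,r_n/2)$.
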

\begin{proof}
   We use proof by contradiction. 
   Suppose that each $F_n$ cannot include any non-empty open subset of $X$ ($n\in\mathbb{N}$). 
   Then, $X\neq F_1$ follows, and $X-F_1$ is a non-empty open subset of $X$. 
   Thus there exist an $a_1\in X-F_1$ and an $r_1>0$ satisfying 
\[
\begin{array}{ll}
   r_1<1/2, & B(a_1,r_1):=\{x\in X \,|\, d(x,a_1)<r_1\}\subset X-F_1.
\end{array}
\]  
   Since $B(a_1,r_1/2)$ is a non-empty open subset of $X$, the supposition assures that $(X-F_2)\cap B(a_1,r_1/2)$ is a non-empty open subset of $X$. 
   Thus there exist an $a_2\in(X-F_2)\cap B(a_1,r_1/2)$ and an $r_2>0$ satisfying 
\[
\begin{array}{ll}
   r_2<r_1/2, & B(a_2,r_2)\subset (X-F_2)\cap B(a_1,r_1/2).
\end{array}   
\]  
   By repeating the arguments above, one has a sequence $\{a_n\}_{n=1}^\infty\subset X$ and a sequence $\{r_n\}_{n=1}^\infty$ of positive real numbers such that 
\begin{equation}\label{eq-1}\tag*{\textcircled{1}}
\begin{array}{ll}
   r_{n+1}<r_n/2, & B(a_{n+1},r_{n+1})\subset (X-F_{n+1})\cap B(a_n,r_n/2),\, n=1,2,\dots.
\end{array}
\end{equation}
   Here we remark that 
\begin{equation}\label{eq-2}\tag*{\textcircled{2}}
\begin{split}
  \cdots\subset B(a_{n+1},r_{n+1})\subset  \bigl((X-F_{n+1})\cap & B(a_n,r_n/2)\bigr) \subset B(a_n,r_n)\subset \bigl((X-F_n)\cap B(a_{n-1},r_{n-1}/2)\bigr)\\
  &\subset\cdots\subset B(a_2,r_2)\subset\bigl((X-F_2)\cap B(a_1,r_1/2)\bigr)\subset B(a_1,r_1) \subset X-F_1. 
\end{split}
\end{equation}
   The \ref{eq-1} assures that $n\geq m$ implies 
\[
\begin{split}
   d(a_m,a_n)
  &\leq d(a_m,a_{m+1})+d(a_{m+1},a_{m+2})+\cdots+d(a_{n-1},a_n)\\
  &<\dfrac{r_m}{2}+\dfrac{r_{m+1}}{2}+\cdots+\dfrac{r_{n-1}}{2}
   <\dfrac{r_1}{2^m}+\dfrac{r_1}{2^{m+1}}+\cdots+\dfrac{r_1}{2^{n-1}}
   <\dfrac{r_1}{2^{m-1}}
   <\dfrac{1}{2^m}.
\end{split}
\]
   Consequently $\{a_n\}_{n=1}^\infty$ is a Cauchy sequence in $(X,d)$, so there exists a unique $a\in X$ satisfying  
\begin{equation}\label{eq-3}\tag*{\textcircled{3}}
   \lim_{n\to\infty}d(a_n,a)=0.
\end{equation}
   For any $k\in\mathbb{N}$, in terms of \ref{eq-3} there exists a natural number $N_k>k$ such that $n\geq N_k$ implies $d(a_n,a)<r_{k+1}/2$, and then 
\[
   d(a_{k+1},a)\leq d(a_{k+1},a_n)+d(a_n,a)<r_{k+1}
\]
because we can deduce $d(a_{k+1},a_n)<r_{k+1}/2$ from $n\geq k+1$ and \ref{eq-1}. 
   Therefore it follows from $a\in B(a_{k+1},r_{k+1})$ and \ref{eq-2} that $a\not\in\bigcup_{j=1}^{k+1}F_j$ for all $k\in\mathbb{N}$. 
   This and $X=\bigcup_{n=1}^\infty F_n$ yield $a\not\in X$, which is a contradiction.
\end{proof}

\chapter{Left-invariant Haar measures}\label{ch-5}
   In this chapter we deal with left-invariant Haar measures on topological groups. 
   The setting of this chapter is as follows: 
\begin{itemize}  
\item    
   $G$ is a locally compact Hausdorff topological group.
\end{itemize}
   Besides, we utilize the following notation:
\begin{itemize}  
\item 
   $\mathcal{T}$ : the set of open subsets of $G$, 
\item 
   $\mathscr{B}$ : the $\sigma$-algebra on $G$ generated by $\mathcal{T}$, i.e., the Borel field on $G$,
\item 
   $\mathcal{C}$ : the set of compact subsets of $G$,
\item 
   $\mathcal{U}$ : the set of open neighborhoods of the unit element $e\in G$,
\item
   $W^\circ$ : the interior of a subset $W\subset G$,
\item
   $l_g$ (resp.\ $r_g$) : $2^G\to 2^G$, $A\mapsto gA$ (resp.\ $Ag$), for $g\in G$, 
\item
   $c_B$ : the characteristic function of a subset $B\subset G$, 
\item 
   $\mathscr{C}_{\geq 0}(G,\mathbb{R})
   :=\big\{\begin{array}{@{}c|c@{}}
   f:G\to\mathbb{R} 
   & \mbox{(1) $f$ is continuous, (2) $\operatorname{supp}(f)\subset G$ is compact, (3) $f(g)\geq 0$ for all $g\in G$}
   \end{array}\big\}$.
\end{itemize} 
   Here $2^G$ stands for the power set of $G$.

\section{Definition of left-invariant Haar measure}\label{sec-5.1}
   We first give a lemma, next state Theorem \ref{thm-5.1.2} and then recall the definition of left-invariant Haar measure.

\begin{lemma}\label{lem-5.1.1}
\begin{enumerate}
\item[]
\item[{\rm (1)}] 
   $\mathcal{C}\subset\mathscr{B}$.
\item[{\rm (2)}]
   $l_g(\mathscr{B})\subset\mathscr{B}$, $r_g(\mathscr{B})\subset\mathscr{B}$ for each $g\in G$.
\end{enumerate}
\end{lemma}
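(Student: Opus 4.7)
The plan is short because both statements are essentially topological.

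For (1), my strategy is to invoke the standard fact that in a Hausdorff space compact subsets are closed. Since $G$ is a Hausdorff topological group, any $K\in\mathcal{C}$ is closed, so $G\setminus K\in\mathcal{T}\subset\mathscr{B}$, and taking complements (which $\mathscr{B}$ is closed under) gives $K\in\mathscr{B}$. This establishes $\mathcal{C}\subset\mathscr{B}$ immediately; no measure-theoretic work is needed.

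For (2), my plan is to use that $l_g:G\to G$ and $r_g:G\to G$ are homeomorphisms, with continuous inverses $l_{g^{-1}}$ and $r_{g^{-1}}$, so they map $\mathcal{T}$ bijectively to $\mathcal{T}$. I would then consider the family
\[
\mathscr{B}_g:=\{B\subset G \,|\, l_g(B)\in\mathscr{B}\}.
\]
A direct check shows $\mathscr{B}_g$ is a $\sigma$-algebra: it contains $G$, it is closed under complements because $l_g(G\setminus B)=G\setminus l_g(B)$ (as $l_g$ is a bijection), and it is closed under countable unions because $l_g(\bigcup_n B_n)=\bigcup_n l_g(B_n)$. Since $l_g(\mathcal{T})\subset\mathcal{T}\subset\mathscr{B}$ we have $\mathcal{T}\subset\mathscr{B}_g$, hence the $\sigma$-algebra $\mathscr{B}$ generated by $\mathcal{T}$ satisfies $\mathscr{B}\subset\mathscr{B}_g$, which is exactly $l_g(\mathscr{B})\subset\mathscr{B}$. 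The argument for $r_g$ is identical.

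Neither step involves any genuine obstacle; the only point that requires care is the elementary verification that $\mathscr{B}_g$ is a $\sigma$-algebra, and the observation that one must use bijectivity of $l_g$ (not merely continuity) to commute $l_g$ with complements. Once this is recorded, the lemma follows from general topology without any reference to the group structure beyond the continuity of translations.
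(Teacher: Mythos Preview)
Your proof is correct and essentially identical to the paper's: for (1) both use that compact subsets of a Hausdorff space are closed, and for (2) both show that a certain family (your $\mathscr{B}_g=\{B:l_g(B)\in\mathscr{B}\}$, the paper's $l_{g^{-1}}(\mathscr{B})$, which are literally the same set) is a $\sigma$-algebra containing $\mathcal{T}$ and hence contains $\mathscr{B}$. The only cosmetic difference is that you spell out the $\sigma$-algebra verification in more detail.
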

\begin{proof}
   (1).
   Since $G$ is a Hausdorff space, every $C\in\mathcal{C}$ is a closed subset of $G$, and we have (1).\par
  
   (2). 
   Since the left translation $L_{g^{-1}}:G\to G$ is a homeomorphism, $l_{g^{-1}}(\mathscr{B})$ is a $\sigma$-algebra on $G$ and includes $\mathcal{T}=l_{g^{-1}}(\mathcal{T})$. 
   Hence we see that $\mathscr{B}\subset l_{g^{-1}}(\mathscr{B})$ because $\mathscr{B}$ is the least $\sigma$-algebra on $G$ including $\mathcal{T}$.
   It follows from $\mathscr{B}\subset l_{g^{-1}}(\mathscr{B})$ that $l_g(\mathscr{B})\subset\mathscr{B}$. 
   Similarly, $r_g(\mathscr{B})\subset\mathscr{B}$.   
\end{proof} 

   Lemma \ref{lem-5.1.1} ensures that the conditions (p6), (p7) in the following theorem are well-defined:
\begin{theorem}[{cf.\ Haar \cite{Ha}, von Neumann \cite{vN}}]\label{thm-5.1.2}
   There exists a set function $\mu:\mathscr{B}\to\mathbb{R}\amalg\{\infty\}$ such that 
\begin{enumerate}
\item[{\rm (p1)}]
   $0\leq\mu(A)\leq\infty$ for all $A\in\mathscr{B}$, 
\item[{\rm (p2)}]
   $\mu(\emptyset)=0$,
\item[{\rm (p3)}] 
   $A_n\in\mathscr{B}$ $(n=1,2,\dots)$, $A_j\cap A_k=\emptyset$ $(j\neq k)$ imply $\mu(\coprod_{n=1}^\infty A_n)=\sum_{n=1}^\infty\mu(A_n)$,
\item[{\rm (p4)}] 
   $\mu(A)=\inf\{\mu(O):\mbox{$O\in\mathcal{T}$, $A\subset O$}\}$ for every $A\in\mathscr{B}$,
\item[{\rm (p5)}] 
   $\mu(O)=\sup\{\mu(C):\mbox{$C\in\mathcal{C}$, $C\subset O$}\}$ for every $O\in\mathcal{T}$,  
\item[{\rm (p6)}] 
   $\mu(C)<\infty$ for each $C\in\mathcal{C}$,
\item[{\rm (p7)}]  
   $\mu(gA)=\mu(A)$ for all $(g,A)\in G\times\mathscr{B}$, \quad {\rm (left-invariant)}
\item[{\rm (p8)}] 
   $\mu(O)>0$ for each $O\in\mathcal{T}-\{\emptyset\}$.
\end{enumerate}
   In addition, the existence of $\mu$ above is unique up to a positive multiplicative constant whenever $G$ satisfies the second countability axiom.
\end{theorem}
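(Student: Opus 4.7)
The plan is to follow the classical Haar--Weil construction: build a left-invariant content on $\mathcal{C}$ via a covering-number limit, extend it to a Borel measure by inner regularity on $\mathcal{T}$ and outer regularity on $\mathscr{B}$, and finally derive uniqueness under second countability through a Radon--Nikodym argument combined with Fubini.

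For the first step, by local compactness and Hausdorffness fix once and for all a reference $K_0 \in \mathcal{C}$ with $K_0^{\circ} \neq \emptyset$. For $K \in \mathcal{C}$ and $U \in \mathcal{U}$ define the Haar covering number $(K:U) := \min\{n \in \mathbb{N} : \exists g_1,\dots,g_n \in G,\ K \subset \bigcup_{i=1}^{n} l_{g_i}(U)\}$, which is finite by compactness, and set $I_U(K) := (K:U)/(K_0:U)$. Elementary counting yields: (i) $0 \le I_U(K) \le (K : K_0^{\circ})$, a bound independent of $U$; (ii) $I_U$ is left-invariant; (iii) $I_U$ is monotone and subadditive; and (iv) for disjoint $K_1, K_2 \in \mathcal{C}$, using continuity of multiplication and the Hausdorff property, one can find a symmetric $U \in \mathcal{U}$ so small that no single left translate of $U$ meets both $K_1$ and $K_2$, forcing $I_U(K_1 \cup K_2) = I_U(K_1) + I_U(K_2)$. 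Viewing $\{I_U\}_{U \in \mathcal{U}}$ as a net in the product space $\prod_{K \in \mathcal{C}}[0,(K:K_0^{\circ})]$, which is compact by Tychonoff's theorem, directed by reverse inclusion on $\mathcal{U}$, extract a cluster point $\lambda : \mathcal{C} \to [0,\infty)$. Properties (i)--(iv) then imply that $\lambda$ is monotone, left-invariant, and \emph{additive} on disjoint compacts, with $\lambda(K_0) = 1$.

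For the second step, define $\tilde{\mu}(O) := \sup\{\lambda(K) : K \in \mathcal{C},\ K \subset O\}$ for $O \in \mathcal{T}$, and $\mu^{*}(A) := \inf\{\tilde{\mu}(O) : O \in \mathcal{T},\ A \subset O\}$ for $A \subset G$. A Carath\'eodory-type verification shows $\mu^{*}$ is an outer measure under which every Borel set is measurable, so $\mu := \mu^{*}|_{\mathscr{B}}$ is a Borel measure. Then (p1)--(p2) are immediate; (p3) follows from the $\sigma$-additivity of the Carath\'eodory restriction; (p4) and (p5) are built into the construction; (p6) follows from $\lambda(K) \le (K : K_0^{\circ}) < \infty$; (p7) is inherited from the left-invariance of $\lambda$; and (p8) holds because any nonempty $O \in \mathcal{T}$ contains a compact neighborhood $W$ of some point with $\lambda(W) > 0$ (since finitely many left translates of $W$ cover $K_0$).

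For the third step, assume $G$ is second countable, so $G$ is $\sigma$-compact and any $\mu$ as above is $\sigma$-finite and Radon. Given two such measures $\mu$ and $\nu$, the inner and outer regularity together with (p8) imply they have the same null sets, so $\nu \ll \mu \ll \nu$, and the Radon--Nikodym theorem yields a nonnegative Borel function $h$ with $d\nu = h\, d\mu$. Left-invariance of both measures gives $\int_A h(gy)\, d\mu(y) = \nu(gA) = \nu(A) = \int_A h(y)\, d\mu(y)$ for every $g \in G$ and $A \in \mathscr{B}$, whence $h \circ l_g = h$ $\mu$-a.e.\ for each $g$. A standard Fubini argument on $G \times G$ (valid by $\sigma$-finiteness) then upgrades this to $h$ being essentially constant, giving $\nu = c\mu$ for some $c > 0$. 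The principal obstacle is the first step: producing the cluster point $\lambda$ with \emph{exact} additivity on disjoint compacts. This rests on Tychonoff's theorem (as the index set $\mathcal{U}$ is neither countable nor metrizable in general) together with the separation argument in (iv), which must be carried out using only the topological group structure; the extension and the uniqueness argument then follow standard templates.
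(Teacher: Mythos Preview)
Your existence argument (Steps 1--2) is the paper's: your covering ratios $I_U(K)$ are the paper's $h_U(C)$ (Proposition~5.2.4), the Tychonoff extraction of a cluster point is Lemma~5.2.8, additivity on disjoint compacts is Proposition~5.2.11-(vii), and the two-stage extension $\lambda\to\tilde\mu\to\mu^*$ with a Carath\'eodory verification is exactly Subsection~5.2.3. So that half is correct and matches.

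Your uniqueness argument (Step 3) diverges from the paper's and has a gap. You assert that ``inner and outer regularity together with (p8) imply [$\mu$ and $\nu$] have the same null sets,'' but this does not follow from those properties. If $\mu(A)=0$, outer regularity (p4) produces open $O_\epsilon\supset A$ with $\mu(O_\epsilon)<\epsilon$, yet nothing bounds $\nu(O_\epsilon)$; (p8) is a \emph{lower} bound on $\nu$ of nonempty opens, not an upper bound. A compact $K$ with $\mu(K)=0$ must have empty interior, but that alone does not force $\nu(K)=0$ (think of fat Cantor sets). Mutual absolute continuity of two left Haar measures is normally derived \emph{from} uniqueness, not used as an input to it. A clean repair: apply Radon--Nikodym to $\mu$ against $\rho:=\mu+\nu$, where $\mu\ll\rho$ is automatic; your Fubini step then shows the density is an essential constant $c\in(0,1)$, giving $\nu=\tfrac{1-c}{c}\mu$.

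The paper's route (Proposition~5.2.29) avoids Radon--Nikodym and absolute continuity altogether. It fixes $h_0\in\mathscr{C}_{\ge0}(G,\mathbb{R})$ with $\int_G h_0(gx)\,d\nu(g)>0$ for every $x$ (Lemma~5.2.28), sets $F(x,y)=f(x)h_0(yx)\big/\!\int_G h_0(gx)\,d\nu(g)$ for $f\in\mathscr{C}_{\ge0}(G,\mathbb{R})$, and applies Fubini twice on $G\times G$ with the substitutions $x\mapsto y^{-1}x$ and $y\mapsto xy$ (each absorbed by left-invariance) to show that $\int_G f\,d\mu\big/\int_G h_0\,d\mu$ is independent of the Haar measure $\mu$. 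This uses only compactly supported continuous functions and $\sigma$-finiteness, and yields the constant $\lambda=\int h_0\,d\mu\big/\int h_0\,d\nu$ directly.
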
 

\begin{remark}\label{rem-5.1.3}
   Here are comments on Theorem \ref{thm-5.1.2}.
\begin{enumerate}
\item[(i)] 
   The conditions (p1), (p2) and (p3) are just the conditions for $\mu$ to be a measure on $\mathscr{B}$.    
\item[(ii)]
   The conditions (p3) and (p6) imply that for a $g\in G$,
\[
   \left\{\begin{array}{l}
   \mu(\{g\})=\mu(\{g\}\amalg\emptyset)=\mu(\{g\})+\mu(\emptyset),\\
   \mu(\{g\})<\infty.
   \end{array}\right.
\]
   Accordingly, these conditions imply (p2) $\mu(\emptyset)=0$. 
\item[(iii)] 
   It seems that one can omit the supposition ``$G$ satisfies the second countability axiom'' from this theorem. 
   e.g.\ Theorem 9.2.6 in Cohn \cite[p.290]{Co}. 
\end{enumerate}
\end{remark}

   We will prove this theorem in the next section.

\begin{definition}\label{def-5.1.4}
   A measure $\mu$ on $\mathscr{B}$ is called a non-zero {\it left-invariant Haar measure} on $G$,\index{left-invariant Haar measure@left-invariant Haar measure\dotfill} if it satisfies the five conditions (p4) through (p8) in Theorem \ref{thm-5.1.2}.
\end{definition}

\section{Proof of Theorem 5.1.2}\label{sec-5.2}
   We take four steps to prove Theorem \ref{thm-5.1.2}. 
   In Subsection \ref{subsec-5.2.1} we first define a non-negative integer $\sharp(C:W)$ and a set function $h_U:\mathcal{C}\to\mathbb{Q}$. 
   In Subsection \ref{subsec-5.2.2} we get a set function $h_\bullet:\mathcal{C}\to\mathbb{R}$ by taking $\sharp(C:W)$ and $h_U$ into consideration.
   In Subsection \ref{subsec-5.2.3} we construct a Carath\'{e}odory outer measure $\mu^*$ on $G$ from the function $h_\bullet$. 
   Finally in Subsection \ref{subsec-5.2.4} we complete the proof of Theorem \ref{thm-5.1.2}. 
   The arguments below will be similar to those in Cohn \cite[Section 9.2]{Co}.

\subsection{Step 1/4, $\sharp(C:W)\in\mathbb{Z}_{\geq 0}$ \& $h_U:\mathcal{C}\to\mathbb{Q}$}\label{subsec-5.2.1}    
   For any $C\in\mathcal{C}$ and any subset $W\subset G$ with $W^\circ\neq\emptyset$, one puts
\begin{equation}\label{eq-5.2.1}
   \sharp(C:W):=\min\{n\in\mathbb{Z}_{\geq 0} \,|\, \mbox{there exist $n$ elements $g_1,g_2,\dots,g_n\in G$ so that $C\subset\bigcup_{i=1}^ng_iW$}\}.
\end{equation}
   This \eqref{eq-5.2.1} is well-defined because $\emptyset\neq\{n\in\mathbb{Z}_{\geq 0} \,|\, \mbox{there exist $n$ elements $g_1,g_2,\dots,g_n\in G$ so that $C\subset\bigcup_{i=1}^ng_iW$}\}$ follows from $C\in\mathcal{C}$ and $W^\circ\neq\emptyset$.
   In view of \eqref{eq-5.2.1} we see that 
\begin{equation}\label{eq-5.2.2} 
\begin{array}{ll}
   \sharp(C:W)\in\mathbb{Z}_{\geq 0}; & \mbox{$C=\emptyset$ if and only if $\sharp(C:W)=0$}.
\end{array}
\end{equation}
   Since $G$ is locally compact, there exists a $C_0\in\mathcal{C}$ whose interior is non-empty. 
   By use of this $C_0$ and a given $U\in\mathcal{U}$, let us define a set function $h_U:\mathcal{C}\to\mathbb{Q}$ by 
\begin{equation}\label{eq-5.2.3} 
   \mbox{$h_U(C):=\dfrac{\sharp(C:U)}{\sharp(C_0:U)}$ for $C\in\mathcal{C}$},  
\end{equation}
where we remark that \eqref{eq-5.2.3} is well-defined due to \eqref{eq-5.2.2}, $C_0\neq\emptyset$ and $U\in\mathcal{U}$.
   The above $h_U$ has the following properties:

\begin{proposition}\label{prop-5.2.4}
   For any $U\in\mathcal{U}$ and $C,C_1,C_2\in\mathcal{C}$,
\begin{enumerate}
\item[{\rm (i)}]
   $0\leq h_U(C)\leq\sharp(C:C_0)\in\mathbb{Z}$, 
\item[{\rm (ii)}]
   $h_U(\emptyset)=0$, 
\item[{\rm (iii)}]
   $h_U(C_0)=1$, 
\item[{\rm (iv)}]
   $h_U(gC)=h_U(C)$ for all $g\in G$, 
\item[{\rm (v)}]
   $C_1\subset C_2$ implies $h_U(C_1)\leq h_U(C_2)$,
\item[{\rm (vi)}]
   $h_U(C_1\cup C_2)\leq h_U(C_1)+h_U(C_2)$,
\item[{\rm (vii)}]
   $C_1U^{-1}\cap C_2U^{-1}=\emptyset$ implies $h_U(C_1\cup C_2)=h_U(C_1)+h_U(C_2)$. 
   Here $U^{-1}:=\{u^{-1} \,|\, u\in U\}$.  
\end{enumerate} 
\end{proposition}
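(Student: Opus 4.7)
The plan is to reduce every clause of the proposition to an elementary statement about the non-negative integer $\sharp(C:W)$ defined in \eqref{eq-5.2.1}, and then read off the corresponding statement about $h_U$ using \eqref{eq-5.2.3}. Since $h_U$ is a positive rational multiple of $\sharp(\cdot:U)$, issues (ii)--(vii) are equivalent to the parallel assertions with $h_U$ replaced by $\sharp(\cdot:U)$, and (iii) is built into the normalization by $\sharp(C_0:U)$.

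First I would dispose of the easy items. Clauses (ii) and (iii) are immediate from the second half of \eqref{eq-5.2.2} and from the definition \eqref{eq-5.2.3}, respectively. For (iv), I would observe that $C\subset\bigcup_{i=1}^n g_iU$ holds if and only if $gC\subset\bigcup_{i=1}^n(gg_i)U$, so $\sharp(gC:U)=\sharp(C:U)$. Clause (v) is similarly immediate: any finite cover of $C_2$ by left translates of $U$ covers $C_1$ as well. For the upper bound in (i), the idea is to compose two minimal covers: pick $g_1,\dots,g_n\in G$ with $C_0\subset\bigcup_{i=1}^n g_iU$ and $n=\sharp(C_0:U)$, and $a_1,\dots,a_m\in G$ with $C\subset\bigcup_{j=1}^m a_jC_0$ and $m=\sharp(C:C_0)$. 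Then $C\subset\bigcup_{i,j}a_jg_iU$, whence $\sharp(C:U)\le mn$, and dividing by $\sharp(C_0:U)$ gives $h_U(C)\le\sharp(C:C_0)\in\mathbb{Z}$.

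For (vi), concatenating a minimal cover $C_1\subset\bigcup_{i=1}^{k_1}g_iU$ with a minimal cover $C_2\subset\bigcup_{j=1}^{k_2}a_jU$ produces a cover of $C_1\cup C_2$ by $k_1+k_2$ translates, so $\sharp(C_1\cup C_2:U)\le\sharp(C_1:U)+\sharp(C_2:U)$. The main point of the proposition is (vii), for which the key observation is the equivalence
\[
   gU\cap C_i\neq\emptyset\iff g\in C_iU^{-1},
\]
valid for any $g\in G$, $i\in\{1,2\}$. Under the hypothesis $C_1U^{-1}\cap C_2U^{-1}=\emptyset$, no single left translate $gU$ can meet both $C_1$ and $C_2$. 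Thus given any cover $C_1\cup C_2\subset\bigcup_{i=1}^k g_iU$ realizing $k=\sharp(C_1\cup C_2:U)$, after discarding those $g_iU$ that meet neither $C_1$ nor $C_2$ the remaining indices split into two disjoint groups, one covering $C_1$ and the other $C_2$. This yields $\sharp(C_1:U)+\sharp(C_2:U)\le\sharp(C_1\cup C_2:U)$, which combined with (vi) gives the desired equality.

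I do not expect a serious obstacle here: the whole proposition is a bookkeeping exercise about minimal covers, and the only substantive input is the set-theoretic reformulation of the disjointness hypothesis in (vii) via $gU\cap C_i\neq\emptyset\Leftrightarrow g\in C_iU^{-1}$. The one small subtlety to keep in mind is that in the proof of (vii) one must first throw away the useless translates $g_iU$ from a minimal cover of $C_1\cup C_2$, so that the remaining indices are cleanly partitioned; but this is harmless because the discarded translates do not affect the cardinality bound in the direction we need.
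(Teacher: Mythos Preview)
Your proposal is correct and follows essentially the same route as the paper: each clause is reduced to the corresponding statement about $\sharp(\cdot:U)$, with (i) handled by composing minimal covers and (vii) by the observation that no translate $gU$ can meet both $C_1$ and $C_2$ under the hypothesis. The only cosmetic difference is that in (vii) you explicitly discard the translates meeting neither set, whereas the paper simply partitions all $\ell$ indices into two groups summing to $\ell$; both give the needed inequality $\sharp(C_1:U)+\sharp(C_2:U)\le\sharp(C_1\cup C_2:U)$.
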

\begin{proof}
   (i). 
   It is enough to show $h_U(C)\leq\sharp(C:C_0)$ because of \eqref{eq-5.2.2} and \eqref{eq-5.2.3}. 
   Let $n:=\sharp(C:C_0)$, $m:=\sharp(C_0:U)$. 
   Then, by \eqref{eq-5.2.1} there exist $n$ elements $g_1,g_2,\dots,g_n\in G$ and $m$ elements $h_1,h_2,\dots,h_m\in G$ such that $C\subset\bigcup_{i=1}^ng_iC_0$ and $C_0\subset\bigcup_{j=1}^mh_jU$, respectively. 
   Accordingly $C\subset\bigcup_{i=1}^n\bigcup_{j=1}^mg_ih_jU$, and so \eqref{eq-5.2.1} implies
\[
   \sharp(C:U)\leq nm=\sharp(C:C_0)\sharp(C_0:U).
\]
   This, \eqref{eq-5.2.3} and $\sharp(C_0:U)>0$ yield $h_U(C)\leq\sharp(C:C_0)$.\par
   
   (ii), (iii) are immediate from \eqref{eq-5.2.2} and \eqref{eq-5.2.3}.\par
   
   (iv). 
   By \eqref{eq-5.2.3} it suffices to show $\sharp(gC:U)=\sharp(C:U)$.
   Let $k:=\sharp(gC:U)$, $\ell:=\sharp(C:U)$. 
   Then, by \eqref{eq-5.2.1} there exist $g_1,g_2,\dots,g_k,h_1,h_2,\dots,h_\ell\in G$ such that $gC\subset\bigcup_{a=1}^kg_aU$, $C\subset\bigcup_{b=1}^\ell h_bU$. 
   On the one hand; from $gC\subset\bigcup_{a=1}^kg_aU$ we obtain $C\subset\bigcup_{a=1}^k(g^{-1}g_a)U$, and hence $\ell=\sharp(C:U)\leq k$ by \eqref{eq-5.2.1}. 
   On the other hand; from $C\subset\bigcup_{b=1}^\ell h_bU$ one obtains $gC\subset\bigcup_{b=1}^\ell(gh_b)U$, and $k=\sharp(gC:U)\leq\ell$. 
   Therefore $k=\ell$ holds, namely $\sharp(gC:U)=\sharp(C:U)$.\par
   
   (v). 
   By \eqref{eq-5.2.3} and $\sharp(C_0:U)>0$ it suffices to show $\sharp(C_2:U)\geq\sharp(C_1:U)$.
   The supposition ``$C_1\subset C_2$'' implies that 
\begin{multline*}
   \{n\in\mathbb{Z}_{\geq 0} \,|\, \mbox{there exist $n$ elements $g_1,g_2,\dots,g_n\in G$ so that $C_2\subset\bigcup_{i=1}^ng_iU$}\}\\
   \subset 
   \{m\in\mathbb{Z}_{\geq 0} \,|\, \mbox{there exist $m$ elements $h_1,h_2,\dots,h_m\in G$ so that $C_1\subset\bigcup_{j=1}^mh_jU$}\},   
\end{multline*}
and hence 
\begin{multline*}
   \min\{n\in\mathbb{Z}_{\geq 0} \,|\, \mbox{there exist $n$ elements $g_1,g_2,\dots,g_n\in G$ so that $C_2\subset\bigcup_{i=1}^ng_iU$}\}\\
   \geq 
   \min\{m\in\mathbb{Z}_{\geq 0} \,|\, \mbox{there exist $m$ elements $h_1,h_2,\dots,h_m\in G$ so that $C_1\subset\bigcup_{j=1}^mh_jU$}\}.   
\end{multline*}
   Consequently we deduce $\sharp(C_2:U)\geq\sharp(C_1:U)$ by \eqref{eq-5.2.1}.\par

   (vi). 
   By \eqref{eq-5.2.3} and $\sharp(C_0:U)>0$ it suffices to show $\sharp(C_1\cup C_2:U)\leq\sharp(C_1:U)+\sharp(C_2:U)$. 
   Let $m:=\sharp(C_1:U)$, $n:=\sharp(C_2:U)$. 
   Then, by \eqref{eq-5.2.1} there exist $h_1,h_2,\dots,h_m,g_1,g_2,\dots,g_n\in G$ such that $C_1\subset\bigcup_{j=1}^mh_jU$, $C_2\subset\bigcup_{i=1}^ng_iU$; and it follows that $C_1\cup C_2\subset \bigcup_{j=1}^mh_jU\cup\bigcup_{i=1}^ng_iU$. 
   So, \eqref{eq-5.2.1} yields $\sharp(C_1\cup C_2:U)\leq m+n=\sharp(C_1:U)+\sharp(C_2:U)$.\par
   
   (vii). 
   By (vi), \eqref{eq-5.2.3} and $\sharp(C_0:U)>0$ it suffices to show $\sharp(C_1:U)+\sharp(C_2:U)\leq\sharp(C_1\cup C_2:U)$.
   Let $\ell:=\sharp(C_1\cup C_2:U)$. 
   Then, there exist $g_1,g_2,\dots,g_\ell\in G$ such that 
\[
   (C_1\cup C_2)\subset\bigcup_{a=1}^\ell g_aU
\]
by \eqref{eq-5.2.1}.
   Here, the supposition ``$C_1U^{-1}\cap C_2U^{-1}=\emptyset$'' enables us to assert that each set $g_aU$ meets at most one of $C_1$ and $C_2$. 
   Therefore one can separate $\{g_a\}_{a=1}^\ell$ into two pieces $\{h_b\}_{b=1}^n$ and $\{k_c\}_{c=1}^m$ so that $C_1\subset\bigcup_{b=1}^nh_bU$ and $C_2\subset\bigcup_{c=1}^mk_cU$.
   This and \eqref{eq-5.2.1} imply $\sharp(C_1:U)+\sharp(C_2:U)\leq n+m=\ell=\sharp(C_1\cup C_2:U)$.  
\end{proof}

\subsection{Step 2/4, $h_\bullet:\mathcal{C}\to\mathbb{R}$}\label{subsec-5.2.2}
   Our goal in this subsection is to demonstrate Proposition \ref{prop-5.2.11}.\par

   For each $C\in\mathcal{C}$ we define a closed (finite) interval $I_C\subset\mathbb{R}$ as 
\[
   I_C:=\big[0,\sharp(C:C_0)\big]
\]
(cf.\ \eqref{eq-5.2.1}), and denote by $X$ the product space of the family $\{I_C\}_{C\in\mathcal{C}}$ of topological spaces. 
   Tikhonov's product theorem implies that 
\begin{equation}\label{eq-5.2.5} 
   \mbox{the topological space $X=\Pi_{C\in\mathcal{C}}I_C$ is compact}.
\end{equation}

\begin{remark}\label{rem-5.2.6}
   In general, one can identify ``a set function $h:\mathcal{C}\to\mathbb{R}$ such that $h(C)\in I_C$ for all $C\in\mathcal{C}$'' with ``an element of $X=\Pi_{C\in\mathcal{C}}I_C$'' via $h\mapsto\bigl(h(C)\bigr)_{C\in\mathcal{C}}\in X$.
   Under this identification we construct arguments hereafter.
\end{remark}

   Proposition \ref{prop-5.2.4}-(i) and Remark \ref{rem-5.2.6} allow us to assume that $h_U\in X$ for all $U\in\mathcal{U}$. 
   For this reason, we can define a closed subset $S(V)\subset X$ by 
\begin{equation}\label{eq-5.2.7}
   S(V):=\overline{\{h_U \,|\, \mbox{$U\in\mathcal{U}$, $U\subset V$}\}} \quad\mbox{(the closure in $X$)}
\end{equation}
for $V\in\mathcal{U}$.

\begin{lemma}\label{lem-5.2.8}
   There exists an $h_\bullet\in\bigcap_{V\in\mathcal{U}}S(V)$.
\end{lemma}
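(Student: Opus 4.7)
The plan is to apply the finite intersection property characterization of compactness: since $X = \Pi_{C\in\mathcal{C}} I_C$ is compact by \eqref{eq-5.2.5}, it suffices to show that the family $\{S(V)\}_{V\in\mathcal{U}}$ of closed subsets of $X$ has the finite intersection property.

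First I would observe that each $S(V)$ is nonempty: picking any $U \in \mathcal{U}$ with $U \subset V$ (such $U$ exists, for instance $U = V$ itself), we get $h_U \in S(V)$ by \eqref{eq-5.2.7}. Next, given finitely many $V_1, V_2, \dots, V_n \in \mathcal{U}$, I would form the intersection $W := V_1 \cap V_2 \cap \cdots \cap V_n$. Since $\mathcal{U}$ is closed under finite intersections (as a neighborhood filter of $e$), $W \in \mathcal{U}$. Then for every $U \in \mathcal{U}$ with $U \subset W$, one has $U \subset V_i$ for each $1 \leq i \leq n$, so by the definition \eqref{eq-5.2.7}, $h_U \in S(V_i)$ for every $i$. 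In particular $h_U \in \bigcap_{i=1}^n S(V_i)$, which proves the finite intersection property.

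Finally, applying the compactness of $X$ (cf.\ \eqref{eq-5.2.5}) to the family $\{S(V)\}_{V\in\mathcal{U}}$ of closed subsets with the finite intersection property, I conclude that $\bigcap_{V\in\mathcal{U}}S(V) \neq \emptyset$. Any element $h_\bullet$ of this intersection gives what we want; implicitly $h_\bullet$ is regarded as a set function $\mathcal{C}\to\mathbb{R}$ with $h_\bullet(C) \in I_C$ via the identification noted in Remark \ref{rem-5.2.6}.

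I do not anticipate any real obstacle here; the argument is essentially the standard Tikhonov-plus-finite-intersection-property trick that appears in every proof of Haar measure existence. The only subtlety worth stating carefully is the well-definedness check that $h_U \in X$ (so that $h_U$ may legitimately be used as an element of the product space), which is exactly Proposition \ref{prop-5.2.4}-(i) together with Remark \ref{rem-5.2.6}.
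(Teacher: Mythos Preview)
Your proof is correct and follows essentially the same approach as the paper: both verify the finite intersection property of $\{S(V)\}_{V\in\mathcal{U}}$ by intersecting finitely many $V_i$'s to get $W\in\mathcal{U}$ and exhibiting $h_W$ (or any $h_U$ with $U\subset W$) in the common intersection, then invoke the compactness of $X$ from \eqref{eq-5.2.5}. The paper is slightly terser, simply taking $h_V$ with $V=\bigcap_i V_i$, but the argument is the same.
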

\begin{proof}
   The family $\{S(V)\}_{V\in\mathcal{U}}$ consists of closed subsets of $X$. 
   It has the finite intersection property. 
   Indeed; for any finite elements $V_1,\dots,V_m\in\mathcal{U}$ one sees that $V:=\bigcap_{i=1}^mV_i$ belongs to $\mathcal{U}$, and moreover $h_V\in\bigcap_{i=1}^mS(V_i)$; hence $\{S(V)\}_{V\in\mathcal{U}}$ has the desired property.
   Consequently we deduce $\bigcap_{V\in\mathcal{U}}S(V)\neq\emptyset$ by \eqref{eq-5.2.5}.
\end{proof}

   We prepare two lemmas for proving Proposition \ref{prop-5.2.11}. 
\begin{lemma}\label{lem-5.2.9}
   For each $C\in\mathcal{C}$, the following two items hold$:$ 
\begin{enumerate}
\item[{\rm (1)}] 
   $\Pr_C:X\to I_C$, $h\mapsto h(C)$ is continuous$;$ in particular, it is a continuous mapping of $X$ into $\mathbb{R}$.
   cf.\ Remark {\rm \ref{rem-5.2.6}}.
\item[{\rm (2)}]
   $0\leq h(C)\leq \sharp(C:C_0)$ for all $h\in X$. 
\end{enumerate}
\end{lemma}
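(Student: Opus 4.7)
The plan is to extract both assertions directly from the definition of $X=\Pi_{C\in\mathcal{C}}I_C$ as a product of topological spaces, together with the identification in Remark \ref{rem-5.2.6} of elements of $X$ with set functions $h:\mathcal{C}\to\mathbb{R}$ satisfying $h(C)\in I_C$.

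For (1), I would invoke the universal property of the product topology: for each fixed $C\in\mathcal{C}$ the canonical projection $\Pr_C:\Pi_{C'\in\mathcal{C}}I_{C'}\to I_C$ is continuous by definition (indeed, the product topology is the coarsest topology making every such projection continuous). Under the identification of $h\in X$ with the tuple $\bigl(h(C')\bigr)_{C'\in\mathcal{C}}$, this projection is precisely the evaluation $h\mapsto h(C)$. Composing with the inclusion $I_C=[0,\sharp(C:C_0)]\hookrightarrow\mathbb{R}$, which is continuous, yields that $\Pr_C:X\to\mathbb{R}$ is continuous.

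For (2), the bound is immediate: by the identification in Remark \ref{rem-5.2.6}, every $h\in X$ satisfies $h(C)\in I_C=[0,\sharp(C:C_0)]$, i.e.\ $0\leq h(C)\leq\sharp(C:C_0)$.

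There is essentially no obstacle here; the lemma is a direct unwinding of definitions, and its purpose is to prepare elementary tools (continuity of coordinate projections and uniform bounds on coordinates) that will be applied to the particular element $h_\bullet\in\bigcap_{V\in\mathcal{U}}S(V)$ constructed in Lemma \ref{lem-5.2.8} in order to derive its properties in the subsequent steps.
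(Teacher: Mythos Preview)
Your proposal is correct and follows essentially the same approach as the paper: both parts are read off directly from the definition of $X$ as the product $\Pi_{C\in\mathcal{C}}I_C$, with (1) being continuity of the coordinate projection and (2) the fact that $h(C)\in I_C=[0,\sharp(C:C_0)]$.
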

\begin{proof}
   (1). 
   $X=\Pi_{C\in\mathcal{C}}I_C$ is the product topological space, and so the projection $X\ni h\mapsto h(C)\in I_C$ is continuous.\par
   
   (2) follows by $h(C)=\Pr_C(h)\in I_C=\big[0,\sharp(C:C_0)\big]$.
\end{proof}

\begin{lemma}\label{lem-5.2.10}
   For any $C_1,C_2\in\mathcal{C}$ with $C_1\cap C_2=\emptyset$, there exist $(O_1,V_1),(O_2,V_2)\in\mathcal{T}\times\mathcal{U}$ which satisfy $O_1\cap O_2=\emptyset$, $C_1V_1\subset O_1$ and $C_2V_2\subset O_2$.
\end{lemma}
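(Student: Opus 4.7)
The plan is to reduce the entire claim to the single task of finding one open neighborhood $V\in\mathcal{U}$ of the identity whose ``thickening'' $VV^{-1}$ avoids the compact set $K:=C_2^{-1}C_1$; once such $V$ is at hand, I will simply take $V_1=V_2:=V$ and set $O_1:=C_1V$, $O_2:=C_2V$, and all four conditions of the lemma will fall out at once.

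First I would recast the desired disjointness algebraically. An equality $c_1v_1=c_2v_2$ with $c_i\in C_i$, $v_i\in V_i$ is equivalent to $c_2^{-1}c_1=v_2v_1^{-1}$, so
\[
   C_1V_1\cap C_2V_2=\emptyset
   \quad\Longleftrightarrow\quad
   C_2^{-1}C_1\cap V_2V_1^{-1}=\emptyset.
\]
Thus the problem collapses to separating the (compact) set $K:=C_2^{-1}C_1$ from a conveniently small symmetric neighborhood of $e$.

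Next I would verify that $K\in\mathcal{C}$ and that $e\notin K$. Inversion is a self-homeomorphism of $G$, so $C_2^{-1}$ is compact; hence $C_2^{-1}\times C_1$ is compact, and $K$ is compact as its image under the continuous multiplication $G\times G\to G$. Because $G$ is Hausdorff, $K$ is therefore closed, so $G\setminus K\in\mathcal{T}$. The assumption $C_1\cap C_2=\emptyset$ translates at once into $e\notin C_2^{-1}C_1=K$, so $G\setminus K$ is an open neighborhood of $e$. Now the continuity of the map $\psi:G\times G\to G$, $(a,b)\mapsto ab^{-1}$, at $(e,e)$ provides a $V\in\mathcal{U}$ with $\psi(V\times V)=VV^{-1}\subset G\setminus K$. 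Setting $V_1=V_2:=V$ and $O_1:=C_1V$, $O_2:=C_2V$, each $O_i=\bigcup_{c\in C_i}cV$ is open as a union of open left translates; the inclusions $C_iV_i\subset O_i$ are tautological; and the disjointness $O_1\cap O_2=\emptyset$ is exactly the reformulation above applied to $V_2V_1^{-1}=VV^{-1}$.

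The main obstacle is mild: there is no serious technical difficulty beyond spotting the correct algebraic reformulation. The one point that demands care is the closedness of $K=C_2^{-1}C_1$---the product of two closed sets in a topological group need not be closed in general---and this rests squarely on the compactness of both factors together with the Hausdorff property of $G$.
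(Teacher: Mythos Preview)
Your proof is correct and takes a genuinely different route from the paper. The paper proceeds in two stages: it first uses the regularity of $G$ (a Hausdorff topological group is regular) together with the compactness of $C_1$ to produce disjoint open sets $O_1\supset C_1$ and $O_2\supset C_2$, and only then runs a second compactness argument on each $C_a$ separately---covering $C_a$ by sets $hU_h$ with $U_hU_h\subset W_h$ and $hW_h\subset O_a$, extracting a finite subcover, and intersecting the $U_{h_j}$'s to manufacture $V_a$ with $C_aV_a\subset O_a$.

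Your argument is shorter and more algebraic: you encode the disjointness $C_1V\cap C_2V=\emptyset$ as $VV^{-1}\cap C_2^{-1}C_1=\emptyset$, observe that $K=C_2^{-1}C_1$ is compact (hence closed, since $G$ is Hausdorff) and misses $e$, and then invoke continuity of $(a,b)\mapsto ab^{-1}$ once at $(e,e)$ to produce the required $V$. This avoids the repeated finite-cover extractions entirely and yields the somewhat stronger conclusion $V_1=V_2$ and $O_i=C_iV_i$. The paper's approach, by contrast, makes the underlying regularity of the space more visible and is perhaps closer to how one would argue in a non-group setting; your approach exploits the group structure more fully.
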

\begin{proof}
   In case of $C_1=\emptyset$ we can get the conclusion by setting $O_1:=\emptyset$ and $O_2=V_1=V_2:=G$. 
   Similarly one can do so in case of $C_2=\emptyset$.\par

   Now, let us suppose that $C_1\neq\emptyset$ and $C_2\neq\emptyset$.
   On the one hand; $C_2$ is a closed subset of $G$ since $G$ is a Hausdorff space and $C_2\in\mathcal{C}$. 
   On the other hand; $G$ is a regular space since $G$ is a Hausdorff topological group.
   Consequently, for an arbitrary $g\in C_1$, there exist $P_g,Q_g\in\mathcal{T}$ such that 
\[
   \begin{array}{lll}
   g\in P_g, & C_2\subset Q_g, & P_g\cap Q_g=\emptyset,
   \end{array}
\]
where we remark that $C_1\cap C_2=\emptyset$, $g\in C_1$ lead to $g\not\in C_2$. 
   In terms of $C_1\subset\bigcup_{g\in C_1}P_g$ and $C_1\in\mathcal{C}$, there exist finite elements $g_1,\dots,g_n\in C_1$ such that $C_1\subset\bigcup_{i=1}^nP_{g_i}$. 
   Setting $O_1:=\bigcup_{i=1}^nP_{g_i}$ and $O_2:=\bigcap_{i=1}^nQ_{g_i}$ we deduce 
\[
   \begin{array}{llll}
   O_1,O_2\in\mathcal{T}, & O_1\cap O_2=\emptyset, & C_1\subset O_1, & C_2\subset O_2.
   \end{array}
\] 
   The rest of proof is to confirm that for each $a=1,2$, there exists a $V_a\in\mathcal{U}$ satisfying $C_aV_a\subset O_a$. 
   Fix any element $h\in C_a$. 
   From $h\in C_a\subset O_a\in\mathcal{T}$ we obtain a $W_h\in\mathcal{U}$ such that 
\[
   hW_h\subset O_a.
\] 
   Moreover, since the mapping $G\times G\ni(g_1,g_2)\mapsto g_1g_2\in G$ is continuous at $(e,e)$ and $W_h$ is an open neighborhood of $e\in G$, there exists a $U_h\in\mathcal{U}$ satisfying 
\[
   U_hU_h\subset W_h.
\]
   In terms of $C_a\subset\bigcup_{h\in C_a}hU_h$ and $C_a\in\mathcal{C}$, there exist finite elements $h_1,\dots,h_\ell\in C_a$ such that $C_a\subset\bigcup_{j=1}^\ell h_jU_{h_j}$. 
   Now, let $V_a:=\bigcap_{j=1}^\ell U_{h_j}$. 
   Then it follows that $V_a\in\mathcal{U}$; besides, for any $k\in C_a$ ($\subset\bigcup_{j=1}^\ell h_jU_{h_j}$) there exists a $1\leq i\leq\ell$ such that $k\in h_iU_{h_i}$, and hence 
\[
   kV_a\subset h_iU_{h_i}V_a\subset h_iU_{h_i}U_{h_i}\subset h_iW_{h_i}\subset O_a.
\]
   This implies $C_aV_a\subset O_a$.   
\end{proof}

   Now, let us prove 
\begin{proposition}\label{prop-5.2.11}
   For any $C,C_1,C_2\in\mathcal{C}$,
\begin{enumerate}
\item[{\rm (i)}]
   $0\leq h(C)\leq\sharp(C:C_0)\in\mathbb{Z}$ for all $h\in X=\Pi_{C\in\mathcal{C}}I_C$,
\item[{\rm (ii)}]
   $h(\emptyset)=0$ for all $h\in X$,
\item[{\rm (iii)}]
   $h(C_0)=1$ for all $h\in S(G)$,
\item[{\rm (iv)}]
   $h(gC)=h(C)$ for all $g\in G$ and $h\in S(G)$,
\item[{\rm (v)}]
   $C_1\subset C_2$ implies $h(C_1)\leq h(C_2)$ for all $h\in S(G)$,
\item[{\rm (vi)}]
   $h(C_1\cup C_2)\leq h(C_1)+h(C_2)$ for all $h\in S(G)$,
\item[{\rm (vii)}]
   $C_1\cap C_2=\emptyset$ implies $h_\bullet(C_1\cup C_2)=h_\bullet(C_1)+h_\bullet(C_2)$.
\end{enumerate} 
   Remark here that $h_\bullet\in S(G)\subset X$, and {\rm (i)} through {\rm (vii)} hold for $h_\bullet$.
   cf.\ \eqref{eq-5.2.7}.
\end{proposition}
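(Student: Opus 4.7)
\medskip
\noindent\textbf{Proof proposal.} The strategy is to read off items (i)--(ii) directly from the definitions, to derive (iii)--(vi) by passing the corresponding properties of $h_U$ in Proposition~\ref{prop-5.2.4} to the closure $S(G)$ via continuity of the coordinate projections $\Pr_C$, and to reserve the critical use of $h_\bullet \in \bigcap_{V\in\mathcal{U}}S(V)$ for the additivity (vii), where the disjointness hypothesis $C_1\cap C_2=\emptyset$ needs to be strengthened to $C_1U^{-1}\cap C_2U^{-1}=\emptyset$ via Lemma~\ref{lem-5.2.10}.

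First, (i) is Lemma~\ref{lem-5.2.9}-(2). For (ii), note $\sharp(\emptyset:C_0)=0$ by \eqref{eq-5.2.2}, so $I_\emptyset=\{0\}$ and hence $h(\emptyset)=0$ for every $h\in X$. For (iii)--(vi), the plan is the same in each case: exhibit the set in question as the preimage of a closed subset of $\mathbb{R}$ (or $\mathbb{R}^2$) under a continuous function built from the projections $\Pr_C$, so that the set is closed in $X$; then observe via Proposition~\ref{prop-5.2.4} that it contains every $h_U$ with $U\in\mathcal{U}$, hence contains $\overline{\{h_U:U\in\mathcal{U}\}}=S(G)$. Concretely, for (iii) use $\Pr_{C_0}^{-1}(\{1\})$; for (iv) use $(\Pr_{gC}-\Pr_C)^{-1}(\{0\})$; for (v) use $(\Pr_{C_2}-\Pr_{C_1})^{-1}([0,\infty))$; and for (vi) use $(\Pr_{C_1}+\Pr_{C_2}-\Pr_{C_1\cup C_2})^{-1}([0,\infty))$.

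The hard part, and the reason the construction of $h_\bullet$ required intersecting over \emph{all} $V\in\mathcal{U}$ rather than just taking a point of $S(G)$, is (vii). Given $C_1\cap C_2=\emptyset$ with $C_1,C_2\in\mathcal{C}$, apply Lemma~\ref{lem-5.2.10} to obtain $(O_1,V_1),(O_2,V_2)\in\mathcal{T}\times\mathcal{U}$ with $C_iV_i\subset O_i$ ($i=1,2$) and $O_1\cap O_2=\emptyset$. Because inversion is a homeomorphism of $G$, $V:=V_1^{-1}\cap V_2^{-1}\in\mathcal{U}$; for any $U\in\mathcal{U}$ with $U\subset V$ we then have $C_iU^{-1}\subset C_iV_i\subset O_i$, so $C_1U^{-1}\cap C_2U^{-1}=\emptyset$, and Proposition~\ref{prop-5.2.4}-(vii) gives $h_U(C_1\cup C_2)=h_U(C_1)+h_U(C_2)$. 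The set
\[
   E:=(\Pr_{C_1\cup C_2}-\Pr_{C_1}-\Pr_{C_2})^{-1}(\{0\})
\]
is closed in $X$ and contains $\{h_U:U\in\mathcal{U},\ U\subset V\}$, hence contains $S(V)$. Since $h_\bullet\in\bigcap_{V'\in\mathcal{U}}S(V')\subset S(V)$ by Lemma~\ref{lem-5.2.8}, we conclude $h_\bullet(C_1\cup C_2)=h_\bullet(C_1)+h_\bullet(C_2)$, establishing (vii).

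The only real obstacle is the passage from $C_1\cap C_2=\emptyset$ to $C_1U^{-1}\cap C_2U^{-1}=\emptyset$, which is exactly what Lemma~\ref{lem-5.2.10} was prepared for; the remaining items are routine continuity-and-closure arguments once one recognizes that each property of $h_U$ in Proposition~\ref{prop-5.2.4} cuts out a closed subset of the compact product $X$.
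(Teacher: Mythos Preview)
Your proof is correct and follows essentially the same approach as the paper's: items (i)--(ii) are read off from the definitions, (iii)--(vi) are obtained by passing Proposition~\ref{prop-5.2.4} to the closure $S(G)$ via continuity of the projections (the paper phrases this as ``continuous function agrees with the desired value on a dense subset'' rather than ``preimage of a closed set,'' but these are equivalent), and for (vii) both you and the paper use Lemma~\ref{lem-5.2.10} to produce a neighborhood $V\in\mathcal{U}$ (your $V_1^{-1}\cap V_2^{-1}$ is exactly the paper's $V_3^{-1}$) on which the additivity holds for all $h_U$ with $U\subset V$, then invoke $h_\bullet\in S(V)$.
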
   
\begin{proof}
   (i) follows from \eqref{eq-5.2.2} and Lemma \ref{lem-5.2.9}-(2).\par
   
   (ii). 
   By Lemma \ref{lem-5.2.9}-(1), $\Pr_{\emptyset}(X)\subset I_{\emptyset}=\big[0,\sharp(\emptyset:C_0)\big]\stackrel{\eqref{eq-5.2.2}}{=}\{0\}$. 
   Hence, $h(\emptyset)=\Pr_{\emptyset}(h)=0$ for all $h\in X$.\par

   (iii). 
   Lemma \ref{lem-5.2.9}-(1) implies that $\Pr_{C_0}:S(G)\to I_{C_0}$, $h\mapsto h(C_0)$, is continuous. 
   Proposition \ref{prop-5.2.4}-(iii), combined with \eqref{eq-5.2.7}, implies that $\Pr_{C_0}=1$ on a dense subset $\{h_U \,|\, U\in\mathcal{U}\}$ of $S(G)$. 
   Consequently $h(C_0)=\Pr_{C_0}(h)=1$ for all $h\in S(G)$.\par
   
   (iv). 
   By Lemma \ref{lem-5.2.9}-(1) we see that $\Pr_{gC}-\Pr_C:S(G)\to\mathbb{R}$ is continuous. 
   Proposition \ref{prop-5.2.4}-(iv) and \eqref{eq-5.2.7} imply that $\Pr_{gC}-\Pr_C=0$ on the dense subset $\{h_U \,|\, U\in\mathcal{U}\}$ of $S(G)$. 
   Thus $h(gC)-h(C)=(\Pr_{gC}-\Pr_C)(h)=0$ for all $h\in S(G)$.\par
   
   (v). 
   By virtue of Lemma \ref{lem-5.2.9}-(1), Proposition \ref{prop-5.2.4}-(v) and \eqref{eq-5.2.7} we deduce that $\Pr_{C_2}-\Pr_{C_1}:S(G)\to\mathbb{R}$ is continuous, and that $\Pr_{C_2}-\Pr_{C_1}\geq0$ on the dense subset $\{h_U \,|\, U\in\mathcal{U}\}\subset S(G)$. 
   Hence $h(C_2)-h(C_1)=(\Pr_{C_1}-\Pr_{C_2})(h)\geq0$ for all $h\in S(G)$.\par
   
   (vi). 
   One can conclude (vi) by arguments similar to those in the above (v) and Proposition \ref{prop-5.2.4}-(vi).\par
   
   (vii). 
   Since $C_1,C_2\in\mathcal{C}$ with $C_1\cap C_2=\emptyset$, Lemma \ref{lem-5.2.10} assures that there exist $(O_1,V_1),(O_2,V_2)\in\mathcal{T}\times\mathcal{U}$ satisfying 
\[
\begin{array}{lll}
   O_1\cap O_2=\emptyset, & C_1V_1\subset O_1, & C_2V_2\subset O_2.
\end{array}
\]
   By use of $V_1,V_2$, we put $V_3:=V_1\cap V_2$. 
   Then, it follows that $V_3,V_3^{-1}\in\mathcal{U}$; and moreover, $U\subset V_3^{-1}$ and $U\in\mathcal{U}$ imply
\[
   h_U(C_1)+h_U(C_2)-h_U(C_1\cup C_2)=0
\]
because of Proposition \ref{prop-5.2.4}-(vii) and $(C_1U^{-1}\cap C_2U^{-1})\subset(C_1V_3\cap C_2V_3)\subset(C_1V_1\cap C_2V_2)\subset(O_1\cap O_2)=\emptyset$. 
   Consequently we deduce that $(\Pr_{C_1}+\Pr_{C_2}-\Pr_{C_1\cup C_2})(h_U)=0$ for all $h_U\in\{h_U \,|\, \mbox{$U\in\mathcal{U}$, $U\subset V_3^{-1}$}\}$. 
   Furthermore, one verifies that  
\[
   \mbox{$(\Pr_{C_1}+\Pr_{C_2}-\Pr_{C_1\cup C_2})(h)=0$ for all $h\in S(V_3^{-1})$}
\]
because $\Pr_{C_1}+\Pr_{C_2}-\Pr_{C_1\cup C_2}:S(V_3^{-1})\to\mathbb{R}$ is continuous and $\{h_U \,|\, \mbox{$U\in\mathcal{U}$, $U\subset V_3^{-1}$}\}$ is dense in $S(V_3^{-1})$. 
   Therefore we obtain $h_\bullet(C_1)+h_\bullet(C_2)-h_\bullet(C_1\cup C_2)=(\Pr_{C_1}+\Pr_{C_2}-\Pr_{C_1\cup C_2})(h_\bullet)=0$ from $h_\bullet\in\bigl(\bigcap_{V\in\mathcal{U}}S(V)\bigr)\subset S(V_3^{-1})$. 
\end{proof}

\subsection{Step 3/4, $\mu^*:2^G\to\mathbb{R}\amalg\{\infty\}$}\label{subsec-5.2.3}

\begin{lemma}\label{lem-5.2.12}
   Set 
\begin{equation}\label{eq-5.2.13}
   \mbox{$\mu_1^*(O):=\sup\{h_\bullet(C) : \mbox{$C\in\mathcal{C}$, $C\subset O$}\}$ for $O\in\mathcal{T}$};
\end{equation} 
\begin{equation}\label{eq-5.2.14}
   \mbox{$\mu^*(A):=\inf\{\mu_1^*(O) : \mbox{$O\in\mathcal{T}$, $A\subset O$}\}$ for $A\in 2^G$}.
\end{equation} 
   Then $\mu_1^*(O)=\mu^*(O)$ holds for each $O\in\mathcal{T}$.
\end{lemma}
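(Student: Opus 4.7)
The plan is to establish the equality by two opposite inequalities, both of which follow from the definitions together with a monotonicity observation for $\mu_1^*$ restricted to open sets.

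For the inequality $\mu^*(O) \leq \mu_1^*(O)$, I would simply observe that $O$ itself belongs to the family $\{O'\in\mathcal{T} : O\subset O'\}$ over which the infimum in \eqref{eq-5.2.14} is taken. Therefore $\mu_1^*(O)$ is one of the values appearing in that set, and the infimum cannot exceed it.

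For the reverse inequality $\mu_1^*(O) \leq \mu^*(O)$, the key step is to verify that $\mu_1^*$ is monotone with respect to set inclusion among open sets: if $O_1,O_2\in\mathcal{T}$ with $O_1\subset O_2$, then any $C\in\mathcal{C}$ with $C\subset O_1$ also satisfies $C\subset O_2$, so the set $\{h_\bullet(C) : C\in\mathcal{C},\,C\subset O_1\}$ is contained in $\{h_\bullet(C) : C\in\mathcal{C},\,C\subset O_2\}$, which yields $\mu_1^*(O_1)\leq\mu_1^*(O_2)$ from \eqref{eq-5.2.13}. Applying this with $O_1:=O$ and $O_2:=O'$ for each $O'\in\mathcal{T}$ containing $O$, and then taking the infimum over such $O'$, gives $\mu_1^*(O)\leq\mu^*(O)$.

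There is no real obstacle here; the lemma is essentially a formal consistency check ensuring that the definition of $\mu^*$ on arbitrary subsets, when specialized to open sets, recovers the preliminary outer-measure $\mu_1^*$ defined directly via inner approximation by compact sets. The only thing to be slightly careful about is the case $\{C\in\mathcal{C}:C\subset O\}=\{\emptyset\}$ (i.e.\ when $O=\emptyset$), but Proposition \ref{prop-5.2.11}-(ii) gives $h_\bullet(\emptyset)=0$, so both sides equal $0$ and the argument goes through uniformly.
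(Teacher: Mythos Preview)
Your proof is correct and follows essentially the same approach as the paper: both establish the equality via two opposite inequalities, using that $O$ itself lies in the indexing family for the infimum, and that $\mu_1^*$ is monotone under inclusion of open sets (via containment of the corresponding sets of $h_\bullet$-values). Your additional remark about the case $O=\emptyset$ is a harmless extra observation not present in the paper's version.
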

\begin{proof}
   On the one hand; it follows from $O\in\mathcal{T}$, $O\subset O$ that 
\[
   \mu_1^*(O)
   \geq\inf\{\mu_1^*(P) : \mbox{$P\in\mathcal{T}$, $O\subset P$}\}
   \stackrel{\eqref{eq-5.2.14}}{=}\mu^*(O).
\]  
   On the other hand; for an arbitrary $Q\in\mathcal{T}$ with $O\subset Q$, one has $\{h_\bullet(C) : \mbox{$C\in\mathcal{C}$, $C\subset O$}\}\subset\{h_\bullet(K) : \mbox{$K\in\mathcal{C}$, $K\subset Q$}\}$, and so \eqref{eq-5.2.13} yields $\mu_1^*(O)\leq\mu_1^*(Q)$. 
   This enables us to show 
\[
   \mu_1^*(O)
   \leq\inf\{\mu_1^*(Q) : \mbox{$Q\in\mathcal{T}$, $O\subset Q$}\}
   \stackrel{\eqref{eq-5.2.14}}{=}\mu^*(O).   
\] 
   Hence $\mu_1^*(O)=\mu^*(O)$ holds.
\end{proof}

   Our first aim in this subsection is to prove Proposition \ref{prop-5.2.18}, which tells us that the $\mu^*$ in \eqref{eq-5.2.14} is a Carath\'{e}odory outer measure on $G$.
   We are going to confirm three lemmas and conclude Proposition \ref{prop-5.2.18} from them.

\begin{lemma}\label{lem-5.2.15}
   Let $C\in\mathcal{C}$, and let $O_1,O_2,\dots,O_k\in\mathcal{T}$ such that $C\subset\bigcup_{a=1}^kO_a$.
   Then, there exist $C_1,C_2,\dots,C_k\in\mathcal{C}$ such that $C_a\subset O_a$ $(1\leq a\leq k)$ and $C=\bigcup_{a=1}^kC_a$. 
\end{lemma}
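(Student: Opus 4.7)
The plan is to exploit local compactness together with the regularity of $G$ (which holds because $G$ is a Hausdorff topological group) in order to shrink the cover $\{O_a\}$ to one whose members have closures contained in the original $O_a$'s, and then chop $C$ up using these closures.

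First I would proceed pointwise: for each $x\in C$, pick an index $a(x)\in\{1,\dots,k\}$ with $x\in O_{a(x)}$. Since $G$ is locally compact Hausdorff, hence regular, and $O_{a(x)}$ is an open neighborhood of $x$, there exists an open neighborhood $W_x$ of $x$ whose closure $\overline{W_x}$ is compact and satisfies $\overline{W_x}\subset O_{a(x)}$. (Concretely, pick a compact neighborhood $K$ of $x$, then by regularity separate $x$ from the closed set $G\setminus(O_{a(x)}\cap K^\circ)$ to obtain such $W_x$.) The family $\{W_x\}_{x\in C}$ is an open cover of the compact set $C$, so there are finitely many points $x_1,\dots,x_n\in C$ with $C\subset\bigcup_{i=1}^n W_{x_i}$, and in particular $C\subset\bigcup_{i=1}^n\overline{W_{x_i}}$.

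Next I would partition the indices $\{1,\dots,n\}$ according to the map $i\mapsto a(x_i)$. For each $a\in\{1,\dots,k\}$, set
\[
   C_a:=C\cap\bigcup_{\{i\,:\,a(x_i)=a\}}\overline{W_{x_i}},
\]
with the convention that the empty union yields $C_a=\emptyset$. Each $C_a$ is then a closed subset of $C$, hence compact, so $C_a\in\mathcal{C}$. Moreover $C_a\subset O_a$, because every $\overline{W_{x_i}}$ appearing in the union satisfies $\overline{W_{x_i}}\subset O_{a(x_i)}=O_a$. Finally $\bigcup_{a=1}^k C_a=C\cap\bigcup_{i=1}^n\overline{W_{x_i}}=C$, since the covering property gives $C\subset\bigcup_{i=1}^n\overline{W_{x_i}}$.

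The only delicate step is the first one, namely producing for every $x\in C$ an open neighborhood whose closure is both compact and contained in the prescribed $O_{a(x)}$. This is where local compactness and the regularity of the Hausdorff topological group $G$ are both needed; the remaining parts of the argument are just bookkeeping with finite unions and the fact that closed subsets of compact sets are compact.
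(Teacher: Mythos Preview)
Your argument is correct, but it differs from the paper's. The paper proceeds by induction on $k$, reducing to the case $k=2$: there one sets $K_a:=C\setminus O_a$ for $a=1,2$, notes that $K_1,K_2$ are disjoint compact sets (since $C\subset O_1\cup O_2$), invokes the earlier Lemma~5.2.10 to separate them by disjoint open sets $P_1,P_2$, and then puts $C_a:=C\setminus P_a$. By contrast, you handle all $k$ at once via a shrinking argument: refine the cover pointwise so that closures sit inside the original $O_a$'s, pass to a finite subcover, and group the pieces by index. Your route is more self-contained (it does not rely on Lemma~5.2.10) and makes transparent that only regularity of a locally compact Hausdorff space is used, not the group structure; the paper's route is slicker once Lemma~5.2.10 is available and fits the inductive style of the section. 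One small simplification you could make: you do not actually need $\overline{W_x}$ to be compact, since $C_a$ is a closed subset of the compact set $C$ and hence compact automatically; regularity alone (to get $\overline{W_x}\subset O_{a(x)}$) already suffices.
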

\begin{proof}
   We prove this lemma in case of $k=2$, which enables one to get the conclusion by mathematical induction on $k$.\par
   
   Suppose that $C\subset O_1\cup O_2$, where $O_1,O_2\in\mathcal{T}$. 
   Setting $K_a:=C-O_a$ ($a=1,2$), we obtain $K_1,K_2\in\mathcal{C}$ and $K_1\cap K_2=\emptyset$ from the supposition.
   Hence Lemma \ref{lem-5.2.10} assures the existence of $P_1,P_2\in\mathcal{T}$ such that 
\begin{equation}\label{eq-1}\tag*{\textcircled{1}}
\begin{array}{lll}
   P_1\cap P_2=\emptyset, & K_1\subset P_1, & K_2\subset P_2.
\end{array} 
\end{equation}
   Now, let $C_a:=C-P_a$ for $a=1,2$. 
   Then, it follows from \ref{eq-1} that $C_1,C_2\in\mathcal{C}$, $C_a\subset O_a$ ($a=1,2$) and $C=C_1\cup C_2$.   
\end{proof}

\begin{lemma}\label{lem-5.2.16}
   For any $A,A_1,A_2\in 2^G$,
\begin{enumerate}
\item[{\rm (1)}]
   $0\leq\mu^*(A)\leq\infty$, 
\item[{\rm (2)}]
   $\mu^*(\emptyset)=0$, 
\item[{\rm (3)}]
   $A_1\subset A_2$ implies $\mu^*(A_1)\leq\mu^*(A_2)$.  
\end{enumerate}   
\end{lemma}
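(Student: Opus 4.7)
The plan is to verify each of the three items directly from the definitions \eqref{eq-5.2.13} and \eqref{eq-5.2.14}, invoking only the basic properties of $h_\bullet$ established in Proposition \ref{prop-5.2.11}. All three statements are essentially bookkeeping, with no combinatorial or topological subtlety; the only thing to keep track of is that infima and suprema behave correctly when we widen or narrow the index set.

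For (1), the key point is non-negativity: Proposition \ref{prop-5.2.11}-(i) gives $h_\bullet(C)\geq 0$ for every $C\in\mathcal{C}$, so \eqref{eq-5.2.13} yields $\mu_1^*(O)\geq 0$ for every $O\in\mathcal{T}$, whence \eqref{eq-5.2.14} gives $\mu^*(A)\geq 0$. Since we allow $\mu^*$ to take the value $\infty$, the upper bound is immediate from the codomain $\mathbb{R}\amalg\{\infty\}$. For (2), I would note that $\emptyset\in\mathcal{T}$ and $\emptyset\subset\emptyset$, so $\mu^*(\emptyset)\leq\mu_1^*(\emptyset)$; then observe that the only $C\in\mathcal{C}$ with $C\subset\emptyset$ is $C=\emptyset$ itself, and $h_\bullet(\emptyset)=0$ by Proposition \ref{prop-5.2.11}-(ii), so $\mu_1^*(\emptyset)=0$. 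Combined with (1) this forces $\mu^*(\emptyset)=0$.

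For (3), the monotonicity follows from the standard fact that the infimum taken over a larger set is no greater than the infimum over a smaller set. Concretely, if $A_1\subset A_2$ then every $O\in\mathcal{T}$ with $A_2\subset O$ also satisfies $A_1\subset O$, so
\[
   \{\mu_1^*(O) : O\in\mathcal{T},\ A_2\subset O\}\subset\{\mu_1^*(O) : O\in\mathcal{T},\ A_1\subset O\},
\]
which gives $\mu^*(A_1)\leq\mu^*(A_2)$ by \eqref{eq-5.2.14}.

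There is no real obstacle here; the statement is the trivial part of showing that $\mu^*$ is an outer measure, and the substantive work (countable subadditivity and Carath\'{e}odory measurability of Borel sets) will presumably be carried out in subsequent lemmas using Lemma \ref{lem-5.2.15} and Proposition \ref{prop-5.2.11}-(vi),(vii). The only small thing to be careful about is to use $\mu_1^*(\emptyset)=0$ rather than arguing directly with $\mu^*$, since the definition of $\mu^*$ passes through $\mu_1^*$ on open sets.
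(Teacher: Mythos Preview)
Your proof is correct and follows essentially the same approach as the paper: both derive (1) and (2) from the definitions \eqref{eq-5.2.13}, \eqref{eq-5.2.14} together with Proposition~\ref{prop-5.2.11}-(i),(ii), and both obtain (3) from the inclusion of index sets $\{\mu_1^*(O) : O\in\mathcal{T},\ A_2\subset O\}\subset\{\mu_1^*(O) : O\in\mathcal{T},\ A_1\subset O\}$. Your write-up is slightly more explicit in spelling out why $\mu_1^*(\emptyset)=0$, but the argument is the same.
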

\begin{proof}
   (1) (resp.\ (2)) follows by \eqref{eq-5.2.13}, \eqref{eq-5.2.14} and Proposition \ref{prop-5.2.11}-(i) (resp.\ -(ii)).\par
   
   (3). 
   From $A_1\subset A_2$ we deduce that $\{\mu_1^*(O) : \mbox{$O\in\mathcal{T}$, $A_2\subset O$}\}\subset\{\mu_1^*(P) : \mbox{$P\in\mathcal{T}$, $A_1\subset P$}\}$, so that $\mu^*(A_2)\geq\mu^*(A_1)$ due to \eqref{eq-5.2.14}.
\end{proof}

\begin{lemma}\label{lem-5.2.17}
   $O_n\in\mathcal{T}$ $(n=1,2,\dots)$ imply $\mu^*(\bigcup_{n=1}^\infty O_n)\leq\sum_{n=1}^\infty\mu^*(O_n)$.
\end{lemma}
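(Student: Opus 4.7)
The plan is to exploit Lemma 5.2.12, which says $\mu^*(O)=\mu_1^*(O)=\sup\{h_\bullet(C):C\in\mathcal{C},\,C\subset O\}$ for every open set $O$. So setting $O:=\bigcup_{n=1}^\infty O_n\in\mathcal{T}$, it suffices to prove that for every compact $C\subset O$ one has $h_\bullet(C)\leq\sum_{n=1}^\infty\mu^*(O_n)$, and then to take the supremum over such $C$.

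First I would use compactness of $C$ to reduce the countable cover $\{O_n\}_{n\in\mathbb{N}}$ (which covers $C$ since $C\subset O$) to a finite subcover $C\subset\bigcup_{i=1}^kO_{n_i}$. Next I would apply Lemma \ref{lem-5.2.15} to split $C$ into compact pieces $C_1,\dots,C_k\in\mathcal{C}$ with $C_i\subset O_{n_i}$ and $C=\bigcup_{i=1}^kC_i$. Then by iterating the finite subadditivity of Proposition \ref{prop-5.2.11}-(vi), one has $h_\bullet(C)\leq\sum_{i=1}^kh_\bullet(C_i)$. Finally, since each $C_i\subset O_{n_i}$ with $C_i\in\mathcal{C}$, the very definition \eqref{eq-5.2.13} of $\mu_1^*$ yields $h_\bullet(C_i)\leq\mu_1^*(O_{n_i})=\mu^*(O_{n_i})$, whence
\[
 h_\bullet(C)\leq\sum_{i=1}^k\mu^*(O_{n_i})\leq\sum_{n=1}^\infty\mu^*(O_n).
\]

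Taking the supremum over all $C\in\mathcal{C}$ with $C\subset O$ gives $\mu_1^*(O)\leq\sum_{n=1}^\infty\mu^*(O_n)$, and Lemma \ref{lem-5.2.12} converts this into the desired inequality $\mu^*(O)\leq\sum_{n=1}^\infty\mu^*(O_n)$. There is no real obstacle here: the one point that requires a little care is that Proposition \ref{prop-5.2.11}-(vi) is only stated for the union of two compact sets, so one needs an elementary induction (applied $k-1$ times) to obtain the finite subadditivity $h_\bullet(\bigcup_{i=1}^kC_i)\leq\sum_{i=1}^kh_\bullet(C_i)$; this is routine and does not require any new ingredient.
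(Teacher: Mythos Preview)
Your proof is correct and follows essentially the same approach as the paper's: both reduce to a finite subcover by compactness, invoke Lemma~\ref{lem-5.2.15} to split $C$ into compact pieces subordinate to the cover, apply the (iterated) subadditivity of Proposition~\ref{prop-5.2.11}-(vi), and then use \eqref{eq-5.2.13} together with Lemma~\ref{lem-5.2.12} to conclude. The only cosmetic difference is that the paper also cites Lemma~\ref{lem-5.2.16}-(1) to justify passing from the finite sum $\sum_{i=1}^k\mu^*(O_{n_i})$ to the full series $\sum_{n=1}^\infty\mu^*(O_n)$, which you use implicitly.
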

\begin{proof}
   For an arbitrary $C\in\mathcal{C}$ with $C\subset\bigcup_{n=1}^\infty O_n$, one can choose a finite subset $\{O_a\}_{a=1}^k\subset\{O_n\}_{n=1}^\infty$ so that $C\subset\bigcup_{a=1}^kO_a$. 
   Then, there exist $C_1,C_2,\dots,C_k\in\mathcal{C}$ such that $C_a\subset O_a$ $(1\leq a\leq k)$ and $C=\bigcup_{a=1}^kC_a$ by Lemma \ref{lem-5.2.15}. 
   Therefore
\[
\begin{split}
   h_\bullet(C)
  &=h_\bullet\Big(\bigcup_{a=1}^kC_a\Big)
   \leq\sum_{a=1}^k h_\bullet(C_a) \quad\mbox{($\because$ Proposition \ref{prop-5.2.11}-(vi), $C_a\in\mathcal{C}$)}\\
  &\leq\sum_{a=1}^k\mu_1^*(O_a) \quad\mbox{($\because$ \eqref{eq-5.2.13}, $C_a\in\mathcal{C}$, $C_a\subset O_a$)}\\
  &\leq\sum_{n=1}^\infty\mu^*(O_n) \quad\mbox{($\because$ Lemma \ref{lem-5.2.12}, $O_a\in\mathcal{T}$, Lemma \ref{lem-5.2.16}-(1))},
\end{split} 
\] 
namely $h_\bullet(C)\leq\sum_{n=1}^\infty\mu^*(O_n)$ for any $C\in\mathcal{C}$ with $C\subset\bigcup_{n=1}^\infty O_n$. 
   This and \eqref{eq-5.2.13} yield $\mu_1^*(\bigcup_{n=1}^\infty O_n)\leq\sum_{n=1}^\infty\mu^*(O_n)$.
   Hence $\mu^*(\bigcup_{n=1}^\infty O_n)\leq\sum_{n=1}^\infty\mu^*(O_n)$ by Lemma \ref{lem-5.2.12} and $\bigcup_{n=1}^\infty O_n\in\mathcal{T}$.
\end{proof}

   We are in a position to prove
\begin{proposition}\label{prop-5.2.18}
   The $\mu^*$ in \eqref{eq-5.2.14} has the following four properties$:$
\begin{enumerate}
\item[{\rm (i)}]
   $0\leq\mu^*(A)\leq\infty$ for all $A\in 2^G$, 
\item[{\rm (ii)}]
   $\mu^*(\emptyset)=0$,
\item[{\rm (iii)}] 
   $A_1\subset A_2$, $A_1,A_2\in 2^G$ imply $\mu^*(A_1)\leq\mu^*(A_2)$,
\item[{\rm (iv)}] 
   $A_n\in 2^G$ $(n=1,2,\dots)$ imply $\mu^*(\bigcup_{n=1}^\infty A_n)\leq\sum_{n=1}^\infty\mu^*(A_n)$.
\end{enumerate}   
\end{proposition}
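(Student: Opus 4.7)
The first three conclusions (i), (ii), (iii) are already supplied by Lemma \ref{lem-5.2.16}, so the entire substance of the proposition lies in promoting the countable subadditivity of Lemma \ref{lem-5.2.17}, which is proved only for open sets, to arbitrary subsets of $G$.

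My plan is the standard $\varepsilon/2^n$ approximation argument. First I would dispose of the trivial case: if $\mu^*(A_n)=\infty$ for some $n$, then $\sum_{n=1}^\infty \mu^*(A_n)=\infty$ and inequality (iv) is automatic. So I may assume $\mu^*(A_n)<\infty$ for every $n$. Fix an arbitrary $\varepsilon>0$. By the definition \eqref{eq-5.2.14} of $\mu^*$ as an infimum over open covers, for each $n\in\mathbb{N}$ there exists $O_n\in\mathcal{T}$ with $A_n\subset O_n$ and
\[
   \mu_1^*(O_n)<\mu^*(A_n)+\frac{\varepsilon}{2^n}.
\]
By Lemma \ref{lem-5.2.12}, $\mu_1^*(O_n)=\mu^*(O_n)$, so $\mu^*(O_n)<\mu^*(A_n)+\varepsilon/2^n$.

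Next, set $U:=\bigcup_{n=1}^\infty O_n\in\mathcal{T}$. Clearly $\bigcup_{n=1}^\infty A_n\subset U$. Applying monotonicity (Lemma \ref{lem-5.2.16}-(3)) and then Lemma \ref{lem-5.2.17}, I obtain
\[
   \mu^*\Big(\bigcup_{n=1}^\infty A_n\Big)\leq \mu^*(U)\leq \sum_{n=1}^\infty\mu^*(O_n)<\sum_{n=1}^\infty\mu^*(A_n)+\sum_{n=1}^\infty\frac{\varepsilon}{2^n}=\sum_{n=1}^\infty\mu^*(A_n)+\varepsilon.
\]
Since $\varepsilon>0$ was arbitrary, the desired inequality (iv) follows.

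There is no real obstacle here; the only thing to be mindful of is that \eqref{eq-5.2.14} is defined as an infimum over open coverings of a single set, not over countable open coverings, so the bridge from the open-set subadditivity of Lemma \ref{lem-5.2.17} to the general case must pass through Lemma \ref{lem-5.2.12} (to identify $\mu_1^*$ with $\mu^*$ on $\mathcal{T}$). Once that identification is invoked, the argument is entirely routine and consists of three lines of bookkeeping combined with the already-established monotonicity and countable subadditivity on open sets.
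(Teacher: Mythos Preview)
Your proof is correct and follows essentially the same approach as the paper: invoke Lemma~\ref{lem-5.2.16} for (i)--(iii), handle the trivial case $\mu^*(A_n)=\infty$, then use the $\varepsilon/2^n$ approximation via \eqref{eq-5.2.14} and Lemma~\ref{lem-5.2.12} to pick open $O_n\supset A_n$, and finish with monotonicity and Lemma~\ref{lem-5.2.17}. The paper's proof is line-for-line the same argument.
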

\begin{proof}
   By virtue of Lemma \ref{lem-5.2.16} it is enough to prove (iv).\par

   (iv). 
   It is clear in the case where there exists a $j\in\mathbb{N}$ such that $\mu^*(A_j)=\infty$. 
   Henceforth, we investigate the case where $\mu^*(A_n)<\infty$ for all $n\in\mathbb{N}$. 
   Let $\epsilon>0$.
   For each $m\in\mathbb{N}$ there exists an $O_m\in\mathcal{T}$ such that 
\[
\begin{array}{ll}
   A_m\subset O_m, & \mu^*(O_m)=\mu_1^*(O_m)<\mu^*(A_m)+\dfrac{\epsilon}{2^m}
\end{array}
\]   
because of $\mu^*(A_m)<\infty$, \eqref{eq-5.2.14} and Lemma \ref{lem-5.2.12}.
   Then, it follows that
\[
\begin{split}
   \mu^*\Big(\bigcup_{m=1}^\infty A_m\Big)
  &\leq\mu^*\Big(\bigcup_{m=1}^\infty O_m\Big) \quad\mbox{($\because$ $\bigcup_{m=1}^\infty A_m\subset\bigcup_{m=1}^\infty O_m$, Lemma \ref{lem-5.2.16}-(3))}\\
  &\leq\sum_{m=1}^\infty\mu^*(O_m) \quad\mbox{($\because$ Lemma \ref{lem-5.2.17}, $O_m\in\mathcal{T}$ ($m=1,2,\dots$))}\\
  &\leq\sum_{m=1}^\infty\Big(\mu^*(A_m)+\dfrac{\epsilon}{2^m}\Big)
  =\Big(\sum_{m=1}^\infty\mu^*(A_m)\Big)+\epsilon,
\end{split}  
\] 
so that $\mu^*(\bigcup_{m=1}^\infty A_m)\leq\sum_{m=1}^\infty\mu^*(A_m)$.
\end{proof}

   Proposition \ref{prop-5.2.18} tells us that the $\mu^*$ in \eqref{eq-5.2.14} is a Carath\'{e}odory outer measure on $G$, so that we can get a $\sigma$-algebra $\mathscr{M}$ on $G$ by setting
\begin{equation}\label{eq-5.2.19}
   \mathscr{M}:=\{A\in 2^G \,|\, \mbox{$\mu^*(B)=\mu^*(B\cap A)+\mu^*(B-A)$ for every $B\in 2^G$}\}.
\end{equation}
   Our second aim is to prove Proposition \ref{prop-5.2.22} below.
   We first show two lemmas and afterwards accomplish the aim.
   
\begin{lemma}\label{lem-5.2.20}
   Let $A\in 2^G$. 
   Then, 
\begin{enumerate}
\item[{\rm (1)}]
   $\mu^*(B)\leq\mu^*(B\cap A)+\mu^*(B-A)$ for every $B\in 2^G;$ 
\item[{\rm (2)}] 
   $\mu^*(B)=\infty$, $B\in 2^G$ imply $\mu^*(B)\geq\mu^*(B\cap A)+\mu^*(B-A)$.  
\end{enumerate}   
\end{lemma}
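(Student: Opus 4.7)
The plan is to derive both statements directly from Proposition \ref{prop-5.2.18}, which has already established that $\mu^*$ is an outer measure on $G$ (non-negative, monotone, countably subadditive, with $\mu^*(\emptyset)=0$). Neither part requires any new construction; the content of Lemma \ref{lem-5.2.20} is just a bookkeeping consequence of those four properties.

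For part (1), I would begin by observing the set-theoretic identity
\[
   B=(B\cap A)\cup(B-A)
\]
valid for every $B,A\in 2^G$. Taking $A_1:=B\cap A$, $A_2:=B-A$ and $A_n:=\emptyset$ for $n\geq 3$, this gives $B=\bigcup_{n=1}^\infty A_n$, and countable subadditivity (Proposition \ref{prop-5.2.18}-(iv)) yields
\[
   \mu^*(B)\leq\sum_{n=1}^\infty\mu^*(A_n)=\mu^*(B\cap A)+\mu^*(B-A)+\sum_{n=3}^\infty\mu^*(\emptyset).
\]
Applying Proposition \ref{prop-5.2.18}-(ii) to rewrite each term $\mu^*(\emptyset)$ as $0$ produces the desired inequality.

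For part (2), the statement is a triviality: under the hypothesis $\mu^*(B)=\infty$, any element of $\mathbb{R}\amalg\{\infty\}$ is $\leq\infty$, and in particular $\mu^*(B\cap A)+\mu^*(B-A)\leq\infty=\mu^*(B)$, where we use that the sum is taken in $\mathbb{R}\amalg\{\infty\}$ with the convention $x+\infty=\infty$ and that both summands are non-negative by Proposition \ref{prop-5.2.18}-(i).

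There is no serious obstacle here; the only minor point of care is to make the extended-arithmetic convention explicit in part (2) so that the sum $\mu^*(B\cap A)+\mu^*(B-A)$ is unambiguously an element of $[0,\infty]$. Once this is spelled out, both assertions follow in a line each from Proposition \ref{prop-5.2.18}. The lemma is really preparatory for the Carathéodory criterion \eqref{eq-5.2.19}: together, (1) and (2) reduce the verification of $A\in\mathscr{M}$ to checking only the reverse inequality $\mu^*(B)\geq\mu^*(B\cap A)+\mu^*(B-A)$ on those $B\in 2^G$ with $\mu^*(B)<\infty$, which will be the substantive content of the arguments that follow.
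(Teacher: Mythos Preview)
Your proof is correct and follows the same route as the paper: part (1) is the subadditivity inequality from Proposition~\ref{prop-5.2.18}-(iv) applied to the decomposition $B=(B\cap A)\cup(B-A)$, and part (2) is declared ``Trivial'' in the paper for exactly the reason you give. Your explicit padding with $A_n=\emptyset$ for $n\geq 3$ just spells out what the paper's one-line appeal to (iv) leaves implicit.
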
 
\begin{proof}
   (1). 
   By Proposition \ref{prop-5.2.18}-(iv) we have $\mu^*(B)=\mu^*\bigl((B\cap A)\cup(B-A)\bigr)\leq\mu^*(B\cap A)+\mu^*(B-A)$.\par
   
   (2). 
   Trivial.
\end{proof}

\begin{lemma}\label{lem-5.2.21}
   Let $O\in\mathcal{T}$. 
   Then it follows that $\mu^*(P)<\infty$, $P\in\mathcal{T}$ imply $\mu^*(P)\geq\mu^*(P\cap O)+\mu^*(P-O)$.
\end{lemma}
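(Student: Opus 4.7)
The plan is to exploit the inner-regularity-like definition of $\mu^*$ on open sets together with the finite additivity of $h_\bullet$ on disjoint pairs of compacta (Proposition \ref{prop-5.2.11}-(vii)). Since $\mu^*(P)<\infty$ and monotonicity (Lemma \ref{lem-5.2.16}-(3)) gives $\mu^*(P\cap O),\mu^*(P-O)\leq\mu^*(P)<\infty$, all quantities appearing are finite, so an $\varepsilon/2$-approximation argument is available.

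The key idea is to approximate $P\cap O$ from inside by a compact set $C_1$, and then approximate $P-O$ from inside by a compact set $C_2$ chosen to be disjoint from $C_1$. Fix $\varepsilon>0$. First, since $P\cap O\in\mathcal{T}$, Lemma \ref{lem-5.2.12} and \eqref{eq-5.2.13} yield a $C_1\in\mathcal{C}$ with $C_1\subset P\cap O$ and
\[
   h_\bullet(C_1)>\mu^*(P\cap O)-\varepsilon/2.
\]
Because $G$ is Hausdorff, $C_1$ is closed in $G$; hence $P-C_1=P\cap(G-C_1)$ belongs to $\mathcal{T}$. Moreover $P-O\subset P-C_1$, so monotonicity together with Lemma \ref{lem-5.2.12} gives $\mu^*(P-O)\leq\mu_1^*(P-C_1)$. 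Using \eqref{eq-5.2.13} once more, one obtains a $C_2\in\mathcal{C}$ with $C_2\subset P-C_1$ and
\[
   h_\bullet(C_2)>\mu_1^*(P-C_1)-\varepsilon/2\geq\mu^*(P-O)-\varepsilon/2.
\]

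By construction $C_1\cap C_2=\emptyset$ and $C_1\cup C_2\subset P$, so Proposition \ref{prop-5.2.11}-(vii) and \eqref{eq-5.2.13} yield
\[
   \mu^*(P)=\mu_1^*(P)\geq h_\bullet(C_1\cup C_2)=h_\bullet(C_1)+h_\bullet(C_2)>\mu^*(P\cap O)+\mu^*(P-O)-\varepsilon.
\]
Letting $\varepsilon\downarrow 0$ gives the desired inequality.

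The only nontrivial point is verifying that $C_2$ can indeed be chosen so as to be disjoint from $C_1$; this is automatic once we insist $C_2\subset P-C_1$, which is legitimate precisely because $P-C_1$ is open (thanks to $G$ being Hausdorff and $C_1$ compact, hence closed). Everything else is bookkeeping: the additivity of $h_\bullet$ on disjoint compact pairs (Proposition \ref{prop-5.2.11}-(vii)) and the identity $\mu^*(P)=\mu_1^*(P)$ for $P\in\mathcal{T}$ (Lemma \ref{lem-5.2.12}). I expect no serious obstacle.
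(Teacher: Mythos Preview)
Your proof is correct and follows essentially the same approach as the paper's: approximate $P\cap O$ from inside by a compact $C_1$, then approximate the open set $P-C_1$ from inside by a compact $C_2$, and apply Proposition~\ref{prop-5.2.11}-(vii) to the disjoint pair $C_1,C_2$. The only differences are cosmetic (you use $\varepsilon/2$ where the paper uses $\epsilon$, and you invoke the monotonicity $\mu^*(P-O)\leq\mu^*(P-C_1)$ before choosing $C_2$ rather than after).
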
 
\begin{proof}
   Take any $\epsilon>0$. 
   By Lemma \ref{lem-5.2.16}-(3) and $(P\cap O)\subset P$, we see that $\mu^*(P\cap O)\leq\mu^*(P)<\infty$. 
   Therefore there exists a $C_1\in\mathcal{C}$ such that 
\begin{equation}\label{eq-1}\tag*{\textcircled{1}}
\begin{array}{ll}
   C_1\subset(P\cap O), & h_\bullet(C_1)>\mu_1^*(P\cap O)-\epsilon=\mu^*(P\cap O)-\epsilon
\end{array}
\end{equation}
because of \eqref{eq-5.2.13}, $P\cap O\in\mathcal{T}$ and Lemma \ref{lem-5.2.12}.
   Since $(P-C_1)\subset P$ one can conclude that there exists a $C_2\in\mathcal{C}$ satisfying    
\begin{equation}\label{eq-2}\tag*{\textcircled{2}}
\begin{array}{ll}
   C_2\subset(P-C_1), & h_\bullet(C_2)>\mu^*(P-C_1)-\epsilon 
\end{array}
\end{equation}
in a similar way.
   Moreover, $C_1\cup C_2\in\mathcal{C}$, $(C_1\cup C_2)\subset P$, \eqref{eq-5.2.13} and Lemma \ref{lem-5.2.12} yield $\mu^*(P)\geq h_\bullet(C_1\cup C_2)$. 
   Hence 
\[
\begin{split}
   \mu^*(P)
  &\geq h_\bullet(C_1\cup C_2)
   =h_\bullet(C_1)+h_\bullet(C_2) \quad\mbox{($\because$ $C_1\cap C_2=\emptyset$, Proposition \ref{prop-5.2.11}-(vii))}\\
  &>\mu^*(P\cap O)+\mu^*(P-C_1)-2\epsilon \quad\mbox{($\because$ \ref{eq-1}, \ref{eq-2})}\\
  &\geq\mu^*(P\cap O)+\mu^*(P-O)-2\epsilon,
\end{split} 
\]
where we remark that $\mu^*(P-C_1)\geq\mu^*(P-O)$ follows from $(P-C_1)\supset(P-O)$ and Lemma \ref{lem-5.2.16}-(3). 
   This $\mu^*(P)>\mu^*(P\cap O)+\mu^*(P-O)-2\epsilon$ assures that $\mu^*(P)\geq\mu^*(P\cap O)+\mu^*(P-O)$ holds.
\end{proof}

   Lemmas \ref{lem-5.2.20} and \ref{lem-5.2.21} allow us to assert 
\begin{proposition}\label{prop-5.2.22}
   The $\sigma$-algebra $\mathscr{M}$ on $G$ includes $\mathscr{B}$.
   cf.\ \eqref{eq-5.2.19}.
\end{proposition}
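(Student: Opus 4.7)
The plan is to show that every open set lies in $\mathscr{M}$; since $\mathscr{M}$ is a $\sigma$-algebra and $\mathscr{B}$ is the smallest $\sigma$-algebra containing $\mathcal{T}$, the inclusion $\mathscr{B}\subset\mathscr{M}$ will follow immediately. So fix an arbitrary $O\in\mathcal{T}$ and attempt to verify the Carath\'{e}odory condition
\[
   \mu^*(B)=\mu^*(B\cap O)+\mu^*(B-O) \quad\mbox{for every $B\in 2^G$}.
\]
One direction, namely $\mu^*(B)\leq\mu^*(B\cap O)+\mu^*(B-O)$, is already given by Lemma \ref{lem-5.2.20}-(1), so the task reduces to the reverse inequality.

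For the reverse inequality, first dispose of the case $\mu^*(B)=\infty$ by Lemma \ref{lem-5.2.20}-(2). Henceforth assume $\mu^*(B)<\infty$. The key idea is to approximate $B$ from above by an open set: by the definition \eqref{eq-5.2.14} of $\mu^*$, for every $\epsilon>0$ there exists $P\in\mathcal{T}$ with $B\subset P$ and $\mu^*(P)=\mu_1^*(P)<\mu^*(B)+\epsilon<\infty$ (using Lemma \ref{lem-5.2.12}). Then Lemma \ref{lem-5.2.21}, which was prepared exactly for this situation, yields
\[
   \mu^*(P)\geq\mu^*(P\cap O)+\mu^*(P-O).
\]
Combining this with the monotonicity of $\mu^*$ (Lemma \ref{lem-5.2.16}-(3)) applied to the inclusions $B\cap O\subset P\cap O$ and $B-O\subset P-O$, I obtain
\[
   \mu^*(B)+\epsilon>\mu^*(P)\geq\mu^*(P\cap O)+\mu^*(P-O)\geq\mu^*(B\cap O)+\mu^*(B-O).
\]
Letting $\epsilon\downarrow 0$ gives the desired inequality, so $O\in\mathscr{M}$.

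The main obstacle has essentially been absorbed into the preceding Lemma \ref{lem-5.2.21}, whose proof used the regularity of $h_\bullet$ via \eqref{eq-5.2.13} together with the additivity property \textrm{(vii)} of Proposition \ref{prop-5.2.11} for disjoint compacta. Once that lemma is in hand, the present proof is essentially a routine $\epsilon$-approximation argument combined with the observation that $\mathcal{T}$ generates $\mathscr{B}$. Thus the proof reduces to the three lines: apply Lemma \ref{lem-5.2.20}, reduce to the finite case, approximate by an open superset, and apply Lemma \ref{lem-5.2.21} together with monotonicity.
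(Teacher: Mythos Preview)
Your proof is correct and follows essentially the same approach as the paper: reduce to showing $\mathcal{T}\subset\mathscr{M}$, use Lemma \ref{lem-5.2.20} to reduce to the inequality $\mu^*(B\cap O)+\mu^*(B-O)\leq\mu^*(B)$ for $\mu^*(B)<\infty$, approximate $B$ from above by an open $P$ via \eqref{eq-5.2.14}, and then apply Lemma \ref{lem-5.2.21} together with monotonicity. The paper's argument is identical in structure and in the lemmas invoked.
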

\begin{proof}
   It is enough to conclude $\mathcal{T}\subset\mathscr{M}$, since $\mathscr{M}$ is a $\sigma$-algebra on $G$ and $\mathscr{B}$ is the least $\sigma$-algebra on $G$ including $\mathcal{T}$.
   From \eqref{eq-5.2.19} and Lemma \ref{lem-5.2.20} one can obtain $\mathcal{T}\subset\mathscr{M}$, provided that the following inequality holds for each $O\in\mathcal{T}$:
\begin{equation}\label{eq-1}\tag*{\textcircled{1}} 
   \mbox{$\mu^*(B\cap O)+\mu^*(B-O)\leq\mu^*(B)$ for any $B\in 2^G$ with $\mu^*(B)<\infty$}.
\end{equation}
   Let us show \ref{eq-1} from now on. 
   Fix any $\epsilon>0$, $O\in\mathcal{T}$, and $B\in 2^G$ with $\mu^*(B)<\infty$. 
   By virtue of $\mu^*(B)<\infty$ and \eqref{eq-5.2.14} we have a $P\in\mathcal{T}$ such that
\begin{equation}\label{eq-a}\tag{a}
\begin{array}{ll}
   B\subset P, & \mu_1^*(P)<\mu^*(B)+\epsilon<\infty.
\end{array}   
\end{equation}
   Since $O,P\in\mathcal{T}$ and $\mu_1^*(P)<\infty$, Lemmas \ref{lem-5.2.21} and \ref{lem-5.2.12} assure 
\begin{equation}\label{eq-b}\tag{b}
   \mu^*(P\cap O)+\mu^*(P-O)\leq\mu^*(P)=\mu_1^*(P).
\end{equation}
   By $B\subset P$ we deduce $(B\cap O)\subset(P\cap O)$ and $(B-O)\subset(P-O)$. 
   Hence Lemma \ref{lem-5.2.16}-(3) implies that 
\begin{equation}\label{eq-c}\tag{c}
   \mu^*(B\cap O)+\mu^*(B-O)\leq\mu^*(P\cap O)+\mu^*(P-O).
\end{equation}
   Consequently \eqref{eq-a}, \eqref{eq-b} and \eqref{eq-c} yield $\mu^*(B\cap O)+\mu^*(B-O)<\mu^*(B)+\epsilon$, which gives rise to \ref{eq-1}.  
\end{proof}

\subsection{Step 4/4, the proof of Theorem \ref{thm-5.1.2}}\label{subsec-5.2.4}

   In this subsection we demonstrate Theorem \ref{thm-5.1.2}. 
   We prove the existence of a left-invariant Haar measure $\mu$ in the first half, and prove the uniqueness of $\mu$ in the latter half (see Propositions \ref{prop-5.2.23} and \ref{prop-5.2.29}).
\paragraph{The existence of Haar measure}
   First of all, let us show 
\begin{proposition}[Existence]\label{prop-5.2.23}
   There exists a set function $\mu:\mathscr{B}\to\mathbb{R}\amalg\{\infty\}$ satisfying the eight conditions {\rm (p1)} through {\rm (p8)} in Theorem {\rm \ref{thm-5.1.2}}.
\end{proposition}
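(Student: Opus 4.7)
The plan is to define $\mu:=\mu^*|_\mathscr{B}$ and check the eight properties (p1)--(p8) in turn, using the materials already assembled: $h_\bullet$ from Proposition \ref{prop-5.2.11}, the outer measure $\mu^*$ from Proposition \ref{prop-5.2.18}, the identity $\mu^*=\mu_1^*$ on $\mathcal{T}$ (Lemma \ref{lem-5.2.12}), and the inclusion $\mathscr{B}\subset\mathscr{M}$ (Proposition \ref{prop-5.2.22}). Conditions (p1) and (p2) are immediate from Proposition \ref{prop-5.2.18}, and (p3) follows from Carath\'eodory's theorem: the restriction of $\mu^*$ to $\mathscr{M}$ is a measure, and $\mathscr{B}\subset\mathscr{M}$. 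Condition (p4) is the defining formula \eqref{eq-5.2.14} combined with Lemma \ref{lem-5.2.12}, which replaces $\mu_1^*$ by $\mu^*$ inside the infimum.

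For (p5) I first note that $\mu(O)=\mu_1^*(O)=\sup\{h_\bullet(C):C\in\mathcal{C},\,C\subset O\}$ by \eqref{eq-5.2.13} and Lemma \ref{lem-5.2.12}. To upgrade $h_\bullet(C)$ to $\mu(C)$ I prove the sandwich
\[
h_\bullet(C)\leq\mu(C)\leq\mu(O)\qquad\text{for every compact }C\subset O.
\]
The right inequality is monotonicity of $\mu^*$ (Lemma \ref{lem-5.2.16}-(3)); the left inequality holds because every open neighborhood $P$ of $C$ satisfies $\mu_1^*(P)\geq h_\bullet(C)$ by \eqref{eq-5.2.13}, hence $\mu^*(C)=\inf_{P\supset C,\,P\in\mathcal{T}}\mu_1^*(P)\geq h_\bullet(C)$. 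The sandwich turns the sup of $h_\bullet(C)$ into the sup of $\mu(C)$.

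Condition (p6) uses local compactness of $G$: for $C\in\mathcal{C}$ choose $O\in\mathcal{T}$ with $C\subset O$ and $\overline{O}\in\mathcal{C}$. Then for any compact $K\subset O$ one has $K\subset\overline{O}$, so $h_\bullet(K)\leq h_\bullet(\overline{O})$ by Proposition \ref{prop-5.2.11}-(v), whence
\[
\mu(C)\leq\mu(O)=\mu_1^*(O)\leq h_\bullet(\overline{O})\leq\sharp(\overline{O}:C_0)<\infty
\]
by Proposition \ref{prop-5.2.11}-(i). Condition (p7) is obtained by pushing left-invariance up the construction: Proposition \ref{prop-5.2.11}-(iv) gives $h_\bullet(gC)=h_\bullet(C)$, which forces $\mu_1^*(gO)=\mu_1^*(O)$ for $O\in\mathcal{T}$ via the substitution $K=gK'$ in \eqref{eq-5.2.13}, and then the defining infimum \eqref{eq-5.2.14} transports left-invariance from $\mathcal{T}$ to all of $2^G$ (and a fortiori to $\mathscr{B}$).

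The step I expect to be the subtlest is (p8). I argue by contradiction: suppose $O\in\mathcal{T}$, $O\neq\emptyset$, but $\mu(O)=0$. Fix the distinguished compact set $C_0$ with $C_0^\circ\neq\emptyset$ used to normalize $h_\bullet$. Since $\{gO\}_{g\in G}$ is an open cover of $G$ and $C_0$ is compact, there exist $g_1,\dots,g_n\in G$ with $C_0\subset\bigcup_{i=1}^n g_iO$. Countable subadditivity of $\mu^*$ (Proposition \ref{prop-5.2.18}-(iv)) and left-invariance (p7) then give
\[
\mu(C_0)\leq\sum_{i=1}^n\mu(g_iO)=n\,\mu(O)=0,
\]
whereas the already-established inequality $\mu(C_0)\geq h_\bullet(C_0)=1$ (from the (p5) step above and Proposition \ref{prop-5.2.11}-(iii)) yields a contradiction. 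Thus $\mu(O)>0$, completing the verification of all eight conditions.
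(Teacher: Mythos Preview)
Your proof is correct and follows essentially the same route as the paper: define $\mu:=\mu^*|_{\mathscr{B}}$, invoke Carath\'eodory for (p1)--(p3), read (p4) off \eqref{eq-5.2.14}, prove the key sandwich $h_\bullet(C)\le\mu(C)\le\mu(O)$ for (p5), bound $\mu(O)$ by $h_\bullet(\overline{O})$ for (p6), push left-invariance from $h_\bullet$ through $\mu_1^*$ to $\mu^*$ for (p7), and derive (p8) by contradiction from $h_\bullet(C_0)=1$. Your (p8) argument is in fact slightly more direct than the paper's: the paper first translates the null open set to a neighborhood $P'\in\mathcal{U}$ of the identity before covering, whereas you observe immediately that $\{gO\}_{g\in G}$ covers $G$ and cover $C_0$ directly---both reach the same contradiction $1\le\mu(C_0)\le 0$.
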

\begin{proof}
   We have already known that the $\mu^*$ in \eqref{eq-5.2.14} is a Carath\'{e}odory outer measure on $G$, and that the inclusion $\mathscr{B}\subset\mathscr{M}$ holds for the class $\mathscr{M}$ of all sets measurable with respect to $\mu^*$ (recall Proposition \ref{prop-5.2.18}, \eqref{eq-5.2.19}, Proposition \ref{prop-5.2.22}).
   Hence we can define a measure $\mu$ on $\mathscr{B}$ as follows: 
\begin{equation}\label{eq-5.2.24}
   \mu:=\mu^*|_\mathscr{B}.
\end{equation}
   Then \eqref{eq-5.2.14} assures that the four conditions (p1) through (p4) in Theorem \ref{thm-5.1.2} hold for the $\mu=\mu^*|_\mathscr{B}$.
   From now on, let us confirm that the rest of conditions also hold.\par
   
   (p5). 
   Fix an $O\in\mathcal{T}$.
   For a given $C\in\mathcal{C}$, it follows from \eqref{eq-5.2.13} that $h_\bullet(C)\leq\mu(Q)$ for any $Q\in\mathcal{T}$ with $C\subset Q$, so that 
\[
   h_\bullet(C)
   \leq\inf\{\mu(Q) : \mbox{$Q\in\mathcal{T}$, $C\subset Q$}\}
   \stackrel{\eqref{eq-5.2.14}}{=}\mu(C).
\]
   Hence we see that 
\begin{equation}\label{eq-1}\tag*{\textcircled{1}} 
   \mbox{$h_\bullet(C)\leq\mu(C)$ for all $C\in\mathcal{C}$}.
\end{equation}
   This \ref{eq-1} gives us  
\[
   \mu(O)
   \stackrel{\eqref{eq-5.2.13}}{=}\sup\{h_\bullet(C) : \mbox{$C\in\mathcal{C}$, $C\subset O$}\}
   \leq\sup\{\mu(C) : \mbox{$C\in\mathcal{C}$, $C\subset O$}\}
   \leq\mu(O);
\]
and therefore (p5) $\mu(O)=\sup\{\mu(C):\mbox{$C\in\mathcal{C}$, $C\subset O$}\}$ holds. 
   Here, we remark that $\mu(C)\leq\mu(O)$ follows from $C\subset O$ and Lemma \ref{lem-5.2.16}-(3).\par 
   
   (p6). 
   Take any $C\in\mathcal{C}$. 
   Since $C\subset G$ is compact and $G$ is a locally compact Hausdorff space, we can construct an $O\in\mathcal{T}$ so that $C\subset O$ and $\overline{O}\in\mathcal{C}$.
   Then, for any $K\in\mathcal{C}$ with $K\subset O$, one has
\[
   h_\bullet(K)\leq h_\bullet(\overline{O})
\]
due to $K,\overline{O}\in\mathcal{C}$, $K\subset\overline{O}$ and Proposition \ref{prop-5.2.11}-(v). 
   Therefore it follows from \eqref{eq-5.2.13} that 
\[
   \mu(O)\leq h_\bullet(\overline{O}).
\]
   In addition, $C\subset O$ and Lemma \ref{lem-5.2.16}-(3) yield 
\[
   \mu(C)\leq\mu(O).
\]
   Consequently we deduce $\mu(C)\leq\mu(O)\leq h_\bullet(\overline{O})\leq\sharp(\overline{O}:C_0)<\infty$ by Proposition \ref{prop-5.2.11}-(i).
   So, (p6) $\mu(C)<\infty$ holds.\par

   (p7). 
   Fix an arbitrary $(g,A)\in G\times\mathscr{B}$. 
   For a given $O\in\mathcal{T}$ we first obtain 
\[
\begin{split}
   \mu(O)
  &\stackrel{\eqref{eq-5.2.13}}{=}\sup\{h_\bullet(C) : \mbox{$C\in\mathcal{C}$, $C\subset O$}\}
   =\sup\{h_\bullet(gC) : \mbox{$C\in\mathcal{C}$, $gC\subset O$}\} \quad\mbox{($\because$ $l_{g^{-1}}(\mathcal{C})=\mathcal{C}$)}\\
  &\quad=\sup\{h_\bullet(gC) : \mbox{$C\in\mathcal{C}$, $C\subset g^{-1}O$}\}
   =\sup\{h_\bullet(C) : \mbox{$C\in\mathcal{C}$, $C\subset g^{-1}O$}\} \quad\mbox{($\because$ Proposition \ref{prop-5.2.11}-(iv))}\\
  &\stackrel{\eqref{eq-5.2.13}}{=}\mu(g^{-1}O). 
\end{split}
\] 
   This $\mu(O)=\mu(g^{-1}O)$ enables us to conclude that 
\[
\begin{split}
   \mu(gA)
  &\stackrel{\eqref{eq-5.2.14}}{=}\inf\{\mu(O) : \mbox{$O\in\mathcal{T}$, $gA\subset O$}\}
   =\inf\{\mu(g^{-1}O) : \mbox{$O\in\mathcal{T}$, $gA\subset O$}\}\\
  &\quad=\inf\{\mu(g^{-1}O) : \mbox{$O\in\mathcal{T}$, $A\subset g^{-1}O$}\}
   =\inf\{\mu(O) : \mbox{$O\in\mathcal{T}$, $A\subset O$}\} \quad\mbox{($\because$ $l_g(\mathcal{T})=\mathcal{T}$)}\\
  &\stackrel{\eqref{eq-5.2.14}}{=}\mu(A).
\end{split} 
\]
   Hence (p7) $\mu(gA)=\mu(A)$ holds.\par

   (p8). 
   Let us use proof by contradiction.
   Suppose that there exists a $P\in\mathcal{T}-\{\emptyset\}$ satisfying $\mu(P)\leq0$.
   On the one hand; from (p1) and $\mu(P)\leq0$ we obtain $\mu(P)=0$. 
   On the other hand; since $P\neq\emptyset$ there exists a $p\in P$. 
   Setting $P':=p^{-1}P$ we conclude 
\[
\begin{array}{ll}
   P'\in\mathcal{U}, & \mu(P')=0
\end{array}
\]
by (p7). 
   For any $C\in\mathcal{C}$, it follows from $P'\in\mathcal{U}$ that $C\subset\bigcup_{c\in C}cP'$, and so there exist finite elements $c_1,\dots,c_k\in C$ such that $C\subset\bigcup_{i=1}^kc_iP'$. 
   Then (p1), Lemma \ref{lem-5.2.16}-(3), Proposition \ref{prop-5.2.18}-(iv) imply that 
\[
   0\leq\mu(C)
   \leq\mu\Big(\bigcup_{i=1}^kc_iP'\Big)
   \leq\sum_{i=1}^k\mu(c_iP')
   \stackrel{{\rm (p7)}}{=}\sum_{i=1}^k\mu(P')
   =0;
\]
in particular, $\mu(C_0)=0$. 
   However, Proposition \ref{prop-5.2.11}-(iii) and \ref{eq-1} yield $1=h_\bullet(C_0)\leq\mu(C_0)=0$, which is a contradiction. 
   For this reason one sees that $\mu(Q)>0$ for all $Q\in\mathcal{T}-\{\emptyset\}$, and (p8) holds. 
   Consequently we have shown the $\mu:\mathscr{B}\to\mathbb{R}\cup\{\infty\}$ in \eqref{eq-5.2.24} satisfies the eight conditions (p1) through (p8) in Theorem \ref{thm-5.1.2}.
\end{proof}

\paragraph{The uniqueness of Haar measure}
   Our aim is to demonstrate that the existence of left-invariant Haar measure is unique up to a positive multiplicative constant whenever $G$ satisfies the second countability axiom (cf.\ Proposition \ref{prop-5.2.29}).
   For the aim let us give four lemmas first. 

\begin{lemma}\label{lem-5.2.25}
   For any $K\in\mathcal{C}$ and $O\in\mathcal{T}$ with $K\subset O$, there exists an $f\in\mathscr{C}_{\geq 0}(G,\mathbb{R})$ such that $c_K\leq f\leq c_O$ on $G$. 
\end{lemma}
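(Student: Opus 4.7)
The plan is to construct $f$ via a classical Urysohn-type argument, using local compactness of $G$ to reduce the construction to a compact Hausdorff (hence normal) setting in which the usual Urysohn lemma applies.

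First, I would use local compactness to find an open set $V \in \mathcal{T}$ with compact closure sandwiched between $K$ and $O$, that is, $K \subset V \subset \overline{V} \subset O$ and $\overline{V} \in \mathcal{C}$. The construction: for each $x \in K \subset O$, since $G$ is locally compact Hausdorff and $O$ is an open neighborhood of $x$, one can choose a compact neighborhood $C_x$ of $x$ contained in $O$, and set $W_x := C_x^\circ \in \mathcal{T}$, so that $x \in W_x \subset \overline{W_x} \subset C_x \subset O$ with $\overline{W_x}$ compact. Since $K$ is compact, finitely many $W_{x_1},\dots,W_{x_n}$ cover $K$, and $V := \bigcup_{i=1}^n W_{x_i}$ satisfies $\overline{V} = \bigcup_{i=1}^n \overline{W_{x_i}} \in \mathcal{C}$ with $\overline{V} \subset O$ and $K \subset V$.

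Next, I would apply Urysohn's lemma inside the compact Hausdorff (hence normal) space $\overline{V}$. The two subsets $K$ and $\overline{V} \setminus V$ are disjoint closed subsets of $\overline{V}$, so Urysohn's lemma yields a continuous function $g : \overline{V} \to [0,1]$ with $g = 1$ on $K$ and $g = 0$ on $\overline{V} \setminus V$.

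Then I would extend by zero: set $f(x) := g(x)$ for $x \in \overline{V}$ and $f(x) := 0$ for $x \in G \setminus \overline{V}$. The extension is well-defined, and I would verify its continuity by considering three kinds of points: on the open set $V$, $f = g$ is continuous; on the open set $G \setminus \overline{V}$, $f \equiv 0$; and at any $x \in \overline{V} \setminus V$, continuity follows from $g(x) = 0$ together with continuity of $g$ on $\overline{V}$, since for $\varepsilon > 0$ one can choose an open $U' \subset G$ with $|g(y)| < \varepsilon$ on $\overline{V} \cap U'$, and then $|f(y)| \leq \varepsilon$ on all of $U'$ (points of $U'$ outside $\overline{V}$ give $f = 0$, and points of $U' \cap V$ give $f = g$). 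Finally, $f \geq 0$, $\operatorname{supp}(f) \subset \overline{V} \in \mathcal{C}$, so $f \in \mathscr{C}_{\geq 0}(G,\mathbb{R})$; moreover $f = 1$ on $K$ and $f = 0$ off $V \subset O$, which gives $c_K \leq f \leq c_O$.

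The main obstacle is the first step, namely producing the open set $V$ with compact closure contained in $O$ — this is the only place the hypothesis of local compactness is used, and without it one cannot reduce to a compact Hausdorff setting. Once $\overline{V}$ is in hand, Urysohn's lemma and the extension-by-zero are standard, and the verification $c_K \leq f \leq c_O$ is a direct check.
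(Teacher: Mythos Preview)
Your proof is correct and follows essentially the same approach as the paper: sandwich an open set with compact closure between $K$ and $O$, apply Urysohn's lemma on that compact Hausdorff closure, and extend by zero. The only cosmetic differences are that you spell out the construction of the intermediate open set (the paper simply asserts its existence from local compactness) and you verify continuity of the extension pointwise rather than via the pasting lemma on the closed cover $\overline{V}\cup(G\setminus V)$.
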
   
\begin{proof}
   Since $K\in\mathcal{C}$, $O\in\mathcal{T}$, $K\subset O$ and $G$ is a locally compact Hausdorff space, there exists a $Q\in\mathcal{T}$ such that 
\[
   K\subset Q\subset\overline{Q}\subset O
\]
and $\overline{Q}\in\mathcal{C}$.  
   Here $\overline{Q}$ is a compact Hausdorff space, so it is a normal space.
   Hence Uryson's lemma assures that there exists a continuous function $h:\overline{Q}\to\mathbb{R}$ such that
\begin{center}
\begin{tabular}{lll}
   (i) $0\leq h(q)\leq 1$ for all $q\in\overline{Q}$, & (ii) $h(k)=1$ for all $k\in K$, & (iii) $h(p)=0$ for all $p\in\overline{Q}-Q$.   
\end{tabular}
\end{center}
   Then we define a function $f:G\to\mathbb{R}$ by 
\[
   f(g):=\begin{cases} h(g) & \mbox{if $g\in\overline{Q}$},\\ 0 & \mbox{if $g\in G-Q$}.\end{cases}
\]
   Remark here that the definition of $f$ is well-defined because $h(p)=0$ for all $p\in\overline{Q}\cap(G-Q)$. 
   About this $f$ we assert the following statements, which complete the proof of Lemma \ref{lem-5.2.25}: 
\begin{enumerate}
\item 
   $f$ is continuous since $h:\overline{Q}\to\mathbb{R}$ is continuous, both $\overline{Q}$ and $G-Q$ are closed in $G$ and $G=\overline{Q}\cup(G-Q)$;
\item 
   $\operatorname{supp}(f)$ is compact due to $\operatorname{supp}(f)\subset\overline{Q}$ and $\overline{Q}\in\mathcal{C}$;
\item
   it follows from (i) that $0\leq f(g)\leq 1$ for all $g\in G$;
\item
   $0\leq f\leq 1$, (ii) and $K\subset\overline{Q}$ imply $c_K\leq f$;
\item 
   it follows from $(G-O)\subset(G-Q)$ that $f(x)=0$ for all $x\not\in O$, so that $0\leq f\leq 1$ leads to $f\leq c_O$.    
\end{enumerate}
\end{proof}

   From Lemma \ref{lem-5.2.25} we deduce
\begin{lemma}\label{lem-5.2.26}
   Let $\nu$ be a measure on $\mathscr{B}$ such that 
\begin{enumerate}
\item[{\rm (p5)}] 
   $\nu(O)=\sup\{\nu(C):\mbox{$C\in\mathcal{C}$, $C\subset O$}\}$ for every $O\in\mathcal{T}$.
\end{enumerate} 
   Then, for each $P\in\mathcal{T}$ it follows that 
\[
   \nu(P)
   =\sup\left\{\begin{array}{@{}l|l@{}}
     \displaystyle{\int_Gf(g)d\nu(g)} & \mbox{$f\in\mathscr{C}_{\geq 0}(G,\mathbb{R})$, $f\leq c_P$}\end{array}\right\}.
\] 
\end{lemma}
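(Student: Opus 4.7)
The plan is to prove the equality by verifying the two inequalities $(\leq)$ and $(\geq)$ separately, with Lemma \ref{lem-5.2.25} supplying the bridge between compact subsets and continuous functions.

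For the inequality $(\leq)$, I would argue as follows. Fix $P \in \mathcal{T}$. For any $f \in \mathscr{C}_{\geq 0}(G,\mathbb{R})$ with $f \leq c_P$, the function $f$ is Borel measurable (being continuous) and non-negative, so the integral $\int_G f(g)\,d\nu(g)$ is well-defined in $[0,\infty]$. Since $c_P$ is the characteristic function of the Borel set $P$ and $0 \leq f \leq c_P$ pointwise on $G$, monotonicity of the Lebesgue integral gives
\[
\int_G f(g)\,d\nu(g) \leq \int_G c_P(g)\,d\nu(g) = \nu(P).
\]
Taking the supremum over all such $f$ yields the desired bound.

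For the reverse inequality $(\geq)$, I would combine the hypothesis (p5) with Lemma \ref{lem-5.2.25}. Let $C \in \mathcal{C}$ with $C \subset P$ be arbitrary; since $P \in \mathcal{T}$, Lemma \ref{lem-5.2.25} supplies an $f_C \in \mathscr{C}_{\geq 0}(G,\mathbb{R})$ such that $c_C \leq f_C \leq c_P$ on $G$. Then $f_C$ belongs to the class over which we are taking the supremum, and
\[
\nu(C) = \int_G c_C(g)\,d\nu(g) \leq \int_G f_C(g)\,d\nu(g) \leq \sup\left\{\int_G f\,d\nu : f \in \mathscr{C}_{\geq 0}(G,\mathbb{R}),\ f \leq c_P\right\}.
\]
Taking the supremum over all compact $C \subset P$ on the left and invoking (p5) gives $\nu(P) \leq \sup\{\int_G f\,d\nu : f \in \mathscr{C}_{\geq 0}(G,\mathbb{R}),\ f \leq c_P\}$, which completes the proof.

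There is essentially no main obstacle: both inequalities reduce to monotonicity of the integral, and the only nontrivial ingredient is the Urysohn-type interpolation Lemma \ref{lem-5.2.25}, which is already established. One minor point to be careful about is the well-definedness of the integrals, which rests on $C \in \mathscr{B}$ (guaranteed by Lemma \ref{lem-5.1.1}-(1)) and on the Borel measurability of every $f \in \mathscr{C}_{\geq 0}(G,\mathbb{R})$ (immediate from continuity); both are routine.
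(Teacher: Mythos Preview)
Your proposal is correct and follows essentially the same approach as the paper: both directions are handled exactly as you describe, with monotonicity of the integral giving $\sup\leq\nu(P)$ and Lemma~\ref{lem-5.2.25} combined with (p5) giving $\nu(P)\leq\sup$. The only cosmetic point is that your labels ``$(\leq)$'' and ``$(\geq)$'' appear to be swapped relative to the displayed equation $\nu(P)=\sup\{\cdots\}$, but the arguments themselves are clear and complete.
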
   
\begin{proof}
   First, let us confirm that 
\begin{equation}\label{eq-1}\tag*{\textcircled{1}}
   \sup\left\{\begin{array}{@{}l|l@{}}
     \displaystyle{\int_Gf(g)d\nu(g)} & \mbox{$f\in\mathscr{C}_{\geq 0}(G,\mathbb{R})$, $f\leq c_P$}\end{array}\right\}
   \leq\nu(P).   
\end{equation} 
   For any $f\in\mathscr{C}_{\geq 0}(G,\mathbb{R})$ with $f\leq c_P$, both $f$ and $c_P$ are $\mathscr{B}$-measurable functions on $G$ and $0\leq f\leq c_P$. 
   Therefore we have 
\[
   \int_Gf(g)d\nu(g)\leq\int_Gc_P(g)d\nu(g)=\nu(P).
\] 
   Hence the inequality \ref{eq-1} holds.
   Now, let us show that the converse inequality also holds. 
   From (p5) it suffices to show that for any $K\in\mathcal{C}$ with $K\subset P$, there exists an $h\in\mathscr{C}_{\geq 0}(G,\mathbb{R})$ satisfying   
\[
\begin{array}{ll}
   h\leq c_P, & \nu(K)\leq\displaystyle{\int_Gh(g)d\nu(g)}.
\end{array}
\]
   That comes from Lemma \ref{lem-5.2.25} and $\nu(K)=\int_Gc_K(g)d\nu(g)$. 
\end{proof}

\begin{lemma}\label{lem-5.2.27}
   Let $\nu$ be a measure on $\mathscr{B}$ such that 
\begin{enumerate}
\item[{\rm (p6)}] 
   $\nu(C)<\infty$ for each $C\in\mathcal{C}$.
\end{enumerate} 
   Then, the following three items hold$:$
\begin{enumerate}
\item[{\rm (1)}]
   Any $f\in\mathscr{C}_{\geq 0}(G,\mathbb{R})$ is $\nu$-integrable on $G$.
\item[{\rm (2)}]
   For any $f\in\mathscr{C}_{\geq 0}(G,\mathbb{R})$ and $g\in G$, the non-negative function $G\ni x\mapsto\int_Gf(gx)d\nu(g)\in\mathbb{R}$ is continuous.
\item[{\rm (3)}]
   The measure space $(G,\mathscr{B},\nu)$ is $\sigma$-finite in the case where $G$ satisfies the second countability axiom.
\end{enumerate}
\end{lemma}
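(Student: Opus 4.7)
The plan is to treat (1), (2), (3) in order, exploiting throughout that $\nu$ is finite on compacta. For (1), set $K:=\operatorname{supp}(f)$, which is compact, and $M:=\sup_{y\in G}f(y)<\infty$ (since $f$ is continuous on $K$ and vanishes off $K$); then $0\leq f\leq M\,c_K$ on $G$, so monotonicity of the Lebesgue integral combined with (p6) gives $\int_G f\,d\nu\leq M\,\nu(K)<\infty$, i.e.\ $f$ is $\nu$-integrable.

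For (2), fix $f\in\mathscr{C}_{\geq 0}(G,\mathbb{R})$ and $x_0\in G$, and set $K_0:=\operatorname{supp}(f)$. Using local compactness of $G$ I would first pick a symmetric $V\in\mathcal{U}$ with $\overline{V}$ compact, and put $K:=K_0\,\overline{V}$ and $L:=K\,x_0^{-1}$; both are compact, and $\nu(L)<\infty$ by (p6). For every $x\in x_0 V$ the integrand $g\mapsto f(gx)$ vanishes outside $K_0\,x^{-1}\subset L$. The analytic heart of the proof is the following uniform continuity statement for $f$: for every $\epsilon>0$ there exists $U\in\mathcal{U}$, with $U\subset V$, such that
\[
   |f(y)-f(yz)|<\frac{\epsilon}{\nu(L)+1}\quad\text{for all }y\in G\text{ and }z\in U.
\]
Granted this, writing $gx=(gx_0)\cdot(x_0^{-1}x)$ with $x_0^{-1}x\in U$ yields $|f(gx)-f(gx_0)|<\epsilon/(\nu(L)+1)$ uniformly in $g\in G$ whenever $x\in x_0 U$; integrating over $L$ (off which both integrands vanish) gives
\[
   \Big|\int_G f(gx)\,d\nu(g)-\int_G f(gx_0)\,d\nu(g)\Big|\leq\int_L|f(gx)-f(gx_0)|\,d\nu(g)<\epsilon,
\]
which is the desired continuity at $x_0$.

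The main obstacle is the uniform continuity claim, which I would settle by the standard finite-cover argument. For each $k\in K$, continuity of $f$ and of left multiplication by $k$ provides $W_k\in\mathcal{U}$ with $|f(kw)-f(k)|<\epsilon/(2(\nu(L)+1))$ for $w\in W_k$; next choose a symmetric $W_k'\in\mathcal{U}$ with $W_k'W_k'\subset W_k\cap V$; extract a finite subcover $K\subset\bigcup_{i=1}^N k_i W_{k_i}'$ from compactness of $K$; and put $U:=\bigcap_{i=1}^N W_{k_i}'$. For $y\in K$, writing $y=k_iw$ with $w\in W_{k_i}'$ and taking $z\in U$ gives $wz\in W_{k_i}'W_{k_i}'\subset W_{k_i}$, so the triangle inequality produces the required bound. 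For $y\notin K$ one has $f(y)=0$; moreover, if $yz\in K_0$ for some $z\in U$, symmetry of $V$ forces $y=(yz)z^{-1}\in K_0\,V\subset K$, contradicting $y\notin K$, so $f(yz)=0$ as well and the estimate holds trivially.

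For (3), assume $G$ is second countable and fix a countable base $\{O_n\}_{n\in\mathbb{N}}$ of its topology; local compactness of $G$ ensures that every point lies in some relatively compact neighborhood, hence in some $O_n$ with $\overline{O_n}$ compact, so the subfamily $\{O_{n_k}\}_{k\in\mathbb{N}}$ of basis elements with compact closure still covers $G$. Then $G=\bigcup_{k=1}^\infty\overline{O_{n_k}}$ with $\nu(\overline{O_{n_k}})<\infty$ by (p6), so $(G,\mathscr{B},\nu)$ is $\sigma$-finite.
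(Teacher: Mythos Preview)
Your proof is correct and follows essentially the same route as the paper: for (1) you bound $f$ by $M\,c_K$ just as the paper bounds it by $\lambda c_{\operatorname{supp}(f)}$; for (2) you use the same compact set $K_0\overline{V}x_0^{-1}$ (the paper's $\operatorname{supp}(f)\overline{V}x_0^{-1}$) together with the left uniform continuity of $f$, the only difference being that you spell out the finite-cover proof of uniform continuity while the paper simply invokes it as a known property of compactly supported continuous functions on a topological group; and for (3) both proofs extract a countable compact cover from second countability plus local compactness.
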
   
\begin{proof}
   (1). 
   Since $\operatorname{supp}(f)\subset G$ is compact and $f$ is continuous, there exists a positive real number $\lambda$ such that $f\leq\lambda c_{\operatorname{supp}(f)}$ on $G$. 
   In addition, $\nu\bigl(\operatorname{supp}(f)\bigr)<\infty$ due to (p6).
   Then, it follows from $0\leq f\leq\lambda c_{\operatorname{supp}(f)}$ and $\nu\bigl(\operatorname{supp}(f)\bigr)<\infty$ that 
\[
   \int_Gf(g)d\nu(g)
   \leq\int_G\lambda c_{\operatorname{supp}(f)}(g)d\nu(g)
   =\lambda\nu\bigl(\operatorname{supp}(f)\bigr)<\infty.
\]
   Hence $f$ is $\nu$-integrable on $G$.\par
   
   (2).
   The above (1) assures that the function $G\ni x\mapsto\int_Gf(gx)d\nu(g)\in\mathbb{R}$ and the computations below are well-defined.\par
   
   Fix any $\epsilon>0$ and $x_0\in G$. 
   There exists a $V\in\mathcal{U}$ satisfying $\overline{V}\in\mathcal{C}$ because $G$ is a locally compact Hausdorff space. 
   In view of $\operatorname{supp}(f)\in\mathcal{C}$, we see that $\operatorname{supp}(f)\overline{V}x_0^{-1}$ is a compact subset of $G$, and that $f$ is uniformly continuous on $G$.
   Then it follows from (p6) that  
\[
   \mbox{$0<\delta<\infty$ holds for $\delta:=1+\nu\bigl(\operatorname{supp}(f)\overline{V}x_0^{-1}\bigr)$};
\]
and moreover, there exists a $U\in\mathcal{U}$ such that (i) $U=U^{-1}$, (ii) $U\subset V$, and (iii) $a^{-1}b\in U$ implies $|f(a)-f(b)|<\epsilon/\delta$. 
   If $x\in G$ and $g\in G$ satisfy $x_0^{-1}x\in U$ and $g\not\in\operatorname{supp}(f)\overline{V}x_0^{-1}$, respectively, then (i) and (ii) yield $gx\not\in\operatorname{supp}(f)$, and $f(gx)=0$. 
   Consequently, $x\in x_0U$ implies 
\allowdisplaybreaks{
\begin{align*}
   \Big|\int_Gf(gx_0)d\nu(g)&-\int_Gf(gx)d\nu(g)\Big|
   \leq\int_G\bigl|f(gx_0)-f(gx)\bigr|d\nu(g)\\
  &=\int_{\operatorname{supp}(f)\overline{V}x_0^{-1}}\bigl|f(gx_0)-f(gx)\bigr|d\nu(g)+\int_{G-\operatorname{supp}(f)\overline{V}x_0^{-1}}\bigl|f(gx_0)-f(gx)\bigr|d\nu(g)\\
  &=\int_{\operatorname{supp}(f)\overline{V}x_0^{-1}}\bigl|f(gx_0)-f(gx)\bigr|d\nu(g)\\
  &\leq\int_{\operatorname{supp}(f)\overline{V}x_0^{-1}}\dfrac{\epsilon}{\delta}d\nu(g) \quad\mbox{($\because$ $(gx_0)^{-1}gx=x_0^{-1}x\in U$, (iii))}\\
  &=\dfrac{\epsilon}{\delta}\nu\bigl(\operatorname{supp}(f)\overline{V}x_0^{-1}\bigr)
   =\epsilon\dfrac{\nu\bigl(\operatorname{supp}(f)\overline{V}x_0^{-1}\bigr)}{1+\nu\bigl(\operatorname{supp}(f)\overline{V}x_0^{-1}\bigr)}
   \leq\epsilon.
\end{align*}}So, the function $G\ni x\mapsto\int_Gf(gx)d\nu(g)\in\mathbb{R}$ is continuous.\par

   (3). 
   Since $G$ satisfies the second countability axiom and is a locally compact Hausdorff space, there exists a sequence $\{E_n\}_{n=1}^\infty\subset G$ satisfying $E_n\in\mathcal{C}$ ($n\in\mathbb{N}$) and $\bigcup_{n=1}^\infty E_n=G$. 
   Thus we conclude (3) from (p6). 
\end{proof}

\begin{lemma}\label{lem-5.2.28}
   There exists an $h_0\in\mathscr{C}_{\geq 0}(G,\mathbb{R})$ satisfying $\int_Gh_0(gx)d\nu(g)>0$ for all $x\in G$ and all measures $\nu$ on $\mathscr{B}$ such that {\rm (p8)} $\nu(O)>0$ for each $O\in\mathcal{T}-\{\emptyset\}$.
\end{lemma}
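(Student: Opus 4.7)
The plan is to choose a single continuous, nonnegative, compactly supported bump function $h_0$ that is strictly positive on some nonempty open set, and then argue by continuity plus (p8) that its integral against any $\nu$ satisfying (p8) is positive, uniformly over $x \in G$.

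First I would construct $h_0$. Since $G$ is a locally compact Hausdorff space, there exists a nonempty open set $P \in \mathcal{T}$ whose closure $\overline{P}$ is compact. Pick any point $p \in P$; the singleton $K := \{p\}$ belongs to $\mathcal{C}$ because $G$ is Hausdorff, and $K \subset P$. Applying Lemma \ref{lem-5.2.25} to $(K, P)$ yields an $h_0 \in \mathscr{C}_{\geq 0}(G, \mathbb{R})$ with $c_K \le h_0 \le c_P$ on $G$. In particular $h_0(p) \ge 1$, $\operatorname{supp}(h_0) \subset \overline{P}$ is compact, and $h_0$ is continuous and nonnegative on $G$.

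Next I would exploit continuity of $h_0$ at the point $p$ to produce an open neighborhood on which $h_0$ is uniformly bounded below. Since $h_0(p) \ge 1$ and $h_0$ is continuous, there exists an open neighborhood $V \in \mathcal{T}$ of $p$ and a real constant $c > 0$ (e.g.\ $c = 1/2$) such that
\[
   h_0(y) \ge c \quad \text{for all } y \in V.
\]
Now fix any $x \in G$ and any measure $\nu$ on $\mathscr{B}$ satisfying (p8). Because $h_0$ is continuous and right translation $R_x : G \to G$ is continuous, the function $g \mapsto h_0(gx) = (h_0 \circ R_x)(g)$ is continuous, hence $\mathscr{B}$-measurable and nonnegative, so $\int_G h_0(gx)\, d\nu(g) \in [0, \infty]$ is well-defined. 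The set $V x^{-1}$ is a nonempty open subset of $G$ (it contains $p x^{-1}$), and for every $g \in V x^{-1}$ one has $g x \in V$, whence $h_0(gx) \ge c$. Therefore
\[
   \int_G h_0(gx)\, d\nu(g)
   \ge \int_{Vx^{-1}} h_0(gx)\, d\nu(g)
   \ge c\,\nu(V x^{-1}).
\]
By (p8) applied to the nonempty open set $Vx^{-1} \in \mathcal{T} - \{\emptyset\}$, we get $\nu(Vx^{-1}) > 0$, so the right-hand side is strictly positive. This gives the desired inequality for every $x \in G$ and every $\nu$ in the stated class.

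There is essentially no hard step here: the only delicate point is to remember that $h_0$ must be chosen once and for all, independent of $x$ and $\nu$, which is achieved by fixing $p$, $V$, and $c$ before quantifying over $(x,\nu)$. The two ingredients that actually do the work are Lemma \ref{lem-5.2.25} (existence of Urysohn-type bump functions in $\mathscr{C}_{\geq 0}(G,\mathbb{R})$) and (p8) (strict positivity on nonempty open sets), together with the elementary observation that $R_x$ is a homeomorphism so the translate $Vx^{-1}$ of a nonempty open set is again a nonempty open set.
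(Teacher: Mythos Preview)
Your proof is correct and follows essentially the same approach as the paper: construct a bump function via Lemma~\ref{lem-5.2.25}, then bound the integral from below by $c\,\nu(Vx^{-1})$ for some nonempty open $V$ and apply (p8). The paper's version is marginally more direct: it chooses from the outset a compact $K$ with nonempty interior and takes $h_0\geq c_K$, so that $h_0\geq 1$ on the open set $K^\circ$ without any appeal to continuity; your extra step of passing from $h_0(p)\geq 1$ to an open $V$ via continuity is unnecessary but harmless.
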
   
\begin{proof}
   Since $G$ is locally compact, there exists a $K\in\mathcal{C}$ satisfying $\emptyset\neq K^\circ$. 
   Lemma \ref{lem-5.2.25} and $K\in\mathcal{C}$ allow us to find an $h_0\in\mathscr{C}_{\geq 0}(G,\mathbb{R})$ such that $h_0\geq c_K$. 
   In this setting, for each $x\in G$ and each measure $\nu$ with (p8), we obtain
\[
   \int_Gh_0(gx)d\nu(g)
   \geq\int_Gc_K(gx)d\nu(g)
   =\int_Gc_{Kx^{-1}}(g)d\nu(g)
   =\nu(Kx^{-1})
   \geq\nu(K^\circ x^{-1})>0
\] 
from (p8).
\end{proof}

   Lemmas \ref{lem-5.2.26}, \ref{lem-5.2.27} and \ref{lem-5.2.28} enable one to obtain
\begin{proposition}[Uniqueness]\label{prop-5.2.29}
   Suppose that $G$ satisfies the second countability axiom. 
   Then, for non-zero left-invariant Haar measures $\mu$ and $\nu$ on $G$, there exists a positive real number $\lambda$ such that $\mu=\lambda\nu$. 
\end{proposition}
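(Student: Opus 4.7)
The plan is to reduce Proposition 5.2.29 to an equality of integrals $\int_G f\,d\mu = \lambda\int_G f\,d\nu$ on $\mathscr{C}_{\geq 0}(G,\mathbb{R})$ via Lemma 5.2.26, and then to establish that equality by a double-integral manipulation built from Lemmas 5.2.27 and 5.2.28.

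First I set up the analytic framework. Lemma 5.2.27-(3), together with (p6) and second countability, makes $(G,\mathscr{B},\mu)$ and $(G,\mathscr{B},\nu)$ $\sigma$-finite, so Fubini's theorem is applicable on $G\times G$ for non-negative Borel functions. Applying Lemma 5.2.28 to both $\mu$ and $\nu$ (each satisfies (p8)), I fix one common auxiliary function $h_0\in\mathscr{C}_{\geq 0}(G,\mathbb{R})$ such that $\int_G h_0(gx)\,d\mu(g)>0$ and $\int_G h_0(gx)\,d\nu(g)>0$ for every $x\in G$; both of these are continuous in $x$ by Lemma 5.2.27-(2). Setting
$$\lambda := \frac{\int_G h_0(g)\,d\mu(g)}{\int_G h_0(g)\,d\nu(g)}$$
yields a positive real number: the numerator is finite by (p6), and the denominator is strictly positive because $h_0$ is strictly positive on a non-empty open set and (p8) applies.

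The main step, and also the main obstacle, is to promote the single ratio $\lambda$ to the universal identity $\int_G f\,d\mu = \lambda\int_G f\,d\nu$ for every $f\in\mathscr{C}_{\geq 0}(G,\mathbb{R})$. For this I would introduce, for each test function $f$, a double integral over $G\times G$ of the product $f(y)\,h_0(y^{-1}x)$ (or a close variant) and compute it in two orders using Fubini. Because only left invariance is available---no right invariance or invariance under inversion---every substitution must take the form $z\mapsto az$, so that it can be absorbed by left invariance of $\mu$ or $\nu$. One order of integration collapses the inner $\nu$-integral via the change of variable $x\mapsto yx$, producing $(\int_G f\,d\mu)(\int_G h_0\,d\nu)$; the other order exposes a $\mu$-convolution kernel $x\mapsto\int_G f(y)h_0(y^{-1}x)\,d\mu(y)$ that must then be analyzed by specializing $h_0$ to an approximation-of-identity $\{h_U\}_{U\in\mathcal{U}}\subset\mathscr{C}_{\geq 0}(G,\mathbb{R})$ supported near $e$, normalized so that $\int_G h_U\,d\mu=1$, and passing to the limit $U\to\{e\}$ via the continuity in Lemma 5.2.27-(2). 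The delicate bookkeeping required to execute this limit using only left invariance, and to identify the limiting expression with the symmetric quantity $(\int_G f\,d\nu)(\int_G h_0\,d\mu)$, is the heart of the argument.

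Once the proportionality $\int_G f\,d\mu = \lambda\int_G f\,d\nu$ is established on all of $\mathscr{C}_{\geq 0}(G,\mathbb{R})$, Lemma 5.2.26 (applicable to both measures since both satisfy (p5)) upgrades it to $\mu(O)=\lambda\nu(O)$ for every $O\in\mathcal{T}$, because the defining supremum in Lemma 5.2.26 is over a set of test functions independent of the measure. Finally, the outer regularity (p4), satisfied by both measures, extends the relation to the whole Borel $\sigma$-algebra: for any $A\in\mathscr{B}$,
$$\mu(A)=\inf\{\mu(O):O\in\mathcal{T},\,A\subset O\}=\lambda\inf\{\nu(O):O\in\mathcal{T},\,A\subset O\}=\lambda\nu(A),$$
yielding $\mu=\lambda\nu$ on $\mathscr{B}$ and completing the proof of Proposition 5.2.29.
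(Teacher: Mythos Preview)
Your scaffolding matches the paper exactly: reduce via Lemma~5.2.26 and (p4) to the identity $\int_G f\,d\mu=\lambda\int_G f\,d\nu$ on $\mathscr{C}_{\geq 0}(G,\mathbb{R})$, and attack that identity with Fubini on $G\times G$ using Lemmas~5.2.27 and~5.2.28. The gap is in the ``heart of the argument,'' which you explicitly leave undone. Your proposed approximation-of-identity route is workable in principle but not as you describe it: with $\int_G h_U\,d\mu=1$ and $h_U$ supported near $e$, the inner integral $\int_G f(y)h_U(y^{-1}x)\,d\mu(y)$ tends to $f(x)\int_G h_U(y^{-1})\,d\mu(y)$, not to $f(x)$, so you would first need $h_U$ symmetric under inversion; and even then the limit of the full double integral is $\int_G f\,d\nu$, not the ``symmetric quantity $(\int_G f\,d\nu)(\int_G h_0\,d\mu)$'' you name as the target. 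The correct conclusion from this route would be that $\lim_U\int_G h_U\,d\nu$ exists and equals $(\int_G f\,d\nu)/(\int_G f\,d\mu)$ independently of $f$, which then gives the proportionality---but none of this is carried out.

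The paper sidesteps all of this with a normalization trick: it sets
\[
   F(x,y):=\frac{f(x)\,h_0(yx)}{\int_G h_0(gx)\,d\nu(g)},
\]
so that $\int_G F(x,y)\,d\nu(y)=f(x)$ identically in $x$. Two Fubini swaps and the purely left substitutions $x\mapsto y^{-1}x$ (in $d\mu$) and $y\mapsto xy$ (in $d\nu$) then give the closed-form identity
\[
   \frac{\int_G f\,d\mu}{\int_G h_0\,d\mu}
   =\int_G\frac{f(y^{-1})}{\int_G h_0(gy^{-1})\,d\nu(g)}\,d\nu(y),
\]
whose right side is visibly independent of $\mu$. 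Running the same computation with $\mu$ replaced by $\nu$ therefore yields $\int_G f\,d\mu/\int_G h_0\,d\mu=\int_G f\,d\nu/\int_G h_0\,d\nu$ directly, with no limits, no approximate identities, and no symmetry hypothesis on $h_0$. This is the missing idea in your proposal.
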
 
\begin{proof}
   Throughout this proof, (p$k$) means the condition (p$k$) in Theorem \ref{thm-5.1.2} ($1\leq k\leq 8$).\par
   
   By (p4) and Lemma \ref{lem-5.2.26} it suffices to confirm the following: there exists a $\lambda>0$ such that 
\begin{equation}\label{eq-1}\tag*{\textcircled{1}}
   \int_Gf(x)d\mu(x)=\lambda\int_Gf(y)d\nu(y)
\end{equation}
for all $f\in\mathscr{C}_{\geq 0}(G,\mathbb{R})$.
   Let us fix any $f\in\mathscr{C}_{\geq 0}(G,\mathbb{R})$, and deal with the product measure space $(G\times G,\mathscr{R}, \mu\times\nu)$ obtained from the measure spaces $(G,\mathscr{B},\mu)$ and $(G,\mathscr{B},\nu)$. 
   Recalling that there exists an $h_0\in\mathscr{C}_{\geq 0}(G,\mathbb{R})$ such that $\int_Gh_0(gx)d\mu(g)>0$, $\int_Gh_0(gx)d\nu(g)>0$ for all $x\in G$, we define a function $F:G\times G\to\mathbb{R}$ by 
\begin{equation}\label{eq-a}\tag{a}
   \mbox{$F(x,y):=\dfrac{f(x)h_0(yx)}{\int_Gh_0(gx)d\nu(g)}$ for $(x,y)\in G\times G$}
\end{equation}
(cf.\ Lemma \ref{lem-5.2.28}).
   On the one hand; this function $F$ is non-negative, continuous on $G\times G$ by Lemma \ref{lem-5.2.27}-(2). 
   Hence $F$ is $\mathscr{R}$-measurable on $G\times G$.
   On the other hand; since $\operatorname{supp}(F)\subset\operatorname{supp}(f)\times\operatorname{supp}(h_0)\operatorname{supp}(f)^{-1}$, we see that $\operatorname{supp}(F)$ is a compact subset of $G\times G$, and that $(\mu\times\nu)\bigl(\operatorname{supp}(F)\bigr)\leq\mu\bigl(\operatorname{supp}(f)\bigr)\nu\bigl(\operatorname{supp}(h_0)\operatorname{supp}(f)^{-1}\bigr)<\infty$ by (p6). 
   Accordingly we conclude that  
\begin{equation}\label{eq-b}\tag{b}
   \mbox{the $F(x,y)$ is $\mathscr{R}$-measurable and $(\mu\times\nu)$-integrable on $G\times G$}
\end{equation}
by arguments similar to those in the proof of Lemma \ref{lem-5.2.27}-(1). 
   Now, \eqref{eq-b}, Lemma \ref{lem-5.2.27}-(3) and Fubini's theorem imply
\allowdisplaybreaks{
\begin{align*}
   \int_Gf(x)d\mu(x)
  &=\int_G\Big(\int_GF(x,y)d\nu(y)\Big)d\mu(x)
   =\int_{G\times G}F(x,y)d(\mu\times\nu)(x,y)\\
  &=\int_G\Big(\int_GF(x,y)d\mu(x)\Big)d\nu(y)
   =\int_G\Big(\int_GF(y^{-1}x,y)d\mu(y^{-1}x)\Big)d\nu(y) \quad\mbox{(by $x\mapsto y^{-1}x$)}\\
  &\stackrel{{\rm (p7)}}{=}\int_G\Big(\int_GF(y^{-1}x,y)d\mu(x)\Big)d\nu(y) 
   =\int_{G\times G}F(y^{-1}x,y)d(\mu\times\nu)(x,y)\\
  &=\int_G\Big(\int_GF(y^{-1}x,y)d\nu(y)\Big)d\mu(x)
   =\int_G\Big(\int_GF(y^{-1},xy)d\nu(xy)\Big)d\mu(x) \quad\mbox{(by $y\mapsto xy$)}\\
  &\stackrel{{\rm (p7)}}{=}\int_G\Big(\int_GF(y^{-1},xy)d\nu(y)\Big)d\mu(x)
   \stackrel{\eqref{eq-a}}{=}\int_Gh_0(x)d\mu(x)\int_G\dfrac{f(y^{-1})}{\int_Gh_0(gy^{-1})d\nu(g)}d\nu(y),
\end{align*}}where we remark that $f(x)=\int_GF(x,y)d\nu(y)$. 
   Hence it turns out that 
\begin{equation}\label{eq-c}\tag{c}
   \dfrac{\int_Gf(x)d\mu(x)}{\int_Gh_0(z)d\mu(z)}
   =\int_G\dfrac{f(y^{-1})}{\int_Gh_0(gy^{-1})d\nu(g)}d\nu(y).
\end{equation}
   The above arguments assure that for any non-zero left-invariant Haar measure $\mu'$ on $G$, the equality 
\[
   \dfrac{\int_Gf(x)d\mu'(x)}{\int_Gh_0(z)d\mu'(z)}
   =\int_G\dfrac{f(y^{-1})}{\int_Gh_0(gy^{-1})d\nu(g)}d\nu(y)
\]
always holds, and thus \eqref{eq-c} yields $\dfrac{\int_Gf(x)d\mu(x)}{\int_Gh_0(z)d\mu(z)}=\dfrac{\int_Gf(x)d\mu'(x)}{\int_Gh_0(z)d\mu'(z)}$; in particular, 
\[
   \dfrac{\int_Gf(x)d\mu(x)}{\int_Gh_0(z)d\mu(z)}=\dfrac{\int_Gf(x)d\nu(x)}{\int_Gh_0(z)d\nu(z)}.
\]   
   Setting $\lambda:=\dfrac{\int_Gh_0(z)d\mu(z)}{\int_Gh_0(z)d\nu(z)}$, we have $\lambda>0$ and \ref{eq-1}.     
\end{proof}

   Propositions \ref{prop-5.2.23} and \ref{prop-5.2.29} lead to Theorem \ref{thm-5.1.2}.

\section{An example of unimodular group}\label{sec-5.3}
   Suppose $G$ to satisfy the second countability axiom. 
   Let $\mu$ be a non-zero left-invariant Haar measure on $G$.
   For an $x\in G$, Lemma \ref{lem-5.1.1}-(2) enables us to define a set function $\delta(x)\mu:\mathscr{B}\to\mathbb{R}\amalg\{\infty\}$ by
\begin{equation}\label{eq-5.3.1}
   \mbox{$(\delta(x)\mu)(A):=\mu(Ax)$ for $A\in\mathscr{B}$}.
\end{equation}
   Then $\delta(x)\mu$ is also a non-zero left-invariant Haar measure on $G$, since the right translation $R_x:G\to G$ is a homeomorphism.
   Accordingly there exists a unique positive real number $\triangle(x)$ satisfying 
\begin{equation}\label{eq-5.3.2}
   \delta(x)\mu=\triangle(x)\mu
\end{equation}
by Theorem \ref{thm-5.1.2}. 
   The function $\triangle:G\to\mathbb{R}^+$, $x\mapsto\triangle(x)$, is called the {\it modular function}\index{modular function@modular function\dotfill} of $G$.\footnote{Remark.\ Theorem \ref{thm-5.1.2} assures that this modular function $\triangle$ is independent of the choice of $\mu$.} 
   Besides; the group $G$ is said to be {\it unimodular},\index{unimodular group@unimodular group\dotfill} if $\triangle(x)=1$ for all $x\in G$.
   In this section, we clarify some properties of $\triangle$ and show Proposition \ref{prop-5.3.4} which provides us with an example of unimodular group.
\begin{proposition}\label{prop-5.3.3}
   Suppose that $G$ satisfies the second countability axiom. 
   Let $\triangle$ denote the modular function of $G$. 
   Then,
\begin{enumerate}
\item[{\rm (i)}]
   $\triangle:G\to\mathbb{R}^+$, $x\mapsto\triangle(x)$, is a continuous function.
\item[{\rm (ii)}]
   $\triangle(xy)=\triangle(x)\triangle(y)$ for all $x,y\in G$. 
\item[{\rm (iii)}]
   $\triangle(x)=1$ for all $x\in G$ $($i.e., $G$ is unimodular$)$ if and only if a non-zero left-invariant Haar measure $\mu$ on $G$ is also right-invariant $($i.e., $\mu(Ag)=\mu(A)$ for all $(g,A)\in G\times\mathscr{B})$.  
\end{enumerate} 
\end{proposition}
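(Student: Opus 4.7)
The plan is to exploit the defining identity $\mu(Ax) = \triangle(x)\mu(A)$ for $(x, A) \in G \times \mathscr{B}$, which is just $\delta(x)\mu = \triangle(x)\mu$ evaluated on $A$. Parts (ii) and (iii) follow almost immediately from this identity, while part (i)---the main obstacle---requires an integration-based argument derived from a substitution formula.

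For part (ii), associativity of multiplication yields, for any $x, y \in G$ and $A \in \mathscr{B}$,
\[
\triangle(xy)\mu(A) = \mu(Axy) = \mu((Ax)y) = \triangle(y)\mu(Ax) = \triangle(y)\triangle(x)\mu(A).
\]
By local compactness of $G$ there exists a compact set $C$ with nonempty interior, and conditions (p6), (p8) of Theorem \ref{thm-5.1.2} force $0 < \mu(C) < \infty$; cancellation then gives $\triangle(xy) = \triangle(x)\triangle(y)$. For part (iii), if $\triangle \equiv 1$ then $\mu(Ax) = \mu(A)$ for all $(x, A)$, i.e., $\mu$ is right-invariant; conversely, right-invariance yields $\triangle(x)\mu(A) = \mu(A)$ for every $A \in \mathscr{B}$, and the choice $A = C$ from above produces $\triangle(x) = 1$.

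For part (i), I would first derive the substitution formula
\[
\int_G f(gx^{-1})\,d\mu(g) = \triangle(x)\int_G f(g)\,d\mu(g)
\]
valid for every non-negative $\mathscr{B}$-measurable $f$; this follows from $\mu(Ax) = \triangle(x)\mu(A)$ and the identity $c_{Ax}(g) = c_A(gx^{-1})$, extended by linearity to non-negative simple functions and by monotone convergence to general non-negative measurable $f$. Applying the formula to the function $h_0 \in \mathscr{C}_{\geq 0}(G, \mathbb{R})$ produced by Lemma \ref{lem-5.2.28}---for which $\int_G h_0(g)\,d\mu(g) > 0$ since $\mu$ satisfies (p8)---yields the explicit expression
\[
\triangle(x) = \frac{\int_G h_0(gx^{-1})\,d\mu(g)}{\int_G h_0(g)\,d\mu(g)}.
\]
The denominator is a fixed positive constant. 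Lemma \ref{lem-5.2.27}-(2)---applicable because $\mu$ satisfies (p6)---shows that $y \mapsto \int_G h_0(gy)\,d\mu(g)$ is continuous on $G$; composing with the continuous inversion $G \ni x \mapsto x^{-1} \in G$ of the topological group then shows that $x \mapsto \int_G h_0(gx^{-1})\,d\mu(g)$ is continuous, and hence so is $\triangle$. The only delicate point is to justify the substitution formula past the characteristic-function case, but this is a routine application of the standard monotone class / MCT hierarchy, so no genuine obstruction remains.
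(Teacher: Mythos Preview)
Your proof is correct and follows essentially the same route as the paper: parts (ii) and (iii) are handled identically via the defining identity $\mu(Ax)=\triangle(x)\mu(A)$, and for (i) both you and the paper express $\triangle$ (or its reciprocal) as a ratio of integrals of the function $h_0$ from Lemma~\ref{lem-5.2.28} and then invoke Lemma~\ref{lem-5.2.27}-(2). The only cosmetic difference is that the paper works with $\int_G h_0(gx)\,d\mu(g)=\triangle(x)^{-1}\int_G h_0\,d\mu$, obtaining continuity of $1/\triangle$ directly from Lemma~\ref{lem-5.2.27}-(2) without composing with inversion, whereas you use $gx^{-1}$ and then compose with $x\mapsto x^{-1}$; either way is fine.
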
 
\begin{proof}
   Let $\mu$ be a non-zero left-invariant Haar measure on $G$.\par
   
   (i).
   By Lemma \ref{lem-5.2.28} there exists an $h_0\in\mathscr{C}_{\geq 0}(G,\mathbb{R})$ such that $\int_Gh_0(gx)d\mu(g)>0$ for all $x\in G$.
   From \eqref{eq-5.3.1} and \eqref{eq-5.3.2} we obtain $\int_Gh_0(gx)d\mu(g)=\triangle(x)^{-1}\int_Gh_0(g)d\mu(g)$. 
   Then, Lemma \ref{lem-5.2.27}-(2) implies that 
\[
   \mbox{$\displaystyle{G\ni x\mapsto\dfrac{1}{\triangle(x)}\int_Gh_0(g)d\mu(g)\in\mathbb{R}^+}$ is continuous}.
\]
   Hence $\triangle:G\to\mathbb{R}^+$, $x\mapsto\triangle(x)$, is continuous because $\int_Gh_0(g)d\mu(g)$ is a positive constant.\par
   
   (ii).
   By a direct computation, together with \eqref{eq-5.3.2} and \eqref{eq-5.3.1}, we  have $\triangle(xy)\mu(A)=\mu(Axy)=(\delta(y)\mu)(Ax)=\triangle(y)\mu(Ax)=\triangle(x)\triangle(y)\mu(A)$ for all $A\in\mathscr{B}$, and so $\triangle(xy)=\triangle(x)\triangle(y)$ by virtue of $\mu\neq 0$.\par
   
   (iii). 
   For each $x\in G$, it follows from \eqref{eq-5.3.2}, $\mu\neq 0$ and \eqref{eq-5.3.1} that $\triangle(x)=1$ if and only if $\delta(x)\mu=\mu$ if and only if $\mu(Ax)=\mu(A)$ for all $A\in\mathscr{B}$.
   Hence we can get the conclusion.
\end{proof} 

   Now, let us show 
\begin{proposition}\label{prop-5.3.4} 
   $G$ is unimodular if $G$ is a compact Hausdorff topological group, or $G$ is a connected semisimple Lie group.\footnote{Remark.\ A connected Lie group always satisfies the second countability axiom.}
   Here, we say that a Lie group is {\rm semisimple}, if so is its Lie algebra.  
\end{proposition}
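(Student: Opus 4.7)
The plan is to handle the two assertions separately, both routed through Proposition \ref{prop-5.3.3}, which states that the modular function $\triangle:G\to\mathbb{R}^+$ is a continuous group homomorphism.

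For the compact case, I would observe that $\triangle(G)$ is a compact subgroup of the multiplicative group $(\mathbb{R}^+,\cdot)$. Since $\{1\}$ is the only compact subgroup of $(\mathbb{R}^+,\cdot)$---any other element $a\neq 1$ would generate an unbounded sequence $\{a^n\}$ (if $a>1$) or $\{a^{-n}\}$ (if $a<1$)---this forces $\triangle\equiv 1$. If the second-countability hypothesis of Proposition \ref{prop-5.3.3} is a concern for arbitrary compact Hausdorff groups, one can instead compare $\mu$ with its right translate $\delta(x)\mu$ directly: both are finite left-invariant Haar measures with identical total mass $\mu(G)<\infty$, and the uniqueness part of Theorem \ref{thm-5.1.2} (see Remark \ref{rem-5.1.3}-(iii)) forces them to coincide, yielding $\triangle(x)=1$.

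For the connected semisimple case, the plan is to identify $\triangle$ via the adjoint representation and then exploit $\frak{g}=[\frak{g},\frak{g}]$. Starting from a non-zero element $\omega_e\in\wedge^n\frak{g}^*$, where $n=\dim_\mathbb{R}\frak{g}$, extend it to a left-invariant top form $\omega$ on $G$ and let $\mu$ be the associated Haar measure. Using the factorisation $L_{g^{-1}}\circ R_g=c_{g^{-1}}$ (conjugation by $g^{-1}$), whose differential at $e$ is $\operatorname{Ad}(g^{-1})$, a short computation gives $R_g^*\omega=\det\operatorname{Ad}(g^{-1})\,\omega$, whence
\[
   \triangle(g)=|\det\operatorname{Ad}(g)|^{-1} \quad\mbox{for every $g\in G$}.
\]
It then suffices to prove $\det\operatorname{Ad}(g)=1$ for all $g\in G$. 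From $\frak{g}=[\frak{g},\frak{g}]$ every $X\in\frak{g}$ is a finite sum $\sum_i[Y_i,Z_i]$, and since the trace of a commutator in $\operatorname{End}(\frak{g})$ vanishes, $\operatorname{tr}(\operatorname{ad}X)=\sum_i\operatorname{tr}[\operatorname{ad}Y_i,\operatorname{ad}Z_i]=0$; hence $\det\operatorname{Ad}(\exp X)=\exp(\operatorname{tr}\operatorname{ad}X)=1$. Because $G$ is a connected Lie group it is generated as a group by $\exp(\frak{g})$, and multiplicativity of $\det\operatorname{Ad}$ propagates this to every $g\in G$. Therefore $\triangle\equiv 1$ on $G$.

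The main obstacle I anticipate is the derivation of the identity $\triangle(g)=|\det\operatorname{Ad}(g)|^{-1}$: the earlier chapters have not developed integration against left-invariant differential forms, so one must verify from scratch that $|\omega|$ really defines a left-invariant Haar measure in the sense of Definition \ref{def-5.1.4}---checking the regularity conditions (p4), (p5) and the strict positivity (p8) on non-empty opens---and then carefully track its behaviour under right translation. The algebraic ingredients ($\frak{g}=[\frak{g},\frak{g}]$ for semisimple $\frak{g}$, and that a connected Lie group is generated by $\exp\frak{g}$) are essentially formal.
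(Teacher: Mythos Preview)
Your compact case is exactly the paper's argument: $\triangle(G)$ is a compact subgroup of $(\mathbb{R}^+,\cdot)$, hence trivial.

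For the semisimple case your route is correct but differs from the paper's. You pass through the identification $\triangle(g)=|\det\operatorname{Ad}(g)|^{-1}$ and then kill $\det\operatorname{Ad}$ via $\operatorname{tr}\operatorname{ad}=0$ on $\frak{g}=[\frak{g},\frak{g}]$. The paper instead differentiates $\triangle$ itself: since $\triangle:G\to\mathbb{R}^+$ is a continuous homomorphism between Lie groups it is a Lie group homomorphism, so its differential $\triangle_*:\frak{g}\to\frak{gl}(1,\mathbb{R})\cong\mathbb{R}$ exists; as the target is abelian and $\frak{g}=[\frak{g},\frak{g}]$, one gets $\triangle_*=0$, whence $\triangle(\exp X)=e^{\triangle_*(X)}=1$ and connectedness finishes. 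The advantage of the paper's approach is precisely that it sidesteps the obstacle you flagged: no need to build a Haar measure from a left-invariant top form or to verify the formula $\triangle=|\det\operatorname{Ad}|^{-1}$ within the framework of Chapter~\ref{ch-5}. Your approach buys the explicit formula for $\triangle$, which is informative in its own right, but costs the extra analytic setup.
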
 
\begin{proof}
   First, let us confirm that a compact Hausdorff topological group $K$ is unimodular. 
   Proposition \ref{prop-5.3.3}-(i), (ii) implies that $\triangle:K\to\mathbb{R}^+$, $k\mapsto\triangle(k)$, is a continuous (group) homomorphism, where we note that $\mathbb{R}^+$ is the identity component of $GL(1,\mathbb{R})$. 
   Thus its image $\triangle(K)$ is a compact subgroup of $\mathbb{R}^+$, and it must be $\{1\}$.\footnote{($\because$) Suppose that $\triangle(K)$ contains an element $\lambda\neq 1$. 
   Then, since $\triangle(K)$ is a subgroup of $\mathbb{R}^+$, we have $\lambda,\lambda^{-1}\in\triangle(K)$ and $(0,1)\cup(1,\infty)\subset\triangle(K)$; so $\triangle(K)=(0,\infty)=\mathbb{R}^+$. 
   This is a contradiction.
   For this reason $\triangle(K)=\{1\}$.} 
   So $K$ is unimodular.\par
 
   Next, let us prove that $G$ is unimodular, where $G$ is a connected semisimple Lie group. 
   Since $\triangle:G\to\mathbb{R}^+$, $g\mapsto\triangle(g)$, is a continuous homomorphism, it is a Lie group homomorphism.
   Therefore one can set its differential $\triangle_*:\frak{g}\to\frak{gl}(1,\mathbb{R})$, and obtain 
\[
   \triangle_*(\frak{g})=\{0\}
\]
from $\frak{g}=[\frak{g},\frak{g}]$. 
   Since $G$ is a connected Lie group, for each $g\in G$ there exist finite elements $X_1,X_2,\dots,X_k\in\frak{g}$ such that $g=\exp X_1\exp X_2\cdots\exp X_k$, and then 
\[
   \triangle(g)
   =\triangle(\exp X_1\cdots\exp X_k)
   =\triangle(\exp X_1)\cdots\triangle(\exp X_k)
   =e^{\triangle_*(X_1)}\cdots e^{\triangle_*(X_k)}
   =1.
\]
   For this reason $\triangle(G)\subset\{1\}$, and $G$ is unimodular.   
\end{proof}

   We end this chapter with commenting on unimodular groups.
\begin{remark}\label{rem-5.3.5}
   Suppose that (s1) $G$ satisfies the second countability axiom and (s2) $G$ is unimodular. 
   Then, for a given non-zero left-invariant Haar measure $\mu$ on $G$ and any $\mu$-integrable function $f$ on $G$, it follows that
\begin{enumerate}
\item[(1)]
   $\displaystyle{\int_Gf(g)d\mu(g)=\int_Gf(xg)d\mu(g)=\int_Gf(g)d\mu(xg)}$ for all $x\in G$;
\item[(2)]
   $\displaystyle{\int_Gf(g)d\mu(g)=\dfrac{1}{\triangle(x)}\int_Gf(g)d\mu(g)=\int_Gf(gx)d\mu(g)}$ for all $x\in G$;
\item[(3)]
   $\displaystyle{\int_Gf(g)d\mu(g)=\int_G\dfrac{1}{\triangle(g)}f(g^{-1})d\mu(g)=\int_Gf(g^{-1})d\mu(g)}$,
\end{enumerate}
where $\triangle$ is the modular function of $G$.
   Remark here, (1), (2$'$) $\triangle(x)^{-1}\int_Gf(g)d\mu(g)=\int_Gf(gx)d\mu(g)$ and (3$'$) $\int_Gf(g)d\mu(g)=\int_G\triangle(g)^{-1}f(g^{-1})d\mu(g)$ come from the measure $\mu$ being left-invariant only.
\end{remark} 

\chapter{Regulated integrals}\label{ch-6}
   In this chapter we study integrals of vector-valued functions.
   cf.\ Lang \cite[Section 4, Chapter I]{La}.

\section{An introduction to regulated integral}\label{sec-6.1} 
   The setting of Section \ref{sec-6.1} is as follows:
\begin{itemize}
\item
   $(X,\mathscr{B},\mu)$ is a measure space which consists of an abstract space $X$, a $\sigma$-algebra $\mathscr{B}$ on $X$, and a measure $\mu$ on $\mathscr{B}$,
\item
   $\mathcal{V}$ is a Fr\'{e}chet space over $\mathbb{K}=\mathbb{R}$ or $\mathbb{C}$, whose topology is determined by a countable number of seminorms $\{p_\ell\}_{\ell\in\mathbb{N}}$,
\item
   $d$ is a metric on $\mathcal{V}$ such that 
   \begin{enumerate}
   \item[(1)] 
      $d$ is a suitable metric for $\mathcal{V}$ which induces a topology identical to the original one, 
   \item[(2)]
      the metric space $(\mathcal{V},d)$ is complete, 
   \item[(3)]
      $d(\xi_1,\xi_2)=d(\xi_1+\xi_3,\xi_2+\xi_3)$ for all $\xi_1,\xi_2,\xi_3\in\mathcal{V}$.
   \end{enumerate}
\end{itemize}
   We remark that for $\xi\in\mathcal{V}$ and $\{\xi_n\}_{n=1}^\infty\subset\mathcal{V}$, $\displaystyle{\lim_{n\to\infty}d(\xi,\xi_n)=0}$ if and only if for any $\epsilon>0$ and each $\ell\in\mathbb{N}$ there exists an $N_\ell\in\mathbb{N}$ such that $n\geq N_\ell$ implies $p_\ell(\xi-\xi_n)<\epsilon$.

\subsection{The regulated integral of a step function}\label{subsec-6.1.1}
   For an $A\in\mathscr{B}$ with $\mu(A)<\infty$, a {\it step function}\index{step function@step function\dotfill} 
\[
   S=S(x):A\to\mathcal{V}
\]
is a mapping such that there exists a finite sequence $\{A_i\}_{i=1}^k\subset 2^A$ satisfying the following three conditions: 
\begin{enumerate}
\item
   $A_i\in\mathscr{B}$ for all $1\leq i\leq k$,
\item
   $A=\coprod_{i=1}^kA_i$ (disjoint union),
\item
   the mapping $S$ is constant on each $A_i$ ($1\leq i\leq k$).
\end{enumerate}
   For this step function $S:A\to\mathcal{V}$, we define its integral $\int_AS(x)d\mu(x)$ on $A$ by
\begin{equation}\label{eq-6.1.1}
   \int_AS(x)d\mu(x)
   :=\sum_{i=1}^k\mu(A_i)\xi_i,
\end{equation}
where $S(A_i)=\{\xi_i\}$, $1\leq i\leq k$.
   Remark that $0\leq\mu(A_i)\leq\mu(A)<\infty$ ($1\leq i\leq k$), that the integral \eqref{eq-6.1.1} is independent of the choice of $A_i$ on which $S$ is constant, and that $\int_AS(x)d\mu(x)\in\mathcal{V}$.
   In addition, $\int_AS(x)d\mu(x)=0$ if $\mu(A)\leq 0$ (i.e., $\mu(A)=0$).

\subsection{Definition of regulated integral}\label{subsec-6.1.2}
   In the previous subsection we have set the integral of a step function, \eqref{eq-6.1.1}.    
   We want to consider the integral $\int_AF(x)d\mu(x)$ on $A$ of a more general function $F:A\to\mathcal{V}$. 
   For this reason, let us first prove Lemma \ref{lem-6.1.2} and afterwards show Proposition \ref{prop-6.1.3}. 
   This proposition grants our want.   

\begin{lemma}\label{lem-6.1.2}
   Let $A\in\mathscr{B}$ with $\mu(A)<\infty$, let $S,T:A\to\mathcal{V}$ be step functions, and let $\alpha,\beta\in\mathbb{K}$. 
\begin{enumerate}
\item[{\rm (1)}]
   $\alpha S+\beta T:A\to\mathcal{V}$ is a step function, and $\displaystyle{\int_A(\alpha S+\beta T)(x)d\mu(x)=\alpha\int_AS(x)d\mu(x)+\beta\int_AT(x)d\mu(x)}$.
\item[{\rm (2)}] 
   If $A=B\amalg C$ and $B\in\mathscr{B}$, then both $S:B\to\mathcal{V}$ and $S:C\to\mathcal{V}$ are step functions, and 
\[
   \int_AS(x)d\mu(x)=\int_BS(y)d\mu(y)+\int_CS(z)d\mu(z).
\]
\item[{\rm (3)}]
   Suppose that $\mathcal{W}$ is a Fr\'{e}chet space over $\mathbb{K}$ and $K:\mathcal{V}\to\mathcal{W}$ is a $\mathbb{K}$-linear mapping.
   Then, $K\circ S:A\to\mathcal{W}$ is a step function, and $\displaystyle{K\Big(\int_AS(x)d\mu(x)\Big)=\int_A(K\circ S)(x)d\mu(x)}$.
\end{enumerate}
\end{lemma}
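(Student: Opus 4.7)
The plan is to reduce all three assertions to straightforward computations with \eqref{eq-6.1.1} by means of common-refinement arguments. The one preliminary I will establish (implicit in the parenthetical remark after \eqref{eq-6.1.1}) is that the right-hand side of \eqref{eq-6.1.1} is independent of the partition: if $\{A_i\}_{i=1}^k$ and $\{A_j'\}_{j=1}^\ell$ are two partitions of $A$ by elements of $\mathscr{B}$ on each of which $S$ is constant, then $\{A_i\cap A_j'\}_{i,j}\subset\mathscr{B}$ is still a partition of $A$, on every non-empty $A_i\cap A_j'$ the two constant values $\xi_i$ and $\xi_j'$ must coincide, and the finite additivity of $\mu$ then forces $\sum_i\mu(A_i)\xi_i=\sum_{i,j}\mu(A_i\cap A_j')\xi_i=\sum_j\mu(A_j')\xi_j'$. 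I will use this silently below to evaluate the integral on whichever partition is convenient.

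For (1), I would fix partitions $\{A_i\}_{i=1}^k$ of $A$ adapted to $S$ with values $\xi_i\in\mathcal{V}$ and $\{B_j\}_{j=1}^\ell$ adapted to $T$ with values $\eta_j\in\mathcal{V}$, and pass to the common refinement $\{A_i\cap B_j\}_{i,j}\subset\mathscr{B}$. On each $A_i\cap B_j$ the mapping $\alpha S+\beta T$ equals the constant $\alpha\xi_i+\beta\eta_j$, so $\alpha S+\beta T$ is a step function. Applying \eqref{eq-6.1.1} on this refinement and using the marginal identities $\sum_j\mu(A_i\cap B_j)=\mu(A_i)$ and $\sum_i\mu(A_i\cap B_j)=\mu(B_j)$ produces the linearity assertion.

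For (2), the restrictions $\{A_i\cap B\}$ and $\{A_i\cap C\}$ are partitions by elements of $\mathscr{B}$ on which $S|_B$ and $S|_C$ are respectively constant, so both are step functions, and $\mu(A_i)=\mu(A_i\cap B)+\mu(A_i\cap C)$ splits $\int_A S\,d\mu$ as $\int_B S\,d\mu+\int_C S\,d\mu$ term by term. For (3), $K\circ S$ takes the constant value $K(\xi_i)$ on each $A_i$, so it is a step function into $\mathcal{W}$, and the $\mathbb{K}$-linearity of $K$ applied to the finite sum $\sum_i\mu(A_i)\xi_i$ gives the commutation with the integral.

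The only substantive point in all of this is the well-definedness lemma for \eqref{eq-6.1.1}; once that is in hand, the three items are essentially bookkeeping, and I anticipate no real obstacle beyond checking at each step that the refined partitions still lie in $\mathscr{B}$ and that the finite sums rearrange as claimed.
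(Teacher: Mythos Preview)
Your proposal is correct and follows essentially the same common-refinement argument as the paper: for (1) the paper passes to the refinement $\{A_i\cap B_j\}$ and uses the marginal identities exactly as you do, for (2) it restricts to $\{A_i\cap B\}$ and $\{A_i\cap C\}$ (noting $C=A-B\in\mathscr{B}$), and for (3) it applies linearity of $K$ termwise. The only difference is that you spell out the well-definedness of \eqref{eq-6.1.1} explicitly, whereas the paper merely remarks on it after the definition.
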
  
\begin{proof}
   Let 
\[
\begin{array}{llll}
   A=\coprod_{i=1}^kA_i=\coprod_{j=1}^hB_j, 
   & A_i,B_j\in\mathscr{B}, 
   & S(x)=\sum_{i=1}^kc_{A_i}(x)\xi_i, 
   & T(x)=\sum_{j=1}^hc_{B_j}(x)\eta_j,
\end{array}
\]
where $\xi_i,\eta_j\in\mathcal{V}$ and $c_Q$ is the characteristic function of a subset $Q\subset A$.\par

   (1).
   It turns out that $A_i\cap B_j\in\mathscr{B}$, $A=\coprod_{1\leq i\leq k, 1\leq j\leq h}(A_i\cap B_j)$, and $\alpha S+\beta T=\sum_{1\leq i\leq k, 1\leq j\leq h}c_{A_i\cap B_j}(\alpha\xi_i+\beta\eta_j)$. 
   Consequently $\alpha S+\beta T:A\to\mathcal{V}$ is a step function; besides,  
\[
\begin{split}
   \int_A(\alpha S+\beta T)(x)d\mu(x)
  &\stackrel{\eqref{eq-6.1.1}}{=}\sum_{1\leq i\leq k, 1\leq j\leq h}\mu(A_i\cap B_j)(\alpha\xi_i+\beta\eta_j)
   =\alpha\sum_{i=1}^k\Big(\sum_{j=1}^h\mu(A_i\cap B_j)\Big)\xi_i+\beta\sum_{j=1}^h\Big(\sum_{i=1}^k\mu(A_i\cap B_j)\Big)\eta_j\\
  &=\alpha\sum_{i=1}^k\mu(A_i)\xi_i+\beta\sum_{j=1}^h\mu(B_j)\eta_j
   \stackrel{\eqref{eq-6.1.1}}{=}\alpha\int_AS(x)d\mu(x)+\beta\int_AT(x)d\mu(x).
\end{split} 
\] 
   Hence (1) holds.\par
   
   (2).
   Both $S(y)=\sum_{i=1}^kc_{A_i\cap B}(y)\xi_i:B\to\mathcal{V}$ and $S(z)=\sum_{i=1}^kc_{A_i\cap C}(z)\xi_i:C\to\mathcal{V}$ are step functions, and
\[
   \int_{B\amalg C}S(x)d\mu(x)
   \stackrel{\eqref{eq-6.1.1}}{=}\sum_{i=1}^k\mu\bigl(A_i\cap(B\amalg C)\bigr)\xi_i
   =\sum_{i=1}^k\mu(A_i\cap B)\xi_i+\sum_{i=1}^k\mu(A_i\cap C)\xi_i
   \stackrel{\eqref{eq-6.1.1}}{=}\int_BS(y)d\mu(y)+\int_CS(z)d\mu(z),
\]
where we remark that $B$, $C=(A-B)\in\mathscr{B}$, $\mu(B)\leq\mu(A)<\infty$, $\mu(C)<\infty$, $B=\coprod_{i=1}^k(A_i\cap B)$, $C=\coprod_{i=1}^k(A_i\cap C)$, and $(A_i\cap B)$, $(A_i\cap C)\in\mathscr{B}$.\par

   (3). 
   Since $K:\mathcal{V}\to\mathcal{W}$ is linear we see that $(K\circ S)(x)=\sum_{i=1}^kc_{A_i}(x)K(\xi_i)$, and so $K\circ S:A\to\mathcal{W}$ is a step function.
   Moreover, 
\[
   K\Big(\int_AS(x)d\mu(x)\Big)
   \stackrel{\eqref{eq-6.1.1}}{=}K\Big(\sum_{i=1}^k\mu(A_i)\xi_i\Big)
   =\sum_{i=1}^k\mu(A_i)K(\xi_i)
   \stackrel{\eqref{eq-6.1.1}}{=}\int_A(K\circ S)(x)d\mu(x).
\]
\end{proof}

   Taking the proof of Lemma \ref{lem-6.1.2} into account, we prove    
\begin{proposition}\label{prop-6.1.3}
   Let $A\in\mathscr{B}$ with $\mu(A)<\infty$, and let $F:A\to\mathcal{V}$ be a mapping.
   Suppose that a sequence $\{S_n:A\to\mathcal{V} \,|\, \mbox{$S_n$ is a step function}\}_{n=1}^\infty$ is uniformly convergent to $F$ on $A$. 
   Then, the following two items hold$:$
\begin{enumerate}
\item[{\rm (i)}]
   There exists a unique $\xi_F\in\mathcal{V}$ such that $\displaystyle{\lim_{n\to\infty}d\Bigl(\xi_F,\int_AS_n(x)d\mu(x)\Bigr)=0}$.
\item[{\rm (ii)}] 
   If another sequence $\{T_m:A\to\mathcal{V}\,|\, \mbox{$T_m$ is a step function}\}_{m=1}^\infty$ is uniformly convergent to $F$ on $A$ also, then the sequence $\big\{\int_AT_m(x)d\mu(x)\big\}_{m=1}^\infty$ in $(\mathcal{V},d)$ converges to the same limit point $\xi_F$ as $\big\{\int_AS_n(x)d\mu(x)\big\}_{n=1}^\infty$. 
\end{enumerate}   
\end{proposition}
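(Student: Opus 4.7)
The plan is to reduce (i) to showing that $\{\int_A S_n(x)\,d\mu(x)\}_{n=1}^\infty$ is a Cauchy sequence in the complete metric space $(\mathcal{V},d)$, then invoke completeness to produce the limit $\xi_F$, with uniqueness immediate from $d$ being Hausdorff. Part (ii) will fall out by applying (i) to the interleaved sequence $S_1,T_1,S_2,T_2,\ldots$ of step functions.

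Since the topology of $\mathcal{V}$ is determined by the countable family of seminorms $\{p_\ell\}_{\ell\in\mathbb{N}}$, a sequence in $\mathcal{V}$ is Cauchy in $d$ if and only if it is Cauchy with respect to every $p_\ell$. To control $p_\ell$ I would use Lemma \ref{lem-6.1.2}-(1) to write
\[
\int_A S_n\,d\mu-\int_A S_m\,d\mu=\int_A(S_n-S_m)(x)\,d\mu(x),
\]
and for a common refinement $A=\coprod_{j}B_j$ on which $(S_n-S_m)|_{B_j}\equiv\eta_j$, formula \eqref{eq-6.1.1} together with the triangle and positive-homogeneity inequalities for $p_\ell$ give
\[
p_\ell\Big(\int_A S_n\,d\mu-\int_A S_m\,d\mu\Big)
\leq\sum_j\mu(B_j)\,p_\ell(\eta_j)
\leq\mu(A)\sup_{x\in A}p_\ell\bigl(S_n(x)-S_m(x)\bigr).
\]
Uniform convergence of $\{S_n\}$ to $F$ on $A$---equivalently, uniform convergence with respect to each seminorm $p_\ell$---then implies via the triangle inequality for $p_\ell$ that $\sup_{x\in A}p_\ell(S_n(x)-S_m(x))\to 0$ as $n,m\to\infty$. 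Combined with $\mu(A)<\infty$, the above estimate shows $\{\int_A S_n\,d\mu\}$ is $p_\ell$-Cauchy for every $\ell$, hence Cauchy in $(\mathcal{V},d)$, and completeness furnishes a unique $\xi_F\in\mathcal{V}$ with $d\bigl(\xi_F,\int_A S_n\,d\mu\bigr)\to 0$.

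For (ii) I would form the interleaved sequence $U_{2k-1}:=S_k$, $U_{2k}:=T_k$, which is again a sequence of step functions converging uniformly to $F$ on $A$ (given $\epsilon>0$ and $\ell$, taking $K$ larger than both the convergence indices of $\{S_k\}$ and $\{T_k\}$ bounds $p_\ell(U_j(x)-F(x))<\epsilon$ uniformly for $j\geq 2K$). Applying (i) to $\{U_k\}$ produces a single limit point $\eta\in\mathcal{V}$ for $\{\int_A U_k\,d\mu\}$, and since $\{\int_A S_n\,d\mu\}$ and $\{\int_A T_m\,d\mu\}$ are subsequences of $\{\int_A U_k\,d\mu\}$, they share this limit, so $\eta=\xi_F$ and $\int_A T_m\,d\mu\to\xi_F$ in $(\mathcal{V},d)$.

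The main technical care needed is the interpretation of ``uniform convergence'' in a Fr\'{e}chet setting---namely uniformly with respect to the metric $d$, equivalently uniformly with respect to each seminorm $p_\ell$---and the degenerate case $\mu(A)=0$, in which every step-function integral on $A$ vanishes by \eqref{eq-6.1.1} and the conclusions hold trivially with $\xi_F=0$.
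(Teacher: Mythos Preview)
Your proof is correct. Part (i) follows the same line as the paper: reduce to showing the integrals form a Cauchy sequence by estimating $p_\ell\bigl(\int_A S_n - \int_A S_m\bigr)$ via a common refinement, bound by $\mu(A)\sup_x p_\ell(S_n(x)-S_m(x))$, and invoke completeness. Your use of Lemma~\ref{lem-6.1.2}-(1) to rewrite the difference as a single integral is a minor streamlining of what the paper does by hand.

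Part (ii) is where you diverge. The paper repeats the Cauchy estimate essentially verbatim to show directly that $p\bigl(\int_A S_n - \int_A T_n\bigr)\to 0$, then squeezes $d(\xi_F,\xi_F')$ by the triangle inequality. Your interleaving argument---apply (i) once to $U_1,U_2,\ldots = S_1,T_1,S_2,T_2,\ldots$ and read off both limits as subsequential limits of a single convergent sequence---is cleaner and avoids that repetition entirely. It buys you economy; the paper's direct computation buys an explicit bound on $p\bigl(\int_A S_n - \int_A T_n\bigr)$, which is not needed here but is the sort of estimate that resurfaces in the proof of Lemma~\ref{lem-6.1.5}. Either route is fine for the proposition as stated.
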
  
\begin{proof}
   Let $A=\coprod_{i=1}^{k_n}A_{n,i}$, $A_{n,i}\in\mathscr{B}$ and $S_n(x)=\sum_{i=1}^{k_n}c_{A_{n,i}}(x)\xi_{n,i}$, where $\xi_{n,i}\in\mathcal{V}$, $n\in\mathbb{N}$.\par

   (i). 
   Since $(\mathcal{V},d)$ is a Fr\'{e}chet space, it is enough to confirm that $\big\{\int_AS_n(x)d\mu(x)\big\}_{n=1}^\infty$ is a Cauchy sequence in $(\mathcal{V},d)$.  
   In case of $\mu(A)\leq0$, we show that $0\leq\mu(A_{n,i})\leq\mu(A)\leq0$ and 
\[
   \int_AS_n(x)d\mu(x)
   \stackrel{\eqref{eq-6.1.1}}{=}\sum_{i=1}^{k_n}\mu(A_{n,i})\xi_{n,i}
   =0   
\]
for all $n\in\mathbb{N}$; therefore $\big\{\int_AS_n(x)d\mu(x)\big\}_{n=1}^\infty=\{0\}$ is a Cauchy sequence in $(\mathcal{V},d)$. 
   From now on, let us consider the case where $\mu(A)>0$. 
   Fix any $\epsilon>0$ and an arbitrary seminorm $p\in\{p_\ell\}_{\ell\in\mathbb{N}}$. 
   For each $n\in\mathbb{N}$ we define a subset $X_n\subset A$ by 
\[
   X_n:=\big\{a\in A \,\big|\, p\bigl(S_n(a)-F(a)\bigr)<\epsilon/(2\mu(A))\big\}.
\]   
   On the one hand; the supposition allows us to choose an $N_p\in\mathbb{N}$ such that $n\geq N_p$ implies $A=X_n$; and so 
\begin{equation}\label{eq-1}\tag*{\textcircled{1}}
   \mbox{$A=\big\{a\in A \,\big|\, p\bigl(S_n(a)-F(a)\bigr)<\epsilon/(2\mu(A))\big\}$ for all $n\geq N_p$}.
\end{equation}
   On the other hand; a direct computation yields   
\[   
\begin{split}
    p\Big(\int_AS_n(x)d\mu(x)-\int_AS_m(x)d\mu(x)\Big)
   &\stackrel{\eqref{eq-6.1.1}}{=}p\Big(\sum_{1\leq i\leq k_n, 1\leq j\leq k_m}\mu(A_{n,i}\cap A_{m,j})(\xi_{n,i}-\xi_{m,j})\Big)\\
   &\leq\sum_{1\leq i\leq k_n, 1\leq j\leq k_m}\mu(A_{n,i}\cap A_{m,j})\cdot p(\xi_{n,i}-\xi_{m,j}).
\end{split}
\]   
   Here, one estimates the last term at 
\begin{equation}\label{eq-2}\tag*{\textcircled{2}}
   \mu(A_{n,i}\cap A_{m,j})\cdot p(\xi_{n,i}-\xi_{m,j})\leq\mu(A_{n,i}\cap A_{m,j})\cdot\epsilon/\mu(A),
\end{equation}
provided that $n,m\geq N_p$.
   Indeed; in case of $A_{n,i}\cap A_{m,j}=\emptyset$ we have 
\[
   \mu(A_{n,i}\cap A_{m,j})\cdot p(\xi_{n,i}-\xi_{m,j})
   =0
   \leq\mu(A_{n,i}\cap A_{m,j})\cdot\epsilon/\mu(A).
\]
   In case of $A_{n,i}\cap A_{m,j}\neq\emptyset$, one can take an element $a\in A_{n,i}\cap A_{m,j}$, and obtain $S_n(a)=\xi_{n,i}$, $S_m(a)=\xi_{m,j}$ and 
\begin{multline*}
   \mu(A_{n,i}\cap A_{m,j})\cdot p(\xi_{n,i}-\xi_{m,j})
   =\mu(A_{n,i}\cap A_{m,j})\cdot p(S_n(a)-S_m(a))\\
   \leq\mu(A_{n,i}\cap A_{m,j})\cdot \bigl(p(S_n(a)-F(a))+p(F(a)-S_m(a))\bigr)
   <\mu(A_{n,i}\cap A_{m,j})\cdot\epsilon/\mu(A)
\end{multline*} 
from \ref{eq-1} and $n,m\geq N_p$. 
   In any case \ref{eq-2} does hold. 
   Consequently, it follows from \ref{eq-2} that $n,m\geq N_p$ implies
\[
\begin{split}
   p\Big(\int_AS_n(x)d\mu(x)-\int_AS_m(x)d\mu(x)\Big)
  &\leq\sum_{1\leq i\leq k_n, 1\leq j\leq k_m}\mu(A_{n,i}\cap A_{m,j})\cdot p(\xi_{n,i}-\xi_{m,j})\\
  &\leq\dfrac{\epsilon}{\mu(A)}\sum_{1\leq i\leq k_n, 1\leq j\leq k_m}\mu(A_{n,i}\cap A_{m,j})
   \leq\epsilon.
\end{split}
\]
   Hence $d\bigl(\int_AS_n(x)d\mu(x),\int_AS_m(x)d\mu(x)\bigr)\to 0$ ($n,m\to\infty$), and (i) holds.\par
   
   (ii). 
   Suppose that a sequence $\{T_m:A\to\mathcal{V}\,|\, \mbox{$T_m$ is a step function}\}_{m=1}^\infty$ is uniformly convergent to $F$ on $A$. 
   Then, by virtue of (i) there exists a unique $\xi_F'\in\mathcal{V}$ satisfying $\displaystyle{\lim_{m\to\infty}d\Bigl(\xi_F',\int_AT_m(x)d\mu(x)\Bigr)=0}$; and  
\[
   d(\xi_F,\xi_F')
   \leq d\Big(\xi_F,\int_AS_n(x)d\mu(x)\Big)+d\Big(\int_AS_n(x)d\mu(x),\int_AT_n(x)d\mu(x)\Big)+d\Big(\int_AT_n(x)d\mu(x),\xi_F'\Big).
\] 
   Accordingly, it suffices to confirm that 
\begin{equation}\label{eq-a}\tag{a} 
   \lim_{n\to\infty}d\Big(\int_AS_n(x)d\mu(x),\int_AT_n(x)d\mu(x)\Big)=0.
\end{equation}   
   The arguments below will be similar to those in (i) above.\par
   
   Now, let $A=\coprod_{j=1}^{h_m}B_{m,j}$, $B_{m,j}\in\mathscr{B}$ and $T_m(x)=\sum_{j=1}^{h_m}c_{B_{m,j}}(x)\eta_{m,j}$, where $\eta_{m,j}\in\mathcal{V}$, $m\in\mathbb{N}$. 
   In case of $\mu(A)\leq0$ we know that $\mu(B_{n,j})=0$ and 
\[
   \int_AS_n(x)d\mu(x)=0=\int_AT_n(x)d\mu(x)
\] 
for all $n\in\mathbb{N}$. 
   Accordingly one has \eqref{eq-a} in case of $\mu(A)=0$.
   So, we investigate the case where $\mu(A)>0$ henceforth. 
   Let us fix any $\epsilon>0$ and seminorm $p\in\{p_\ell\}_{\ell\in\mathbb{N}}$.   
   Since $\{S_n\}_{n=1}^\infty$ and $\{T_m\}_{m=1}^\infty$ are uniformly convergent to $F$ on $A$, there exist $N_p\in\mathbb{N}$ and $M_p\in\mathbb{N}$ such that $n\geq N_p$ and $m\geq M_p$ imply 
\begin{equation}\label{eq-1'}\tag*{\textcircled{1}$'$}
   \mbox{$A=\big\{a\in A \,\big|\, p\bigl(S_n(a)-F(a)\bigr)<\epsilon/(2\mu(A))\big\}$ and $A=\big\{b\in A \,\big|\, p\bigl(T_m(b)-F(b)\bigr)<\epsilon/(2\mu(A))\big\}$},
\end{equation}
respectively.
   For any $n\geq\max\{N_p,M_p\}$, one knows  
\begin{equation}\label{eq-2'}\tag*{\textcircled{2}$'$} 
   \mu(A_{n,i}\cap B_{n,j})\cdot p(\xi_{n,i}-\eta_{n,j})\leq\mu(A_{n,i}\cap B_{n,j})\cdot\epsilon/\mu(A)
\end{equation}
in a similar way.
   Consequently $n\geq\max\{N_p,M_p\}$ implies 
\[
\begin{split}
   p\Big(\int_AS_n(x)d\mu(x)-\int_AT_n(x)d\mu(x)\Big)
  &\leq\sum_{1\leq i\leq k_n, 1\leq j\leq h_n}\mu(A_{n,i}\cap B_{n,j})\cdot p(\xi_{n,i}-\eta_{n,j})\\
  &\leq\dfrac{\epsilon}{\mu(A)}\sum_{1\leq i\leq k_n, 1\leq j\leq h_n}\mu(A_{n,i}\cap B_{n,j})
   \leq\epsilon.
\end{split}
\]
   Therefore $d\bigl(\int_AS_n(x)d\mu(x),\int_AT_n(x)d\mu(x)\bigr)\to 0$ ($n\to\infty$), and \eqref{eq-a} holds.
\end{proof}

   Proposition \ref{prop-6.1.3} assures that the following Definition \ref{def-6.1.4}-(2) is well-defined:
\begin{definition}\label{def-6.1.4}
   Let $A\in\mathscr{B}$ with $\mu(A)<\infty$.
\begin{enumerate}
\item[(1)]
   A mapping $F:A\to\mathcal{V}$ is said to be {\it regulated},\index{regulated mapping@regulated mapping\dotfill} if there exists a sequence $\{S_n:A\to\mathcal{V} \,|\, \mbox{$S_n$ is a step function}\}_{n=1}^\infty$ which is uniformly convergent to $F$ on $A$. 
\item[(2)]
   Let $F:A\to\mathcal{V}$ be a regulated mapping. 
   Suppose that a sequence $\{S_n\}_{n=1}^\infty$ of step functions is uniformly convergent to $F$ on $A$.
   Then, there exists a unique $\xi_F\in\mathcal{V}$ such that 
\[
   \lim_{n\to\infty}d\Bigl(\xi_F,\int_AS_n(x)d\mu(x)\Bigr)=0.
\]
   This $\xi_F$ is called the {\it regulated integral}\index{regulated integral@regulated integral\dotfill} or the {\it integral} on $A$ of $F$ and we write $\int_AF(x)d\mu(x)$.     
\end{enumerate}
\end{definition} 

   Needless to say, the above integral $\int_AF(x)d\mu(x)$ accords with the integral in \eqref{eq-6.1.1} whenever $F$ is a step function.

\subsection{Properties of regulated integrals}\label{subsec-6.1.3}
   Let us clarify some properties of regulated integrals.
\begin{lemma}\label{lem-6.1.5}
   Let $A\in\mathscr{B}$ with $\mu(A)<\infty$, let $F,G:A\to\mathcal{V}$ be regulated, and let $\alpha,\beta\in\mathbb{K}$.
\begin{enumerate}
\item[{\rm (1)}]
   $\alpha F+\beta G:A\to\mathcal{V}$ is a regulated mapping and
\[
   \int_A(\alpha F+\beta G)(x)d\mu(x)=\alpha\int_AF(x)d\mu(x)+\beta\int_AG(x)d\mu(x).
\] 
\item[{\rm (2)}]
   If $A=B\amalg C$ and $B\in\mathscr{B}$, then both $F:B\to\mathcal{V}$ and $F:C\to\mathcal{V}$ are regulated mappings, and 
\[
   \int_AF(x)d\mu(x)=\int_BF(y)d\mu(y)+\int_CF(z)d\mu(z).  
\]
\item[{\rm (3)}]
   Suppose that $\mathcal{W}$ is a Fr\'{e}chet space over $\mathbb{K}$ and $L:\mathcal{V}\to\mathcal{W}$ is a continuous, $\mathbb{K}$-linear mapping.
   Then, $L\circ F:A\to\mathcal{W}$ is a regulated mapping and
\[
   L\Big(\int_AF(x)d\mu(x)\Big)=\int_A(L\circ F)(x)d\mu(x).
\]
\item[{\rm (4)}]
   For any continuous seminorm $\hat{p}$ on $\mathcal{V}$, it follows that $\hat{p}\circ F:A\to\mathbb{R}$ is regulated, and the inequality
\[
   \hat{p}\Big(\int_AF(x)d\mu(x)\Big)\leq\int_A(\hat{p}\circ F)(x)d\mu(x)
\]
holds.
\end{enumerate}
   cf.\ Definition {\rm \ref{def-6.1.4}}.
\end{lemma}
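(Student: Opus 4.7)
The plan is uniform across all four parts: choose step-function approximations, verify the claimed identity or inequality at the level of step functions by invoking Lemma \ref{lem-6.1.2}, and then pass to the limit using the definition of the regulated integral (Definition \ref{def-6.1.4}-(2)) together with the appropriate continuity property. Throughout, I will use that $F$ regulated means there exists $\{S_n\}$ of step functions converging uniformly to $F$ on $A$, in the sense that for every continuous seminorm $p$ on $\mathcal{V}$, $\sup_{x\in A}p(S_n(x)-F(x))\to 0$ (equivalent to uniform convergence with respect to $d$).

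For part (1), fix step-function approximations $S_n\to F$ and $T_n\to G$ uniformly on $A$. Since each seminorm $p_\ell$ is subadditive and absolutely homogeneous, $p_\ell(\alpha S_n+\beta T_n-\alpha F-\beta G)\leq|\alpha|p_\ell(S_n-F)+|\beta|p_\ell(T_n-G)$, so $\alpha S_n+\beta T_n\to \alpha F+\beta G$ uniformly on $A$, and Lemma \ref{lem-6.1.2}-(1) shows that $\alpha S_n+\beta T_n$ is itself a step function. Hence $\alpha F+\beta G$ is regulated. Lemma \ref{lem-6.1.2}-(1) applied to $S_n,T_n$ gives $\int_A(\alpha S_n+\beta T_n)d\mu=\alpha\int_AS_nd\mu+\beta\int_AT_nd\mu$; letting $n\to\infty$ and using continuity of addition and scalar multiplication in the Fr\'echet space $\mathcal{V}$ yields the desired identity. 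For part (2), restrict $S_n$ to $B$ and $C$: these restrictions remain step functions (Lemma \ref{lem-6.1.2}-(2)) and clearly converge uniformly to $F|_B$ and $F|_C$ respectively. Applying Lemma \ref{lem-6.1.2}-(2) to $S_n$ on $A=B\amalg C$ and taking the limit (again using continuity of addition) gives the claimed additivity.

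For part (3), the key input is that a continuous $\mathbb{K}$-linear map $L:\mathcal{V}\to\mathcal{W}$ between Fr\'echet spaces satisfies: for each defining seminorm $q_\ell$ of $\mathcal{W}$ there exist $N_\ell\in\mathbb{N}$ and $C_\ell>0$ with $q_\ell(L(v))\leq C_\ell\max_{1\leq j\leq N_\ell}p_j(v)$ for all $v\in\mathcal{V}$. Applied to $v=S_n(x)-F(x)$ uniformly over $x\in A$, this shows $L\circ S_n\to L\circ F$ uniformly on $A$; Lemma \ref{lem-6.1.2}-(3) guarantees $L\circ S_n$ is a step function with integral $L(\int_AS_nd\mu)$. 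So $L\circ F$ is regulated, and passing to the limit---using continuity of $L$ on the left side and Definition \ref{def-6.1.4}-(2) on the right---gives the identity.

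For part (4), note that any continuous seminorm $\hat{p}$ on $\mathcal{V}$ satisfies $|\hat{p}(v)-\hat{p}(w)|\leq\hat{p}(v-w)$, so uniform convergence of $S_n$ to $F$ (for the seminorm $\hat{p}$) forces uniform convergence of the real-valued mappings $\hat{p}\circ S_n\to\hat{p}\circ F$ on $A$; each $\hat{p}\circ S_n$ takes only finitely many values and is thus a step function $A\to\mathbb{R}$, so $\hat{p}\circ F$ is regulated. For step functions, writing $S_n=\sum_i c_{A_{n,i}}\xi_{n,i}$, the triangle inequality for $\hat{p}$ yields $\hat{p}\bigl(\int_A S_n\,d\mu\bigr)=\hat{p}\bigl(\sum_i\mu(A_{n,i})\xi_{n,i}\bigr)\leq\sum_i\mu(A_{n,i})\hat{p}(\xi_{n,i})=\int_A(\hat{p}\circ S_n)\,d\mu$. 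Letting $n\to\infty$ and using continuity of $\hat{p}$ on the left (to pass $\hat{p}$ through the limit of $\int_AS_n\,d\mu$) and the definition of the regulated integral on the right produces the stated inequality. The only mildly subtle point in the whole argument is part (3), where one must translate continuity of $L$ into the quantitative seminorm bound needed to upgrade uniform convergence of $S_n\to F$ to uniform convergence of $L\circ S_n\to L\circ F$; everything else is a routine approximation-and-limit.
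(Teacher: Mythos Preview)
Your proof is correct and follows essentially the same approach as the paper: reduce to step functions via Lemma~\ref{lem-6.1.2}, then pass to the limit using the definition of the regulated integral and continuity (of addition and scalar multiplication in (1)--(2), of $L$ in (3), of $\hat{p}$ in (4)). The only cosmetic differences are that the paper separates off the trivial case $|\alpha|+|\beta|=0$ in (1) and, in (4), phrases the uniform convergence of $\hat{p}\circ S_n$ via the translation-invariant metric $d$ and continuity of $\hat{p}$ at $0$ rather than via the reverse triangle inequality $|\hat{p}(v)-\hat{p}(w)|\le\hat{p}(v-w)$; your route is equally valid.
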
   
\begin{proof}
   Let $\{S_n\}_{n=1}^\infty$ and $\{T_m\}_{m=1}^\infty$ be sequences of step functions which are uniformly convergent to $F$, $G$ on $A$, respectively.\par
   
   (1). 
   In case of $|\alpha|+|\beta|\leq0$, one has $\alpha=\beta=0$; thus $\alpha F+\beta G=0$ is a regulated mapping and 
\[
   \int_A(\alpha F+\beta G)(x)d\mu(x)=0=\alpha\int_AF(x)d\mu(x)+\beta\int_AG(x)d\mu(x).
\]
   So, let us consider the case where $|\alpha|+|\beta|>0$ henceforth.
   Fix any $\epsilon>0$ and any seminorm $p\in\{p_\ell\}_{\ell\in\mathbb{N}}$.
   Since $\{S_n\}_{n=1}^\infty$, $\{T_m\}_{m=1}^\infty$ are uniformly convergent to $F$, $G$ on $A$, there exist $N_p$, $M_p\in\mathbb{N}$ such that $n\geq N_p$, $m\geq M_p$ imply 
\[
\begin{array}{ll}
   p\bigl(F(x)-S_n(x)\bigr)<\dfrac{\epsilon}{|\alpha|+|\beta|}, &
   p\bigl(G(x)-T_m(x)\bigr)<\dfrac{\epsilon}{|\alpha|+|\beta|}
\end{array}   
\] 
for all $x\in A$, respectively. 
   Hence $k\geq\max\{N_p,M_p\}$ implies 
\[
   p\bigl(\alpha F(x)+\beta G(x)-\alpha S_k(x)-\beta T_k(x)\bigr)
   \leq|\alpha|p\bigl(F(x)-S_k(x)\bigr)+|\beta|p\bigl(G(x)-T_k(x)\bigr)
   <\epsilon
\]
for all $x\in A$.
   Consequently the sequence $\{\alpha S_n+\beta T_n\}_{n=1}^\infty$ of step functions is uniformly convergent to $\alpha F+\beta G$ on $A$ (cf.\ Lemma \ref{lem-6.1.2}-(1)), and $\alpha F+\beta G:A\to\mathcal{V}$ is a regulated mapping.
   Furthermore, we obtain 
\allowdisplaybreaks{
\begin{align*}
  p\Big(\int_A&(\alpha F+\beta G)(x)d\mu(x)-\alpha\int_AF(x)d\mu(x)-\beta\int_AG(x)d\mu(x)\Big)\\
  &\leq p\Big(\int_A(\alpha F+\beta G)(x)d\mu(x)-\int_A(\alpha S_n+\beta T_n)(x)d\mu(x)\Big)\\
&\qquad+p\Big(\int_A(\alpha S_n+\beta T_n)(x)d\mu(x)-\alpha\int_AF(x)d\mu(x)-\beta\int_AG(x)d\mu(x)\Big)\\
  &=p\Big(\int_A(\alpha F+\beta G)(x)d\mu(x)-\int_A(\alpha S_n+\beta T_n)(x)d\mu(x)\Big)\\
  &\qquad+p\Big(\alpha\int_AS_n(x)d\mu(x)+\beta\int_AT_n(x)d\mu(x)-\alpha\int_AF(x)d\mu(x)-\beta\int_AG(x)d\mu(x)\Big)
  \quad\mbox{($\because$ Lemma \ref{lem-6.1.2}-(1))}\\
  &\leq p\Big(\int_A(\alpha F+\beta G)(x)d\mu(x)-\int_A(\alpha S_n+\beta T_n)(x)d\mu(x)\Big)\\
  &\qquad+|\alpha|p\Big(\int_AS_n(x)d\mu(x)-\int_AF(x)d\mu(x)\Big)+|\beta|p\Big(\int_AT_n(x)d\mu(x)-\int_AG(x)d\mu(x)\Big)\\
  &\longrightarrow 0 \quad(n\to\infty)
\end{align*}}because of Definition \ref{def-6.1.4}-(2) and $\{\alpha S_n+\beta T_n\}_{n=1}^\infty$, $\{S_n\}_{n=1}^\infty$, $\{T_m\}_{m=1}^\infty$ being uniformly convergent to $\alpha F+\beta G$, $F$, $G$ on $A$, respectively. 
   The above computation leads to $\int_A(\alpha F+\beta G)(x)d\mu(x)=\alpha\int_AF(x)d\mu(x)+\beta\int_AG(x)d\mu(x)$.\par

   (2). 
   cf.\ Lemma \ref{lem-6.1.2}-(2).\par

   (3).
   Suppose that the topology for $\mathcal{W}$ is determined by a countable number of seminorms $\{q_m\}_{m\in\mathbb{N}}$. 
   Since $\{S_n\}_{n=1}^\infty$ is uniformly convergent to $F$ on $A$, it follows from Definition \ref{def-6.1.4}-(2) that
\begin{equation}\label{eq-1}\tag*{\textcircled{1}} 
   \lim_{n\to\infty}d\Big(\int_AF(x)d\mu(x),\int_AS_n(x)d\mu(x)\Big)=0.
\end{equation} 
   From now on, let us confirm that $L\circ F:A\to\mathcal{W}$ is regulated. 
   By Lemma \ref{lem-6.1.2}-(3),  $L\circ S_n:A\to\mathcal{W}$ is a step function ($n\in\mathbb{N}$).
   We want to show that the sequence $\{L\circ S_n\}_{n=1}^\infty$ of step functions is uniformly convergent to $L\circ F$ on $A$. 
   Take any $\epsilon>0$ and any seminorm $q\in\{q_m\}_{m\in\mathbb{N}}$. 
   Since $L:\mathcal{V}\to\mathcal{W}$ is continuous, there exist finite $p_{\ell_1},\dots,p_{\ell_r}\in\{p_\ell\}_{\ell\in\mathbb{N}}$ and $\lambda_1,\dots,\lambda_r>0$ such that 
\begin{equation}\label{eq-2}\tag*{\textcircled{2}}
   \mbox{$q\bigl(L(\xi)\bigr)\leq\lambda_1p_{\ell_1}(\xi)+\cdots+\lambda_rp_{\ell_r}(\xi)$ for all $\xi\in\mathcal{V}$}.
\end{equation} 
   Then, for each $1\leq j\leq r$ there exists an $N_{q,j}\in\mathbb{N}$ such that $n\geq N_{q,j}$ implies 
\begin{equation}\label{eq-3}\tag*{\textcircled{3}}
   p_{\ell_j}\bigl(S_n(x)-F(x)\bigr)<\dfrac{\epsilon}{\lambda_1+\cdots+\lambda_r}
\end{equation} 
for all $x\in A$, because $\{S_n\}_{n=1}^\infty$ is uniformly convergent to $F$ on $A$.
   Therefore it follows from \ref{eq-2} and \ref{eq-3} that $n\geq\max\{N_{q,j}:1\leq j\leq r\}$ implies 
\[
   q\bigl(L(S_n(x))-L(F(x))\bigr)
   =q\bigl(L(S_n(x)-F(x))\bigr)
   \leq\sum_{j=1}^r\lambda_jp_{\ell_j}\bigl(S_n(x)-F(x)\bigr)
   <\dfrac{\epsilon}{\lambda_1+\cdots+\lambda_r}\sum_{j=1}^r\lambda_j
   =\epsilon
\]
for all $x\in A$.
   Hence $\{L\circ S_n\}_{n=1}^\infty$ is uniformly convergent to $L\circ F$ on $A$. 
   Consequently we assert that $L\circ F:A\to\mathcal{W}$ is a regulated mapping. 
   Note here, at this stage we see that 
\begin{equation}\label{eq-4}\tag*{\textcircled{4}} 
   q\Big(\int_A(L\circ S_n)(x)d\mu(x)-\int_A(L\circ F)d\mu(x)\Big)\longrightarrow 0 \quad(n\to\infty)
\end{equation} 
by Definition \ref{def-6.1.4}-(2).
   The rest of proof is to verify that $L\bigl(\int_AF(x)d\mu(x)\bigr)=\int_A(L\circ F)(x)d\mu(x)$, which comes from
\allowdisplaybreaks{
\begin{align*}
   q\Big(L&\bigl(\int_AF(x)d\mu(x)\bigr)-\int_A(L\circ F)(x)d\mu(x)\Big)\\
  &\leq q\Big(L\bigl(\int_AF(x)d\mu(x)\bigr)-\int_A(L\circ S_n)(x)d\mu(x)\Big)+q\Big(\int_A(L\circ S_n)(x)d\mu(x)-\int_A(L\circ F)(x)d\mu(x)\Big)\\
  &=q\Big(L\bigl(\int_AF(x)d\mu(x)\bigr)-L\bigl(\int_AS_n(x)d\mu(x)\bigr)\Big)+q\Big(\int_A(L\circ S_n)(x)d\mu(x)-\int_A(L\circ F)(x)d\mu(x)\Big)
  \quad\mbox{($\because$ Lemma \ref{lem-6.1.2}-(3))}\\
  &=q\Big(L\bigl(\int_AF(x)d\mu(x)-\int_AS_n(x)d\mu(x)\bigr)\Big)+q\Big(\int_A(L\circ S_n)(x)d\mu(x)-\int_A(L\circ F)(x)d\mu(x)\Big)\\
  &\longrightarrow 0 \quad(n\to\infty)
\end{align*}}because $q\circ L$ is continuous, \ref{eq-1} and \ref{eq-4}.\par

   (4). 
   Let $A=\coprod_{i=1}^{k_n}A_{n,i}$, $A_{n,i}\in\mathscr{B}$ and $S_n(x)=\sum_{i=1}^{k_n}c_{A_{n,i}}(x)\xi_{n,i}$, where $\xi_{n,i}\in\mathcal{V}$, $n\in\mathbb{N}$.
   By a direct computation we obtain $\hat{p}\bigl(S_n(y)\bigr)=\hat{p}(\xi_{n,i})$ if $y\in A_{n,i}$. 
   This assures that 
\[
   (\hat{p}\circ S_n)(x)=\sum_{i=1}^{k_n}c_{A_{n,i}}(x)\hat{p}(\xi_{n,i}),
\]
and that $\{\hat{p}\circ S_n\}_{n=1}^\infty$ is a sequence of step functions. 
   Now, let us show that $\{\hat{p}\circ S_n\}_{n=1}^\infty$ is uniformly convergent to $\hat{p}\circ F$ on $A$. 
   Take an arbitrary $\epsilon>0$. 
   On the one hand; since $\hat{p}:\mathcal{V}\to\mathbb{R}$ is continuous at $0$, there exists a $\delta>0$ such that $\eta\in\mathcal{V}$, $d(\eta,0)<\delta$ implies 
\[
   \hat{p}(\eta)=\big|\hat{p}(\eta)-\hat{p}(0)\big|<\epsilon.
\] 
   On the other hand; since $\{S_n\}_{n=1}^\infty$ is uniformly convergent to $F$ on $A$, there exists an $N\in\mathbb{N}$ such that $n\geq N$ implies 
\[
   d\bigl(F(x)-S_n(x),0\bigr)
   =d\bigl(F(x),S_n(x)\bigr)
   <\delta
\]
for all $x\in A$. 
   Consequently, $n\geq N$ implies $\hat{p}\bigl(F(x)-S_n(x)\bigr)<\epsilon$, and then
\[
   \big|\hat{p}(F(x))-\hat{p}(S_n(x))\big|
   \leq \hat{p}\bigl(F(x)-S_n(x)\bigr)
   <\epsilon
\]   
for all $x\in A$. 
   Hence $\{\hat{p}\circ S_n\}_{n=1}^\infty$ is uniformly convergent to $\hat{p}\circ F$ on $A$, and we conclude that $\hat{p}\circ F:A\to\mathbb{R}$ is regulated. 
   In addition, one has 
\[
   \int_A(\hat{p}\circ S_n)(x)d\mu(x)
   \stackrel{\eqref{eq-6.1.1}}{=}\sum_{i=1}^{k_n}\mu(A_{n,i})\hat{p}(\xi_{n,i})
   \geq\hat{p}\Big(\sum_{i=1}^{k_n}\mu(A_{n,i})\xi_{n,i}\Big)
   \stackrel{\eqref{eq-6.1.1}}{=}\hat{p}\Big(\int_AS_n(x)d\mu(x)\Big)
\] 
for all $n\in\mathbb{N}$, and therefore  
\[
\begin{split}
   \int_A(\hat{p}\circ F)(x)d\mu(x)
   =\lim_{n\to\infty}\int_A(\hat{p}\circ S_n)(x)d\mu(x)
   \geq\lim_{n\to\infty}\hat{p}\Big(\int_AS_n(x)d\mu(x)\Big)
  &=\hat{p}\Big(\lim_{n\to\infty}\int_AS_n(x)d\mu(x)\Big) \quad\mbox{($\because$ $\hat{p}$ is continuous)}\\
  &=\hat{p}\Big(\int_AF(x)d\mu(x)\Big). 
\end{split} 
\]  
\end{proof}

\begin{remark}\label{rem-6.1.6}
   In terms of Lemmas \ref{lem-6.1.2}-(1) and \ref{lem-6.1.5}-(1), the sets of step functions and regulated mappings are vector spaces over $\mathbb{K}$, respectively.   
\end{remark}

\subsection{A remark on regulated integrals of real-valued functions}\label{subsec-6.1.4}
   In case of $\mathcal{V}=\mathbb{R}$, we can consider two kinds of integrals, the regulated integral in the sense of Definition \ref{def-6.1.4}-(2) and the $\mu$-integral in the sense of the measure theory, which we here dare to write $\int_AF(x)d\mu(x)$ the former and $\mu$-$\int_AF(x)d\mu(x)$ the latter.
   In this subsection we are going to confirm that
\[
   \int_AF(x)d\mu(x)=\mbox{$\mu$-}\!\!\int_AF(x)d\mu(x)
\]
for all regulated mappings $F:A\to\mathbb{R}$, where $A\in\mathscr{B}$ with $\mu(A)<\infty$.\par

   Suppose that $\mathcal{V}=\mathbb{R}$, $A\in\mathscr{B}$ and $\mu(A)<\infty$. 
   Here, $(\mathcal{V},\{p_\ell\}_{\ell\in\mathbb{N}})=(\mathbb{R},|\cdot|)$ follows.\par

   Any step function $S=\sum_{i=1}^kc_{A_i}(x)\lambda_i:A\to\mathbb{R}$ is $\mu$-integrable on $A$, and it is immediate from \eqref{eq-6.1.1} that 
\begin{equation}\label{eq-1}\tag*{\textcircled{1}}
   \int_AS(x)d\mu(x)
   =\sum_{i=1}^k\mu(A_i)\lambda_i
   =\mbox{$\mu$-}\!\!\int_AS(x)d\mu(x).
\end{equation}   
   Now, let $F:A\to\mathbb{R}$ be a regulated mapping.
   Then, by Definition \ref{def-6.1.4}-(1) there exists a sequence $\{S_n\}_{n=1}^\infty$ of step functions which is uniformly convergent to $F$ on $A$.
   On the one hand; Definition \ref{def-6.1.4}-(2) implies that  
\begin{equation}\label{eq-2}\tag*{\textcircled{2}}
   \Big|\int_AF(x)d\mu(x)-\int_AS_n(x)d\mu(x)\Big|\longrightarrow 0 \quad(n\to\infty).
\end{equation}   
   On the other hand; since $\{S_n\}_{n=1}^\infty$ is uniformly convergent to $F$ on $A$, there exists an $M\in\mathbb{N}$ such that $m\geq M$ implies $|S_m(x)-S_M(x)|<1$ for all $x\in A$; and it follows that  
\[
   \mbox{$|S_m(x)|\leq |S_M(x)|+1$ for all $m\geq M$ and $x\in A$}.
\]
   This, together with Lebesgue's convergence theorem, tells us that $F$ is $\mu$-integrable on $A$ and   
\begin{equation}\label{eq-3}\tag*{\textcircled{3}}
   \Big|\mbox{$\mu$-}\!\!\int_AS_n(x)d\mu(x)-\mbox{$\mu$-}\!\!\int_AF(x)d\mu(x)\Big|\longrightarrow 0 \quad(n\to\infty),
\end{equation}   
since $\mu(A)<\infty$, each $S_n$ is $\mu$-integrable on $A$ and $\{S_n\}_{n=1}^\infty$ is uniformly convergent to $F$ on $A$.   
   In view of \ref{eq-1}, \ref{eq-2} and \ref{eq-3} we see that 
\begin{multline*}
   \Big|\int_AF(x)d\mu(x)-\mbox{$\mu$-}\!\!\int_AF(x)d\mu(x)\Big|
   \leq\Big|\int_AF(x)d\mu(x)-\int_AS_n(x)d\mu(x)\Big|+\Big|\int_AS_n(x)d\mu(x)-\mbox{$\mu$-}\!\!\int_AF(x)d\mu(x)\Big|\\
   =\Big|\int_AF(x)d\mu(x)-\int_AS_n(x)d\mu(x)\Big|+\Big|\mbox{$\mu$-}\!\!\int_AS_n(x)d\mu(x)-\mbox{$\mu$-}\!\!\int_AF(x)d\mu(x)\Big|
   \longrightarrow 0 \quad(n\to\infty).
\end{multline*}
   So, one has $\int_AF(x)d\mu(x)=\mu$-$\int_AF(x)d\mu(x)$.

\begin{remark}\label{rem-6.1.7}
   By the arguments above and the measure theory we deduce that for any $A\in\mathscr{B}$ and $\mu(A)<\infty$, all regulated mappings $f,g:A\to\mathbb{R}$ are $\mu$-integrable on $A$, and that the inequalities 
\[
   0\leq\int_Af(x)d\mu(x)\leq\int_Ag(x)d\mu(x)
\]
hold in the case where $0\leq f(x)\leq g(x)$ for all $x\in A$.
\begin{center}
\unitlength=1mm
\begin{picture}(96,16)
\put(40,1){\line(1,0){55}}
\put(42,3){\line(1,0){43}}
\put(42,3){\line(0,1){6}}
\put(44,5){Regulated integrals on $A$}
\put(42,9){\line(1,0){43}}
\put(85,3){\line(0,1){6}} 
\put(52,12){$\mu$-Integrals on $A$}
\put(40,1){\line(0,1){15}}
\put(40,16){\line(1,0){55}}
\put(95,1){\line(0,1){15}} 
\put(0,10){\underline{Case $\mathcal{V}=\mathbb{R}$, $\mu(A)<\infty$}:}
\end{picture}
\end{center}
\end{remark}

\subsection{An example of regulated mapping}\label{subsec-6.1.5}
   The following proposition provides us with examples of regulated mappings.

\begin{proposition}\label{prop-6.1.8}
   Suppose that 
\begin{enumerate}
\item[{\rm (s1)}]
   $X$ is a Hausdorff $($topological$)$ space, 
\item[{\rm (s2)}]
   $\mathscr{B}$ includes the set of open subsets of $X$, 
\item[{\rm (s3)}] 
   $K$ is a compact subset of $X$ with $\mu(K)<\infty$.  
\end{enumerate}
   Then, every continuous mapping $F:K\to\mathcal{V}$ is regulated.
   Accordingly $F$ has a regulated integral on $K$. 
\end{proposition}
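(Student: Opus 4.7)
The plan is to exploit the fact that $F$ is continuous on the compact set $K$ (hence uniformly continuous with respect to the metric $d$) in order to construct, for each $n\in\mathbb{N}$, a step function $S_n:K\to\mathcal{V}$ satisfying $\sup_{x\in K}d\bigl(S_n(x),F(x)\bigr)\leq 1/n$. Passing from this $d$-uniform approximation to uniform convergence in each seminorm $p_\ell$---which is the notion implicitly used in the proof of Proposition \ref{prop-6.1.3}---will then be immediate from the fact that $d$ generates the same topology on $\mathcal{V}$ as the family $\{p_\ell\}_{\ell\in\mathbb{N}}$, together with the translation invariance axiom (3) imposed on $d$ at the beginning of Section \ref{sec-6.1}.

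First I would verify $K\in\mathscr{B}$: since $X$ is Hausdorff and $K$ is compact, $K$ is closed in $X$, so its complement lies in $\mathcal{T}$ and hence in $\mathscr{B}$ by hypothesis (s2), and therefore $K\in\mathscr{B}$. Next, fix $n\in\mathbb{N}$. For each $x\in K$, continuity of $F$ at $x$ yields an open subset $V_x\subset X$ containing $x$ such that $d\bigl(F(y),F(x)\bigr)<1/n$ for every $y\in V_x\cap K$. The open covering $\{V_x\cap K\}_{x\in K}$ of the compact space $K$ admits a finite subcover, say $\{V_{x_1}\cap K,\dots,V_{x_{k_n}}\cap K\}$; I would then disjointify it by setting
\[
   A_{n,1}:=V_{x_1}\cap K,\qquad A_{n,i}:=\bigl(V_{x_i}-(V_{x_1}\cup\cdots\cup V_{x_{i-1}})\bigr)\cap K\quad(2\leq i\leq k_n).
\]
Each $A_{n,i}$ lies in $\mathscr{B}$ because $K$ and every $V_{x_j}$ do, and $K=\coprod_{i=1}^{k_n}A_{n,i}$. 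Since $\mu(A_{n,i})\leq\mu(K)<\infty$, one may define a step function $S_n:K\to\mathcal{V}$ by $S_n(x):=F(x_i)$ whenever $x\in A_{n,i}$, and observe that $x\in A_{n,i}\subset V_{x_i}\cap K$ forces $d\bigl(S_n(x),F(x)\bigr)=d\bigl(F(x_i),F(x)\bigr)<1/n$.

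It then remains to deduce uniform convergence of $\{S_n\}_{n=1}^\infty$ to $F$ on $K$ in the required sense. Given any seminorm $p_\ell$ and any $\epsilon>0$, since $d$ induces the topology generated by $\{p_\ell\}_{\ell\in\mathbb{N}}$ there exists $\delta>0$ such that $d(\eta,0)<\delta$ implies $p_\ell(\eta)<\epsilon$; choosing $N\in\mathbb{N}$ with $1/N<\delta$, the translation invariance of $d$ gives $d\bigl(S_n(x)-F(x),0\bigr)=d\bigl(S_n(x),F(x)\bigr)<\delta$ for all $x\in K$ and $n\geq N$, so $p_\ell\bigl(S_n(x)-F(x)\bigr)<\epsilon$ uniformly in $x$. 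Hence $F$ is regulated in the sense of Definition \ref{def-6.1.4}-(1), and its regulated integral on $K$ exists by Definition \ref{def-6.1.4}-(2).

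The construction is essentially routine, and the only real subtlety I anticipate is the bookkeeping in the final paragraph: the natural output of the covering argument produces a sequence that is uniformly close to $F$ in the single metric $d$, whereas the proof of Proposition \ref{prop-6.1.3} extracts estimates per seminorm $p_\ell$. Reconciling these two formulations rests on the standing hypothesis that $d$ and $\{p_\ell\}_{\ell\in\mathbb{N}}$ determine the same topology, which gives the one-sided comparison $d(\eta,0)<\delta\Rightarrow p_\ell(\eta)<\epsilon$ needed above; I would call out this point explicitly rather than leaving it as an aside, because without translation invariance of $d$ (axiom (3) of the setup) the passage from pointwise comparison to a uniform-in-$x$ statement is not automatic.
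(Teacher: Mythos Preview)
Your proof is correct and follows essentially the same approach as the paper: build step functions $S_n$ with $d\bigl(S_n(x),F(x)\bigr)<1/n$ for all $x\in K$ via a finite-cover-and-disjointify argument. The only cosmetic difference is that the paper covers the \emph{image} $F(K)$ by $d$-balls $B(\eta_i,1/n)$ and pulls back via $F^{-1}$, whereas you cover the \emph{domain} $K$ using continuity of $F$ at each point; both routes land on the same estimate. Your final paragraph, reconciling $d$-uniform convergence with per-seminorm uniform convergence via translation invariance of $d$, is a point the paper leaves implicit but which you are right to spell out.
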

\begin{proof}
   For $\xi_0\in\mathcal{V}$ and $r>0$ we set an open subset $B(\xi_0,r)\subset\mathcal{V}$ as $B(\xi_0,r):=\{\xi\in\mathcal{V} \,|\, d(\xi,\xi_0)<r\}$.\par

   By Definition \ref{def-6.1.4} it suffices to construct a sequence of step functions which is uniformly convergent to $F$ on $K$. 
   For any $n\in\mathbb{N}$, one has $F(K)\subset\bigcup_{\eta\in F(K)}B(\eta,1/n)$. 
   Since $F(K)$ is compact in $\mathcal{V}$, there exist finite elements $\eta_1,\eta_2,\dots,\eta_{\ell_n}\in F(K)$ satisfying $F(K)\subset\bigcup_{i=1}^{\ell_n}B(\eta_i,1/n)$. 
   Then, we put 
\[
   \mbox{$A_i:=F^{-1}\bigl(B(\eta_i,1/n)\bigr)-\bigcup_{j=1}^{i-1}F^{-1}\bigl(B(\eta_j,1/n)\bigr)$ for $1\leq i\leq\ell_n$}.
\]
   Since $F:K\to\mathcal{V}$ is continuous, it follows from (s2) that $F^{-1}\bigl(B(\eta_i,1/n)\bigr)\in\mathscr{B}$, so that $A_i\in\mathscr{B}$ ($1\leq i\leq\ell_n$).
   Moreover, we deduce $K=\coprod_{i=1}^{\ell_n}A_i$ by a direct computation. 
   Now, let $S_n(x):=\sum_{i=1}^{\ell_n}c_{A_i}(x)\eta_i$ for $x\in K$.
   This $S_n:K\to\mathcal{V}$ is a step function and satisfies $d\bigl(S_n(x),F(x)\bigr)<1/n$ for all $x\in K$.
   Accordingly $\{S_n\}_{n=1}^\infty$ is the desired sequence of step functions.
\end{proof}

\begin{remark}\label{rem-6.1.9}
   By considering Lemma \ref{lem-6.1.5}-(3) in case $\mathcal{W}=\mathbb{K}$ we see that the integral $\int_KF(x)d\mu(x)$ in Proposition \ref{prop-6.1.8} accords with the integral in Bourbaki \cite[Section 3, Chapter III]{Br}, the integral in Rudin \cite[p.77, Definition 3.26]{Ru}, and so on.
\end{remark}

\section{An application ($K$-finite vectors)}\label{sec-6.2}
   The setting of Section \ref{sec-6.2} is as follows:
\begin{itemize}
\item
   $K$ is a compact Lie group, 
\item
   $\mathcal{V}$ is a Fr\'{e}chet space over $\mathbb{C}$, 
\item
   $d$ is a suitable complete metric for $\mathcal{V}$ which induces a topology identical to  the original one,    
\item 
   $\varrho:K\to GL(\mathcal{V})$, $k\mapsto\varrho(k)$, is a (group) homomorphism, where it does not matter whether $\varrho$ is continuous here.   
\end{itemize}
   In this section we apply the theory on regulated integrals to conclude 
\begin{proposition}\label{prop-6.2.1}
   Suppose that {\rm (S)} the mapping $\pi_\varrho:K\times\mathcal{V}\to\mathcal{V}$, $(k,\xi)\mapsto\varrho(k)\xi$, is continuous.\footnote{This supposition (S) means that $\varrho$ is a continuous representation of $K$ on $\mathcal{V}$ (see Definition \ref{def-11.0.1}).}  
   Then, 
\[
   \mathcal{V}_K:=\{\eta\in\mathcal{V} \,|\, \dim_\mathbb{C}\operatorname{span}_\mathbb{C}\{\varrho(k)\eta : k\in K\}<\infty\}
\]
is a $\varrho(K)$-invariant, complex vector subspace of $\mathcal{V}$, and moreover, it is dense in $\mathcal{V}$.
\end{proposition}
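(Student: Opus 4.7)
That $\mathcal{V}_K$ is a $\varrho(K)$-invariant complex vector subspace is immediate from the definition: for $\eta_1,\eta_2\in\mathcal{V}_K$ and $\alpha\in\mathbb{C}$ the orbit of $\alpha\eta_1+\eta_2$ lies in $\operatorname{span}_\mathbb{C}\{\varrho(k)\eta_1\}+\operatorname{span}_\mathbb{C}\{\varrho(k)\eta_2\}$, which is finite-dimensional, and $\operatorname{span}_\mathbb{C}\{\varrho(kk_0)\eta:k\in K\}\subset\operatorname{span}_\mathbb{C}\{\varrho(k)\eta:k\in K\}$ for each $k_0\in K$. The substantive content is density, and the plan is to manufacture elements of $\mathcal{V}_K$ by averaging against representative functions on $K$ and to appeal to the Peter--Weyl theorem.

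Since $K$ is a compact Hausdorff Lie group, Theorem~\ref{thm-5.1.2} and Proposition~\ref{prop-5.3.4} supply a normalized bi-invariant Haar measure $\mu_K$ on $K$ with $\mu_K(K)=1$. For $f\in\mathcal{C}(K,\mathbb{C})$ and $\xi\in\mathcal{V}$, hypothesis (S) makes $K\ni k\mapsto f(k)\varrho(k)\xi\in\mathcal{V}$ continuous, so Proposition~\ref{prop-6.1.8} permits the regulated integral
\[
   \varrho(f)\xi:=\int_K f(k)\varrho(k)\xi\,d\mu_K(k)\in\mathcal{V}.
\]
For any $k'\in K$, the operator $\varrho(k'):\mathcal{V}\to\mathcal{V}$ is continuous (by (S)) and $\mathbb{C}$-linear, so Lemma~\ref{lem-6.1.5}-(3), together with left-invariance of $\mu_K$, gives
\[
   \varrho(k')\varrho(f)\xi=\int_K f(k)\varrho(k'k)\xi\,d\mu_K(k)=\int_K f(k'^{-1}k)\varrho(k)\xi\,d\mu_K(k)=\varrho(L_{k'}f)\xi,
\]
where $(L_{k'}f)(k):=f(k'^{-1}k)$. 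Since $\mathcal{C}(K,\mathbb{C})\ni f\mapsto\varrho(f)\xi$ is $\mathbb{C}$-linear by Lemma~\ref{lem-6.1.5}-(1), we infer $\dim_\mathbb{C}\operatorname{span}_\mathbb{C}\{\varrho(k')\varrho(f)\xi:k'\in K\}\leq\dim_\mathbb{C}\operatorname{span}_\mathbb{C}\{L_{k'}f:k'\in K\}$, so $\varrho(f)\xi\in\mathcal{V}_K$ whenever the left-translate orbit of $f$ is finite-dimensional. Matrix coefficients $f(k)=\langle\tau(k)u,v\rangle$ of finite-dimensional continuous unitary representations $(\tau,V_\tau)$ of $K$ satisfy this, because $L_{k'}f(k)=\langle\tau(k)\tau(k'^{-1})u,v\rangle$ stays in the span of $\{k\mapsto\langle\tau(k)w,v\rangle:w\in V_\tau\}$, whose complex dimension is $\leq\dim_\mathbb{C}V_\tau<\infty$; and the Peter--Weyl theorem asserts that the subspace $\mathcal{R}(K)\subset\mathcal{C}(K,\mathbb{C})$ spanned by all such matrix coefficients is uniformly dense.

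For density of $\mathcal{V}_K$, fix $\xi\in\mathcal{V}$, a continuous seminorm $p$ on $\mathcal{V}$, and $\epsilon>0$. The function $K\ni k\mapsto p(\varrho(k)\xi-\xi)$ is continuous by (S) and vanishes at $e\in K$, so there is an open neighborhood $U$ of $e$ with $p(\varrho(u)\xi-\xi)<\epsilon/2$ for every $u\in U$; by Uryson's lemma on the compact Hausdorff $K$ (cf.\ Lemma~\ref{lem-5.2.25}) together with the strict positivity $\mu_K(U)>0$ (Theorem~\ref{thm-5.1.2}-(p8)), choose $f\in\mathcal{C}(K,\mathbb{C})$ with $f\geq 0$, $\operatorname{supp}(f)\subset U$ and $\int_K f\,d\mu_K=1$. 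Applying Lemma~\ref{lem-6.1.5}-(3) to the continuous $\mathbb{C}$-linear map $\mathbb{C}\to\mathcal{V}$, $z\mapsto z\xi$, gives $\int_K f(k)\xi\,d\mu_K(k)=\xi$, so Lemma~\ref{lem-6.1.5}-(1), -(4) yield
\[
   p\bigl(\varrho(f)\xi-\xi\bigr)=p\Big(\int_K f(k)\bigl(\varrho(k)\xi-\xi\bigr)\,d\mu_K(k)\Big)\leq\int_K f(k)\,p(\varrho(k)\xi-\xi)\,d\mu_K(k)<\epsilon/2.
\]
Set $M:=1+\sup_{k\in K}p(\varrho(k)\xi)<\infty$ (finite since $K$ is compact). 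Peter--Weyl density furnishes $g\in\mathcal{R}(K)$ with $\sup_{k\in K}|f(k)-g(k)|<\epsilon/(2M)$; a second application of Lemma~\ref{lem-6.1.5}-(4) gives $p(\varrho(g)\xi-\varrho(f)\xi)\leq\int_K|f(k)-g(k)|\,p(\varrho(k)\xi)\,d\mu_K(k)<\epsilon/2$. Since $\varrho(g)\xi\in\mathcal{V}_K$ by the previous paragraph, the triangle inequality delivers $p(\varrho(g)\xi-\xi)<\epsilon$, proving density.

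The main obstacle is the invocation of the Peter--Weyl theorem---specifically the uniform density of $\mathcal{R}(K)$ in $\mathcal{C}(K,\mathbb{C})$---which is not developed in the excerpt and must be cited as a classical result. A secondary technical point is the construction of the continuous bump $f$ supported in $U$ with $\int_K f\,d\mu_K=1$, which combines Uryson's lemma on compact Hausdorff $K$ with the strict positivity of $\mu_K$ on non-empty open sets.
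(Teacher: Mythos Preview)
Your proof is correct and follows essentially the same strategy as the paper: define the averaging operator $\varrho(f)\xi=\int_K f(k)\varrho(k)\xi\,d\mu_K(k)$ (the paper calls it $F_\xi(f)$), show it maps left-$K$-finite functions into $\mathcal{V}_K$ (the paper's Lemma~\ref{lem-6.2.5}), approximate $\xi$ by $\varrho(f)\xi$ with $f$ a bump at $e$ (Lemma~\ref{lem-6.2.6}), then replace $f$ by a nearby matrix coefficient via Peter--Weyl and the continuity of $f\mapsto\varrho(f)\xi$ (Lemma~\ref{lem-6.2.4}). The only cosmetic differences are that the paper builds \emph{smooth} bump functions (exploiting that $K$ is a Lie group) rather than invoking Uryson, and carries out the estimates in the Fr\'echet metric $d$ rather than one seminorm at a time.
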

   
   The main purpose of this section is to prove Proposition \ref{prop-6.2.1}.

\subsection{A preparation for proving Proposition \ref{prop-6.2.1}}\label{subsec-6.2.1} 
   In order to prove Proposition \ref{prop-6.2.1} we study $\mathcal{C}(K,\mathbb{C})$ first, where $\mathcal{C}(K,\mathbb{C})=\{\phi:K\to\mathbb{C} \,|\, \mbox{$\phi$ is continuous}\}$. 
   Let us define a norm $\|\cdot\|$ on the complex vector space $\mathcal{C}(K,\mathbb{C})$ by 
\[
   \mbox{$\|\phi\|:=\sup\big\{|\phi(x)| : x\in K\big\}$ for $\phi\in\mathcal{C}(K,\mathbb{C})$}, 
\]
consider $\mathcal{C}(K,\mathbb{C})$ as a complex Banach space with this norm, and define a homomorphism $\rho:K\to GL\bigl(\mathcal{C}(K,\mathbb{C})\bigr)$, $k\mapsto\rho(k)$, as follows:
\[
   \mbox{$\bigl(\rho(k)\phi\bigr)(x):=\phi(k^{-1}x)$ for $\phi\in\mathcal{C}(K,\mathbb{C})$ and $x\in K$}.
\] 
   Then, it turns out that $\|\rho(k)\phi\|=\|\phi\|$ for all $(k,\phi)\in K\times\mathcal{C}(K,\mathbb{C})$, and that the mapping $\pi_\rho:K\times\mathcal{C}(K,\mathbb{C})\to\mathcal{C}(K,\mathbb{C})$, $(k,\phi)\mapsto\rho(k)\phi$, is continuous. 
   Furthermore, by the Peter-Weyl theory one knows

\begin{proposition}[{e.g.\ Sugiura \cite[p.27, Theorem 3.5]{Su0}}]\label{prop-6.2.2}
\[
   \mathcal{C}(K,\mathbb{C})_K:=\{\varphi\in\mathcal{C}(K,\mathbb{C}) \,|\, \dim_\mathbb{C}\operatorname{span}_\mathbb{C}\{\rho(k)\varphi : k\in K\}<\infty\}
\]
is a $\rho(K)$-invariant, complex vector subspace of $\mathcal{C}(K,\mathbb{C})$ and is dense in $\mathcal{C}(K,\mathbb{C})$.\footnote{Remark.\ $\mathcal{C}(K,\mathbb{C})_K$ includes the representative ring of the compact Lie group $K$.}
\end{proposition}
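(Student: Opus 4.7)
The plan is to prove density via the spectral theory of compact self-adjoint convolution operators on $L^2(K)$ combined with an approximate-identity argument; the algebraic content is essentially free. Indeed, if $\varphi_1,\varphi_2\in\mathcal{C}(K,\mathbb{C})_K$ and $a,b\in\mathbb{C}$, then $\operatorname{span}_\mathbb{C}\{\rho(k)(a\varphi_1+b\varphi_2):k\in K\}$ lies in the sum of the two finite-dimensional spans and is itself finite-dimensional; and $\operatorname{span}_\mathbb{C}\{\rho(k')\rho(k)\varphi:k\in K\}=\operatorname{span}_\mathbb{C}\{\rho(k'k)\varphi:k\in K\}$ is contained in $\operatorname{span}_\mathbb{C}\{\rho(\ell)\varphi:\ell\in K\}$, giving $\rho(K)$-invariance. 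So the substantive statement is density.

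For density I will fix the normalized bi-invariant Haar measure $d\mu$ on $K$ (compact Lie groups are unimodular by Proposition \ref{prop-5.3.4}) and, for $f\in\mathcal{C}(K,\mathbb{C})$, study the right-convolution operator $T_f$ on $L^2(K)$ defined by $T_f\phi(x):=\int_K\phi(y)f(y^{-1}x)\,d\mu(y)$. A change of variable $y\mapsto ky$ shows $T_f\circ\rho(k)=\rho(k)\circ T_f$ for every $k\in K$. The integral kernel $(x,y)\mapsto f(y^{-1}x)$ is continuous on the compact space $K\times K$, so $T_f$ is Hilbert--Schmidt and hence compact; a direct computation shows that $T_f$ is self-adjoint provided $f(z^{-1})=\overline{f(z)}$ for all $z\in K$. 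The spectral theorem for compact self-adjoint operators will then yield an $L^2$-orthogonal decomposition
\[
L^2(K)=\ker(T_f)\oplus\bigoplus_{\lambda\neq 0}E_\lambda,
\]
with each $E_\lambda$ finite-dimensional and, by the commutation relation, $\rho(K)$-invariant. Each $\xi\in E_\lambda$ with $\lambda\neq 0$ equals $T_f(\xi/\lambda)$, and continuity of $f$ on compact $K$ makes $T_f$ send $L^2(K)$ into $\mathcal{C}(K,\mathbb{C})$, whence $E_\lambda\subset\mathcal{C}(K,\mathbb{C})_K$.

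To complete the proof, given $\varphi\in\mathcal{C}(K,\mathbb{C})$ and $\epsilon>0$ I will select, using uniform continuity of $\varphi$ together with Urysohn's lemma, a function $f\in\mathcal{C}(K,\mathbb{C})$ with $f\geq 0$, $f(z^{-1})=f(z)$, $\int_Kf\,d\mu=1$ and support so concentrated near $e$ that $|\varphi(xz^{-1})-\varphi(x)|<\epsilon/2$ for all $x\in K$ and all $z\in\operatorname{supp}(f)$. A change of variable gives $T_f\varphi(x)=\int_K\varphi(xz^{-1})f(z)\,d\mu(z)$, so $\|T_f\varphi-\varphi\|_\infty<\epsilon/2$. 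Writing the spectral expansion $T_f\varphi=\sum_{\lambda\neq 0}\lambda P_\lambda\varphi$ ($L^2$-convergent, with $P_\lambda$ the orthogonal projection onto $E_\lambda$) and truncating to $S_N:=\sum_{|\lambda|\geq 1/N}\lambda P_\lambda\varphi$ places each $S_N$ in a finite sum of finite-dimensional $\rho(K)$-invariant subspaces, so $S_N\in\mathcal{C}(K,\mathbb{C})_K$. Setting $\varphi_N:=\sum_{0<|\lambda|<1/N}P_\lambda\varphi$ yields $T_f\varphi-S_N=T_f\varphi_N$, and the Cauchy--Schwarz bound $\|T_f\varphi_N\|_\infty\leq\|f\|_{L^2}\|\varphi_N\|_{L^2}$ together with $\|\varphi_N\|_{L^2}\to 0$ forces $\|T_f\varphi-S_N\|_\infty\to 0$; hence $\|\varphi-S_N\|_\infty<\epsilon$ for $N$ large.

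The main obstacle will be the functional-analytic underpinnings taken as inputs: the spectral theorem for compact self-adjoint operators, the Hilbert--Schmidt character of $T_f$, and the verification that $T_f$ actually maps $L^2(K)$ into $\mathcal{C}(K,\mathbb{C})$---the last requiring an equicontinuity argument for the family $\{y\mapsto f(y^{-1}x)\}_{x\in K}$ that relies on uniform continuity of $f$ on compact $K$. The approximate-identity step and the spectral truncation are then routine.
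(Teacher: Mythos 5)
Your proposal is correct. Note first that the paper does not actually prove Proposition \ref{prop-6.2.2}: it is quoted as a known consequence of the Peter--Weyl theory with a citation to Sugiura, so there is no in-paper argument to compare against. What you have written is a self-contained proof, and it is the classical one (and essentially the one behind the cited reference): right-convolution operators $T_f$ commute with the left regular representation $\rho$, are Hilbert--Schmidt with continuous kernel hence compact, and are self-adjoint when $f(z^{-1})=\overline{f(z)}$; their nonzero eigenspaces are finite-dimensional, $\rho(K)$-invariant, and consist of continuous functions, hence lie in $\mathcal{C}(K,\mathbb{C})_K$; an approximate identity plus spectral truncation then gives sup-norm density. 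All the individual steps check out, including the commutation $T_f\circ\rho(k)=\rho(k)\circ T_f$ (right convolution against left translation is the correct pairing) and the Cauchy--Schwarz bound $\|T_f\varphi_N\|_\infty\leq\|f\|_{L^2}\|\varphi_N\|_{L^2}$. Two small points deserve an explicit word in a written-up version: (i) the identity $T_f\varphi-S_N=T_f\varphi_N$ is first obtained in $L^2$, and you should note that both sides are continuous functions agreeing $\mu$-a.e.\ (Haar measure has full support), hence everywhere, before taking sup norms; (ii) the change of variables $T_f\varphi(x)=\int_K\varphi(xz^{-1})f(z)\,d\mu(z)$ uses inversion-invariance of Haar measure, which is where unimodularity (Proposition \ref{prop-5.3.4}) actually enters. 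Neither is a gap. The trade-off versus the paper is clear: the paper keeps Chapter 6 short by importing the Peter--Weyl approximation theorem as a black box, while your route makes the chapter self-contained at the cost of importing the spectral theorem for compact self-adjoint operators instead.
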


   From now on, let us suppose that 
\begin{enumerate}
\item[(S)]
   $\pi_\varrho:K\times\mathcal{V}\to\mathcal{V}$, $(k,\xi)\mapsto\varrho(k)\xi$, is continuous,
\end{enumerate}
and denote by $\mu$ a non-zero left-invariant Haar measure on $\mathscr{B}$, where $\mathscr{B}$ is the Borel field on the compact Lie group $K$.   
   Then, for a $\xi\in\mathcal{V}$, Proposition \ref{prop-6.1.8} and (S) allow us to define a mapping $F_\xi:\mathcal{C}(K,\mathbb{C})\to\mathcal{V}$ by 
\begin{equation}\label{eq-6.2.3}
   \mbox{$\displaystyle{F_\xi(\phi):=\int_K\phi(x)(\varrho(x)\xi)d\mu(x)}$ for $\phi\in\mathcal{C}(K,\mathbb{C})$},
\end{equation} 
since $f_\phi:K\to\mathcal{V}$, $x\mapsto\phi(x)(\varrho(x)\xi)$, is continuous and $\mu(K)<\infty$ (cf.\ (p6) in Theorem \ref{thm-5.1.2}).

\begin{lemma}\label{lem-6.2.4}
   On the supposition {\rm (S)}$;$ for each $\xi\in\mathcal{V}$, the mapping $F_\xi:\mathcal{C}(K,\mathbb{C})\to\mathcal{V}$, $\phi\mapsto\int_K\phi(x)(\varrho(x)\xi)d\mu(x)$, is continuous.
\end{lemma}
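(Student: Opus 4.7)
The plan is to exploit the linearity of $F_\xi$ together with Lemma \ref{lem-6.1.5}-(4), reducing continuity to a uniform seminorm estimate. First I would observe that the mapping $F_\xi$ is $\mathbb{C}$-linear: for $\phi_1,\phi_2\in\mathcal{C}(K,\mathbb{C})$ and $\alpha\in\mathbb{C}$, the integrands $(\alpha\phi_1+\phi_2)(x)(\varrho(x)\xi)$ and $\alpha\phi_1(x)(\varrho(x)\xi)+\phi_2(x)(\varrho(x)\xi)$ are continuous mappings $K\to\mathcal{V}$ (by (S)), hence regulated on the compact set $K$ (Proposition \ref{prop-6.1.8}); then Lemma \ref{lem-6.1.5}-(1) gives $F_\xi(\alpha\phi_1+\phi_2)=\alpha F_\xi(\phi_1)+F_\xi(\phi_2)$.

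Next, since $F_\xi$ is linear, it suffices to verify continuity at $0\in\mathcal{C}(K,\mathbb{C})$, and since the topology of $\mathcal{V}$ is determined by the countable family $\{p_\ell\}_{\ell\in\mathbb{N}}$ of (continuous) seminorms, it is enough to estimate $p_\ell\bigl(F_\xi(\phi)\bigr)$ for each $\ell$. Applying Lemma \ref{lem-6.1.5}-(4) to the continuous seminorm $p_\ell$ and the regulated mapping $K\ni x\mapsto\phi(x)(\varrho(x)\xi)\in\mathcal{V}$, I obtain
\[
   p_\ell\bigl(F_\xi(\phi)\bigr)
   \leq\int_K p_\ell\bigl(\phi(x)\varrho(x)\xi\bigr)\,d\mu(x)
   =\int_K|\phi(x)|\,p_\ell\bigl(\varrho(x)\xi\bigr)\,d\mu(x).
\]

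To bound the right-hand side I would invoke compactness: the supposition (S) implies that $K\ni x\mapsto\varrho(x)\xi\in\mathcal{V}$ is continuous (it is the restriction of $\pi_\varrho$ to $K\times\{\xi\}$), so $K\ni x\mapsto p_\ell\bigl(\varrho(x)\xi\bigr)\in\mathbb{R}$ is continuous, and since $K$ is compact there exists a constant $M_\ell:=\sup\{p_\ell(\varrho(x)\xi):x\in K\}<\infty$. Combined with (p6) of Theorem \ref{thm-5.1.2}, which ensures $\mu(K)<\infty$, this yields
\[
   p_\ell\bigl(F_\xi(\phi)\bigr)\leq M_\ell\,\mu(K)\,\|\phi\|
\]
for every $\phi\in\mathcal{C}(K,\mathbb{C})$, where $\|\phi\|=\sup_{x\in K}|\phi(x)|$.

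From this uniform estimate continuity of $F_\xi$ at $0$ is immediate: for any $\epsilon>0$ and any $\ell\in\mathbb{N}$, if $\|\phi\|<\epsilon/(1+M_\ell\mu(K))$ then $p_\ell(F_\xi(\phi))<\epsilon$. By linearity, $F_\xi$ is continuous on the whole of $\mathcal{C}(K,\mathbb{C})$. There is no real obstacle in this argument; the only point requiring care is to verify that the regulated integral actually exists (which is handled by Proposition \ref{prop-6.1.8}) and that each $p_\ell$ is a continuous seminorm on $\mathcal{V}$ so that Lemma \ref{lem-6.1.5}-(4) is applicable, which holds by the very fact that $\{p_\ell\}_{\ell\in\mathbb{N}}$ defines the topology of $\mathcal{V}$.
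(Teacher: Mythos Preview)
Your proof is correct and follows essentially the same approach as the paper: both arguments use Lemma \ref{lem-6.1.5}-(4) to obtain the seminorm estimate $p_\ell\bigl(F_\xi(\phi)\bigr)\leq\int_K|\phi(x)|\,p_\ell(\varrho(x)\xi)\,d\mu(x)$ and then bound this by $\|\phi\|$ times a finite constant. The only cosmetic difference is that you first isolate linearity and reduce to continuity at $0$, bounding by $M_\ell\,\mu(K)\,\|\phi\|$, whereas the paper works directly at an arbitrary point $\psi$ and uses the constant $\int_K\hat{p}(\varrho(k)\xi)\,d\mu(k)$ instead.
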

\begin{proof}
   Fix any $\xi\in\mathcal{V}$, $\epsilon>0$, $\psi\in\mathcal{C}(K,\mathbb{C})$ and any continuous seminorm $\hat{p}$ on $\mathcal{V}$.
   Let us verify that $F_\xi:\mathcal{C}(K,\mathbb{C})\to\mathcal{V}$ is continuous at $\psi$.
   For $\phi\in\mathcal{C}(K,\mathbb{C})$ we suppose that 
\[
   \|\psi-\phi\|<\dfrac{\epsilon}{1+\int_K\hat{p}\bigl(\varrho(k)\xi\bigr)d\mu(k)},
\] 
where it should be remarked that $0\leq\int_K\hat{p}\bigl(\varrho(k)\xi\bigr)d\mu(k)<\infty$, cf.\ Remark \ref{rem-6.1.7}.  
   The mapping $f_\psi$ (resp.\ $f_\phi$)$:K\to\mathcal{V}$, $x\mapsto\psi(x)(\varrho(x)\xi)$ (resp.\ $\mapsto\phi(x)(\varrho(x)\xi)$), is continuous, and so it is regulated due to Proposition \ref{prop-6.1.8}. 
   Hence one shows 
\[
\begin{split}
   \hat{p}\bigl(F_\xi(\psi)-F_\xi(\phi)\bigr)
  &\stackrel{\eqref{eq-6.2.3}}{=}\hat{p}\Big(\int_Kf_\psi(x)d\mu(x)-\int_Kf_\phi(x)d\mu(x)\Big)
   =\hat{p}\Big(\int_K(f_\psi-f_\phi)(x)d\mu(x)\Big) \quad\mbox{($\because$ Lemma \ref{lem-6.1.5}-(1))}\\
  &\leq\int_K\hat{p}\bigl((f_\psi-f_\phi)(x)\bigr)d\mu(x) 
   =\int_K|\psi(x)-\phi(x)|\hat{p}\bigl(\varrho(x)\xi\bigr)d\mu(x)
   \leq\int_K\|\psi-\phi\|\hat{p}\bigl(\varrho(x)\xi\bigr)d\mu(x)\\
  &=\|\psi-\phi\|\int_K\hat{p}\bigl(\varrho(x)\xi\bigr)d\mu(x)
  \leq\dfrac{\epsilon}{1+\int_K\hat{p}\bigl(\varrho(k)\xi\bigr)d\mu(k)}\int_K\hat{p}\bigl(\varrho(x)\xi\bigr)d\mu(x) 
  \leq\epsilon
\end{split}
\]
(see Lemma \ref{lem-6.1.5}-(4), Remark \ref{rem-6.1.7} also).
   Therefore $F_\xi:\mathcal{C}(K,\mathbb{C})\to\mathcal{V}$ is continuous at $\psi$.
\end{proof}

\begin{lemma}\label{lem-6.2.5}
   On the supposition {\rm (S)}$;$ $F_\xi\bigl(\mathcal{C}(K,\mathbb{C})_K\bigr)\subset\mathcal{V}_K$ for all $\xi\in\mathcal{V}$.   
\end{lemma}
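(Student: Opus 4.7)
The plan is to establish the equivariance identity
\[
   \varrho(k)\bigl(F_\xi(\varphi)\bigr)=F_\xi\bigl(\rho(k)\varphi\bigr) \quad \mbox{for all $k\in K$ and $\varphi\in\mathcal{C}(K,\mathbb{C})$},
\]
and then read off the conclusion from the finite-dimensionality of $\operatorname{span}_\mathbb{C}\{\rho(k)\varphi:k\in K\}$.

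To get the identity, I would first observe that for each fixed $k\in K$ the mapping $\varrho(k):\mathcal{V}\to\mathcal{V}$ is $\mathbb{C}$-linear (since $\varrho(k)\in GL(\mathcal{V})$) and continuous, the continuity being a consequence of the supposition (S) applied with $k$ fixed. Hence Lemma \ref{lem-6.1.5}-(3) permits pulling $\varrho(k)$ inside the regulated integral:
\[
   \varrho(k)\bigl(F_\xi(\varphi)\bigr)
   \stackrel{\eqref{eq-6.2.3}}{=}\varrho(k)\int_K\varphi(x)\bigl(\varrho(x)\xi\bigr)d\mu(x)
   =\int_K\varphi(x)\bigl(\varrho(kx)\xi\bigr)d\mu(x),
\]
where I used $\varrho(k)\varrho(x)=\varrho(kx)$.

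Next I would perform the change of variables $y=kx$, namely prove $\int_Kf(kx)d\mu(x)=\int_Kf(y)d\mu(y)$ for the continuous (hence regulated) mapping $f:K\to\mathcal{V}$, $y\mapsto\varphi(k^{-1}y)\bigl(\varrho(y)\xi\bigr)=(\rho(k)\varphi)(y)\bigl(\varrho(y)\xi\bigr)$. For a step function $S=\sum_ic_{A_i}v_i$ the identity $c_{A_i}\circ L_k=c_{k^{-1}A_i}$ together with the left-invariance (p7) of $\mu$ from Theorem \ref{thm-5.1.2} gives
\[
   \int_KS(kx)d\mu(x)
   =\sum_i\mu(k^{-1}A_i)v_i
   =\sum_i\mu(A_i)v_i
   =\int_KS(y)d\mu(y);
\]
approximating $f$ uniformly on $K$ by step functions $\{S_n\}_{n=1}^\infty$ (which is possible by Proposition \ref{prop-6.1.8}), the uniform convergence $S_n\circ L_k\to f\circ L_k$ and Definition \ref{def-6.1.4}-(2) let me pass to the limit on both sides, yielding the change-of-variables formula for regulated integrals. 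Applying it to the above $f$ produces exactly $F_\xi\bigl(\rho(k)\varphi\bigr)$, and the equivariance identity is proved.

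With the identity in hand, take $\varphi\in\mathcal{C}(K,\mathbb{C})_K$ and pick a $\mathbb{C}$-basis $\{\varphi_1,\dots,\varphi_n\}$ of the finite-dimensional space $W:=\operatorname{span}_\mathbb{C}\{\rho(k)\varphi:k\in K\}$, writing $\rho(k)\varphi=\sum_{i=1}^nc_i(k)\varphi_i$. Linearity of $F_\xi$ (Lemma \ref{lem-6.1.5}-(1)) then gives
\[
   \varrho(k)\bigl(F_\xi(\varphi)\bigr)
   =F_\xi\bigl(\rho(k)\varphi\bigr)
   =\sum_{i=1}^nc_i(k)F_\xi(\varphi_i),
\]
so $\operatorname{span}_\mathbb{C}\{\varrho(k)F_\xi(\varphi):k\in K\}\subset\operatorname{span}_\mathbb{C}\{F_\xi(\varphi_i):1\leq i\leq n\}$, which is finite-dimensional. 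Hence $F_\xi(\varphi)\in\mathcal{V}_K$, as required. The only non-formal step is the change-of-variables argument for regulated integrals, and this is the place where some care must be taken; everything else is a straightforward application of the material already assembled in Sections \ref{sec-6.1} and \ref{sec-5.1}.
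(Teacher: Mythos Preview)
Your proof is correct and follows essentially the same approach as the paper: pull $\varrho(k)$ inside the integral via Lemma~\ref{lem-6.1.5}-(3), perform the substitution $y=kx$ using left-invariance of $\mu$, expand $\rho(k)\varphi$ in a finite basis, and conclude by linearity of $F_\xi$. Your explicit reduction of the change-of-variables formula to step functions is more detailed than the paper's terse ``by $x\mapsto k^{-1}y$'' and ``$\mu$ is left-invariant,'' but the underlying argument is identical.
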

\begin{proof}
   Fix a $\varphi\in\mathcal{C}(K,\mathbb{C})_K$. 
   There exist finite vectors $\varphi_1,\dots,\varphi_n\in\mathcal{C}(K,\mathbb{C})$ and functions $\alpha_1,\dots,\alpha_n:K\to\mathbb{C}$ such that
\begin{equation}\label{eq-1}\tag*{\textcircled{1}}
   \mbox{$\rho(k)\varphi=\sum_{i=1}^n\alpha_i(k)\varphi_i$ for all $k\in K$} 
\end{equation} 
by virtue of $\dim_\mathbb{C}\operatorname{span}_\mathbb{C}\{\rho(k)\varphi : k\in K\}<\infty$. 
   Therefore, for any $k\in K$ 
\allowdisplaybreaks{
\begin{align*}
   \varrho(k)\bigl(F_\xi(\varphi)\bigr)
  &\stackrel{\eqref{eq-6.2.3}}{=}\varrho(k)\Big(\int_K\varphi(x)(\varrho(x)\xi)d\mu(x)\Big)
   =\int_K\varrho(k)\bigl(\varphi(x)(\varrho(x)\xi)\bigr)d\mu(x) \quad\mbox{($\because$ (S), Lemma \ref{lem-6.1.5}-(3))}\\
  &=\int_K\varphi(x)(\varrho(kx)\xi)d\mu(x)
   =\int_K\varphi(k^{-1}y)(\varrho(y)\xi)d\mu(k^{-1}y) \quad\mbox{(by $x\to k^{-1}y$)}\\
  &\stackrel{\ref{eq-1}}{=}\int_K\sum_{i=1}^n\alpha_i(k)\varphi_i(y)(\varrho(y)\xi)d\mu(k^{-1}y) 
   =\int_K\sum_{i=1}^n\alpha_i(k)\varphi_i(y)(\varrho(y)\xi)d\mu(y) \quad\mbox{($\because$ $\mu$ is left-invariant)}\\
  &=\sum_{i=1}^n\alpha_i(k)\int_K\varphi_i(y)(\varrho(y)\xi)d\mu(y)
   \stackrel{\eqref{eq-6.2.3}}{=}\sum_{i=1}^n\alpha_i(k)F_\xi(\varphi_i). 
\end{align*}}Hence $\dim_\mathbb{C}\operatorname{span}_\mathbb{C}\{\varrho(k)\bigl(F_\xi(\varphi)\bigr) : k\in K\}\leq n<\infty$, and so $F_\xi(\varphi)\in\mathcal{V}_K$.
\end{proof}

\begin{lemma}\label{lem-6.2.6}
   On the supposition {\rm (S)}$;$ for any $\xi\in\mathcal{V}$, there exists a sequence $\{\phi_n\}_{n=1}^\infty\subset\mathcal{C}(K,\mathbb{C})$ satisfying 
\[
   \lim_{n\to\infty}d\bigl(\xi,F_\xi(\phi_n)\bigr)=0.
\]   
\end{lemma}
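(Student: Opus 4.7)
The strategy is to construct a Dirac-type approximate identity concentrated near the unit element $e\in K$ and exploit the continuity of $\pi_\varrho$ at $(e,\xi)$.

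First I would fix a decreasing neighborhood basis $\{V_n\}_{n\in\mathbb{N}}$ of $e\in K$ in the Lie group $K$. For each $n$, I would use Lemma \ref{lem-5.2.25} (choosing a compact neighborhood $C_n$ of $e$ satisfying $C_n\subset V_n$) to obtain a function $\psi_n\in\mathscr{C}_{\geq 0}(K,\mathbb{R})$ with $c_{C_n}\leq\psi_n\leq c_{V_n}$. Property (p8) of the Haar measure $\mu$ gives $\mu(C_n^\circ)>0$, so $\int_K\psi_n\,d\mu>0$, and I may normalize by setting
\[
   \phi_n:=\frac{1}{\int_K\psi_n(y)\,d\mu(y)}\,\psi_n\in\mathcal{C}(K,\mathbb{C}),
\]
so that $\phi_n\geq 0$, $\operatorname{supp}(\phi_n)\subset V_n$, and $\int_K\phi_n\,d\mu=1$.

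Next I would rewrite the difference $F_\xi(\phi_n)-\xi$ as a single integral. Applying Lemma \ref{lem-6.1.5}-(3) to the continuous complex-linear mapping $L:\mathbb{C}\to\mathcal{V}$, $z\mapsto z\xi$, one obtains
\[
   \int_K\phi_n(x)\,\xi\,d\mu(x)
   =L\Big(\int_K\phi_n(x)\,d\mu(x)\Big)
   =L(1)=\xi,
\]
so by linearity of the regulated integral (Lemma \ref{lem-6.1.5}-(1)),
\[
   F_\xi(\phi_n)-\xi
   =\int_K\phi_n(x)\bigl(\varrho(x)\xi-\xi\bigr)\,d\mu(x).
\]
Then, for any continuous seminorm $\hat{p}$ on $\mathcal{V}$, Lemma \ref{lem-6.1.5}-(4) and $\phi_n\geq 0$ yield
\[
   \hat{p}\bigl(F_\xi(\phi_n)-\xi\bigr)
   \leq\int_K\phi_n(x)\,\hat{p}\bigl(\varrho(x)\xi-\xi\bigr)d\mu(x)
   \leq\sup_{x\in V_n}\hat{p}\bigl(\varrho(x)\xi-\xi\bigr),
\]
since $\operatorname{supp}(\phi_n)\subset V_n$ and $\int_K\phi_n\,d\mu=1$.

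Finally I would use the supposition (S). For every $\ell\in\mathbb{N}$, the continuity of $\pi_\varrho$ at $(e,\xi)$ shows that $x\mapsto p_\ell(\varrho(x)\xi-\xi)$ is continuous on $K$ and vanishes at $e$; hence for each fixed $\ell$, $\sup_{x\in V_n}p_\ell(\varrho(x)\xi-\xi)\to 0$ as $n\to\infty$ (because $\{V_n\}$ is a neighborhood basis at $e$). This gives $p_\ell\bigl(F_\xi(\phi_n)-\xi\bigr)\to 0$ for every $\ell\in\mathbb{N}$, which is precisely the statement that $d\bigl(\xi,F_\xi(\phi_n)\bigr)\to 0$ in the Fréchet metric, as noted at the start of Section \ref{sec-6.1}.

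The argument is essentially routine once the approximate identity is in place; the only subtle point is the simultaneous control over the countably many seminorms $\{p_\ell\}_{\ell\in\mathbb{N}}$, but this is handled automatically because for \emph{each} fixed $\ell$ the relevant supremum tends to zero along a single sequence $\{V_n\}$ (no diagonal choice of $\phi_n$ is required). Thus the main (minor) obstacle is the careful bookkeeping to exchange the integral against a scalar bump with a vector-valued integrand, which is precisely what Lemma \ref{lem-6.1.5}-(3),(4) was set up to allow.
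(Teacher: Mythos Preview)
Your proof is correct and follows essentially the same approach as the paper: construct a normalized approximate identity $\{\phi_n\}$ supported near $e$, rewrite $F_\xi(\phi_n)-\xi$ as a single regulated integral, and bound each seminorm via Lemma~\ref{lem-6.1.5}-(4) using continuity of $x\mapsto\varrho(x)\xi$ at $e$. Your use of Lemma~\ref{lem-6.1.5}-(3) with $L:\mathbb{C}\to\mathcal{V}$, $z\mapsto z\xi$, to obtain $\int_K\phi_n(x)\,\xi\,d\mu(x)=\xi$ is slightly slicker than the paper's step-function footnote argument, and your bumps come from Lemma~\ref{lem-5.2.25} rather than smooth bump functions, but these are cosmetic differences.
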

\begin{proof}
   Fix an $\epsilon>0$ and a $\xi\in\mathcal{V}$.
   Since $K$ is a smooth manifold, one can construct a strictly decreasing sequence $\{U_n\}_{n=1}^\infty$ of open neighborhoods of $e\in K$ so that 
\begin{equation}\label{eq-1}\tag*{\textcircled{1}}
   U_1\supset\overline{U_2}\supset U_2\supset\overline{U_3}\supset\cdots\supset U_n\supset\overline{U_{n+1}}\supset U_{n+1}\supset\overline{U_{n+2}}\supset\cdots\supset\{e\}
\end{equation}
and a sequence $\{\psi_n\}_{n=1}^\infty$ of smooth functions (which are sometimes called {\it bump functions}) so that
\begin{enumerate}
\item
   $0\leq\psi_n(x)\leq 1$ for all $x\in K$, 
\item 
   $\psi_n(y)=1$ for all $y\in U_{n+1}$,
\item
   $\psi_n(z)=0$ for all $z\in K-U_n$.   
\end{enumerate}
   For these functions one can assert that $0<\mu(U_{n+1})\leq\int_K\psi_n(x)d\mu(x)\leq\mu(U_n)\leq\mu(K)<\infty$ for all $n\in\mathbb{N}$ because $\mu$ is a Haar measure on the compact Lie group $K$, cf.\ (p8), (p6) in Theorem \ref{thm-5.1.2}. 
   Setting $\phi_n:=\dfrac{1}{\int_K\psi_n(x)d\mu(x)}\psi_n$ for $n\in\mathbb{N}$, we conclude that $\phi_n\in\mathcal{C}(K,\mathbb{C})$, $\int_K\phi_n(x)d\mu(x)=1$, and $0\leq\phi_n(x)$ for all $n\in\mathbb{N}$ and $x\in K$; in addition, 
\begin{equation}\label{eq-2}\tag*{\textcircled{2}}
\begin{split}
   \xi-F_\xi(\phi_n)
  &\stackrel{\eqref{eq-6.2.3}}{=}\xi-\int_K\phi_n(x)(\varrho(x)\xi)d\mu(x)
   =\int_K\phi_n(x)\xi d\mu(x)-\int_K\phi_n(x)(\varrho(x)\xi)d\mu(x)\\
  &=\int_K\phi_n(x)(\xi-\varrho(x)\xi)d\mu(x) \quad\mbox{($\because$ Lemma \ref{lem-6.1.5}-(1), Proposition \ref{prop-6.1.8})},
\end{split}  
\end{equation}
where $\xi=\int_K\phi_n(x)\xi d\mu(x)$ follows from $\int_K\phi_n(x)d\mu(x)=1$.\footnote{Indeed; suppose a sequence $\{f_m=\sum_ic_{A_{m,i}}\lambda_{m,i}:K\to\mathbb{R}\}_{m=1}^\infty$ of step functions to be uniformly convergent to $\phi_n$ on $K$. 
   Then it follows from Definition \ref{def-6.1.4}-(2) and $\int_K\phi_n(x)d\mu(x)=1$ that $\lim_m\int_Kf_m(x)d\mu(x)=\int_K\phi_n(x)d\mu(x)=1$. 
   Besides, the sequence $\{f_m\xi=\sum_ic_{A_{m,i}}\lambda_{m,i}\xi:K\to\mathcal{V}\}_{m=1}^\infty$ consists of step functions and is uniformly convergent to $\phi_n\xi$ on $K$. 
   Accordingly $\int_K\phi_n(x)\xi d\mu(x)=\lim_m\int_K f_m(x)\xi d\mu(x)\stackrel{\eqref{eq-6.1.1}}{=}\lim_m\sum_i\mu(A_{m,i})\lambda_{m,i}\xi=\bigl(\lim_m(\sum_i\mu(A_{m,i})\lambda_{m,i})\bigr)\xi\stackrel{\eqref{eq-6.1.1}}{=}\bigl(\lim_m\int_Kf_m(x)d\mu(x)\bigr)\xi=\xi$.
   Hence $\int_K\phi_n(x)\xi d\mu(x)=\xi$.}
   Now, let $\hat{p}$ be an arbitrary continuous seminorm on $\mathcal{V}$. 
   Since $K\ni k\mapsto\varrho(k)\xi\in\mathcal{V}$ is continuous at $e\in K$ and $\varrho(e)\xi=\xi$, there exists an open neighborhood $O$ of $e\in K$ satisfying  
\begin{equation}\label{eq-3}\tag*{\textcircled{3}}
   \mbox{$\hat{p}\bigl(\xi-\varrho(y)\xi\bigr)<\epsilon$ for all $y\in O$}.
\end{equation}
   By \ref{eq-1} there exists an $N_{\hat{p}}\in\mathbb{N}$ such that $n\geq N_{\hat{p}}$ implies $U_n\subset O$, and then
\allowdisplaybreaks{
\begin{align*}
   \hat{p}\bigl(\xi-F_\xi(\phi_n)\bigr)
  &\stackrel{\ref{eq-2}}{=}\hat{p}\Big(\int_K\phi_n(x)\bigl(\xi-\varrho(x)\xi\bigr)d\mu(x)\Big)
   \leq\int_K\phi_n(x)\hat{p}\bigl(\xi-\varrho(x)\xi\bigr)d\mu(x) \quad\mbox{($\because$ Lemma \ref{lem-6.1.5}-(4), $0\leq\phi_n(x)$)}\\
  &=\int_{U_n}\phi_n(y)\hat{p}\bigl(\xi-\varrho(y)\xi\bigr)d\mu(y)+\int_{K-U_n}\phi_n(z)\hat{p}\bigl(\xi-\varrho(z)\xi\bigr)d\mu(z) \quad\mbox{($\because$ Lemma \ref{lem-6.1.5}-(2))}\\
  &=\int_{U_n}\phi_n(y)\hat{p}\bigl(\xi-\varrho(y)\xi\bigr)d\mu(y) \quad\mbox{($\because$ $\phi_n=0$ on $K-U_n$)}\\
  &\leq\epsilon\int_{U_n}\phi_n(y)d\mu(y) \quad\mbox{($\because$ $U_n\subset O$, \ref{eq-3})}\\
  &\leq\epsilon\int_K\phi_n(x)d\mu(x) \quad\mbox{($\because$ $0\leq\phi_n(x)$)}\\
  &=\epsilon.
\end{align*}}For this reason the sequence $\{\phi_n\}_{n=1}^\infty$ satisfies $\displaystyle{\lim_{n\to\infty}d\bigl(\xi,F_\xi(\phi_n)\bigr)=0}$.
\end{proof}

\subsection{Proof of Proposition \ref{prop-6.2.1}}\label{subsec-6.2.2} 
   Now, let us prove Proposition \ref{prop-6.2.1}.
\begin{proof}[Proof of Proposition {\rm \ref{prop-6.2.1}}]
   We only prove that $\mathcal{V}_K$ is dense in $\mathcal{V}=(\mathcal{V},d)$.
   Take any $\xi_0\in\mathcal{V}$ and $\epsilon>0$. 
   By Lemma \ref{lem-6.2.6} there exists a $\psi\in\mathcal{C}(K,\mathbb{C})$ satisfying  
\[
   d\bigl(\xi_0,F_{\xi_0}(\psi)\bigr)<\epsilon/2.
\] 
   Since $F_{\xi_0}:\mathcal{C}(K,\mathbb{C})\to\mathcal{V}$ is continuous at $\psi$ (cf.\ Lemma \ref{lem-6.2.4}), there exists an open neighborhood $W$ of $\psi\in\mathcal{C}(K,\mathbb{C})$ such that  
\[
   d\bigl(F_{\xi_0}(\psi),F_{\xi_0}(\phi)\bigr)<\epsilon/2
\]
for all $\phi\in W$.
   Proposition \ref{prop-6.2.2} enables us to take an element $\varphi\in W\cap \mathcal{C}(K,\mathbb{C})_K$. 
   Then, it follows from Lemma \ref{lem-6.2.5} that $F_{\xi_0}(\varphi)\in\mathcal{V}_K$, and we have 
\[
   d\bigl(\xi_0,F_{\xi_0}(\varphi)\bigr)
   \leq d\bigl(\xi_0,F_{\xi_0}(\psi)\bigr)+d\bigl(F_{\xi_0}(\psi),F_{\xi_0}(\varphi)\bigr)
   <\epsilon.
\]   
   This completes the proof of Proposition \ref{prop-6.2.1}.  
\end{proof}

\chapter{Elliptic elements and elliptic adjoint orbits}\label{ch-7}
   In this chapter we recall the definitions of elliptic element and elliptic (adjoint) orbit, and show some fundamental properties of elliptic elements and elliptic orbits. 
   The setting of Chapter \ref{ch-7} is as follows: 
\begin{itemize}
\item
   $G$ is a connected, real semisimple Lie group, 
\item 
   $\frak{g}_\mathbb{C}$ is the complexification of the Lie algebra $\frak{g}$.
\end{itemize}
   Remark that the Lie group $G$ satisfies the second countability axiom since it is connected.

\section{Definitions of elliptic element and elliptic orbit}\label{sec-7.1}
   Here are the definitions of elliptic element and elliptic orbit.
\begin{definition}[{cf.\ Kobayashi \cite{Ko}}]\label{def-7.1}
\begin{enumerate}
\item[]
\item[(i)]
   An element $Z\in\frak{g}$ is said to be {\it semisimple},\index{semisimple element@semisimple element\dotfill} if the linear transformation $\operatorname{ad}Z:\frak{g}\to\frak{g}$, $X\mapsto[Z,X]$, is semisimple.\footnote{This condition is equivalent to the condition that $\operatorname{ad}Z:\frak{g}_\mathbb{C}\to\frak{g}_\mathbb{C}$ is represented by a diagonal matrix relative to some complex basis of $\frak{g}_\mathbb{C}$.}
\item[(ii)]
   The adjoint orbit $\operatorname{Ad}G(Z)$ of $G$ through a semisimple element $Z\in\frak{g}$ is called a {\it semisimple} ({\it adjoint}) {\it orbit}.\index{semisimple (adjoint) orbit@semisimple (adjoint) orbit\dotfill}   
\item[(iii)]
   An element $T\in\frak{g}$ is said to be {\it elliptic},\index{elliptic element@elliptic element\dotfill} if it is semisimple and all the eigenvalues of $\operatorname{ad}T$ are purely imaginary.
\item[(iv)]
   The adjoint orbit $\operatorname{Ad}G(T)$ of $G$ through an elliptic element $T\in\frak{g}$ is called an {\it elliptic adjoint orbit} or an {\it elliptic orbit}.\index{elliptic (adjoint) orbit@elliptic (adjoint) orbit\dotfill} 
\end{enumerate} 
\end{definition}

   Needless to say, $0$ is an elliptic element of $\frak{g}$.

\section{Properties of elliptic elements}\label{sec-7.2}
   Let us clarify some properties of elliptic elements.

\begin{lemma}\label{lem-7.2.1}
   Let $T$ be a non-zero, elliptic element of $\frak{g}$, and let $S^1:=\{\exp tT : t\in\mathbb{R}\}$.
\begin{enumerate}
\item[{\rm (1)}]
   $S^1$ is a $1$-dimensional connected, closed Abelian subgroup of $G$. 
\item[{\rm (2)}]
   Suppose that the center $Z(G)$ of $G$ is finite. 
   Then, $S^1$ is compact. 
\end{enumerate}   
\end{lemma}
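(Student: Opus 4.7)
The plan is to view $S^1$ as the image of the real-analytic Lie group homomorphism $\alpha:\mathbb{R}\to G$, $t\mapsto\exp tT$. Since $\alpha$ is a continuous homomorphism of abelian groups, the image $S^1=\alpha(\mathbb{R})$ is automatically a connected abelian subgroup of $G$. Because $T\neq 0$, the differential $(d\alpha)_0:\mathbb{R}\to\frak{g}$, $s\mapsto sT$, is injective, so $\alpha$ is an immersion and $\ker\alpha$ is a discrete closed subgroup of $\mathbb{R}$, hence either $\{0\}$ or $c\mathbb{Z}$ for some $c>0$.

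For (1), I would split according to $\ker\alpha$. If $\ker\alpha=c\mathbb{Z}$, then $\alpha$ factors through a continuous injective homomorphism $\mathbb{R}/c\mathbb{Z}\to G$ whose source is compact, so $S^1$ is a compact (hence closed) $1$-dimensional subgroup of $G$, and in this case assertion (2) is already established. In the remaining case $\ker\alpha=\{0\}$, $S^1$ is the injective continuous image of $\mathbb{R}$. To show it is closed and $1$-dimensional I would analyze the closure $H:=\overline{S^1}$ in $G$: by the theorem on closed subgroups, $H$ is automatically a connected abelian closed Lie subgroup of $G$ whose Lie algebra $\frak{h}\subset\frak{g}$ contains $\mathbb{R}T$, and the ellipticity of $T$ should then be used to force $\frak{h}=\mathbb{R}T$, whence $H=S^1$ because $\exp|_{\mathbb{R}T}:\mathbb{R}T\to H$ is a surjective immersion.

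For (2), I would exploit the fact that when $Z(G)$ is finite the map $\operatorname{Ad}:G\to GL(\frak{g})$ has finite kernel and is therefore proper. Ellipticity of $T$ makes $\operatorname{ad}T$ semisimple with purely imaginary spectrum, so after extending to $\frak{g}_\mathbb{C}$ the family $\{\exp(t\,\operatorname{ad}T):t\in\mathbb{R}\}$ acts diagonally in a fixed basis with entries of modulus $1$; hence $\operatorname{Ad}(S^1)$ is a bounded, and so relatively compact, subset of $GL(\frak{g})$. Properness of $\operatorname{Ad}$ then yields that $\operatorname{Ad}^{-1}\bigl(\overline{\operatorname{Ad}(S^1)}\bigr)$ is a compact subset of $G$ containing $S^1$. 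Combined with the closedness from (1), this gives $S^1$ compact.

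The hard part will be the verification that $\frak{h}=\mathbb{R}T$ in the injective case of (1), since for a general semisimple $G$ the closure of a $1$-parameter subgroup can be higher-dimensional (as happens with an irrational winding on a torus). This step presumably invokes finer structural facts about elliptic elements---such as a Cartan decomposition $\frak{g}=\frak{k}\oplus\frak{p}$, $\operatorname{Ad}(G)$-conjugacy of $T$ into the maximal compactly embedded subalgebra $\frak{k}$, and the resulting realization of $\overline{\operatorname{Ad}(S^1)}$ inside a maximal torus of a compact subgroup---rather than purely formal arguments, and this is the point at which the remaining hypotheses tacit to the setting enter.
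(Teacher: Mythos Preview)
Your approach to (2) is essentially the paper's: ellipticity makes $\operatorname{Ad}(\exp tT)$ a block-rotation matrix, so $\operatorname{Ad}S^1$ is compact, and with $Z(G)$ finite one pulls this back to $G$. That part is fine.

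The gap is in (1). In the injective case you try to analyze $H=\overline{S^1}$ and reduce to showing $\operatorname{Lie}(H)=\mathbb{R}T$, then concede that this ``presumably invokes finer structural facts'' like a Cartan decomposition. It does not, and the paper's route is much shorter. You already have the key fact in hand from your argument for (2): $\operatorname{Ad}S^1\subset GL(\frak{g})$ is compact, hence closed, so
\[
   \operatorname{Ad}^{-1}\bigl(\operatorname{Ad}S^1\bigr)=S^1\,Z(G)
\]
is a closed subgroup of $G$ (here connectedness of $G$ gives $\ker\operatorname{Ad}=Z(G)$). Since $\frak{g}$ is semisimple, $Z(G)$ is discrete, so $\dim S^1Z(G)=1$ and $S^1$ is exactly the identity component of the closed Lie group $S^1Z(G)$; therefore $S^1$ is closed in $S^1Z(G)$ and hence in $G$. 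No case split on $\ker\alpha$, no analysis of $\overline{S^1}$, no Cartan decomposition needed.

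In short: the ingredient you were missing for (1) is the same compactness of $\operatorname{Ad}S^1$ you used for (2), applied one step earlier.
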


\begin{remark}\label{rem-7.2.2}
   We cannot omit the supposition ``the center $Z(G)$ of $G$ is finite'' from Lemma \ref{lem-7.2.1}-(2). 
   cf.\ Example \ref{ex-7.2.3}.
\end{remark}

   From now on, we are going to prove Lemma \ref{lem-7.2.1}.
\begin{proof}[Proof of Lemma {\rm \ref{lem-7.2.1}}]
   (1). 
   $S^1$ coincides with the connected Lie subgroup of $G$ corresponding to the subalgebra $\operatorname{span}_\mathbb{R}\{T\}$ of $\frak{g}$. 
   Hence, let us only verify that $S^1$ is a closed subset of $G$.
   The element $T$ is non-zero elliptic, so there exist $\lambda_1,\dots,\lambda_k>0$ and an ordered real basis $\{X_1,Y_1,X_2,Y_2,\dots,X_k,Y_k,Z_{2k+1},\dots,Z_N\}$ of $\frak{g}$ such that 
\[
\begin{array}{ll}
   \mbox{$\operatorname{ad}T(X_i)=\lambda_iY_i$, $\operatorname{ad}T(Y_i)=-\lambda_iX_i$ ($1\leq i\leq k$)},
   & \mbox{$\operatorname{ad}T(Z_j)=0$ ($2k+1\leq j\leq N=\dim_\mathbb{R}\frak{g}$)}.
\end{array}   
\]
   Relative to this ordered basis, $\operatorname{Ad}(\exp tT)=\exp t\operatorname{ad}T:\frak{g}\to\frak{g}$ is represented by 
\[
\begin{array}{ll}
   \operatorname{Ad}(\exp tT)
   =\left(\begin{array}{@{}c@{\,\,}c@{\,\,}c@{\,\,}c@{}}
                  R_1 &        &     & \mbox{\huge{O}}\\
                      & \ddots &     &                \\
                      &        & R_k &                \\
      \mbox{\huge{O}} &        &     & I_{N-2k}
     \end{array}\right)\!,
& R_i=\begin{pmatrix} \cos(t\lambda_i) & -\sin(t\lambda_i)\\ \sin(t\lambda_i) & \cos(t\lambda_i)\end{pmatrix}\!,\, 1\leq i\leq k, 
\end{array}
\]
where $I_{N-2k}$ stands for the identity matrix of order $(N-2k)$. 
   Accordingly 
\begin{equation}\label{eq-1}\tag*{\textcircled{1}}
   \mbox{$\operatorname{Ad}S^1$ is a compact subgroup of $GL(\frak{g})=GL(N,\mathbb{R})$}.
\end{equation}
   Since the adjoint representation $\operatorname{Ad}:G\to GL(\frak{g})$ is a continuous homomorphism and the Lie group $G$ is connected, we conclude that 
\begin{equation}\label{eq-2}\tag*{\textcircled{2}}
   \mbox{$\operatorname{Ad}^{-1}\bigl(\operatorname{Ad}S^1\bigr)=S^1Z(G)$ is a closed subgroup of $G$}
\end{equation}
by \ref{eq-1}.
   Here it follows from $\dim_\mathbb{R}Z(G)=0$ that $S^1$ is the identity component of the Lie group $S^1Z(G)$. 
   Therefore $S^1$ is closed in $S^1Z(G)$. 
   This and \ref{eq-2} assure that $S^1$ is a closed subset of $G$.\par

   (2). 
   Suppose that the center $Z(G)$ is finite. 
   On the one hand; $S^1\cap Z(G)$ is compact by the supposition. 
   On the other hand; $S^1/(S^1\cap Z(G))$ is homeomorphic to $\operatorname{Ad}S^1$ because $S^1$ satisfies the second countability axiom ($\because$ (1)) and $\operatorname{Ad}:S^1\to\operatorname{Ad}S^1$ is a surjective continuous homomorphism with kernel $S^1\cap Z(G)$. 
   Consequently, $S^1$ is compact due to \ref{eq-1}.  
\end{proof}

\begin{example}\label{ex-7.2.3}
   Let $\frak{g}:=\frak{sl}(2,\mathbb{R})=\left\{\begin{array}{@{}c|c@{}} \begin{pmatrix} x & y\\ z & -x\end{pmatrix} & x,y,z\in\mathbb{R}\end{array}\right\}$ and let 
\[
   T:=\begin{pmatrix} 0 & 1\\ -1 & 0\end{pmatrix}\!.
\]   
   Then $T$ belongs to $\frak{g}$, $\left\{\begin{array}{@{}c@{\,}c@{\,}c@{}} E_1:=\begin{pmatrix} -i & 1\\ 1 & i\end{pmatrix}\!, & E_2:=\begin{pmatrix} i & 1\\ 1 & -i\end{pmatrix}\!, & E_3:=\begin{pmatrix} 0 & 1\\ -1 & 0\end{pmatrix}\end{array}\right\}$ is a complex basis of $\frak{g}_\mathbb{C}=\frak{sl}(2,\mathbb{C})$, and $\operatorname{ad}T:\frak{g}_\mathbb{C}\to\frak{g}_\mathbb{C}$ is represented by 
\[
   \operatorname{ad}T=\begin{pmatrix} 2i & 0 & 0\\ 0 & -2i & 0\\ 0 & 0 & 0\end{pmatrix}
\] 
relative to the basis. 
   This implies that $T$ is a non-zero elliptic element of $\frak{g}$. 
   Incidentally, $\operatorname{ad}T:\frak{g}\to\frak{g}$ is represented by  
\[
   \operatorname{ad}T=\begin{pmatrix} 0 & -2 & 0\\ 2 & 0 & 0\\ 0 & 0 & 0\end{pmatrix}
\]
relative to the real basis $\left\{\begin{array}{@{}c@{\,}c@{\,}c@{}} X_1:=\begin{pmatrix} 0 & 1\\ 1 & 0\end{pmatrix}\!, & Y_1:=\begin{pmatrix} 1 & 0\\ 0 & -1\end{pmatrix}\!, & Z_3:=\begin{pmatrix} 0 & 1\\ -1 & 0\end{pmatrix}\end{array}\right\}$ of $\frak{g}=\frak{sl}(2,\mathbb{R})$. 
   Now, $\operatorname{span}_\mathbb{R}\{T\}=\frak{so}(2)$ holds, and one can get a Cartan decomposition $\frak{g}=\frak{k}\oplus\frak{p}$ of $\frak{g}$ by setting
\[
\begin{array}{ll}
   \frak{k}:=\operatorname{span}_\mathbb{R}\{T\}, & \frak{p}:=\left\{\begin{array}{@{}c|c@{}} \begin{pmatrix} x & y\\ y & -x\end{pmatrix} & x,y\in\mathbb{R}\end{array}\right\}\!.
\end{array}   
\] 
   Let $G$ be a simply connected Lie group with Lie algebra $\frak{g}$, and let $G=KP$ denote the Cartan decomposition of $G$ corresponding to $\frak{g}=\frak{k}\oplus\frak{p}$. 
   In this setting, we have $\{\exp tT : t\in\mathbb{R}\}=K$, but $K$ is not compact because $K$ includes the center $Z(G)$ and $Z(G)$ is infinite (more precisely, $Z(G)=\mathbb{Z}$). 
   This implies that we cannot omit the supposition ``the center $Z(G)$ of $G$ is finite'' from Lemma \ref{lem-7.2.1}-(2). 
\end{example}

   The following lemma provides us with a criterion for judging whether an $X\in\frak{g}$ is elliptic or not. 
\begin{lemma}\label{lem-7.2.4}
   An element $X\in\frak{g}$ is elliptic if and only if there exists a Cartan involution $\theta_*$ of $\frak{g}$ so that $\theta_*(X)=X$.
\end{lemma}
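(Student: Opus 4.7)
The plan is to treat the two implications separately. For the ``if'' direction, I would fix a Cartan involution $\theta_*$ with $\theta_*(X)=X$ and equip $\frak{g}$ with the positive-definite inner product $\langle Y,Z\rangle:=-B_\frak{g}(Y,\theta_*Z)$. Using the invariance $B_\frak{g}([X,Y],Z)=-B_\frak{g}(Y,[X,Z])$ together with the fact that $\theta_*$ is an automorphism (so $\theta_*[X,Z]=[\theta_*X,\theta_*Z]=[X,\theta_*Z]$), a short calculation yields
\[
\langle[X,Y],Z\rangle+\langle Y,[X,Z]\rangle=0
\]
for all $Y,Z\in\frak{g}$. Hence $\operatorname{ad}X$ is skew-symmetric on the Euclidean space $(\frak{g},\langle,\rangle)$; such an operator is $\mathbb{C}$-diagonalizable with purely imaginary eigenvalues, so $X$ is elliptic.

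For the converse, suppose $X\in\frak{g}$ is elliptic. The explicit block-diagonal form of $\operatorname{ad}X$ exhibited in the proof of Lemma~\ref{lem-7.2.1} already shows that the one-parameter subgroup $\{\exp(t\operatorname{ad}X):t\in\mathbb{R}\}$ is bounded in $GL(\frak{g})$, so its closure $\mathcal{T}$ is a compact abelian subgroup. Because $\frak{g}$ is semisimple, the adjoint group $\operatorname{Int}(\frak{g})$ is a closed connected subgroup of $GL(\frak{g})$ with trivial center, and $\mathcal{T}\subset\operatorname{Int}(\frak{g})$. I would then invoke the Cartan--Mostow structure theory: Cartan involutions $\theta$ of $\frak{g}$ correspond bijectively to maximal compact subgroups of $\operatorname{Int}(\frak{g})$ via
\[
\theta\longmapsto K_\theta:=\{\phi\in\operatorname{Int}(\frak{g}):\theta\circ\phi\circ\theta^{-1}=\phi\},
\]
and every compact subgroup of $\operatorname{Int}(\frak{g})$ is contained in some $K_\theta$. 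Applying this to $\mathcal{T}$ produces a Cartan involution $\theta_*$ with $\theta_*\circ\exp(t\operatorname{ad}X)\circ\theta_*^{-1}=\exp(t\operatorname{ad}X)$ for every $t\in\mathbb{R}$. Differentiating at $t=0$ gives $\operatorname{ad}(\theta_*X)=\theta_*\circ\operatorname{ad}X\circ\theta_*^{-1}=\operatorname{ad}X$, and since $\operatorname{ad}:\frak{g}\to\operatorname{Der}(\frak{g})$ is injective on the semisimple algebra $\frak{g}$, I conclude $\theta_*X=X$.

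The main obstacle is the structural input in the second paragraph, namely the Cartan--Mostow conjugacy theorem combined with the characterization of maximal compact subgroups of $\operatorname{Int}(\frak{g})$ as fixed-point groups of Cartan involutions. This packages together (i) the conjugacy of all Cartan involutions of $\frak{g}$ under $\operatorname{Int}(\frak{g})$ and (ii) the fact that every compact subgroup of a connected real semisimple Lie group with finite center lies in a maximal compact subgroup. These are standard but nontrivial; depending on the monograph's style they will be cited (for example from Helgason, Ch.~III, or Knapp, Ch.~VI) rather than reproved. By contrast, the bounded-orbit computation, the differentiation at $t=0$, and the semisimplicity-based cancellation are routine and parallel the reasoning already used in the proof of Lemma~\ref{lem-7.2.1}.
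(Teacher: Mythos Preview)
Your proof is correct and follows essentially the same route as the paper. The ``if'' direction is identical (skew-symmetry of $\operatorname{ad}X$ with respect to $-B_\frak{g}(\cdot,\theta_*\cdot)$), and for the converse the paper invokes the same structural fact you do---a compactly embedded one-dimensional subalgebra sits inside a maximal compact subalgebra $\frak{k}$, which is the $+1$-eigenspace of some Cartan involution---only phrased at the Lie-algebra level rather than via maximal compact subgroups of $\operatorname{Int}(\frak{g})$; your explicit differentiation step is simply absorbed into that correspondence.
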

\begin{proof}
   Assume that $X\neq0$ (otherwise our assertions are trivial).
   If $X$ is elliptic, then Lemma \ref{lem-7.2.1} assures the existence of a Cartan involution $\theta_*$ of $\frak{g}$ satisfying $\theta_*(X)=X$ (because, for a given compact subalgebra $\frak{s}^1\subset\frak{g}$ there always exists a maximal compact subalgebra $\frak{k}$ of $\frak{g}$ such that $\frak{s}^1\subset\frak{k}$).\par
   
   Conversely, suppose that a Cartan involution $\theta_*$ of $\frak{g}$ satisfies $\theta_*(X)=X$, and define an inner product $\langle\,\cdot\,,\,\cdot\,\rangle$ on $\frak{g}$ by 
\begin{equation}\label{eq-7.2.5}
   \mbox{$\langle Y,Z\rangle:=-B_\frak{g}\bigl(Y,\theta_*(Z)\bigr)$ for $Y,Z\in\frak{g}$},
\end{equation} 
where $B_\frak{g}$ stands for the Killing form of $\frak{g}$. 
   For an arbitrary $\operatorname{ad}X$-invariant vector subspace $\frak{m}\subset\frak{g}$, we take its orthogonal complement $\frak{m}^\perp$ in $\frak{g}$ with respect to $\langle\,\cdot\,,\,\cdot\,\rangle$. 
   Then, $\frak{g}=\frak{m}\oplus\frak{m}^\perp$ holds, and moreover, the vector space $\frak{m}^\perp$ is also $\operatorname{ad}X$-invariant because it follows from $\theta_*(X)=X$ that 
\begin{equation}\label{eq-1}\tag*{\textcircled{1}}
   \mbox{$\langle\operatorname{ad}X(Y),Z\rangle=-\langle Y,\operatorname{ad}X(Z)\rangle$ for all $Y,Z\in\frak{g}$}.
\end{equation}
   Consequently the linear transformation $\operatorname{ad}X:\frak{g}\to\frak{g}$ is semisimple; besides, all the eigenvalues of $\operatorname{ad}X$ are purely imaginary by \ref{eq-1}. 
   Hence, $X$ is elliptic.
\end{proof}

   By Lemma \ref{lem-7.2.4} one has
\begin{corollary}\label{cor-7.2.6}
   All elements of a compact semisimple Lie algebra are elliptic.
\end{corollary}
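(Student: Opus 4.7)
The plan is to invoke Lemma \ref{lem-7.2.4} and exhibit an explicit Cartan involution of $\mathfrak{g}$ that fixes every element; the only natural candidate is $\operatorname{id}_\mathfrak{g}$ itself, and verifying that it is a Cartan involution reduces to a standard fact about the Killing form on a compact semisimple Lie algebra.

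More precisely, let $\mathfrak{g}$ be a compact semisimple Lie algebra. I would first recall that the Killing form $B_\mathfrak{g}$ is then negative definite on $\mathfrak{g}$ (this is Weyl's theorem, characterizing compact semisimple Lie algebras by negative-definiteness of $B_\mathfrak{g}$). Next, I would verify that the identity mapping $\theta_* := \operatorname{id}_\mathfrak{g}$ is a Cartan involution of $\mathfrak{g}$: it is obviously an involutive Lie algebra automorphism, and the symmetric bilinear form
\[
   (X,Y) \longmapsto -B_\mathfrak{g}\bigl(X,\theta_*(Y)\bigr) = -B_\mathfrak{g}(X,Y)
\]
is positive definite by the previous step, so $\theta_*$ satisfies the defining condition for a Cartan involution.

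Finally, since $\theta_*(X) = X$ for every $X \in \mathfrak{g}$, Lemma \ref{lem-7.2.4} immediately yields that $X$ is elliptic, completing the proof. There is essentially no obstacle in this argument; the only non-trivial input is the negative-definiteness of the Killing form on a compact semisimple Lie algebra, which is a classical fact and which I would simply quote rather than reprove.
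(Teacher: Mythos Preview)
Your proof is correct and follows exactly the route the paper intends: the corollary is stated as an immediate consequence of Lemma~\ref{lem-7.2.4}, and you have simply made explicit the implicit step, namely that $\theta_* = \operatorname{id}_\mathfrak{g}$ is a Cartan involution when $\mathfrak{g}$ is compact semisimple (because $B_\mathfrak{g}$ is negative definite). There is nothing to add.
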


   Corollary \ref{cor-7.2.6} provides us with examples of elliptic orbits. 
\begin{example}[A complex Grassmann manifold]\label{ex-7.2.7}
   Let $G:=SU(n)=\{g\in SL(n,\mathbb{C}) \,|\, {}^t\overline{g}=g^{-1}\}$, and let 
\[
   T:=\sqrt{-1}\begin{pmatrix}
   (n-k)I_k & \mbox{O}\\
   \mbox{O} & -kI_{n-k}
   \end{pmatrix},
\] 
where $n\geq 2$ and $1\leq k\leq n-1$. 
   Then $T$ is an element of $\frak{g}$, and it is elliptic because $\frak{g}=\frak{su}(n)$ is a compact semisimple Lie algebra. 
   A direct computation yields 
\[
   C_G(T)
   =\left\{\begin{array}{@{\,}c|c@{\,}}
   \begin{pmatrix}
    A & \mbox{O}\\
    \mbox{O} & D
   \end{pmatrix}\in SL(n,\mathbb{C})
   & A\in U(k), D\in U(n-k)
   \end{array}\right\} 
   =S(U(k)\times U(n-k)),
\]
and the elliptic orbit $\operatorname{Ad}G(T)=G/C_G(T)=SU(n)/S(U(k)\times U(n-k))$ is a complex Grassmann manifold; in particular, it is a complex projective space in case of $k=1$ or $k=n-1$.
   Incidentally, the eigenvalue of $\operatorname{ad}T$ is $\pm n\sqrt{-1}$ or zero.
\end{example}

   The following lemma will be needed later (e.g.\ Chapter \ref{ch-8}):
\begin{lemma}\label{lem-7.2.8}
   Let $T$ be an elliptic element of $\frak{g}$, and set complex vector subspaces $\frak{g}^\lambda,\frak{u}^\pm\subset\frak{g}_\mathbb{C}$ as 
\begin{equation}\label{eq-7.2.9}
\begin{array}{lll}
   \mbox{$\frak{g}^\lambda:=\{W\in\frak{g}_\mathbb{C} \,|\, \operatorname{ad}T(W)=i\lambda W\}$ for $\lambda\in\mathbb{R}$}, 
   & \frak{u}^+:=\bigoplus_{\lambda>0}\frak{g}^\lambda,
   & \frak{u}^-:=\bigoplus_{\lambda>0}\frak{g}^{-\lambda},
\end{array}   
\end{equation} 
where $i:=\sqrt{-1}$ and $\frak{g}^\lambda=\{0\}$ in the case where $i\lambda$ is different from the eigenvalues of $\operatorname{ad}T$.
   In addition, denote by $\frak{l}_\mathbb{C}$ $($resp.\ $\frak{l})$ the centralizer of $T$ in $\frak{g}_\mathbb{C}$ $($resp.\ $\frak{g})$, by $\frak{u}$ the image of the linear mapping $\operatorname{ad}T:\frak{g}\to\frak{g}$, and by $\overline{\sigma}$ the conjugation of $\frak{g}_\mathbb{C}$ with respect to $\frak{g}$.
   Then, it follows that 
\begin{enumerate}
\item[{\rm (1)}]
   $\frak{g}_\mathbb{C}=\bigoplus_{\lambda\in\mathbb{R}}\frak{g}^\lambda=\frak{u}^+\oplus\frak{l}_\mathbb{C}\oplus\frak{u}^-$, $\frak{l}_\mathbb{C}=\frak{g}^0$,
\item[{\rm (2)}] 
   $\operatorname{Ad}z(\frak{g}^\lambda)\subset\frak{g}^\lambda$ for all $(z,\lambda)\in C_G(T)\times\mathbb{R}$, where $C_G(T):=\{z\in G \,|\, \operatorname{Ad}z(T)=T\}$,
\item[{\rm (2$'$)}]  
   $\operatorname{Ad}z(\frak{l}_\mathbb{C})\subset\frak{l}_\mathbb{C}$, $\operatorname{Ad}z(\frak{u}^+)\subset\frak{u}^+$, $\operatorname{Ad}z(\frak{u}^-)\subset\frak{u}^-$ for all $z\in C_G(T)$,
\item[{\rm (3)}]  
   $[\frak{g}^\lambda,\frak{g}^\mu]\subset\frak{g}^{\lambda+\mu}$ for all $\lambda,\mu\in\mathbb{R}$,
\item[{\rm (3$'$)}]  
   $[\frak{l}_\mathbb{C},\frak{l}_\mathbb{C}]\subset\frak{l}_\mathbb{C}$, $[\frak{l}_\mathbb{C},\frak{u}^+]\subset\frak{u}^+$, $[\frak{l}_\mathbb{C},\frak{u}^-]\subset\frak{u}^-$, $[\frak{u}^+,\frak{u}^+]\subset\frak{u}^+$, $[\frak{u}^-,\frak{u}^-]\subset\frak{u}^-$,  
\item[{\rm (4)}]  
   $B_{\frak{g}_\mathbb{C}}(\frak{g}^\lambda,\frak{g}^\mu)=\{0\}$ if $\lambda+\mu\neq0$, where $B_{\frak{g}_\mathbb{C}}$ is the Killing form of $\frak{g}_\mathbb{C}$, 
\item[{\rm (4$'$)}]  
   $B_{\frak{g}_\mathbb{C}}(\frak{l}_\mathbb{C},\frak{u}^+)=\{0\}$, $B_{\frak{g}_\mathbb{C}}(\frak{l}_\mathbb{C},\frak{u}^-)=\{0\}$, $B_{\frak{g}_\mathbb{C}}(\frak{u}^+,\frak{u}^+)=\{0\}$, $B_{\frak{g}_\mathbb{C}}(\frak{u}^-,\frak{u}^-)=\{0\}$,   
\item[{\rm (5)}]  
   $\overline{\sigma}(\frak{g}^\lambda)=\frak{g}^{-\lambda}$ for all $\lambda\in\mathbb{R}$,
\item[{\rm (5$'$)}] 
   $\overline{\sigma}(\frak{l}_\mathbb{C})=\frak{l}_\mathbb{C}$, $\overline{\sigma}(\frak{u}^+)=\frak{u}^-$, $\overline{\sigma}(\frak{u}^-)=\frak{u}^+$,
\item[{\rm (6)}] 
   $\overline{\sigma}\bigl(\operatorname{Ad}g(W)\bigr)=\operatorname{Ad}g\bigl(\overline{\sigma}(W)\bigr)$ for all $(g,W)\in G\times\frak{g}_\mathbb{C}$, 
\item[{\rm (i)}] 
   $\frak{g}=\frak{l}\oplus\frak{u}$, $T\in\frak{l}$, 
\item[{\rm (ii)}]  
   $\operatorname{Ad}z(\frak{l})\subset\frak{l}$, $\operatorname{Ad}z(\frak{u})\subset\frak{u}$ for all $z\in C_G(T)$, 
\item[{\rm (iii)}]  
   $\frak{l}=\{Y\in\frak{l}_\mathbb{C} \,|\, \overline{\sigma}(Y)=Y\}$, $\frak{u}=\{V+\overline{\sigma}(V) \,|\, V\in\frak{u}^+\}$.  
\end{enumerate}   
\end{lemma}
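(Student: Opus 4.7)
The plan is to deduce items (1)--(6) from the diagonalisability of $\operatorname{ad}T$ on $\frak{g}_\mathbb{C}$ and the fact that $T\in\frak{g}$, and then to pass from these complex statements to the real ones (i)--(iii) via the conjugation $\overline{\sigma}$.

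First I would establish (1). Since $T$ is elliptic, the real operator $\operatorname{ad}T:\frak{g}\to\frak{g}$ is semisimple with purely imaginary eigenvalues, so its complexification $\operatorname{ad}T:\frak{g}_\mathbb{C}\to\frak{g}_\mathbb{C}$ is diagonalisable with eigenvalues in $i\mathbb{R}$; this gives $\frak{g}_\mathbb{C}=\bigoplus_{\lambda\in\mathbb{R}}\frak{g}^\lambda$. The identification $\frak{g}^0=\frak{l}_\mathbb{C}$ is immediate from the definitions of $\frak{g}^0$ and of the centraliser, and the grouping into $\frak{u}^+\oplus\frak{l}_\mathbb{C}\oplus\frak{u}^-$ is a rewriting of this direct sum.

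Next, items (2)--(4) are obtained by routine calculation. For (2), $\operatorname{Ad}z(T)=T$ and the naturality identity $\operatorname{ad}(\operatorname{Ad}z(X))=\operatorname{Ad}z\circ\operatorname{ad}X\circ\operatorname{Ad}z^{-1}$ combine to show that $\operatorname{Ad}z$ commutes with $\operatorname{ad}T$ on $\frak{g}_\mathbb{C}$, hence preserves each eigenspace $\frak{g}^\lambda$; item (2$'$) is then a direct consequence. For (3), the Jacobi identity applied to $T,W,W'$ with $W\in\frak{g}^\lambda$, $W'\in\frak{g}^\mu$ yields $\operatorname{ad}T([W,W'])=i(\lambda+\mu)[W,W']$, and (3$'$) follows. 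For (4), the $\operatorname{ad}$-invariance of $B_{\frak{g}_\mathbb{C}}$ gives $i\lambda B_{\frak{g}_\mathbb{C}}(W,W')=-i\mu B_{\frak{g}_\mathbb{C}}(W,W')$, forcing $B_{\frak{g}_\mathbb{C}}(W,W')=0$ when $\lambda+\mu\neq0$, and (4$'$) follows.

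For (5), I would use that $\overline{\sigma}$ is a conjugate-linear Lie algebra involution with $\overline{\sigma}(T)=T$ (because $T\in\frak{g}$), so $\operatorname{ad}T\bigl(\overline{\sigma}(W)\bigr)=\overline{\sigma}([T,W])=\overline{\sigma}(i\lambda W)=-i\lambda\,\overline{\sigma}(W)$, giving $\overline{\sigma}(\frak{g}^\lambda)\subset\frak{g}^{-\lambda}$; applying $\overline{\sigma}$ once more gives equality, and (5$'$) follows. Item (6) is merely the statement that the adjoint action of the real group $G$ on $\frak{g}_\mathbb{C}$ is the complexification of its action on $\frak{g}$, hence commutes with the conjugation $\overline{\sigma}$.

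Finally I would treat (i)--(iii). For (i), the key point is that $\operatorname{ad}T:\frak{g}\to\frak{g}$ is semisimple \emph{as a real operator}, which follows because its complexification is diagonalisable; consequently $\frak{g}=\ker(\operatorname{ad}T)\oplus\operatorname{im}(\operatorname{ad}T)=\frak{l}\oplus\frak{u}$, and $T\in\frak{l}$ is trivial. Statement (ii) is the real analogue of (2) proved exactly the same way, or alternatively obtained by intersecting (2$'$) with $\frak{g}$. For (iii), the first equality is immediate since $\frak{l}=\frak{g}\cap\frak{l}_\mathbb{C}$ and $\frak{g}=\{Y\in\frak{g}_\mathbb{C}:\overline{\sigma}(Y)=Y\}$. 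For the description of $\frak{u}$, I would note that the complexification $\frak{u}_\mathbb{C}$ equals $\frak{u}^+\oplus\frak{u}^-$ (the image of $\operatorname{ad}T$ on $\frak{g}_\mathbb{C}$), decompose a $\overline{\sigma}$-fixed element $W=V_++V_-\in\frak{u}^+\oplus\frak{u}^-$, and use (5$'$) to conclude $V_-=\overline{\sigma}(V_+)$, so $W=V_++\overline{\sigma}(V_+)$; conversely every such sum is manifestly $\overline{\sigma}$-fixed and lies in $\frak{u}_\mathbb{C}$, hence in $\frak{u}$.

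The only delicate point, and the main obstacle worth flagging, is the assertion in (i) that $\operatorname{ad}T$ admits a complementary kernel \emph{over $\mathbb{R}$}: this is not automatic for a real endomorphism, but is secured here by the fact that the complexified operator is diagonalisable with $\overline{\sigma}$-stable eigenspaces, so $\frak{g}=(\ker\operatorname{ad}T|_{\frak{g}_\mathbb{C}}\oplus\operatorname{im}\operatorname{ad}T|_{\frak{g}_\mathbb{C}})^{\overline{\sigma}}=\frak{l}\oplus\frak{u}$; everything else is a straightforward bookkeeping exercise organised around the eigenspace decomposition of (1).
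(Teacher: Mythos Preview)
Your proposal is correct and follows essentially the same route as the paper: eigenspace decomposition for (1), routine commutation/Jacobi/invariance arguments for (2)--(4), conjugate-linearity of $\overline{\sigma}$ for (5), and passage to the real form via $\overline{\sigma}$-fixed points for (i)--(iii). The only cosmetic differences are that the paper spells out (6) explicitly via $\exp$ and connectedness of $G$, and for (iii) the paper obtains $\frak{u}\subset\{V+\overline{\sigma}(V):V\in\frak{u}^+\}$ by writing $U=[T,X]$ and then closes with a dimension count, whereas your direct use of $\frak{u}=(\frak{u}^+\oplus\frak{u}^-)^{\overline{\sigma}}$ is marginally cleaner.
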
 
\begin{proof}
   (1). 
   Since $T\in\frak{g}$ is elliptic and \eqref{eq-7.2.9} we obtain $\frak{g}_\mathbb{C}=\bigoplus_{\lambda\in\mathbb{R}}\frak{g}^\lambda$. 
   Moreover, it follows from \eqref{eq-7.2.9} that $\frak{l}_\mathbb{C}=\frak{g}^0$ and $\bigoplus_{\lambda\in\mathbb{R}}\frak{g}^\lambda=\bigoplus_{\lambda>0}\frak{g}^\lambda\oplus\frak{g}^0\oplus\bigoplus_{\mu<0}\frak{g}^\mu=\frak{u}^+\oplus\frak{l}_\mathbb{C}\oplus\frak{u}^-$.\par
   
   (2). 
   For any $(z,\lambda)\in C_G(T)\times\mathbb{R}$ and $X^\lambda\in\frak{g}^\lambda$, one has $[T,\operatorname{Ad}z(X^\lambda)]=\operatorname{Ad}z\bigl([T,X^\lambda]\bigr)=i\lambda\operatorname{Ad}z(X^\lambda)$, and $\operatorname{Ad}z(X^\lambda)\in\frak{g}^\lambda$. 
   Hence $\operatorname{Ad}z(\frak{g}^\lambda)\subset\frak{g}^\lambda$ holds.\par
   
   (3). 
   From the Jacobi identity and \eqref{eq-7.2.9} we deduce (3).\par
 
   (4). 
   For any $X^\lambda\in\frak{g}^\lambda$, $X^\mu\in\frak{g}^\mu$ we see that $i\lambda B_{\frak{g}_\mathbb{C}}(X^\lambda,X^\mu)=B_{\frak{g}_\mathbb{C}}([T,X^\lambda],X^\mu)=-B_{\frak{g}_\mathbb{C}}(X^\lambda,[T,X^\mu])=-i\mu B_{\frak{g}_\mathbb{C}}(X^\lambda,X^\mu)$, and therefore $B_{\frak{g}_\mathbb{C}}(X^\lambda,X^\mu)=0$ if $\lambda\neq-\mu$.
   This yields (4).\par

   (5). 
   For any $X^\lambda\in\frak{g}^\lambda$ we obtain $[T,\overline{\sigma}(X^\lambda)]=\overline{\sigma}\bigl([T,X^\lambda]\bigr)=\overline{\sigma}(i\lambda X^\lambda)=-i\lambda\overline{\sigma}(X^\lambda)$, and $\overline{\sigma}(X^\lambda)\in\frak{g}^{-\lambda}$. 
   Thus $\overline{\sigma}(\frak{g}^\lambda)\subset\frak{g}^{-\lambda}$. 
   This enables us to have $\overline{\sigma}(\frak{g}^{-\lambda})\subset\frak{g}^{-(-\lambda)}=\frak{g}^\lambda$, and moreover $\frak{g}^{-\lambda}=\overline{\sigma}^2(\frak{g}^{-\lambda})\subset\overline{\sigma}(\frak{g}^\lambda)\subset\frak{g}^{-\lambda}$. 
   Consequently we can show $\overline{\sigma}(\frak{g}^\lambda)=\frak{g}^{-\lambda}$.\par
   
   (${\rm b}'$) is a consequence of $\frak{l}_\mathbb{C}=\frak{g}^0$, \eqref{eq-7.2.9} and (${\rm b}$), where ${\rm b}=2,3,4,5$.\par
   
   (6). 
   Take an arbitrary $g\in G$ and $V,W\in\frak{g}_\mathbb{C}$. 
   On the one hand; for any $X\in\frak{g}$ one has 
\[
\begin{split}
   \overline{\sigma}\bigl(\operatorname{Ad}\exp X(V)\bigr)
  &=\overline{\sigma}\bigl(\exp\operatorname{ad}X(V)\bigr)
   =\overline{\sigma}\Big(\sum_{n\geq 0}\dfrac{1}{n!}(\operatorname{ad}X)^nV\Big)
   =\sum_{n\geq 0}\dfrac{1}{n!}\bigl(\operatorname{ad}\overline{\sigma}(X)\bigr)^{\!n}\overline{\sigma}(V)\\
  &=\sum_{n\geq 0}\dfrac{1}{n!}(\operatorname{ad}X)^n\overline{\sigma}(V)
  =\operatorname{Ad}\exp X\bigl(\overline{\sigma}(V)\bigr)
\end{split} 
\] 
because $\overline{\sigma}(X)=X$.
   On the other hand; since the Lie group $G$ is connected, there exist finite elements $X_1,X_2,\dots,X_k\in\frak{g}$ such that $g=\exp X_1\exp X_2\cdots\exp X_k$. 
   Accordingly 
\[
\begin{split}
   \overline{\sigma}\bigl(\operatorname{Ad}g(W)\bigr)
  &=\overline{\sigma}\bigl(\operatorname{Ad}\exp X_1(\operatorname{Ad}\exp X_2(\cdots(\operatorname{Ad}\exp X_k(W))))\bigr)
   =\operatorname{Ad}\exp X_1(\overline{\sigma}\bigl(\operatorname{Ad}\exp X_2(\cdots(\operatorname{Ad}\exp X_k(W)))\bigr))\\
  &=\cdots
   =\operatorname{Ad}\exp X_1(\operatorname{Ad}\exp X_2(\cdots(\operatorname{Ad}\exp X_k(\overline{\sigma}\bigl(W\bigr)))))
   =\operatorname{Ad}g\bigl(\overline{\sigma}(W)\bigr).
\end{split}  
\] 

   (i). 
   Since the linear transformation $\operatorname{ad}T:\frak{g}\to\frak{g}$ is semisimple, we conclude that $\frak{g}=\ker(\operatorname{ad}T)\oplus\operatorname{ad}T(\frak{g})=\frak{l}\oplus\frak{u}$.
   It is natural that $[T,T]=0$ and $T\in\frak{g}$, so that $T\in\frak{l}$.\par
   
   (ii). 
   For any $z\in C_G(T)$, we show that $[T,\operatorname{Ad}z(Y)]=\operatorname{Ad}z\bigl([T,Y]\bigr)=0$ for all $Y\in\frak{l}$; hence $\operatorname{Ad}z(\frak{l})\subset\frak{l}$. 
   It follows from $\frak{u}=[T,\frak{g}]$ and $\operatorname{Ad}z(\frak{g})\subset\frak{g}$ that $\operatorname{Ad}z(\frak{u})=\operatorname{Ad}z\bigl([T,\frak{g}]\bigr)\subset[T,\operatorname{Ad}z(\frak{g})]\subset\frak{u}$.\par
   
   (iii). 
   For a given $W\in\frak{g}_\mathbb{C}$, $W\in\frak{l}$ if and only if ``$[T,W]=0$ and $\overline{\sigma}(W)=W$'' if and only if ``$W\in\frak{l}_\mathbb{C}$ and $\overline{\sigma}(W)=W$.'' 
   This implies that 
\begin{equation}\label{eq-1}\tag*{\textcircled{1}}
   \frak{l}=\{Y\in\frak{l}_\mathbb{C} \,|\, \overline{\sigma}(Y)=Y\}.
\end{equation}
   Now, let us prove that $\frak{u}=\{V+\overline{\sigma}(V) \,|\, V\in\frak{u}^+\}$.
   For any $U\in\frak{u}$, there exists a $X\in\frak{g}$ satisfying $U=[T,X]$ in view of $\frak{u}=[T,\frak{g}]$. 
   Since $X\in\frak{g}\subset\frak{g}_\mathbb{C}$ and (1) there exists a unique $(V^+,Z,V^-)\in\frak{u}^+\times\frak{g}^0\times\frak{u}^-$ such that $X=V^++Z+V^-$. 
   Here $\overline{\sigma}(X)=X$ yields $\overline{\sigma}(V^+)=V^-$, and $X=V^++Z+\overline{\sigma}(V^+)$. 
   Therefore 
\[
   U
   =[T,X]
   =[T,V^++Z+\overline{\sigma}(V^+)]
   =[T,V^+]+[T,\overline{\sigma}(V^+)]
   =[T,V^+]+\overline{\sigma}\bigl([T,V^+]\bigr).
\]  
   This, together with $[T,V^+]\in[\frak{l}_\mathbb{C},\frak{u}^+]\subset\frak{u}^+$, enables us to assert that 
\begin{equation}\label{eq-2}\tag*{\textcircled{2}}
   \frak{u}\subset\{V+\overline{\sigma}(V) \,|\, V\in\frak{u}^+\}.
\end{equation}
   From $\{W\in\frak{u}^+\oplus\frak{u}^- \,|\, \overline{\sigma}(W)=W\}=\{V+\overline{\sigma}(V) \,|\, V\in\frak{u}^+\}$ we obtain 
\[
   2\dim_\mathbb{R}\{V+\overline{\sigma}(V) \,|\, V\in\frak{u}^+\}
   =\dim_\mathbb{R}\frak{u}^++\dim_\mathbb{R}\frak{u}^-
   \stackrel{{\rm (1)}}{=}\dim_\mathbb{R}\frak{g}_\mathbb{C}-\dim_\mathbb{R}\frak{l}_\mathbb{C}
   \stackrel{\ref{eq-1}}{=}2(\dim_\mathbb{R}\frak{g}-\dim_\mathbb{R}\frak{l})
   \stackrel{{\rm (i)}}{=}2\dim_\mathbb{R}\frak{u}.
\]  
   Hence one concludes $\frak{u}=\{V+\overline{\sigma}(V) \,|\, V\in\frak{u}^+\}$ by \ref{eq-2}.
\end{proof}

\section{The centralizer of an elliptic element}\label{sec-7.3}
   In this section we first clarify relation between the centralizer $C_G(T^r)$ of a torus subgroup $T^r$ in $G$ and the centralizer $C_G(T)$ of an elliptic element $T\in\frak{g}$ (cf.\ Proposition \ref{prop-7.3.2}), and then confirm that $C_G(T)$ is a connected, closed subgroup of $G$ (cf.\ Lemma \ref{lem-7.3.3}).    
   Here we utilize the following notation:
\begin{itemize}
\item
   $C_G(A):=\{g\in G \,|\, \mbox{$gag^{-1}=a$ for all $a\in A$}\}$ for a subset $A\subset G$,
\item
   $C_G(X):=\{g\in G \,|\, \operatorname{Ad}g(X)=X\}$ for an element $X\in\frak{g}$.
\end{itemize}

   In order to prove Proposition \ref{prop-7.3.2} we need the following lemma:
\begin{lemma}\label{lem-7.3.1}
   For an $X\in\frak{g}$, we put $A:=\{\exp tX : t\in\mathbb{R}\}$ and denote by $\overline{A}$ the closure of $A$ in $G$.
   Then, 
\[
   C_G(X)=C_G(A)=C_G(\overline{A}).
\]   
\end{lemma}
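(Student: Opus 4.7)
The plan is to establish the chain of inclusions $C_G(\overline{A})\subset C_G(A)\subset C_G(X)\subset C_G(\overline{A})$, which immediately yields the equality of all three subgroups.

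First, since $A\subset\overline{A}$, the inclusion $C_G(\overline{A})\subset C_G(A)$ is tautological from the definition of $C_G(\cdot)$: any $g$ that commutes with every element of $\overline{A}$ certainly commutes with every element of $A$.

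Next, I would prove $C_G(A)\subset C_G(X)$ by a differentiation argument. For $g\in C_G(A)$ one has $g(\exp tX)g^{-1}=\exp tX$ for every $t\in\mathbb{R}$. Rewriting the left-hand side as $\exp\bigl(t\operatorname{Ad}g(X)\bigr)$ and differentiating both sides at $t=0$ yields $\operatorname{Ad}g(X)=X$, hence $g\in C_G(X)$.

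Finally, for $C_G(X)\subset C_G(\overline{A})$, I would first check the weaker containment $C_G(X)\subset C_G(A)$: if $g\in C_G(X)$, then for every $t\in\mathbb{R}$,
\[
   g(\exp tX)g^{-1}=\exp\bigl(t\operatorname{Ad}g(X)\bigr)=\exp tX,
\]
so $g$ commutes with every element of $A$. To upgrade this to $\overline{A}$, I would use continuity: for fixed $g\in C_G(X)$ the set
\[
   F_g:=\{b\in G \,|\, gbg^{-1}=b\}
\]
is the preimage of the diagonal of $G\times G$ under the continuous map $b\mapsto(gbg^{-1},b)$, and hence is a closed subset of $G$. Since $A\subset F_g$ and $F_g$ is closed, we obtain $\overline{A}\subset F_g$, i.e.\ $g\in C_G(\overline{A})$.

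There is no real obstacle in this proof; the only point worth flagging is that the passage from $A$ to $\overline{A}$ requires observing that the fixer of a point under conjugation by $g$ is closed (equivalently, that the commutator map $b\mapsto gbg^{-1}b^{-1}$ is continuous and vanishes on a dense subset of $\overline{A}$). Everything else is a direct consequence of the definitions of $A$, $C_G(X)$, $C_G(A)$ and of the Lie-theoretic identity $g\exp(tX)g^{-1}=\exp\bigl(t\operatorname{Ad}g(X)\bigr)$.
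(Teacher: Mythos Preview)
Your proof is correct and follows essentially the same approach as the paper: both establish the cycle $C_G(\overline{A})\subset C_G(A)\subset C_G(X)\subset C_G(\overline{A})$, use differentiation of $\exp t\operatorname{Ad}g(X)=\exp tX$ at $t=0$ for the middle inclusion, and pass from $A$ to $\overline{A}$ via continuity of conjugation. The only cosmetic difference is that the paper phrases the last step as ``the map $x\mapsto hxh^{-1}$ is continuous and agrees with the identity on the dense subset $A$,'' whereas you phrase it as ``the fixer $F_g$ is closed and contains $A$''; these are equivalent.
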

\begin{proof}
   We will show $C_G(\overline{A})\subset C_G(A)\subset C_G(X)\subset C_G(\overline{A})$ and conclude this lemma.\par
   
   ($C_G(\overline{A})\subset C_G(A)$). 
   The inclusion $C_G(\overline{A})\subset C_G(A)$ is immediate from $A\subset\overline{A}$.\par
   
   ($C_G(A)\subset C_G(X)$). 
   Take an arbitrary $g\in C_G(A)$. 
   Then, for all $t\in\mathbb{R}$ one has $\exp tX=g(\exp tX)g^{-1}=\exp t\operatorname{Ad}g(X)$.
   Differentiating this equation at $t=0$, we obtain $X=\operatorname{Ad}g(X)$. 
   Hence $g\in C_G(X)$, and so $C_G(A)\subset C_G(X)$ follows.\par
   
   ($C_G(X)\subset C_G(\overline{A})$).
   Take an $h\in C_G(X)$. 
   For any $a\in A$, there exists a $t\in\mathbb{R}$ satisfying $a=\exp tX$, and it follows from $\operatorname{Ad}h(X)=X$ that $hah^{-1}=h(\exp tX)h^{-1}=\exp t\operatorname{Ad}h(X)=\exp tX=a$.
   Therefore 
\begin{equation}\label{eq-1}\tag*{\textcircled{1}}
   \mbox{$hah^{-1}=a$ for all $a\in A$}.
\end{equation}
   The mapping $\overline{A}\ni x\mapsto hxh^{-1}\in G$ is continuous, and $A$ is dense in $\overline{A}$. 
   Hence \ref{eq-1} implies that $hxh^{-1}=x$ for all $x\in\overline{A}$, which allows us to show $h\in C_G(\overline{A})$, and $C_G(X)\subset C_G(\overline{A})$.   
\end{proof}

   From Lemma \ref{lem-7.3.1} we deduce 
\begin{proposition}\label{prop-7.3.2}
   For any torus subgroup $T^r\subset G$, there exists an elliptic element $X\in\frak{g}$ such that $C_G(X)=C_G(T^r)$.
\end{proposition}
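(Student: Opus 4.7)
The plan is to produce an elliptic element $X\in\frak{g}$ whose one-parameter subgroup is dense in $T^r$, and then invoke Lemma \ref{lem-7.3.1} to identify the two centralizers. Concretely, let $\frak{t}^r\subset\frak{g}$ be the Lie algebra of the torus $T^r$, so that $T^r=\exp\frak{t}^r$ since $T^r$ is a compact connected Abelian Lie group. I will first choose a \emph{topological generator} of $T^r$, that is, an element $X\in\frak{t}^r$ such that the one-parameter subgroup $A:=\{\exp tX : t\in\mathbb{R}\}$ satisfies $\overline{A}=T^r$. This is the only non-formal input, and it is the step I expect to be the main obstacle; it rests on the Kronecker--Weyl density theorem. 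The argument runs as follows: identify $T^r$ with $\mathbb{R}^r/\mathbb{Z}^r$ via the exponential map, and pick $X\in\frak{t}^r$ corresponding to $(\alpha_1,\alpha_2,\dots,\alpha_r)\in\mathbb{R}^r$ with $\alpha_1,\dots,\alpha_r$ linearly independent over $\mathbb{Q}$; then $\{(t\alpha_1,\dots,t\alpha_r)\bmod\mathbb{Z}^r : t\in\mathbb{R}\}$ is dense in $(\mathbb{R}/\mathbb{Z})^r$, so $\overline{A}=T^r$.

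Next, I would verify that this $X$ is elliptic in the sense of Definition \ref{def-7.1}. Since $T^r$ is compact, its adjoint image $\operatorname{Ad}T^r$ is a compact Abelian subgroup of $GL(\frak{g})$, and hence there exists an inner product $\langle\,\cdot\,,\,\cdot\,\rangle$ on $\frak{g}$ with respect to which every element of $\operatorname{Ad}T^r$ is orthogonal. Consequently the elements of its Lie algebra $\operatorname{ad}\frak{t}^r\subset\frak{gl}(\frak{g})$ are skew-symmetric with respect to $\langle\,\cdot\,,\,\cdot\,\rangle$, so $\operatorname{ad}X:\frak{g}\to\frak{g}$ is a semisimple linear transformation all of whose eigenvalues are purely imaginary. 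Therefore $X$ is elliptic.

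Finally, Lemma \ref{lem-7.3.1} applied to this $X$ gives $C_G(X)=C_G(\overline{A})=C_G(T^r)$, which is the required identity. The only delicate point in the whole argument is the existence of a topological generator of $T^r$, i.e.\ Kronecker's theorem; once that is in hand, the ellipticity of $X$ is forced by the compactness of $T^r$, and the coincidence of the centralizers is an immediate consequence of Lemma \ref{lem-7.3.1}.
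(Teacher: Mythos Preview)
Your proof is correct and follows essentially the same route as the paper: choose a topological generator $X\in\frak{t}^r$ via Kronecker's theorem, verify that $X$ is elliptic, and then apply Lemma \ref{lem-7.3.1}. The only cosmetic difference is in the ellipticity step: the paper invokes Lemma \ref{lem-7.2.4} (the Cartan-involution criterion, using that $\frak{t}^r$ sits inside a maximal compact subalgebra), whereas you argue directly via an $\operatorname{Ad}T^r$-invariant inner product to make $\operatorname{ad}X$ skew-symmetric---but this is precisely the content underlying that lemma, so the two arguments amount to the same thing.
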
 
\begin{proof}
   Since $T^r$ is a torus, Kronecker's approximation theorem enables us to obtain an element $X\in\operatorname{Lie}(T^r)$ such that the closure $\overline{A}{}^{T^r}$ in $T^r$ coincides with the whole $T^r$, where $A:=\{\exp tX : t\in\mathbb{R}\}$. 
   This $X$ is an elliptic element of $\frak{g}$ by Lemma \ref{lem-7.2.4} and $\operatorname{Lie}(T^r)$ being a compact subalgebra of $\frak{g}$. 
   Furthermore, $A\subset\overline{A}{}^{T^r}\subset\overline{A}{}^G$ and $T^r=\overline{A}{}^{T^r}$ yield 
\[
   C_G(\overline{A}{}^G)\subset C_G(T^r)\subset C_G(A).
\]
   Thus $C_G(X)=C_G(T^r)$ follows by Lemma \ref{lem-7.3.1}.   
\end{proof} 

   Recalling that the Lie group $G$ is connected, we demonstrate
\begin{lemma}\label{lem-7.3.3}
   For any elliptic element $T\in\frak{g}$, the centralizer $C_G(T)$ is a connected, closed subgroup of $G$.
\end{lemma}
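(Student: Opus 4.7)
\textbf{Proof plan for Lemma \ref{lem-7.3.3}.}

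Closedness is immediate: the map $G \ni g \mapsto \operatorname{Ad}g(T) \in \frak{g}$ is continuous, and $C_G(T)$ is the preimage of the closed singleton $\{T\}$. The substance of the proof is connectedness, and I will use a Cartan-involution argument to reduce to connectedness of a centralizer in a maximal compact subgroup.

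First I would invoke Lemma \ref{lem-7.2.4} to pick a Cartan involution $\theta_*$ of $\frak{g}$ with $\theta_*(T)=T$, and let $\theta$ denote the corresponding involution of $G$, with fixed-point subgroup $K=G^\theta$ and Cartan decomposition $\frak{g}=\frak{k}\oplus\frak{p}$. The global Cartan decomposition gives a real analytic diffeomorphism $K\times\frak{p}\to G$, $(k,X)\mapsto k\exp X$. Set $\frak{l}:=\frak{c}_\frak{g}(T)$. Because $\theta_*(T)=T$, the subspace $\frak{l}$ is $\theta_*$-stable, so $\frak{l}=(\frak{l}\cap\frak{k})\oplus(\frak{l}\cap\frak{p})$.

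The key step is to establish the equality
\[
   C_G(T)=C_K(T)\cdot\exp(\frak{l}\cap\frak{p}),
\]
with the product map $C_K(T)\times(\frak{l}\cap\frak{p})\to C_G(T)$, $(k,X)\mapsto k\exp X$, being a homeomorphism. The inclusion $\supseteq$ is clear since $[\frak{l}\cap\frak{p},T]=0$. For $\subseteq$, take $g\in C_G(T)$ and write $g=k\exp X$ with $k\in K$, $X\in\frak{p}$. From $\operatorname{Ad}g(T)=T$ and $\theta_*(T)=T$ we get $\operatorname{Ad}\theta(g)(T)=\theta_*\bigl(\operatorname{Ad}g(T)\bigr)=T$, so $\theta(g)=k\exp(-X)\in C_G(T)$; thus $g^{-1}\theta(g)=\exp(-2X)\in C_G(T)$, which gives $e^{2\operatorname{ad}X}(T)=T$ in $\frak{g}$. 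Now $\operatorname{ad}X$ is symmetric on $\frak{g}$ with respect to the inner product $\langle Y,Z\rangle:=-B_\frak{g}(Y,\theta_*Z)$ (cf.\ the proof of Lemma \ref{lem-7.2.4}), hence it has real eigenvalues; decomposing $T$ along the $\operatorname{ad}X$-eigenspaces and using that $e^{2\lambda}=1$ forces $\lambda=0$, we conclude $[X,T]=0$, so $X\in\frak{l}\cap\frak{p}$ and $k=g\exp(-X)\in C_K(T)$. Uniqueness of the decomposition and bicontinuity come from the corresponding property of the global Cartan decomposition of $G$.

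Since $\exp(\frak{l}\cap\frak{p})$ is homeomorphic to the vector space $\frak{l}\cap\frak{p}$, it is connected; so connectedness of $C_G(T)$ reduces to connectedness of $C_K(T)$. This is where the main obstacle lies. One handles it by observing that $T\in\frak{k}$ is contained in some maximal abelian subalgebra $\frak{t}\subset\frak{k}$, and by Lemma \ref{lem-7.3.1} together with Proposition \ref{prop-7.3.2} (applied inside $K$ to the torus generated by $\exp\mathbb{R}T$) one has $C_K(T)=C_K(\overline{\exp\mathbb{R}T})$, the centralizer in $K$ of a compact torus. Since $\frak{k}$ is a compact Lie algebra, the classical theorem on centralizers of tori in connected Lie groups of compact type yields connectedness of $C_K(T)$; if one is uncomfortable with $K$ itself possibly being non-compact when $Z(G)$ is infinite, one first descends via $\operatorname{Ad}$ to the adjoint group (where the maximal compact is genuinely compact), applies the classical result there, and then lifts, using that $Z(G)\subset C_G(T)_0$ because the continuous path $t\mapsto\exp tT$ meets $Z(G)$ trivially outside the identity and the preimage of the connected adjoint centralizer under a covering with central kernel contained in $C_G(T)_0$ is still connected. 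Assembling these yields $C_G(T)$ connected, completing the proof.
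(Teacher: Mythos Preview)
Your overall architecture matches the paper's: reduce via the Cartan decomposition $G\cong K\times\exp\frak{p}$ to showing that $C_K(T)$ is connected, having first shown that the $\frak{p}$-part of any $g\in C_G(T)$ already lies in $\exp(\frak{l}\cap\frak{p})$. Your argument for the latter, via $\theta(g)\in C_G(T)$ and the real-eigenvalue analysis of $\operatorname{ad}X$ on the inner-product space $(\frak{g},\langle\cdot,\cdot\rangle)$, is correct and arguably slicker than the paper's, which instead conjugates $x=kp$ by $\exp tT$ and invokes uniqueness of the $KP$ factorization directly.

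The gap is in your treatment of the connectedness of $C_K(T)$. Your descent to the adjoint group requires $Z(G)\subset C_K(T)_0$ for the lift to remain connected, and your justification---``the continuous path $t\mapsto\exp tT$ meets $Z(G)$ trivially outside the identity''---is both false in general (in $SL(2,\mathbb{R})$ or $SU(2)$ one has $\exp t_0T\in Z(G)\setminus\{e\}$ for suitable $t_0$) and in any case does not imply the inclusion you need. The paper avoids this lifting problem entirely: it writes $\frak{k}=\frak{k}_{\rm ss}\oplus\frak{z}(\frak{k})$, decomposes $T=T_{\rm ss}+T_{\rm z}$ accordingly, and shows $C_K(T)=C_{K_{\rm ss}}(T_{\rm ss})\cdot Z(K)_0$. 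Since $K_{\rm ss}$ is connected with compact semisimple Lie algebra it is genuinely compact regardless of whether $Z(G)$ is finite, so $C_{K_{\rm ss}}(T_{\rm ss})$ is connected by the classical torus-centralizer theorem, and the product with the connected group $Z(K)_0$ is connected. You could salvage your route by proving $Z(G)\subset C_K(T)_0$ directly (it is true, and in fact follows \emph{a posteriori} from the paper's argument), but the semisimple--center decomposition of $\frak{k}$ is the cleaner fix.
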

\begin{proof}
   Needless to say, $C_G(T)$ is a closed subgroup of $G$. 
   So, we only prove that $C_G(T)$ is connected. 
   By Lemma \ref{lem-7.2.4} there exists a Cartan involution $\theta_*$ of $\frak{g}$ so that $\theta_*(T)=T$. 
   By use of this $\theta_*$ we put $\frak{k}:=\{X\in\frak{g} \,|\, \theta_*(X)=X\}$, $\frak{p}:=\{Y\in\frak{g} \,|\, \theta_*(Y)=-Y\}$.  
   Denote by $G=KP$ the Cartan decomposition of $G$ corresponding to $\frak{g}=\frak{k}\oplus\frak{p}$. 
   Then, it turns out that 
\begin{enumerate}
\item[(i)]
   $[\frak{k},\frak{k}]\subset\frak{k}$, $[\frak{k},\frak{p}]\subset\frak{p}$, $[\frak{p},\frak{p}]\subset\frak{k}$,
\item[(ii)]
   $K$ is a closed subgroup of $G$, 
\item[(iii)]
   $T\in\operatorname{Lie}(K)=\frak{k}$,   
\item[(iv)] 
   $P$ is a regular submanifold of $G$,     
\item[(v)]
   $\exp:\frak{p}\to P$, $Y\mapsto\exp Y$, is a real analytic diffeomorphism, 
\item[(vi)]
   $\varphi:K\times P\to G$, $(k,p)\mapsto kp$, is a real analytic diffeomorphism. 
\end{enumerate} 
   Let us prove that $C_G(T)$ is connected by taking three steps (S1), (S2) and (S3): 
\begin{enumerate} 
\item[(S1)] 
   $C_K(T)\times(P\cap C_G(T))$ is homeomorphic to $C_G(T)$ via $\varphi$, where we equip $C_K(T)\times(P\cap C_G(T))$ with the induced topology from $K\times P$;   
\item[(S2)] 
   $P\cap C_G(T)$ is connected; 
\item[(S3)]  
   $C_K(T)$ is connected.   
\end{enumerate}
   (S1): 
   Since $\varphi\bigl(C_K(T)\times(P\cap C_G(T))\bigr)\subset C_G(T)$ is clear, it suffices to confirm that for a given $x\in C_G(T)$, there exist $k\in C_K(T)$ and $p\in P\cap C_G(T)$ satisfying $kp=\varphi(k,p)=x$ by virtue of (vi). 
   Now, let us take any $x\in C_G(T)$. 
   In view of $T=\operatorname{Ad}(x)T$ one sees that  
\begin{equation}\label{eq-1}\tag*{\textcircled{1}}
  \exp tT=\exp t\operatorname{Ad}(x)T=x(\exp tT)x^{-1} \quad(t\in\mathbb{R}).
\end{equation}
   For the $x\in G$ there exists a unique $(k,p)\in K\times P$ such that $kp=x$ by (vi). 
   We want to show that both $k$ and $p$ belong to $C_G(T)$. 
   It follows from \ref{eq-1} that $x=(\exp tT)x\exp(-tT)$, so that   
\[
   k\cdot p
   =x
   =(\exp tT)x\exp(-tT)
  =\bigl((\exp tT)k\exp(-tT)\bigr)\cdot\bigl((\exp tT)p\exp(-tT)\bigr)
\]
for all $t\in\mathbb{R}$.
   Here (i), (iii) and (v) yield $(\exp tT)k\exp(-tT)\in K$ and $(\exp tT)p\exp(-tT)\in P$; hence we conclude 
\begin{equation}\label{eq-2}\tag*{\textcircled{2}}
\begin{array}{ll}
   k=(\exp tT)k\exp(-tT), & p=(\exp tT)p\exp(-tT)
\end{array}
\end{equation}
by $\varphi:K\times P\to G$ being injective. 
   This gives rise to $\exp tT=k(\exp tT)k^{-1}=\exp t\operatorname{Ad}k(T)$ and $\exp tT=\exp t\operatorname{Ad}p(T)$. 
   Differentiating $\exp tT=\exp t\operatorname{Ad}k(T)=\exp t\operatorname{Ad}p(T)$ at $t=0$, we have $T=\operatorname{Ad}k(T)=\operatorname{Ad}p(T)$. 
   This assures that $k,p\in C_G(T)$, and accordingly we deduce $k\in(K\cap C_G(T))=C_K(T)$ and $p\in P\cap C_G(T)$.\par 
 
   (S2): 
   Let us demonstrate that $P\cap C_G(T)$ is (arcwise) connected. 
   Take any $y\in P\cap C_G(T)$ and express it as $y=\exp Y$ ($Y\in\frak{p}$). 
   For any $t\in\mathbb{R}$, one deduces $y=(\exp tT)y\exp(-tT)$ by $y\in C_G(T)$ and arguments similar to those in (S1). 
   Then, we have   
\[
   \exp Y
   =y
   =(\exp tT)y\exp(-tT)
   =\exp\operatorname{Ad}(\exp tT)Y.
\]  
   Therefore, it follows from (v) and $\operatorname{Ad}(\exp tT)Y\in\frak{p}$ that $Y=\operatorname{Ad}(\exp tT)Y=\exp t\operatorname{ad}T(Y)$ for all $t\in\mathbb{R}$; and hence $[T,Y]=0$ holds.  
   By $[Y,T]=0$ we conclude that for every $t\in\mathbb{R}$, 
\[
   \operatorname{Ad}(\exp tY)T
   =\exp t\operatorname{ad}Y(T)
   =\sum_{n\geq 0}\frac{t^n}{n!}(\operatorname{ad}Y)^nT
   =T. 
\]  
   This implies that the whole $1$-parameter subgroup $\{\exp tY\,|\,t\in\mathbb{R}\}$ lies in $P\cap C_G(T)$, where $\exp tY\in P$ follows from $tY\in\frak{p}$ and (v). 
   So, one can joint $y=\exp tY|_{t=1}$ to the unite element $e=\exp tY|_{t=0}\in P\cap C_G(T)$ by an arc in $P\cap C_G(T)$.\par

   (S3):
   Note that $K$ is connected because (vi) and $G$ is connected. 
   Since $\frak{k}$ is compact one can decompose it as 
\[
\begin{array}{ll} 
   \frak{k}=\frak{k}_{\rm ss}\oplus\frak{z}(\frak{k}) & \mbox{(direct sum of Lie algebras)}, 
\end{array}
\]  
where $\frak{k}_{\rm ss}$ (resp.\ $\frak{z}(\frak{k})$) stands for the semisimple part (resp.\ the center) of $\frak{k}$. 
   This and (iii) enable us to uniquely express the $T$ as follows: 
\[
   T=T_{\rm ss}+T_{\rm z}
\] 
($T_{\rm ss}\in\frak{k}_{\rm ss}$, $T_{\rm z}\in\frak{z}(\frak{k})$). 
   Denote by $K_{\rm ss}$ and $Z(K)_0$ the connected Lie subgroups of $K$ corresponding to $\frak{k}_{\rm ss}$ and $\frak{z}(\frak{k})$, respectively.  
   From now on, let us verify that $C_K(T)$ is connected. 
   Since the Lie group $K$ is connected, one sees that $K=K_{\rm ss}\cdot Z(K)_0$; so that  
\begin{equation}\label{eq-3}\tag*{\textcircled{3}}
   C_K(T)=C_{K_{\rm ss}}(T_{\rm ss})\cdot Z(K)_0
\end{equation} 
because $\operatorname{Ad}(k)T_{\rm z}=T_{\rm z}$ for all $k\in K$, and $\operatorname{Ad}(c)X=X$ for all $(c,X)\in Z(K)_0\times\frak{k}$. 
  Since $K_{\rm ss}$ is connected and $\frak{k}_{\rm ss}$ is compact semisimple, $K_{\rm ss}$ is compact. 
  This implies that $C_{K_{\rm ss}}(T_{\rm ss})$ is connected, and it follows from \ref{eq-3} that $C_K(T)$ is connected.
\end{proof}

   By Lemma \ref{lem-7.3.3} one can conclude
\begin{proposition}\label{prop-7.3.4}
   For any elliptic element $T\in\frak{g}$, the homogeneous space $G/C_G(T)$ is simply connected.
\end{proposition}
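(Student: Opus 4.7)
If $T=0$ then $C_G(T)=G$ and $G/C_G(T)$ is a single point, so assume $T\neq 0$. The plan is to reduce the problem, via the Cartan decomposition of $G$, to the analogous statement for the maximal compact subgroup, and then to exploit the fact that $C_K(T)$ automatically contains a maximal torus of $K$.

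By Lemma \ref{lem-7.2.4} I would fix a Cartan involution $\theta_*$ with $\theta_*(T)=T$, and take the associated Cartan decompositions $\frak{g}=\frak{k}\oplus\frak{p}$ and $G=KP$. The proof of Lemma \ref{lem-7.3.3} already yields $C_G(T)=C_K(T)\cdot(P\cap C_G(T))$ with $C_K(T)$ connected and $P\cap C_G(T)=\exp(\frak{p}\cap\frak{c}_\frak{g}(T))$; in particular $K\cap C_G(T)=C_K(T)$, and both $P$ and $P\cap C_G(T)$ are contractible, so the inclusions $K\hookrightarrow G$ and $C_K(T)\hookrightarrow C_G(T)$ are homotopy equivalences (the homotopy $h_t(k\exp Y):=k\exp(tY)$ retracts $G$ onto $K$ and, since it carries $C_K(T)\exp(\frak{p}\cap\frak{c}_\frak{g}(T))$ into itself, simultaneously retracts $C_G(T)$ onto $C_K(T)$).

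For the first step, by Proposition \ref{prop-1.3.3} both $K\to K/C_K(T)$ and $G\to G/C_G(T)$ are principal fiber bundles, hence Serre fibrations. The commuting square of inclusions $C_K(T)\hookrightarrow K$, $C_G(T)\hookrightarrow G$ induces a map $K/C_K(T)\to G/C_G(T)$ (well-defined because $K\cap C_G(T)=C_K(T)$) that fits into a morphism of these two fibrations. Comparing the two long exact sequences of homotopy groups and noting that the induced maps $\pi_n(C_K(T))\to\pi_n(C_G(T))$ and $\pi_n(K)\to\pi_n(G)$ are isomorphisms for every $n$, the five lemma will yield an isomorphism $\pi_n(K/C_K(T))\cong\pi_n(G/C_G(T))$ for all $n$; in particular $\pi_1(G/C_G(T))\cong\pi_1(K/C_K(T))$.

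For the second step, since $T\in\frak{k}$ is non-zero and $K$ is compact, the closure $S$ of $\{\exp tT:t\in\mathbb{R}\}$ in $K$ is a torus; extend $S$ to a maximal torus $\widetilde{T}\subset K$. Because $\widetilde{T}$ is abelian, every $\tilde t\in\widetilde{T}$ centralises each $\exp tT$, so Lemma \ref{lem-7.3.1} gives $\widetilde{T}\subset C_K(T)$. Now $C_K(T)$ is a closed connected subgroup of the compact connected group $K$ containing a maximal torus, and the long exact sequence
\[
   \pi_1(C_K(T))\longrightarrow\pi_1(K)\longrightarrow\pi_1(K/C_K(T))\longrightarrow\pi_0(C_K(T))=0,
\]
together with the classical surjectivity of $\pi_1(\widetilde{T})\to\pi_1(K)$ (equivalent to $K/\widetilde{T}$ being simply connected, which follows from the Bruhat/cell decomposition of the flag variety $K/\widetilde{T}$), forces the first arrow to be surjective, whence $\pi_1(K/C_K(T))=0$. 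Combined with the first step, this will complete the proof. The main obstacle is the first step: one must verify that the inclusion-induced map $K/C_K(T)\to G/C_G(T)$ genuinely fits into a morphism of fibrations whose vertical arrows on total space and fiber are the Cartan homotopy equivalences, which requires checking that $h_t$ preserves $C_G(T)$; once that compatibility is secured, the rest is a mechanical diagram chase.
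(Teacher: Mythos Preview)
Your first step---the reduction to $K/C_K(T)$ via the Cartan decomposition and the five lemma---is sound; the verification you flag as the ``main obstacle'' goes through exactly as you sketch, since the retraction $h_t$ preserves $C_G(T)=C_K(T)\exp(\frak{p}\cap\frak{c}_\frak{g}(T))$. The actual gap lies in your second step, where you assume $K$ is compact. In the setting of Chapter~\ref{ch-7} this need not hold: $G$ is only a connected real semisimple Lie group, and Example~\ref{ex-7.2.3} exhibits the universal cover of $SL(2,\mathbb{R})$, for which $K\cong\mathbb{R}$. When $K$ is non-compact the closure of $\{\exp tT:t\in\mathbb{R}\}$ need not be a torus, ``maximal torus of $K$'' loses its usual meaning, and the Bruhat-cell argument for the simple connectivity of $K/\widetilde{T}$ does not apply. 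The gap is easy to patch---since $Z(G)\subset C_G(T)$ one has $G/C_G(T)\cong(\operatorname{Ad}G)/C_{\operatorname{Ad}G}(T)$, and $\operatorname{Ad}G$ has trivial center so its maximal compact subgroup really is compact---but as written the argument is incomplete.

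The paper takes a shorter route that avoids the issue: pass to the universal cover $\widetilde{G}$, apply Lemma~\ref{lem-7.3.3} there to get $C_{\widetilde{G}}(T)$ connected, read off $\pi_1\bigl(\widetilde{G}/C_{\widetilde{G}}(T)\bigr)=0$ from the long exact sequence of the bundle $C_{\widetilde{G}}(T)\to\widetilde{G}\to\widetilde{G}/C_{\widetilde{G}}(T)$ (using $\pi_1(\widetilde{G})=0$ and $\pi_0(C_{\widetilde{G}}(T))=0$), and observe that $\widetilde{G}/C_{\widetilde{G}}(T)\to G/C_G(T)$ is a homeomorphism because the covering kernel is central and hence lies in $C_{\widetilde{G}}(T)$. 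Your approach, once repaired, yields the extra geometric information $G/C_G(T)\simeq K/C_K(T)$, but the paper's proof is essentially a three-line consequence of Lemma~\ref{lem-7.3.3}.
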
 
\begin{proof}
   Let $(\widetilde{G},p)$ be a universal covering group of the connected Lie group $G$. 
   Then, the differential homomorphism $p_*:\widetilde{\frak{g}}\to\frak{g}$ is a Lie algebra isomorphism, and so we assume $\widetilde{\frak{g}}=\frak{g}$ via $p_*$. 
   On this assumption, $T$ is an elliptic element of $\widetilde{\frak{g}}$ and $C_{\widetilde{G}}(T)$ is connected by Lemma \ref{lem-7.3.3}. 
   Therefore $\widetilde{G}/C_{\widetilde{G}}(T)$ is simply connected, and hence $G/C_G(T)$ is also simply connected because $\widetilde{G}/C_{\widetilde{G}}(T)$ is homeomorphic to $G/C_G(T)$.
\end{proof}

\section{An appendix (semisimple orbits)}\label{sec-7.4}
   We investigate relation between semisimple (adjoint) orbits and reductive homogeneous spaces, where we refer to Nomizu \cite[p.41]{No} for the definition of reductive homogeneous space.
   Let $Z$ be a semisimple element of $\frak{g}$.
   Since the linear transformation $\operatorname{ad}Z:\frak{g}\to\frak{g}$ is semisimple, it follows that    
\begin{enumerate}
\item[(1)]
   its image $\operatorname{ad}Z(\frak{g})$ is a vector subspace of $\frak{g}$, 
\item[(2)]
   $\frak{g}=\ker(\operatorname{ad}Z)\oplus\operatorname{ad}Z(\frak{g})$,
\item[(3)]
   $\operatorname{Ad}x(Y)\subset\operatorname{ad}Z(\frak{g})$ for all $(x,Y)\in C_G(Z)\times\operatorname{ad}Z(\frak{g})$.
\end{enumerate}
   Here we remark $\ker(\operatorname{ad}Z)=\operatorname{Lie}(C_G(Z))$.
   Hence, the semisimple adjoint orbit $G/C_G(Z)$ is a {\it reductive} homogeneous space.
   Moreover, one can assert that
\begin{lemma}[Uniqueness]\label{lem-7.4.1}
   Let $X$ be any element of $\frak{g}$.
   If $\frak{m}$ is a vector subspace of $\frak{g}$ such that {\rm (2$'$)} $\frak{g}=\ker(\operatorname{ad}X)\oplus\frak{m}$ and {\rm (3$'$)} $\operatorname{Ad}x(Y)\subset\frak{m}$ for all $(x,Y)\in C_G(X)\times\frak{m}$, then $\frak{m}$ coincides with $\operatorname{ad}X(\frak{g})$.
\end{lemma}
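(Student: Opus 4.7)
The strategy is to reduce the identity $\frak{m}=\operatorname{ad}X(\frak{g})$ to a single inclusion by a dimension count, and then to deduce that inclusion from an infinitesimal consequence of (3$'$). On the one hand, the rank-nullity theorem applied to the linear map $\operatorname{ad}X:\frak{g}\to\frak{g}$ gives
\[
   \dim_\mathbb{R}\operatorname{ad}X(\frak{g})=\dim_\mathbb{R}\frak{g}-\dim_\mathbb{R}\ker(\operatorname{ad}X);
\]
on the other hand, the direct sum in (2$'$) forces $\dim_\mathbb{R}\frak{m}=\dim_\mathbb{R}\frak{g}-\dim_\mathbb{R}\ker(\operatorname{ad}X)$ as well. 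Hence it suffices to prove the inclusion $\operatorname{ad}X(\frak{g})\subset\frak{m}$.

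The first step is to upgrade (3$'$) to an infinitesimal statement. Recall that $C_G(X)$ is a closed subgroup of $G$, so it is a Lie subgroup with Lie algebra $\frak{c}_\frak{g}(X)=\ker(\operatorname{ad}X)$; this follows from the standard identity $\operatorname{Ad}(\exp tZ)X=\exp(t\operatorname{ad}Z)(X)$ for $Z\in\frak{g}$. Consequently, for any $Z\in\ker(\operatorname{ad}X)$ and $Y\in\frak{m}$ the curve $t\mapsto\operatorname{Ad}(\exp tZ)Y$ lies in $\frak{m}$ by (3$'$), so its derivative at $t=0$, which equals $[Z,Y]$, also lies in the finite-dimensional subspace $\frak{m}$. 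This yields $[\ker(\operatorname{ad}X),\frak{m}]\subset\frak{m}$.

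The second step is to specialize to $Z=X$. Since $[X,X]=0$, we have $X\in\ker(\operatorname{ad}X)$, so the previous step gives $\operatorname{ad}X(\frak{m})\subset\frak{m}$. Combined with the trivial inclusion $\operatorname{ad}X\bigl(\ker(\operatorname{ad}X)\bigr)=\{0\}\subset\frak{m}$ and the decomposition (2$'$) $\frak{g}=\ker(\operatorname{ad}X)\oplus\frak{m}$, this shows $\operatorname{ad}X(\frak{g})\subset\frak{m}$, which completes the proof by the dimension count above.

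The only point requiring care is the identification $\operatorname{Lie}\bigl(C_G(X)\bigr)=\ker(\operatorname{ad}X)$; once that standard fact is invoked, the rest of the argument is essentially formal. There is no nontrivial obstacle, and no use of semisimplicity of $X$ is needed, reflecting the fact that the statement holds for an arbitrary $X\in\frak{g}$.
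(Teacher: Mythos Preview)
Your proof is correct and follows essentially the same approach as the paper: both derive $\operatorname{ad}X(\frak{m})\subset\frak{m}$ from (3$'$) by differentiating the adjoint action of $\exp tX\in C_G(X)$, then combine this with (2$'$) and elementary linear algebra. The only cosmetic difference is that the paper shows both inclusions explicitly (using that $\operatorname{ad}X|_\frak{m}$ is injective, hence bijective), whereas you prove one inclusion and invoke a dimension count; also, you do not actually need the full identification $\operatorname{Lie}(C_G(X))=\ker(\operatorname{ad}X)$, only the trivial fact that $\exp tX\in C_G(X)$.
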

\begin{proof}
   $\operatorname{ad}X(\frak{m})\subset\frak{m}$ is a consequence of (3$'$).
   Furthermore, (2$'$) assures that the linear transformation $\operatorname{ad}X:\frak{m}\to\frak{m}$ is injective, and hence is isomorphic.
   From $\frak{m}=\operatorname{ad}X(\frak{m})$ we obtain $\frak{m}\subset\operatorname{ad}X(\frak{g})$. 
   Therefore $\frak{m}=\operatorname{ad}X(\frak{g})$ holds because of $\operatorname{ad}X(\frak{g})=\operatorname{ad}X\bigl(\ker(\operatorname{ad}X)\oplus\frak{m}\bigr)\subset\operatorname{ad}X(\frak{m})\subset\frak{m}$.
\end{proof}

\begin{proposition}\label{prop-7.4.2}
   For an $X\in\frak{g}$ the following {\rm (a)} and {\rm (b)} are equivalent$:$
\begin{enumerate}
\item[{\rm (a)}]
   $X$ is a semisimple element of $\frak{g}$.
\item[{\rm (b)}]
   $G/C_G(X)$ is a reductive homogeneous space.   
\end{enumerate}  
\end{proposition}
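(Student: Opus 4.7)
The direction (a) $\Rightarrow$ (b) is essentially contained in the discussion preceding Lemma \ref{lem-7.4.1}: if $X$ is semisimple, then $\operatorname{ad}X:\frak{g}\to\frak{g}$ is a semisimple linear map, so $\frak{g}=\ker(\operatorname{ad}X)\oplus\operatorname{ad}X(\frak{g})$, and $\frak{m}:=\operatorname{ad}X(\frak{g})$ is $\operatorname{Ad}(C_G(X))$-invariant thanks to the identity $\operatorname{Ad}(c)\circ\operatorname{ad}X=\operatorname{ad}(\operatorname{Ad}(c)X)\circ\operatorname{Ad}(c)=\operatorname{ad}X\circ\operatorname{Ad}(c)$ for $c\in C_G(X)$. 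So the only substantive task is (b) $\Rightarrow$ (a).

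For (b) $\Rightarrow$ (a), suppose a complement $\frak{m}$ gives a reductive decomposition $\frak{g}=\ker(\operatorname{ad}X)\oplus\frak{m}$ with $\operatorname{Ad}(C_G(X))\frak{m}\subset\frak{m}$. By Lemma \ref{lem-7.4.1} this forces $\frak{m}=\operatorname{ad}X(\frak{g})$, so in particular $\operatorname{ad}X:\frak{m}\to\frak{m}$ is bijective. Take the abstract Jordan decomposition $X=X_s+X_n$ in the real semisimple Lie algebra $\frak{g}$; then $\operatorname{ad}X_s$ is semisimple, $\operatorname{ad}X_n$ is nilpotent, $[X_s,X_n]=0$, and $\operatorname{ad}X=\operatorname{ad}X_s+\operatorname{ad}X_n$ is the Jordan decomposition of $\operatorname{ad}X$. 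Since $X_s,X_n\in\frak{c}_\frak{g}(X)$, the one-parameter subgroups $\exp(tX_s)$ and $\exp(tX_n)$ lie in $C_G(X)$; differentiating the $\operatorname{Ad}$-invariance of $\frak{m}$ yields $\operatorname{ad}X_s(\frak{m})\subset\frak{m}$ and $\operatorname{ad}X_n(\frak{m})\subset\frak{m}$.

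Now decompose $\frak{g}=\bigoplus_\alpha\frak{g}_\alpha$ into $\operatorname{ad}X_s$-eigenspaces. Because $\frak{m}$ is $\operatorname{ad}X_s$-invariant, $\frak{m}=\bigoplus_\alpha(\frak{m}\cap\frak{g}_\alpha)$, and because $\operatorname{ad}X|_{\frak{g}_\alpha}=\alpha\cdot\operatorname{id}+\operatorname{ad}X_n|_{\frak{g}_\alpha}$ is invertible for $\alpha\neq 0$ (sum of a scalar and a commuting nilpotent), one has $\ker(\operatorname{ad}X)\subset\frak{g}_0$. On the complementary piece $\frak{m}\cap\frak{g}_0$ the map $\operatorname{ad}X$ acts as $\operatorname{ad}X_n|_{\frak{g}_0}$, which is simultaneously nilpotent and bijective; hence $\frak{m}\cap\frak{g}_0=0$, and consequently $\operatorname{ad}X_n|_{\frak{g}_0}=0$. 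Since $X_n\in\frak{g}_0=\frak{c}_\frak{g}(X_s)$, this precisely says $X_n\in\frak{z}\bigl(\frak{c}_\frak{g}(X_s)\bigr)$.

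\textbf{Main obstacle.} The crucial and only nontrivial remaining step is to deduce $X_n=0$ from the facts that $X_n$ is ad-nilpotent in $\frak{g}$ and lies in the center of the reductive subalgebra $\frak{c}_\frak{g}(X_s)$. I would handle this by complexification: $\frak{c}_{\frak{g}_\mathbb{C}}(X_s)=\bigl(\frak{c}_\frak{g}(X_s)\bigr)_\mathbb{C}$ is a Levi subalgebra of the complex semisimple Lie algebra $\frak{g}_\mathbb{C}$, and every Cartan subalgebra $\frak{h}_\mathbb{C}$ of $\frak{c}_{\frak{g}_\mathbb{C}}(X_s)$ containing $X_s$ is also a Cartan subalgebra of $\frak{g}_\mathbb{C}$ and contains the center $\frak{z}\bigl(\frak{c}_{\frak{g}_\mathbb{C}}(X_s)\bigr)$. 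Hence $X_n\in\frak{h}_\mathbb{C}$ is ad-semisimple in $\frak{g}_\mathbb{C}$; being simultaneously ad-nilpotent and ad-semisimple with commuting parts forces $X_n=0$, so $X=X_s$ is semisimple, completing (b) $\Rightarrow$ (a).
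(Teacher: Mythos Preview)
Your proof is correct, but it takes a genuinely different route from the paper's for (b)~$\Rightarrow$~(a). Both arguments start from Lemma~\ref{lem-7.4.1} to conclude $\frak{m}=\operatorname{ad}X(\frak{g})$, but then diverge. The paper simply picks a Cartan subalgebra $\frak{h}$ of $\frak{c}_\frak{g}(X)$ and shows directly that $\frak{h}$ is self-normalizing in $\frak{g}$: if $[\frak{h},Y]\subset\frak{h}$, then $\operatorname{ad}X(Y)\in\frak{h}\subset\frak{c}_\frak{g}(X)$ while also $\operatorname{ad}X(Y)\in\frak{m}$, forcing $\operatorname{ad}X(Y)=0$, hence $Y\in\frak{c}_\frak{g}(X)$ and so $Y\in\frak{h}$. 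Thus $\frak{h}$ is a Cartan subalgebra of the semisimple $\frak{g}$, and $X\in\frak{h}$ is automatically semisimple. Your route instead invokes the abstract Jordan decomposition $X=X_s+X_n$, uses the reductive splitting to force $X_n\in\frak{z}\bigl(\frak{c}_\frak{g}(X_s)\bigr)$, and then complexifies to place $X_n$ in a Cartan subalgebra of $\frak{g}_\mathbb{C}$, making it simultaneously semisimple and nilpotent. The paper's argument is shorter and entirely self-contained (no Jordan decomposition, no complexification, no Levi-subalgebra structure theory); your argument is more structural in that it isolates exactly why the nilpotent part must vanish, at the cost of importing more background machinery. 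One minor wording issue: your phrase ``$\operatorname{ad}X_s$-eigenspaces'' tacitly assumes real diagonalizability, but what you actually use is only the splitting $\frak{g}=\frak{g}_0\oplus\operatorname{ad}X_s(\frak{g})$ with $\frak{g}_0=\ker(\operatorname{ad}X_s)$, which holds for any semisimple operator over $\mathbb{R}$.
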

\begin{proof}
   (a)$\Rightarrow$(b). 
   cf.\ the beginning of this section.\par
   
   (b)$\Rightarrow$(a). 
   Suppose that $G/C_G(X)$ is a reductive homogeneous space. 
   By the definition of reductive homogeneous space, there exists a vector subspace $\frak{m}\subset\frak{g}$ such that 
\begin{equation}\label{eq-1}\tag*{\textcircled{1}}
\begin{array}{ll}
   \frak{g}=\frak{c}_\frak{g}(X)\oplus\frak{m}, & \operatorname{Ad}\bigl(C_G(X)\bigr)\frak{m}\subset\frak{m}.
\end{array}
\end{equation}
   Then, Lemma \ref{lem-7.4.1} implies  
\begin{equation}\label{eq-2}\tag*{\textcircled{2}}
   \frak{m}=\operatorname{ad}X(\frak{g}).
\end{equation}
   Now, let $\frak{h}$ be a Cartan subalgebra of $\frak{c}_\frak{g}(X)$---this is, $\frak{h}$ is a subalgebra of $\frak{c}_\frak{g}(X)$ such that  
\begin{enumerate}
\item[(i)]
   $\frak{h}$ is nilpotent, 
\item[(ii)]
   the normalizer of $\frak{h}$ in $\frak{c}_\frak{g}(X)$ coincides with $\frak{h}$.
\end{enumerate} 
   We will verify that this $\frak{h}$ is also a Cartan subalgebra of $\frak{g}$. 
   Since (ii), $X\in\frak{c}_\frak{g}(X)$ and $[\frak{h},X]=\{0\}\subset\frak{h}$, one obtains   
\begin{equation}\label{eq-3}\tag*{\textcircled{3}}
   X\in\frak{h}.
\end{equation}
   We want to show that the normalizer $\frak{n}_\frak{g}(\frak{h})$ of $\frak{h}$ in $\frak{g}$ also coincides with $\frak{h}$. 
   Let $Y$ be any element of $\frak{g}$ with $[\frak{h},Y]\subset\frak{h}$. 
   On the one hand; \ref{eq-3} yields $\operatorname{ad}X(Y)\in\frak{h}\subset\frak{c}_\frak{g}(X)$.
   On the other hand; \ref{eq-2} yields $\operatorname{ad}X(Y)\in\frak{m}$. 
   Thus $\operatorname{ad}X(Y)\in(\frak{c}_\frak{g}(X)\cap\frak{m})=\{0\}$ by \ref{eq-1}, and hence $Y\in\frak{c}_\frak{g}(X)$.
   Accordingly, (ii) implies that $Y\in\frak{h}$, so that $\frak{n}_\frak{g}(\frak{h})\subset\frak{h}$, and $\frak{n}_\frak{g}(\frak{h})=\frak{h}$.
   This, together with (i), assures that $\frak{h}$ is a Cartan subalgebra of $\frak{g}$. 
   So, since $\frak{g}$ is a semisimple Lie algebra, $\operatorname{ad}H:\frak{g}\to\frak{g}$ is semisimple for each $H\in\frak{h}$. 
   In particular, $\operatorname{ad}X:\frak{g}\to\frak{g}$ is semisimple. 
   For this reason $X$ is a semisimple element of $\frak{g}$. 
\end{proof}

\begin{remark}\label{rem-7.4.3}
   It is known that $\operatorname{Ad}G(X)$ is a closed subset of $\frak{g}$ if and only if $X$ is a semisimple element of $\frak{g}$.
   cf.\ Proposition \ref{prop-7.4.2}.
\end{remark}

\chapter{Complex flag manifolds}\label{ch-8}
   By a {\it complex flag manifold}\index{complex flag manifold@complex flag manifold\dotfill}, we mean the complex homogeneous space $G_\mathbb{C}/Q$ of a connected complex semisimple Lie group $G_\mathbb{C}$ over a connected, closed complex parabolic (Lie) subgroup $Q\subset G_\mathbb{C}$.
   Here, a complex flag manifold is also called a K\"{a}hler C-space or a generalized flag manifold.
   In this chapter we study complex flag manifolds.    
   The setting of this chapter is as follows:
\begin{itemize}
\item
   $G_\mathbb{C}$ is a connected complex semisimple Lie group,
\item 
   $T$ is a non-zero, elliptic element of $\frak{g}_\mathbb{C}$, 
\item 
   $L_\mathbb{C}:=C_{G_\mathbb{C}}(T)=\{x\in G_\mathbb{C} \,|\, \operatorname{Ad}x(T)=T\}$,
\item 
   $\frak{g}^\lambda:=\{X\in\frak{g}_\mathbb{C} \,|\, \operatorname{ad}T(X)=i\lambda X\}$ for $\lambda\in\mathbb{R}$,
\item
   $\frak{u}^+:=\bigoplus_{\lambda>0}\frak{g}^\lambda$, $\frak{u}^-:=\bigoplus_{\lambda>0}\frak{g}^{-\lambda}$,
\item 
   $U^+:=\exp\frak{u}^+$, $U^-:=\exp\frak{u}^-$, 
\item 
   $Q^+:=N_{G_\mathbb{C}}(\bigoplus_{\nu\geq 0}\frak{g}^\nu)=\{q\in G_\mathbb{C} \,|\, \operatorname{Ad}q(\bigoplus_{\nu\geq 0}\frak{g}^\nu)\subset\bigoplus_{\nu\geq 0}\frak{g}^\nu\}$, $Q^-:=N_{G_\mathbb{C}}(\bigoplus_{\nu\geq 0}\frak{g}^{-\nu})$,
\end{itemize} 
where $\frak{g}^\lambda=\{0\}$ in the case where $i\lambda$ is different from the eigenvalues of $\operatorname{ad}T$ and $\exp:\frak{g}_\mathbb{C}\to G_\mathbb{C}$ is the exponential mapping. 
   In addition, let $\overline{\theta}_*$ be a Cartan involution of $\frak{g}_\mathbb{C}$ satisfying $\overline{\theta}_*(T)=T$ (cf.\ Lemma \ref{lem-7.2.4}).
   Since $G_\mathbb{C}$ is semisimple, $\overline{\theta}_*$ can be lifted to $G_\mathbb{C}$. 
   Denote its lift by $\overline{\theta}$, and set closed subgroups $G_u\subset G_\mathbb{C}$ and $L_u\subset G_u$ as
\begin{itemize}
\item 
   $G_u:=\{k\in G_\mathbb{C} \,|\, \overline{\theta}(k)=k\}$,
\item
   $L_u:=C_{G_u}(T)$,   
\end{itemize} 
respectively.
   We remark here that $T\in\frak{g}_u$, that $\overline{\theta}$ is an anti-holomorphic involutive automorphism of $G_\mathbb{C}$, and that $G_u$ is connected and is  a maximal compact subgroup of $G_\mathbb{C}$.\footnote{Here we assert that the Lie group $G_u$ is connected since so is $G_\mathbb{C}$; and that $G_u$ is compact since it is a connected Lie group whose Lie algebra is compact semisimple.}
   One can show 
\begin{lemma}\label{lem-8.0.1}
\begin{enumerate}
\item[]
\item[{\rm (a)}]
   $L_\mathbb{C}$ is a connected closed complex $($Lie$)$ subgroup of $G_\mathbb{C}$ with $\frak{l}_\mathbb{C}=\frak{c}_{\frak{g}_\mathbb{C}}(T)=\frak{g}^0$,
\item[{\rm (b)}]
   $Q^s$ is a closed complex subgroup of $G_\mathbb{C}$ with $\frak{q}^s=\{X\in\frak{g}_\mathbb{C} : [X,\bigoplus_{\nu\geq 0}\frak{g}^{s\nu}]\subset\bigoplus_{\nu\geq 0}\frak{g}^{s\nu}\}$ $(s=\pm)$,   
\item[{\rm (c)}] 
   $L_u$ is a connected compact subgroup of $G_u$ and $L_u=(G_u\cap L_\mathbb{C})$,
\item[{\rm (1)}]
   $\frak{g}_\mathbb{C}=\bigoplus_{\lambda\in\mathbb{R}}\frak{g}^\lambda=\frak{u}^+\oplus\frak{l}_\mathbb{C}\oplus\frak{u}^-$, $\bigoplus_{\nu\geq 0}\frak{g}^\nu=\frak{l}_\mathbb{C}\oplus\frak{u}^+$, $\bigoplus_{\nu\geq 0}\frak{g}^{-\nu}=\frak{l}_\mathbb{C}\oplus\frak{u}^-$,
\item[{\rm (2)}] 
   $\operatorname{Ad}x(\frak{g}^\lambda)\subset\frak{g}^\lambda$ for all $(x,\lambda)\in L_\mathbb{C}\times\mathbb{R}$, 
\item[{\rm (2$'$)}]  
   $\operatorname{Ad}x(\frak{l}_\mathbb{C})\subset\frak{l}_\mathbb{C}$, $\operatorname{Ad}x(\frak{u}^+)\subset\frak{u}^+$, $\operatorname{Ad}x(\frak{u}^-)\subset\frak{u}^-$ for all $x\in L_\mathbb{C}$,
\item[{\rm (2$''$)}]
   $L_\mathbb{C}\subset Q^+$, $L_\mathbb{C}\subset Q^-$,
\item[{\rm (3)}]  
   $[\frak{g}^\lambda,\frak{g}^\mu]\subset\frak{g}^{\lambda+\mu}$ for all $\lambda,\mu\in\mathbb{R}$,
\item[{\rm (3$'$)}]  
   $[\frak{l}_\mathbb{C},\frak{l}_\mathbb{C}]\subset\frak{l}_\mathbb{C}$, $[\frak{l}_\mathbb{C},\frak{u}^+]\subset\frak{u}^+$, $[\frak{l}_\mathbb{C},\frak{u}^-]\subset\frak{u}^-$, $[\frak{u}^+,\frak{u}^+]\subset\frak{u}^+$, $[\frak{u}^-,\frak{u}^-]\subset\frak{u}^-$,  
\item[{\rm (3$''$)}]
   both $\frak{u}^+$ and $\frak{u}^-$ are complex nilpotent subalgebras of $\frak{g}_\mathbb{C}$, both $\bigoplus_{\nu\geq 0}\frak{g}^\nu$ and $\bigoplus_{\nu\geq 0}\frak{g}^{-\nu}$ are complex subalgebras of $\frak{g}_\mathbb{C}$,
\item[{\rm (4)}]  
   $B_{\frak{g}_\mathbb{C}}(\frak{g}^\lambda,\frak{g}^\mu)=\{0\}$ if $\lambda+\mu\neq0$, where $B_{\frak{g}_\mathbb{C}}$ is the Killing form of $\frak{g}_\mathbb{C}$, 
\item[{\rm (4$'$)}]  
   $B_{\frak{g}_\mathbb{C}}(\frak{l}_\mathbb{C},\frak{u}^+)=\{0\}$, $B_{\frak{g}_\mathbb{C}}(\frak{l}_\mathbb{C},\frak{u}^-)=\{0\}$, $B_{\frak{g}_\mathbb{C}}(\frak{u}^+,\frak{u}^+)=\{0\}$, $B_{\frak{g}_\mathbb{C}}(\frak{u}^-,\frak{u}^-)=\{0\}$,   
\item[{\rm (5)}]  
   $\overline{\theta}_*(\frak{g}^\lambda)=\frak{g}^{-\lambda}$ for all $\lambda\in\mathbb{R}$,
\item[{\rm (5$'$)}] 
   $\overline{\theta}_*(\frak{l}_\mathbb{C})=\frak{l}_\mathbb{C}$, $\overline{\theta}_*(\frak{u}^+)=\frak{u}^-$, $\overline{\theta}_*(\frak{u}^-)=\frak{u}^+$,
\item[{\rm (5$''$)}] 
   $\overline{\theta}(L_\mathbb{C})=L_\mathbb{C}$, $\overline{\theta}(U^+)=U^-$, $\overline{\theta}(U^-)=U^+$, $\overline{\theta}(Q^+)=Q^-$, $\overline{\theta}(Q^-)=Q^+$,
\item[{\rm (6)}] 
   $\overline{\theta}_*\bigl(\operatorname{Ad}k(X)\bigr)=\operatorname{Ad}k\bigl(\overline{\theta}_*(X)\bigr)$ for all $(k,X)\in G_u\times\frak{g}_\mathbb{C}$, 
\item[{\rm (i)}] 
   $\frak{g}_u=\frak{l}_u\oplus\operatorname{ad}T(\frak{g}_u)$, $T\in\frak{l}_u$, 
\item[{\rm (ii)}]  
   $\operatorname{Ad}z(\frak{l}_u)\subset\frak{l}_u$, $\operatorname{Ad}z\bigl(\operatorname{ad}T(\frak{g}_u)\bigr)\subset\operatorname{ad}T(\frak{g}_u)$ for all $z\in L_u$, 
\item[{\rm (iii)}]  
   $\frak{l}_u=(\frak{g}_u\cap\frak{l}_\mathbb{C})=\{Y\in\frak{l}_\mathbb{C} \,|\, \overline{\theta}_*(Y)=Y\}$, $\operatorname{ad}T(\frak{g}_u)=\{A\in\operatorname{ad}T(\frak{g}_\mathbb{C}) \,|\, \overline{\theta}_*(A)=A\}=\{V+\overline{\theta}_*(V) \,|\, V\in\frak{u}^+\}$.  
\end{enumerate}   
\end{lemma}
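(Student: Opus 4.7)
The plan is to view everything through the compact real form $\frak{g}_u$ and then transport the results to the complex setting. Set $\frak{g}:=\frak{g}_u$ and note that it is a compact real semisimple Lie algebra whose complexification is $\frak{g}_\mathbb{C}$, while $\overline{\theta}_*|_{\frak{g}_\mathbb{C}}$ coincides with the conjugation of $\frak{g}_\mathbb{C}$ with respect to $\frak{g}_u$. Since $T\in\frak{g}_u$ and every element of a compact semisimple Lie algebra is elliptic (Corollary~\ref{cor-7.2.6}), the hypotheses of Lemma~\ref{lem-7.2.8} are satisfied with $G=G_u$, and that lemma directly supplies (1), (3), (3$'$), (4), (4$'$), (5), (5$'$), (6) (with $\overline{\sigma}=\overline{\theta}_*$), and (i), (ii), (iii) (with $\frak{l}=\frak{l}_u$ and $\frak{u}=\operatorname{ad}T(\frak{g}_u)$).

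Next I would establish the structural claims (a), (b), (c). For (c), the identity $L_u=G_u\cap L_\mathbb{C}$ is immediate from the definitions; connectedness of $L_u$ is Lemma~\ref{lem-7.3.3} applied to the connected real semisimple group $G_u$ and the elliptic element $T$, and compactness then follows because $L_u$ is a closed subgroup of the compact $G_u$. For (a), I regard $G_\mathbb{C}$ as a connected real semisimple Lie group with real Lie algebra $\frak{g}_\mathbb{C}^\mathbb{R}$; one checks that $T$ remains elliptic in the real sense because the $\mathbb{R}$-linear operator $\operatorname{ad}_{\frak{g}_\mathbb{C}^\mathbb{R}}T$ has the same $\pm i\lambda$ as eigenvalues (each complex $i\lambda$-eigenspace contributes a real $2$-dimensional $\operatorname{ad}T$-invariant piece on which $T$ acts by $i\lambda$), hence Lemma~\ref{lem-7.3.3} yields that $L_\mathbb{C}=C_{G_\mathbb{C}}(T)$ is connected and closed, and since $\frak{l}_\mathbb{C}=\frak{g}^0$ is a complex subspace of $\frak{g}_\mathbb{C}$ it is a closed complex subgroup. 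For (b), the normalizer $N_{G_\mathbb{C}}(\frak{m})$ of a complex subspace $\frak{m}\subset\frak{g}_\mathbb{C}$ is always a closed complex subgroup with Lie algebra $\{X\in\frak{g}_\mathbb{C}:[X,\frak{m}]\subset\frak{m}\}$; applied to $\frak{m}=\bigoplus_{\nu\geq 0}\frak{g}^{\pm\nu}$ (which is complex because every $\frak{g}^\lambda$ is) this gives (b).

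For the remaining enhancements (2), (2$'$), (2$''$), (3$''$), (5$''$) I argue by connectedness. For (2), observe that $\frak{l}_\mathbb{C}$ stabilizes each $\frak{g}^\lambda$ under $\operatorname{ad}$ by (3$'$); since $L_\mathbb{C}$ is connected, $\operatorname{Ad}L_\mathbb{C}$ stabilizes $\frak{g}^\lambda$. Assertion (2$'$) is the special case $\lambda=0,\pm\nu$, and (2$''$) follows because $\operatorname{Ad}L_\mathbb{C}$ then preserves $\bigoplus_{\nu\geq 0}\frak{g}^{\pm\nu}$, so $L_\mathbb{C}\subset Q^{\pm}$. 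For (3$''$), nilpotency of $\frak{u}^{+}$ is a consequence of the grading: $(\operatorname{ad}\frak{u}^+)^N\frak{u}^+=\{0\}$ for $N$ large enough, since eigenvalues add and are bounded above by $\max\lambda\cdot\operatorname{(number\ of\ positive\ }\lambda\text{)}$; the same works for $\frak{u}^{-}$, and the subalgebra property of $\bigoplus_{\nu\geq 0}\frak{g}^{\pm\nu}$ is immediate from (3). For (5$''$), connectedness of $L_\mathbb{C}$, $U^{+}$, $U^{-}$ combined with $\overline{\theta}_*$-invariance of $\frak{l}_\mathbb{C}$ and the swap $\overline{\theta}_*(\frak{u}^{\pm})=\frak{u}^{\mp}$ from (5$'$) yields the corresponding statements on the group level; invariance for $Q^{\pm}$ uses that $\overline{\theta}_*$ swaps $\bigoplus_{\nu\geq 0}\frak{g}^\nu$ and $\bigoplus_{\nu\geq 0}\frak{g}^{-\nu}$, so the anti-holomorphic automorphism $\overline{\theta}$ interchanges their normalizers.

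The only mild subtlety I anticipate is the bookkeeping for (a), namely verifying that $T$ remains elliptic when $\frak{g}_\mathbb{C}$ is regarded as a real Lie algebra so that Lemma~\ref{lem-7.3.3} applies; once that realification step is in hand, everything else is a careful transcription of Lemma~\ref{lem-7.2.8} into the present complex-group notation, together with routine connectedness arguments on the group level.
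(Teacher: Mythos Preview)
Your proposal is correct and follows essentially the same route as the paper, whose entire proof reads ``cf.\ Lemmas~\ref{lem-7.3.3} and~\ref{lem-7.2.8}'': you apply Lemma~\ref{lem-7.2.8} with $G=G_u$ and $\overline{\sigma}=\overline{\theta}_*$ to obtain the Lie-algebra statements, and invoke Lemma~\ref{lem-7.3.3} for connectedness of centralizers. One small simplification for part~(a): rather than arguing directly with eigenvalues, you can use Lemma~\ref{lem-7.2.4}, since $\overline{\theta}_*$ is a Cartan involution of the real semisimple Lie algebra $\frak{g}_\mathbb{C}^\mathbb{R}$ fixing $T\in\frak{g}_u$, so $T$ is elliptic there and Lemma~\ref{lem-7.3.3} applies immediately.
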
 
\begin{proof}
   cf.\ Lemmas \ref{lem-7.3.3} and \ref{lem-7.2.8}.
\end{proof}

\section{A complex flag manifold and a fundamental root system}\label{sec-8.1}
   From the next section we will prove propositions related to complex flag manifolds by taking a root system into consideration. 
   In this section we set up a root system and give a lemma.

\subsection{A root space decomposition}\label{subsec-8.1.1} 
   Take a maximal torus $i\frak{h}_\mathbb{R}$ of the compact semisimple Lie algebra $\frak{g}_u$ containing the element $T$, and denote by $\triangle=\triangle(\frak{g}_\mathbb{C},\frak{h}_\mathbb{C})$ the (non-zero) root system of $\frak{g}_\mathbb{C}$ relative to $\frak{h}_\mathbb{C}$, where $\frak{h}_\mathbb{C}$ is the complex vector subspace of $\frak{g}_\mathbb{C}$ generated by $i\frak{h}_\mathbb{R}$.
   Let $\frak{g}_\alpha$ be the root subspace of $\frak{g}_\mathbb{C}$ for $\alpha\in\triangle$. 
   In this setting $T\in i\frak{h}_\mathbb{R}$, $\overline{\theta}_*(\frak{h}_\mathbb{C})=\frak{h}_\mathbb{C}$, $\frak{h}_\mathbb{C}=i\frak{h}_\mathbb{R}\oplus\frak{h}_\mathbb{R}$, $\overline{\theta}_*=\operatorname{id}$ on $i\frak{h}_\mathbb{R}$, $\overline{\theta}_*=-\operatorname{id}$ on $\frak{h}_\mathbb{R}$, and $\frak{g}_\mathbb{C}$ is decomposed into a direct sum of vector subspaces: $\frak{g}_\mathbb{C}=\frak{h}_\mathbb{C}\oplus\bigoplus_{\alpha\in\triangle}\frak{g}_\alpha$. 
   Here $\frak{h}_\mathbb{R}:=i(i\frak{h}_\mathbb{R})$.

\subsection{Chevalley's canonical basis}\label{subsec-8.1.2}    
   For each root $\alpha\in\triangle$, there exists a unique $H_\alpha\in\frak{h}_\mathbb{C}$ such that $\alpha(X)=B_{\frak{g}_\mathbb{C}}(H_\alpha,X)$ for all $X\in\frak{h}_\mathbb{C}$.
   Then $\frak{h}_\mathbb{R}=\operatorname{span}_\mathbb{R}\{H_\alpha\,|\,\alpha\in\triangle\}$, and for every $\alpha\in\triangle$ there exist vectors $E_{\pm\alpha}\in\frak{g}_{\pm\alpha}$ satisfying 
\begin{equation}\label{eq-8.1.1}
   \mbox{$(E_{\alpha}-E_{-\alpha}), i(E_{\alpha}+E_{-\alpha})\in\frak{g}_u$ and $[E_{\alpha},E_{-\alpha}]=\bigl(2/\alpha(H_\alpha)\bigr)H_\alpha$}.
\end{equation}
   Here it follows that $\frak{g}_u=i\frak{h}_\mathbb{R}\oplus\bigoplus_{\alpha\in\triangle}\operatorname{span}_\mathbb{R}\{E_{\alpha}-E_{-\alpha}\}\oplus\operatorname{span}_\mathbb{R}\{i(E_{\alpha}+E_{-\alpha})\}$, and
\begin{equation}\label{eq-8.1.2}
   \mbox{$\overline{\theta}_*(E_{\alpha})=-E_{-\alpha}$ for all $\alpha\in\triangle=\triangle(\frak{g}_\mathbb{C},\frak{h}_\mathbb{C})$}.
\end{equation}
   Remark that $\frak{g}_\alpha=\operatorname{span}_\mathbb{C}\{E_\alpha\}$ for all $\alpha\in\triangle$. 
   Setting $H_\alpha^*:=\bigl(2/\alpha(H_\alpha)\bigr)H_\alpha$ for $\alpha\in\triangle$, one has $[H_\alpha^*,E_\alpha]=2E_\alpha$, $[H_\alpha^*,E_{-\alpha}]=-2E_{-\alpha}$, $[E_\alpha,E_{-\alpha}]=H_\alpha^*$, and thus $\frak{s}_\alpha:=\operatorname{span}_\mathbb{C}\{H_\alpha^*,E_\alpha,E_{-\alpha}\}$ is a complex subalgebra of $\frak{g}_\mathbb{C}$ which is isomorphic to $\frak{sl}(2,\mathbb{C})$ for each $\alpha\in\triangle$.

\subsection{A Weyl group $\mathcal{W}$}\label{subsec-8.1.3} 
   Define a Weyl group $\mathcal{W}$ of $G_\mathbb{C}$ and an action $\zeta$ of $\mathcal{W}$ on the dual space $(\frak{h}_\mathbb{C})^*$ by 
\begin{equation}\label{eq-8.1.3}
   \left\{\begin{array}{l}
   \mathcal{W}:=N_{G_u}(i\frak{h}_\mathbb{R})/C_{G_u}(i\frak{h}_\mathbb{R}),\\
   \mbox{$\zeta([w])\eta:={}^t\!\operatorname{Ad}w^{-1}(\eta)$ for $[w]\in\mathcal{W}$ and $\eta\in(\frak{h}_\mathbb{C})^*$},
   \end{array}\right.
\end{equation}
where $[w]$ stands for the left coset $wC_{G_u}(i\frak{h}_\mathbb{R})$. 
   By use of $E_\alpha$ in \eqref{eq-8.1.1} we set 
\begin{equation}\label{eq-8.1.4}
   \mbox{$w_\alpha:=\exp(\pi/2)(E_{\alpha}-E_{-\alpha})$ for $\alpha\in\triangle$}.
\end{equation}
   Remark that $\zeta:\mathcal{W}\to GL((\frak{h}_\mathbb{C})^*)$, $[w]\mapsto\zeta([w])$, is a group homomorphism and $\operatorname{Ad}w(\frak{g}_\beta)=\frak{g}_{\zeta([w])\beta}$ for all $([w],\beta)\in\mathcal{W}\times\triangle$.
   For every root $\alpha\in\triangle$, it follows from \eqref{eq-8.1.1} and \eqref{eq-8.1.4} that $\operatorname{Ad}w_\alpha(X)=X-\alpha(X)H_\alpha^*$ for all $X\in\frak{h}_\mathbb{C}$, so that $w_\alpha$ belongs to the normalizer $N_{G_u}(i\frak{h}_\mathbb{R})$ and so $[w_\alpha]\in\mathcal{W}$; besides, $\zeta([w_\alpha])$ is the reflection along $\alpha$ which leaves $\triangle$ invariant.

\subsection{A fundamental root system, Borel subalgebras, and Iwasawa decompositions}\label{subsec-8.1.4} 
   Let $\Pi_\triangle$ be a fundamental root system of $\triangle=\triangle(\frak{g}_\mathbb{C},\frak{h}_\mathbb{C})$ satisfying\footnote{There is such a system with \eqref{eq-8.1.5}---for example, consider the lexicographic linear ordering on the dual space $(\frak{h}_\mathbb{R})^*$ associated with an ordered real basis $-iT=:A_1,A_2,\dots,A_\ell$ of $\frak{h}_\mathbb{R}$.} 
\begin{equation}\label{eq-8.1.5}
   \mbox{$\alpha(-iT)\geq 0$ for all $\alpha\in\Pi_\triangle$}.
\end{equation}
   Relative to this $\Pi_\triangle$ we fix the set $\triangle^+$ of positive roots, and put $\triangle^-:=-\triangle^+$. 
   Needless to say, $\beta(-iT)\geq 0$ for all $\beta\in\triangle^+$.
   Setting 
\begin{equation}\label{eq-8.1.6}
\begin{array}{llll}
   \frak{n}^+:=\bigoplus_{\beta\in\triangle^+}\frak{g}_\beta, 
   & \frak{n}^-:=\bigoplus_{\beta\in\triangle^+}\frak{g}_{-\beta}, 
   & \frak{b}^+:=\frak{h}_\mathbb{C}\oplus\frak{n}^+,
   & \frak{b}^-:=\frak{h}_\mathbb{C}\oplus\frak{n}^-,   
\end{array}
\end{equation}
one has Iwasawa decompositions $\frak{g}_\mathbb{C}=\frak{g}_u\oplus\frak{h}_\mathbb{R}\oplus\frak{n}^\pm$ and complex Borel subalgebras $\frak{b}^\pm$ of $\frak{g}_\mathbb{C}$.
   Moreover, $\frak{g}_\mathbb{C}=\frak{n}^+\oplus\frak{h}_\mathbb{C}\oplus\frak{n}^-$, $\overline{\theta}_*(\frak{n}^\pm)=\frak{n}^\mp$, $\overline{\theta}_*(\frak{b}^\pm)=\frak{b}^\mp$ and 
\begin{equation}\label{eq-8.1.7}
\left\{
\begin{array}{@{\,}l}
   \frak{l}_\mathbb{C}=\frak{c}_{\frak{g}_\mathbb{C}}(T)=\frak{g}^0=\frak{h}_\mathbb{C}\oplus\bigoplus_{\gamma\in\blacktriangle}\frak{g}_\gamma,\\
   \frak{u}^+=\bigoplus_{\lambda>0}\frak{g}^\lambda=\bigoplus_{\alpha\in\triangle^+-\blacktriangle}\frak{g}_\alpha\subset\bigoplus_{\beta\in\triangle^+}\frak{g}_\beta=\frak{n}^+\subset\frak{b}^+\subset\bigoplus_{\nu\geq 0}\frak{g}^\nu=\frak{l}_\mathbb{C}\oplus\frak{u}^+,\\ 
   \frak{u}^-=\bigoplus_{\alpha\in\triangle^+-\blacktriangle}\frak{g}_{-\alpha}\subset\frak{n}^-\subset\frak{b}^-\subset\bigoplus_{\nu\geq 0}\frak{g}^{-\nu}=\frak{l}_\mathbb{C}\oplus\frak{u}^-,
\end{array}\right.   
\end{equation}
where $\blacktriangle:=\{\gamma\in\triangle(\frak{g}_\mathbb{C},\frak{h}_\mathbb{C}) \,|\, \gamma(T)=0\}$.
   Denote by $G_\mathbb{C}=G_uH_\mathbb{R}N^\pm$ the Iwasawa decompositions of $G_\mathbb{C}$ corresponding to the $\frak{g}_\mathbb{C}=\frak{g}_u\oplus\frak{h}_\mathbb{R}\oplus\frak{n}^\pm$, respectively.

\begin{remark}\label{rem-8.1.8}
\begin{enumerate}[(i)]
\item[]
\item 
   $\frak{l}_\mathbb{C}$ is a complex reductive Lie algebra by \eqref{eq-8.1.7}.
\item
   A complex subalgebra of $\frak{g}_\mathbb{C}$ is said to be {\it parabolic},\index{parabolic subalgebra@parabolic subalgebra\dotfill} if it includes a complex Borel subalgebra of $\frak{g}_\mathbb{C}$.
\item  
   \eqref{eq-8.1.7} implies that $\bigoplus_{\nu\geq 0}\frak{g}^{s\nu}=\frak{l}_\mathbb{C}\oplus\frak{u}^s$ is a complex parabolic subalgebra of $\frak{g}_\mathbb{C}$ ($s=\pm$). 
\item
   We have constructed complex parabolic subalgebras $\frak{l}_\mathbb{C}\oplus\frak{u}^\pm\subset\frak{g}_\mathbb{C}$ from the elliptic element $T\in\frak{g}_\mathbb{C}$. 
   Similarly one can do so from every elliptic element of $\frak{g}_\mathbb{C}$. 
   This construction provides us with all complex parabolic subalgebras of $\frak{g}_\mathbb{C}$. 
\item
   Let $\{Z_a\}_{a=1}^\ell\subset\frak{h}_\mathbb{C}$ be the dual basis of $\Pi_\triangle=\{\alpha_a\}_{a=1}^\ell$. 
   Then, by \eqref{eq-8.1.5} one can express $-iT$ as $-iT=\sum_{a=1}^\ell\lambda_aZ_a$ with $\lambda_1,\lambda_2,\dots,\lambda_\ell\geq 0$.
   In fact; for any elliptic element $T'\in\frak{g}_\mathbb{C}$, there exist an inner automorphism $\psi$ of $\frak{g}_\mathbb{C}$ and $\lambda_1',\lambda_2',\dots,\lambda_\ell'\geq 0$ such that $\psi(-iT')=\sum_{a=1}^\ell\lambda_a'Z_a$.
\item
   $\blacktriangle$, $\triangle^+-\blacktriangle$ and $\triangle^--\blacktriangle$ are closed subsets of $\triangle$, and furthermore, $\blacktriangle$ is symmetric (i.e., $\blacktriangle=-\blacktriangle$).
   Here a subset $\Gamma\subset\triangle$ is said to be {\it closed},\index{closed subset of a root system@closed subset of a root system\dotfill} if $\alpha,\beta\in\Gamma$ and $\alpha+\beta\in\triangle$ imply $\alpha+\beta\in\Gamma$.
\item
   $\triangle^+-\blacktriangle=\{\alpha\in\triangle \,|\, \alpha(-iT)>0\}$, $\triangle^--\blacktriangle=\{\alpha\in\triangle \,|\, \alpha(-iT)<0\}$.
\item 
   If $\alpha(-iT)>0$ for all $\alpha\in\Pi_\triangle$ (cf.\ \eqref{eq-8.1.5}), then $\frak{l}_\mathbb{C}=\frak{h}_\mathbb{C}$, $\frak{u}^\pm=\frak{n}^\pm$, $\bigoplus_{\nu\geq 0}\frak{g}^{\pm\nu}=\frak{l}_\mathbb{C}\oplus\frak{u}^\pm=\frak{b}^\pm$ and $\blacktriangle=\emptyset$. 
\end{enumerate}
\end{remark}

   In addition to Remark \ref{rem-8.1.8} we pay attention to 
\begin{remark}\label{rem-8.1.9}
   Let 
\begin{equation}\label{eq-8.1.10}
\begin{array}{lllll}
   H_\mathbb{C}:=C_{G_\mathbb{C}}(\frak{h}_\mathbb{C}), & B^\pm:=N_{G_\mathbb{C}}(\frak{b}^\pm), & \blacktriangle^\pm:=\blacktriangle\cap\triangle^\pm, & \frak{n}^\pm_1:=\bigoplus_{\alpha\in\blacktriangle^\pm}\frak{g}_\alpha,
   & N^\pm_1:=\exp\frak{n}^\pm_1.
\end{array}
\end{equation} 
   Then it turns out that 
\begin{enumerate}[(i)]
\item 
   $L_\mathbb{C}=L_uH_\mathbb{R}N^\pm_1$ are Iwasawa decompositions of the reductive Lie group $L_\mathbb{C}$, 
\item 
   If $\alpha(-iT)>0$ for all $\alpha\in\Pi_\triangle$, then $L_\mathbb{C}=H_\mathbb{C}$, $U^\pm=N^\pm$ and $Q^\pm=B^\pm$.    
\end{enumerate}   
\end{remark}

   In view of \eqref{eq-8.1.7} we see
\begin{lemma}\label{lem-8.1.11}
   Let $s=+$ or $-$. 
\begin{enumerate}
\item[{\rm (1)}]
   $G_\mathbb{C}=G_uQ^s$. 
\item[{\rm (2)}]
   $N^s\subset L_\mathbb{C}U^s$.   
\end{enumerate}
\end{lemma}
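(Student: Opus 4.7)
For part (1) the strategy is to reduce everything to the Iwasawa decomposition $G_{\mathbb C}=G_uH_{\mathbb R}N^s$ already introduced in \S\ref{subsec-8.1.4}: it suffices to show $H_{\mathbb R}N^s\subset Q^s$. First I would verify $\frak b^s\subset\bigoplus_{\nu\geq 0}\frak g^{s\nu}$, which is literally one of the inclusions recorded in \eqref{eq-8.1.7}. Combining this with the description $\frak q^s=\{X\in\frak g_{\mathbb C}:[X,\bigoplus_{\nu\geq 0}\frak g^{s\nu}]\subset\bigoplus_{\nu\geq 0}\frak g^{s\nu}\}$ from Lemma \ref{lem-8.0.1}-(b) together with the subalgebra property (3$''$) of Lemma \ref{lem-8.0.1}, I get $\frak h_{\mathbb R}\subset\frak h_{\mathbb C}\subset\frak q^s$ and $\frak n^s\subset\frak q^s$. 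Exponentiating and using that $\operatorname{Ad}(\exp X)=\exp(\operatorname{ad}X)$ preserves $\bigoplus_{\nu\geq 0}\frak g^{s\nu}$ whenever $\operatorname{ad}X$ does, this yields $H_{\mathbb R}=\exp\frak h_{\mathbb R}\subset Q^s$ and $N^s=\exp\frak n^s\subset Q^s$; whence $G_{\mathbb C}=G_uH_{\mathbb R}N^s\subset G_uQ^s\subset G_{\mathbb C}$.

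For part (2) I would exploit the vector space decomposition
\[
   \frak n^s=\frak n^s_1\oplus\frak u^s,
\]
which is immediate from \eqref{eq-8.1.7} and the definition $\frak n^s_1=\bigoplus_{\alpha\in\blacktriangle^s}\frak g_\alpha$ in \eqref{eq-8.1.10}, together with the fact that $\frak n^s_1\subset\frak l_{\mathbb C}$ (since $\blacktriangle\subset\{\gamma:\gamma(T)=0\}$). The next key step is to observe that $\frak u^s$ is an ideal of $\frak n^s$: from Lemma \ref{lem-8.0.1}-(3$'$) we have $[\frak l_{\mathbb C},\frak u^s]\subset\frak u^s$ and $[\frak u^s,\frak u^s]\subset\frak u^s$, so
\[
   [\frak n^s,\frak u^s]=[\frak n^s_1,\frak u^s]+[\frak u^s,\frak u^s]\subset[\frak l_{\mathbb C},\frak u^s]+\frak u^s\subset\frak u^s.
\]

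Since $\frak n^s$ is a complex nilpotent subalgebra of $\frak g_{\mathbb C}$ (Lemma \ref{lem-8.0.1}-(3$''$)) and $G_{\mathbb C}$ is connected, $N^s=\exp\frak n^s$ is the corresponding connected nilpotent subgroup, and similarly for $N^s_1$ and $U^s$. The ideal property makes $U^s$ normal in $N^s$, and the vector-space direct sum $\frak n^s=\frak n^s_1\oplus\frak u^s$ together with the Campbell--Baker--Hausdorff formula (applied inside the nilpotent group $N^s$) yields the product decomposition $N^s=N^s_1\cdot U^s$. Since $\frak n^s_1\subset\frak l_{\mathbb C}$ gives $N^s_1\subset L_{\mathbb C}$, this proves $N^s\subset L_{\mathbb C}U^s$.

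The main obstacle is the last step: establishing $N^s=N^s_1\cdot U^s$ rigorously. The cleanest way I anticipate is to note that the map $\frak n^s_1\times\frak u^s\to N^s$, $(X,Y)\mapsto\exp X\exp Y$, is a holomorphic map whose differential at $(0,0)$ is the identification $\frak n^s_1\oplus\frak u^s=\frak n^s\cong T_eN^s$; combined with the fact that on a simply connected nilpotent Lie group the product of a closed subgroup and a closed normal subgroup with complementary Lie algebras is the whole group, this gives surjectivity. An alternative is to argue directly via the quotient $N^s/U^s$, whose Lie algebra is $\frak n^s/\frak u^s\cong\frak n^s_1$, and to lift a coset representative back into $N^s_1$. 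Either route is routine once the Lie-algebraic picture in the previous paragraph is set up.
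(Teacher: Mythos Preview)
Your proof is correct and follows essentially the same route as the paper. For (1) the paper likewise uses the Iwasawa decomposition $G_{\mathbb C}=G_uH_{\mathbb R}N^s$ and checks $H_{\mathbb R},N^s\subset Q^s$ via the bracket inclusions in \eqref{eq-8.1.7} and Lemma~\ref{lem-8.0.1}-(3$''$); for (2) the paper uses the same decomposition $\frak n^s=(\frak l_{\mathbb C}\cap\frak n^s)\oplus\frak u^s$ (your $\frak n^s_1$ is exactly $\frak l_{\mathbb C}\cap\frak n^s$) and invokes simple connectedness of the nilpotent group $N^s$ to get $N^s=\exp(\frak l_{\mathbb C}\cap\frak n^s)\exp\frak u^s$, though it phrases this in one line without spelling out the ideal property of $\frak u^s$ that you make explicit.
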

\begin{proof}
   (1). 
   We prove $G_\mathbb{C}\subset G_uQ^s$ only. 
   For any $g\in G_\mathbb{C}$, there exists a unique $(k,a,n)\in G_uH_\mathbb{R}N^s$ satisfying $g=kan$, since $G_\mathbb{C}=G_uH_\mathbb{R}N^s$.
   From \eqref{eq-8.1.7} and Lemma \ref{lem-8.0.1}-(3$''$) we obtain $[\frak{h}_\mathbb{R},\bigoplus_{\nu\geq 0}\frak{g}^{s\nu}]\subset\bigoplus_{\nu\geq 0}\frak{g}^{s\nu}$, $[\frak{n}^s,\bigoplus_{\nu\geq 0}\frak{g}^{s\nu}]\subset\bigoplus_{\nu\geq 0}\frak{g}^{s\nu}$. 
   So, both $H_\mathbb{R}=\exp\frak{h}_\mathbb{R}$ and $N^s=\exp\frak{n}^s$ are subsets of $N_{G_\mathbb{C}}(\bigoplus_{\nu\geq 0}\frak{g}^{s\nu})=Q^s$, and thus $an\in H_\mathbb{R}N^s\in Q^sQ^s\subset Q^s$. 
   This yields $g=k(an)\in G_uQ^s$, and $G_\mathbb{C}\subset G_uQ^s$.\par

   (2).
   By a direct computation with \eqref{eq-8.1.7} we deduce $\frak{n}^s=(\frak{l}_\mathbb{C}\cap\frak{n}^s)\oplus\frak{u}^s$, and both $\frak{l}_\mathbb{C}\cap\frak{n}^s$ and $\frak{u}^s$ are subalgebras of $\frak{n}^s$. 
   Therefore we conclude $N^s=\exp(\frak{l}_\mathbb{C}\cap\frak{n}^s)\exp\frak{u}^s\subset L_\mathbb{C}U^s$ since the nilpotent Lie group $N^s=\exp\frak{n}^s$ is simply connected.
\end{proof}

\section{Propositions related to complex flag manifolds}\label{sec-8.2}

\subsection{Some properties of $U^s$, $Q^s$ and $G_\mathbb{C}/Q^s$}\label{subsec-8.2.1}
   Let us clarify some properties of $U^\pm$, $Q^\pm$ and $G_\mathbb{C}/Q^\pm$.

\begin{proposition}\label{prop-8.2.1}
   The following seven items hold for each $s=\pm:$
\begin{enumerate}
\item[{\rm (i)}]
   $U^s$ is a simply connected, closed complex nilpotent subgroup of $G_\mathbb{C}$ whose Lie algebra is $\frak{u}^s$, and $\exp:\frak{u}^s\to U^s$ is biholomorphic.  
\item[{\rm (ii)}]
   $L_u$ coincides with $G_u\cap Q^s$.
\item[{\rm (iii)}]
   $Q^s=N_{G_\mathbb{C}}(\bigoplus_{\nu\geq 0}\frak{g}^{s\nu})$ is a connected, closed complex parabolic subgroup of $G_\mathbb{C}$ such that $Q^s=L_\mathbb{C}\ltimes U^s$ $($semidirect$)$ and $\frak{q}^s=(\frak{l}_\mathbb{C}\oplus\frak{u}^s)=\bigoplus_{\nu\geq 0}\frak{g}^{s\nu}$.
\item[{\rm (iv)}]
   The product mapping $U^{-s}\times Q^s\ni(u,q)\mapsto uq\in G_\mathbb{C}$ is a biholomorphism of $U^{-s}\times Q^s$ onto a domain in $G_\mathbb{C}$.\footnote{This statement will be improved later (see Corollary \ref{cor-8.3.16}-(i)).}
\item[{\rm (v)}] 
   $\iota:G_u/L_u\to G_\mathbb{C}/Q^s$, $kL_u\mapsto kQ^s$, is a $G_u$-equivariant real analytic diffeomorphism.  
\item[{\rm (vi)}] 
   $Q^s$ includes the center $Z(G_\mathbb{C})$.
\item[{\rm (vii)}]
   $Q^{-s}\cap Q^s=L_\mathbb{C}$. 
\end{enumerate}   
\end{proposition}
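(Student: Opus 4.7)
The plan is to prove the seven items in the order (i)$\to$(iii)$\to$(iv)$\to$(ii)$\to$(v)$\to$(vi)$\to$(vii), so each assertion can invoke the preceding ones. The recurring tools will be the eigenspace grading $\frak{g}_\mathbb{C}=\bigoplus_\lambda\frak{g}^\lambda$ with $[\frak{g}^\lambda,\frak{g}^\mu]\subset\frak{g}^{\lambda+\mu}$, the description of $\frak{q}^s$ in Lemma~\ref{lem-8.0.1}-(b), the relations $\overline{\theta}_*(\frak{u}^{\pm})=\frak{u}^{\mp}$, and the global decomposition $G_\mathbb{C}=G_uQ^s$ from Lemma~\ref{lem-8.1.11}-(1).

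For (i), every $X\in\frak{u}^s=\bigoplus_{\lambda>0}\frak{g}^{s\lambda}$ lies in a direct sum of weight spaces of one sign, so the grading forces $(\operatorname{ad}X)^N=0$ for $N$ exceeding the spread of eigenvalues; hence $\frak{u}^s$ is a complex $\operatorname{ad}$-nilpotent subalgebra, and Chevalley's theorem on analytic subgroups generated by $\operatorname{ad}$-nilpotent subalgebras yields that $U^s$ is a closed, simply connected, complex nilpotent subgroup of $G_\mathbb{C}$ with $\exp|_{\frak{u}^s}$ biholomorphic. For (iii), I first compute $\frak{q}^s$: a component $X^\nu\in\frak{g}^\nu$ of $X\in\frak{q}^s$ satisfies $[T,X^\nu]=i\nu X^\nu\in\bigoplus_{\mu\geq 0}\frak{g}^{s\mu}$, forcing $X^\nu=0$ whenever $s\nu<0$, so $\frak{q}^s=\frak{l}_\mathbb{C}\oplus\frak{u}^s$. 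Since $L_\mathbb{C}$ is connected (Lemma~\ref{lem-8.0.1}-(a)) and normalizes $\frak{u}^s$ (Lemma~\ref{lem-8.0.1}-(2$'$)), the product $L_\mathbb{C}U^s$ is a connected complex subgroup of $Q^s$ with the full Lie algebra $\frak{q}^s$, hence equals $(Q^s)_0$; the connectedness of $Q^s$ itself will be obtained by showing that $Q^s$ stabilizes the Borel subgroup $B^s$ contained in $L_\mathbb{C}U^s$ (cf.~\eqref{eq-8.1.7}) and then invoking the self-normalizing property of $B^s$. The semidirect product structure follows from $L_\mathbb{C}\cap U^s=\{e\}$: if $l=\exp V\in L_\mathbb{C}\cap U^s$ with $V\in\frak{u}^s$, then $\operatorname{Ad}(l)T=T$ combined with the invertibility of $\operatorname{ad}T$ on $\frak{u}^s$ and the finite polynomial nature of $\exp\operatorname{ad}V$ forces $V=0$.

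Item (iv) reduces to a local inversion together with a uniqueness argument: at $(0,e)$ the map $\frak{u}^{-s}\times\frak{q}^s\ni(X,Y)\mapsto\exp X\cdot\exp Y$ has invertible differential by Lemma~\ref{lem-8.0.1}-(1), and injectivity of $U^{-s}\times Q^s\ni(u,q)\mapsto uq$ is reduced to $U^{-s}\cap Q^s=\{e\}$, which follows from $\frak{u}^{-s}\cap\frak{q}^s=\{0\}$ and the nilpotent structure of $U^{-s}$. Then (ii) proceeds as follows: the inclusion $L_u\subset G_u\cap Q^s$ is Lemma~\ref{lem-8.0.1}-(c,2$''$); conversely, for $g=lu\in G_u\cap Q^s$ with $(l,u)\in L_\mathbb{C}\times U^s$ from (iii), applying $\overline{\theta}$ gives $\overline{\theta}(l)\overline{\theta}(u)=lu$, whence $\overline{\theta}(u)\in U^{-s}\cap L_\mathbb{C}U^s$; the injective parametrization of (iv) forces $\overline{\theta}(u)=e$, so $u=e$ and $g=l\in G_u\cap L_\mathbb{C}=L_u$. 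Item (v) is then essentially automatic: $\iota$ is surjective by $G_\mathbb{C}=G_uQ^s$, injective by (ii), and real analytic with bijective differential at $eL_u$ (both tangent spaces have real dimension $2\dim_\mathbb{C}\frak{u}^s$); $G_u$-equivariance together with compactness of $G_u$ upgrades it to a diffeomorphism.

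The last two items are short. For (vi), $G_\mathbb{C}$ connected semisimple implies $Z(G_\mathbb{C})\subset\ker\operatorname{Ad}$, so every central element trivially stabilizes $\bigoplus_{\nu\geq 0}\frak{g}^{s\nu}$ and lies in $Q^s$. For (vii), $L_\mathbb{C}\subset Q^+\cap Q^-$ comes from Lemma~\ref{lem-8.0.1}-(2$''$); conversely, given $g\in Q^+\cap Q^-$, I write $g=l_+u_+=l_-u_-$ by (iii) with $u_\pm\in U^\pm$, so $l_-^{-1}l_+=u_-u_+^{-1}\in L_\mathbb{C}\cap U^-U^+$. Reading this as an element of $U^-\cdot Q^+$ and applying the biholomorphism of (iv) (with $s=+$) in two ways forces $u_-=e$ and $u_+^{-1}\in L_\mathbb{C}\cap U^+=\{e\}$ by the semidirect argument already used in (iii), so $g=l_+\in L_\mathbb{C}$. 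The main obstacle I anticipate is the connectedness of $Q^s$ in (iii): the definition as a normalizer only pins down the Lie algebra and hence the identity component, and separating genuinely new components requires either invoking the standard fact that parabolic subgroups of a connected complex semisimple group containing a Borel subgroup are self-normalizing, or an ad hoc computation through $B^s\subset L_\mathbb{C}U^s$.
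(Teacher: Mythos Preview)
Your overall strategy works, but it diverges from the paper in the pivotal items (ii) and (iii), and the order in which they are proved.

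The paper proves (ii) \emph{before} (iii), by a root-theoretic argument: for $k\in G_u\cap Q^+$ one first shows $\operatorname{Ad}k$ preserves both $\frak{c}_{\frak{g}_\mathbb{C}}(T)$ and $\frak{u}^+$ (using $\overline{\theta}(k)=k$ and Lemma~\ref{lem-8.0.1}-(5)), then conjugates a maximal torus inside $\frak{c}_{\frak{g}_u}(T)$ and reads off from the induced permutation of $\triangle^+$ that $\operatorname{Ad}(xk)=\operatorname{id}$ on $\frak{h}_\mathbb{C}$. With (ii) in hand, (iii) is obtained globally via the Iwasawa decomposition $G_\mathbb{C}=G_uH_\mathbb{R}N^+$: writing $q=kan$, one gets $an\in L_\mathbb{C}U^+\subset Q^+$ directly from $H_\mathbb{R}\subset L_\mathbb{C}$ and Lemma~\ref{lem-8.1.11}-(2), hence $k\in G_u\cap Q^+=L_u$, and $Q^+=L_\mathbb{C}U^+$ falls out with connectedness for free.

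Your route reverses the dependency: you obtain $(Q^s)_0=L_\mathbb{C}U^s$ by a Lie-algebra count, then import the self-normalizing property of parabolics to kill the other components, and only afterwards deduce (ii) from (iii)+(iv) by the $\overline{\theta}$-trick. That (ii) argument is genuinely cleaner than the paper's Weyl-group computation. The price is that your connectedness step in (iii) rests on an external structure theorem, whereas the paper's Iwasawa argument is self-contained within the notation already set up in Section~\ref{sec-8.1}. One phrasing to fix: in your sketch you say ``$Q^s$ stabilizes the Borel subgroup $B^s$'' and then invoke $N_{G_\mathbb{C}}(B^s)=B^s$; taken literally this gives $Q^s\subset B^s$, which is false. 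What you want (and correctly identify in your final paragraph) is that $(Q^s)_0\supset B^s$ is parabolic, hence $N_{G_\mathbb{C}}((Q^s)_0)=(Q^s)_0$, and $Q^s$ normalizes $(Q^s)_0$ because it normalizes $\frak{q}^s$.
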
   
\begin{proof}
   By Lemma \ref{lem-8.0.1}-(5$''$), (5$'$), (1) and $\overline{\theta}\bigl(Z(G_\mathbb{C})\bigr)=Z(G_\mathbb{C})$, it suffices to investigate the case of $s=+$ only.
   Let us obey the setting of Section \ref{sec-8.1}.\par

   (i). 
   $N^+$ is a closed complex nilpotent subgroup of $G_\mathbb{C}$ whose Lie algebra is $\frak{n}^+$, and $\exp:\frak{n}^+\to N^+$ is biholomorphic. 
   Hence we conclude (i) from $\frak{u}^+\subset\frak{n}^+$ and $U^+=\exp\frak{u}^+$. 
   cf.\ \eqref{eq-8.1.7}.\par
   
   (ii). 
   It is immediate from $L_u=(G_u\cap L_\mathbb{C})$ and Lemma \ref{lem-8.0.1}-(2$''$) that 
\[
   L_u
   \subset(G_u\cap L_\mathbb{C})
   \subset(G_u\cap Q^+). 
\]
   Let us show that the converse inclusion also holds.
   Take an arbitrary $k\in G_u\cap Q^+$. 
   We are going to conclude $k\in C_{G_u}(T)$.
   Since $Q^+=N_{G_\mathbb{C}}(\bigoplus_{\nu\geq 0}\frak{g}^\nu)$ we have $\operatorname{Ad}k\bigl(\bigoplus_{\nu\geq 0}\frak{g}^\nu\bigr)\subset\bigoplus_{\nu\geq 0}\frak{g}^\nu$. 
   Furthermore, $\overline{\theta}(k)=k$ and Lemma \ref{lem-8.0.1}-(5) give rise to $\operatorname{Ad}k\bigl(\bigoplus_{\nu\geq 0}\frak{g}^{-\nu}\bigr)\subset\bigoplus_{\nu\geq 0}\frak{g}^{-\nu}$. 
   Therefore it follows from $\frak{c}_{\frak{g}_\mathbb{C}}(T)=\frak{g}^0$ and $\frak{u}^+=\bigoplus_{\lambda>0}\frak{g}^\lambda$ that 
\[
\left\{
\begin{array}{@{\,}l}
   \operatorname{Ad}k\bigl(\frak{c}_{\frak{g}_\mathbb{C}}(T)\bigr)
   =\operatorname{Ad}k\bigl(\bigoplus_{\nu\geq 0}\frak{g}^\nu\cap\bigoplus_{\mu\geq 0}\frak{g}^{-\mu}\bigr)
   \subset\bigl(\bigoplus_{\nu\geq 0}\frak{g}^\nu\cap\bigoplus_{\mu\geq 0}\frak{g}^{-\mu}\bigr)
   =\frak{c}_{\frak{g}_\mathbb{C}}(T),\\
   \operatorname{Ad}k(\frak{u}^+)
   =\operatorname{Ad}k\bigl([T,\frak{u}^+]\bigr)
   \subset[\operatorname{Ad}k(T),\bigoplus_{\nu\geq 0}\frak{g}^\nu]
   =[\operatorname{Ad}k(T),\frak{c}_{\frak{g}_\mathbb{C}}(T)\oplus\frak{u}^+]
   \subset\frak{u}^+,
\end{array}\right.
\]
where we note that $\operatorname{ad}T:\frak{u}^+\to\frak{u}^+$ is linear isomorphic and $\operatorname{Ad}k(T)$ belongs to the center of $\frak{c}_{\frak{g}_\mathbb{C}}(T)$. 
   From $\operatorname{Ad}k(\frak{g}_u)\subset\frak{g}_u$ and $\operatorname{Ad}k\bigl(\frak{c}_{\frak{g}_\mathbb{C}}(T)\bigr)\subset\frak{c}_{\frak{g}_\mathbb{C}}(T)$ one obtains 
\begin{equation}\label{eq-1}\tag*{\textcircled{1}}
   \operatorname{Ad}k\bigl(\frak{c}_{\frak{g}_u}(T)\bigr)
   =\operatorname{Ad}k\bigl(\frak{g}_u\cap\frak{c}_{\frak{g}_\mathbb{C}}(T)\bigr)
   \subset\frak{c}_{\frak{g}_u}(T).
\end{equation}
   Here $i\frak{h}_\mathbb{R}\subset\frak{c}_{\frak{g}_u}(T)$ and $i\frak{h}_\mathbb{R}$ is a maximal torus of the compact Lie algebra $\frak{c}_{\frak{g}_u}(T)$. 
   So, by \ref{eq-1} there exists an $x\in C_{G_u}(T)$ satisfying 
\[
\begin{array}{ll}
   \operatorname{Ad}(xk)(i\frak{h}_\mathbb{R})=i\frak{h}_\mathbb{R}, 
   & {}^t\!\operatorname{Ad}(xk)^{-1}(\blacktriangle^+)\subset\blacktriangle^+,
\end{array}   
\]
where $\blacktriangle=\{\gamma\in\triangle(\frak{g}_\mathbb{C},\frak{h}_\mathbb{C}) \,|\, \gamma(T)=0\}$ and $\blacktriangle^+=\blacktriangle\cap\triangle^+$.
   In view of $x\in C_{G_u}(T)\subset L_\mathbb{C}$, $\operatorname{Ad}k(\frak{u}^+)\subset\frak{u}^+$ and Lemma \ref{lem-8.0.1}-(2$'$) we see that $\operatorname{Ad}(xk)(\frak{u}^+)\subset\frak{u}^+$. 
   This, combined with $\operatorname{Ad}(xk)(i\frak{h}_\mathbb{R})=i\frak{h}_\mathbb{R}$ and $\frak{u}^+=\bigoplus_{\alpha\in\triangle^+-\blacktriangle}\frak{g}_\alpha$, assures that 
\[
   {}^t\!\operatorname{Ad}(xk)^{-1}(\triangle^+-\blacktriangle)\subset\triangle^+-\blacktriangle.
\]   
   Consequently it follows that ${}^t\!\operatorname{Ad}(xk)^{-1}(\triangle^+)\subset\triangle^+$, and so $\operatorname{Ad}(xk)=\operatorname{id}$ on $\frak{h}_\mathbb{C}$. 
   Hence we conclude $k\in C_{G_u}(T)$ by $T\in\frak{h}_\mathbb{C}$ and $\operatorname{Ad}x(T)=T$. 
   For this reason we show that $(G_u\cap Q^+)\subset C_{G_u}(T)=L_u$, so that $L_u=(G_u\cap Q^+)$.\par
    
   (iii). 
   First, let us verify 
\begin{equation}\label{eq-2}\tag*{\textcircled{2}}
   L_\mathbb{C}U^+\subset Q^+.
\end{equation}
   It follows from \eqref{eq-8.1.7} and Lemma \ref{lem-8.0.1}-(3$''$) that $[\frak{u}^+,\bigoplus_{\nu\geq 0}\frak{g}^\nu]\subset\bigoplus_{\nu\geq 0}\frak{g}^\nu$. 
   Accordingly Lemma \ref{lem-8.0.1}-(2$''$), together with (i), assures that both $L_\mathbb{C}$ and $U^+$ are subsets of $N_{G_\mathbb{C}}(\bigoplus_{\nu\geq 0}\frak{g}^\nu)=Q^+$, and thus one can assert \ref{eq-2} $L_\mathbb{C}U^+\subset Q^+Q^+\subset Q^+$.
   Next, let us demonstrate  
\begin{equation}\label{eq-3}\tag*{\textcircled{3}}
   Q^+\subset L_\mathbb{C}U^+.
\end{equation}
   Take an arbitrary $q\in Q^+$. 
   By $q\in G_\mathbb{C}=G_uH_\mathbb{R}N^+$ there exists a unique $(k,a,n)\in G_u\times H_\mathbb{R}\times N^+$ satisfying $q=kan$. 
   Then $H_\mathbb{R}\subset L_\mathbb{C}$, Lemma \ref{lem-8.1.11}-(2) and \ref{eq-2} tell us that 
\[
   an\in L_\mathbb{C}L_\mathbb{C}U^+\subset L_\mathbb{C}U^+\subset Q^+.
\]
   This and (ii) yield $k=q(an)^{-1}\in(G_u\cap Q^+)=L_u\subset L_\mathbb{C}$. 
   Accordingly, $q=k(an)\in L_\mathbb{C}L_\mathbb{C}U^+\subset L_\mathbb{C}U^+$, and one has \ref{eq-3}.
   At this stage we know that $Q^+=L_\mathbb{C}U^+$, and that $Q^+$ is a connected, closed complex subgroup of $G_\mathbb{C}$ due to (i) and Lemma \ref{lem-8.0.1}-(a), (b). 
   In addition, $U^+$ is a normal subgroup of $Q^+=L_\mathbb{C}U^+$ by virtue of Lemma \ref{lem-8.0.1}-(2$'$) and $U^+=\exp\frak{u}^+$.
   Therefore, the rest of proof is to confirm  
\begin{equation}\label{eq-4}\tag*{\textcircled{4}}
   L_\mathbb{C}\cap U^+=\{e\}
\end{equation}
because \ref{eq-4}, $Q^+=L_\mathbb{C}U^+$, Lemma \ref{lem-8.0.1}-(1) and Remark \ref{rem-8.1.8}-(iii) assure that $\operatorname{Lie}(Q^+)=(\frak{l}_\mathbb{C}\oplus\frak{u}^+)=\bigoplus_{\nu\geq 0}\frak{g}^\nu$ is parabolic. 
   Take an arbitrary $y\in L_\mathbb{C}\cap U^+$. 
   By $y\in U^+$ and (i) there exists a unique $Y\in\frak{u}^+$ satisfying $y=\exp Y$. 
   It follows from $y\in L_\mathbb{C}=C_{G_\mathbb{C}}(T)$ that $\operatorname{Ad}y(T)=T$. 
   So, for any $t\in\mathbb{R}$ we have $y(\exp tT)y^{-1}=\exp tT$, and then $y=(\exp tT)y\exp(-tT)$. 
   Therefore $\exp Y=\exp\operatorname{Ad}(\exp tT)(Y)$. 
   This, together with $Y,\operatorname{Ad}(\exp tT)(Y)\in\frak{u}^+$ and (i), assures that $Y=\operatorname{Ad}(\exp tT)(Y)$. 
   Differentiating this equation at $t=0$, we obtain $0=[T,Y]$. 
   Thus $Y\in(\frak{c}_{\frak{g}_\mathbb{C}}(T)\cap\frak{u}^+)=\{0\}$, and $y=\exp Y=e$. 
   For this reason \ref{eq-4} holds.\par
   
   (iv).
   Since (i), (iii) and $\frak{g}_\mathbb{C}=\frak{u}^-\oplus(\frak{l}_\mathbb{C}\oplus\frak{u}^+)=\frak{u}^-\oplus\frak{q}^+$, we only show that 
\[
   U^-\cap Q^+=\{e\}.
\]   
   Take any $z\in U^-\cap Q^+$. 
   On the one hand; by (i) there exists a unique $Z\in\frak{u}^-$ such that $z=\exp Z$. 
   On the other hand; from $z\in Q^+=N_{G_\mathbb{C}}(\bigoplus_{\nu\geq 0}\frak{g}^\nu)$ and $T\in\frak{g}^0$ we have $\operatorname{Ad}z(T)-T\in\bigoplus_{\nu\geq 0}\frak{g}^\nu$. 
   Hence 
\[
   \bigoplus_{\nu\geq 0}\frak{g}^\nu
   \ni\operatorname{Ad}z(T)-T
   =\sum_{n\geq 1}\dfrac{1}{n!}(\operatorname{ad}Z)^nT
   \in\frak{u}^-
   =\bigoplus_{\lambda>0}\frak{g}^{-\lambda}.
\]  
   This implies that $\operatorname{Ad}z(T)-T=0$, so that $z\in(L_\mathbb{C}\cap U^-)=\{e\}$ by (iii). 
   Thus $U^-\cap Q^+=\{e\}$ follows.\par
   
   (v). 
   By (ii) and Lemma \ref{lem-8.1.11}-(1), $\iota:G_u/L_u\to G_\mathbb{C}/Q^+$, $kL_u\mapsto kQ^+$, is bijective and $G_u$-equivariant real analytic. 
   Hence it suffices to show that the differential $(d\iota)_{o_u}$ of $\iota$ at $o_u$ is a real linear isomorphism of the tangent vector space $T_{o_u}(G_u/L_u)$ onto $T_{\iota(o_u)}(G_\mathbb{C}/Q^+)$. 
   Here $o_u$ denotes the origin of $G_u/L_u$.
   From (ii) we obtain $\frak{l}_u=(\frak{g}_u\cap\frak{q}^+)$, and so the differential $(d\iota)_{o_u}$ is a linear injection. 
   Moreover, since $\frak{g}_u$ (resp.\ $\frak{l}_u$) is a real form of $\frak{g}_\mathbb{C}$ (resp.\ $\frak{l}_\mathbb{C}$), one shows
\[
\begin{split}
   \dim_\mathbb{R}G_\mathbb{C}/Q^+
  &=\dim_\mathbb{R}\frak{g}_\mathbb{C}-\dim_\mathbb{R}\frak{q}^+
   \stackrel{{\rm (iii)}}{=}\dim_\mathbb{R}\frak{g}_\mathbb{C}-(\dim_\mathbb{R}\frak{l}_\mathbb{C}+\dim_\mathbb{R}\frak{u}^+)\\
  &=2\dim_\mathbb{R}\frak{g}_u-2\dim_\mathbb{R}\frak{l}_u-\dim_\mathbb{R}\frak{u}^+
   =2\dim_\mathbb{R}\operatorname{ad}T(\frak{g}_u)-\dim_\mathbb{R}\frak{u}^+\\
  &=\dim_\mathbb{R}\operatorname{ad}T(\frak{g}_u)
   =\dim_\mathbb{R}\frak{g}_u-\dim_\mathbb{R}\frak{l}_u
   =\dim_\mathbb{R}G_u/L_u,
\end{split}
\]
where we remark that $\frak{g}_u=\frak{l}_u\oplus\operatorname{ad}T(\frak{g}_u)$ and $\dim_\mathbb{R}\operatorname{ad}T(\frak{g}_u)=\dim_\mathbb{R}\frak{u}^+$.
   Thus $(d\iota)_{o_u}:T_{o_u}(G_u/L_u)\to T_{\iota(o_u)}(G_\mathbb{C}/Q^+)$ is linear isomorphic.\par
   
   (vi). 
   $Z(G_\mathbb{C})\subset C_{G_\mathbb{C}}(T)=L_\mathbb{C}\subset Q^+$ by (iii).\par
   
   (vii). 
   Lemma \ref{lem-8.0.1}-(2$''$) yields $L_\mathbb{C}\subset Q^-\cap Q^+$. 
   Now, let $q\in Q^-\cap Q^+$. 
   Then, there exist elements $(l_\pm,u_\pm)\in L_\mathbb{C}\times U^\pm$ satisfying $l_-u_-=q=l_+u_+$ due to (iii).
   From $u_-=l_-^{-1}l_+u_+$, (iii) and (iv), we deduce that $u_-=e$, $l_-^{-1}l_+=e$, $u_+=e$.
   Therefore $q=l_+\in L_\mathbb{C}$, and hence $Q^-\cap Q^+\subset L_\mathbb{C}$.
\end{proof}

\begin{remark}\label{rem-8.2.2}
   By Proposition \ref{prop-8.2.1}-(iii), $\operatorname{Lie}(Q^s)=\frak{q}^s$ is a complex parabolic subalgebra of $\frak{g}_\mathbb{C}$ whose Levi factor and unipotent radical are $\frak{l}_\mathbb{C}$ and $\frak{u}^s$, respectively ($s=\pm$).
\end{remark}

   Propositions \ref{prop-7.3.4} and \ref{prop-8.2.1}-(v) and Lemma \ref{lem-8.0.1}-(5$''$) lead to 
\begin{corollary}\label{cor-8.2.3}
\begin{enumerate}
\item[]
\item[{\rm (1)}]
   Both $G_\mathbb{C}/Q^+$ and $G_\mathbb{C}/Q^-$ are simply connected, compact complex homogeneous spaces.
\item[{\rm (2)}]
   $G_\mathbb{C}/Q^+$ is $G_\mathbb{C}$-equivariant anti-biholomorphic to $G_\mathbb{C}/Q^-$ via the mapping $G_\mathbb{C}/Q^+\ni gQ^+\mapsto\overline{\theta}(g)Q^-\in G_\mathbb{C}/Q^-$.
\end{enumerate}   
\end{corollary}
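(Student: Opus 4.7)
The plan is to handle (1) by transporting properties across the diffeomorphism furnished by Proposition \ref{prop-8.2.1}-(v), and to handle (2) by exhibiting the explicit map, checking well-definedness via Lemma \ref{lem-8.0.1}-(5$''$), and invoking anti-holomorphicity of $\overline{\theta}$.

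For (1), first note that $G_\mathbb{C}/Q^s$ is a complex homogeneous space of $G_\mathbb{C}$: indeed $Q^s$ is a closed complex subgroup of $G_\mathbb{C}$ by Proposition \ref{prop-8.2.1}-(iii), so Theorem \ref{thm-1.2.1} (together with Remark \ref{rem-1.2.3}) endows $G_\mathbb{C}/Q^s$ with an invariant complex structure. Next I would apply Proposition \ref{prop-8.2.1}-(v) to identify $G_\mathbb{C}/Q^s$ with $G_u/L_u$ as real analytic manifolds. Since $\frak{g}_u$ is a compact semisimple Lie algebra, $G_u$ is a connected compact Lie group, so $G_u/L_u$ is compact; hence so is $G_\mathbb{C}/Q^s$. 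For simple connectedness, note $T\in\frak{g}_u$ and $\frak{g}_u$ is compact semisimple, so $T$ is elliptic in $\frak{g}_u$ by Corollary \ref{cor-7.2.6}; besides, $L_u=C_{G_u}(T)$ by definition. Proposition \ref{prop-7.3.4} applied to the connected semisimple Lie group $G_u$ and the elliptic element $T\in\frak{g}_u$ then yields that $G_u/L_u$ is simply connected, and so is $G_\mathbb{C}/Q^s$.

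For (2), I would define $\Phi:G_\mathbb{C}/Q^+\to G_\mathbb{C}/Q^-$ by $\Phi(gQ^+):=\overline{\theta}(g)Q^-$. Well-definedness: if $gQ^+=g'Q^+$, then $g'=gq$ for some $q\in Q^+$, so $\overline{\theta}(g')=\overline{\theta}(g)\overline{\theta}(q)$, and by Lemma \ref{lem-8.0.1}-(5$''$) we have $\overline{\theta}(q)\in\overline{\theta}(Q^+)=Q^-$, giving $\overline{\theta}(g')Q^-=\overline{\theta}(g)Q^-$. The same lemma, together with the fact that $\overline{\theta}$ is an involutive automorphism of $G_\mathbb{C}$, shows that $\Psi:G_\mathbb{C}/Q^-\to G_\mathbb{C}/Q^+$, $gQ^-\mapsto\overline{\theta}(g)Q^+$, is well-defined and that $\Phi\circ\Psi=\operatorname{id}$, $\Psi\circ\Phi=\operatorname{id}$, so $\Phi$ is bijective. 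Equivariance under the $G_\mathbb{C}$-actions (via $\overline{\theta}$) reads $\Phi(gxQ^+)=\overline{\theta}(g)\Phi(xQ^+)$, which is immediate; restricted to $G_u$ (on which $\overline{\theta}$ is the identity) this is genuine $G_u$-equivariance.

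It remains to establish that $\Phi$ is anti-biholomorphic. Because $\overline{\theta}:G_\mathbb{C}\to G_\mathbb{C}$ is an anti-holomorphic diffeomorphism, the composition
\[
   G_\mathbb{C}\xrightarrow{\overline{\theta}}G_\mathbb{C}\xrightarrow{\pi^-}G_\mathbb{C}/Q^-
\]
is anti-holomorphic, where $\pi^-$ is the (holomorphic) projection coming from Theorem \ref{thm-1.2.1}. Since this composition is constant on the fibers of $\pi^+:G_\mathbb{C}\to G_\mathbb{C}/Q^+$ (by the well-definedness argument), it descends to a map $G_\mathbb{C}/Q^+\to G_\mathbb{C}/Q^-$ which is exactly $\Phi$; the descent is anti-holomorphic thanks to the existence of local holomorphic cross-sections $\sigma_\alpha:U_\alpha\to G_\mathbb{C}$ for $\pi^+$ (Theorem \ref{thm-1.2.1}), since locally $\Phi=\pi^-\circ\overline{\theta}\circ\sigma_\alpha$. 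The same argument applied to $\Psi$ shows its anti-holomorphicity, so $\Phi$ is an anti-biholomorphism. The mildly delicate point here is confirming that the anti-holomorphic descent is genuinely anti-holomorphic as a map between complex manifolds; using the local holomorphic cross-sections reduces this to the known anti-holomorphicity of $\overline{\theta}$, which I expect to be the only step requiring any real care.
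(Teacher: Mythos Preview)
Your proposal is correct and follows essentially the same approach as the paper, which simply cites Propositions \ref{prop-7.3.4} and \ref{prop-8.2.1}-(v) together with Lemma \ref{lem-8.0.1}-(5$''$) without further detail. You have filled in precisely the arguments those citations are meant to cover: the transfer of compactness and simple connectedness through the diffeomorphism $G_u/L_u\cong G_\mathbb{C}/Q^s$, and the anti-holomorphic descent via $\overline{\theta}$ using local holomorphic sections.
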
 

\subsection{A complex Grassmann manifold and an invariant K\"{a}hler metric on $G_\mathbb{C}/Q^s$}\label{subsec-8.2.2}
   Now, let $M_{N,K}(\mathbb{C})$ be the set of all $K$-dimensional complex vector subspaces of $\frak{g}_\mathbb{C}$, where $N=\dim_\mathbb{C}\frak{g}_\mathbb{C}$, $K=\dim_\mathbb{C}\frak{q}^s$ and $s=+$ or $-$.
   The special linear group $SL(\frak{g}_\mathbb{C})=SL(N,\mathbb{C})$ acts transitively on $M_{N,K}(\mathbb{C})$, so we put
\[
   M_{N,K}(\mathbb{C})=SL(\frak{g}_\mathbb{C})/P^s,
\]   
where $P^s:=\{\phi\in SL(\frak{g}_\mathbb{C}) \,|\, \phi(\frak{q}^s)\subset\frak{q}^s\}$.
   In this way, $M_{N,K}(\mathbb{C})$ is a complex homogeneous space, which is a complex Grassmann manifold.
   Since $\operatorname{Ad}G_\mathbb{C}\subset SL(\frak{g}_\mathbb{C})$ and $G_\mathbb{C}$ acts on $M_{N,K}(\mathbb{C})$ as a holomorphic transformation group, 
\[
   G_\mathbb{C}\times M_{N,K}(\mathbb{C})\ni(g,\frak{m})\mapsto\operatorname{Ad}g(\frak{m})\in M_{N,K}(\mathbb{C}),
\]
one can consider the orbit $\operatorname{Ad}G_\mathbb{C}(\frak{q}^s)$ of $G_\mathbb{C}$ through the point $\frak{q}^s\in M_{N,K}(\mathbb{C})$, and equip $\operatorname{Ad}G_\mathbb{C}(\frak{q}^s)\subset M_{N,K}(\mathbb{C})$ with the relative topology. 
   Then, it follows from $Q^s=N_{G_\mathbb{C}}(\frak{q}^s)$ that 
\begin{equation}\label{eq-8.2.4}
   \mbox{$f_s:G_\mathbb{C}/Q^s\to\operatorname{Ad}G_\mathbb{C}(\frak{q}^s)$, $gQ^s\mapsto\operatorname{Ad}g(\frak{q}^s)$},
\end{equation}
is a bijective continuous mapping; moreover, it is homeomorphic by Corollary \ref{cor-8.2.3}. 
   Providing $\operatorname{Ad}G_\mathbb{C}(\frak{q}^s)$ with the holomorphic structure so that $f_s:G_\mathbb{C}/Q^s\to\operatorname{Ad}G_\mathbb{C}(\frak{q}^s)$ is biholomorphic, we deduce that the orbit $\operatorname{Ad}G_\mathbb{C}(\frak{q}^s)$ is a simply connected, compact, regular complex submanifold of $M_{N,K}(\mathbb{C})$. 
   Here we remark that the mapping $G_\mathbb{C}/Q^s\ni gQ^s\mapsto(\operatorname{Ad}g)P^s\in SL(\frak{g}_\mathbb{C})/P^s$ is a $G_\mathbb{C}$-equivariant holomorphic embedding.

\begin{remark}\label{rem-8.2.5}
   Via the Pl\"{u}cker embedding $p_{\ddot{u}}:M_{N,K}(\mathbb{C})\to CP(\wedge^K\mathbb{C}^N)$, $\operatorname{span}_\mathbb{C}\{v_1,v_2,\dots,v_K\}\mapsto[v_1\wedge v_2\wedge\cdots\wedge v_K]$, the orbit $\operatorname{Ad}G_\mathbb{C}(\frak{q}^s)$ can be holomorphically embedded into the complex projective space $CP(\wedge^K\mathbb{C}^N)$ of dimension $N!/(K!(N-K)!)-1$. 
   Since $p_{\ddot{u}}\bigl(\operatorname{Ad}G_\mathbb{C}(\frak{q}^s)\bigr)=p_{\ddot{u}}\bigl(f_s(G_\mathbb{C}/Q^s)\bigr)$ is a connected, compact, regular complex submanifold of $CP(\wedge^K\mathbb{C}^N)$, it is a projective algebraic variety by Theorem V in  Chow \cite[p.910]{Ch}.
\end{remark}

   The complex homogeneous space $G_\mathbb{C}/Q^s$ is a K\"{a}hler manifold. 
   Indeed, 
\begin{proposition}\label{prop-8.2.6}
  The simply connected, compact complex homogeneous space $G_\mathbb{C}/Q^s=(G_\mathbb{C}/Q^s,J_s)$ admits a $G_u$-invariant K\"{a}hler metric ${\sf g}_s$ with respect to $J_s$ $(s=\pm)$.
  Here we refer to Remark {\rm \ref{rem-1.2.3}} for the $G_\mathbb{C}$-invariant complex structure $J_s$ on $G_\mathbb{C}/Q^s$.\footnote{Remark.\ $J_s$ is $G_\mathbb{C}$-invariant, but, in contrast, ${\sf g}_s$ is $G_u$-invariant.}
\end{proposition}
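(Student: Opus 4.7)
By Proposition~\ref{prop-8.2.1}-(v), $\iota: G_u/L_u \to G_\mathbb{C}/Q^s$ is a $G_u$-equivariant diffeomorphism; pulling back $J_s$ gives a $G_u$-invariant complex structure $J_s^\natural$ on $G_u/L_u$, and it suffices to construct a $G_u$-invariant K\"{a}hler metric on $G_u/L_u$ relative to $J_s^\natural$ and then transport by $\iota$. The reductive decomposition $\frak{g}_u = \frak{l}_u \oplus \frak{m}_u$ with $\frak{m}_u := \operatorname{ad}T(\frak{g}_u) = \{V + \overline{\theta}_*(V) : V \in \frak{u}^+\}$ (Lemma~\ref{lem-8.0.1}-(i)--(iii)) identifies $T_{o_u}(G_u/L_u) \cong \frak{m}_u$ in an $\operatorname{Ad}(L_u)$-equivariant way. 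Composing $(d\iota)_{o_u}$ with the projection $\frak{g}_\mathbb{C} = \frak{u}^{-s} \oplus \frak{q}^s \to \frak{u}^{-s}$ coming from Proposition~\ref{prop-8.2.1}-(iii), (iv) yields a real-linear isomorphism $\frak{m}_u \to \frak{u}^{-s}$; since $J_s$ corresponds to multiplication by $i$ on $\frak{u}^{-s}$ and $\overline{\theta}_*$ is $\mathbb{C}$-antilinear, one reads off the explicit formula for $J_s^\natural$ on $\frak{m}_u$ (e.g.\ $J_+^\natural(V + \overline{\theta}_*(V)) = -iV + i\overline{\theta}_*(V)$ for $V \in \frak{u}^+$).

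\textbf{The K\"{a}hler form via a Kirillov--Kostant--Souriau construction.} Define $\omega_{s, o_u}(X, Y) := \epsilon_s B_{\frak{g}_\mathbb{C}}(T, [X, Y])$ on $\frak{m}_u$, with $\epsilon_s \in \{\pm 1\}$ to be pinned down by the positivity check below. Since $T \in \frak{l}_u$ is $\operatorname{Ad}(L_u)$-fixed and $B_{\frak{g}_\mathbb{C}}$ is $\operatorname{Ad}$-invariant, $\omega_{s, o_u}$ is $\operatorname{Ad}(L_u)$-invariant and extends uniquely to a $G_u$-invariant 2-form $\omega_s$ on $G_u/L_u$. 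Closedness of $\omega_s$ is the classical closedness of the Kirillov--Kostant--Souriau form on the coadjoint orbit $\operatorname{Ad}^*(G_u)\xi_T$, where $\xi_T := -B_{\frak{g}_\mathbb{C}}(T, \cdot)|_{\frak{g}_u} \in \frak{g}_u^*$ has stabilizer $L_u = C_{G_u}(T)$ by definition; this orbit is $G_u$-equivariantly identified with $G_u/L_u$. Alternatively $d\omega_s = 0$ follows at the origin from the Jacobi identity, the reductive formula for the exterior derivative of a $G_u$-invariant form, and the fact that $T \in \frak{l}_u$ kills the $\frak{l}_u$-component contributions via $B_{\frak{g}_\mathbb{C}}(T, \frak{l}_u) \cap B_{\frak{g}_\mathbb{C}}(T, \frak{m}_u)$-type orthogonalities from Lemma~\ref{lem-8.0.1}-(4$'$).

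\textbf{Hermitian property and positivity.} Set ${\sf g}_{s, o_u}(X, Y) := \omega_{s, o_u}(X, J_s^\natural Y)$, which is automatically $G_u$-invariant and, once shown symmetric and positive definite, gives a $J_s^\natural$-Hermitian metric whose fundamental form is the closed 2-form $\omega_s$ --- i.e.\ a K\"{a}hler metric. The central computation uses the $\operatorname{ad}T$-eigenspace decomposition of Lemma~\ref{lem-8.0.1}-(1): for $X = V + \overline{\theta}_*(V)$, $Y = W + \overline{\theta}_*(W)$ with $V = \sum_\lambda V^\lambda$, $W = \sum_\mu W^\mu$, one expands $[X, J_s^\natural Y]$ and pairs against $T$ via $B_{\frak{g}_\mathbb{C}}$. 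By Lemma~\ref{lem-8.0.1}-(3), (4), (4$'$), the Killing form kills $[V, W]$ and $[\overline{\theta}_*(V), \overline{\theta}_*(W)]$ (which lie in $\frak{u}^\pm$) and, on the cross brackets $[V, \overline{\theta}_*(W)]$ and $[\overline{\theta}_*(V), W]$, collapses the sum to matching weights $\lambda = \mu$. After the scalar $J_s^\natural$-twist one arrives at
\[
   {\sf g}_{s, o_u}(X, Y) = -2\epsilon_s \sum_{\lambda > 0} \lambda\, \operatorname{Re}\bigl(B_{\frak{g}_\mathbb{C}}(V^\lambda, \overline{\theta}_*(W^\lambda))\bigr),
\]
which is manifestly symmetric in $X, Y$ (since $B_{\frak{g}_\mathbb{C}}(\overline{\theta}_*(V^\lambda), W^\lambda) = \overline{B_{\frak{g}_\mathbb{C}}(V^\lambda, \overline{\theta}_*(W^\lambda))}$), and on the diagonal equals $2\epsilon_s \sum_{\lambda > 0} \lambda\bigl(-B_{\frak{g}_\mathbb{C}}(V^\lambda, \overline{\theta}_*(V^\lambda))\bigr)$; each summand is strictly positive for $V^\lambda \ne 0$ because $-B_{\frak{g}_\mathbb{C}}(\cdot, \overline{\theta}_*(\cdot))$ is the standard positive definite Hermitian inner product on $\frak{g}_\mathbb{C}$. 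A single choice $\epsilon_s \in \{\pm 1\}$ (depending on $s$) then delivers positive definiteness, and ${\sf g}_s := \iota_*\bigl(\text{invariant extension of }{\sf g}_{s, o_u}\bigr)$ is the sought K\"{a}hler metric on $G_\mathbb{C}/Q^s$.

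\textbf{Main obstacle.} Closedness of $\omega_s$, conceptually the K\"{a}hler condition, is essentially automatic from the KKS construction and requires no new input beyond $T \in \frak{l}_u$. The delicate step is the final symmetry-and-positivity verification, where one must simultaneously track the $\operatorname{ad}T$-eigenspace decomposition, the $\overline{\theta}_*$-twist encoding $\frak{m}_u \subset \frak{g}_u$, the explicit form of $J_s^\natural$ pulled back from $\frak{u}^{-s}$, and the weight-orthogonality of the Killing form. The key structural input is that $T$ is \emph{elliptic}: the eigenvalues of $\operatorname{ad}T$ on $\frak{u}^\pm$ are $\pm i\lambda$ with $\lambda > 0$, and it is precisely the positivity of these weights $\lambda$ that produces the positive definiteness of ${\sf g}_{s, o_u}$.
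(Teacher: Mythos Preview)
Your proof is correct but takes a genuinely different route from the paper. The paper's argument is external and two-line: it uses the $G_\mathbb{C}$-equivariant holomorphic embedding $G_\mathbb{C}/Q^s \hookrightarrow SL(\frak{g}_\mathbb{C})/P^s = M_{N,K}(\mathbb{C})$ from Subsection~\ref{subsec-8.2.2}, notes that the Grassmannian $M_{N,K}(\mathbb{C})$ carries an $SU(\frak{g}_u)$-invariant K\"{a}hler metric (the Fubini--Study-type metric), and pulls it back; since $\operatorname{Ad}G_u \subset SU(\frak{g}_u)$, the restriction is $G_u$-invariant. No eigenspace computation is needed.

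Your intrinsic KKS approach is longer but more informative: it produces an explicit formula for ${\sf g}_s$ in terms of $B_{\frak{g}_\mathbb{C}}$ and $T$, and isolates exactly where ellipticity enters (the positivity of the weights $\lambda$). In fact your argument is essentially the compact-form specialization of what the paper does later in Proposition~\ref{prop-10.4.1} for arbitrary real forms (where one only obtains a \emph{pseudo}-K\"{a}hler metric); in the compact case $\frak{g}_u$ the form $-B_{\frak{g}_\mathbb{C}}(\cdot,\overline{\theta}_*(\cdot))$ is genuinely positive definite, upgrading pseudo-K\"{a}hler to K\"{a}hler. So the paper trades the explicit computation you carried out here for the Grassmannian shortcut, deferring the intrinsic construction to Chapter~\ref{ch-10} where it is unavoidable.
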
 
\begin{proof}
   The special unitary group $SU(\frak{g}_u)=SU(N)$ acts transitively on the complex Grassmann manifold $M_{N,K}(\mathbb{C})$,\footnote{Remark.\ $M_{N,K}(\mathbb{C})$ may be represented as $SU(N)/S(U(K)\times U(N-K))$.} and the complex homogeneous space $M_{N,K}(\mathbb{C})=SL(\frak{g}_\mathbb{C})/P^s$ admits a unique $SU(\frak{g}_u)$-invariant K\"{a}hler metric $\tilde{{\sf g}}_s$ up to a positive multiplicative constant. 
   Accordingly one can induce a $G_u$-invariant K\"{a}hler metric ${\sf g}_s$ on $G_\mathbb{C}/Q^s$ by use of the $G_\mathbb{C}$-equivariant holomorphic embedding $G_\mathbb{C}/Q^s\ni gQ^s\mapsto(\operatorname{Ad}g)P^s\in SL(\frak{g}_\mathbb{C})/P^s$, where we remark that $\operatorname{Ad}G_u\subset SU(\frak{g}_u)$.
\end{proof}

\subsection{A complex projective space, an irreducible representation and $G_\mathbb{C}/Q^+$}\label{subsec-8.2.3}
   Our goal in this subsection is to prove that $G_\mathbb{C}/Q^+$ can be holomorphically embedded into a complex projective space $CP({\sf V})$.
   We will construct arguments by obeying the setting of Section \ref{sec-8.1}, in particular, Subsection \ref{subsec-8.1.4}.\par
   
   Let $\hat{G}_\mathbb{C}$ be the quotient group of the Lie group $G_\mathbb{C}$ modulo the center $Z(G_\mathbb{C})$, and set $\hat{Q}^+:=N_{\hat{G}_\mathbb{C}}(\frak{q}^+)$, where we assume 
\[
   \frak{g}_\mathbb{C}=\hat{\frak{g}}_\mathbb{C}.
\] 
   First, let us confirm  
\begin{lemma}\label{lem-8.2.7}
   The mapping $G_\mathbb{C}/Q^+\ni gQ^+\mapsto\pi(g)\hat{Q}^+\in\hat{G}_\mathbb{C}/\hat{Q}^+$ is a $G_\mathbb{C}$-equivariant biholomorphism. 
   Here $\pi$ is the projection of $G_\mathbb{C}$ onto $\hat{G}_\mathbb{C}=G_\mathbb{C}/Z(G_\mathbb{C})$.
\end{lemma}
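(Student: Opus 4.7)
The plan is to reduce the lemma to the identification $\hat{Q}^+=\pi(Q^+)$, and then use the fact that $\pi$ is a covering homomorphism of complex Lie groups (so in particular a local biholomorphism) to transport the holomorphic structure between the two quotients.

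First I would establish the key set-theoretic identity $Q^+=\pi^{-1}(\hat{Q}^+)$. Since $\frak{g}_\mathbb{C}$ is semisimple, $Z(G_\mathbb{C})$ is discrete, so $\pi_*:\frak{g}_\mathbb{C}\to\hat{\frak{g}}_\mathbb{C}$ is a Lie algebra isomorphism (the identification used in the statement). For every $g\in G_\mathbb{C}$ one has the commutation $\operatorname{Ad}_{\hat{G}_\mathbb{C}}(\pi(g))\circ\pi_*=\pi_*\circ\operatorname{Ad}_{G_\mathbb{C}}(g)$, so $g$ stabilizes $\frak{q}^+\subset\frak{g}_\mathbb{C}$ if and only if $\pi(g)$ stabilizes $\frak{q}^+\subset\hat{\frak{g}}_\mathbb{C}$. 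Therefore $Q^+=N_{G_\mathbb{C}}(\frak{q}^+)=\pi^{-1}(N_{\hat{G}_\mathbb{C}}(\frak{q}^+))=\pi^{-1}(\hat{Q}^+)$. In particular $\hat{Q}^+=\pi(Q^+)$ because $\pi$ is surjective, and the inclusion $Z(G_\mathbb{C})\subset Q^+$ (Proposition \ref{prop-8.2.1}-(vi)) is automatic from this.

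Next I would treat the bijectivity and equivariance, both of which are formal consequences of Step 1. Well-definedness: if $gQ^+=g'Q^+$ then $g^{-1}g'\in Q^+\subset\pi^{-1}(\hat{Q}^+)$, hence $\pi(g)\hat{Q}^+=\pi(g')\hat{Q}^+$. Injectivity: if $\pi(g_1)\hat{Q}^+=\pi(g_2)\hat{Q}^+$ then $g_1^{-1}g_2\in\pi^{-1}(\hat{Q}^+)=Q^+$, so $g_1Q^+=g_2Q^+$. Surjectivity follows from the surjectivity of $\pi$, and $G_\mathbb{C}$-equivariance is immediate from $\pi(hg)=\pi(h)\pi(g)$ (where $G_\mathbb{C}$ acts on $\hat{G}_\mathbb{C}/\hat{Q}^+$ through $\pi$).

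Finally, for the biholomorphy, I would invoke Theorem \ref{thm-1.2.1}: there exist holomorphic local cross-sections $\sigma_\alpha:U_\alpha\to G_\mathbb{C}$ for $\pi_{Q^+}:G_\mathbb{C}\to G_\mathbb{C}/Q^+$ and $\hat{\sigma}_\beta:\hat{U}_\beta\to\hat{G}_\mathbb{C}$ for $\hat{\pi}_{\hat{Q}^+}:\hat{G}_\mathbb{C}\to\hat{G}_\mathbb{C}/\hat{Q}^+$. The assignment $gQ^+\mapsto\pi(g)\hat{Q}^+$ is locally the composition $\hat{\pi}_{\hat{Q}^+}\circ\pi\circ\sigma_\alpha$ of holomorphic mappings (recall $\pi$ is a holomorphic Lie group homomorphism), hence is holomorphic. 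For the inverse, the kernel $Z(G_\mathbb{C})$ is discrete, so $\pi$ is a covering homomorphism and a local biholomorphism; locally on each $\hat{U}_\beta$, one can lift $\hat{\sigma}_\beta$ through $\pi$ to a holomorphic map into $G_\mathbb{C}$ and then project to $G_\mathbb{C}/Q^+$, producing a holomorphic local inverse.

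The main obstacle is the first step, i.e.\ the clean identification $\hat{Q}^+=\pi(Q^+)$; once it is in place, bijectivity and equivariance are automatic, and the biholomorphy follows at once from $\pi$ being a covering of complex Lie groups combined with the existence of holomorphic cross-sections (Theorem \ref{thm-1.2.1}).
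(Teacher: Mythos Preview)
Your proof is correct and follows essentially the same approach as the paper. The paper's own proof is a one-liner: it notes that $\pi:G_\mathbb{C}\to\hat{G}_\mathbb{C}$ is a surjective holomorphic homomorphism and invokes Proposition~\ref{prop-8.2.1}-(vi) (i.e.\ $Z(G_\mathbb{C})\subset Q^+$), leaving the rest implicit; your argument simply unpacks the details, in particular the identity $Q^+=\pi^{-1}(\hat{Q}^+)$ and the biholomorphy via holomorphic local sections.
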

\begin{proof}
   $\pi:G_\mathbb{C}\to\hat{G}_\mathbb{C}$, $g\mapsto gZ(G_\mathbb{C})$, is a surjective holomorphic homomorphism. 
   So, we conclude this lemma from Proposition \ref{prop-8.2.1}-(vi). 
\end{proof} 

   Next, let us construct a complex projective space from an irreducible representation. 
   Denote by $\{\varpi_a\}_{a=1}^\ell$ the set of the fundamental dominant weights relative to $\Pi_\triangle=\{\alpha_a\}_{a=1}^\ell$.
   For $-iT=\sum_{a=1}^\ell\lambda_aZ_a$ ($\lambda_a\geq 0$) one can separate the set $\{\lambda_a\}_{a=1}^\ell$ into two pieces $\{\lambda_{a_j}\}_{j=1}^r$ and $\{\lambda_{a_k}\}_{k=r+1}^\ell$ so that $\lambda_{a_j}>0$ for all $1\leq j\leq r$ and $\lambda_{a_k}=0$ for all $r+1\leq k\leq\ell$. 
   Namely, 
\[
\begin{split}
   -iT
   &=\mbox{$\lambda_1Z_1+\lambda_2Z_2+\cdots+\lambda_\ell Z_\ell$ with $\lambda_1,\lambda_2,\dots,\lambda_\ell\geq 0$}\\
   &=\mbox{$\lambda_{a_1}Z_{a_1}+\lambda_{a_2}Z_{a_2}+\cdots+\lambda_{a_r}Z_{a_r}$ with $\lambda_{a_1},\lambda_{a_2},\dots,\lambda_{a_r}>0$}.
\end{split}   
\]
   Remark here that $\Pi_\triangle\cap\blacktriangle=\{\alpha_{a_k}\}_{k=r+1}^\ell$. 
   Taking that into account, we define a dominant integral form $\varpi$ on $\frak{h}_\mathbb{C}$ as follows:  
\begin{equation}\label{eq-8.2.8}
   \varpi:=\varpi_{a_1}+\varpi_{a_2}+\cdots+\varpi_{a_r}.
\end{equation} 
   Here $\varpi\neq0$ comes from $T\neq 0$.
   The Cartan-Weyl theorem enables us to obtain an irreducible representation $\rho_*$ of the complex Lie algebra $\frak{g}_\mathbb{C}$ on a finite-dimensional complex vector space ${\sf V}$ which has the above $\varpi$ as its highest weight. 
   Since the complex Lie group $\hat{G}_\mathbb{C}$ is isomorphic to the adjoint group of $\frak{g}_\mathbb{C}$, one can take a holomorphic homomorphism $\rho:\hat{G}_\mathbb{C}\to GL({\sf V})$, $\hat{g}\mapsto\rho(\hat{g})$, whose differential homomorphism accords with $\rho_*:\frak{g}_\mathbb{C}\to\frak{gl}({\sf V})$.
   Now, let ${\sf u}_0\in{\sf V}$ be a maximal vector of weight $\varpi$, let $[{\sf v}]:=\operatorname{span}_\mathbb{C}\{{\sf v}\}$ for $0\neq{\sf v}\in{\sf V}$, and let $CP({\sf V})=\{[{\sf v}]: 0\neq {\sf v}\in{\sf V}\}$ denote the complex projective space of dimension $d-1$, where $d:=\dim_\mathbb{C}{\sf V}$.
   The special linear group $SL({\sf V})=SL(d,\mathbb{C})$ acts transitively on $CP({\sf V})$, so we put 
\[
   CP({\sf V})=SL({\sf V})/P,
\] 
where $P:=\{\varphi\in SL({\sf V}) : [\varphi({\sf u}_0)]=[{\sf u}_0]\}$. 
   In this setting, we verify

\begin{lemma}\label{lem-8.2.9}
   The mapping $\hat{G}_\mathbb{C}/\hat{Q}^+\ni\hat{g}\hat{Q}^+\mapsto\rho(\hat{g})P\in SL({\sf V})/P$ is a $\hat{G}_\mathbb{C}$-equivariant holomorphic embedding.
\end{lemma}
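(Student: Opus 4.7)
The plan is to identify the map as the orbit map through $[{\sf u}_0]$ of the $\hat{G}_\mathbb{C}$-action on $CP({\sf V})$ induced by $\rho$, and to verify in turn well-definedness, equivariance and holomorphicity, injectivity via identification of the isotropy subgroup, and the immersion property. Since $\hat{G}_\mathbb{C}/\hat{Q}^+$ is compact Hausdorff by Corollary \ref{cor-8.2.3}, an injective holomorphic immersion into Hausdorff $SL({\sf V})/P$ will automatically be an embedding.

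The key object is the Lie subalgebra $\frak{p}':=\{X\in\frak{g}_\mathbb{C}:\rho_*(X){\sf u}_0\in\mathbb{C}{\sf u}_0\}$, which is indeed closed under brackets because $\rho_*([X,Y]){\sf u}_0=\rho_*(X)\rho_*(Y){\sf u}_0-\rho_*(Y)\rho_*(X){\sf u}_0\in\mathbb{C}{\sf u}_0$ whenever $X,Y\in\frak{p}'$. The maximal-vector properties $\rho_*(\frak{n}^+){\sf u}_0=0$ and $\rho_*(\frak{h}_\mathbb{C}){\sf u}_0\subset\mathbb{C}{\sf u}_0$ give $\frak{b}^+\subset\frak{p}'$; and since $\Pi_\triangle\cap\blacktriangle=\{\alpha_{a_k}\}_{k=r+1}^\ell$ with $\varpi_{a_i}(H^*_{\alpha_{a_j}})=\delta_{ij}$, the identity $\varpi(H^*_{\alpha_{a_k}})=0$ for $r+1\le k\le\ell$ together with $\frak{sl}(2,\mathbb{C})$-theory applied on each $\frak{s}_{\alpha_{a_k}}$ forces $\rho_*(E_{-\alpha_{a_k}}){\sf u}_0=0$, so $E_{-\alpha_{a_k}}\in\frak{p}'$. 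Iterating brackets of $\{E_{-\alpha_{a_k}}\}_{k\ge r+1}$ together with $\frak{b}^+$ generates exactly $\frak{q}^+$ by standard parabolic-subalgebra theory, whence $\frak{q}^+\subset\frak{p}'$. Since $\hat{Q}^+=\pi(Q^+)$ is connected by Proposition \ref{prop-8.2.1}-(iii),(vi), this yields well-definedness $\rho(\hat{Q}^+)\subset P$. Equivariance is immediate, $\rho:\hat{G}_\mathbb{C}\to GL({\sf V})$ is holomorphic by construction, and the trace-zero condition $\operatorname{tr}\rho_*=0$ on $\frak{g}_\mathbb{C}=[\frak{g}_\mathbb{C},\frak{g}_\mathbb{C}]$ places its image inside $SL({\sf V})$; Theorem \ref{thm-1.2.1} then transfers holomorphicity to the quotient.

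The main obstacle is establishing injectivity, which requires the reverse inclusion $\frak{p}'\subset\frak{q}^+$ and its group-level upgrade. Because $\frak{p}'\supset\frak{b}^+$, the subalgebra $\frak{p}'$ is a standard parabolic $\frak{h}_\mathbb{C}\oplus\bigoplus_{\gamma\in\triangle^+\cup\triangle_\Theta^-}\frak{g}_\gamma$ for a unique $\Theta\subset\Pi_\triangle$, where $\triangle_\Theta$ denotes the subsystem generated by $\Theta$. For each simple $\alpha_j\in\Pi_\triangle$, the weight $\varpi-\alpha_j$ is strictly below $\varpi$, so $E_{-\alpha_j}\in\frak{p}'$ forces $\rho_*(E_{-\alpha_j}){\sf u}_0=0$; by $\frak{sl}(2,\mathbb{C})$-theory this is equivalent to $\varpi(H^*_{\alpha_j})=0$, and in view of $\varpi=\sum_{j=1}^r\varpi_{a_j}$ to $\alpha_j\in\Pi_\triangle\cap\blacktriangle$. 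Therefore $\Theta=\Pi_\triangle\cap\blacktriangle$, $\triangle_\Theta=\blacktriangle$, and a short check using \eqref{eq-8.1.7} gives $\frak{p}'=\frak{q}^+$. If now $\hat{g}\in\hat{G}_\mathbb{C}$ stabilizes $[{\sf u}_0]$, then $\rho(\hat{g}){\sf u}_0\in\mathbb{C}{\sf u}_0$, and from $\rho_*(\operatorname{Ad}(\hat{g})X){\sf u}_0=\rho(\hat{g})\rho_*(X)\rho(\hat{g})^{-1}{\sf u}_0$ one reads off $\operatorname{Ad}(\hat{g})\frak{p}'\subset\frak{p}'$, so $\hat{g}\in N_{\hat{G}_\mathbb{C}}(\frak{q}^+)=\hat{Q}^+$, yielding injectivity.

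With $\frak{p}'=\frak{q}^+$ in hand, the induced tangent map at the origin of $\hat{G}_\mathbb{C}/\hat{Q}^+$ is injective, and $\hat{G}_\mathbb{C}$-equivariance upgrades this to an immersion at every point. The map is thereby a continuous injection from the compact Hausdorff space $\hat{G}_\mathbb{C}/\hat{Q}^+$ into the Hausdorff space $SL({\sf V})/P$, hence a homeomorphism onto its image, and combined with the immersion property this delivers the desired holomorphic embedding.
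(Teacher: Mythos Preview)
Your proof is correct and follows essentially the same strategy as the paper: both identify the stabilizer subalgebra $\frak{p}'=\{X\in\frak{g}_\mathbb{C}:\rho_*(X){\sf u}_0\in\mathbb{C}{\sf u}_0\}$, use $\frak{sl}(2,\mathbb{C})$-theory together with $\varpi(H^*_\gamma)=0$ for $\gamma\in\blacktriangle$ to obtain $\frak{q}^+\subset\frak{p}'$, and pass to the group level via the normalizer identity $\hat{Q}^+=N_{\hat{G}_\mathbb{C}}(\frak{q}^+)$.

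The one genuine organizational difference lies in how $\frak{p}'\subset\frak{q}^+$ is obtained. The paper argues root by root: for each $\alpha\in\triangle^+-\blacktriangle$ it shows directly that $\rho_*(E_{-\alpha}){\sf u}_0$ is a nonzero vector of weight $\varpi-\alpha\neq\varpi$, so $E_{-\alpha}\notin\frak{p}'$, and then a weight-space decomposition of an arbitrary $Y\in\frak{p}'$ finishes the job. You instead invoke the classification of subalgebras containing $\frak{b}^+$ to write $\frak{p}'$ as the standard parabolic attached to some $\Theta\subset\Pi_\triangle$, and then determine $\Theta$ by checking only the simple roots. Your route is a touch more structural and saves the verification for non-simple roots in $\triangle^+-\blacktriangle$; the paper's route is more self-contained, avoiding any appeal to the parabolic classification. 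Either way the content is the same, and your explicit use of compactness (Corollary~\ref{cor-8.2.3}) to upgrade the injective immersion to an embedding makes that final step cleaner than the paper's somewhat terse ``injectivity of $\hat{f}$ assures injectivity of its differential.''
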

\begin{proof}
   Remark that $\rho(\hat{G}_\mathbb{C})\subset SL({\sf V})$ follows from $\rho(\hat{G}_\mathbb{C})\subset GL({\sf V})$ and $\hat{G}_\mathbb{C}$ being connected semisimple.\par
   
   In this proof we temporarily denote by $\hat{f}$ the mapping $\hat{G}_\mathbb{C}/\hat{Q}^+\ni\hat{g}\hat{Q}^+\mapsto\rho(\hat{g})P\in SL({\sf V})/P$.\par 
   
   (well-defined). 
   It is necessary to confirm that the $\hat{f}$ is well-defined. 
   For this reason, we aim to demonstrate $\rho(\hat{Q}^+)\subset P$. 
   From \eqref{eq-8.1.7} and $\blacktriangle^+=\blacktriangle\cap\triangle^+$ one has 
\[
   \mbox{$\frak{q}^+=\frak{l}_\mathbb{C}\oplus\frak{u}^+=\frak{h}_\mathbb{C}\oplus\frak{n}^+\oplus\bigoplus_{\gamma\in\blacktriangle^+}\frak{g}_{-\gamma}$}.
\]   
   So, for a given $X\in\operatorname{Lie}(\hat{Q}^+)=\frak{q}^+$ there exists a unique $(X_h,X_n,X_l)\in\frak{h}_\mathbb{C}\times\frak{n}^+\times\bigoplus_{\gamma\in\blacktriangle^+}\frak{g}_{-\gamma}$ such that 
\[
   X=X_h+X_n+X_l.
\] 
   By a direct computation we obtain
\begin{equation}\label{eq-1}\tag*{\textcircled{1}}
\begin{array}{ll}
   \rho_*(X_h){\sf u}_0=\varpi(X_h){\sf u}_0\in\operatorname{span}_\mathbb{C}\{{\sf u}_0\},
   & \rho_*(X_n){\sf u}_0=0\in\operatorname{span}_\mathbb{C}\{{\sf u}_0\}
\end{array}
\end{equation}
because ${\sf u}_0$ is a maximal vector of weight $\varpi$ and $X_h\in\frak{h}_\mathbb{C}$, $X_n\in\frak{n}^+$. 
   We want to show that 
\begin{equation}\label{eq-2}\tag*{\textcircled{2}}
   \rho_*(X_l){\sf u}_0\in\operatorname{span}_\mathbb{C}\{{\sf u}_0\},
\end{equation}
which is a consequence of that $\rho_*(E_{-\gamma}){\sf u}_0=0$ for all $\gamma\in\blacktriangle^+$. 
   Therefore, let us show that $\rho_*(E_{-\gamma}){\sf u}_0=0$ for all $\gamma\in\blacktriangle^+$. 
   Take an arbitrary $\gamma\in\blacktriangle^+$. 
   From $\Pi_\triangle\cap\blacktriangle=\{\alpha_{a_k}\}_{k=r+1}^\ell$ we deduce $\blacktriangle^+=\operatorname{span}_{\mathbb{Z}_{\geq 0}}\{\alpha_{a_k}\}_{k=r+1}^\ell$. 
   This, together with \eqref{eq-8.2.8} and $\gamma\in\blacktriangle^+$, gives  
\begin{equation}\label{eq-a}\tag{a} 
   \varpi(H_\gamma^*)=0.
\end{equation} 
   We are going to construct a complex vector subspace of ${\sf V}$ from the $\frak{s}_\gamma=\operatorname{span}_\mathbb{C}\{H_\gamma^*,E_\gamma,E_{-\gamma}\}$. 
   Set 
\begin{equation}\label{eq-b}\tag{b} 
   {\sf u}_n:=\rho_*(E_{-\gamma})^n{\sf u}_0
\end{equation} 
for $n\in\mathbb{N}$. 
   Since $\dim_\mathbb{C}{\sf V}<\infty$ and vectors ${\sf u}_0,{\sf u}_1,{\sf u}_2,\dots$ are linearly independent\footnote{(because ${\sf u}_0,{\sf u}_1,{\sf u}_2,\dots$ are eigenvectors of $\rho_*(H_\gamma^*)$ for distinct eigenvalues)}, there exists a unique $m\in\mathbb{N}$ such that ${\sf u}_p\neq 0$ ($0\leq p\leq m-1$) and ${\sf u}_m={\sf u}_{m+1}=\cdots=0$.
   Here, it follows from \eqref{eq-a} and \eqref{eq-b} that
\[
\left\{
\begin{array}{@{\,}l}
   \mbox{$\rho_*(H_\gamma^*){\sf u}_p=-2p{\sf u}_p$ for all $0\leq p\leq m-1$},\\
   \mbox{$\rho_*(E_\gamma){\sf u}_0=0$, $\rho_*(E_\gamma){\sf u}_q=-q(q-1){\sf u}_{q-1}$ for all $1\leq q\leq m-1$},\\
   \mbox{$\rho_*(E_{-\gamma}){\sf u}_{m-1}=0$, $\rho_*(E_{-\gamma}){\sf u}_j={\sf u}_{j+1}$ for all $0\leq j\leq m-2$}.
\end{array}\right.
\]
   Then ${\sf W}:=\operatorname{span}_\mathbb{C}\{{\sf u}_p\}_{p=0}^{m-1}$ is an $m$-dimensional, $\rho_*(\frak{s}_\gamma)$-invariant complex vector subspace of ${\sf V}$, and moreover, $0={\sf u}_m=\rho_*(E_{-\gamma})^m{\sf u}_0$ and \eqref{eq-b} yield $0=\rho_*(E_\gamma)\bigl(\rho_*(E_{-\gamma})^m{\sf u}_0\bigr)=-m(m-1){\sf u}_{m-1}$; therefore $m=1$. 
   This implies that ${\sf u}_1=0$, and thus $\rho_*(E_{-\gamma}){\sf u}_0={\sf u}_1=0$.
   Accordingly we conclude \ref{eq-2}.
   By virtue of \ref{eq-1}, \ref{eq-2}, one can assert that 
\begin{equation}\label{eq-3}\tag*{\textcircled{3}}
   \mbox{$\rho_*(X){\sf u}_0\in\operatorname{span}_\mathbb{C}\{{\sf u}_0\}$ for all $X\in\operatorname{Lie}(\hat{Q}^+)=\frak{q}^+$}.
\end{equation}
   Therefore, for every $X\in\operatorname{Lie}(\hat{Q}^+)$ there exists a $w\in\mathbb{C}$ satisfying $\rho_*(X){\sf u}_0=w{\sf u}_0$, and then $\rho(\exp X){\sf u}_0=e^w{\sf u}_0\in[{\sf u}_0]$. 
   This assures $\rho(\hat{Q}^+)\subset P$ because the Lie group $\hat{Q}^+$ is connected.
   From $\rho(\hat{Q}^+)\subset P$ we conclude that $\hat{f}:\hat{G}_\mathbb{C}/\hat{Q}^+\to SL({\sf V})/P$, $\hat{g}\hat{Q}^+\mapsto\rho(\hat{g})P$, is well-defined.\par
    
   (injective). 
   Our aim is to prove that $\hat{f}$ is injective. 
   In order to accomplish the aim, we first prepare for a moment. 
   Fix any $\alpha\in\triangle^+-\blacktriangle$. 
   On the one hand; one has $\varpi(H_\alpha^*)>0$ since \eqref{eq-8.2.8}. 
   On the other hand; it follows from $H_\alpha^*=[E_\alpha,E_{-\alpha}]$ and $\rho_*(E_\alpha){\sf u}_0=0$ that 
\[
   \varpi(H_\alpha^*){\sf u}_0
   =\rho_*([E_\alpha,E_{-\alpha}]){\sf u}_0
   =\rho_*(E_\alpha)\bigl(\rho_*(E_{-\alpha}){\sf u}_0\bigr)-\rho_*(E_{-\alpha})\bigl(\rho_*(E_\alpha){\sf u}_0\bigr)
   =\rho_*(E_\alpha)\bigl(\rho_*(E_{-\alpha}){\sf u}_0\bigr).
\]
   Consequently we can assert that  
\begin{equation}\label{eq-4}\tag*{\textcircled{4}}
   \mbox{(i) $\rho_*(E_{-\alpha}){\sf u}_0\neq 0$, and (ii) $\varpi-\alpha$ is a weight of the representation $\rho_*$ (relative to $\frak{h}_\mathbb{C}$) for each $\alpha\in\triangle^+-\blacktriangle$}.
\end{equation}
   Next, let us confirm that 
\begin{equation}\label{eq-5}\tag*{\textcircled{5}} 
   \mbox{$Y\in\frak{g}_\mathbb{C}$ and $\rho_*(Y){\sf u}_0\in\operatorname{span}_\mathbb{C}\{{\sf u}_0\}$ imply $Y\in\frak{q}^+$}.  
\end{equation}
   For $Y\in\frak{g}_\mathbb{C}$ suppose that $\rho_*(Y){\sf u}_0\in\operatorname{span}_\mathbb{C}\{{\sf u}_0\}$. 
   By \eqref{eq-8.1.7} and $\frak{g}_\mathbb{C}=\frak{q}^+\oplus\frak{u}^-$ we have $\frak{g}_\mathbb{C}=\frak{q}^+\oplus\bigoplus_{\alpha\in\triangle^+-\blacktriangle}\frak{g}_{-\alpha}$.
   So, there exist a $Y_q\in\frak{q}^+$ and $w_{-\alpha}\in\mathbb{C}$ such that $Y=Y_q+\sum_{\alpha\in\triangle^+-\blacktriangle}w_{-\alpha}E_{-\alpha}$. 
   Then the supposition, \ref{eq-3} and \ref{eq-4} yield 
\[
   \mbox{${\sf V}_\varpi
   =\operatorname{span}_\mathbb{C}\{{\sf u}_0\}
   \ni\rho_*(Y){\sf u}_0
   =\rho_*(Y_q){\sf u}_0+\sum_{\alpha\in\triangle^+-\blacktriangle}w_{-\alpha}\rho_*(E_{-\alpha}){\sf u}_0
   \in{\sf V}_\varpi\oplus\bigoplus_{\alpha\in\triangle^+-\blacktriangle}{\sf V}_{\varpi-\alpha}$},
\]
and therefore $w_{-\alpha}=0$ for all $\alpha\in\triangle^+-\blacktriangle$, where we denote by ${\sf V}_{\varpi-\alpha}$ the wight subspace of ${\sf V}$ for $\varpi-\alpha$. 
   Consequently it turns out that $Y=Y_q\in\frak{q}^+$, and so \ref{eq-5} holds.\par
   
   Now, we are in a position to accomplish the aim. 
   For the aim, it is enough to prove that
\begin{equation}\label{eq-6}\tag*{\textcircled{6}}
   \rho^{-1}(P)\subset\hat{Q}^+.   
\end{equation}
   For $\hat{g}\in\hat{G}_\mathbb{C}$ we suppose $\rho(\hat{g})\in P$. 
   Then $[\rho(\hat{g}){\sf u}_0]=[{\sf u}_0]$ holds, and for any $X\in\frak{q}^+$ one has 
\[
   \rho_*\bigl(\operatorname{Ad}\hat{g}(X)\bigr){\sf u}_0
   =\rho(\hat{g})\rho_*(X)\rho(\hat{g})^{-1}{\sf u}_0\in\operatorname{span}_\mathbb{C}\{{\sf u}_0\}
\]
by \ref{eq-3}. 
   Accordingly it follows from \ref{eq-5} that $\operatorname{Ad}\hat{g}(X)\in\frak{q}^+$ for all $X\in\frak{q}^+$. 
   Thus $\hat{g}\in N_{\hat{G}_\mathbb{C}}(\frak{q}^+)=\hat{Q}^+$, and one concludes \ref{eq-6}. 
   This \ref{eq-6} implies that $\hat{f}:\hat{G}_\mathbb{C}/\hat{Q}^+\to SL({\sf V})/P$, $\hat{g}\hat{Q}^+\mapsto\rho(\hat{g})P$, is injective.\par
   
   (holomorphic). 
   Since $\rho:\hat{G}_\mathbb{C}\to SL({\sf V})$, $\hat{g}\mapsto\rho(\hat{g})$, is a holomorphic homomorphism, it is now obvious that $\hat{f}:\hat{G}_\mathbb{C}/\hat{Q}^+\to SL({\sf V})/P$, $\hat{g}\hat{Q}^+\mapsto\rho(\hat{g})P$, is a $\hat{G}_\mathbb{C}$-equivariant holomorphic mapping. 
   Moreover, we have already shown that $\hat{f}$ is injective, which also assures that its differential $(d\hat{f})_p$ is injective at each point $p\in\hat{G}_\mathbb{C}/\hat{Q}^+$. 
   Hence, the mapping $\hat{f}$ is a $\hat{G}_\mathbb{C}$-equivariant holomorphic embedding.
\end{proof}

   By Lemmas \ref{lem-8.2.7} and \ref{lem-8.2.9} one establishes 
\begin{theorem}\label{thm-8.2.10}
   $G_\mathbb{C}/Q^+$ is able to be $G_\mathbb{C}$-equivariant holomorphically embedded into the complex projective space $CP({\sf V})=SL({\sf V})/P$, where ${\sf V}$ is a representation space of the irreducible representation of $\frak{g}_\mathbb{C}$ with highest weight $\varpi$ in \eqref{eq-8.2.8}. 
\end{theorem}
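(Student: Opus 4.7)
The plan is to obtain the embedding as the composition of the two maps already supplied by Lemmas \ref{lem-8.2.7} and \ref{lem-8.2.9}, with only minor bookkeeping required to verify $G_\mathbb{C}$-equivariance. Concretely, define
\[
   F: G_\mathbb{C}/Q^+ \longrightarrow SL({\sf V})/P, \qquad gQ^+ \longmapsto \rho\bigl(\pi(g)\bigr)P,
\]
where $\pi: G_\mathbb{C} \to \hat{G}_\mathbb{C} = G_\mathbb{C}/Z(G_\mathbb{C})$ is the canonical projection and $\rho:\hat{G}_\mathbb{C}\to GL({\sf V})$ is the holomorphic homomorphism integrating the irreducible representation $\rho_*:\frak{g}_\mathbb{C}\to\frak{gl}({\sf V})$ with highest weight $\varpi$ introduced in \eqref{eq-8.2.8}. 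The existence of such a lift $\rho$ is possible because $\hat{G}_\mathbb{C}$ is (isomorphic to) the adjoint group of $\frak{g}_\mathbb{C}$, as already used in the prelude to Lemma \ref{lem-8.2.9}.

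I would then argue that $F$ is the composition
\[
   G_\mathbb{C}/Q^+ \xrightarrow{\;f_1\;} \hat{G}_\mathbb{C}/\hat{Q}^+ \xrightarrow{\;f_2\;} SL({\sf V})/P = CP({\sf V}),
\]
where $f_1(gQ^+) := \pi(g)\hat{Q}^+$ and $f_2(\hat{g}\hat{Q}^+) := \rho(\hat{g})P$. Lemma \ref{lem-8.2.7} tells us $f_1$ is a $G_\mathbb{C}$-equivariant biholomorphism, and Lemma \ref{lem-8.2.9} tells us $f_2$ is a $\hat{G}_\mathbb{C}$-equivariant holomorphic embedding. The composition of a biholomorphism with a holomorphic embedding is a holomorphic embedding, so $F$ is one.

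To conclude $G_\mathbb{C}$-equivariance of $F$, I would let $G_\mathbb{C}$ act on $SL({\sf V})/P$ through the holomorphic homomorphism $\rho\circ\pi:G_\mathbb{C}\to SL({\sf V})$; then for every $g_1,g_2\in G_\mathbb{C}$,
\[
   F\bigl(g_1\cdot g_2 Q^+\bigr) = \rho\bigl(\pi(g_1g_2)\bigr)P = \rho\bigl(\pi(g_1)\bigr)\rho\bigl(\pi(g_2)\bigr)P = (\rho\circ\pi)(g_1)\cdot F(g_2Q^+),
\]
so $F$ intertwines the natural $G_\mathbb{C}$-action on $G_\mathbb{C}/Q^+$ with the one induced on $CP({\sf V})$ via $\rho\circ\pi$. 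Since $CP({\sf V})$ is the quotient of ${\sf V}\setminus\{0\}$ by nonzero scalars and $\rho(\hat{g})$ preserves the line $[{\sf u}_0]$ only when $\hat{g}\in\hat{Q}^+$, no additional identification is needed.

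The only potentially subtle step is the very first one: producing the group-level homomorphism $\rho:\hat{G}_\mathbb{C}\to GL({\sf V})$ from the Lie algebra representation $\rho_*$, and checking that $F$ is well defined (which boils down to $\rho(\hat{Q}^+)\subset P$ and to the fact that $Z(G_\mathbb{C})\subset Q^+$, which is Proposition \ref{prop-8.2.1}-(vi)). Both of these are already handled—the first inside the proof of Lemma \ref{lem-8.2.9} via the analysis of $\rho_*(X){\sf u}_0\in\operatorname{span}_\mathbb{C}\{{\sf u}_0\}$ for $X\in\frak{q}^+$, and the second by Proposition \ref{prop-8.2.1}-(vi). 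Hence no genuine obstacle remains, and the theorem follows cleanly by composing the two lemmas.
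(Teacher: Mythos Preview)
Your proof is correct and matches the paper's approach exactly: the paper derives Theorem~\ref{thm-8.2.10} simply by composing the biholomorphism of Lemma~\ref{lem-8.2.7} with the holomorphic embedding of Lemma~\ref{lem-8.2.9}. Your additional remarks on well-definedness and $G_\mathbb{C}$-equivariance are accurate but not strictly needed, since these are already absorbed into the equivariance statements of the two lemmas.
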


\section{Bruhat decompositions}\label{sec-8.3}
   In this section we generalize Bruhat decompositions of $G_\mathbb{C}$ by following Kostant's method \cite{Ko1,Ko2}. 
   The setting of Section \ref{sec-8.1} remains valid in this section.
   Recalling that $\mathcal{W}=N_{G_u}(i\frak{h}_\mathbb{R})/C_{G_u}(i\frak{h}_\mathbb{R})$ and $\blacktriangle=\{\gamma\in\triangle \,|\, \gamma(T)=0\}$, we set
\begin{equation}\label{eq-8.3.1}
\left\{
\begin{array}{@{\,}l}
   \mbox{$\Phi_{[w]}:=\{\beta\in\triangle^+ \,|\, \zeta([w])^{-1}\beta\in\triangle^-\}$ for $[w]\in\mathcal{W}$},\\
   \begin{array}{@{}ll} 
   \mathcal{W}^1:=\{[\sigma]\in\mathcal{W} \,|\, \Phi_{[\sigma]}\subset\triangle^+-\blacktriangle\}, &
   \mathcal{W}_1:=N_{L_u}(i\frak{h}_\mathbb{R})/C_{L_u}(i\frak{h}_\mathbb{R}). 
   \end{array}
\end{array}\right.
\end{equation}   
   Remark here that $\mathcal{W}_1$ is a Weyl group of $L_\mathbb{C}$.
   Hereafter, we assume $\mathcal{W}_1$ to be a subgroup of the Weyl group $\mathcal{W}$ via the mapping $N_{L_u}(i\frak{h}_\mathbb{R})/C_{L_u}(i\frak{h}_\mathbb{R})\ni\tau C_{L_u}(i\frak{h}_\mathbb{R})\mapsto\tau C_{G_u}(i\frak{h}_\mathbb{R})\in N_{G_u}(i\frak{h}_\mathbb{R})/C_{G_u}(i\frak{h}_\mathbb{R})$.
   In addition, we utilize the following notation:
\begin{itemize}
\item
   $[\kappa]$ : the unique element of $\mathcal{W}$ such that $\zeta([\kappa])(\triangle^-)=\triangle^+$ and $\zeta([\kappa])=\zeta([\kappa])^{-1}$, 
\item 
   $n_{[\sigma]}$ : the cardinal number of the set $\Phi_{[\sigma]}$ for $[\sigma]\in\mathcal{W}^1$.   
\end{itemize}

\subsection{A proposition on the root system}\label{subsec-8.3.1}
   We will verify Theorem \ref{thm-8.3.7} in the next subsection. 
   For this reason we need  

\begin{proposition}[{cf.\ Kostant \cite[pp.359--361]{Ko1}, \cite[p.121]{Ko2}}]\label{prop-8.3.2}
\begin{enumerate}
\item[]
\item[{\rm (i)}] 
   $\Phi_{[w]}$ is a closed subset of $\triangle$ for any $[w]\in\mathcal{W}$ $($i.e., $\beta_1,\beta_2\in\Phi_{[w]}$ and $\beta_1+\beta_2\in\triangle$ imply $\beta_1+\beta_2\in\Phi_{[w]})$. 
\item[{\rm (ii)}]
   $\triangle^+=\Phi_{[w]}\amalg\Phi_{[w\kappa]}$ $($disjoint union$)$ for all $[w]\in\mathcal{W}$. 
\item[{\rm (iii)}]
   If $[\sigma]\in\mathcal{W}^1$, then $\zeta([\sigma])^{-1}(\blacktriangle^+)\subset\triangle^+$ and $\zeta([\sigma])^{-1}(\blacktriangle^-)\subset\triangle^-$. 
\item[{\rm (iv)}]
   For each $[w]\in\mathcal{W}$, there exists a unique $([\tau],[\sigma])\in\mathcal{W}_1\times\mathcal{W}^1$ such that $[w]=[\tau\sigma]$. 
\item[{\rm (v)}]
   For a given $[\sigma]\in\mathcal{W}^1$, the following items {\rm (v.1)} and {\rm (v.2)} hold$:$
   \begin{enumerate}
   \item[{\rm (v.1)}]
   $n_{[\sigma]}=0$ if and only if $[e]=[\sigma]$. 
   \item[{\rm (v.2)}]
   $n_{[\sigma]}=1$ if and only if there exists a $\beta\in\Pi_\triangle-\blacktriangle$ satisfying $[w_\beta]=[\sigma]$.
   \end{enumerate}
\end{enumerate}
   Here $\blacktriangle^\pm=\blacktriangle\cap\triangle^\pm$, $e$ is the unit element of $G_\mathbb{C}$, and we refer to \eqref{eq-8.1.4} for $w_\beta$.
\end{proposition}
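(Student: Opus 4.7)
The plan is to prove (i)--(v) in the stated order, since (iii) is used crucially in (iv) and the combinatorics of the inversion sets $\Phi_{[\,\cdot\,]}$ drives (v). For (i), I would unpack the definition: if $\beta_1,\beta_2\in\Phi_{[w]}$ and $\beta_1+\beta_2\in\triangle$, then $\beta_1+\beta_2\in\triangle^+$ as a sum of two positive roots, while $\zeta([w])^{-1}(\beta_1+\beta_2)=\zeta([w])^{-1}\beta_1+\zeta([w])^{-1}\beta_2$ is a sum of two negative roots, hence negative when (as assumed) it is a root. For (ii), I would use $\zeta([\kappa])=\zeta([\kappa])^{-1}$ and $\zeta([\kappa])(\triangle^-)=\triangle^+$ to rewrite the condition $\beta\in\Phi_{[w\kappa]}$ as $\zeta([w])^{-1}\beta\in\triangle^+$, which on $\triangle^+$ is precisely the complement of $\Phi_{[w]}$. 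For (iii), any $\alpha\in\blacktriangle^+$ lies in $\triangle^+$ but not in $\triangle^+-\blacktriangle$, so $\alpha\notin\Phi_{[\sigma]}$ and $\zeta([\sigma])^{-1}\alpha\in\triangle^+$; the corresponding statement on $\blacktriangle^-$ follows by negating.

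The substantive part is (iv). For existence I would pass to the subsystem $\blacktriangle$: the intersection $\blacktriangle\cap\zeta([w])(\triangle^+)$ is a positive system for $\blacktriangle$, and since $\mathcal{W}_1$ is the Weyl group of $\blacktriangle$ and therefore acts simply transitively on the positive systems of $\blacktriangle$, there exists a unique $[\tau]\in\mathcal{W}_1$ with $\zeta([\tau])(\blacktriangle^+)=\blacktriangle\cap\zeta([w])(\triangle^+)$. Setting $[\sigma]:=[\tau^{-1}w]$ then yields $\zeta([\sigma])^{-1}(\blacktriangle^+)=\zeta([w])^{-1}(\blacktriangle)\cap\triangle^+\subset\triangle^+$, whence $\Phi_{[\sigma]}\cap\blacktriangle^+=\emptyset$; combined with the corresponding statement for $\blacktriangle^-$ and the symmetry $\blacktriangle=-\blacktriangle$, this gives $\Phi_{[\sigma]}\cap\blacktriangle=\emptyset$, i.e., $[\sigma]\in\mathcal{W}^1$. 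For uniqueness, if $[\tau_1\sigma_1]=[\tau_2\sigma_2]$ with $[\tau_i]\in\mathcal{W}_1$ and $[\sigma_i]\in\mathcal{W}^1$, set $[\rho]:=[\tau_2^{-1}\tau_1]\in\mathcal{W}_1$ so that $[\sigma_2]=[\rho\sigma_1]$; a nontrivial $[\rho]$ would produce some $\alpha\in\blacktriangle^+$ with $\zeta([\rho])^{-1}\alpha\in\blacktriangle^-$, and applying (iii) to $[\sigma_1]$ would give $\zeta([\sigma_2])^{-1}\alpha=\zeta([\sigma_1])^{-1}\zeta([\rho])^{-1}\alpha\in\triangle^-$, contradicting (iii) for $[\sigma_2]$.

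For (v.1), $n_{[\sigma]}=0$ is equivalent to $\zeta([\sigma])^{-1}(\triangle^+)\subset\triangle^+$, and by a cardinality argument this forces $\zeta([\sigma])^{-1}(\triangle^+)=\triangle^+$, hence $\zeta([\sigma])=\mathrm{id}$ and $[\sigma]=[e]$. For (v.2), the reverse implication is the familiar computation $\Phi_{[w_\beta]}=\{\beta\}$ for $\beta\in\Pi_\triangle$, together with $\beta\in\Pi_\triangle-\blacktriangle$ placing $[w_\beta]$ inside $\mathcal{W}^1$. For the forward implication, assuming $\Phi_{[\sigma]}=\{\beta\}$, I would show $\beta\in\Pi_\triangle$ by contradiction: a decomposition $\beta=\gamma_1+\gamma_2$ with $\gamma_i\in\triangle^+$ would, since $\zeta([\sigma])^{-1}\beta\in\triangle^-$, preclude $\zeta([\sigma])^{-1}\gamma_i\in\triangle^+$ for both $i$ (else their sum would be a positive root), putting some $\gamma_i$ in $\Phi_{[\sigma]}=\{\beta\}$, which contradicts $\gamma_i\neq\beta$. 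Then $\beta\in\Pi_\triangle-\blacktriangle$ follows from $[\sigma]\in\mathcal{W}^1$, and $[\sigma]=[w_\beta]$ comes from the standard fact that a Weyl group element is determined by its inversion set.

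The principal obstacle will be (iv): both the identification of $\blacktriangle\cap\zeta([w])(\triangle^+)$ as a positive system of $\blacktriangle$ and the delicate interplay between (iii) and the uniqueness argument rest on recognizing $\mathcal{W}_1$ as precisely the Weyl group of the root subsystem $\blacktriangle$ acting simply transitively on its positive systems, a point that may warrant a brief justification via $L_\mathbb{C}$ being a reductive subgroup with root system $\blacktriangle$ relative to $\frak{h}_\mathbb{C}$; everything else is essentially bookkeeping with the definitions.
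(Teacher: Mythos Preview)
Your proposal is correct and follows essentially the same route as the paper: parts (i)--(iii) are handled identically, the existence in (iv) is the paper's Lemma~8.3.6 repackaged (your $\blacktriangle\cap\zeta([w])(\triangle^+)$ is exactly $-\blacktriangle_{[w]}$ there, and both arguments invoke the simply transitive action of $\mathcal{W}_1$ on positive systems of $\blacktriangle$), and (v) rests on the same fact that an element of $\mathcal{W}$ is determined by its inversion set (the paper's Corollary~8.3.5). The one genuine streamlining is your uniqueness argument in (iv): the paper proves $\Phi_{[\sigma_1^{-1}]}=\Phi_{[\sigma_2^{-1}]}$ by a two-sided inclusion and then appeals to Corollary~8.3.5, whereas your direct contradiction---picking $\alpha\in\blacktriangle^+$ inverted by a nontrivial $[\rho]\in\mathcal{W}_1$ and applying (iii) to both $[\sigma_1]$ and $[\sigma_2]$---is shorter and avoids that detour entirely.
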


   The main purpose of this subsection is to prove Proposition \ref{prop-8.3.2}. 
   First of all, we are going to prepare three lemmas for proving it. 
   The first lemma is 

\begin{lemma}\label{lem-8.3.3}
\begin{enumerate}
\item[]
\item[{\rm (1)}] 
   $\Phi_{[w]}$ is a closed subset of $\triangle$ for any $[w]\in\mathcal{W}$. 
\item[{\rm (2)}]
   $\triangle^+=\Phi_{[w]}\amalg\Phi_{[w\kappa]}$ for all $[w]\in\mathcal{W}$. 
\item[{\rm (3)}]
   If $[\sigma]\in\mathcal{W}^1$, then $\zeta([\sigma])^{-1}(\blacktriangle^+)\subset\triangle^+$ and $\zeta([\sigma])^{-1}(\blacktriangle^-)\subset\triangle^-$. 
\end{enumerate}
\end{lemma}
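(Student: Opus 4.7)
The plan is to prove the three items in order, exploiting the fact that $\zeta([w])$ permutes $\triangle$ and that $\zeta([\kappa])$ maps $\triangle^-$ onto $\triangle^+$.

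For item (1), I would take $\beta_1,\beta_2\in\Phi_{[w]}$ with $\beta_1+\beta_2\in\triangle$. Since $\beta_1,\beta_2\in\triangle^+$ and $\Pi_\triangle$ is a fundamental system, the sum $\beta_1+\beta_2$ lies in $\triangle^+$ as soon as it lies in $\triangle$. Applying the linear extension $\zeta([w])^{-1}$ (which stabilises $\triangle$), I get $\zeta([w])^{-1}(\beta_1+\beta_2)=\zeta([w])^{-1}\beta_1+\zeta([w])^{-1}\beta_2$, and the two summands lie in $\triangle^-$ by definition of $\Phi_{[w]}$; hence, since the total is a root, it must be negative. This yields $\beta_1+\beta_2\in\Phi_{[w]}$.

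For item (2), note that $\zeta$ is a group homomorphism, so $\zeta([w\kappa])^{-1}=\zeta([\kappa])^{-1}\zeta([w])^{-1}=\zeta([\kappa])\zeta([w])^{-1}$ using $\zeta([\kappa])^{-1}=\zeta([\kappa])$. Given $\beta\in\triangle^+$, set $\gamma:=\zeta([w])^{-1}\beta$, which lies in $\triangle=\triangle^+\amalg\triangle^-$. If $\gamma\in\triangle^-$, then $\beta\in\Phi_{[w]}$; if $\gamma\in\triangle^+$, then $\zeta([\kappa])\gamma\in\zeta([\kappa])(\triangle^+)=\triangle^-$ (using $\zeta([\kappa])(\triangle^-)=\triangle^+$ together with $\zeta([\kappa])^2=\mathrm{id}$), so $\beta\in\Phi_{[w\kappa]}$. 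The two possibilities are mutually exclusive, so $\triangle^+=\Phi_{[w]}\amalg\Phi_{[w\kappa]}$.

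For item (3), I would assume $[\sigma]\in\mathcal{W}^1$, i.e.\ $\Phi_{[\sigma]}\subset\triangle^+-\blacktriangle$, and take $\gamma\in\blacktriangle^+\subset\triangle^+$. By item (2) applied with $[w]=[\sigma]$, $\gamma$ belongs either to $\Phi_{[\sigma]}$ or to $\Phi_{[\sigma\kappa]}$; but $\gamma\in\blacktriangle$ forces $\gamma\notin\Phi_{[\sigma]}\subset\triangle^+-\blacktriangle$, so $\gamma\in\Phi_{[\sigma\kappa]}$, which exactly says $\zeta([\sigma])^{-1}\gamma\in\zeta([\kappa])(\triangle^-)=\triangle^+$. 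The statement for $\blacktriangle^-$ follows immediately by replacing $\gamma$ with $-\gamma$, since $\zeta([\sigma])^{-1}$ is linear and $\blacktriangle^-=-\blacktriangle^+$.

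I do not expect a serious obstacle here: the argument is entirely formal once one unpacks the definitions of $\Phi_{[w]}$, $\mathcal{W}^1$, and the characterising property $\zeta([\kappa])(\triangle^-)=\triangle^+$. The only mildly delicate point is the disjointness in (2), which rests on $\zeta([\kappa])^{-1}=\zeta([\kappa])$ and the disjointness $\triangle^+\cap\triangle^-=\emptyset$; both are immediate from the construction of $[\kappa]$ and of the positive system.
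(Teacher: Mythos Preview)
Your proof is correct and follows essentially the same approach as the paper. The paper declares (1) and (2) ``clear from the definition'' and for (3) argues the same dichotomy directly (if $\zeta([\sigma])^{-1}\gamma\in\triangle^-$ then $\gamma\in\Phi_{[\sigma]}\subset\triangle^+-\blacktriangle$, a contradiction), whereas you route the same contradiction through the decomposition in (2); the logical content is identical.
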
   
\begin{proof}
   We only prove (3), since (1) and (2) are clear from the definition \eqref{eq-8.3.1} of $\Phi_{[w]}$ and $\zeta([\kappa])(\triangle^-)=\triangle^+$.\par
   
   (3). 
   For each $\gamma\in\blacktriangle^+$, either the case $\zeta([\sigma])^{-1}\gamma\in\triangle^-$ or $\zeta([\sigma])^{-1}\gamma\in\triangle^+$ has to occur.
   If $\zeta([\sigma])^{-1}\gamma\in\triangle^-$, then it follows from \eqref{eq-8.3.1} and $[\sigma]\in\mathcal{W}^1$ that $\gamma\in\Phi_{[\sigma]}\subset\triangle^+-\blacktriangle$, which contradicts $\gamma\in\blacktriangle^+$. 
   Thus the remaining case $\zeta([\sigma])^{-1}\gamma\in\triangle^+$ occurs, and $\zeta([\sigma])^{-1}(\blacktriangle^+)\subset\triangle^+$ holds.\par

   The $\zeta([\sigma])^{-1}(\blacktriangle^-)\subset\triangle^-$ comes from $\zeta([\sigma])^{-1}(\blacktriangle^+)\subset\triangle^+$, $\triangle^-=-\triangle^+$ and $\blacktriangle^-=-\blacktriangle^+$. 
\end{proof}

   The second lemma is 
\begin{lemma}\label{lem-8.3.4}
   Set $\triangle_{[w]}:=\Phi_{[w]}\amalg(-\Phi_{[w\kappa]})$ for $[w]\in\mathcal{W}$. 
   Then 
\begin{enumerate}
\item[{\rm (1)}] 
   $\triangle_{[w]}$ is a closed subset of $\triangle$ for any $[w]\in\mathcal{W}$. 
\item[{\rm (2)}]
   $\triangle=\triangle_{[w]}\amalg(-\triangle_{[w]})$ for all $[w]\in\mathcal{W}$.
\item[{\rm (3)}]
   $\zeta([w])^{-1}(\triangle_{[w]})=\triangle^-$ for all $[w]\in\mathcal{W}$. 
\end{enumerate}
\end{lemma}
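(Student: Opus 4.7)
The plan is to handle (3) first as the central calculation, from which (1) and (2) will follow formally by transporting everything through the bijection $\zeta([w])^{-1}$ of $\triangle$.

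For (3), the key observation is a reformulation of $\Phi_{[w\kappa]}$. Since $\zeta:\mathcal{W}\to GL((\frak{h}_\mathbb{C})^*)$ is a homomorphism and $\zeta([\kappa])=\zeta([\kappa])^{-1}$ exchanges $\triangle^-$ and $\triangle^+$, one gets $\zeta([w\kappa])^{-1}=\zeta([\kappa])\zeta([w])^{-1}$, so a root $\beta\in\triangle^+$ belongs to $\Phi_{[w\kappa]}$ precisely when $\zeta([w])^{-1}\beta\in\triangle^+$. Combined with the defining property of $\Phi_{[w]}$, this yields
\[
\zeta([w])^{-1}(\Phi_{[w]})\subset\triangle^-,\qquad \zeta([w])^{-1}(\Phi_{[w\kappa]})\subset\triangle^+,
\]
hence $\zeta([w])^{-1}(\triangle_{[w]})\subset\triangle^-$. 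For the reverse inclusion, given $\alpha\in\triangle^-$ I would split on the sign of $\zeta([w])\alpha$: if $\zeta([w])\alpha\in\triangle^+$ then $\zeta([w])\alpha\in\Phi_{[w]}$, so $\alpha\in\zeta([w])^{-1}(\Phi_{[w]})$; if $\zeta([w])\alpha\in\triangle^-$ then $-\zeta([w])\alpha\in\Phi_{[w\kappa]}$ by the reformulation above, so $\alpha\in -\zeta([w])^{-1}(\Phi_{[w\kappa]})$. Either way $\alpha\in\zeta([w])^{-1}(\triangle_{[w]})$, proving (3).

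Once (3) is in hand, (2) drops out immediately by applying the bijection $\zeta([w])^{-1}$: the union $\triangle_{[w]}\cup(-\triangle_{[w]})$ is carried to $\triangle^-\cup\triangle^+=\triangle$, and the intersection to $\triangle^-\cap\triangle^+=\emptyset$, giving the disjoint decomposition $\triangle=\triangle_{[w]}\amalg(-\triangle_{[w]})$. For (1), suppose $\beta_1,\beta_2\in\triangle_{[w]}$ with $\beta_1+\beta_2\in\triangle$. By (3), $\zeta([w])^{-1}\beta_1$ and $\zeta([w])^{-1}\beta_2$ lie in $\triangle^-$, which is closed under root addition, so $\zeta([w])^{-1}(\beta_1+\beta_2)\in\triangle^-$, whence $\beta_1+\beta_2\in\triangle_{[w]}$ by (3) once more.

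No serious obstacle is expected: the whole lemma is driven by the identity $\zeta([w\kappa])^{-1}=\zeta([\kappa])\zeta([w])^{-1}$ and the sign-swapping property of $\zeta([\kappa])$. The only point demanding a little care is the bookkeeping of signs when one passes between $\Phi_{[w\kappa]}$ and $-\Phi_{[w\kappa]}$ inside the definition of $\triangle_{[w]}$; separating the two cases for $\alpha\in\triangle^-$ above is what handles this cleanly.
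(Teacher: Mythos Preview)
Your proof is correct, but it takes a different route from the paper. The paper argues in the order (1)$\to$(2)$\to$(3): for (1) it does a direct case analysis on whether each of $\alpha,\beta$ lies in $\Phi_{[w]}$ or in $-\Phi_{[w\kappa]}$, using Lemma~\ref{lem-8.3.3}-(1); for (2) it invokes $\triangle^+=\Phi_{[w]}\amalg\Phi_{[w\kappa]}$ from Lemma~\ref{lem-8.3.3}-(2); and for (3) it checks only the inclusion $\zeta([w])^{-1}(\triangle_{[w]})\subset\triangle^-$ and concludes equality by the cardinality count coming from (2). You instead prove (3) first by a genuine two-sided inclusion argument, then read off (1) and (2) by transporting the closed decomposition $\triangle=\triangle^-\amalg\triangle^+$ through the bijection $\zeta([w])$. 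Your approach is cleaner in that it makes (1) a one-line consequence of the closedness of $\triangle^-$ and avoids the four-case split; the paper's approach has the mild advantage of reusing Lemma~\ref{lem-8.3.3} more explicitly and never needing the reverse inclusion in (3).
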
   
\begin{proof}
   (1). 
   For $\alpha,\beta\in\triangle_{[w]}$ we suppose that $\alpha+\beta\in\triangle$. 
   If $\alpha,\beta\in\Phi_{[w]}$ (resp.\ $-\Phi_{[w\kappa]}$), then $\alpha+\beta\in\Phi_{[w]}$ (resp.\ $-\Phi_{[w\kappa]}$) due to Lemma \ref{lem-8.3.3}-(1). 
   Now, let us investigate the case where $\alpha\in\Phi_{[w]}$ and $\beta\in-\Phi_{[w\kappa]}$. 
   Then one has $\zeta([w])^{-1}(\alpha+\beta)\in\triangle^-$ in case of $\alpha+\beta\in\triangle^+$, and $\zeta([w\kappa])^{-1}(\alpha+\beta)\in\triangle^+$ in case of $\alpha+\beta\in\triangle^-$.
   Accordingly $\alpha+\beta\in\Phi_{[w]}$ in case of $\alpha+\beta\in\triangle^+$, and $\alpha+\beta\in-\Phi_{[w\kappa]}$ in case of $\alpha+\beta\in\triangle^-$. 
   In any cases we obtain $\alpha+\beta\in\bigl(\Phi_{[w]}\cup(-\Phi_{[w\kappa]})\bigr)\subset\triangle_{[w]}$. 
   Consequently $\triangle_{[w]}$ is a closed subset of $\triangle$.\par
   
   (2) follows from $\triangle=\triangle^+\amalg(-\triangle^+)$, Lemma \ref{lem-8.3.3}-(2) and $\triangle_{[w]}=\Phi_{[w]}\amalg(-\Phi_{[w\kappa]})$.\par
   
   (3). 
   On the one hand; by a direct computation with \eqref{eq-8.3.1} we deduce $\zeta([w])^{-1}(\triangle_{[w]})\subset\triangle^-$. 
   On the other hand; the above (2) implies that the cardinal number of $\triangle^-$ is equal to that of $\triangle_{[w]}$, which is equal to that of $\zeta([w])^{-1}(\triangle_{[w]})$.  
   Therefore one concludes $\zeta([w])^{-1}(\triangle_{[w]})=\triangle^-$. 
\end{proof}

   Lemmas \ref{lem-8.3.3} and \ref{lem-8.3.4} lead to 
\begin{corollary}\label{cor-8.3.5}
   $\Phi_{[w_1]}=\Phi_{[w_2]}$ with $[w_1],[w_2]\in\mathcal{W}$ implies $[w_1]=[w_2]$.
\end{corollary}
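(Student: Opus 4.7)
My approach is to reduce the equality $[w_1]=[w_2]$ in $\mathcal{W}$ to the equality $\zeta([w_1])=\zeta([w_2])$ in $GL((\frak{h}_\mathbb{C})^*)$, and then invoke the faithfulness of the action $\zeta$. The key is that the hypothesis $\Phi_{[w_1]}=\Phi_{[w_2]}$ controls enough of $\triangle$ to force the two linear maps $\zeta([w_1])^{-1}$ and $\zeta([w_2])^{-1}$ to coincide on a spanning set.

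First, I would use Lemma \ref{lem-8.3.3}-(2), which says $\triangle^+=\Phi_{[w]}\amalg\Phi_{[w\kappa]}$, to observe that $\Phi_{[w\kappa]}=\triangle^+-\Phi_{[w]}$ is completely determined by $\Phi_{[w]}$. Consequently the hypothesis $\Phi_{[w_1]}=\Phi_{[w_2]}$ immediately yields $\Phi_{[w_1\kappa]}=\Phi_{[w_2\kappa]}$. Combining these two equalities with the definition $\triangle_{[w]}:=\Phi_{[w]}\amalg(-\Phi_{[w\kappa]})$ from Lemma \ref{lem-8.3.4}, I obtain $\triangle_{[w_1]}=\triangle_{[w_2]}$.

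Next, Lemma \ref{lem-8.3.4}-(3) gives $\zeta([w_1])^{-1}(\triangle_{[w_1]})=\triangle^-=\zeta([w_2])^{-1}(\triangle_{[w_2]})$. Since $\triangle_{[w_1]}=\triangle_{[w_2]}$, this rewrites as
\[
\zeta([w_2 w_1^{-1}])(\alpha)=\alpha \quad \text{for every } \alpha\in\triangle_{[w_1]},
\]
i.e.\ $\zeta([w_2 w_1^{-1}])$ fixes every element of $\triangle_{[w_1]}$ pointwise. Now Lemma \ref{lem-8.3.4}-(2) asserts $\triangle=\triangle_{[w_1]}\amalg(-\triangle_{[w_1]})$, and the full root system $\triangle$ spans $(\frak{h}_\mathbb{C})^*$ because $\frak{g}_\mathbb{C}$ is semisimple (so $\{H_\alpha\}_{\alpha\in\triangle}$ spans $\frak{h}_\mathbb{C}$). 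Hence $\triangle_{[w_1]}$ itself spans $(\frak{h}_\mathbb{C})^*$ over $\mathbb{C}$, and by linearity $\zeta([w_2 w_1^{-1}])=\operatorname{id}_{(\frak{h}_\mathbb{C})^*}$.

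Finally, I would conclude by noting that $\zeta:\mathcal{W}\to GL((\frak{h}_\mathbb{C})^*)$ is faithful: if $\operatorname{Ad}w$ acts trivially on $\frak{h}_\mathbb{C}^*$ (equivalently on $\frak{h}_\mathbb{C}$), then $w\in C_{G_u}(i\frak{h}_\mathbb{R})$, so $[w]=[e]$ in $\mathcal{W}=N_{G_u}(i\frak{h}_\mathbb{R})/C_{G_u}(i\frak{h}_\mathbb{R})$. Applied to $w_2 w_1^{-1}$ this gives $[w_1]=[w_2]$, as required. The only subtle point is the spanning argument together with the faithfulness of $\zeta$; both are standard consequences of the semisimplicity of $\frak{g}_\mathbb{C}$ and the definition of $\mathcal{W}$ as a quotient by the centralizer of $i\frak{h}_\mathbb{R}$, so I expect no serious obstacle, the work being essentially bookkeeping with Lemmas \ref{lem-8.3.3} and \ref{lem-8.3.4}.
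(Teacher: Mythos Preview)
Your overall strategy matches the paper's: deduce $\Phi_{[w_1\kappa]}=\Phi_{[w_2\kappa]}$ from Lemma~\ref{lem-8.3.3}-(2), hence $\triangle_{[w_1]}=\triangle_{[w_2]}$, and then use Lemma~\ref{lem-8.3.4}-(3). However, there is a genuine gap at the crucial step. From
\[
\zeta([w_1])^{-1}(\triangle_{[w_1]})=\triangle^-=\zeta([w_2])^{-1}(\triangle_{[w_1]})
\]
you only obtain the \emph{set} equality $\zeta([w_2w_1^{-1}])(\triangle_{[w_1]})=\triangle_{[w_1]}$, not the pointwise statement $\zeta([w_2w_1^{-1}])(\alpha)=\alpha$ for each $\alpha\in\triangle_{[w_1]}$. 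Two distinct bijections can have the same image on a given subset; nothing in the hypotheses forces the two maps $\zeta([w_1])^{-1}$ and $\zeta([w_2])^{-1}$ to agree element by element on $\triangle_{[w_1]}$. Consequently your spanning-and-linearity argument does not go through.

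The paper closes the argument differently: it shows $\zeta([w_2^{-1}w_1])(\triangle^-)=\triangle^-$ as a set, and then invokes the standard fact that the only Weyl group element permuting the negative (equivalently, positive) roots among themselves is the identity. This is precisely the simple transitivity of $\mathcal{W}$ on Weyl chambers (or, equivalently, on positive systems), and it is what you need in place of the incorrect pointwise claim. Once you replace your pointwise assertion by this Weyl-chamber fact, the rest of your write-up (including the faithfulness of $\zeta$) is fine.
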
   
\begin{proof}
   Suppose that $\Phi_{[w_1]}=\Phi_{[w_2]}$ for $[w_1],[w_2]\in\mathcal{W}$. 
   From $\Phi_{[w_1]}=\Phi_{[w_2]}$ and Lemma \ref{lem-8.3.3}-(2) we see that $\Phi_{[w_1\kappa]}=\Phi_{[w_2\kappa]}$. 
   Hence it turns out that $\triangle_{[w_1]}=\bigl(\Phi_{[w_1]}\amalg(-\Phi_{[w_1\kappa]})\bigr)=\triangle_{[w_2]}$, so that 
\[
    \zeta([w_2^{-1}w_1])(\triangle^-)
   =\zeta([w_2])^{-1}\bigl(\zeta([w_1])(\triangle^-)\bigr)
   =\zeta([w_2])^{-1}(\triangle_{[w_1]})
   =\zeta([w_2])^{-1}(\triangle_{[w_2]})
   =\triangle^-
\]
by Lemma \ref{lem-8.3.4}-(3).
   This and \eqref{eq-8.1.3} assure that $\operatorname{Ad}(w_2^{-1}w_1)=\operatorname{id}$ on $\frak{h}_\mathbb{C}$, and hence $[w_1]=[w_2]$. 
\end{proof}

   The last lemma is 
\begin{lemma}\label{lem-8.3.6} 
   Set $\Psi_{[w]}:=\bigl(\zeta([w])(\triangle^-)\bigr)\cap\blacktriangle^+$, $\Psi_{[w]}^c:=\blacktriangle^+-\Psi_{[w]}$, and $\blacktriangle_{[w]}:=\Psi_{[w]}\amalg(-\Psi_{[w]}^c)$ for $[w]\in\mathcal{W}$.
   Then 
\begin{enumerate}
\item[{\rm (1)}] 
   $\blacktriangle^+=\Psi_{[w]}\amalg\Psi_{[w]}^c$ for all $[w]\in\mathcal{W}$.
\item[{\rm (2)}] 
   $\Psi_{[w]}^c=\bigl(\zeta([w])(\triangle^+)\bigr)\cap\blacktriangle^+$ for all $[w]\in\mathcal{W}$.
\item[{\rm (3)}] 
   Both $\Psi_{[w]}$ and $\Psi_{[w]}^c$ are closed subsets of $\blacktriangle$ for each $[w]\in\mathcal{W}$.
\item[{\rm (4)}]
   $\blacktriangle_{[w]}$ is a closed subset of $\blacktriangle$ for any $[w]\in\mathcal{W}$.    
\item[{\rm (5)}]
   $\blacktriangle=\blacktriangle_{[w]}\amalg(-\blacktriangle_{[w]})$ for all $[w]\in\mathcal{W}$.
\item[{\rm (6)}]
   For a given $[w]\in\mathcal{W}$, there exists a unique $[\tau]\in\mathcal{W}_1$ such that $\zeta([\tau])^{-1}(\blacktriangle_{[w]})=\blacktriangle^-$.
\end{enumerate}   
\end{lemma}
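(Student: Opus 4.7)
My plan is to dispatch the items in the order they are stated, since each later item leans on the earlier ones. Items (1) and (2) are essentially bookkeeping: (1) is the definition of $\Psi_{[w]}^c$, and for (2) I would decompose $\triangle=\triangle^{-}\amalg\triangle^{+}$, apply $\zeta([w])$, intersect with $\blacktriangle^{+}$, and observe that $\bigl(\zeta([w])(\triangle^{+})\bigr)\cap\blacktriangle^{+}=\blacktriangle^{+}-\Psi_{[w]}=\Psi_{[w]}^{c}$. Item (5) I would handle right after (1)--(2) as a purely set-theoretic rearrangement of $\blacktriangle=\blacktriangle^{+}\amalg\blacktriangle^{-}=\bigl(\Psi_{[w]}\amalg\Psi_{[w]}^{c}\bigr)\amalg\bigl(-\Psi_{[w]}\amalg-\Psi_{[w]}^{c}\bigr)$, grouping terms as $\bigl(\Psi_{[w]}\amalg-\Psi_{[w]}^{c}\bigr)\amalg\bigl(\Psi_{[w]}^{c}\amalg-\Psi_{[w]}\bigr)=\blacktriangle_{[w]}\amalg\bigl(-\blacktriangle_{[w]}\bigr)$.

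For (3) and (4) the point is that $\zeta([w])^{-1}$ is a linear isomorphism permuting $\triangle$, so sums lie in $\triangle^{-}$ (resp.\ $\triangle^{+}$) whenever the two summands do. For (3), if $\beta_{1},\beta_{2}\in\Psi_{[w]}$ and $\beta_{1}+\beta_{2}\in\blacktriangle$, then $\beta_{1}+\beta_{2}\in\blacktriangle^{+}$ (sum of positive roots is positive) and $\zeta([w])^{-1}(\beta_{1}+\beta_{2})\in\triangle^{-}$, so $\beta_{1}+\beta_{2}\in\Psi_{[w]}$; the argument for $\Psi_{[w]}^{c}$ is identical via (2). For (4), I would do a four-case analysis on whether $\beta_{1},\beta_{2}\in\blacktriangle_{[w]}$ lie in $\Psi_{[w]}$ or $-\Psi_{[w]}^{c}$: the two ``aligned'' cases reduce to (3) after possibly negating; in the two ``mixed'' cases, one notes that both $\zeta([w])^{-1}\beta_{1}$ and $\zeta([w])^{-1}\beta_{2}$ are negative (resp.\ both positive after an overall sign change), so $\zeta([w])^{-1}(\beta_{1}+\beta_{2})$ keeps a definite sign, which dictates whether $\beta_{1}+\beta_{2}$ sits in $\Psi_{[w]}$ or in $-\Psi_{[w]}^{c}$ according to whether it lies in $\blacktriangle^{+}$ or $\blacktriangle^{-}$.

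For (6) the key structural observation is that, by (4) and (5), $-\blacktriangle_{[w]}$ is a closed subset of $\blacktriangle$ whose complement in $\blacktriangle$ is $\blacktriangle_{[w]}=-(-\blacktriangle_{[w]})$; that is, $-\blacktriangle_{[w]}$ is a \emph{positive system} of the root system $\blacktriangle=\triangle(\frak{l}_{\mathbb{C}},\frak{h}_{\mathbb{C}})$ of the reductive subalgebra $\frak{l}_{\mathbb{C}}$. Since $\mathcal{W}_{1}$ is precisely the Weyl group of $\blacktriangle$ and acts simply transitively on the set of positive systems of $\blacktriangle$, there is a unique $[\tau]\in\mathcal{W}_{1}$ with $\zeta([\tau])(\blacktriangle^{+})=-\blacktriangle_{[w]}$, equivalently $\zeta([\tau])^{-1}(\blacktriangle_{[w]})=\blacktriangle^{-}$. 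I would include a brief justification of the identification $\mathcal{W}_{1}=N_{L_{u}}(i\frak{h}_{\mathbb{R}})/C_{L_{u}}(i\frak{h}_{\mathbb{R}})$ with the abstract Weyl group of $\blacktriangle$, which follows from $i\frak{h}_{\mathbb{R}}$ being a maximal torus of $\frak{l}_{u}$ and $\frak{l}_{\mathbb{C}}=\frak{h}_{\mathbb{C}}\oplus\bigoplus_{\gamma\in\blacktriangle}\frak{g}_{\gamma}$ from \eqref{eq-8.1.7}. The main obstacle I anticipate is keeping the four cases of (4) organized so as to pin down correctly which of $\Psi_{[w]}$ or $-\Psi_{[w]}^{c}$ contains $\beta_{1}+\beta_{2}$; the inclusions in (6) are then essentially forced once one recognizes $-\blacktriangle_{[w]}$ as a positive system for $\blacktriangle$.
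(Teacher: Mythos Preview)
Your proposal is correct and follows essentially the same approach as the paper. The paper's proof is quite terse---for (4) it simply refers back to the analogous case analysis in Lemma~\ref{lem-8.3.4}-(1), and for (6) it says only that ``(6) is a consequence of the above (4), (5)''---so your more explicit treatment of the four cases in (4) and your identification of $-\blacktriangle_{[w]}$ as a positive system of $\blacktriangle$ on which $\mathcal{W}_1$ acts simply transitively is exactly the content the paper leaves implicit.
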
   
\begin{proof}
   (1) is obvious.\par

   (2). 
   Since $\Psi_{[w]}^c=\blacktriangle^+-\Psi_{[w]}$ and $\Psi_{[w]}=\bigl(\zeta([w])(\triangle^-)\bigr)\cap\blacktriangle^+$, the following six conditions (i) through (vi) are equivalent for $\gamma\in\triangle$:
\begin{center}
\begin{tabular}{lll}
     (i) $\gamma\in\Psi_{[w]}^c$,
   & (ii) $\gamma\in\blacktriangle^+$ and $\gamma\not\in\Psi_{[w]}$,
   & (iii) $\gamma\in\blacktriangle^+$ and $\gamma\not\in\zeta([w])(\triangle^-)$,\\
     (iv) $\gamma\in\blacktriangle^+$ and $\zeta([w])^{-1}\gamma\not\in\triangle^-$,
   & (v) $\gamma\in\blacktriangle^+$ and $\zeta([w])^{-1}\gamma\in\triangle^+$, 
   & (vi) $\gamma\in\bigl(\zeta([w])(\triangle^+)\bigr)\cap\blacktriangle^+$.
\end{tabular}
\end{center} 
   Hence $\Psi_{[w]}^c=\bigl(\zeta([w])(\triangle^+)\bigr)\cap\blacktriangle^+$ follows.\par

   (3). 
   By $\Psi_{[w]}=\bigl(\zeta([w])(\triangle^-)\bigr)\cap\blacktriangle^+$ and $\Psi_{[w]}^c=\bigl(\zeta([w])(\triangle^+)\bigr)\cap\blacktriangle^+$  we conclude that $\Psi_{[w]}$ and $\Psi_{[w]}^c$ are closed subsets of $\blacktriangle$, respectively.\par
   
   (4).
   We conclude (4) by arguments similar to those in the proof of Lemma \ref{lem-8.3.4}-(1) together with the above (3), (2).\par
   
   (5) follows from $\blacktriangle=\blacktriangle^+\amalg(-\blacktriangle^+)$, the above (1) and $\blacktriangle_{[w]}=\Psi_{[w]}\amalg(-\Psi_{[w]}^c)$.\par
   
   (6) is a consequence of the above (4), (5).
\end{proof}

   Now, let us start with proving the proposition. 
\begin{proof}[Proof of Proposition {\rm \ref{prop-8.3.2}}]
   By virtue of Lemma \ref{lem-8.3.3} it suffices to confirm the items (iv) and (v) only.\par
   
   (iv). 
   First, let us verify the uniqueness of $([\tau],[\sigma])\in\mathcal{W}_1\times\mathcal{W}^1$.\par

   (Uniqueness). 
   For $[\sigma_1],[\sigma_2]\in\mathcal{W}^1$ we suppose that $[\tau_1]:=[\sigma_1\sigma_2^{-1}]$ belongs to $\mathcal{W}_1$.
   We are going to prove $[\sigma_1]=[\sigma_2]$.
   In terms of $[\tau_1]\in\mathcal{W}_1$, $[\frak{l}_\mathbb{C},\frak{u}^-]\subset\frak{u}^-$ and $\frak{u}^-=\bigoplus_{\alpha\in\triangle^--\blacktriangle}\frak{g}_\alpha$ one has 
\begin{equation}\label{eq-1}\tag*{\textcircled{1}}
   \zeta([\tau_1])\bigl(\triangle^--\blacktriangle)\subset\triangle^--\blacktriangle. 
\end{equation} 
   Let us show that 
\begin{equation}\label{eq-2}\tag*{\textcircled{2}}
   \Phi_{[\sigma_2^{-1}]}\subset\Phi_{[\sigma_1^{-1}]}. 
\end{equation} 
   For any $\beta\in\Phi_{[\sigma_2^{-1}]}$ we obtain $\beta\in\triangle^+$, $\zeta([\sigma_2])\beta\in\triangle^-$ from \eqref{eq-8.3.1}. 
   Hence it turns out that 
\[
   \zeta([\sigma_1])\beta=\zeta([\tau_1\sigma_2])\beta\in\zeta([\tau_1])(\triangle^-).
\]
   So, either the case $\zeta([\sigma_1])\beta\in\zeta([\tau_1])(\blacktriangle^-)$ or $\zeta([\sigma_1])\beta\in\zeta([\tau_1])(\triangle^--\blacktriangle)$ has to occur.
   If $\zeta([\sigma_1])\beta\in\zeta([\tau_1])(\blacktriangle^-)$, then we conclude  $\beta\in\zeta([\sigma_1^{-1}\tau_1])(\blacktriangle^-)=\zeta([\sigma_2])^{-1}(\blacktriangle^-)\subset\triangle^-$ by Lemma \ref{lem-8.3.3}-(3) and $[\sigma_2]\in\mathcal{W}^1$.
   However, this $\beta\in\triangle^-$ contradicts $\beta\in\triangle^+$. 
   For this reason, the remaining case $\zeta([\sigma_1])\beta\in\zeta([\tau_1])(\triangle^--\blacktriangle)$ occurs. 
   Accordingly \ref{eq-1} implies $\zeta([\sigma_1])\beta\in\triangle^-$, and moreover \eqref{eq-8.3.1} yields $\beta\in\Phi_{[\sigma_1^{-1}]}$. 
   Hence \ref{eq-2} $\Phi_{[\sigma_2^{-1}]}\subset\Phi_{[\sigma_1^{-1}]}$ holds.   
   We have deduced $\Phi_{[\sigma_2^{-1}]}\subset\Phi_{[\sigma_1^{-1}]}$ from $[\sigma_1\sigma_2^{-1}]\in\mathcal{W}_1$. 
   Thus one can show $\Phi_{[\sigma_1^{-1}]}\subset\Phi_{[\sigma_2^{-1}]}$ from $[\sigma_2\sigma_1^{-1}]=[\sigma_1\sigma_2^{-1}]^{-1}\in\mathcal{W}_1$. 
   Consequently $\Phi_{[\sigma_1^{-1}]}=\Phi_{[\sigma_2^{-1}]}$. 
   This and Corollary \ref{cor-8.3.5} allow us to have $[\sigma_1]=[\sigma_2]$.\par
   
   Next, we are going to confirm the existence of $([\tau],[\sigma])\in\mathcal{W}_1\times\mathcal{W}^1$.\par
   (Existence). 
   Take an arbitrary $[w]\in\mathcal{W}$. 
   By Lemma \ref{lem-8.3.6}-(6) there exists a unique $[\tau]\in\mathcal{W}_1$ such that 
\[
   \zeta([\tau])^{-1}(\blacktriangle_{[w]})=\blacktriangle^-.
\] 
   Then, it is enough to confirm that $[\sigma]:=[\tau^{-1}w]$ belongs to $\mathcal{W}^1$. 
   In order to show $[\sigma]\in\mathcal{W}^1$, we first prove 
\begin{equation}\label{eq-3}\tag*{\textcircled{3}}
   \bigl(\zeta([\sigma])(\triangle^-)\bigr)\cap\blacktriangle^+=\emptyset.
\end{equation} 
   Let us use proof by contradiction. 
   Suppose that there exists a $\gamma\in\bigl(\zeta([\sigma])(\triangle^-)\bigr)\cap\blacktriangle^+$. 
   Then $\gamma\in\blacktriangle^+$ and $\zeta([\tau])\gamma\in\zeta([w])(\triangle^-)$. 
   From $\gamma\in\blacktriangle^+$ and $[\tau]\in\mathcal{W}_1$ we deduce $\zeta([\tau])\gamma\in\blacktriangle$. 
   So, $\zeta([\tau])\gamma\in\blacktriangle^+$ or $\zeta([\tau])\gamma\in\blacktriangle^-$.
\begin{enumerate}
\item 
   If $\zeta([\tau])\gamma\in\blacktriangle^+$, then $\zeta([\tau])\gamma\in\bigl(\zeta([w])(\triangle^-)\bigr)\cap\blacktriangle^+=\Psi_{[w]}\subset\blacktriangle_{[w]}=\zeta([\tau])(\blacktriangle^-)$.
\item 
   If $\zeta([\tau])\gamma\in\blacktriangle^-$, then Lemma \ref{lem-8.3.6}-(2) tells us that $\zeta([\tau])\gamma\in\bigl(\zeta([w])(\triangle^-)\bigr)\cap\blacktriangle^-=-\Psi_{[w]}^c\subset\blacktriangle_{[w]}=\zeta([\tau])(\blacktriangle^-)$.
\end{enumerate} 
   These contradict $\gamma\in\blacktriangle^+$. 
   Hence \ref{eq-3} holds.
   Now, let us show $[\sigma]\in\mathcal{W}^1$.
   We need to demonstrate that $\Phi_{[\sigma]}\subset\triangle^+-\blacktriangle$.
   For any $\alpha\in\Phi_{[\sigma]}$, it follows from \eqref{eq-8.3.1} that $\alpha\in\triangle^+$ and $\alpha\in\zeta([\sigma])(\triangle^-)$. 
   This and \ref{eq-3} give $\alpha\in\triangle^+-\blacktriangle$, and $\Phi_{[\sigma]}\subset\triangle^+-\blacktriangle$.
   For this reason we assert $[\sigma]\in\mathcal{W}^1$.\par

   From now on, we are going to prove (v).\par

   (v.1). 
   In view of \eqref{eq-8.3.1} we see that $n_{[\sigma]}=0$ if and only if $\zeta([\sigma])(\triangle^+)=\triangle^+$ if and only if $\operatorname{Ad}\sigma=\operatorname{id}$ on $\frak{h}_\mathbb{C}$  if and only if $[\sigma]=[e]$.
   Hence one has (v.1).\par
   
   (v.2). 
   Suppose that a $\beta\in\Pi_\triangle-\blacktriangle$ satisfies $[w_\beta]=[\sigma]$.
   Since $\beta\in\Pi_\triangle$ one knows that $\{\beta\}=\{\alpha\in\triangle^+ \,|\, \zeta([w_\beta])\alpha\in\triangle^-\}=\Phi_{[w_\beta^{-1}]}=\Phi_{[w_\beta]}=\Phi_{[\sigma]}$.
   Therefore $n_{[\sigma]}=1$.\par

   Conversely, suppose that $n_{[\sigma]}=1$.  
   By Lemma \ref{lem-8.3.3}-(2), $\Pi_\triangle\subset\triangle^+=\Phi_{[\sigma]}\amalg\Phi_{[\sigma\kappa]}$. 
   If $\Pi_\triangle\subset\Phi_{[\sigma\kappa]}$, then we conclude that $\Phi_{[\sigma\kappa]}=\triangle^+$ (because $\Phi_{[\sigma\kappa]}$ is a closed subset of $\triangle$), which contradicts $\Phi_{[\sigma]}\neq\emptyset$. 
   Therefore there exists a $\gamma\in\Pi_\triangle$ such that $\gamma\not\in\Phi_{[\sigma\kappa]}$. 
   Then we deduce $\gamma\in\Phi_{[\sigma]}$, and hence the supposition assures
\[
   \Phi_{[\sigma]}=\{\gamma\}.
\] 
   Here $\gamma\in\Pi_\triangle-\blacktriangle$ follows from $\Phi_{[\sigma]}\subset\triangle^+-\blacktriangle$. 
   Since $\gamma\in\Pi_\triangle$ one knows that $\Phi_{[w_\gamma]}=\{\gamma\}=\Phi_{[\sigma]}$. 
   This and Corollary \ref{cor-8.3.5} provide $[w_\gamma]=[\sigma]$. 
   Consequently (v.2) holds.
\end{proof}

\subsection{The generalized Bruhat decomposition by Kostant}\label{subsec-8.3.2}
   Proposition \ref{prop-8.3.2} enables us to establish the following theorem which is a result of Kostant \cite[p.123, Proposition 6.1]{Ko2} with some slight modifications:
\begin{theorem}\label{thm-8.3.7}
   Let $r=\dim_\mathbb{C}\frak{u}^+$. 
\begin{enumerate}
\item[{\rm (1)}]
   For each $[\sigma]\in\mathcal{W}^1$ we set
\begin{equation}\label{eq-8.3.8}
\begin{array}{lll}
     \Gamma_{[\sigma]}:=\{\gamma\in\Phi_{[\sigma^{-1}\kappa]} \,|\, \zeta([\sigma])\gamma\in\triangle^+-\blacktriangle\}, 
   & \frak{u}^+_{[\sigma]}:=\bigoplus_{\gamma\in\Gamma_{[\sigma]}}\frak{g}_{\zeta([\sigma])\gamma},  
   & U^+_{[\sigma]}:=\exp\frak{u}^+_{[\sigma]}. 
\end{array}
\end{equation}
   Then, $U^+_{[\sigma]}$ is a simply connected closed complex nilpotent subgroup of $U^+$ and it is biholomorphic to the $(r-n_{[\sigma]})$-dimensional complex Euclidean space $\frak{u}^+_{[\sigma]}$ $(\subset\frak{u}^+)$ via the exponential mapping $\exp:\frak{u}^+_{[\sigma]}\to U^+_{[\sigma]}$.
   Furthermore, 
\[
   N^+\sigma^{-1}Q^-=\sigma^{-1}U^+_{[\sigma]}Q^-.
\]
\item[{\rm (2)}]
   For a given $[\sigma]\in\mathcal{W}^1$, the following items {\rm (2.i)} and {\rm (2.ii)} hold$:$
   \begin{enumerate}
   \item[{\rm (2.i)}]
   $\dim_\mathbb{C}U^+_{[\sigma]}=r=\dim_\mathbb{C}U^+$ if and only if $[e]=[\sigma]$. 
   \item[{\rm (2.ii)}]
   $\dim_\mathbb{C}U^+_{[\sigma]}=r-1$ if and only if there exists a $\beta\in\Pi_\triangle-\blacktriangle$ satisfying $[w_\beta]=[\sigma]$.
   \end{enumerate}

\item[{\rm (3)}]
   $G_\mathbb{C}=\coprod_{[\sigma]\in\mathcal{W}^1}N^+\sigma^{-1}Q^-=\coprod_{[\sigma]\in\mathcal{W}^1}\sigma^{-1}U^+_{[\sigma]}Q^-$.
\end{enumerate}   
\end{theorem}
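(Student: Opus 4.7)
My plan is to decode $\Gamma_{[\sigma]}$ explicitly, handle the subalgebra and dimension claims, then tackle the central multiplicative identity $N^+\sigma^{-1}Q^- = \sigma^{-1}U^+_{[\sigma]}Q^-$ via a nilpotent-group factorization, and finally deduce (3) from the classical Bruhat decomposition together with Proposition 8.3.2(iv). The definition unwinds to $\Phi_{[\sigma^{-1}\kappa]} = \{\beta\in\triangle^+ : \zeta([\sigma])\beta\in\triangle^+\}$ and hence $\Gamma_{[\sigma]} = \{\beta\in\triangle^+ : \zeta([\sigma])\beta\in\triangle^+-\blacktriangle\}$, so the roots of $\frak{u}^+_{[\sigma]}$ are exactly $\zeta([\sigma])\Gamma_{[\sigma]}\subset\triangle^+-\blacktriangle$ and $\frak{u}^+_{[\sigma]}\subset\frak{u}^+$. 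I will verify that this root set is closed by comparing $(-iT)$-values (two roots in $\triangle^+-\blacktriangle$ have strictly positive value on $-iT$, so their sum remains in $\triangle^+-\blacktriangle$ when it is a root, and closure inside $\zeta([\sigma])(\triangle^+)$ is automatic). So $\frak{u}^+_{[\sigma]}$ is a complex subalgebra of the nilpotent $\frak{u}^+$, and Proposition 8.2.1(i) delivers $U^+_{[\sigma]} = \exp\frak{u}^+_{[\sigma]}$ as a simply connected, closed complex nilpotent subgroup biholomorphic to $\frak{u}^+_{[\sigma]}$.

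For the dimension equality $|\Gamma_{[\sigma]}| = r - n_{[\sigma]}$, I will assemble three ingredients: the partition $\triangle^+ = \Phi_{[\sigma^{-1}\kappa]}\amalg\Phi_{[\sigma^{-1}]}$ from Lemma 8.3.3(2), the bijection $\Phi_{[\sigma^{-1}]}\to\Phi_{[\sigma]}$, $\beta\mapsto-\zeta([\sigma])\beta$ (giving $|\Phi_{[\sigma^{-1}]}| = n_{[\sigma]}$), and the observation that $\beta\in\Phi_{[\sigma^{-1}\kappa]}-\Gamma_{[\sigma]}$ corresponds bijectively via $\beta\mapsto\zeta([\sigma])\beta$ with $\blacktriangle^+$, using Proposition 8.3.2(iii). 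Claim (2) is then immediate from Proposition 8.3.2(v).

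The main obstacle is the multiplicative identity $\tilde{N}Q^- = U^+_{[\sigma]}Q^-$, where $\tilde{N}:=\sigma N^+\sigma^{-1}$. One inclusion is clear since $\frak{u}^+_{[\sigma]}\subset\operatorname{Ad}\sigma(\frak{n}^+)$. For the reverse, I decompose $\tilde{\frak{n}} := \operatorname{Ad}(\sigma)\frak{n}^+ = \frak{u}^+_{[\sigma]}\oplus\frak{s}$, where $\frak{s} := \tilde{\frak{n}}\cap\frak{q}^-$ is the direct sum of root spaces for $\beta\in\Phi_{[\sigma^{-1}]}$ (with image in $\triangle^--\blacktriangle$, contained in $\frak{u}^-$, using Proposition 8.3.2(iii) to rule out $\blacktriangle^-$) and for $\beta\in\Phi_{[\sigma^{-1}\kappa]}-\Gamma_{[\sigma]}$ (with image in $\blacktriangle^+$, contained in $\frak{l}_\mathbb{C}$). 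I will check via the $(-iT)$-sign calculation that $\frak{s}$ is a complex subalgebra. Since both $\frak{u}^+_{[\sigma]}$ and $\frak{s}$ are complex subalgebras spanning $\tilde{\frak{n}}$ with $\frak{u}^+_{[\sigma]}\cap\frak{s}\subset\frak{u}^+\cap\frak{q}^- = \{0\}$ (Proposition 8.2.1(iv)), the standard factorization lemma for nilpotent groups yields a biholomorphism $U^+_{[\sigma]}\times S\to\tilde{N}$, $(u,s)\mapsto us$; injectivity follows from the big-cell uniqueness $U^+\cap Q^- = \{e\}$, while surjectivity can be obtained either by an induction along the lower central series of $\tilde{\frak{n}}$ or by Ax--Grothendieck applied to the polynomial map $\frak{u}^+_{[\sigma]}\oplus\frak{s}\to\tilde{\frak{n}}$ induced by Baker--Campbell--Hausdorff. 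Since $S\subset Q^-$, this gives $\tilde{N}Q^- = U^+_{[\sigma]}SQ^- = U^+_{[\sigma]}Q^-$, finishing (1).

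For (3) I will invoke the classical Bruhat decomposition $G_\mathbb{C} = \coprod_{[w]\in\mathcal{W}}B^+w^{-1}B^-$. Writing each $[w] = [\tau\sigma]$ uniquely with $[\tau]\in\mathcal{W}_1\subset L_\mathbb{C}\subset Q^-$ and $[\sigma]\in\mathcal{W}^1$ (Proposition 8.3.2(iv)), and noting that $\sigma$ normalizes $H_\mathbb{C}\subset L_\mathbb{C}\subset Q^-$ so that $B^+\sigma^{-1}Q^- = N^+\sigma^{-1}Q^-$, the collection of Bruhat cells regroups as $\bigcup_{[\tau]\in\mathcal{W}_1}B^+\sigma^{-1}\tau^{-1}B^- = N^+\sigma^{-1}Q^-$, using $\bigcup_{[\tau]\in\mathcal{W}_1}\tau^{-1}B^- = Q^-$ (which follows from the Bruhat decomposition of the reductive $L_\mathbb{C}$ with respect to its Borel $L_\mathbb{C}\cap B^-$ and the factorization $Q^- = L_\mathbb{C}U^-$). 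Disjointness across distinct $[\sigma]\in\mathcal{W}^1$ follows from classical Bruhat disjointness combined with the uniqueness in Proposition 8.3.2(iv). The second equality in (3) is then an immediate consequence of (1).
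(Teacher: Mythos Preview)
Your work on parts (1) and (2) is correct and close in spirit to the paper's argument. For the multiplicative identity you perform one factorization $\tilde N = U^+_{[\sigma]}\cdot S$ with $\frak s = \operatorname{Ad}(\sigma)\frak n^+\cap\frak q^-$, whereas the paper factors in two stages (first along $\triangle^+=\Phi_{[\sigma^{-1}\kappa]}\amalg\Phi_{[\sigma^{-1}]}$, then along $\Gamma_{[\sigma]}$ versus its complement inside $\Phi_{[\sigma^{-1}\kappa]}$); both rest on the same product lemma for simply connected nilpotent groups, and your injectivity argument via $U^+\cap Q^-=\{e\}$ is clean. Incidentally, you do not need the $(-iT)$-sign computation to see that $\frak s$ is a subalgebra: it is the intersection of two subalgebras.

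Part (3), however, contains a genuine error. The claim
\[
\bigcup_{[\tau]\in\mathcal W_1}\tau^{-1}B^- \;=\; Q^-
\]
is false. Already for $L_{\mathbb C}=SL_2(\mathbb C)$ (so $\frak u^-=0$, $Q^-=L_{\mathbb C}$, $\mathcal W_1=\{[e],[w]\}$) the left-hand side is $B^-\cup w^{-1}B^-$, a union of two cosets of a Borel subgroup, which cannot exhaust the $3$-dimensional group; the upper-triangular unipotent $\begin{pmatrix}1&1\\0&1\end{pmatrix}$ lies in neither coset. The Bruhat decomposition of $L_{\mathbb C}$ gives $L_{\mathbb C}=\bigcup_{[\tau]}N_1^+\tau^{-1}B_1^-$, not $\bigcup_{[\tau]}\tau^{-1}B_1^-$; the unipotent factor $N_1^+$ is indispensable.

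Once you reinstate $N_1^+$, the missing ingredient is the commutation $\sigma^{-1}N_1^+\subset N^+\sigma^{-1}$, and this is precisely where Proposition~8.3.2(iii) enters: from $\zeta([\sigma])^{-1}(\blacktriangle^+)\subset\triangle^+$ one gets $\operatorname{Ad}(\sigma^{-1})\frak n_1^+\subset\frak n^+$. The paper carries out exactly this step in the $\overline\theta$-dual form, establishing $N^-\sigma^{-1}Q^+=\coprod_{[\tau]\in\mathcal W_1}N^-(\tau\sigma)^{-1}B^+$ via $\sigma^{-1}N_1^-\subset N^-\sigma^{-1}$, and then applying $\overline\theta$ to obtain the stated decomposition with $N^+$ and $Q^-$.
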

\begin{proof}
   (1). 
   Since both $\Phi_{[\sigma^{-1}\kappa]}$ and $\triangle^+-\blacktriangle$ are closed subsets of $\triangle$, it follows from \eqref{eq-8.3.8} that $\Gamma_{[\sigma]}$ is a closed subset of $\triangle$. 
   Therefore we see that $\frak{u}^+_{[\sigma]}=\bigoplus_{\gamma\in\Gamma_{[\sigma]}}\frak{g}_{\zeta([\sigma])\gamma}$ is a complex subalgebra of the nilpotent Lie algebra $\frak{u}^+=\bigoplus_{\alpha\in\triangle^+-\blacktriangle}\frak{g}_\alpha$.
   Consequently $U^+_{[\sigma]}=\exp\frak{u}^+_{[\sigma]}$ is a simply connected closed complex nilpotent subgroup of $U^+=\exp\frak{u}^+$ and is biholomorphic to $\frak{u}^+_{[\sigma]}$ via $\exp$, and $\dim_\mathbb{C}\frak{u}^+_{[\sigma]}$ accords with the cardinal number $|\Gamma_{[\sigma]}|$.
   Moreover, 
\[
\begin{split}
   \zeta([\sigma])(\Gamma_{[\sigma]})
  &\stackrel{\eqref{eq-8.3.8}}{=}\{\zeta([\sigma])\gamma\in\triangle^+-\blacktriangle \,|\, \gamma\in\Phi_{[\sigma^{-1}\kappa]}\} 
   \stackrel{\eqref{eq-8.3.1}}{=}\{\zeta([\sigma])\gamma\in\triangle^+-\blacktriangle \,|\, \gamma\in\triangle^+,\, \zeta([\sigma^{-1}\kappa])^{-1}\gamma\in\triangle^-\}\\
  &=\{\zeta([\sigma])\gamma\in\triangle^+-\blacktriangle \,|\, \gamma\in\triangle^+\} \quad\mbox{($\because$ $\zeta([\kappa])(\triangle^-)=\triangle^+$, $\triangle^+-\blacktriangle\subset\triangle^+$)}\\
  &=\{\zeta([\sigma])\gamma\in\triangle^+-\blacktriangle \,|\, \zeta([\sigma])^{-1}\bigl(\zeta([\sigma])\gamma\bigr)\in\triangle^+\}
   \stackrel{\eqref{eq-8.3.1}}{=}(\triangle^+-\blacktriangle)-\Phi_{[\sigma]}.
\end{split} 
\] 
   This implies that the number $|\Gamma_{[\sigma]}|$ is equal to $r-n_{[\sigma]}$ because of $|\triangle^+-\blacktriangle|=\dim_\mathbb{C}\frak{u}^+=r$ and $\Phi_{[\sigma]}\subset\triangle^+-\blacktriangle$. 
   Hence one has $\dim_\mathbb{C}\frak{u}^+_{[\sigma]}=r-n_{[\sigma]}$. 
   Now, the rest of proof is to confirm that $N^+\sigma^{-1}Q^-=\sigma^{-1}U^+_{[\sigma]}Q^-$. 
   Proposition \ref{prop-8.3.2}-(i), (ii) implies that $N^+\stackrel{\eqref{eq-8.1.6}}{=}\exp\bigl(\bigoplus_{\alpha\in\triangle^+}\frak{g}_\alpha\bigr)=\exp\bigl(\bigoplus_{\gamma\in\Phi_{[\sigma^{-1}\kappa]}}\frak{g}_\gamma\bigr)\exp\bigl(\bigoplus_{\beta\in\Phi_{[\sigma^{-1}]}}\frak{g}_\beta\bigr)$, so that 
\begin{equation}\label{eq-8.3.9}
\begin{split}
   N^+\sigma^{-1}Q^-
   &=\mbox{$\sigma^{-1}\exp\bigl(\bigoplus_{\gamma\in\Phi_{[\sigma^{-1}\kappa]}}\frak{g}_{\zeta([\sigma])\gamma}\bigr)\exp\bigl(\bigoplus_{\beta\in\Phi_{[\sigma^{-1}]}}\frak{g}_{\zeta([\sigma])\beta}\bigr)Q^-$}\\
  &=\mbox{$\sigma^{-1}\exp\bigl(\bigoplus_{\gamma\in\Phi_{[\sigma^{-1}\kappa]}}\frak{g}_{\zeta([\sigma])\gamma}\bigr)Q^-$}\\
  &=\mbox{$\sigma^{-1}\exp\bigl(\bigoplus_{\gamma_1\in\Gamma_{[\sigma]}}\frak{g}_{\zeta([\sigma])\gamma_1}\bigr)\exp\bigl(\bigoplus_{\gamma_2\in\{\gamma\in\Phi_{[\sigma^{-1}\kappa]} \,|\, \zeta([\sigma])\gamma\in\blacktriangle^+\} }\frak{g}_{\zeta([\sigma])\gamma_2}\bigr)Q^-$}\\
  &=\mbox{$\sigma^{-1}\exp\bigl(\bigoplus_{\gamma_1\in\Gamma_{[\sigma]}}\frak{g}_{\zeta([\sigma])\gamma_1}\bigr)Q^-$}
   \stackrel{\eqref{eq-8.3.8}}{=}\sigma^{-1}U^+_{[\sigma]}Q^-,
\end{split}
\end{equation}
where we note that $\bigoplus_{\beta\in\Phi_{[\sigma^{-1}]}}\frak{g}_{\zeta([\sigma])\beta}\subset\frak{n}^-\subset\frak{q}^-$ and $\Phi_{[\sigma^{-1}\kappa]}=\Gamma_{[\sigma]}\amalg\{\gamma\in\Phi_{[\sigma^{-1}\kappa]} \,|\, \zeta([\sigma])\gamma\in\blacktriangle^+\}$.\par

   (2) is immediate from (1) and Proposition \ref{prop-8.3.2}-(v).\par

   (3). 
   By (1), it is enough to prove $G_\mathbb{C}=\coprod_{[\sigma]\in\mathcal{W}^1}N^+\sigma^{-1}Q^-$. 
   In terms of $B^+=N_{G_\mathbb{C}}(\frak{b}^+)$ we fix a Bruhat decomposition $G_\mathbb{C}=\coprod_{[w]\in\mathcal{W}}N^+w^{-1}B^+$. 
   Then, $\zeta([\kappa])(\triangle^-)=\triangle^+$ yields $G_\mathbb{C}=\kappa^{-1}G_\mathbb{C}=\coprod_{[w]\in\mathcal{W}}N^-(w\kappa)^{-1}B^+=\coprod_{[w]\in\mathcal{W}}N^-w^{-1}B^+$, namely
\begin{equation}\label{eq-8.3.10}
   G_\mathbb{C}=\coprod_{[w]\in\mathcal{W}}N^-w^{-1}B^+.
\end{equation}
   In a similar way, one can obtain 
\[
   L_\mathbb{C}=\coprod_{[\tau]\in\mathcal{W}_1}N^-_1\tau^{-1}B^+_1 
\]
from \eqref{eq-8.1.10} and $B^+_1:=N_{L_\mathbb{C}}(\frak{h}_\mathbb{C}\oplus\frak{n}^+_1)$. 
   This, together with $Q^+=L_\mathbb{C}U^+$ and $B^+=B^+_1U^+$, assures that for any $[\sigma]\in\mathcal{W}^1$,
\begin{equation}\label{eq-8.3.11}
   N^-\sigma^{-1}Q^+
   =N^-\sigma^{-1}L_\mathbb{C}U^+
   =\bigcup_{[\tau]\in\mathcal{W}_1}N^-\sigma^{-1}(N^-_1\tau^{-1}B^+_1)U^+
   =\bigcup_{[\tau]\in\mathcal{W}_1}N^-\sigma^{-1}N^-_1\tau^{-1}B^+
   =\coprod_{[\tau]\in\mathcal{W}_1}N^-(\tau\sigma)^{-1}B^+,
\end{equation}
where $\sigma^{-1}N^-_1\subset N^-\sigma^{-1}$ follows from $[\sigma]\in\mathcal{W}^1$ and Proposition \ref{prop-8.3.2}-(iii). 
   Consequently, \eqref{eq-8.3.10} and Proposition \ref{prop-8.3.2}-(iv) allow us to assert that
\[
   G_\mathbb{C}=\coprod_{[\sigma]\in\mathcal{W}^1}N^-\sigma^{-1}Q^+.
\]
   Thus $G_\mathbb{C}=\coprod_{[\sigma]\in\mathcal{W}^1}N^+\sigma^{-1}Q^-$ because of $\overline{\theta}(G_\mathbb{C})=G_\mathbb{C}$, $\overline{\theta}(N^-)=N^+$, $\overline{\theta}(\sigma)=\sigma$ and $\overline{\theta}(Q^+)=Q^-$.
\end{proof}

\begin{remark}\label{rem-8.3.12}
\begin{enumerate}
\item[]
\item[(1)]
   In the proof of Theorem \ref{thm-8.3.7}-(3) we gave Bruhat decompositions $G_\mathbb{C}=\coprod_{[w]\in\mathcal{W}}N^+w^{-1}B^+$ and $G_\mathbb{C}=\coprod_{[w]\in\mathcal{W}}N^-w^{-1}B^+$, and generalized Bruhat decompositions $G_\mathbb{C}=\coprod_{[\sigma]\in\mathcal{W}^1}N^-\sigma^{-1}Q^+$ and $G_\mathbb{C}=\coprod_{[\sigma]\in\mathcal{W}^1}N^+\sigma^{-1}Q^-$.
\item[(2)]
   Theorem \ref{thm-8.3.7}-(1) and Proposition \ref{prop-8.2.1}-(iii), (iv) imply that $N^+\sigma^{-1}Q^-=\sigma^{-1}U^+_{[\sigma]}Q^-$ is a connected regular complex submanifold of $G_\mathbb{C}$ whose dimension is $\dim_\mathbb{C}U^+-n_{[\sigma]}+\dim_\mathbb{C}Q^-$ ($=\dim_\mathbb{C}G_\mathbb{C}-n_{[\sigma]}$) for each $[\sigma]\in\mathcal{W}^1$. 
\end{enumerate}   
\end{remark}

    Taking the proof of Theorem \ref{thm-8.3.7} into consideration, we prove 
\begin{lemma}\label{lem-8.3.13}
   For every $[\sigma]\in\mathcal{W}^1$, the following three items hold$:$ 
\begin{enumerate}
\item[{\rm (1)}]
   $N^+\sigma^{-1}Q^-=\coprod_{[\tau]\in\mathcal{W}_1}N^+(\tau\sigma)^{-1}B^-$, where $B^-=N_{G_\mathbb{C}}(\frak{b}^-)$.
\item[{\rm (2)}] 
   $\zeta([\sigma])^{-1}(\blacktriangle^+)=\{\gamma\in\Phi_{[\sigma^{-1}\kappa]} \,|\, \zeta([\sigma])\gamma\in\blacktriangle^+\}$.
\item[{\rm (3)}]
   $\dim_\mathbb{C}N^+\sigma^{-1}Q^-\geq\dim_\mathbb{C}N^+(\tau\sigma)^{-1}B^-$ for all $[\tau]\in\mathcal{W}_1$, and $\dim_\mathbb{C}N^+\sigma^{-1}Q^-=\dim_\mathbb{C}N^+\sigma^{-1}B^-$.
\end{enumerate}
\end{lemma}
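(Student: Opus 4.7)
The plan is to establish the three items sequentially, using the Bruhat-decomposition machinery developed earlier in the chapter.

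For item (1), I would begin with $Q^-=L_\mathbb{C}U^-$ (Proposition \ref{prop-8.2.1}-(iii)) and apply a Bruhat-type decomposition of the reductive group $L_\mathbb{C}$ analogous to the one used at \eqref{eq-8.3.11}, namely $L_\mathbb{C}=\coprod_{[\tau]\in\mathcal{W}_1}N^+_1\tau^{-1}B^-_1$ with $B^-_1:=N_{L_\mathbb{C}}(\frak{h}_\mathbb{C}\oplus\frak{n}^-_1)$. Since $B^-=B^-_1U^-$, this gives $Q^-=\bigcup_{[\tau]\in\mathcal{W}_1}N^+_1\tau^{-1}B^-$. The decisive step is then to move $N^+_1$ past $\sigma^{-1}$: because $[\sigma]\in\mathcal{W}^1$, Proposition \ref{prop-8.3.2}-(iii) yields $\zeta([\sigma])^{-1}(\blacktriangle^+)\subset\triangle^+$, hence $\sigma^{-1}N^+_1\sigma=\exp\bigl(\bigoplus_{\gamma\in\blacktriangle^+}\frak{g}_{\zeta([\sigma])^{-1}\gamma}\bigr)\subset N^+$, so $\sigma^{-1}N^+_1\subset N^+\sigma^{-1}$. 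Combining, $N^+\sigma^{-1}Q^-=\bigcup_{[\tau]\in\mathcal{W}_1}N^+(\tau\sigma)^{-1}B^-$. Disjointness follows from the ambient Bruhat decomposition $G_\mathbb{C}=\coprod_{[w]\in\mathcal{W}}N^+w^{-1}B^-$ (obtained by applying $\overline{\theta}$ to \eqref{eq-8.3.10}) together with the injectivity of $[\tau]\mapsto[\tau\sigma]$ on $\mathcal{W}_1$.

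For item (2), I would carry out a direct calculation in the root system. Using $\zeta([\kappa])(\triangle^-)=\triangle^+$ and $\zeta([\kappa])^2=\operatorname{id}$, I first rewrite $\Phi_{[\sigma^{-1}\kappa]}=\{\gamma\in\triangle^+ : \zeta([\sigma])\gamma\in\triangle^+\}$, so the right-hand side simplifies to $\{\gamma\in\triangle^+ : \zeta([\sigma])\gamma\in\blacktriangle^+\}$ (the condition $\zeta([\sigma])\gamma\in\triangle^+$ being automatic since $\blacktriangle^+\subset\triangle^+$). Equality with $\zeta([\sigma])^{-1}(\blacktriangle^+)$ is then immediate from Proposition \ref{prop-8.3.2}-(iii), which ensures that any element of $\zeta([\sigma])^{-1}(\blacktriangle^+)$ automatically lies in $\triangle^+$.

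For item (3), I would compare dimensions using Theorem \ref{thm-8.3.7}-(1) applied to two different elliptic elements. Applied to $T$, it gives (as recorded in Remark \ref{rem-8.3.12}-(2)) $\dim_\mathbb{C}N^+\sigma^{-1}Q^-=\dim_\mathbb{C}G_\mathbb{C}-n_{[\sigma]}$. Applied to an elliptic element $T'$ satisfying $\alpha(-iT')>0$ for all $\alpha\in\Pi_\triangle$---which forces $\blacktriangle=\emptyset$, $Q^-=B^-$, and $\mathcal{W}^1=\mathcal{W}$---the same theorem yields $\dim_\mathbb{C}N^+w^{-1}B^-=\dim_\mathbb{C}G_\mathbb{C}-n_{[w]}$ for every $[w]\in\mathcal{W}$. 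Setting $[w]=[\tau\sigma]$ reduces the desired inequality to $n_{[\sigma]}\leq n_{[\tau\sigma]}$. To prove this, I note that every $[\tau]\in\mathcal{W}_1$ satisfies $\operatorname{Ad}\tau(T)=T$, so $\zeta([\tau])$ preserves $\alpha(T)$ for each root $\alpha$ and hence preserves $\blacktriangle$, $\triangle^+-\blacktriangle$, and $\triangle^--\blacktriangle$ setwise. A short verification then shows that $\beta\mapsto\zeta([\tau])\beta$ injects $\Phi_{[\sigma]}\subset\triangle^+-\blacktriangle$ into $\Phi_{[\tau\sigma]}$: for $\beta\in\Phi_{[\sigma]}$ one has $\zeta([\tau])\beta\in\triangle^+-\blacktriangle\subset\triangle^+$ and $\zeta([\tau\sigma])^{-1}\zeta([\tau])\beta=\zeta([\sigma])^{-1}\beta\in\triangle^-$. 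This yields $n_{[\sigma]}\leq n_{[\tau\sigma]}$, and equality for the second assertion is trivial from $[\tau]=[e]$. The main technical hurdle lies in the bookkeeping for (1): one must verify carefully that the Bruhat decomposition of $L_\mathbb{C}$ relative to $B^-_1$ is a disjoint union, and that the pivotal conjugation step $\sigma^{-1}N^+_1\subset N^+\sigma^{-1}$ genuinely uses the hypothesis $[\sigma]\in\mathcal{W}^1$ through Proposition \ref{prop-8.3.2}-(iii)---without it the inclusion can fail. A secondary subtlety in (3) is confirming that Theorem \ref{thm-8.3.7} applies in the Borel-case setup with the auxiliary elliptic element, so as to supply the dimension formula $\dim_\mathbb{C}N^+w^{-1}B^-=\dim_\mathbb{C}G_\mathbb{C}-n_{[w]}$ without any restriction on $[w]$.
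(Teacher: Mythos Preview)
Your arguments for items (1) and (2) are correct and essentially match the paper's: for (1) you redo directly what the paper obtains by applying $\overline{\theta}$ to \eqref{eq-8.3.11}, and for (2) your simplification $\Phi_{[\sigma^{-1}\kappa]}=\{\gamma\in\triangle^+:\zeta([\sigma])\gamma\in\triangle^+\}$ gives the same conclusion as the paper's inclusion-plus-cardinality argument.

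For item (3) you take a genuinely different route. The paper first gets the inequality $\dim_\mathbb{C}N^+(\tau\sigma)^{-1}B^-\leq\dim_\mathbb{C}N^+\sigma^{-1}Q^-$ immediately from the submanifold inclusion in (1), and then establishes the equality $\dim_\mathbb{C}N^+\sigma^{-1}Q^-=\dim_\mathbb{C}N^+\sigma^{-1}B^-$ by an explicit cell computation: it rewrites $N^+\sigma^{-1}B^-$ as $\sigma^{-1}U^+_{[\sigma]}N^+_1B^-$ (using item (2) to identify the $\blacktriangle^+$-piece of $\Phi_{[\sigma^{-1}\kappa]}$ with $\zeta([\sigma])^{-1}(\blacktriangle^+)$), then reads off the dimension from $U^+_{[\sigma]}\cap N^+_1B^-=\{e\}$ and $N^+_1\cap B^-=\{e\}$. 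Your approach instead invokes Theorem \ref{thm-8.3.7} a second time with an auxiliary elliptic element $T'$ satisfying $\alpha(-iT')>0$ for all $\alpha\in\Pi_\triangle$ (so that $Q^-$ becomes $B^-$ and $\mathcal{W}^1$ becomes all of $\mathcal{W}$), yielding $\dim_\mathbb{C}N^+w^{-1}B^-=\dim_\mathbb{C}G_\mathbb{C}-n_{[w]}$ for every $[w]$; since $n_{[w]}$ depends only on $\triangle^+$ and not on $T$, this is legitimate. You then reduce the whole item to the root-system inequality $n_{[\sigma]}\leq n_{[\tau\sigma]}$, which you prove by the injection $\beta\mapsto\zeta([\tau])\beta$ of $\Phi_{[\sigma]}$ into $\Phi_{[\tau\sigma]}$. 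This is cleaner in that it avoids the explicit $U^+_{[\sigma]}N^+_1B^-$ factorization and never uses item (2); the paper's computation, on the other hand, stays within the fixed $T$ and produces the concrete cell description $N^+\sigma^{-1}B^-=\sigma^{-1}U^+_{[\sigma]}N^+_1B^-$ as a byproduct.
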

\begin{proof}
   (1) comes from \eqref{eq-8.3.11}, $\overline{\theta}(N^-)=N^+$, $\overline{\theta}(Q^+)=Q^-$, $\overline{\theta}(B^+)=B^-$ and $\overline{\theta}(w)=w$ ($w\in N_{G_u}(i\frak{h}_\mathbb{R})$).\par
   
   (2). 
   On the one hand; by Proposition \ref{prop-8.3.2}-(iii) and $[\sigma]\in\mathcal{W}^1$ we have $\zeta([\sigma])^{-1}(\blacktriangle^+)\subset\triangle^+$. 
   Besides, a direct computation yields $\zeta([\sigma^{-1}\kappa])^{-1}\bigl(\zeta([\sigma])^{-1}(\blacktriangle^+)\bigr)\subset\zeta([\kappa])^{-1}(\blacktriangle^+)\subset\triangle^-$. 
   Hence we obtain $\zeta([\sigma])^{-1}(\blacktriangle^+)\subset\Phi_{[\sigma^{-1}\kappa]}$ from \eqref{eq-8.3.1}.
   Therefore 
\[
   \zeta([\sigma])^{-1}(\blacktriangle^+)\subset\{\gamma\in\Phi_{[\sigma^{-1}\kappa]} \,|\, \zeta([\sigma])\gamma\in\blacktriangle^+\}.
\] 
   On the other hand; since $\zeta([\sigma]):\triangle\to\triangle$ is bijective, the cardinal number $\big|\{\gamma\in\Phi_{[\sigma^{-1}\kappa]} \,|\, \zeta([\sigma])\gamma\in\blacktriangle^+\}\big|$ is less than or equal to $|\blacktriangle^+|$.
   Consequently (2) follows from $|\blacktriangle^+|=\big|\zeta([\sigma])^{-1}(\blacktriangle^+)\big|$.\par
   
   (3). 
   For any $[\tau]\in\mathcal{W}_1$, both $N^+(\tau\sigma)^{-1}B^-$ and $N^+\sigma^{-1}Q^-$ are regular submanifolds of $G_\mathbb{C}$, and moreover, $N^+(\tau\sigma)^{-1}B^-\subset N^+\sigma^{-1}Q^-$ due to (1).
   Hence we conclude that $\dim_\mathbb{C}N^+(\tau\sigma)^{-1}B^-\leq\dim_\mathbb{C}N^+\sigma^{-1}Q^-$ for all $[\tau]\in\mathcal{W}_1$.
   At this stage, the rest of proof is to deduce 
\[
   \dim_\mathbb{C}N^+\sigma^{-1}B^-=\dim_\mathbb{C}N^+\sigma^{-1}Q^-.
\]   
   In a similar way to \eqref{eq-8.3.9} one has 
\allowdisplaybreaks{
\begin{align*}
   N^+\sigma^{-1}B^-
   &=\mbox{$\sigma^{-1}\exp\bigl(\bigoplus_{\gamma\in\Phi_{[\sigma^{-1}\kappa]}}\frak{g}_{\zeta([\sigma])\gamma}\bigr)\exp\bigl(\bigoplus_{\beta\in\Phi_{[\sigma^{-1}]}}\frak{g}_{\zeta([\sigma])\beta}\bigr)B^-$}\\
  &=\mbox{$\sigma^{-1}\exp\bigl(\bigoplus_{\gamma\in\Phi_{[\sigma^{-1}\kappa]}}\frak{g}_{\zeta([\sigma])\gamma}\bigr)B^-$}\\
  &=\mbox{$\sigma^{-1}\exp\bigl(\bigoplus_{\gamma_1\in\Gamma_{[\sigma]}}\frak{g}_{\zeta([\sigma])\gamma_1}\bigr)\exp\bigl(\bigoplus_{\gamma_2\in\{\gamma\in\Phi_{[\sigma^{-1}\kappa]} \,|\, \zeta([\sigma])\gamma\in\blacktriangle^+\} }\frak{g}_{\zeta([\sigma])\gamma_2}\bigr)B^-$}\\
  &\stackrel{(2)}{=}\mbox{$\sigma^{-1}\exp\bigl(\bigoplus_{\gamma_1\in\Gamma_{[\sigma]}}\frak{g}_{\zeta([\sigma])\gamma_1}\bigr)\exp\bigl(\bigoplus_{\alpha\in\blacktriangle^+}\frak{g}_{\alpha}\bigr)B^-$}
  \stackrel{\eqref{eq-8.3.8}}{=}\sigma^{-1}U^+_{[\sigma]}N^+_1B^-,
\end{align*}}where $N^+_1=\exp\frak{n}^+_1$. 
   Therefore it follows from $(U^+_{[\sigma]}\cap N^+_1B^-)\subset(U^+\cap Q^-)=\{e\}$ and $(N^+_1\cap B^-)\subset(N^+\cap B^-)=\{e\}$ that 
\[
\begin{split}
   \dim_\mathbb{C}N^+\sigma^{-1}B^-
   =\dim_\mathbb{C}U^+_{[\sigma]}N^+_1B^-
  &=\dim_\mathbb{C}U^+_{[\sigma]}+(\dim_\mathbb{C}N^+_1+\dim_\mathbb{C}B^-)\\
  &=(\dim_\mathbb{C}U^+-n_{[\sigma]})+\dim_\mathbb{C}Q^-
   =\dim_\mathbb{C}N^+\sigma^{-1}Q^-.
\end{split}
\]   
   cf.\ Remark \ref{rem-8.3.12}-(2).
\end{proof} 

   Lemma \ref{lem-8.3.13} provides us with 
\begin{proposition}\label{prop-8.3.14}
   Let $[\sigma],[\theta]\in\mathcal{W}^1$. 
   If $N^+\theta^{-1}Q^-\subset\overline{N^+\sigma^{-1}Q^-}-N^+\sigma^{-1}Q^-$, then $\dim_\mathbb{C}N^+\theta^{-1}Q^-<\dim_\mathbb{C}N^+\sigma^{-1}Q^-$ and $n_{[\theta]}>n_{[\sigma]}$.
   Here we denote by $\overline{N^+\sigma^{-1}Q^-}$ the closure of $N^+\sigma^{-1}Q^-$ in $G_\mathbb{C}$.
\end{proposition}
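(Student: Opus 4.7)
The plan is to reduce the question to a dimension comparison of Schubert cells in the compact complex flag manifold $G_\mathbb{C}/Q^-$, and there to invoke the classical strict dimension drop for the boundary of a locally closed connected complex submanifold. First I would record, from Theorem 8.3.7-(3) and Remark 8.3.12-(2), that for every $[w]\in\mathcal{W}^1$ the cell $N^+w^{-1}Q^-=w^{-1}U^+_{[w]}Q^-$ is a connected regular complex submanifold of $G_\mathbb{C}$ of complex dimension $\dim_\mathbb{C}G_\mathbb{C}-n_{[w]}$, hence locally closed. Each such cell is right $Q^-$-invariant, so the projection $\pi:G_\mathbb{C}\to G_\mathbb{C}/Q^-$ satisfies $N^+w^{-1}Q^-=\pi^{-1}(C_{[w]})$ with $C_{[w]}:=\pi(w^{-1}U^+_{[w]})$; and $C_{[w]}$ is a connected complex submanifold of $G_\mathbb{C}/Q^-$ of complex dimension $r-n_{[w]}$, where $r=\dim_\mathbb{C}U^+$. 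Because $\pi$ is an open continuous surjection (being a principal $Q^-$-bundle), one also has $\overline{N^+w^{-1}Q^-}=\pi^{-1}\bigl(\overline{C_{[w]}}\bigr)$.

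Next I would work inside the compact complex manifold $G_\mathbb{C}/Q^-$. By Corollary 8.2.3 and the projective embedding supplied by Theorem 8.2.10 (applied analogously with $-T$ in place of $T$), $G_\mathbb{C}/Q^-$ holomorphically embeds into a complex projective space; Chow's theorem then identifies $\overline{C_{[\sigma]}}$ with an irreducible projective algebraic subvariety of $G_\mathbb{C}/Q^-$ of pure dimension $r-n_{[\sigma]}$, containing $C_{[\sigma]}$ as an open dense subset. Therefore the proper closed analytic subset $\overline{C_{[\sigma]}}-C_{[\sigma]}$ of the irreducible variety $\overline{C_{[\sigma]}}$ satisfies
\[
\dim_\mathbb{C}\bigl(\overline{C_{[\sigma]}}-C_{[\sigma]}\bigr)<r-n_{[\sigma]}.
\]

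Projecting the hypothesis $N^+\theta^{-1}Q^-\subset\overline{N^+\sigma^{-1}Q^-}-N^+\sigma^{-1}Q^-$ under $\pi$ then yields $C_{[\theta]}\subset\overline{C_{[\sigma]}}-C_{[\sigma]}$. Since $C_{[\theta]}$ is a connected complex submanifold of dimension $r-n_{[\theta]}$, the displayed inequality forces $r-n_{[\theta]}<r-n_{[\sigma]}$, i.e.\ $n_{[\theta]}>n_{[\sigma]}$; by the dimension formula of Remark 8.3.12-(2) this is equivalent to $\dim_\mathbb{C}N^+\theta^{-1}Q^-<\dim_\mathbb{C}N^+\sigma^{-1}Q^-$, which is the remaining assertion.

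The main obstacle will be providing a clean justification of the strict dimension drop $\dim_\mathbb{C}\bigl(\overline{C_{[\sigma]}}-C_{[\sigma]}\bigr)<r-n_{[\sigma]}$ within the conventions of the paper. The shortest route is via Chow's theorem as sketched above, reducing to the algebraic-geometric principle that a proper closed subvariety of an irreducible analytic variety has strictly smaller dimension. Alternatively, one can argue purely differential-geometrically: since $C_{[\sigma]}$ is a connected complex submanifold of pure dimension $r-n_{[\sigma]}$ and is dense in its closure, invariance of domain in $\mathbb{C}^{r-n_{[\sigma]}}$ prevents a connected complex submanifold of dimension $\geq r-n_{[\sigma]}$ (such as $C_{[\theta]}$ would have to be if $n_{[\theta]}\leq n_{[\sigma]}$) from sitting inside the nowhere-dense boundary $\overline{C_{[\sigma]}}-C_{[\sigma]}$.
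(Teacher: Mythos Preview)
Your overall strategy is sound and different from the paper's, but there is one genuine gap. Chow's theorem applies to closed \emph{analytic} subsets of projective space, not to arbitrary closed subsets; you invoke it for $\overline{C_{[\sigma]}}$ without first arguing that this closure is complex-analytic. The closure of a locally closed complex submanifold need not be analytic in general. The cleanest fix within the paper's framework is to note (via Lemma 8.2.7 and Remark 8.2.5) that $G_\mathbb{C}/Q^-$ is a projective algebraic variety on which the adjoint image of $N^+$ acts algebraically, so each $C_{[\sigma]}$ is an algebraic-group orbit and its Zariski closure (which equals the Euclidean closure on a projective variety) is an irreducible algebraic subvariety; then your dimension-drop argument goes through. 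Your alternative ``invariance of domain'' sketch, as written, is not convincing: invariance of domain compares an $n$-manifold with $\mathbb{R}^n$, but here the ambient set $\overline{C_{[\sigma]}}$ is not known to be a manifold, so there is no map to which the theorem directly applies.

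For comparison, the paper proceeds quite differently. It first proves a lemma decomposing each generalized cell as a finite union of ordinary Bruhat cells, $N^+\sigma^{-1}Q^-=\coprod_{[\tau]\in\mathcal{W}_1}N^+(\tau\sigma)^{-1}B^-$, and records that $\dim_\mathbb{C}N^+\sigma^{-1}Q^-=\dim_\mathbb{C}N^+\sigma^{-1}B^-$. It then invokes the classical fact (cited from Bump) that the boundary of a $B^-$-Bruhat cell is a disjoint union of $B^-$-cells of strictly lower dimension, and combines these to obtain the inequality. So the paper stays combinatorial and reduces to the Borel case, whereas you pass to the flag manifold and use algebraic geometry. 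Your route is shorter once the analyticity of $\overline{C_{[\sigma]}}$ is secured, and it avoids the auxiliary lemma; the paper's route is more self-contained within the Lie-theoretic setup and does not need Chow's theorem or the algebraicity of orbit closures.
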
 
\begin{proof}
   By Lemma \ref{lem-8.3.13}-(1) and $[\sigma],[\theta]\in\mathcal{W}^1$ we have 
\begin{equation}\label{eq-1}\tag*{\textcircled{1}}
\begin{split}
   N^+\theta^{-1}B^-
  &\subset N^+\theta^{-1}Q^-
   \subset\overline{N^+\sigma^{-1}Q^-}-N^+\sigma^{-1}Q^-
   =\overline{\bigcup_{[\tau]\in\mathcal{W}_1}N^+(\tau\sigma)^{-1}B^-}-\bigcup_{[\tau_1]\in\mathcal{W}_1}N^+(\tau_1\sigma)^{-1}B^-\\
  &=\bigcup_{[\tau]\in\mathcal{W}_1}\overline{N^+(\tau\sigma)^{-1}B^-}-\bigcup_{[\tau_1]\in\mathcal{W}_1}N^+(\tau_1\sigma)^{-1}B^-
  \subset\bigcup_{[\tau]\in\mathcal{W}_1}\bigl(\overline{N^+(\tau\sigma)^{-1}B^-}-N^+(\tau\sigma)^{-1}B^-\bigr)
\end{split} 
\end{equation}
because $\mathcal{W}_1$ is a finite set.
   Here each $N^+w^{-1}B^-$ is a Bruhat cell in $G_\mathbb{C}$ ($[w]\in\mathcal{W}$), so one knows that\footnote{Remark.\ One can assert these statements without the supposition that the group $G_\mathbb{C}$ is algebraic or simply connected.} 
\begin{enumerate}
\item 
   for any $[w]\in\mathcal{W}$, $\overline{N^+w^{-1}B^-}-N^+w^{-1}B^-$ is a disjoint union of Bruhat cells of strictly lower dimension,
\item
   for $[w_1],[w_2]\in\mathcal{W}$, $(N^+w_1^{-1}B^-\cap N^+w_2^{-1}B^-)\neq\emptyset$ if and only if $N^+w_1^{-1}B^-=N^+w_2^{-1}B^-$  
\end{enumerate} 
(e.g.\ Theorem 27.4 in Bump \cite[p.252]{Bu}).
   These, together with \ref{eq-1}, enable us to see that 
\[
   \overline{N^+(\tau\sigma)^{-1}B^-}-N^+(\tau\sigma)^{-1}B^-
   \subset\coprod_{\mbox{\scriptsize{$[w_{[\tau]}]\in\mathcal{W}$ with $\dim_\mathbb{C}N^+w_{[\tau]}^{-1}B^-<\dim_\mathbb{C}N^+(\tau\sigma)^{-1}B^-$}}}N^+w_{[\tau]}^{-1}B^-
\]
for all $[\tau]\in\mathcal{W}_1$; besides, there exist $[\tau_2]\in\mathcal{W}_1$ and $[w_{[\tau_2]}]\in\mathcal{W}$ satisfying 
\[
\begin{array}{ll}
   N^+\theta^{-1}B^-=N^+w_{[\tau_2]}^{-1}B^-, & \dim_\mathbb{C}N^+w_{[\tau_2]}^{-1}B^-<\dim_\mathbb{C}N^+(\tau_2\sigma)^{-1}B^-.
\end{array}
\]
   Consequently Lemma \ref{lem-8.3.13}-(3) and $[\tau_2]\in\mathcal{W}_1$ yield 
\[
   \dim_\mathbb{C}N^+\theta^{-1}Q^-
   =\dim_\mathbb{C}N^+\theta^{-1}B^-
   <\dim_\mathbb{C}N^+(\tau_2\sigma)^{-1}B^-
   \leq\dim_\mathbb{C}N^+\sigma^{-1}Q^-.
\]
   From $\dim_\mathbb{C}N^+\theta^{-1}Q^-<\dim_\mathbb{C}N^+\sigma^{-1}Q^-$ we obtain $n_{[\theta]}>n_{[\sigma]}$. 
   cf.\ Remark \ref{rem-8.3.12}-(2).    
\end{proof}

   The direct product group $N^+\times Q^-$ acts on $G_\mathbb{C}$ by
\[
   (N^+\times Q^-)\times G_\mathbb{C}\ni((n,q),x)\mapsto nxq^{-1}\in G_\mathbb{C}.
\]
   Theorem \ref{thm-8.3.7}-(3) tells us that this orbit space coincides with $\{N^+\sigma^{-1}Q^- : [\sigma]\in\mathcal{W}^1\}$. 
   In addition, since the action is continuous and $\mathcal{W}^1$ is finite, one can show that for any $[\sigma]\in\mathcal{W}^1$ there exist finite elements $[\theta_1],[\theta_2],\dots,[\theta_k]\in\mathcal{W}^1$ such that 
\[
   \overline{N^+\sigma^{-1}Q^-}=N^+\sigma^{-1}Q^-\amalg N^+\theta_1^{-1}Q^-\amalg N^+\theta_2^{-1}Q^-\amalg\cdots\amalg N^+\theta_k^{-1}Q^-.
\] 
   Furthermore, Proposition \ref{prop-8.3.14} leads to 
\begin{corollary}\label{cor-8.3.15}
   For any $[\sigma]\in\mathcal{W}^1$ there exist finite elements $[\theta_1],[\theta_2],\dots,[\theta_k]\in\mathcal{W}^1$ such that $\overline{N^+\sigma^{-1}Q^-}-N^+\sigma^{-1}Q^-=N^+\theta_1^{-1}Q^-\amalg N^+\theta_2^{-1}Q^-\amalg\cdots\amalg N^+\theta_k^{-1}Q^-$ and $n_{[\theta_i]}>n_{[\sigma]}$ for all $1\leq i\leq k$.
\end{corollary}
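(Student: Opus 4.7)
The plan is to stitch together the two observations the author has already assembled just before the statement, namely (a) the description of the orbit space of the $N^+\times Q^-$-action on $G_\mathbb{C}$ coming from Theorem \ref{thm-8.3.7}-(3), and (b) the dimension/root-count inequality supplied by Proposition \ref{prop-8.3.14}.

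First I would fix an arbitrary $[\sigma]\in\mathcal{W}^1$ and recall, as in the paragraph preceding the statement, that $G_\mathbb{C}=\coprod_{[\theta]\in\mathcal{W}^1}N^+\theta^{-1}Q^-$ is the orbit decomposition for the continuous action $(N^+\times Q^-)\times G_\mathbb{C}\to G_\mathbb{C}$, $((n,q),x)\mapsto nxq^{-1}$. Since orbits are either equal or disjoint, the closure $\overline{N^+\sigma^{-1}Q^-}$ is a union of whole orbits, so there exists a subset $S_{[\sigma]}\subset\mathcal{W}^1$ with
\[
   \overline{N^+\sigma^{-1}Q^-}=\coprod_{[\theta]\in S_{[\sigma]}}N^+\theta^{-1}Q^-,
\]
and $[\sigma]\in S_{[\sigma]}$. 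Finiteness of $S_{[\sigma]}$ is immediate from the finiteness of $\mathcal{W}^1$ (which in turn follows from the finiteness of the Weyl group $\mathcal{W}$). Setting $\{[\theta_1],\ldots,[\theta_k]\}:=S_{[\sigma]}\setminus\{[\sigma]\}$ and subtracting the open stratum gives
\[
   \overline{N^+\sigma^{-1}Q^-}-N^+\sigma^{-1}Q^-
   =N^+\theta_1^{-1}Q^-\amalg\cdots\amalg N^+\theta_k^{-1}Q^-,
\]
which is exactly the disjoint-union assertion.

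Next I would verify the inequality $n_{[\theta_i]}>n_{[\sigma]}$ for each $1\le i\le k$. By the displayed equality just obtained, for each such $i$ we have $N^+\theta_i^{-1}Q^-\subset\overline{N^+\sigma^{-1}Q^-}-N^+\sigma^{-1}Q^-$, so Proposition \ref{prop-8.3.14} applies directly to the pair $([\sigma],[\theta_i])$ and yields $n_{[\theta_i]}>n_{[\sigma]}$ (together with the sharper dimension inequality, which is not needed here). This completes the proof.

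There is essentially no serious obstacle: the only place where one has to be slightly careful is in justifying that the closure of one orbit is a union of whole orbits, but this is automatic from the continuity of the action (if $x\in\overline{N^+\sigma^{-1}Q^-}$ and $(n,q)\in N^+\times Q^-$, then $nxq^{-1}\in\overline{N^+\sigma^{-1}Q^-}$ because the homeomorphism $G_\mathbb{C}\ni y\mapsto nyq^{-1}\in G_\mathbb{C}$ carries $N^+\sigma^{-1}Q^-$ onto itself and hence carries its closure onto itself). Once that point is noted, the corollary is a one-line deduction from Proposition \ref{prop-8.3.14}.
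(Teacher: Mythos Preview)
Your proof is correct and follows essentially the same route as the paper: the paragraph preceding the corollary already records that the closure of each $N^+\sigma^{-1}Q^-$ is a finite disjoint union of $(N^+\times Q^-)$-orbits (by continuity of the action and finiteness of $\mathcal{W}^1$), and the corollary is then deduced from Proposition~\ref{prop-8.3.14} exactly as you do. Your added remark justifying why the closure of an orbit is saturated by orbits is a welcome clarification of a point the paper leaves implicit.
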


   Theorem \ref{thm-8.3.7} leads to the following corollary which is an improvement of Proposition \ref{prop-8.2.1}-(iv):
\begin{corollary}\label{cor-8.3.16}
\begin{enumerate}
\item[]
\item[{\rm (i)}]
   The product mapping $U^+\times Q^-\ni(u,q)\mapsto uq\in G_\mathbb{C}$ is a biholomorphism of $U^+\times Q^-$ onto a dense, domain in $G_\mathbb{C}$.
\item[{\rm (ii)}]
   $N^+Q^-$ is a dense, domain in $G_\mathbb{C}$.
\end{enumerate}   
\end{corollary}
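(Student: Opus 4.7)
The plan is to deduce (i) from Proposition \ref{prop-8.2.1}-(iv) by identifying $U^+Q^-$ with the open ``big cell'' in the generalized Bruhat decomposition of Theorem \ref{thm-8.3.7}, and then deduce (ii) immediately after showing $N^+Q^- = U^+Q^-$.

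First I would show the set-theoretic identity $N^+Q^- = U^+Q^-$. The inclusion $U^+Q^- \subset N^+Q^-$ is immediate from $U^+ \subset N^+$ (cf.\ \eqref{eq-8.1.7}). For the reverse inclusion, Lemma \ref{lem-8.1.11}-(2) gives $N^+ \subset L_\mathbb{C} U^+$; since $L_\mathbb{C}$ normalizes $U^+$ (Lemma \ref{lem-8.0.1}-(2$'$)) one has $L_\mathbb{C} U^+ = U^+ L_\mathbb{C}$, and since $L_\mathbb{C} \subset Q^-$ (Lemma \ref{lem-8.0.1}-(2$''$)) we conclude $N^+Q^- \subset U^+ L_\mathbb{C} Q^- = U^+ Q^-$.

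Second, Proposition \ref{prop-8.2.1}-(iv) already supplies the biholomorphism of $U^+ \times Q^-$ onto the domain $U^+Q^- \subset G_\mathbb{C}$, so for (i) it remains only to prove density. By Theorem \ref{thm-8.3.7}-(3), $G_\mathbb{C} = \coprod_{[\sigma] \in \mathcal{W}^1} N^+ \sigma^{-1} Q^-$, and the piece indexed by $[\sigma] = [e]$ is precisely $N^+Q^- = U^+Q^-$. By Proposition \ref{prop-8.3.2}-(v.1), $[e]$ is the unique element of $\mathcal{W}^1$ with $n_{[\sigma]} = 0$, so every other cell satisfies $n_{[\sigma]} \geq 1$. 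Remark \ref{rem-8.3.12}-(2) then identifies each such $N^+\sigma^{-1}Q^-$ as a connected regular complex submanifold of $G_\mathbb{C}$ of complex codimension $n_{[\sigma]} \geq 1$.

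Third, for density, note that $G_\mathbb{C} \setminus U^+Q^-$ is closed (as the complement of the open domain $U^+Q^-$) and equals the finite disjoint union $\coprod_{[\sigma] \neq [e]} N^+\sigma^{-1}Q^-$. Applying Corollary \ref{cor-8.3.15} to each summand, the closure of $N^+\sigma^{-1}Q^-$ in $G_\mathbb{C}$ is a finite union of cells, all still of codimension $\geq 1$; hence $G_\mathbb{C} \setminus U^+Q^-$ lies in a finite union of closed subsets, each of which is (locally) contained in a complex submanifold of positive codimension. Since $G_\mathbb{C}$ is a locally compact Hausdorff Baire space and each such closed piece is nowhere dense (locally it sits inside the zero set of a nontrivial holomorphic function, which has empty interior by the identity theorem), Baire's theorem yields that $G_\mathbb{C} \setminus U^+Q^-$ has empty interior. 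Therefore $U^+Q^-$ is dense, proving (i); and (ii) follows at once from $N^+Q^- = U^+Q^-$.

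The only nonroutine step is the ``nowhere-dense'' claim in the density argument; the main obstacle will be invoking it cleanly without importing more machinery than necessary. I expect the shortest route is the Baire/identity-theorem argument sketched above, since each $\overline{N^+\sigma^{-1}Q^-}$ is a finite union of regular complex submanifolds of codimension $\geq 1$ and therefore genuinely nowhere dense in $G_\mathbb{C}$.
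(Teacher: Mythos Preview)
Your proof is correct and follows essentially the same approach as the paper: both identify $U^+Q^-$ with the big cell of the Bruhat decomposition (Theorem \ref{thm-8.3.7}) and argue that its complement, being a finite union of strictly lower-dimensional submanifolds, cannot be dense. The paper's density argument is shorter---each cell $\sigma^{-1}U^+_{[\sigma]}Q^-$ with $[\sigma]\neq[e]$ has measure zero in $G_\mathbb{C}$, hence so does the finite union, hence its complement $U^+Q^-$ is dense---while your route through Corollary \ref{cor-8.3.15} and Baire category is correct but more elaborate than needed.
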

\begin{proof}
   (i).
   We only verify that the image $U^+Q^-$ is dense in $G_\mathbb{C}$. 
   For every $[\sigma]\in\mathcal{W}^1-\{[e]\}$, it follows from Theorem \ref{thm-8.3.7} that $\sigma^{-1}U^+_{[\sigma]}Q^-$ is a submanifold of $G_\mathbb{C}$ whose dimension is strictly lower than $\dim_\mathbb{C}G_\mathbb{C}$, so that it has measure $0$ in the manifold $G_\mathbb{C}$. 
   Hence, the finite union $\coprod_{[\sigma]\in\mathcal{W}^1-\{[e]\}}\sigma^{-1}U^+_{[\sigma]}Q^-$ has measure $0$ in $G_\mathbb{C}$ also, and therefore its complement $G_\mathbb{C}-\coprod_{[\sigma]\in\mathcal{W}^1-\{[e]\}}\sigma^{-1}U^+_{[\sigma]}Q^-=e^{-1}U^+_{[e]}Q^-=U^+Q^-$ is dense in $G_\mathbb{C}$.\par

   (ii).
   By Theorem \ref{thm-8.3.7}-(1) one has $N^+Q^-=U^+Q^-$. 
   Hence (ii) comes from (i).  
\end{proof}

\subsection{An analytic continuation related to the Bruhat decomposition}\label{subsec-8.3.3}
   Our aim in this subsection to demonstrate 
\begin{theorem}\label{thm-8.3.17}
   Let
\begin{equation}\label{eq-8.3.18}
   \mbox{$O:=\coprod_{\mbox{\scriptsize{$[\sigma]\in\mathcal{W}^1$ {\rm with} $n_{[\sigma]}\leq 1$}}}N^+\sigma^{-1}Q^-$}.
\end{equation}
   Then, it follows that 
\begin{enumerate}
\item[{\rm (i)}]
   $O$ is a dense, domain in $G_\mathbb{C}$,
\item[{\rm (ii)}]
   any holomorphic function $f$ on $O$ can be continued analytically to the whole $G_\mathbb{C}$.    
\end{enumerate}   
\end{theorem}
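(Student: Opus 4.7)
The plan is to treat (i) and (ii) separately. Statement (i) will be obtained as a direct combinatorial consequence of the generalised Bruhat stratification and the closure results of Subsection~8.3.2, while (ii) will be deduced by recognising $G_\mathbb{C}\setminus O$ as a closed complex-analytic subset of codimension at least two and invoking a Hartogs/Riemann-type extension theorem.

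For (i) I would first establish openness. Since $\mathcal{W}^1$ is finite and Theorem~8.3.7-(3) gives the disjoint stratification, $G_\mathbb{C}\setminus O=\bigcup_{[\sigma]\in\mathcal{W}^1,\,n_{[\sigma]}\geq 2}N^+\sigma^{-1}Q^-$. Applying Corollary~8.3.15 to any such $[\sigma]$, every cell $N^+\theta^{-1}Q^-$ appearing in $\overline{N^+\sigma^{-1}Q^-}\setminus N^+\sigma^{-1}Q^-$ satisfies $n_{[\theta]}>n_{[\sigma]}\geq 2$ and therefore also lies in $G_\mathbb{C}\setminus O$; hence $\overline{N^+\sigma^{-1}Q^-}\subset G_\mathbb{C}\setminus O$. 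Consequently $G_\mathbb{C}\setminus O=\bigcup_{n_{[\sigma]}\geq 2}\overline{N^+\sigma^{-1}Q^-}$ is a finite union of closed sets, so $O$ is open. Density is immediate from Proposition~8.3.2-(v.1), which gives $n_{[e]}=0$, so that $N^+Q^-\subset O$, combined with Corollary~8.3.16-(ii). Connectedness of $O$ follows from the sandwich $N^+Q^-\subset O\subset G_\mathbb{C}=\overline{N^+Q^-}$: the set $N^+Q^-$ is the continuous image of the connected product $N^+\times Q^-$ under multiplication, and any subset lying between a connected set and its closure is connected. Hence $O$ is a domain.

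For (ii) the strategy is to recognise $G_\mathbb{C}\setminus O$ as a closed analytic subset of complex codimension at least two. By Remark~8.3.12-(2) we have $\dim_\mathbb{C}N^+\sigma^{-1}Q^-=\dim_\mathbb{C}G_\mathbb{C}-n_{[\sigma]}$, so for every cell contributing to $G_\mathbb{C}\setminus O$ the codimension is $n_{[\sigma]}\geq 2$. I would then identify each closure $\overline{N^+\sigma^{-1}Q^-}$ as the preimage, under the holomorphic submersion $\pi\colon G_\mathbb{C}\to G_\mathbb{C}/Q^-$, of the closure of the Schubert cell $\pi(N^+\sigma^{-1}Q^-)=N^+\sigma^{-1}Q^-/Q^-$ in the complex flag manifold $G_\mathbb{C}/Q^-$; this uses the $Q^-$-saturation of every $N^+\sigma^{-1}Q^-$. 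Since Schubert varieties are (algebraic, hence) closed analytic subvarieties of $G_\mathbb{C}/Q^-$, pulling back yields that $\overline{N^+\sigma^{-1}Q^-}$ is a closed analytic subvariety of $G_\mathbb{C}$, and so is the finite union $G_\mathbb{C}\setminus O$. Its codimension is $\min\{n_{[\sigma]}:n_{[\sigma]}\geq 2\}\geq 2$. Applying the Riemann/Hartogs second extension theorem for holomorphic functions across analytic sets of codimension at least two, any $f\in\mathcal{O}(O)$ extends uniquely to a holomorphic function on $G_\mathbb{C}$.

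The main obstacle I anticipate is the rigorous invocation of the extension theorem within the self-contained style of this paper. A fully elementary substitute would be a purely local Hartogs argument: at each $p\in G_\mathbb{C}\setminus O$ choose a polydisc coordinate chart $(U,\varphi)$ biholomorphic to $\{|z_i|<1\}^N$ in which the trace $\varphi(U\cap(G_\mathbb{C}\setminus O))$ is contained in a finite union of analytic sets of codimension at least two, and then use the classical Hartogs extension on the polydisc to extend $f$ across $\varphi(p)$. Uniqueness of holomorphic continuation (via the identity principle on the connected open set $O$) guarantees that these local extensions coincide on overlaps and glue to a single holomorphic extension on $G_\mathbb{C}$. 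Arranging the local charts compatibly with the Schubert stratification (e.g.\ by combining the biholomorphism $U^-\times Q^-\to U^-Q^-$ of Corollary~8.3.16-(i) with translates by the finitely many $\sigma^{-1}$) is the only delicate point.
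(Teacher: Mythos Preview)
Your argument for (i) is essentially identical to the paper's (Lemma~8.3.21): closedness of the complement via Corollary~8.3.15, density from $N^+Q^-\subset O$ and Corollary~8.3.16-(ii), and connectedness from the sandwich between a connected dense set and its closure (the paper packages this last step as Lemma~8.3.19).

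For (ii) your route is correct but genuinely different from the paper's. You identify $G_\mathbb{C}\setminus O$ globally as a closed analytic set of codimension $\geq 2$ (via the pullback of Schubert varieties from $G_\mathbb{C}/Q^-$) and then invoke the second Riemann extension theorem in one stroke. The paper instead avoids any appeal to algebraicity of Schubert varieties or to the general extension theorem: it proceeds by an explicit induction on the stratum depth $n_{[\sigma]}$. For each $[\sigma]$ with $n_{[\sigma]}\geq 2$ it works inside the concrete chart $\sigma^{-1}N^+Q^-\cong U^+\times Q^-$ (Proposition~8.2.1-(iv)), where Lemma~8.3.22 exhibits $N^+\sigma^{-1}Q^-$ as the zero locus of $n_{[\sigma]}$ explicit coordinate functions; the elementary polydisc Hartogs lemma (Lemma~8.3.20) then extends $f$ across this stratum, after Proposition~8.3.24-(iii) guarantees that the neighbouring lower strata are already in the domain. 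Your approach is shorter and more conceptual, but imports nontrivial results (analyticity of Schubert closures, the Riemann second removable singularities theorem) from outside; the paper's approach is longer but entirely self-contained, using only the classical polydisc Hartogs statement it proves from scratch. The ``local Hartogs substitute'' you sketch at the end is close in spirit to what the paper does, though the paper's choice of chart is not an arbitrary polydisc but precisely the cell $O_{[\sigma]}\subset\sigma^{-1}N^+Q^-$ engineered so that the stratum is a coordinate subspace.
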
 

   For the aim we first prepare some lemmas and a proposition. 
   We will conclude Theorem \ref{thm-8.3.17} by Hartogs's continuation theorem and $G_\mathbb{C}-O$ being of complex codimension 2 or more. 
\begin{lemma}\label{lem-8.3.19}
   Let $M$ be a topological manifold, let $A$ be a subset of $M$, and let $D$ be a dense domain in $M$. 
   Suppose that the subset $A\cup D$ of $M$ is open. 
   Then, $A\cup D$ is a dense domain in $M$.
\end{lemma}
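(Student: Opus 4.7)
The plan is to check that $A\cup D$ satisfies the three properties making it a dense domain in $M$: namely, that it is open, dense, and connected. Openness is part of the hypothesis, and density is immediate from $D\subset A\cup D$ together with $\overline{D}{}^M=M$, which yields $\overline{A\cup D}{}^M\supset\overline{D}{}^M=M$. So the only nontrivial point is connectedness.

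For connectedness, I would argue by contradiction. Suppose $A\cup D=U_1\amalg U_2$ for non-empty disjoint subsets $U_1,U_2$ that are open in $A\cup D$. Because $A\cup D$ is assumed to be open in $M$, both $U_1$ and $U_2$ are open in $M$. Since $D\subset A\cup D$, intersecting with $D$ produces the disjoint open decomposition $D=(D\cap U_1)\amalg(D\cap U_2)$. The set $D$ is connected (as a domain), so one of these pieces must be empty, say $D\cap U_2=\emptyset$. Then $U_2$ is a non-empty open subset of $M$ disjoint from $D$, which contradicts the density of $D$ in $M$. Hence no such decomposition exists and $A\cup D$ is connected, finishing the proof.

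There is no real obstacle to overcome: the argument is a short purely topological observation. The only point that needs care is the role of the hypothesis that $A\cup D$ is open in $M$, which is precisely what allows us to promote a separation of $A\cup D$ into a separation by open subsets of $M$, so that the density of $D$ can be applied to derive the contradiction.
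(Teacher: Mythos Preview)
Your proof is correct, but it takes a genuinely different route from the paper. The paper argues \emph{arcwise} connectedness directly: fixing $p_0\in D$ and any $p\in A\cup D$, it uses that $M$ is a manifold (hence locally arcwise connected) to find an arcwise connected open $U$ with $p\in U\subset A\cup D$; density of $D$ then puts some $d\in U\cap D$, and one joins $p_0$ to $d$ inside $D$ and $d$ to $p$ inside $U$. Your argument, by contrast, works with the separation definition of connectedness and never invokes the local Euclidean structure at all --- it goes through for any topological space $M$, not just a manifold. What the paper's approach buys is that it yields arcwise connectedness outright (though for open subsets of a manifold this is equivalent to connectedness anyway); what your approach buys is a shorter, more general argument that isolates precisely why the hypothesis ``$A\cup D$ open in $M$'' is needed, namely to promote a relative separation of $A\cup D$ to one by $M$-open sets so that density of $D$ bites.
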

\begin{proof}
   We only confirm that $A\cup D$ is arcwise connected. 
   Fix a point $p_0\in D$, and take any $p\in A\cup D$. 
   By the supposition there exists an arcwise connected, open subset $U\subset M$ such that $p\in U\subset A\cup D$. 
   Since $D\subset M$ is dense, there exists a $d\in U\cap D$. 
   Then, $p_0$ and $d$ (resp.\ $d$ and $p$) can be joined by an arc in $D$ (resp.\ $U$), and therefore $p_0$ and $p$ can be joined by an arc in $A\cup D$. 
\end{proof}

\begin{lemma}[Hartogs's continuation theorem]\label{lem-8.3.20}
   Let $P$ be an open subset of $\mathbb{C}^N$ defined by $|z^1|<R, |z^2|<R,\dots,|z^N|<R$ for some $R>0$, and set
\[
   A:=\{(z^1,\dots,z^k,z^{k+1},\dots,z^N)\in P \,|\, z^1=z^2=\cdots=z^k=0\},
\] 
where $2\leq k\leq N$.
   Then, for an arbitrary holomorphic function $f:(P-A)\to\mathbb{C}$, there exists a unique holomorphic function $\tilde{f}:P\to\mathbb{C}$ such that $f=\tilde{f}$ on $P-A$.
\end{lemma}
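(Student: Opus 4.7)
The plan is to apply the standard one-variable Cauchy integral formula in the coordinate $z^1$ and exploit the hypothesis $k \geq 2$ to define the extension. Concretely, fix any $r$ with $0 < r < R$. For $z = (z^1, z^2, \ldots, z^N) \in P$ with $|z^1| < r$, I would set
\[
   \tilde{f}(z) := \frac{1}{2\pi i}\int_{|\zeta| = r}\frac{f(\zeta, z^2, \ldots, z^N)}{\zeta - z^1}\,d\zeta.
\]
The integrand is well-defined because for $|\zeta| = r > 0$ the point $(\zeta, z^2, \ldots, z^N)$ automatically satisfies $\zeta \neq 0$, hence lies in $P - A$ regardless of the values of $z^2, \ldots, z^k$; here the requirement $k \geq 2$ is irrelevant but the fact that $z^1$ is one of the vanishing coordinates is crucial. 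Differentiation under the integral sign (justified by continuity and compactness of the circle) shows that $\tilde{f}$ is holomorphic on $\{z \in P : |z^1| < r\}$, and Cauchy's theorem applied to the annulus $r_1 \leq |\zeta| \leq r_2$ (with $0 < r_1 < r_2 < R$, noting that $\zeta \mapsto f(\zeta, z^2, \ldots, z^N)$ is holomorphic on $\{0 < |\zeta| < R\}$ for every fixed tail) shows that the value is independent of $r$ on the overlap; letting $r \uparrow R$ assembles a single holomorphic function $\tilde{f}$ defined on all of $P$.

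Next I need to verify $\tilde{f} = f$ on $P - A$. On the open subset $V := \{z \in P : (z^2, \ldots, z^k) \neq (0, \ldots, 0)\}$ the slice $\zeta \mapsto f(\zeta, z^2, \ldots, z^N)$ is holomorphic on the full disc $\{|\zeta| < R\}$ (since the point $(\zeta, z^2, \ldots, z^N)$ never lies in $A$), so the ordinary one-variable Cauchy integral formula gives $\tilde{f}(z) = f(z)$ for every $z \in V$. Since $V \subset P - A$, it remains to propagate this equality to the whole of $P - A$. For this I would invoke the identity theorem applied on the connected open set $P - A$: the crucial point is that $A$ has complex codimension $k \geq 2$ in $P$, hence real codimension at least $4$, so its complement in the polydisc $P$ is connected. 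The holomorphic function $\tilde{f} - f$ on $P - A$ vanishes on the non-empty open subset $V$, hence vanishes identically on $P - A$.

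Uniqueness of $\tilde{f}$ follows from the identity theorem on the connected polydisc $P$: any two holomorphic extensions of $f$ agree on the non-empty open subset $P - A$ and therefore on $P$. The only points requiring a little care in the write-up are (i) the connectedness of $P - A$, which is the genuinely geometric use of the assumption $k \geq 2$, and (ii) the independence of $\tilde{f}$ from the auxiliary radius $r$; both are routine. I do not anticipate any substantive obstacle — this is the classical Hartogs Kugelsatz in its most elementary form, and the hypothesis $k \geq 2$ is precisely what is needed for the one-variable integral representation to furnish a well-defined holomorphic extension across the ``thin'' analytic set $A$.
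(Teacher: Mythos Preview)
Your proof is correct and follows essentially the same route as the paper's: both define the extension via the Cauchy integral in the first coordinate, verify agreement with $f$ on the open set where some other vanishing coordinate is nonzero (you use $\{(z^2,\ldots,z^k)\neq 0\}$, the paper uses the smaller $\{z^2\neq 0\}$), and then invoke the identity theorem on the connected domain $P-A$. The only cosmetic difference is that you justify holomorphicity by differentiation under the integral sign, whereas the paper spells out a Morera--Fubini argument variable by variable.
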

\begin{proof}
   (Uniqueness). 
   The uniqueness of $\tilde{f}$ comes from $P$ being a domain, $P-A$ being a non-empty open subset of $P$ and the theorem of identity.\par
   
   (Existence).
   Let us confirm the existence of $\tilde{f}$.
   Take an arbitrary $0<r_1<R$, and fix a point $(z^1,z^2,\dots,z^N)\in P$ with $|z^1|<r_1$. 
   Then, for any $|z^1|<|w|<R$ it turns out that $(w,z^2,\dots,z^N)\in P -A$, and hence the definition
\[
   \mbox{$g(w):=\dfrac{f(w,z^2,\dots,z^N)}{w-z^1}$ for $w\in\{w\in\mathbb{C}:|z^1|<|w|<R\}$}
\] 
is well-defined.
   Furthermore, $g(w)$ is holomorphic on the annular domain $|z^1|<|w|<R$ which includes the circle $C_0:|w|=r_1$, and consequently  
\[
   \int_{C_0}g(w)dw
   =\int_{|w|=r_1}\dfrac{f(w,z^2,\dots,z^N)}{w-z^1}dw
\]
exists in $\mathbb{C}$ for every $(z^1,z^2,\dots,z^N)\in D_{r_1}\times D_R\times\cdots\times D_R$, where $D_r:=\{z\in\mathbb{C} : |z|<r\}$. 
   Now, let us prove that 
\begin{equation}\label{eq-1}\tag*{\textcircled{1}} 
   \mbox{the function $\displaystyle{\int_{|w|=r_1}\dfrac{f(w,z^2,\dots,z^N)}{w-z^1}dw}$ is holomorphic on $D_{r_1}\times D_R\times\cdots\times D_R$}.
\end{equation}
   Taking Hartogs's theorem of holomorphy into account, we will only conclude that the function is holomorphic with respect to each variable $z^j$ ($1\leq j\leq N$).
   Let us demonstrate that the function is holomorphic with respect to  $z^1$.
   For any $w\in\mathbb{C}$ with $|w|=r_1$, we see that 
\begin{equation}\label{eq-a}\tag{a}
   \mbox{$F_w(z^1,z^2,\dots,z^N):=f(w,z^2,\dots,z^N)/(w-z^1)$ is holomorphic on $D_{r_1}\times D_R\times\cdots\times D_R$}.
\end{equation}
   Hence for a given piecewise differentiable closed curve $C=\sum_{n=1}^mC_n$, $C_n:z^1=z^1_n(s)$ ($a_n\leq s\leq b_n$) of class $C^1$ which is contained in $D_{r_1}$, Cauchy's integral theorem enables us to deduce that for any $w\in\mathbb{C}$ with $|w|=r_1$ and any $(z^2,\dots,z^N)\in D_R\times\cdots\times D_R$, 
\begin{equation}\label{eq-b}\tag{b}
   \int_CF_w(z^1,z^2,\dots,z^N)dz^1=0
\end{equation}
because $D_{r_1}$ is a star region.
   Therefore it follows from $f(w,z^2,\dots,z^N)/(w-z^1)=F_w(z^1,z^2,\dots,z^N)$ and $C=\sum_{n=1}^mC_n$, $C_n:z^1=z^1_n(s)$ ($a_n\leq s\leq b_n$) that 
\begin{multline*}
  \int_C\Big(\int_{|w|=r_1}\dfrac{f(w,z^2,\dots,z^N)}{w-z^1}dw\Big)dz^1
   =\int_C\Big(\int_{|w|=r_1}F_w(z^1,z^2,\dots,z^N)dw\Big)dz^1\\
  =\sum_{n=1}^m\int_{a_n}^{b_n}\Big(\int_0^{2\pi}F_{r_1e^{it}}(z^1_n(s),z^2,\dots,z^N)\dfrac{dr_1e^{it}}{dt}dt\Big)\dfrac{dz^1_n(s)}{ds}ds
   =\int_{|w|=r_1}\Big(\int_CF_w(z^1,z^2,\dots,z^N)dz^1\Big)dw
   \stackrel{\eqref{eq-b}}{=}0.
\end{multline*}
   Here we applied Fubini's theorem to the continuous function $[0,2\pi]\times[a_n,b_n]\ni(t,s)\mapsto\dfrac{f(r_1e^{it},z^2,\dots,z^N)}{r_1e^{it}-z^1_n(s)}\dfrac{dr_1e^{it}}{dt}\dfrac{dz^1_n(s)}{ds}\in\mathbb{C}$. 
   The above and Morera's theorem allow us to assert that $\int_{|w|=r_1}\bigl(f(w,z^2,\dots,z^N)/(w-z^1)\bigr)dw$ is a holomorphic function with respect to the variable $z^1\in D_{r_1}$.
   In a similar way, one can assert that with respect to the other variables (because of \eqref{eq-a}). 
   Hence \ref{eq-1} holds.
   So, one can define a holomorphic function $\tilde{f}:D_{r_1}\times D_R\times\cdots\times D_R\to\mathbb{C}$ by
\begin{equation}\label{eq-2}\tag*{\textcircled{2}} 
   \mbox{$\displaystyle{\tilde{f}(z^1,z^2,\dots,z^N):=\dfrac{1}{2\pi i}\int_{|w|=r_1}\dfrac{f(w,z^2,\dots,z^N)}{w-z^1}dw}$ for $(z^1,z^2,\dots,z^N)\in D_{r_1}\times D_R\times\cdots\times D_R$}.
\end{equation}
   The function $f$ coincides with this $\tilde{f}$ on $(D_{r_1}\times D_R\times\cdots\times D_R)-A$.
   Indeed, since $D_R\times(D_R-\{0\})\times D_R\times\cdots\times D_R\subset P-A$, $f(z^1,z^2,z^3,\dots,z^N)$ is holomorphic on $D_R\times(D_R-\{0\})\times D_R\times\cdots\times D_R$. 
   From Cauchy's integral formula for $z^1\in D_R$ we obtain
\[
   \mbox{$\displaystyle{f(z^1,z^2,z^3,\dots,z^N)=\dfrac{1}{2\pi i}\int_{|w|=r_1}\dfrac{f(w,z^2,z^3,\dots,z^N)}{w-z^1}dw}$ on $D_{r_1}\times(D_R-\{0\})\times D_R\times\cdots\times D_R$}.  
\]   
   This and \ref{eq-2} assure that $f=\tilde{f}$ on $D_{r_1}\times(D_R-\{0\})\times D_R\times\cdots\times D_R$. 
   Therefore the theorem of identity enables us to show 
\[
    \mbox{$f=\tilde{f}$ on $(D_{r_1}\times D_R\times D_R\times\cdots\times D_R)-A$}
\] 
because $D_{r_1}\times(D_R-\{0\})\times D_R\times\cdots\times D_R$ is a non-empty open subset of the domain $(D_{r_1}\times D_R\times D_R\times\cdots\times D_R)-A$.
   Letting $r_1\nearrow R$ one can get the conclusion. 
   Here we remark that $P=D_R\times D_R\times\cdots\times D_R$.
\end{proof}

\begin{lemma}\label{lem-8.3.21}
   $O=\coprod_{\mbox{\scriptsize{$[\sigma]\in\mathcal{W}^1$ {\rm with} $n_{[\sigma]}\leq 1$}}}N^+\sigma^{-1}Q^-$ is a dense, domain in $G_\mathbb{C}$.
\end{lemma}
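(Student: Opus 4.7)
The plan is to verify three properties of $O$: that it contains the already-known dense domain $N^+Q^-$, that it is open in $G_\mathbb{C}$, and that it is connected; the first two are the bulk of the work, and connectedness will then follow formally from Lemma \ref{lem-8.3.19}.

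First I would identify which cells make up $O$. By Proposition \ref{prop-8.3.2}-(v.1), $n_{[\sigma]}=0$ holds exactly for $[\sigma]=[e]$, and by Proposition \ref{prop-8.3.2}-(v.2), $n_{[\sigma]}=1$ holds exactly for $[\sigma]=[w_\beta]$ with $\beta\in\Pi_\triangle-\blacktriangle$. Hence
\[
   O=N^+Q^-\amalg\coprod_{\beta\in\Pi_\triangle-\blacktriangle}N^+w_\beta^{-1}Q^-.
\]
The cell $N^+Q^-$ already contributes a dense domain in $G_\mathbb{C}$ by Corollary \ref{cor-8.3.16}-(ii); so density of $O$ is automatic, and I also get a canonical dense connected subset to play the role of $D$ in Lemma \ref{lem-8.3.19}.

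Next I would prove openness by showing that the complement
\[
   G_\mathbb{C}-O=\bigcup_{[\sigma]\in\mathcal{W}^1,\,n_{[\sigma]}\geq 2}N^+\sigma^{-1}Q^-
\]
is closed. Since $\mathcal{W}^1$ is finite, closure commutes with this union, so it suffices to show that $\overline{N^+\sigma^{-1}Q^-}\subset G_\mathbb{C}-O$ whenever $n_{[\sigma]}\geq 2$. Corollary \ref{cor-8.3.15} says that $\overline{N^+\sigma^{-1}Q^-}-N^+\sigma^{-1}Q^-$ is a finite disjoint union of cells $N^+\theta_i^{-1}Q^-$ with $n_{[\theta_i]}>n_{[\sigma]}\geq 2$; all such cells, together with $N^+\sigma^{-1}Q^-$ itself, sit inside $G_\mathbb{C}-O$. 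Hence $\overline{G_\mathbb{C}-O}=G_\mathbb{C}-O$ and $O$ is open.

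Finally I would apply Lemma \ref{lem-8.3.19} with $M=G_\mathbb{C}$, $D=N^+Q^-$ (a dense domain by Corollary \ref{cor-8.3.16}-(ii)), and $A=\coprod_{\beta\in\Pi_\triangle-\blacktriangle}N^+w_\beta^{-1}Q^-$. The union $A\cup D$ equals $O$, which was just shown to be open, so Lemma \ref{lem-8.3.19} yields that $O$ is a dense, arcwise connected open subset of $G_\mathbb{C}$, i.e., a dense domain. The main obstacle here is the closure/monotonicity step in the middle paragraph: one must invoke Corollary \ref{cor-8.3.15} carefully so that no boundary stratum of a high-$n$ cell can slip down into the $n_{[\sigma]}\leq 1$ part of the stratification. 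Once that monotonicity is cleanly stated and the finiteness of $\mathcal{W}^1$ is used to exchange union and closure, the remainder is purely formal.
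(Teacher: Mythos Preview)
Your proposal is correct and follows essentially the same route as the paper: use Corollary \ref{cor-8.3.16}-(ii) to get the dense domain $N^+Q^-\subset O$, show $G_\mathbb{C}-O$ is closed via finiteness of $\mathcal{W}^1$ and the monotonicity of $n_{[\cdot]}$ along closures from Corollary \ref{cor-8.3.15}, and then invoke Lemma \ref{lem-8.3.19}. The only cosmetic difference is that you spell out the cells of $O$ explicitly via Proposition \ref{prop-8.3.2}-(v), whereas the paper leaves this implicit.
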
 
\begin{proof}
   Corollary \ref{cor-8.3.16}-(ii) tells us that $N^+e^{-1}Q^-$ is a dense domain in $G_\mathbb{C}$.
   By that, Proposition \ref{prop-8.3.2}-(v.1) and Lemma \ref{lem-8.3.19}, it suffices to conclude that $O\subset G_\mathbb{C}$ is open, which is equivalent to that $G_\mathbb{C}-O$ is closed in $G_\mathbb{C}$. 
   Since $\mathcal{W}^1$ is a finite set, we deduce 
\[
   \mbox{$\overline{\bigcup_{\mbox{\scriptsize{$[\theta]\in\mathcal{W}^1$ {\rm with} $2\leq n_{[\theta]}$}}}N^+\theta^{-1}Q^-}
   =\bigcup_{\mbox{\scriptsize{$[\theta]\in\mathcal{W}^1$ {\rm with} $2\leq n_{[\theta]}$}}}\overline{N^+\theta^{-1}Q^-}
   =\coprod_{\mbox{\scriptsize{$[\vartheta]\in\mathcal{W}^1$ {\rm with} $2\leq n_{[\vartheta]}$}}}N^+\vartheta^{-1}Q^-$}
   =G_\mathbb{C}-O
\]
by Corollary \ref{cor-8.3.15} and Theorem \ref{thm-8.3.7}-(3).
   Accordingly $G_\mathbb{C}-O$ is closed in $G_\mathbb{C}$.
\end{proof}

\begin{lemma}\label{lem-8.3.22}
   The following two items hold for a given $[\sigma]\in\mathcal{W}^1:$
\begin{enumerate}
\item[{\rm (1)}]
   $\sigma^{-1}U^+Q^-$ is a dense, domain in $G_\mathbb{C}$.
\item[{\rm (2)}]
   $\sigma^{-1}U^+_{[\sigma]}Q^-$ is an analytic subset of $\sigma^{-1}U^+Q^-$ having complex codimension $n_{[\sigma]}$, that is to say, there exist holomorphic functions $f_1,f_2,\dots,f_{n_{[\sigma]}}:\sigma^{-1}U^+Q^-\to\mathbb{C}$ such that {\rm (2.i)} $df_1\wedge df_2\wedge\cdots\wedge df_{n_{[\sigma]}}\neq 0$ on $\sigma^{-1}U^+Q^-$ and {\rm (2.ii)} $\sigma^{-1}U^+_{[\sigma]}Q^-=\{x\in\sigma^{-1}U^+Q^- \,|\, f_1(x)=f_2(x)=\cdots=f_{n_{[\sigma]}}(x)=0\}$. 
\item[{\rm (3)}]
   $\sigma^{-1}N^+Q^-$ is a dense domain in $G_\mathbb{C}$, and $N^+\sigma^{-1}Q^-$ is an analytic subset of $\sigma^{-1}N^+Q^-$ having complex codimension $n_{[\sigma]}$.
\end{enumerate}
   Here we refer to \eqref{eq-8.3.8} for $U^+_{[\sigma]}$.
\end{lemma}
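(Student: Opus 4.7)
The plan is to leverage the biholomorphism $U^+ \times Q^- \to U^+Q^-$, $(u,q)\mapsto uq$, provided by Corollary \ref{cor-8.3.16}-(i), which realizes $U^+Q^-$ as a product of complex manifolds sitting inside $G_\mathbb{C}$ as a dense open subset. Part (1) will then be almost immediate: left translation by $\sigma^{-1}$ is a biholomorphism of $G_\mathbb{C}$, so $\sigma^{-1}U^+Q^-$ is a dense open subset; it is connected because $U^+ \cong \frak{u}^+$ (Proposition \ref{prop-8.2.1}-(i)) and $Q^-$ (Proposition \ref{prop-8.2.1}-(iii)) are both connected, hence so is their product.

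For part (2), the key algebraic fact is that $\frak{u}^+_{[\sigma]}$ is a complex-linear subspace of $\frak{u}^+$ of codimension exactly $n_{[\sigma]}$. Indeed, the identity $\zeta([\sigma])(\Gamma_{[\sigma]}) = (\triangle^+-\blacktriangle) - \Phi_{[\sigma]}$ established in the proof of Theorem \ref{thm-8.3.7}-(1), combined with $\Phi_{[\sigma]}\subset\triangle^+-\blacktriangle$, gives the linear splitting
\[
   \frak{u}^+ = \frak{u}^+_{[\sigma]} \oplus \bigoplus_{\alpha\in\Phi_{[\sigma]}}\frak{g}_\alpha,
\]
where the complementary summand has dimension $|\Phi_{[\sigma]}| = n_{[\sigma]}$. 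I will then pick complex-linear functionals $\ell_1,\ldots,\ell_{n_{[\sigma]}}\colon \frak{u}^+\to\mathbb{C}$ cutting out $\frak{u}^+_{[\sigma]}$, with $d\ell_1\wedge\cdots\wedge d\ell_{n_{[\sigma]}}\neq 0$, and define holomorphic functions on $\sigma^{-1}U^+Q^-$ by
\[
   f_j(\sigma^{-1}uq) := \ell_j(\log u), \qquad (u,q)\in U^+\times Q^-,
\]
where $\log\colon U^+\to\frak{u}^+$ is the inverse of the biholomorphism $\exp$ from Proposition \ref{prop-8.2.1}-(i) and the decomposition $uq$ is well-defined by Corollary \ref{cor-8.3.16}-(i). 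Because $U^+_{[\sigma]} = \exp(\frak{u}^+_{[\sigma]})$, the common zero locus of the $f_j$'s is exactly $\sigma^{-1}U^+_{[\sigma]}Q^-$. The non-vanishing of the wedge $df_1\wedge\cdots\wedge df_{n_{[\sigma]}}$ will follow because each $f_j$ is a composition of holomorphic submersions (the projection $U^+Q^-\to U^+$ is a submersion, $\log$ is biholomorphic, and the $\ell_j$'s have linearly independent differentials), so the map $(f_1,\ldots,f_{n_{[\sigma]}})\colon \sigma^{-1}U^+Q^-\to\mathbb{C}^{n_{[\sigma]}}$ is itself a submersion.

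For part (3), I will first show $\sigma^{-1}N^+Q^- = \sigma^{-1}U^+Q^-$. The root space decomposition gives $\frak{n}^+ = \frak{u}^+ \oplus \frak{n}^+_1$ with $\frak{n}^+_1 = \bigoplus_{\alpha\in\blacktriangle^+}\frak{g}_\alpha \subset \frak{l}_\mathbb{C}$. Moreover $\frak{u}^+$ is an ideal of $\frak{n}^+$: if $\alpha\in\blacktriangle^+$, $\beta\in\triangle^+-\blacktriangle$, and $\alpha+\beta\in\triangle$, then $(\alpha+\beta)(T) = \beta(T) \neq 0$ forces $\alpha+\beta\in\triangle^+-\blacktriangle$, so $[\frak{n}^+_1,\frak{u}^+]\subset\frak{u}^+$. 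Hence $N^+ = U^+\cdot N^+_1$ at the group level, and since $N^+_1\subset L_\mathbb{C}\subset Q^-$ (Lemma \ref{lem-8.0.1}-(2$''$)), we obtain $\sigma^{-1}N^+Q^- = \sigma^{-1}U^+N^+_1Q^- = \sigma^{-1}U^+Q^-$, which is a dense domain by part (1). Theorem \ref{thm-8.3.7}-(1) identifies $N^+\sigma^{-1}Q^-$ with $\sigma^{-1}U^+_{[\sigma]}Q^-$, so the remaining claim reduces to part (2).

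The only potentially delicate point is the non-vanishing of $df_1\wedge\cdots\wedge df_{n_{[\sigma]}}$, but this collapses to a routine rank computation once the product structure from Corollary \ref{cor-8.3.16}-(i) is used to introduce explicit coordinates; everything else is bookkeeping with root systems and with the results of Section \ref{sec-8.3}.
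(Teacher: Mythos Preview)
Your proposal is correct and follows essentially the same route as the paper: for (1) you both invoke Corollary \ref{cor-8.3.16}-(i) together with left translation; for (2) your functions $f_j = \ell_j\circ\log\circ\mathrm{pr}_{U^+}\circ L_\sigma$ are exactly the paper's canonical coordinates of the first kind $z^j$ pulled back through the $U^+\times Q^-$ factorization, and your explicit splitting $\frak{u}^+ = \frak{u}^+_{[\sigma]}\oplus\bigoplus_{\alpha\in\Phi_{[\sigma]}}\frak{g}_\alpha$ is what the paper packages into ``choose a basis $\{E_j\}$ with $\frak{u}^+_{[\sigma]}=\operatorname{span}\{E_{n_{[\sigma]}+k}\}$''; and for (3) both arguments reduce to (1), (2) via $\sigma^{-1}N^+Q^- = \sigma^{-1}U^+Q^-$ (your derivation of this equality through $N^+ = U^+N^+_1$, $N^+_1\subset Q^-$ just spells out what the paper takes from the proof of Theorem \ref{thm-8.3.7}-(1)).
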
 
\begin{proof}
   (1). 
   Since the left translation $L_{\sigma^{-1}}:G_\mathbb{C}\to G_\mathbb{C}$ is homeomorphic, we conclude (1) by Corollary \ref{cor-8.3.16}-(i).\par
   
   (2).
   By Theorem \ref{thm-8.3.7}-(1) one can choose a complex basis $\{E_j\}_{j=1}^r$ of $\frak{u}^+$ so that $\frak{u}^+_{[\sigma]}=\operatorname{span}_\mathbb{C}\{E_{n_{[\sigma]}+k}\}_{k=1}^{r-n_{[\sigma]}}$.
   Let us consider the canonical coordinates $z^1,z^2,\dots,z^r$ of the first kind associated with this basis $\{E_j\}_{j=1}^r\subset \frak{u}^+$. 
   Then, 
\begin{equation}\label{eq-1}\tag*{\textcircled{1}}
\begin{array}{ll}
   \mbox{$dz^1\wedge\cdots\wedge dz^{n_{[\sigma]}}\wedge dz^{n_{[\sigma]}+1}\wedge\cdots\wedge dz^r\neq 0$ on $U^+$}, & U^+_{[\sigma]}=\{u\in U^+ \,|\, z^1(u)=z^2(u)=\cdots=z^{n_{[\sigma]}}(u)=0\}.
\end{array}
\end{equation}
   For each $x\in\sigma^{-1}U^+Q^-$, Proposition \ref{prop-8.2.1}-(iv) assures that there exists a unique $(u,q)\in U^+\times Q^-$ satisfying $x=\sigma^{-1}uq$, and then one can get a holomorphic function $f_i:\sigma^{-1}U^+Q^-\to\mathbb{C}$ by setting $f_i(x):=z^i(u)$ for $1\leq i\leq n_{[\sigma]}$.
   These $f_1,f_2,\dots,f_{n_{[\sigma]}}$ are desired functions due to \ref{eq-1}.\par
   
   (3) is immediate from (1), (2), $\sigma^{-1}N^+Q^-=\sigma^{-1}U^+Q^-$ and $N^+\sigma^{-1}Q^-=\sigma^{-1}U^+_{[\sigma]}Q^-$.
\end{proof}

   For $[\sigma]\in\mathcal{W}^1$ we set  
\begin{equation}\label{eq-8.3.23}
   O_{[\sigma]}:=\sigma^{-1}N^+Q^--\bigcup_{\mbox{\scriptsize{$[\tau]\in\mathcal{W}^1$ {\rm with} $[\sigma]\neq[\tau]$ \& $n_{[\sigma]}\leq n_{[\tau]}$}}}\overline{N^+\tau^{-1}Q^-}
\end{equation} 
and demonstrate 

\begin{proposition}\label{prop-8.3.24}
   For any $[\sigma]\in\mathcal{W}^1$, it follows that 
\begin{enumerate}
\item[{\rm (i)}]
   $O_{[\sigma]}$ is an open subset of $G_\mathbb{C}$,
\item[{\rm (ii)}]
   $N^+\sigma^{-1}Q^-\subset O_{[\sigma]}\subset \sigma^{-1}N^+Q^-$,
\item[{\rm (iii)}] 
   $O_{[\sigma]}-N^+\sigma^{-1}Q^-\subset\coprod_{\mbox{\scriptsize{$[\eta]\in\mathcal{W}^1$ {\rm with} $n_{[\eta]}\leq n_{[\sigma]}-1$}}}N^+\eta^{-1}Q^-$.
\end{enumerate}   
\end{proposition}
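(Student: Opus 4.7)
The plan is to handle the three items in order, using the Bruhat decomposition of Theorem \ref{thm-8.3.7}, the closure description of Corollary \ref{cor-8.3.15}, and the denseness/openness of $\sigma^{-1}N^+Q^-$ from Lemma \ref{lem-8.3.22}.

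For (i), observe that $\sigma^{-1}N^+Q^-$ is open in $G_\mathbb{C}$ (it is even a dense domain by Lemma \ref{lem-8.3.22}), while the subtracted set is the \emph{finite} union $\bigcup_{[\tau]}\overline{N^+\tau^{-1}Q^-}$, indexed over elements of $\mathcal{W}^1$ with $[\tau]\neq[\sigma]$ and $n_{[\sigma]}\leq n_{[\tau]}$; since $\mathcal{W}^1$ is finite this union is closed in $G_\mathbb{C}$, so its complement is open, and $O_{[\sigma]}$ is the intersection of two open sets.

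For (ii), I split the inclusion $N^+\sigma^{-1}Q^-\subset O_{[\sigma]}$ into two parts. First, $N^+\sigma^{-1}Q^-\subset\sigma^{-1}N^+Q^-$ follows from Theorem \ref{thm-8.3.7}-(1), which gives $N^+\sigma^{-1}Q^-=\sigma^{-1}U^+_{[\sigma]}Q^-\subset\sigma^{-1}U^+Q^-\subset\sigma^{-1}N^+Q^-$ (using $U^+\subset N^+$). Second, I must check that $N^+\sigma^{-1}Q^-\cap\overline{N^+\tau^{-1}Q^-}=\emptyset$ for every $[\tau]\in\mathcal{W}^1$ with $[\tau]\neq[\sigma]$ and $n_{[\sigma]}\leq n_{[\tau]}$. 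By Corollary \ref{cor-8.3.15} the closure decomposes as
\[
   \overline{N^+\tau^{-1}Q^-}=N^+\tau^{-1}Q^-\amalg\coprod_{i=1}^kN^+\theta_i^{-1}Q^-,\qquad n_{[\theta_i]}>n_{[\tau]},
\]
so a point of $N^+\sigma^{-1}Q^-$ lying in this closure would, by the disjointness in Theorem \ref{thm-8.3.7}-(3), force $[\sigma]=[\tau]$ (excluded) or $[\sigma]=[\theta_i]$ for some $i$, whence $n_{[\sigma]}=n_{[\theta_i]}>n_{[\tau]}\geq n_{[\sigma]}$, a contradiction. The opposite inclusion $O_{[\sigma]}\subset\sigma^{-1}N^+Q^-$ is built into the definition of $O_{[\sigma]}$.

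For (iii), take any $x\in O_{[\sigma]}-N^+\sigma^{-1}Q^-$. By the disjoint Bruhat decomposition $G_\mathbb{C}=\coprod_{[\eta]\in\mathcal{W}^1}N^+\eta^{-1}Q^-$ of Theorem \ref{thm-8.3.7}-(3), there is a unique $[\eta]\in\mathcal{W}^1$ with $x\in N^+\eta^{-1}Q^-$, and necessarily $[\eta]\neq[\sigma]$. If $n_{[\eta]}\geq n_{[\sigma]}$, then $[\eta]$ is one of the indices appearing in the subtracted union of (\ref{eq-8.3.23}), so $x\in N^+\eta^{-1}Q^-\subset\overline{N^+\eta^{-1}Q^-}$ would contradict $x\in O_{[\sigma]}$; hence $n_{[\eta]}\leq n_{[\sigma]}-1$, which is the required inclusion. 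The main subtle point is not computational but bookkeeping: one must use Corollary \ref{cor-8.3.15} (closure only picks up strictly larger $n_{[\cdot]}$) together with the disjointness in Theorem \ref{thm-8.3.7}-(3) to get the strict-inequality contradictions in (ii) and (iii); everything else is formal.
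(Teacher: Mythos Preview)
Your proof is correct and follows essentially the same route as the paper: openness from finiteness of $\mathcal{W}^1$, the inclusion in (ii) via Theorem~\ref{thm-8.3.7}-(1) and the closure description of Corollary~\ref{cor-8.3.15} combined with the disjointness of Bruhat cells, and (iii) by locating a point in its unique Bruhat cell and ruling out $n_{[\eta]}\geq n_{[\sigma]}$. The only cosmetic difference is that in (ii) the paper first absorbs the whole orbit $N^+\sigma^{-1}Q^-$ into $\overline{N^+\tau^{-1}Q^-}$ (using that the closure is $(N^+\times Q^-)$-invariant) before invoking Corollary~\ref{cor-8.3.15}, whereas you decompose the closure first and then match cells; both amount to the same contradiction.
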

\begin{proof}
   (i). 
   Since $\sigma^{-1}N^+Q^-\subset G_\mathbb{C}$ is open and $\mathcal{W}^1$ is finite, we have (i) by \eqref{eq-8.3.23}.\par
   
   (ii). 
   It is natural from \eqref{eq-8.3.23} that $O_{[\sigma]}\subset \sigma^{-1}N^+Q^-$.
   So, let us show $N^+\sigma^{-1}Q^-\subset O_{[\sigma]}$, namely 
\[
   N^+\sigma^{-1}Q^-\subset\Big(\sigma^{-1}N^+Q^--\bigcup_{\mbox{\scriptsize{$[\tau]\in\mathcal{W}^1$ {\rm with} $[\sigma]\neq[\tau]$ \& $n_{[\sigma]}\leq n_{[\tau]}$}}}\overline{N^+\tau^{-1}Q^-}\Big). 
\]
   Since $N^+\sigma^{-1}Q^-\subset\sigma^{-1}N^+Q^-$ it is enough to confirm that 
\begin{equation}\label{eq-1}\tag*{\textcircled{1}} 
   \mbox{$x\not\in\bigcup_{\mbox{\scriptsize{$[\tau]\in\mathcal{W}^1$ {\rm with} $[\sigma]\neq[\tau]$ \& $n_{[\sigma]}\leq n_{[\tau]}$}}}\overline{N^+\tau^{-1}Q^-}$ for all $x\in N^+\sigma^{-1}Q^-$}.
\end{equation}
   Let us use proof by contradiction. 
   Suppose a $y\in N^+\sigma^{-1}Q^-$ to satisfy $y\in\bigcup_{\mbox{\scriptsize{$[\tau]\in\mathcal{W}^1$ {\rm with} $[\sigma]\neq[\tau]$ \& $n_{[\sigma]}\leq n_{[\tau]}$}}}\overline{N^+\tau^{-1}Q^-}$.
   Then, there exists a $[\tau]\in\mathcal{W}^1$ such that $[\sigma]\neq[\tau]$, $n_{[\sigma]}\leq n_{[\tau]}$ and $y\in\overline{N^+\tau^{-1}Q^-}$.
   From $y\in N^+\sigma^{-1}Q^-$ and $y\in\overline{N^+\tau^{-1}Q^-}$ one obtains 
\[
   N^+\sigma^{-1}Q^-\subset\overline{N^+\tau^{-1}Q^-}.
\]
   Here we recall that $N^+\sigma^{-1}Q^-$ is an orbit of the group $N^+\times Q^-$ and $N^+\tau^{-1}Q^-$ is also. 
   In case of $N^+\sigma^{-1}Q^-\cap N^+\tau^{-1}Q^-=\emptyset$ Corollary \ref{cor-8.3.15} and $N^+\sigma^{-1}Q^-\subset\overline{N^+\tau^{-1}Q^-}$ cause $n_{[\tau]}<n_{[\sigma]}$, which is a contradiction to $n_{[\sigma]}\leq n_{[\tau]}$. 
   Even if $N^+\sigma^{-1}Q^-\cap N^+\tau^{-1}Q^-\neq\emptyset$, we have $[\sigma]=[\tau]$, which contradicts $[\sigma]\neq[\tau]$. 
   Hence \ref{eq-1} holds.\par

   (iii). 
   By a direct computation we obtain
\allowdisplaybreaks{
\begin{align*}
   O_{[\sigma]}-N^+\sigma^{-1}Q^-
  &\stackrel{\eqref{eq-8.3.23}}{=}
   \Big(\sigma^{-1}N^+Q^--\bigcup_{\mbox{\scriptsize{$[\tau]\in\mathcal{W}^1$ {\rm with} $[\sigma]\neq[\tau]$ \& $n_{[\sigma]}\leq n_{[\tau]}$}}}\overline{N^+\tau^{-1}Q^-}\Big)-N^+\sigma^{-1}Q^-\\
  &\subset\Big(\sigma^{-1}N^+Q^--\bigcup_{\mbox{\scriptsize{$[\tau]\in\mathcal{W}^1$ {\rm with} $[\sigma]\neq[\tau]$ \& $n_{[\sigma]}\leq n_{[\tau]}$}}}N^+\tau^{-1}Q^-\Big)-N^+\sigma^{-1}Q^-\\
  &\subset\sigma^{-1}N^+Q^--\bigcup_{\mbox{\scriptsize{$[\theta]\in\mathcal{W}^1$ {\rm with} $n_{[\sigma]}\leq n_{[\theta]}$}}}N^+\theta^{-1}Q^-\\
  &\subset G_\mathbb{C}-\coprod_{\mbox{\scriptsize{$[\theta]\in\mathcal{W}^1$ {\rm with} $n_{[\sigma]}\leq n_{[\theta]}$}}}N^+\theta^{-1}Q^-.
\end{align*}}This, combined with Theorem \ref{thm-8.3.7}-(3), gives rise to (iii).   
\end{proof}

   Utilizing the notation $O_{[\sigma]}$ in \eqref{eq-8.3.23} we show
\begin{lemma}\label{lem-8.3.25}
   Let $D$ be a dense, domain in $G_\mathbb{C}$. 
   For each $[\sigma]\in\mathcal{W}^1$, the following two items hold$:$
\begin{enumerate}
\item[{\rm (1)}]
   $O_{[\sigma]}\cup D$ be a dense domain in $G_\mathbb{C}$.
\item[{\rm (2)}]
   Suppose that {\rm (s1)} $2\leq n_{[\sigma]}$ and {\rm (s2)} $O_{[\sigma]}-N^+\sigma^{-1}Q^-\subset D$. 
   Then, for a given holomorphic function $f:D\to\mathbb{C}$, there exists a unique holomorphic function $\tilde{f}:O_{[\sigma]}\cup D\to\mathbb{C}$ such that $f=\tilde{f}$ on $D$.
\end{enumerate}   
\end{lemma}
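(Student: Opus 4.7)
The plan is to handle the two assertions separately, both resting on what has already been established.

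For assertion (1): By Proposition \ref{prop-8.3.24}-(i) the set $O_{[\sigma]}$ is open in $G_\mathbb{C}$, and $D$ is open because it is a domain. Hence $O_{[\sigma]}\cup D$ is an open subset of $G_\mathbb{C}$ that contains the dense domain $D$, and Lemma \ref{lem-8.3.19} applied with $M=G_\mathbb{C}$ and $A=O_{[\sigma]}$ immediately yields that $O_{[\sigma]}\cup D$ is itself a dense domain in $G_\mathbb{C}$.

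For assertion (2): Uniqueness of $\tilde{f}$ follows from the theorem of identity, since $D$ is a non-empty open subset of the connected open set $O_{[\sigma]}\cup D$ just obtained in (1). For the existence of $\tilde{f}$, the idea is to extend $f$ across the ``missing'' points $O_{[\sigma]}\setminus D$ by means of Hartogs's continuation theorem (Lemma \ref{lem-8.3.20}). Hypothesis (s2) gives $O_{[\sigma]}\setminus D\subset N^+\sigma^{-1}Q^-$, while Proposition \ref{prop-8.3.24}-(ii) gives $N^+\sigma^{-1}Q^-\subset O_{[\sigma]}\subset\sigma^{-1}N^+Q^-$. Moreover, Lemma \ref{lem-8.3.22}-(3) supplies holomorphic functions $h_1,\dots,h_{n_{[\sigma]}}$ on $\sigma^{-1}N^+Q^-$ with linearly independent differentials cutting out $N^+\sigma^{-1}Q^-$.

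Given $p\in O_{[\sigma]}\setminus D$, I would invoke the holomorphic implicit function theorem to complete $h_1,\dots,h_{n_{[\sigma]}}$ to a holomorphic chart $(W,\varphi)$ of $G_\mathbb{C}$ centered at $p$, shrinking $W$ so that $\varphi(W)$ is a polydisc $\{|z^j|<R\}_{j=1}^N$ of the shape required by Lemma \ref{lem-8.3.20}, with $W\subset O_{[\sigma]}$ and $\varphi(W\cap N^+\sigma^{-1}Q^-)=\{z^1=\dots=z^{n_{[\sigma]}}=0\}$. By (s2) we have $W\setminus(W\cap N^+\sigma^{-1}Q^-)\subset O_{[\sigma]}\setminus N^+\sigma^{-1}Q^-\subset D$, so $f$ is already holomorphic on that complement. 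Since $n_{[\sigma]}\geq 2$ by (s1), Lemma \ref{lem-8.3.20} produces a unique holomorphic extension $\tilde{f}_W:W\to\mathbb{C}$ of $f|_{W\setminus N^+\sigma^{-1}Q^-}$. I would then glue: set $\tilde{f}:=f$ on $D$ and $\tilde{f}:=\tilde{f}_W$ on each such $W$. Consistency on overlaps is ensured by the theorem of identity inside each polydisc $W$, since the two candidate values agree on $W\cap D\supset W\setminus N^+\sigma^{-1}Q^-$, which is dense in $W$ (the analytic set has positive codimension).

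The main obstacle is the local geometric step: ensuring that near each $p\in N^+\sigma^{-1}Q^-\cap O_{[\sigma]}$ one can choose a single holomorphic chart $(W,\varphi)$ of $G_\mathbb{C}$ that simultaneously sits inside $O_{[\sigma]}$, has image a polydisc of the precise form demanded by Lemma \ref{lem-8.3.20}, and sends $N^+\sigma^{-1}Q^-\cap W$ to the coordinate subspace $\{z^1=\dots=z^{n_{[\sigma]}}=0\}$. Once this chart is in hand, the rest of the argument is mechanical; but the construction requires a careful combination of Lemma \ref{lem-8.3.22}-(3) with the holomorphic implicit function theorem and with the openness of $O_{[\sigma]}$ from Proposition \ref{prop-8.3.24}-(i).
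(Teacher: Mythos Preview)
Your proposal is correct and follows essentially the same route as the paper: part (1) via Lemma \ref{lem-8.3.19} and Proposition \ref{prop-8.3.24}-(i); part (2) by locally completing the defining functions $h_1,\dots,h_{n_{[\sigma]}}$ of Lemma \ref{lem-8.3.22}-(3) to a holomorphic chart (the paper phrases this as the inverse mapping theorem), applying the Hartogs lemma \ref{lem-8.3.20} on the resulting polydisc since $n_{[\sigma]}\geq 2$, and gluing the local extensions with $f$ on $D$ via density of the complement of $N^+\sigma^{-1}Q^-$. The ``main obstacle'' you flag is exactly what the paper dispatches in one line with the inverse mapping theorem, so there is no real difficulty there.
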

\begin{proof}
   (1) is a consequence of Lemma \ref{lem-8.3.19} and Proposition \ref{prop-8.3.24}-(i).\par
   
   (2). 
   The uniqueness of $\tilde{f}:O_{[\sigma]}\cup D\to\mathbb{C}$ follows by (1) and the theorem of identity.
   We are going to verify its existence. 
   First, let us establish the following: 
\begin{equation}\label{eq-1}\tag*{\textcircled{1}}
\begin{split}
 & \mbox{For each $x\in O_{[\sigma]}\cup D$, there exist an open neighborhood $P_x$ of $x\in O_{[\sigma]}\cup D$ and a holomorphic}\\
 & \mbox{function $\tilde{f}_x:P_x\to\mathbb{C}$ such that $f=\tilde{f}_x$ on $P_x-N^+\sigma^{-1}Q^-$}.
\end{split}
\end{equation}
   Here we note that (s2) assures $P_x-N^+\sigma^{-1}Q^-\subset D$, so $f$ exists on $P_x-N^+\sigma^{-1}Q^-$.
   Now, fix any $x\in O_{[\sigma]}\cup D$. 
   In either of the cases $x\in D$ and $x\in O_{[\sigma]}-N^+\sigma^{-1}Q^-$, it follows from (s2) that $x\in D$, and hence we deduce \ref{eq-1} by putting 
\[
\begin{array}{ll}
   P_x:=D, & \tilde{f}_x:=f.
\end{array}
\] 
   Let us consider the remaining case 
\[
   x\in N^+\sigma^{-1}Q^-
\]
from now on. 
   By Proposition \ref{prop-8.3.24}-(i), (ii) and Lemma \ref{lem-8.3.22}-(3), there exist holomorphic functions $h_1,\dots,h_{n_{[\sigma]}}:O_{[\sigma]}\to\mathbb{C}$ which satisfy 
\[
\begin{array}{ll}
   \mbox{$dh_1\wedge\cdots\wedge dh_{n_{[\sigma]}}\neq 0$ on $O_{[\sigma]}$}, & N^+\sigma^{-1}Q^-=\{y\in O_{[\sigma]} \,|\, h_1(y)=\cdots=h_{n_{[\sigma]}}(y)=0\}.
\end{array}
\] 
   Then, the inverse mapping theorem enables us to take a holomorphic coordinate neighborhood $(P_x,\psi)$ of $x\in O_{[\sigma]}$ such that (i) $z^j\bigl(\psi(x)\bigr)=0$ for all $1\leq j\leq N=\dim_\mathbb{C}O_{[\sigma]}$, (ii) $\psi$ is a homeomorphism of $P_x$ onto an open subset of $\mathbb{C}^N$ defined by $|z^1|<R,\dots,|z^N|<R$ for some $R>0$ and (iii) $z^i\circ\psi=h_i$ for all $1\leq i\leq n_{[\sigma]}$. 
   Consequently Lemma \ref{lem-8.3.20} and (s1) imply that for the holomorphic function $f:(P_x-N^+\sigma^{-1}Q^-)\to\mathbb{C}$, there exists a unique holomorphic function $\tilde{f}_x:P_x\to\mathbb{C}$ such that $f=\tilde{f}_x$ on $P_x-N^+\sigma^{-1}Q^-$. 
   Thus \ref{eq-1} holds.
   One can construct a holomorphic function $\tilde{f}:O_{[\sigma]}\cup D\to\mathbb{C}$ from \ref{eq-1} and
\begin{equation}\label{eq-2}\tag*{\textcircled{2}}
   \mbox{$\tilde{f}|_{P_x}:=\tilde{f}_x$ for $x\in O_{[\sigma]}\cup D$}.
\end{equation}
   Here, it is necessary to confirm that \ref{eq-2} is well-defined. 
   If $P_x\cap P_y\neq\emptyset$ ($x,y\in O_{[\sigma]}\cup D$), then one can show that
\[
   \mbox{$\tilde{f}_x=\tilde{f}_y$ on $P_x\cap P_y$}
\]
because (a) both $\tilde{f}_x,\tilde{f}_y$ are continuous on $P_x\cap P_y$, (b) $\tilde{f}_x=f=\tilde{f}_y$ on $P_x\cap P_y-N^+\sigma^{-1}Q^-$, and (c) $P_x\cap P_y-N^+\sigma^{-1}Q^-$ is dense in $P_x\cap P_y$ ($\because$ $1\leq n_{[\sigma]}$).
   Accordingly \ref{eq-2} is well-defined.
   For the function $\tilde{f}$ in \ref{eq-2}, we conclude that $f=\tilde{f}$ on $D$.
\end{proof}

   We are in a position to accomplish the aim.
\begin{proof}[Proof of Theorem {\rm \ref{thm-8.3.17}}]
   (i). 
   cf.\ Lemma \ref{lem-8.3.21}.\par

   (ii). 
   Take any holomorphic function $f$ on $O$.
   First, let $k_2:=\min\{n_{[\sigma]}\in\mathbb{N} : [\sigma]\in\mathcal{W}^1,\, 2\leq n_{[\sigma]}\}$.
   For every $[\sigma]\in\mathcal{W}^1$ with $n_{[\sigma]}=k_2$, Proposition \ref{prop-8.3.24}-(iii) implies
\[
   \mbox{$O_{[\sigma]}-N^+\sigma^{-1}Q^-
   \subset\coprod_{\mbox{\scriptsize{$[\eta]\in\mathcal{W}^1$ {\rm with} $n_{[\eta]}\leq k_2-1$}}}N^+\eta^{-1}Q^-
   =\coprod_{\mbox{\scriptsize{$[\theta]\in\mathcal{W}^1$ {\rm with} $n_{[\theta]}\leq 1$}}}N^+\theta^{-1}Q^-
   \stackrel{\eqref{eq-8.3.18}}{=}O$}.   
\] 
   Hence for every $[\sigma]\in\mathcal{W}^1$ with $n_{[\sigma]}=k_2$, the function $f$ can be continued analytically from $O$ to the dense domain $O\cup O_{[\sigma]}$ by virtue of (i), $2\leq k_2=n_{[\sigma]}$ and Lemma \ref{lem-8.3.25}.
   Furthermore, the theorem of identity assures that $f$ can be continued analytically from $O\cup O_{[\sigma]}$ to the dense domain $O_2:=O\cup\big(\bigcup_{\mbox{\scriptsize{$[\sigma]\in\mathcal{W}^1$ {\rm with} $n_{[\sigma]}=k_2$}}}O_{[\sigma]}\big)$.
   Here Proposition \ref{prop-8.3.24}-(ii) and \eqref{eq-8.3.18} yield   
\begin{equation}\label{eq-a}\tag{a}
   \mbox{$\coprod_{\mbox{\scriptsize{$[\theta]\in\mathcal{W}^1$ {\rm with} $n_{[\theta]}\leq k_2$}}}N^+\theta^{-1}Q^-\subset O_2$}.
\end{equation}
   Next, let $k_3:=\min\{n_{[\rho]}\in\mathbb{N} : [\rho]\in\mathcal{W}^1,\, k_2+1\leq n_{[\rho]}\}$. 
   For any $[\rho]\in\mathcal{W}^1$ with $n_{[\rho]}=k_3$, we obtain
\[
   \mbox{$O_{[\rho]}-N^+\rho^{-1}Q^-
   \subset\coprod_{\mbox{\scriptsize{$[\eta]\in\mathcal{W}^1$ {\rm with} $n_{[\eta]}\leq k_3-1$}}}N^+\eta^{-1}Q^-
   =\coprod_{\mbox{\scriptsize{$[\theta]\in\mathcal{W}^1$ {\rm with} $n_{[\theta]}\leq k_2$}}}N^+\theta^{-1}Q^-
   \subset O_2$}   
\] 
from Proposition \ref{prop-8.3.24}-(iii) and \eqref{eq-a}.
   Accordingly, we conclude that $f$ can be continued analytically from $O_2$ to the dense domain $O_3:=O_2\cup\big(\bigcup_{\mbox{\scriptsize{$[\rho]\in\mathcal{W}^1$ {\rm with} $n_{[\rho]}=k_3$}}}O_{[\rho]}\big)$ and $\coprod_{\mbox{\scriptsize{$[\theta]\in\mathcal{W}^1$ {\rm with} $n_{[\theta]}\leq k_3$}}}N^+\theta^{-1}Q^-\subset O_3$ by arguments similar to those stated above and \eqref{eq-a}.
   Now, let $k_4:=\min\{n_{[\varsigma]}\in\mathbb{N} : [\varsigma]\in\mathcal{W}^1,\, k_3+1\leq n_{[\varsigma]}\}$. 
   Then, $f$ can be continued analytically from $O_3$ to the dense domain $O_4:=O_3\cup\big(\bigcup_{\mbox{\scriptsize{$[\varsigma]\in\mathcal{W}^1$ {\rm with} $n_{[\varsigma]}=k_4$}}}O_{[\varsigma]}\big)$ and $\coprod_{\mbox{\scriptsize{$[\theta]\in\mathcal{W}^1$ {\rm with} $n_{[\theta]}\leq k_4$}}}N^+\theta^{-1}Q^-\subset O_4$. 
   By inductive arguments we can get the conclusion.   
\end{proof}

\begin{remark}\label{rem-8.3.26}
   The $O$ in Theorem \ref{thm-8.3.17} is also expressed as  
\[
\begin{array}{r@{\,\,}l}
   O &
   =\coprod_{\mbox{\scriptsize{$[\sigma]\in\mathcal{W}^1$ {\rm with} $n_{[\sigma]}\leq 1$}}}N^+\sigma^{-1}Q^-
   =N^+Q^-\amalg\bigl(\coprod_{\mbox{\scriptsize{$\beta\in\Pi_\triangle-\blacktriangle$}}}N^+w_\beta^{-1}Q^-\bigr)\\
     & 
   =\coprod_{\mbox{\scriptsize{$[\sigma]\in\mathcal{W}^1$ {\rm with} $n_{[\sigma]}\leq 1$}}}\sigma^{-1}U^+_{[\sigma]}Q^-
   =U^+Q^-\amalg\bigl(\coprod_{\mbox{\scriptsize{$\beta\in\Pi_\triangle-\blacktriangle$}}}w_\beta^{-1}U^+_{[w_\beta]}Q^-\bigr).  
\end{array}
\]
   Here $\blacktriangle=\{\gamma\in\triangle(\frak{g}_\mathbb{C},\frak{h}_\mathbb{C}) \,|\, \gamma(T)=0\}$.
   cf.\ Theorem \ref{thm-8.3.7}-(1), Proposition \ref{prop-8.3.2}-(v).   
\end{remark}

   The following lemma will be needed in Chapter \ref{ch-11}:
\begin{lemma}\label{lem-8.3.27}
   For each $\beta\in\Pi_\triangle-\blacktriangle$, the following three items hold$:$
\begin{enumerate}
\item[{\rm (1)}]
   $N^+Q^-\cap w_\beta^{-1}N^+Q^-=\bigl(\exp\bigoplus_{\alpha\in\triangle^+-\{\beta\}}\frak{g}_\alpha\bigr)w_\beta^{-1}\exp(\frak{g}_\beta-\{0\})Q^-$.
\item[{\rm (2)}]
   $N^+Q^-\cap w_\beta^{-1}N^+Q^-$ is a dense domain in $G_\mathbb{C}$.
\item[{\rm (3)}]
   $U^+Q^-\cap w_\beta^{-1}U^+Q^-$ is a dense domain in $G_\mathbb{C}$.    
\end{enumerate}   
\end{lemma}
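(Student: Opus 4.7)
The plan is to prove Part (1) by reducing the analysis to the rank-one $SL(2,\mathbb{C})$ subgroup $S_\beta \subset G_\mathbb{C}$ attached to $\frak{sl}_\beta = \operatorname{span}_\mathbb{C}\{H_\beta^*, E_\beta, E_{-\beta}\}$, and then to derive Parts (2) and (3) from Part (1). Set $\frak{n}^+_\beta := \bigoplus_{\alpha \in \triangle^+ - \{\beta\}} \frak{g}_\alpha$ and $N^+_\beta := \exp \frak{n}^+_\beta$. Since $\beta$ is simple, $\beta$ cannot be written as the sum of two positive roots, so $\frak{n}^+_\beta$ is an ideal of $\frak{n}^+$ and the multiplication map $N^+_\beta \times \exp \frak{g}_\beta \to N^+$ is a biholomorphism; moreover $\zeta([w_\beta])$ stabilizes $\triangle^+ - \{\beta\}$, so $\operatorname{Ad}(w_\beta)$ preserves $\frak{n}^+_\beta$. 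The key rank-one lemma, which I will establish by a direct $2\times 2$ matrix computation inside $S_\beta$, reads: for $X \in \frak{g}_\beta$ one has $w_\beta \exp(X) \in N^+Q^-$ if and only if $X \neq 0$, and in that case there is a Gauss-type factorization $w_\beta \exp(X) = \exp(X')\, h\, \exp(Y)$ with $X' \in \frak{g}_\beta - \{0\}$, $h \in \exp(\mathbb{C} H_\beta^*) \subset L_\mathbb{C}$ and $Y \in \frak{g}_{-\beta}$. The ``only if'' part uses that $w_\beta \notin N^+Q^-$, which follows from the disjointness of Bruhat cells in Theorem \ref{thm-8.3.7}-(3) together with $n_{[w_\beta]} = 1$ from Proposition \ref{prop-8.3.2}-(v.2). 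Since $\beta \in \Pi_\triangle - \blacktriangle$ gives $\frak{g}_{-\beta} \subset \frak{u}^- \subset \frak{q}^-$, both $\exp(Y) \in Q^-$ and $h \in L_\mathbb{C} \subset Q^-$.

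Given the lemma, I write an arbitrary $g \in N^+Q^-$ uniquely as $g = n_\beta \exp(X) q$ with $n_\beta \in N^+_\beta$, $X \in \frak{g}_\beta$, $q \in Q^-$. Because $w_\beta n_\beta w_\beta^{-1} \in N^+_\beta \subset N^+$, the condition $w_\beta g \in N^+Q^-$ reduces to $w_\beta \exp(X) \in N^+Q^-$, which by the lemma is equivalent to $X \neq 0$. This establishes the intermediate identity
\[
N^+Q^- \cap w_\beta^{-1} N^+Q^- = N^+_\beta \exp(\frak{g}_\beta - \{0\}) Q^-.
\]
The Gauss factorization also yields $\exp(X) Q^- = w_\beta^{-1} \exp(X') Q^-$ whenever $X, X'$ are paired by it, and the analogous factorization with $w_\beta$ replaced by $w_\beta^{-1}$ gives the reverse inclusion; hence $\exp(\frak{g}_\beta - \{0\}) Q^- = w_\beta^{-1} \exp(\frak{g}_\beta - \{0\}) Q^-$, and multiplying on the left by $N^+_\beta$ produces the form in the statement and completes Part (1).

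For Part (2), the set on the right in Part (1) is the continuous image of the connected set $N^+_\beta \times (\frak{g}_\beta - \{0\}) \times Q^-$ (the first factor is connected as the exponential image of a complex nilpotent Lie algebra, the second is isomorphic to $\mathbb{C}^\ast$, and the third is connected by Proposition \ref{prop-8.2.1}-(iii)), hence connected. It is open as the intersection of two open sets, non-empty because $\frak{g}_\beta \neq 0$, and dense because the intersection of two open dense subsets of any topological space is dense, where $N^+Q^-$ is open dense in $G_\mathbb{C}$ by Corollary \ref{cor-8.3.16}-(ii) (and left translation by $w_\beta^{-1}$ transports this). For Part (3), Lemma \ref{lem-8.1.11}-(2) combined with $L_\mathbb{C} \subset Q^-$ and $L_\mathbb{C}$ normalizing $U^+$ (Lemma \ref{lem-8.0.1}-(2$'$)) gives $U^+Q^- = N^+Q^-$, so Part (3) is an immediate consequence of Part (2). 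The main obstacle is Part (1): both the rank-one Gauss factorization itself and the bookkeeping required to reconcile the natural parameterization $N^+_\beta \exp(\frak{g}_\beta - \{0\}) Q^-$ with the form $N^+_\beta w_\beta^{-1} \exp(\frak{g}_\beta - \{0\}) Q^-$ in the statement, since $\exp \frak{g}_\beta$ is not normalized by $w_\beta$ and one cannot simply commute $w_\beta^{-1}$ past $\exp(X)$.
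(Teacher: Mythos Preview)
Your proposal is correct and follows essentially the same route as the paper. Both arguments rest on the same three ingredients: (a) the fact that $\operatorname{Ad}(w_\beta)$ preserves $\frak{n}^+_\beta=\bigoplus_{\alpha\in\triangle^+-\{\beta\}}\frak{g}_\alpha$ because $\beta$ is simple, (b) the rank-one Gauss factorization inside $S_\beta$ (the paper records it explicitly as $w_\beta^{-1}\exp(zE_\beta)=\exp((-1/z)E_\beta)\exp(-(\log r+i\theta)H_\beta^*)\exp((1/z)E_{-\beta})$ for $z\neq 0$), and (c) the disjointness of generalized Bruhat cells from Theorem~\ref{thm-8.3.7}-(3) to exclude $X=0$. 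The only organizational difference is that the paper works directly with the parameterization $w_\beta^{-1}N^+Q^-=N^+_\beta\, w_\beta^{-1}\exp(\frak{g}_\beta)Q^-$ and proves both inclusions for the stated form at once, whereas you first derive the intermediate identity $N^+Q^-\cap w_\beta^{-1}N^+Q^-=N^+_\beta\exp(\frak{g}_\beta-\{0\})Q^-$ and then use the Gauss factorization a second time to insert the $w_\beta^{-1}$. One small point: your claim that $g=n_\beta\exp(X)q$ is a \emph{unique} factorization is slightly overstated, since $N^+\cap Q^-=N^+_1\neq\{e\}$; however the $\exp(X)$-component \emph{is} uniquely determined (because $N^+_1\subset N^+_\beta$ when $\beta\notin\blacktriangle$), and that is all your argument actually uses. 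Parts (2) and (3) are handled identically in both proofs.
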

\begin{proof}
   (1). 
   First, let us demonstrate that $\bigl(\exp\bigoplus_{\alpha\in\triangle^+-\{\beta\}}\frak{g}_\alpha\bigr)w_\beta^{-1}\exp(\frak{g}_\beta-\{0\})Q^-\subset N^+Q^-\cap w_\beta^{-1}N^+Q^-$.
   Since $\beta\in\Pi_\triangle$ one has $\zeta([w_\beta])(\triangle^+-\{\beta\})=\triangle^+-\{\beta\}$. 
   Hence 
\[
   \mbox{$\bigl(\exp\bigoplus_{\alpha\in\triangle^+-\{\beta\}}\frak{g}_\alpha\bigr)w_\beta^{-1}
   =w_\beta^{-1}\bigl(\exp\bigoplus_{\alpha\in\triangle^+-\{\beta\}}\frak{g}_{\zeta([w_\beta])\alpha}\bigr)
   =w_\beta^{-1}\bigl(\exp\bigoplus_{\delta\in\triangle^+-\{\beta\}}\frak{g}_\delta\bigr)
   \subset w_\beta^{-1}N^+$}.
\]
   This, together with $\frak{g}_\beta\subset\frak{n}^+$, gives rise to 
\begin{equation}\label{eq-a}\tag{a}
   \mbox{$\bigl(\bigl(\exp\bigoplus_{\alpha\in\triangle^+-\{\beta\}}\frak{g}_\alpha\bigr)w_\beta^{-1}\bigr)\exp(\frak{g}_\beta-\{0\})Q^-\subset(w_\beta^{-1}N^+)N^+Q^-\subset w_\beta^{-1}N^+Q^-$}. 
\end{equation}
   Since $\frak{s}_\beta=\operatorname{span}_\mathbb{C}\{H_\beta^*,E_\beta,E_{-\beta}\}$ is a complex subalgebra of $\frak{g}_\mathbb{C}$ which is isomorphic to $\frak{sl}(2,\mathbb{C})$, the connected Lie subgroup $S_\beta\subset G_\mathbb{C}$ corresponding to $\frak{s}_\beta$ is isomorphic to either $SL(2,\mathbb{C})$ or $SL(2,\mathbb{C})/\mathbb{Z}_2$. 
   Accordingly, for any $z\in\mathbb{C}-\{0\}$ we obtain
\begin{equation}\label{eq-8.3.28}
   w_\beta^{-1}\exp(zE_\beta)
   =\exp\bigl((-1/z)E_\beta\bigr)\exp\bigl(-(\log r+i\theta)H_\beta^*\bigr)\exp\bigl((1/z)E_{-\beta}\bigr)
\end{equation}
from \eqref{eq-8.1.4}, where $z=re^{i\theta}$, $r>0$, $-\pi<\theta\leq\pi$. 
   Then, it turns out that $w_\beta^{-1}\exp(\frak{g}_\beta-\{0\})\subset(\exp\frak{g}_\beta)L_\mathbb{C}U^-\subset N^+Q^-$, and so 
\begin{equation}\label{eq-b}\tag{b}
   \mbox{$\bigl(\exp\bigoplus_{\alpha\in\triangle^+-\{\beta\}}\frak{g}_\alpha\bigr)\bigl(w_\beta^{-1}\exp(\frak{g}_\beta-\{0\})\bigr)Q^-\subset N^+(N^+Q^-)Q^-\subset N^+Q^-$}. 
\end{equation}
   From \eqref{eq-a} and \eqref{eq-b} we conclude that $\bigl(\exp\bigoplus_{\alpha\in\triangle^+-\{\beta\}}\frak{g}_\alpha\bigr)w_\beta^{-1}\exp(\frak{g}_\beta-\{0\})Q^-\subset N^+Q^-\cap w_\beta^{-1}N^+Q^-$. 
   Next, let us show that the converse inclusion also holds. 
   Take an arbitrary $x\in N^+Q^-\cap w_\beta^{-1}N^+Q^-$.
   Proposition \ref{prop-8.3.2}-(i), $\Phi_{[w_\beta]}=\{\beta\}$ and $\Phi_{[w_\beta\kappa]}=\triangle^+-\{\beta\}$ imply that  $N^+=\bigl(\exp\bigoplus_{\delta\in\triangle^+-\{\beta\}}\frak{g}_\delta\bigr)\exp\frak{g}_\beta$, and moreover
\[
   \mbox{$w_\beta^{-1}N^+Q^-=w_\beta^{-1}\bigl(\exp\bigoplus_{\delta\in\triangle^+-\{\beta\}}\frak{g}_\delta\bigr)(\exp\frak{g}_\beta)Q^-=\bigl(\exp\bigoplus_{\alpha\in\triangle^+-\{\beta\}}\frak{g}_\alpha\bigr)w_\beta^{-1}(\exp\frak{g}_\beta)Q^-$}.
\] 
   Hence there exist $n\in\exp\bigoplus_{\alpha\in\triangle^+-\{\beta\}}\frak{g}_\alpha$, $z_1\in\mathbb{C}$ and $q\in Q^-$ satisfying $x=n w_\beta^{-1}\exp(z_1E_\beta)q$.
   Then, we can assert that $x\in\bigl(\exp\bigoplus_{\alpha\in\triangle^+-\{\beta\}}\frak{g}_\alpha\bigr)w_\beta^{-1}\exp(\frak{g}_\beta-\{0\})Q^-$, if 
\begin{equation}\label{eq-c}\tag{c}
   z_1\neq 0.
\end{equation}
   For this reason, the rest of proof is to confirm \eqref{eq-c}. 
   Let us use proof by contradiction.
   Suppose that $z_1=0$. 
   Then, it follows that $x=n w_\beta^{-1}\exp(z_1E_\beta)q=n w_\beta^{-1}q\in N^+w_\beta^{-1}Q^-$, and $x\in N^+Q^-\cap N^+ w_\beta^{-1}Q^-$. 
   This is a contradiction to Theorem \ref{thm-8.3.7}-(3). 
   Therefore \eqref{eq-c} holds.\par
   
   (2). 
   (1) implies that $N^+Q^-\cap w_\beta^{-1}N^+Q^-$ is connected; furthermore, it is a dense domain in $G_\mathbb{C}$ by Corollary \ref{cor-8.3.16}-(ii).\par
   
   (3) is an easy consequence of (2) and $N^+Q^-=U^+Q^-$.
\end{proof}

\chapter{Homogeneous symplectic manifolds}\label{ch-9}
   In this chapter we first study homogeneous symplectic manifolds and afterwards investigate relation between homogeneous symplectic manifolds and adjoint orbits of semisimple Lie groups.

\section[Invariant symplectic forms on homogeneous spaces]{Invariant symplectic forms on homogeneous spaces and skew-symmetric bilinear forms on Lie algebras}\label{sec-9.1}
   Let us establish the following theorem which will play a role in the next section: 
   
\begin{theorem}\label{thm-9.1.1} 
   Let $G$ be a $($real$)$ Lie group which satisfies the second countability axiom, let $H$ be a closed subgroup of $G$, let $\pi$ denote the projection of $G$ onto $G/H$, and let $o:=\pi(e)$. 
   Then, the following two items {\rm (I)} and {\rm (II)} hold$:$
\begin{enumerate}
\item[{\rm (I)}] 
   Suppose the homogeneous space $G/H$ to admit a $G$-invariant symplectic form $\Omega$.  
   Then, there exists a unique skew-symmetric bilinear form $\omega:\frak{g}\times\frak{g}\to\mathbb{R}$ satisfying the following four conditions$:$ 
   \begin{enumerate}
   \item[{\rm (s.1)}] 
      $\omega([X_1,X_2],X_3)+\omega([X_2,X_3],X_1)+\omega([X_3,X_1],X_2)=0$ for all $X_1,X_2,X_3\in\frak{g}$,
   \item[{\rm (s.2)}]
      $\frak{h}=\{Z\in\frak{g} \,|\, \mbox{$\omega(Z,X)=0$ for all $X\in\frak{g}$}\}$,       
   \item[{\rm (s.3)}]
      $\omega\bigl(\operatorname{Ad}z(X),\operatorname{Ad}z(Y)\bigr)=\omega(X,Y)$ for all $z\in H$ and $X,Y\in\frak{g}$,
   \item[{\rm (s.4)}]
      $\omega(X,Y)=\Omega_o\bigl((d\pi)_eX_e,(d\pi)_eY_e\bigr)$ for all $X,Y\in\frak{g}$.
   \end{enumerate}
\item[{\rm (II)}] 
   Suppose that there exists a skew-symmetric bilinear form $\omega:\frak{g}\times\frak{g}\to\mathbb{R}$ satisfying the above three conditions {\rm (s.1)}, {\rm (s.2)} and {\rm (s.3)}. 
   Then, $G/H$ admits a unique $G$-invariant symplectic form $\Omega$ so that $\omega$ is related to $\Omega$ by {\rm (s.4)}.   
\end{enumerate}
   Here $G/H$ is an $n$-dimensional real analytic manifold in view of Theorem {\rm \ref{thm-1.1.2}}, and we identify the real constants with the real-valued constant functions on $G$.   
\end{theorem}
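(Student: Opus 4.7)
The plan is to use Lemma~\ref{lem-1.1.13}---which identifies $\frak{g}/\frak{h}$ with $T_o(G/H)$ via $(d\pi)_e$---together with the $G$-action on $G/H$ to pass back and forth between the global object $\Omega$ and the algebraic datum $\omega$. For part (I) I will simply define $\omega$ by (s.4) and read (s.1)--(s.3) off from properties of $\Omega$. For part (II) I will build $\Omega_o$ from $\omega$, propagate it to other points by the left translations $\tau_g$, and then verify well-definedness, $G$-invariance, analyticity, closedness, and non-degeneracy.

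For part (I), set $\omega(X,Y):=\Omega_o\bigl((d\pi)_eX_e,(d\pi)_eY_e\bigr)$; this is bilinear and skew-symmetric, so (s.4) is automatic and uniqueness of $\omega$ is forced. Lemma~\ref{lem-1.1.13} gives $\ker((d\pi)_e)=\frak{h}$ and surjectivity of $(d\pi)_e$, so non-degeneracy of $\Omega_o$ translates directly into (s.2). For (s.3), fix $z\in H$; the relation $\tau_z\circ\pi=\pi\circ L_z$ of Corollary~\ref{cor-1.1.7}, combined with $\pi\circ R_{z^{-1}}=\pi$ (since $z^{-1}\in H$), upgrades to $\tau_z\circ\pi=\pi\circ(L_z\circ R_{z^{-1}})$, whose differential at $e$ is $(d\tau_z)_o\circ(d\pi)_e=(d\pi)_e\circ\operatorname{Ad}z$; hence $G$-invariance $(\tau_z)^*\Omega=\Omega$ becomes (s.3). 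For (s.1), introduce the fundamental vector field $X^\sharp$ on $G/H$ generated by $X\in\frak{g}$, so that $X^\sharp_o=(d\pi)_eX_e$ and $[X^\sharp,Y^\sharp]=-[X,Y]^\sharp$; $G$-invariance of $\Omega$ yields $\mathcal{L}_{X^\sharp}\Omega=0$, so Cartan's formula
\[
d\Omega(X^\sharp,Y^\sharp,Z^\sharp)=-\sum_{\mathrm{cyc}}\Omega([X^\sharp,Y^\sharp],Z^\sharp),
\]
together with $d\Omega=0$ and (s.4), produces the cocycle identity (s.1).

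For part (II), assume $\omega$ satisfies (s.1)--(s.3). I define $\Omega_o$ on $T_o(G/H)$ by descending $\omega$ through $(d\pi)_e$---well-defined and non-degenerate by (s.2)---and for $p=gH$ I set $\Omega_p(v,w):=\Omega_o\bigl((d\tau_{g^{-1}})_pv,(d\tau_{g^{-1}})_pw\bigr)$; independence of the choice of representative $g$ is precisely (s.3), again via the identification $(d\tau_h)_o\leftrightarrow\operatorname{Ad}h$ for $h\in H$. By construction $\Omega$ is $G$-invariant and pointwise non-degenerate. Analyticity of the coefficients is verified locally using the real analytic cross-sections $\sigma_\alpha:U_\alpha\to G$ of Theorem~\ref{thm-1.1.2}, since on $U_\alpha$ one can write $\Omega_x$ as the pullback $(\tau_{\sigma_\alpha(x)^{-1}})^*\Omega_o$, whose coefficients depend analytically on $x$. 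For closedness, $G$-invariance forces $\mathcal{L}_{X^\sharp}\Omega=0$ for every $X\in\frak{g}$, so Cartan's formula reduces $d\Omega(X^\sharp,Y^\sharp,Z^\sharp)$ at $o$ to the cyclic sum $-\sum_{\mathrm{cyc}}\omega([X,Y],Z)$, which vanishes by (s.1); since fundamental vector fields span the tangent space at $o$ and $d\Omega$ is itself $G$-invariant, $d\Omega=0$ globally. Uniqueness of $\Omega$ follows from (s.4) and $G$-invariance.

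The main obstacle will be handling signs and conventions in the identifications $X^\sharp\leftrightarrow X$, $(d\tau_h)_o\leftrightarrow\operatorname{Ad}h$, and $[X^\sharp,Y^\sharp]=-[X,Y]^\sharp$ consistently in both directions, and upgrading the closedness check in part (II) from $o$ to all of $G/H$ using $G$-invariance of $d\Omega$; the analyticity verification via $\sigma_\alpha$ is routine but is precisely where the local cross-sections from Theorem~\ref{thm-1.1.2} are genuinely needed.
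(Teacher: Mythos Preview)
Your proposal is correct. The overall architecture---define $\omega$ by (s.4) in Part (I), and in Part (II) build $\Omega_o$ from $\omega$ and propagate by $\tau_g$---matches the paper exactly, as does the use of Lemma~\ref{lem-1.1.13} for (s.2) and the local sections $\sigma_\alpha$ for regularity.

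The genuine difference is in how you handle (s.1) and closedness. You work on $G/H$ with fundamental vector fields $X^\sharp$: invariance gives $\mathcal{L}_{X^\sharp}\Omega=0$, and Cartan's formula collapses $d\Omega(X^\sharp,Y^\sharp,Z^\sharp)$ to the cyclic bracket sum, so (s.1) and $d\Omega=0$ become the same identity read at $o$ (then spread by $G$-invariance of $d\Omega$). The paper instead works on $G$: it pulls $\Omega$ back to $\hat\omega=\pi^*\Omega$, observes that $\hat\omega(X,Y)$ is constant for left-invariant $X,Y$, and reads (s.1) off from $d\hat\omega=0$; conversely, in Part (II) it first builds a closed left-invariant $2$-form $\widetilde\omega$ on $G$ from $\omega$ and then shows $\Omega_{ij}=\widetilde\omega_{ij}\circ\sigma$ in adapted bundle coordinates, getting smoothness and $d\Omega=0$ simultaneously from $d\widetilde\omega=0$. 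Your route is a bit more streamlined (no auxiliary form on $G$), while the paper's route is more explicit in coordinates and ties the smoothness and closedness proofs together in one stroke. Both are standard and sound; just be careful, as you note, to keep the sign conventions for $X^\sharp_o=\pm(d\pi)_eX_e$ and $[X^\sharp,Y^\sharp]=\mp[X,Y]^\sharp$ consistent, since they cancel in the end but can cause confusion mid-computation.
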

\begin{proof}
   (I). 
   Let $\Omega$ be a $G$-invariant symplectic form on $G/H$.
   Define a skew-symmetric bilinear form $\omega:\frak{g}\times\frak{g}\to\mathbb{R}$ by 
\begin{equation}\label{eq-9.1.2}
   \mbox{$\omega(X,Y):=\Omega_o\bigl((d\pi)_eX_e,(d\pi)_eY_e\bigr)$ for $X,Y\in\frak{g}$}.
\end{equation} 
   Needless to say, (s.4) holds for this $\omega$.
   For any $z\in H$ and $X,Y\in\frak{g}$, we obtain 
\[
\begin{split}
   \omega\bigl(\operatorname{Ad}z(X),\operatorname{Ad}z(Y)\bigr)
  &\stackrel{\eqref{eq-9.1.2}}{=}\Omega_o\bigl((d\pi)_e(\operatorname{Ad}z(X))_e,(d\pi)_e(\operatorname{Ad}z(Y))_e\bigr)
   =\Omega_o\bigl((d\tau_z)_o((d\pi)_eX_e),(d\tau_z)_o((d\pi)_eY_e)\bigr)\\
  &=\Omega_o\bigl((d\pi)_eX_e,(d\pi)_eY_e\bigr)
   \stackrel{\eqref{eq-9.1.2}}{=}\omega(X,Y)
\end{split}   
\] 
because $\Omega$ is $G$-invariant and $\pi(z)=o$ (see Corollary \ref{cor-1.1.7} for $\tau_z$). 
   Hence (s.3) holds for $\omega$. 
   We are going to confirm that the rest of conditions (s.1) and (s.2) holds for $\omega$.\par
   
   (s.1). 
   Set
\[
   \mbox{$\hat{\omega}_g(u,v):=\Omega_{\pi(g)}\bigl((d\pi)_gu,(d\pi)_gv\bigr)$ for $g\in G$ and $u,v\in T_gG$},
\] 
namely $\hat{\omega}$ is the pullback of $\Omega$ by $\pi:G\to G/H$. 
   Then $\hat{\omega}_g(X_g,Y_g)=\hat{\omega}_e(X_e,Y_e)$ for all $g\in G$ and $X,Y\in\frak{g}$, since $\Omega$ is $G$-invariant. 
   Accordingly we see that for each $X,Y\in\frak{g}$,
\begin{equation}\label{eq-1}\tag*{\textcircled{1}}
   \mbox{the mapping $G\ni g\mapsto\hat{\omega}_g(X_g,Y_g)\in\mathbb{R}$ is the constant function with the value $\hat{\omega}_e(X_e,Y_e)$}.
\end{equation} 
   Identifying the real constants with the real-valued constant functions on $G$, one may assume that
\[
   \mbox{$\omega(X,Y)=\hat{\omega}(X,Y)$ for all $X,Y\in\frak{g}$}
\]
by \eqref{eq-9.1.2}. 
   Hence it suffices to confirm that (s.1) holds for the $\hat{\omega}$.
   Moreover, it follows from $d\Omega=0$ and $\hat{\omega}=\pi^*\Omega$ that $d\hat{\omega}=0$, so that for all $X_1,X_2,X_3\in\frak{g}$,
\[
\begin{split}
   0
  &=(d\hat{\omega})(X_1,X_2,X_3)\\
  &=X_1\bigl(\hat{\omega}(X_2,X_3)\bigr)-X_2\bigl(\hat{\omega}(X_1,X_3)\bigr)+X_3\bigl(\hat{\omega}(X_1,X_2)\bigr)
   -\hat{\omega}([X_1,X_2],X_3)+\hat{\omega}([X_1,X_3],X_2)-\hat{\omega}([X_2,X_3],X_1)\\
  &\stackrel{\ref{eq-1}}{=}-\hat{\omega}([X_1,X_2],X_3)-\hat{\omega}([X_3,X_1],X_2)-\hat{\omega}([X_2,X_3],X_1). 
\end{split}
\]
   Thus (s.1) holds.\par
   
   (s.2).
   Let $\frak{h}_\omega:=\{Z\in\frak{g} \,|\, \mbox{$\omega(Z,X)=0$ for all $X\in\frak{g}$}\}$.
   We want to show $\frak{h}=\frak{h}_\omega$. 
   First, let us show $\frak{h}\subset\frak{h}_\omega$.
   Take any $Z\in\frak{h}$. 
   Then, it is immediate from \eqref{eq-9.1.2} and $(d\pi)_eZ_e=0$ that $\omega(Z,X)=\Omega_o\bigl((d\pi)_eZ_e,(d\pi)_eX_e\bigr)=0$ for all $X\in\frak{g}$. 
   Therefore the inclusion $\frak{h}\subset\frak{h}_\omega$ follows. 
   Next, let us prove that the converse inclusion also holds. 
   For $Y\in\frak{g}$ we suppose that $\omega(Y,X)=0$ for all $X\in\frak{g}$. 
   Then, 
\[
   \Omega_o\bigl((d\pi)_eY_e,(d\pi)_eX_e\bigr)
   \stackrel{\eqref{eq-9.1.2}}{=}\omega(Y,X)
   =0
\]
for all $X\in\frak{g}$. 
   This yields $(d\pi)_eY_e=0$ because $\Omega_o$ is non-degenerate on the vector space $T_o(G/H)$ and the mapping $\frak{g}\ni X\mapsto(d\pi)_eX_e\in T_o(G/H)$ is surjective. 
   By $(d\pi)_eY_e=0$ and Lemma \ref{lem-1.1.13} we conclude $Y\in\frak{h}$, and $\frak{h}_\omega\subset\frak{h}$. 
   Therefore $\frak{h}=\frak{h}_\omega$, and (s.2) holds.
   Thus one can conclude (I) since (s.4) assures the uniqueness of $\omega$.\par

   (II). 
   Now, suppose that a skew-symmetric bilinear form $\omega:\frak{g}\times\frak{g}\to\mathbb{R}$ satisfies the following three conditions: 
\begin{enumerate}
\item[{\rm (s.1)}] 
   $\omega([X_1,X_2],X_3)+\omega([X_2,X_3],X_1)+\omega([X_3,X_1],X_2)=0$ for all $X_1,X_2,X_3\in\frak{g}$,
\item[{\rm (s.2)}]
   $\frak{h}=\{Z\in\frak{g} \,|\, \mbox{$\omega(Z,X)=0$ for all $X\in\frak{g}$}\}$,       
\item[{\rm (s.3)}]
   $\omega\bigl(\operatorname{Ad}z(X),\operatorname{Ad}z(Y)\bigr)=\omega(X,Y)$ for all $z\in H$ and $X,Y\in\frak{g}$.
\end{enumerate} 
   Our aim is to construct a $G$-invariant symplectic form $\Omega$ on $G/H$, on this supposition.   
   First, let us construct a closed differential form $\widetilde{\omega}$ of degree $2$ on $G$ from the $\omega$.
   Let $\{E_r\}_{r=1}^N$ be a real basis of $\frak{g}$, and set 
\[
   \mathcal{C}^\infty(G):=\{\tilde{f}:G\to\mathbb{R} \,|\, \mbox{$\tilde{f}$ is of class $C^\infty$}\}.
\]
   Since $\{(E_r)_g\}_{r=1}^N$ is a real basis of the vector space $T_gG$ for each $g\in G$, an arbitrary vector $U\in\frak{X}(G)$ is uniquely expressed as $U=\sum_{r=1}^N\tilde{f}_rE_r$, $\tilde{f}_r\in\mathcal{C}^\infty(G)$. 
   Then, for $V=\sum_{s=1}^N\tilde{h}_sE_s\in\frak{X}(G)$ ($\tilde{h}_s\in\mathcal{C}^\infty(G)$) we put
\begin{equation}\label{eq-a}\tag{a} 
   \mbox{$\widetilde{\omega}_g(U_g,V_g):=\sum_{r,s=1}^N\tilde{f}_r(g)\tilde{h}_s(g)\omega(E_r,E_s)$ for $g\in G$}.
\end{equation}
   This \eqref{eq-a} is independent of the choice of $\{E_r\}_{r=1}^N$ because $\omega$ is $\mathbb{R}$-bilinear.
   So, $\widetilde{\omega}$ is a differential form of degree $2$ on $G$. 
   From now on, we are going to verify that the $\widetilde{\omega}$ is closed.
   One can express $[E_r,E_s]$ as $[E_r,E_s]=\sum_{\ell=1}^Nc_{rs}^\ell E_\ell$, $c_{rs}^\ell\in\mathbb{R}$. 
   For any $X,Y,Z\in\frak{g}$ there uniquely exist $a^r,b^s,c^t\in\mathbb{R}$ satisfying $X=\sum_{r=1}^Na^rE_r,Y=\sum_{s=1}^Nb^sE_s,Z=\sum_{t=1}^Nc^tE_t$; then all $\widetilde{\omega}(Y,Z)$, $\widetilde{\omega}(X,Z)$ and $\widetilde{\omega}(X,Y)$ are constant functions on $G$ due to \eqref{eq-a}, and moreover, 
\allowdisplaybreaks{
\begin{align*}
    (d\widetilde{\omega})(X,Y,Z)
   &=X\bigl(\widetilde{\omega}(Y,Z)\bigr)-Y\bigl(\widetilde{\omega}(X,Z)\bigr)+Z\bigl(\widetilde{\omega}(X,Y)\bigr)-\widetilde{\omega}([X,Y],Z)+\widetilde{\omega}([X,Z],Y)-\widetilde{\omega}([Y,Z],X)\\
   &=-\widetilde{\omega}([X,Y],Z)-\widetilde{\omega}([Z,X],Y)-\widetilde{\omega}([Y,Z],X)\\
   &\!\stackrel{\eqref{eq-a}}{=}-\sum_{r,s,t,\ell=1}^Na^rb^sc^tc_{rs}^\ell\omega(E_\ell,E_t)-\sum_{r,s,t,\ell=1}^Na^rb^sc^tc_{tr}^\ell\omega(E_\ell,E_s)-\sum_{r,s,t,\ell=1}^Na^rb^sc^tc_{st}^\ell\omega(E_\ell,E_r)\\
   &=-\omega([X,Y],Z)-\omega([Z,X],Y)-\omega([Y,Z],X) \quad\mbox{($\because$ $\omega$ is $\mathbb{R}$-bilinear)}\\
   &\!\!\stackrel{{\rm (s.1)}}{=}0,
\end{align*}}namely $(d\widetilde{\omega})(X,Y,Z)=0$ for all $X,Y,Z\in\frak{g}$. 
   This gives rise to 
\[
   \mbox{$(d\widetilde{\omega})(U,V,W)=0$ for all $U,V,W\in\frak{X}(G)$}
\]
because $d\widetilde{\omega}$ is $\mathcal{C}^\infty(G)$-multilinear and $\frak{X}(G)$ is generated by smooth functions $\tilde{f}:G\to\mathbb{R}$ and elements $X\in\frak{g}$.
   Thus $\widetilde{\omega}$ is closed. 
   Consequently $\widetilde{\omega}$ is a closed, differential form of degree $2$ on $G$.
   Identifying the real constants with the real-valued constant functions on $G$, one may assume that 
\begin{equation}\label{eq-b}\tag{b} 
   \mbox{$\omega(X,Y)=\widetilde{\omega}(X,Y)$ for all $X,Y\in\frak{g}$}   
\end{equation}
by \eqref{eq-a}.
   From now on, let us construct a $G$-invariant symplectic form $\Omega$ on $G/H$.
   For given vectors $u,v\in T_o(G/H)$, we choose $X,Y\in\frak{g}$ so that $u=(d\pi)_eX_e,v=(d\pi)_eY_e$, and set 
\begin{equation}\label{eq-c}\tag{c} 
   \Omega_o(u,v)=\Omega_o\bigl((d\pi)_eX_e,(d\pi)_eY_e\bigr):=\omega(X,Y).
\end{equation}
   Lemma \ref{lem-1.1.13} and (s.2) assure that \eqref{eq-c} is independent of the choice of $X$ and $Y$, and that $\Omega_o$ is non-degenerate. 
   Therefore $\Omega_o$ is a symplectic form on the vector space $T_o(G/H)$. 
   Then, one defines a symplectic form $\Omega_{\pi(g)}$ on $T_{\pi(g)}(G/H)$ ($g\in G$) by  
\begin{equation}\label{eq-d}\tag{d} 
   \mbox{$\Omega_{\pi(g)}(w_1,w_2):=\Omega_o\bigl((d\tau_{g^{-1}})_{\pi(g)}w_1,(d\tau_{g^{-1}})_{\pi(g)}w_2\bigr)$ for $w_1,w_2\in T_{\pi(g)}(G/H)$}.
\end{equation}
   Here we remark that \eqref{eq-d} is well-defined by virtue of (s.3) and \eqref{eq-c}. 
   From \eqref{eq-d} it follows that $\Omega$ is $G$-invariant.
   If we show that $\Omega$ is of class $C^\infty$ and $d\Omega=0$, then one can assert that $\Omega$ is a $G$-invariant symplectic form on $G/H$.\par
   
   (class $C^\infty$). 
   We are going to show that $\Omega$ is of class $C^\infty$.
   For any point $p\in G/H$, there exist coordinate neighborhoods $\bigl(U,(y^1,\dots,y^n)\bigr)$ of class $C^\omega$ of $G/H$ and $\bigl(\pi^{-1}(U),(x^1,\dots,x^n,x^{n+1},\dots,x^N)\bigr)$ of class $C^\omega$ of $G$ such that $p\in U$ and $x^i=y^i\circ\pi$ on $\pi^{-1}(U)$ for all $1\leq i\leq n$; moreover, there exists a real analytic mapping $\sigma:U\to G$ such that $\pi\bigl(\sigma(q)\bigr)=q$ for all $q\in U$ (cf.\ Section \ref{sec-1.3}). 
   Therefore, for any $g\in\pi^{-1}(U)$ and $1\leq i,j\leq n$ we obtain 
\[
\begin{split}
   \Omega_{\pi(g)}\Big(\Big(\dfrac{\partial}{\partial y^i}\Big)_{\!\pi(g)},\Big(\dfrac{\partial}{\partial y^j}\Big)_{\!\pi(g)}\Big)
  &\stackrel{\eqref{eq-d}}{=}\Omega_o\Big((d\tau_{g^{-1}})_{\pi(g)}\Big(\dfrac{\partial}{\partial y^i}\Big)_{\!\pi(g)},(d\tau_{g^{-1}})_{\pi(g)}\Big(\dfrac{\partial}{\partial y^j}\Big)_{\!\pi(g)}\Big)\\
  &=\Omega_o\Big((d\tau_{g^{-1}})_{\pi(g)}\Big((d\pi)_g\Big(\dfrac{\partial}{\partial x^i}\Big)_{\!g}\Big),(d\tau_{g^{-1}})_{\pi(g)}\Big((d\pi)_g\Big(\dfrac{\partial}{\partial x^j}\Big)_{\!g}\Big)\Big) \quad\mbox{($\because$ $x^i=y^i\circ\pi$)}\\
  &=\Omega_o\Big((d\pi)_e\Big((dL_{g^{-1}})_g\Big(\dfrac{\partial}{\partial x^i}\Big)_{\!g}\Big),(d\pi)_e\Big((dL_{g^{-1}})_g\Big(\dfrac{\partial}{\partial x^j}\Big)_{\!g}\Big)\Big).
\end{split}
\]   
   Temporarily we express $(dL_{g^{-1}})_g(\partial/\partial x^k)_g\in T_eG$ as $(dL_{g^{-1}})_g(\partial/\partial x^k)_g=X^k_e$ with $X^k\in\frak{g}$, and then the last term is 
\[
\begin{split}
   \Omega_o\Big((d\pi)_e\Big((dL_{g^{-1}})_g\Big(\dfrac{\partial}{\partial x^i}\Big)_{\!g}\Big),(d\pi)_e\Big((dL_{g^{-1}})_g\Big(\dfrac{\partial}{\partial x^j}\Big)_{\!g}\Big)\Big)
  &=\Omega_o\bigl((d\pi)_eX^i_e,(d\pi)_eX^j_e\bigr)
   \stackrel{\eqref{eq-c}}{=}\omega(X^i,X^j)
   \stackrel{\eqref{eq-b}}{=}\widetilde{\omega}_g(X^i_g,X^j_g)\\
  &=\widetilde{\omega}_g\bigl((dL_g)_eX^i_e,(dL_g)_eX^j_e\bigr)
   =\widetilde{\omega}_g\Big(\Big(\dfrac{\partial}{\partial x^i}\Big)_{\!g},\Big(\dfrac{\partial}{\partial x^j}\Big)_{\!g}\Big).
\end{split} 
\]
   Therefore, it turns out that $\Omega_{ij}\circ\pi=\widetilde{\omega}_{ij}$ on $\pi^{-1}(U)$ ($1\leq i,j\leq n$), where $\Omega_{ij}:=\Omega(\partial/\partial y^i,\partial/\partial y^j)$, $\widetilde{\omega}_{ij}:=\widetilde{\omega}(\partial/\partial x^i,\partial/\partial x^j)$.
   Furthermore, $\pi\circ\sigma=\operatorname{id}$ yields 
\begin{equation}\label{eq-e}\tag{e} 
   \mbox{$\Omega_{ij}=\widetilde{\omega}_{ij}\circ\sigma$ on $U$ ($1\leq i,j\leq n$)}.   
\end{equation}
   This \eqref{eq-e} implies that $\Omega$ is of class $C^\infty$ because $\sigma:U\to\pi^{-1}(U)$ is real analytic and $\widetilde{\omega}_{ij}:\pi^{-1}(U)\to\mathbb{R}$ is smooth.\par

   ($d\Omega=0$). 
   It follows from \eqref{eq-e} and $d\widetilde{\omega}=0$ that for all $1\leq k\leq n$
\[
   \dfrac{\partial\Omega_{ij}}{\partial y^k}
   =\dfrac{\partial(\widetilde{\omega}_{ij}\circ\sigma)}{\partial y^k} 
   =\sum_{s=1}^N\dfrac{\partial\widetilde{\omega}_{ij}}{\partial x^s}\dfrac{\partial(x^s\circ\sigma)}{\partial y^k}
   =0,
\]
and thus $d\Omega=0$ holds.\par

   We have proven that the $\Omega$ in \eqref{eq-d} is a $G$-invariant symplectic form on $G/H$. 
   From \eqref{eq-c} it is natural that $\omega$ is related to this $\Omega$ by (s.4). 
   Now, the uniqueness of $\Omega$ follows from (s.4), $G$-invariability and Lemma \ref{lem-1.1.13}. 
   Hence we complete the proof of Theorem \ref{thm-9.1.1}.
\end{proof}

   Let $\frak{g}$ be a real Lie algebra. 
   Suppose that $\omega$ is a skew-symmetric bilinear form $\omega:\frak{g}\times\frak{g}\to\mathbb{R}$ satisfying 
\begin{enumerate}
\item[{\rm (s.1)}] 
   $\omega([X_1,X_2],X_3)+\omega([X_2,X_3],X_1)+\omega([X_3,X_1],X_2)=0$ for all $X_1,X_2,X_3\in\frak{g}$.
\end{enumerate}
   In this setting, one can get a subalgebra $\frak{h}_\omega\subset\frak{g}$ by putting
\begin{equation}\label{eq-9.1.3}
   \frak{h}_\omega:=\{Z\in\frak{g} \,|\, \mbox{$\omega(Z,X)=0$ for all $X\in\frak{g}$}\},
\end{equation}
and deduce the following proposition from the proof of Theorem 2 in Chu \cite[p.149]{Chu}:

\begin{proposition}\label{prop-9.1.4}
   Let $G$ be a simply connected Lie group with the Lie algebra $\frak{g}$, and let $H_\omega$ be the connected Lie subgroup of $G$ corresponding to the subalgebra  $\frak{h}_\omega\subset\frak{g}$ in \eqref{eq-9.1.3}.
   Then, 
\begin{enumerate}
\item[{\rm (i)}]
   $H_\omega$ is a connected, closed subgroup of $G$,
\item[{\rm (ii)}]
   $\omega\bigl(\operatorname{Ad}z(X),\operatorname{Ad}z(Y)\bigr)=\omega(X,Y)$ for all $z\in H_\omega$ and $X,Y\in\frak{g}$,
\item[{\rm (iii)}]
   $G/H_\omega$ is a simply connected homogeneous space, and there exists a unique $G$-invariant symplectic form $\Omega$ on $G/H_\omega$ such that $\omega(X,Y)=\Omega_{\pi(e)}\bigl((d\pi)_eX_e,(d\pi)_eY_e\bigr)$ for all $X,Y\in\frak{g}$.
\end{enumerate} 
   Here $\pi$ is the projection of $G$ onto $G/H_\omega$, and we identify the real constants with the real-valued constant functions on $G$.
\end{proposition}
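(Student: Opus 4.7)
I would establish the three assertions in the order (ii), (i), (iii). Item (ii) is essentially a direct computation from the cocycle identity (s.1): specialising it to $(X_1,X_2,X_3)=(Z,X,Y)$ with $Z\in\frak{h}_\omega$ annihilates the middle term $\omega([X,Y],Z)$ by definition of $\frak{h}_\omega$, and rearranging the remainder by skew-symmetry yields
\[
\omega(\operatorname{ad}Z(X),Y)+\omega(X,\operatorname{ad}Z(Y))=0,
\]
so $\operatorname{Ad}(\exp Z)=\exp(\operatorname{ad}Z)$ preserves $\omega$; connectedness of $H_\omega$, which is generated by $\exp\frak{h}_\omega$, then propagates the invariance to the whole subgroup, giving (ii).

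\textbf{Closedness via a central extension (the main obstacle).} For (i), my plan is a central-extension trick. Form the Lie algebra $\tilde{\frak{g}}:=\frak{g}\oplus\mathbb{R}\xi$ with bracket $[(X,s),(Y,t)]:=([X,Y],\omega(X,Y))$---condition (s.1) is exactly the Jacobi identity for this bracket---and let $\tilde{G}$ be the simply connected Lie group integrating $\tilde{\frak{g}}$. Since $\tilde{G}$ is simply connected, the Lie-algebra projection $\tilde{\frak{g}}\to\frak{g}$ lifts to a Lie group homomorphism $p\colon\tilde{G}\to G$, which is surjective because $G$ is connected; moreover the long exact homotopy sequence of the fibration $\ker p\hookrightarrow\tilde{G}\twoheadrightarrow G$, combined with $\pi_1(\tilde{G})=\pi_1(G)=0$, forces $\pi_0(\ker p)=0$, so $\ker p$ is connected and hence equals the central one-parameter subgroup $Z:=\exp(\mathbb{R}\xi)$. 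Now consider the coadjoint representation of $\tilde{G}$ on $\tilde{\frak{g}}^*$ and the functional $\xi_0\in\tilde{\frak{g}}^*$ defined by $\xi_0|_{\frak{g}}=0$ and $\xi_0(\xi)=1$. The computation
\[
\bigl(\operatorname{ad}^*(W,s)\xi_0\bigr)(Y,t)=-\xi_0\bigl([W,Y],\omega(W,Y)\bigr)=-\omega(W,Y)
\]
pinpoints the infinitesimal stabiliser as exactly $\frak{h}_\omega\oplus\mathbb{R}\xi$, so the identity component $S:=(\tilde{G}_{\xi_0})_{0}$ is a \emph{closed} connected Lie subgroup of $\tilde{G}$ with that Lie algebra. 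Because $\operatorname{ad}\xi=0$, the central subgroup $Z$ acts trivially on all of $\tilde{\frak{g}}^*$, whence $Z\subset S$; consequently $S$ is $\ker p$-saturated and $p^{-1}(H_\omega)=S$. Since $p$ is a quotient map of Lie groups and $S$ is closed in $\tilde{G}$, the image $H_\omega=p(S)$ is closed in $G$, establishing (i).

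\textbf{Completing (iii).} Once (i) and (ii) are in hand, $\omega$ fulfils all three hypotheses (s.1)--(s.3) of Theorem~\ref{thm-9.1.1}(II) applied to the closed subgroup $H=H_\omega$, which yields the unique $G$-invariant symplectic form $\Omega$ on $G/H_\omega$ related to $\omega$ by (s.4). Simple connectedness of $G/H_\omega$ then falls out instantly from the long exact homotopy sequence of the fibration $H_\omega\hookrightarrow G\to G/H_\omega$: since $\pi_1(G)=0$ and $H_\omega$ is connected (so $\pi_0(H_\omega)=0$), we obtain $\pi_1(G/H_\omega)\cong\pi_0(H_\omega)=0$. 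The whole argument is concentrated at Step~2: the delicate verifications there---that $\ker p$ is connected, that the infinitesimal stabiliser of $\xi_0$ is no larger than $\frak{h}_\omega\oplus\mathbb{R}\xi$, and that the central $Z$ really sits inside $S$---are where the work lies; everything else is either a consequence of the cocycle identity or an invocation of the earlier results in this chapter.
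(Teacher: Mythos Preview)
Your proof is correct, and parts (ii) and (iii) match the paper's almost verbatim. For part (i), however, you take a genuinely different route. The paper (following Chu) avoids any auxiliary group: it builds the left-invariant closed $2$-form $\widetilde{\omega}$ on $G$, defines a Lie-algebra homomorphism $\phi_*:\frak{g}\to\frak{gl}(\frak{g}^*)\ltimes\frak{g}^*$ by $\phi_*(X)=(L_X,\imath(X)\widetilde{\omega})$, integrates it (using simple connectedness of $G$) to an affine action of $G$ on $\frak{g}^*$, and then checks that the isotropy subgroup at $0\in\frak{g}^*$ has Lie algebra exactly $\frak{h}_\omega$, so $H_\omega$ is its identity component and hence closed. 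Your argument instead passes to the simply connected central extension $\tilde{G}$ determined by the cocycle $\omega$ and realises $H_\omega$ as the image under the covering $p:\tilde{G}\to G$ of the (saturated, closed) identity component of a coadjoint stabiliser in $\tilde{G}$. The two constructions are secretly the same representation in different guises---your coadjoint action of $\tilde{G}$ on $\tilde{\frak{g}}^*$, restricted to the affine hyperplane $\{\xi_0+\frak{g}^*\}$, \emph{is} the paper's affine action---but your packaging buys a cleaner conceptual link to Kirillov--Kostant--Souriau theory, while the paper's version has the virtue of never leaving $G$ and not needing the auxiliary homotopy argument that $\ker p$ is connected.
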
 
\begin{proof}
   (i). 
   It is enough to prove that $H_\omega$ is closed in $G$.
   Since (s.1) holds for the $\omega$, one can define a closed differential form $\widetilde{\omega}$ of degree $2$ on $G$ by a similar way to \eqref{eq-a} in the proof of Theorem \ref{thm-9.1.1}-(II).
   Then one can assert that 
\begin{equation}\label{eq-1}\tag*{\textcircled{1}}
   \mbox{$\omega(X,Y)=\widetilde{\omega}(X,Y)$ for all $X,Y\in\frak{g}$}
\end{equation} 
where we identify the real constants with the real-valued constant functions on $G$. 
   For the real vector space $\frak{g}^*$ of left invariant differential forms of degree $1$ on $G$,\footnote{Remark.\ $\dim_\mathbb{R}\frak{g}^*=\dim_\mathbb{R}\frak{g}<\infty$.} the group of affine transformations of the vector space $\frak{g}^*$ is $GL(\frak{g}^*)\ltimes\frak{g}^*$ (semidirect). 
   Moreover, its Lie algebra is $\frak{gl}(\frak{g}^*)\ltimes\frak{g}^*$ and the exponential mapping $\exp:\frak{gl}(\frak{g}^*)\ltimes\frak{g}^*\to GL(\frak{g}^*)\ltimes\frak{g}^*$ is expressed as 
\begin{equation}\label{eq-2}\tag*{\textcircled{2}}
   \mbox{$\bigl(\exp(B,\eta)\bigr)(\xi)=(\exp B)(\xi)+\bigl(\sum_{n=1}^\infty(1/n!)B^{n-1}\bigr)(\eta)$},
\end{equation} 
where $(B,\eta)\in\frak{gl}(\frak{g}^*)\ltimes\frak{g}^*$ and $\xi\in\frak{g}^*$.
   Besides, the bracket product of Lie algebra $\frak{gl}(\frak{g}^*)\ltimes\frak{g}^*$ is expressed as\footnote{e.g.\ \begin{CJK}{UTF8}{min}命題 5.8.2 in 杉浦\end{CJK} \cite[p.406]{Su2}.} 
\begin{equation}\label{eq-3}\tag*{\textcircled{3}}
   \big[(B_1,\eta_1),(B_2,\eta_2)\big]=\bigl([B_1,B_2],B_1(\eta_2)-B_2(\eta_1)\bigr).
\end{equation} 
   Now, for any $X,Y\in\frak{g}$ it follows from \ref{eq-1} that $\widetilde{\omega}(X,Y)$ is a real-valued constant function on $G$. 
   Accordingly one can define a mapping $\phi_*:\frak{g}\to\frak{gl}(\frak{g}^*)\ltimes\frak{g}^*$ by 
\begin{equation}\label{eq-4}\tag*{\textcircled{4}} 
   \mbox{$\phi_*(X):=(L_X,\imath(X)\widetilde{\omega})$ for $X\in\frak{g}$}, 
\end{equation} 
where $L_X$ and $\imath(X)\widetilde{\omega}$ stand for the Lie derivative with respect to the vector field $X$ and the interior product of $\widetilde{\omega}$ with $X$, respectively.
   Here for any $X,Y\in\frak{g}$ one has
\[
\begin{split}
   L_Y\bigl(\imath(X)\widetilde{\omega}\bigr)
  &=(d\circ\imath(Y)+\imath(Y)\circ d)\bigl(\imath(X)\widetilde{\omega}\bigr)
   =\imath(Y)\bigl(d(\imath(X)\widetilde{\omega})\bigr) \quad\mbox{($\because$ $\imath(Y)\bigl(\imath(X)\widetilde{\omega}\bigr)=\widetilde{\omega}(X,Y)$ is constant)}\\
  &=\imath(Y)\bigl( (L_X-\imath(X)\circ d)\widetilde{\omega} \bigr)
   =\imath(Y)\bigl(L_X\widetilde{\omega}\bigr) \quad\mbox{($\because$ $d\widetilde{\omega}=0$)},
\end{split} 
\] 
since $L_W=d\circ\imath(W)+\imath(W)\circ d$ for all $W\in\frak{X}(G)$.
   This shows 
\begin{equation}\label{eq-5}\tag*{\textcircled{5}} 
   \mbox{$L_Y\bigl(\imath(X)\widetilde{\omega}\bigr)=\imath(Y)\bigl(L_X\widetilde{\omega}\bigr)$ for all $X,Y\in\frak{g}$}. 
\end{equation} 
   From now on, let us confirm that the mapping $\phi_*:\frak{g}\to\frak{gl}(\frak{g}^*)\ltimes\frak{g}^*$ in \ref{eq-4} is a Lie algebra homomorphism. 
   It is obvious that $\phi_*:X\mapsto(L_X,\imath(X)\widetilde{\omega})$ is linear. 
   For any $X,Y\in\frak{g}$, we obtain 
\begin{multline*}
   [\phi_*(X),\phi_*(Y)]
   \stackrel{\ref{eq-4}}{=}\big[(L_X,\imath(X)\widetilde{\omega}),(L_Y,\imath(Y)\widetilde{\omega})\big]
   \stackrel{\ref{eq-3}}{=}\bigl([L_X,L_Y],L_X(\imath(Y)\widetilde{\omega})-L_Y(\imath(X)\widetilde{\omega})\bigr)\\
   =\bigl(L_{[X,Y]},L_X(\imath(Y)\widetilde{\omega})-L_Y(\imath(X)\widetilde{\omega})\bigr)
   \stackrel{\ref{eq-5}}{=}\bigl(L_{[X,Y]},L_X(\imath(Y)\widetilde{\omega})-\imath(Y)(L_X\widetilde{\omega})\bigr)
   =\bigl(L_{[X,Y]},\imath([X,Y])\widetilde{\omega}\bigr)
   \stackrel{\ref{eq-4}}{=}\phi_*([X,Y]).
\end{multline*}   
   Thus $\phi_*:\frak{g}\to\frak{gl}(\frak{g}^*)\ltimes\frak{g}^*$, $X\mapsto(L_X,\imath(X)\widetilde{\omega})$, is a Lie algebra homomorphism. 
   Since $\phi_*:\frak{g}\to\frak{gl}(\frak{g}^*)\ltimes\frak{g}^*$ is a homomorphism and $G$ is a simply connected, there uniquely exists a Lie group homomorphism $\phi$ of $G$ into the identity component of $GL(\frak{g}^*)\ltimes\frak{g}^*$ such that its differential homomorphism accords with $\phi_*$. 
   Then one can take a real analytic action of $G$ on $\frak{g}^*$,  
\[
   G\times\frak{g}^*\ni(g,\eta)\mapsto\phi(g)(\eta)\in\frak{g}^*,
\] 
and take the isotropy subgroup $H$ of $G$ at $0\in\frak{g}^*$ into consideration. 
   Needless to say, $H=\{g\in G \,|\, \phi(g)(0)=0\}$ is a closed subgroup of $G$. 
   If 
\begin{equation}\label{eq-6}\tag*{\textcircled{6}} 
   \frak{h}_\omega=\operatorname{Lie}(H),  
\end{equation} 
then \ref{eq-6} implies that $H_\omega$ coincides with the identity component of $H$, so that $H_\omega$ is closed in $H$; and therefore $H_\omega$ is closed in $G$. 
   For this reason, the rest of proof is to demonstrate \ref{eq-6}. 
   For any $Z\in\frak{h}_\omega$ and $t\in\mathbb{R}$ we see that  
\[ 
   \phi(\exp tZ)(0)
   =\bigl(\exp t\phi_*(Z)\bigr)(0)
   \stackrel{\ref{eq-4}}{=}\bigl(\exp t(L_Z,\imath(Z)\widetilde{\omega})\bigr)(0)
   \stackrel{\ref{eq-2}}{=}(\exp tL_Z)(0)+\sum_{n=1}^\infty\dfrac{1}{n!}(tL_Z)^{n-1}(\imath(tZ)\widetilde{\omega})
   =0
\] 
because $\imath(tZ)\widetilde{\omega}=0$ comes from $tZ\in\frak{h}_\omega$, \eqref{eq-9.1.3} and \ref{eq-1}.
   Hence $Z\in\operatorname{Lie}(H)$, and so $\frak{h}_\omega\subset\operatorname{Lie}(H)$. 
   Let us show that the converse inclusion also holds.
   For any $A\in\operatorname{Lie}(H)$ and $t\in\mathbb{R}$, one has
\[
\begin{split}
   0
  &=\phi(\exp tA)(0) \quad\mbox{($\because$ $tA\in\operatorname{Lie}(H)$)}\\
  &=\bigl(\exp t\phi_*(A)\bigr)(0)
   \stackrel{\ref{eq-4},\, \ref{eq-2}}{=}\sum_{n=1}^\infty\dfrac{1}{n!}(tL_A)^{n-1}(\imath(tA)\widetilde{\omega})
   =\sum_{n=1}^\infty\dfrac{t^n}{n!}(L_A)^{n-1}(\imath(A)\widetilde{\omega}).
\end{split}
\]
   Differentiating this equation at $t=0$ we obtain $0=\imath(A)\widetilde{\omega}$, and therefore $A\in\frak{h}_\omega$ due to \eqref{eq-9.1.3} and \ref{eq-1}. 
   Hence $\operatorname{Lie}(H)\subset\frak{h}_\omega$ holds.
   This completes the proof of \ref{eq-6}.     
   
   (ii).
   Since $\omega$ is skew-symmetric, it follows from (s.1) and \eqref{eq-9.1.3} that $\omega\bigl([Z,X],Y\bigr)+\omega\bigl(X,[Z,Y]\bigr)=0$ for all $Z\in\frak{h}_\omega$, $X,Y\in\frak{g}$.
   Therefore (ii) holds because $H_\omega$ is connected.\par

   (iii). 
   $G/H_\omega$ is a simply connected homogeneous space by (i) and $G$ being simply connected.
   Hence we can conclude (iii) by Theorem \ref{thm-9.1.1}-(II) together with (s.1), \eqref{eq-9.1.3} and (ii).
\end{proof}

\section{Homogeneous symplectic manifolds of semisimple Lie groups}\label{sec-9.2}
   We want to first show
\begin{lemma}\label{lem-9.2.1}
   Let $\frak{g}$ be a real semisimple Lie algebra, and let $\omega$ be a skew-symmetric bilinear form $\omega:\frak{g}\times\frak{g}\to\mathbb{R}$ satisfying 
\begin{enumerate}
\item[{\rm (s.1)}] 
   $\omega([X_1,X_2],X_3)+\omega([X_2,X_3],X_1)+\omega([X_3,X_1],X_2)=0$ for all $X_1,X_2,X_3\in\frak{g}$.
\end{enumerate}
   Then, there exists a unique $S\in\frak{g}$ such that 
\[
\begin{array}{ll}
   \mbox{{\rm (1)} $\omega(X,Y)=B_\frak{g}(S,[X,Y])$ for all $X,Y\in\frak{g}$},
   & \mbox{{\rm (2)} $\frak{c}_\frak{g}(S)=\{Z\in\frak{g} \,|\, \mbox{$\omega(Z,X)=0$ for all $X\in\frak{g}$}\}$}.
\end{array}
\]
   Here $B_\frak{g}$ is the Killing form of $\frak{g}$ and $\frak{c}_\frak{g}(S)=\{Y\in\frak{g}\,|\,\operatorname{ad}S(Y)=0\}$.  
\end{lemma}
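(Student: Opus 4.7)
The plan is to recognize condition (s.1) as the Chevalley--Eilenberg $2$-cocycle condition for $\omega\in\Lambda^2\frak{g}^*$ (with trivial coefficients) and then exploit two structural features of the semisimple Lie algebra $\frak{g}$: the non-degeneracy of the Killing form $B_\frak{g}$ and the identity $[\frak{g},\frak{g}]=\frak{g}$. The decisive input is Whitehead's second lemma, $H^2(\frak{g},\mathbb{R})=0$, which guarantees that every such cocycle is a coboundary; explicitly, there exists $f\in\frak{g}^*$ with $\omega(X,Y)=f([X,Y])$ for all $X,Y\in\frak{g}$. (Equivalently, defining $\psi\colon\frak{g}\to\frak{g}^*$ by $\psi(X)(Y)=\omega(X,Y)$, the condition (s.1) translates via skew-symmetry into the statement that $\psi$ is a $1$-cocycle for the coadjoint representation, and Whitehead's first lemma $H^1(\frak{g},\frak{g}^*)=0$ makes it a coboundary.)

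Once the potential $f$ is produced, non-degeneracy of $B_\frak{g}$ furnishes a unique $S\in\frak{g}$ with $f(W)=B_\frak{g}(S,W)$, so that $\omega(X,Y)=B_\frak{g}(S,[X,Y])$, i.e., (1) holds. Property (2) is then a formal consequence of the $\mathrm{ad}$-invariance of $B_\frak{g}$: since $B_\frak{g}(S,[Z,X])=B_\frak{g}([S,Z],X)$, the condition that $\omega(Z,X)=0$ for every $X\in\frak{g}$ is equivalent, by non-degeneracy, to $[S,Z]=0$, that is, $Z\in\frak{c}_\frak{g}(S)$. Uniqueness is immediate as well: if $S'$ also satisfies (1), then $B_\frak{g}(S-S',[\frak{g},\frak{g}])=0$; because $\frak{g}$ is semisimple one has $[\frak{g},\frak{g}]=\frak{g}$, and non-degeneracy forces $S=S'$.

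The main obstacle is the existence assertion $H^2(\frak{g},\mathbb{R})=0$. If one prefers a self-contained derivation rather than quoting Whitehead, the natural route is to reduce to simple ideals. Writing $\frak{g}=\bigoplus_j\frak{g}_j$ with each $\frak{g}_j$ simple, an application of (s.1) to triples $(X_j,Y_j,Z_k)$ with $X_j,Y_j\in\frak{g}_j$ and $Z_k\in\frak{g}_k$ ($j\neq k$) yields $\omega(\frak{g}_j,\frak{g}_k)=0$, reducing the problem to the simple case. In the simple case one defines the $B_\frak{g}$-skew operator $T\colon\frak{g}\to\frak{g}$ by $B_\frak{g}(T(X),Y)=\omega(X,Y)$, rewrites (s.1) in terms of $T$ using invariance of $B_\frak{g}$, and carries out a Casimir-type averaging with respect to $B_\frak{g}$-dual bases $\{e_i\},\{e^i\}$, together with the fact that the Casimir acts as a nonzero scalar on the irreducible adjoint representation, to produce the required $S\in\frak{g}$. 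Everything outside this Casimir (or cohomological) step is essentially formal book-keeping with the Killing form and the bracket.
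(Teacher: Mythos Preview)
Your proof is correct and follows essentially the same line as the paper's: recognize (s.1) as the $2$-cocycle condition, invoke Whitehead's lemma $H^2(\frak{g},\mathbb{R})=0$ to obtain a linear functional $f$ with $\omega(X,Y)=f([X,Y])$, represent $f$ by $B_\frak{g}(S,\,\cdot\,)$ via non-degeneracy, and then read off (2) and uniqueness from $\operatorname{ad}$-invariance of $B_\frak{g}$ together with $[\frak{g},\frak{g}]=\frak{g}$. The extra material you sketch---reduction to simple ideals and a Casimir-style argument to bypass quoting Whitehead---is not in the paper, which simply cites the Whitehead lemma directly.
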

\begin{proof}
   (Uniqueness). 
   The uniqueness of $S$ follows by (1), $\frak{g}=[\frak{g},\frak{g}]$ and $B_\frak{g}$ being non-degenerate.\par
   
   (Existence). 
   Let us confirm that there exists an $S\in\frak{g}$ satisfying the conditions (1) and (2).
   Consider the cohomology group $H^k(\frak{g})=Z^k(\frak{g})/B^k(\frak{g})$ for the trivial representation of $\frak{g}$ on the vector space $\mathbb{R}$.
   On the one hand; (s.1) implies that $\omega\in Z^2(\frak{g})$. 
   On the other hand; by the Whitehead lemma one knows $\dim_\mathbb{R}H^1(\frak{g})=\dim_\mathbb{R}H^2(\frak{g})=0$, since $\frak{g}$ is real semisimple.
   Hence there exists a unique linear mapping $\alpha:\frak{g}\to\mathbb{R}$ such that 
\[
   \mbox{$\alpha\bigl([X,Y]\bigr)=\omega(X,Y)$ for all $X,Y\in\frak{g}$}.
\]   
   Furthermore, there exists a unique $S\in\frak{g}$ such that $\alpha(V)=B_\frak{g}(S,V)$ for all $V\in\frak{g}$ because $B_\frak{g}$ is non-degenerate. 
   Then, this $S$ satisfies (1). 
   By (1) we deduce that  
\[
   \omega(Z,X)
   =B_\frak{g}\bigl(S,[Z,X]\bigr)
   =B_\frak{g}\bigl(\operatorname{ad}S(Z),X\bigr)
\]
for all $X,Z\in\frak{g}$. 
   This implies that $\omega(Z,X)=0$ for all $X\in\frak{g}$ if and only if $\operatorname{ad}S(Z)=0$.
   Therefore $S$ satisfies (2) also.  
\end{proof}  

   From Theorem \ref{thm-9.1.1} and Lemma \ref{lem-9.2.1} we conclude  
\begin{proposition}[{cf.\ Matsushima \cite{Ma}\footnote{Remark.\ Th\'{e}or\`{e}me 1 in Matsushima \cite[p.54]{Ma} and its proof enable one to make a more excellent assertion.}}]\label{prop-9.2.2} 
   Let $G$ be a real semisimple Lie group which satisfies the second countability axiom, let $H$ be a closed subgroup of $G$, let $\pi$ denote the projection of $G$ onto $G/H$, and let $o:=\pi(e)$. 
   Suppose that the homogeneous space $G/H$ admits a $G$-invariant symplectic form $\Omega$. 
   Then, there exists a unique $S\in\frak{g}$ such that 
\begin{enumerate}
\item[{\rm (i)}]
   $B_\frak{g}(S,[X,Y])=\Omega_o\bigl((d\pi)_eX_e,(d\pi)_eY_e\bigr)$ for all $X,Y\in\frak{g}$,
\item[{\rm (ii)}] 
   $C_G(S)_0\subset H\subset C_G(S)$.
\end{enumerate}
   Here $G/H$ is a real analytic manifold in view of Theorem {\rm \ref{thm-1.1.2}}, $C_G(S)_0$ is the identity component of $C_G(S)=\{g\in G \,|\, \operatorname{Ad}g(S)=S\}$, and we identify the real constants with the real-valued constant functions on $G$.
\end{proposition}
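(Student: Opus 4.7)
The plan is to combine Theorem \ref{thm-9.1.1} with Lemma \ref{lem-9.2.1} to transfer the problem to an algebraic one on $\frak{g}$, and then extract assertion (ii) from the $\operatorname{Ad}$-invariance of the Killing form together with the equality $\frak{g}=[\frak{g},\frak{g}]$.

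First, Theorem \ref{thm-9.1.1}-(I) applied to the $G$-invariant symplectic form $\Omega$ furnishes a unique skew-symmetric bilinear form $\omega:\frak{g}\times\frak{g}\to\mathbb{R}$ satisfying (s.1)--(s.4). Since $\frak{g}$ is semisimple, Lemma \ref{lem-9.2.1} applied to this $\omega$ produces a unique $S\in\frak{g}$ with $\omega(X,Y)=B_\frak{g}(S,[X,Y])$ for all $X,Y\in\frak{g}$ and $\frak{c}_\frak{g}(S)=\{Z\in\frak{g}\,|\,\omega(Z,X)=0\text{ for all }X\}$. Combining the first identity with (s.4) gives assertion (i) at once, and the uniqueness statement in the proposition follows from that of $\omega$ (Theorem \ref{thm-9.1.1}-(I)) together with the uniqueness in Lemma \ref{lem-9.2.1}.

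For the inclusion $C_G(S)_0\subset H$ in (ii), I would combine (s.2) with the second conclusion of Lemma \ref{lem-9.2.1} to get
$$\operatorname{Lie}(H)=\frak{h}=\{Z\in\frak{g}\,|\,\omega(Z,X)=0\text{ for all }X\in\frak{g}\}=\frak{c}_\frak{g}(S)=\operatorname{Lie}\bigl(C_G(S)\bigr).$$
Since both $H$ and $C_G(S)$ are closed subgroups of $G$ sharing the same Lie algebra, their identity components coincide, yielding $C_G(S)_0=H_0\subset H$.

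The main point requiring care is the remaining inclusion $H\subset C_G(S)$. For this, I would fix $z\in H$ and use (s.3), identity (i), and the $\operatorname{Ad}$-invariance of $B_\frak{g}$ to compute, for arbitrary $X,Y\in\frak{g}$,
$$B_\frak{g}(S,[X,Y])=\omega(X,Y)=\omega\bigl(\operatorname{Ad}z(X),\operatorname{Ad}z(Y)\bigr)=B_\frak{g}\bigl(S,\operatorname{Ad}z[X,Y]\bigr)=B_\frak{g}\bigl(\operatorname{Ad}z^{-1}(S),[X,Y]\bigr).$$
Hence $B_\frak{g}\bigl(\operatorname{Ad}z^{-1}(S)-S,[X,Y]\bigr)=0$ for all $X,Y\in\frak{g}$. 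The semisimplicity of $\frak{g}$ then enters decisively in two ways: it gives $\frak{g}=[\frak{g},\frak{g}]$, so that the above vanishing extends to all of $\frak{g}$ in the second slot, and it ensures that $B_\frak{g}$ is non-degenerate, so we may conclude $\operatorname{Ad}z^{-1}(S)=S$ and therefore $z\in C_G(S)$. This last step is where the assumption of semisimplicity is genuinely used, and it is the only nontrivial obstacle in the argument; everything else is bookkeeping from the preceding lemma and theorem.
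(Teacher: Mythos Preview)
Your proof is correct and follows essentially the same route as the paper: apply Theorem~\ref{thm-9.1.1}-(I) and Lemma~\ref{lem-9.2.1} to produce $S$ and obtain (i), deduce $C_G(S)_0=H_0\subset H$ from the equality of Lie algebras, and then use (s.3), $\operatorname{Ad}$-invariance of $B_\frak{g}$, $\frak{g}=[\frak{g},\frak{g}]$, and non-degeneracy of $B_\frak{g}$ to force $\operatorname{Ad}z^{-1}(S)=S$ for $z\in H$. The only cosmetic difference is that the paper arranges the computation to conclude $\operatorname{Ad}z(S)=S$ directly rather than $\operatorname{Ad}z^{-1}(S)=S$, which is of course equivalent since $C_G(S)$ is a group.
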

\begin{proof}
   By virtue of Theorem \ref{thm-9.1.1}-(I) and Lemma \ref{lem-9.2.1}, it suffices to verify that (ii) $C_G(S)_0\subset H\subset C_G(S)$. 
   From Lemma \ref{lem-9.2.1}-(2) and Theorem \ref{thm-9.1.1}-(I)-(s.2) we obtain $\operatorname{Lie}(C_G(S))=\operatorname{Lie}(H)$, and therefore 
\[
   C_G(S)_0=H_0\subset H.
\] 
   Hence, the rest of proof is to confirm $H\subset C_G(S)$. 
   For any $z\in H$ and $X,Y\in\frak{g}$, Lemma \ref{lem-9.2.1}-(1) and Theorem \ref{thm-9.1.1}-(I)-(s.3) imply that  
\[
   B_\frak{g}\bigl(S-\operatorname{Ad}z(S),[X,Y]\bigr)
   =B_\frak{g}(S,[X,Y])-B_\frak{g}\bigl(S,[\operatorname{Ad}z^{-1}(X),\operatorname{Ad}z^{-1}(Y)]\bigr)
   =\omega(X,Y)-\omega\bigl(\operatorname{Ad}z^{-1}(X),\operatorname{Ad}z^{-1}(Y)\bigr)
   =0.
\] 
   Accordingly one has $S-\operatorname{Ad}z(S)=0$ because $\frak{g}=[\frak{g},\frak{g}]$ and $B_\frak{g}$ is non-degenerate. 
   Thus it turns out that $z\in C_G(S)$, and $H\subset C_G(S)$.      
\end{proof}

   Proposition \ref{prop-9.2.2} tells us that homogeneous symplectic manifolds of semisimple Lie groups are essentially adjoint orbits. 
   The converse also holds:
\begin{lemma}\label{lem-9.2.3}
   Let $G$ be a real semisimple Lie group which satisfies the second countability axiom, let $S$ be a given element of $\frak{g}$, and let $H$ be a subgroup of $G$ such that 
\[
   C_G(S)_0\subset H\subset C_G(S).
\] 
   Then, $H$ is a closed subgroup of $G$, and there exists a unique $G$-invariant symplectic form $\Omega$ on $G/H$ such that $B_\frak{g}(S,[X,Y])=\Omega_o\bigl((d\pi)_eX_e,(d\pi)_eY_e\bigr)$ for all $X,Y\in\frak{g}$. 
   Here we identify the real constants with the real-valued constant functions on $G$. 
\end{lemma}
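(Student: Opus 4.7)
The strategy is to first verify that $H$ is closed in $G$, then define a skew-symmetric bilinear form $\omega:\frak{g}\times\frak{g}\to\mathbb{R}$ by
\[
   \omega(X,Y):=B_\frak{g}(S,[X,Y]),
\]
check that $\omega$ satisfies the three hypotheses (s.1), (s.2), (s.3) of Theorem \ref{thm-9.1.1}-(II) with respect to the pair $(G,H)$, and finally invoke Theorem \ref{thm-9.1.1}-(II) to produce the desired $G$-invariant symplectic form $\Omega$ on $G/H$, together with its uniqueness.

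First I would handle closedness of $H$. The centralizer $C_G(S)$ is a closed subgroup of $G$, and its identity component $C_G(S)_0$ is open (hence also closed) in $C_G(S)$, so the quotient group $C_G(S)/C_G(S)_0$ is discrete. Since $C_G(S)_0\subset H\subset C_G(S)$, the image of $H$ in $C_G(S)/C_G(S)_0$ is a subgroup of a discrete group, thus closed; pulling back through the quotient map $C_G(S)\to C_G(S)/C_G(S)_0$ shows $H$ is closed in $C_G(S)$, and therefore closed in $G$. In particular $\operatorname{Lie}(H)=\operatorname{Lie}(C_G(S))=\frak{c}_\frak{g}(S)$.

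Next I would verify the three conditions for $\omega(X,Y):=B_\frak{g}(S,[X,Y])$. Property (s.1) is a direct consequence of the Jacobi identity and the $\operatorname{ad}$-invariance of the Killing form: each of the three cyclic terms can be rewritten as $B_\frak{g}(S,\,\cdot\,)$ applied to a cyclic sum that vanishes by the Jacobi identity. For (s.2), the identity $\omega(Z,X)=B_\frak{g}(S,[Z,X])=B_\frak{g}([S,Z],X)$, combined with the non-degeneracy of $B_\frak{g}$, shows that $\omega(Z,\cdot)=0$ if and only if $[S,Z]=0$, i.e., $Z\in\frak{c}_\frak{g}(S)=\frak{h}$. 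For (s.3), take $z\in H\subset C_G(S)$; then $\operatorname{Ad}z(S)=S$ and $\operatorname{Ad}$-invariance of $B_\frak{g}$ give
\[
   \omega\bigl(\operatorname{Ad}z(X),\operatorname{Ad}z(Y)\bigr)
   =B_\frak{g}\bigl(S,\operatorname{Ad}z[X,Y]\bigr)
   =B_\frak{g}\bigl(\operatorname{Ad}z^{-1}(S),[X,Y]\bigr)
   =\omega(X,Y).
\]

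With (s.1), (s.2), (s.3) established, Theorem \ref{thm-9.1.1}-(II) furnishes a unique $G$-invariant symplectic form $\Omega$ on $G/H$ satisfying $\omega(X,Y)=\Omega_o\bigl((d\pi)_eX_e,(d\pi)_eY_e\bigr)$ for all $X,Y\in\frak{g}$, which is exactly the stated relation $B_\frak{g}(S,[X,Y])=\Omega_o\bigl((d\pi)_eX_e,(d\pi)_eY_e\bigr)$. I do not foresee a serious obstacle: the argument is essentially a ``converse'' to Proposition \ref{prop-9.2.2}, and each step is routine once closedness of $H$ is secured. The only point requiring any care is the closedness of $H$, because $H$ is allowed to be strictly larger than $C_G(S)_0$; this is resolved cleanly by the discreteness of $C_G(S)/C_G(S)_0$.
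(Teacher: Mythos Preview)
Your proposal is correct and follows essentially the same approach as the paper: the paper also shows $H$ is open (hence closed) in $C_G(S)$ via the inclusion $C_G(S)_0\subset H$, defines $\omega(X,Y):=B_\frak{g}(S,[X,Y])$, verifies (s.1)--(s.3) using the Jacobi identity, $\frak{h}=\frak{c}_\frak{g}(S)$, and $H\subset C_G(S)$, and then applies Theorem~\ref{thm-9.1.1}-(II). The only cosmetic difference is that you phrase closedness via the discrete quotient $C_G(S)/C_G(S)_0$ while the paper writes $H=\bigcup_{h\in H}L_h(C_G(S)_0)$ directly.
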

\begin{proof}
   First, we confirm that the subgroup $H$ is a closed subset of $G$.
   Since $C_G(S)_0$ is an open subset of $C_G(S)$ and $H=\bigcup_{h\in H}L_h(C_G(S)_0)$, we see that $H$ is an open subgroup of $C_G(S)$. 
   Hence $H$ is closed in $C_G(S)$; besides, $C_G(S)$ is closed in $G$. 
   So, $H$ is a closed subset of $G$.
   At this stage $\frak{h}=\frak{c}_\frak{g}(S)$ follows from $C_G(S)_0\subset H\subset C_G(S)$.\par
   
   Next, we show the existence of $\Omega$. 
   Define a skew-symmetric bilinear form $\omega:\frak{g}\times\frak{g}\to\mathbb{R}$ by 
\[
   \mbox{$\omega(X,Y):=B_\frak{g}(S,[X,Y])$ for $X,Y\in\frak{g}$}.
\]   
   Then, this $\omega$ satisfies the (s.1), (s.2) and (s.3) in Theorem \ref{thm-9.1.1}, because of the Jacobi identity, $\frak{h}=\frak{c}_\frak{g}(S)$ and $H\subset C_G(S)$.
   Consequently, Theorem \ref{thm-9.1.1}-(II) provides us with a unique $G$-invariant symplectic form $\Omega$ so that $B_\frak{g}(S,[X,Y])=\omega(X,Y)=\Omega_o\bigl((d\pi)_eX_e,(d\pi)_eY_e\bigr)$ for $X,Y\in\frak{g}$.  
\end{proof}

\section{An appendix (an orbit space)}\label{sec-9.3}
   The direct product group $GL(1,\mathbb{R})\times SL(2,\mathbb{R})$ acts on $\frak{sl}(2,\mathbb{R})$ by 
\[
   \bigl(GL(1,\mathbb{R})\times SL(2,\mathbb{R})\bigr)\times\frak{sl}(2,\mathbb{R})\ni\bigl((\lambda,g),X\bigr)\mapsto\lambda\operatorname{Ad}g(X)\in\frak{sl}(2,\mathbb{R}).
\]   
   First, let us calculate this orbit space $\frak{sl}(2,\mathbb{R})/(GL(1,\mathbb{R})\times SL(2,\mathbb{R}))$. 
   For a non-zero, element $X=\begin{pmatrix} a & b\\ c & -a\end{pmatrix}\in\frak{sl}(2,\mathbb{R})$ we investigate the following three cases individually:
\begin{center}
\begin{tabular}{lll}
   (k) $0<\det X$, & (a) $0>\det X$, & (n) $0=\det X$.
\end{tabular}
\end{center}
\begin{enumerate}
\item[] 
   Case (k) $0<\det X=-a^2-bc$. 
   Setting 
\[
   (\lambda,g):=\begin{cases}
   \Big(1/\sqrt{-a^2-bc},\begin{pmatrix} \sqrt{c}/\sqrt[4]{-a^2-bc} & -a/(\sqrt{c}\sqrt[4]{-a^2-bc})\\ 0 & \sqrt[4]{-a^2-bc}/\sqrt{c}\end{pmatrix}\Big)
   & \mbox{if $c>0$},\\
   \Big(-1/\sqrt{-a^2-bc},\begin{pmatrix} \sqrt{-c}/\sqrt[4]{-a^2-bc} & a/(\sqrt{-c}\sqrt[4]{-a^2-bc})\\ 0 & \sqrt[4]{-a^2-bc}/\sqrt{-c}\end{pmatrix}\Big)
   & \mbox{if $c<0$},
   \end{cases}
\] 
we have $(\lambda,g)\in GL(1,\mathbb{R})\times SL(2,\mathbb{R})$ and $\lambda\operatorname{Ad}g(X)=\begin{pmatrix} 0 & -1\\ 1 & 0\end{pmatrix}$.
\item[]
   Case (a) $0>\det X=-a^2-bc$.  
   Setting 
\[
   (\lambda,g):=\Big(-1/\sqrt{a^2+bc},\begin{pmatrix} 1 & -(a+\sqrt{a^2+bc})/c\\ c/(2\sqrt{a^2+bc}) & (\sqrt{a^2+bc}-a)/(2\sqrt{a^2+bc})\end{pmatrix}\Big),
\]
we have $(\lambda,g)\in GL(1,\mathbb{R})\times SL(2,\mathbb{R})$ and $\lambda\operatorname{Ad}g(X)=\begin{pmatrix} 1 & 0\\ 0 & -1\end{pmatrix}$.
\item[]
   Case (n) $0=\det X=-a^2-bc$.
   Setting
\[
   (\lambda,g):=\begin{cases}
   \Big(-1/c,\begin{pmatrix} c & 1-a\\ -1 & a/c\end{pmatrix}\Big)
   & \mbox{if $c\neq0$},\\ 
   \Big(1/b,\begin{pmatrix} 1 & 0\\ 0 & 1\end{pmatrix}\Big)
   & \mbox{if $c=0$},   
   \end{cases}
\]    
we see that $(\lambda,g)\in GL(1,\mathbb{R})\times SL(2,\mathbb{R})$ and $\lambda\operatorname{Ad}g(X)=\begin{pmatrix} 0 & 1\\ 0 & 0\end{pmatrix}$. 
   Remark here that $a=0$ and $b\neq 0$ if $c=0$.
\end{enumerate}
   Consequently the orbit space $\frak{sl}(2,\mathbb{R})/(GL(1,\mathbb{R})\times SL(2,\mathbb{R}))$ is as follows:
\begin{equation}\label{eq-9.3.1}
   \frak{sl}(2,\mathbb{R})/(GL(1,\mathbb{R})\times SL(2,\mathbb{R}))
   =\{[K], [A], [N], [O_2]\},
\end{equation}
where $K:=\begin{pmatrix} 0 & -1\\ 1 & 0\end{pmatrix}$, $A:=\begin{pmatrix} 1 & 0\\ 0 & -1\end{pmatrix}$, $N:=\begin{pmatrix} 0 & 1\\ 0 & 0\end{pmatrix}$ and $O_2:=\begin{pmatrix} 0 & 0\\ 0 & 0\end{pmatrix}$.\par

   Now, the centralizers of the above $K$, $A$, $N$ and $O_2$ in $SL(2,\mathbb{R})$ are 
\[
\begin{array}{llll}
    C_{SL(2,\mathbb{R})}(K)=SO(2), 
  & C_{SL(2,\mathbb{R})}(A)=S(GL(1,\mathbb{R})\times GL(1,\mathbb{R})),
  & C_{SL(2,\mathbb{R})}(N)=\mathbb{R}\times\mathbb{Z}_2,
  & C_{SL(2,\mathbb{R})}(O_2)=SL(2,\mathbb{R}),
\end{array}  
\]   
respectively.
   Accordingly \eqref{eq-9.3.1}, Proposition \ref{prop-9.2.2} and Lemma \ref{lem-9.2.3} ensure that a homogeneous symplectic manifold of $SL(2,\mathbb{R})$ is one of the following:
\begin{enumerate}[(1)]
\item
   $SL(2,\mathbb{R})/SO(2)$ $*$ the open unit disk in $\mathbb{C}$, 
\item
   $SL(2,\mathbb{R})/S(GL(1,\mathbb{R})\times GL(1,\mathbb{R}))$ $*$ a hyperboloid of one sheet, 
\item
   $SL(2,\mathbb{R})/S(GL(1,\mathbb{R})\times GL(1,\mathbb{R}))_0$ $*$ a covering space of (2), 
\item
   $SL(2,\mathbb{R})/(\mathbb{R}\times\mathbb{Z}_2)$ $*$ the light cone in the $3$-dimensional Lorentz-Minkowski space $\mathbb{R}^3_1$,
\item
   $SL(2,\mathbb{R})/\mathbb{R}$ $*$ a covering space of (4), 
\item
   $SL(2,\mathbb{R})/SL(2,\mathbb{R})$ $*$ $0$-dimensional manifold.          
\end{enumerate}

\chapter{Homogeneous pseudo-K\"{a}hler manifolds}\label{ch-10}
   It is known that elliptic (adjoint) orbits can be geometrically characterized as follows:
\begin{quote}
   Any elliptic orbit $G/C_G(T)$ is a homogeneous pseudo-K\"{a}hler manifold of $G$. 
   Conversely, a homogeneous pseudo-K\"{a}hler manifold $M$ of $G$ is an elliptic orbit.
   cf.\ Dorfmeister-Guan \cite{DG1}, \cite{DG2}.
\end{quote}
   In this chapter we confirm this fact.
\begin{remark}\label{rem-10.0.1}
   We consider a K\"{a}hler manifold to be one of the pseudo-K\"{a}hler manifolds.
\end{remark}

\section{Projectable vector fields}\label{sec-10.1}
   The setting of Section \ref{sec-10.1} is as follows:
 \begin{itemize}
\item
   $G$ is a Lie group which satisfies the second countability axiom,
\item 
   $H$ is a closed subgroup of $G$, 
\item 
   $\pi$ is the projection of $G$ onto $G/H$.   
\end{itemize} 
   The homogeneous space $G/H$ is an $n$-dimensional real analytic manifold in view of Theorem \ref{thm-1.1.2}.\par

   In the next section we will prove Theorem \ref{thm-10.2.2}. 
   For this reason we need to know some properties of projectable vector fields. 
   Here, a smooth vector filed $V$ on $G$ is said to be {\it projectable},\index{projectable vector field@projectable vector field\dotfill} if there exists an $A\in\frak{X}(G/H)$ such that 
\[
   \mbox{$(d\pi)_gV_g=A_{\pi(g)}$ for all $g\in G$}
\]   
(i.e., $V$ is $\pi$-related to $A$), where $\frak{X}(G/H)$ stands for the real Lie algebra of smooth vector fields on $G/H$. 
   This $A$ is uniquely determined by $V$ since $\pi:G\to G/H$ is surjective. 
   So, we write $\pi_*V$ for $A$.

\begin{lemma}\label{lem-10.1.1}
\begin{enumerate}
\item[]
\item[{\rm (i)}]
   Let $V,W\in\frak{X}(G)$ be projectable, and let $\lambda,\mu\in\mathbb{R}$.
   Then,
   \begin{enumerate}
   \item[{\rm (i.1)}]
   $\lambda V+\mu W$ is a projectable vector field on $G$, and $\pi_*(\lambda V+\mu W)=\lambda(\pi_*V)+\mu(\pi_*W)$,
   \item[{\rm (i.2)}] 
   $[V,W]$ is  a projectable vector field on $G$, and $\pi_*[V,W]=[\pi_*V,\pi_*W]$.
   \end{enumerate}
\item[{\rm (ii)}] 
   For any $A\in\frak{X}(G/H)$ there exists a projectable vector field $V$ on $G$ satisfying $A=\pi_*V$.
\item[{\rm (iii)}] 
   All right invariant vector fields on $G$ are projectable. 
\end{enumerate}
\end{lemma}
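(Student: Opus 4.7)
My plan is to treat the three parts separately, working from the easiest to the most involved. The essential tool throughout is the characterization that a smooth vector field $V$ on $G$ is projectable with $\pi_{*}V = A$ if and only if $V(f\circ\pi) = A(f)\circ\pi$ for every $f\in\mathcal{C}^{\infty}(G/H)$, which follows at once from $(d\pi)_{g}V_{g}(f) = V_{g}(f\circ\pi)$.

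For part (i.1) I would simply observe that $d\pi$ is fiberwise $\mathbb{R}$-linear, so if $V,W$ project to $A,B$ respectively, then $(d\pi)_{g}(\lambda V+\mu W)_{g} = \lambda A_{\pi(g)}+\mu B_{\pi(g)} = (\lambda A+\mu B)_{\pi(g)}$ for every $g\in G$. For (i.2), I would use the functional characterization above: from $V(f\circ\pi) = A(f)\circ\pi$ and $W(f\circ\pi) = B(f)\circ\pi$ one computes
\[
[V,W](f\circ\pi) = V\bigl(B(f)\circ\pi\bigr)-W\bigl(A(f)\circ\pi\bigr) = \bigl(A(B(f))-B(A(f))\bigr)\circ\pi = [A,B](f)\circ\pi,
\]
which means $[V,W]$ is projectable with $\pi_{*}[V,W] = [A,B] = [\pi_{*}V,\pi_{*}W]$.

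For part (iii) I would exploit the fact that right translation by $h\in H$ preserves the projection, i.e.\ $\pi\circ R_{h} = \pi$ on $G$, because $\pi(gh) = ghH = gH = \pi(g)$. If $V$ is right invariant, then $V_{gh} = (dR_{h})_{g}V_{g}$, and differentiating $\pi\circ R_{h} = \pi$ gives $(d\pi)_{gh}V_{gh} = (d\pi)_{g}V_{g}$ for every $h\in H$. Consequently the assignment $A_{\pi(g)}:=(d\pi)_{g}V_{g}$ is well-defined on $G/H$, and smoothness is verified locally by composing with the real analytic local section $\sigma_{\alpha}:U_{\alpha}\to G$ from Theorem \ref{thm-1.1.2} to write $A|_{U_{\alpha}} = (d\pi)\circ V\circ\sigma_{\alpha}$.

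The genuine work lies in part (ii), which I expect to be the main obstacle since it requires producing a global smooth lift from purely local data. The plan is as follows. Given $A\in\frak{X}(G/H)$, for each $\alpha\in A$ use the real analytic section $\sigma_{\alpha}:U_{\alpha}\to G$ and the trivialization $\theta_{\alpha}:\pi^{-1}(U_{\alpha})\to U_{\alpha}\times H$, $g\mapsto(\pi(g),\zeta_{\alpha}(g))$, from Proposition \ref{prop-1.3.3} to construct a local projectable lift $V_{\alpha}\in\frak{X}(\pi^{-1}(U_{\alpha}))$ by transporting $(d\sigma_{\alpha})_{y}A_{y}$ along the $H$-fibers via right translations; concretely, if $g = \sigma_{\alpha}(y)h\in\pi^{-1}(U_{\alpha})$, set $(V_{\alpha})_{g}:=(dR_{h})_{\sigma_{\alpha}(y)}(d\sigma_{\alpha})_{y}A_{y}$, and note $(d\pi)_{g}(V_{\alpha})_{g} = A_{\pi(g)}$ because $\pi\circ R_{h} = \pi$ and $\pi\circ\sigma_{\alpha} = \operatorname{id}_{U_{\alpha}}$. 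Because $G/H$ is a second countable Hausdorff real analytic manifold (Corollary \ref{cor-1.1.12}), it admits a smooth partition of unity $\{\rho_{\alpha}\}_{\alpha\in A}$ subordinate to the open cover $\{U_{\alpha}\}_{\alpha\in A}$; the global field $V:=\sum_{\alpha}(\rho_{\alpha}\circ\pi)V_{\alpha}\in\frak{X}(G)$ is then well-defined (each summand extends by zero outside $\pi^{-1}(U_{\alpha})$) and satisfies $(d\pi)_{g}V_{g} = \sum_{\alpha}\rho_{\alpha}(\pi(g))A_{\pi(g)} = A_{\pi(g)}$. The delicate points to verify carefully are smoothness of each $V_{\alpha}$ (which follows from smoothness of $R_{h}$ jointly in $h$ together with real analyticity of $\sigma_{\alpha}$ and $\theta_{\alpha}^{-1}$) and local finiteness of the sum on $G$ (inherited from local finiteness of $\{\rho_{\alpha}\}$ on $G/H$ via continuity of $\pi$).
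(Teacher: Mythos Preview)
Your proof is correct in all three parts, but parts (ii) and (iii) take a different route from the paper.

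For (iii), the paper does not argue via the identity $\pi\circ R_h=\pi$; instead it writes down, for each $X\in\frak{g}$, the right-invariant field $X'_g\tilde f:=\frac{d}{dt}\big|_{t=0}\tilde f(\exp(-tX)g)$ and the fundamental vector field $X^*_pf:=\frac{d}{dt}\big|_{t=0}f(\tau_{\exp(-tX)}(p))$ of the left $G$-action on $G/H$, and observes directly that $X'$ is $\pi$-related to $X^*$. This buys an explicit formula for $\pi_*X'$ that is used later in the chapter. Your argument, based on $H$-invariance of $\pi$ and smoothness via local sections, is equally valid but less informative.

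For (ii), the paper avoids a partition of unity entirely. It chooses adapted bundle coordinates $(x_a^1,\dots,x_a^n,x_a^{n+1},\dots,x_a^N)$ on $\pi^{-1}(U_a)$ with $x_a^i=y_a^i\circ\pi$ for $i\le n$ and with block-diagonal transition behaviour, then sets $V_a:=\sum_{i=1}^n(A_a^i\circ\pi)\,\partial/\partial x_a^i$ and checks by the chain rule that $V_a=V_b$ on overlaps, so the local lifts glue to a global $V$ without any averaging. Your construction instead pushes $(d\sigma_\alpha)A$ along fibers by right translation and then patches with $\sum_\alpha(\rho_\alpha\circ\pi)V_\alpha$. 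Both work; the paper's lift is canonical once the adapted coordinates are in hand, while your approach is the standard partition-of-unity argument and does not depend on the block structure of the coordinate transitions.
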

\begin{proof}
   (i) follows from $V$ (resp.\ $W$) being $\pi$-related to $\pi_*V$ (resp.\ $\pi_*W$).\par

   (ii).
   Let $A\in\frak{X}(G/H)$. 
   Our aim is to construct a $V\in\frak{X}(G)$ which is $\pi$-related to $A$. 
   There exist coordinate neighborhoods $\bigl(U_a,(y_a^1,\dots,y_a^n)\bigr)$ of class $C^\omega$ of $G/H$ and $\bigl(\pi^{-1}(U_a),(x_a^1,\dots,x_a^n,x_a^{n+1},\dots,x_a^N)\bigr)$ of class $C^\omega$ of $G$ ($a\in\Lambda$) such that 
\begin{enumerate}
\item[(1)]
   $G/H=\bigcup_{a\in\Lambda}U_a$,
\item[(2)]
   $x_a^i=y_a^i\circ\pi$ on $\pi^{-1}(U_a)$ ($a\in\Lambda$, $1\leq i\leq n$),
\item[(3)]
   $\displaystyle{\dfrac{\partial}{\partial x_b^j}=\sum_{i=1}^n\dfrac{\partial x_a^i}{\partial x_b^j}\dfrac{\partial}{\partial x_a^i}}$ ($1\leq j\leq n$), $\displaystyle{\dfrac{\partial}{\partial x_b^r}=\sum_{s=n+1}^N\dfrac{\partial x_a^s}{\partial x_b^r}\dfrac{\partial}{\partial x_a^s}}$ ($n+1\leq r\leq N$) whenever $\pi^{-1}(U_a)\cap\pi^{-1}(U_b)\neq\emptyset$,
\end{enumerate} 
because $(G,\pi,G/H)$ is a real analytic principal fiber bundle (cf.\ Section \ref{sec-1.3}).
   For $a\in\Lambda$ and $1\leq i\leq n$, we put $A_a^i:=A(y_a^i)$. 
   Then, the vector field $A$ is expressed as 
\[
   A=\sum_{i=1}^nA_a^i\dfrac{\partial}{\partial y_a^i}
\]
on each $U_a$, and so we define a smooth vector field $V_a$ on $\pi^{-1}(U_a)$ by 
\[
   V_a:=\sum_{i=1}^n(A_a^i\circ\pi)\dfrac{\partial}{\partial x_a^i} 
\]
for each $a\in\Lambda$. 
   Now, suppose that $\pi^{-1}(U_a)\cap\pi^{-1}(U_b)\neq\emptyset$ ($a,b\in\Lambda$). 
   Then, one has 
\[
\begin{split}
   (V_a)_g
  &=\sum_{i=1}^nA_a^i(\pi(g))\Big(\dfrac{\partial}{\partial x_a^i}\Big)_g
   =\sum_{i,j=1}^nA_b^j(\pi(g))\dfrac{\partial y_a^i}{\partial y_b^j}(\pi(g))\Big(\dfrac{\partial}{\partial x_a^i}\Big)_g\\
  &=\sum_{i,j=1}^nA_b^j(\pi(g))\dfrac{\partial x_a^i}{\partial x_b^j}(g)\Big(\dfrac{\partial}{\partial x_a^i}\Big)_g
   \stackrel{(3)}{=}\sum_{j=1}^nA_b^j(\pi(g))\Big(\dfrac{\partial}{\partial x_b^j}\Big)_g
   =(V_b)_g
\end{split}
\] 
for all $g\in\pi^{-1}(U_a)\cap\pi^{-1}(U_b)$, since it follows from $\sum_{i=1}^nA_a^i(\partial/\partial y_a^i)=A=\sum_{j=1}^nA_b^j(\partial/\partial y_b^j)$ that $A_a^i=\sum_{j=1}^nA_b^j(\partial y_a^i/\partial y_b^j)$ and it follows from (2) $x_a^i=y_a^i\circ\pi$ that 
\[
   \dfrac{\partial x_a^i}{\partial x_b^j}(g)
   =\dfrac{\partial(y_a^i\circ\pi)}{\partial x_b^j}(g)
   =\sum_{k=1}^n\dfrac{\partial y_a^i}{\partial y_b^k}(\pi(g))\dfrac{\partial(y_b^k\circ\pi)}{\partial x_b^j}(g)
   =\sum_{k=1}^n\dfrac{\partial y_a^i}{\partial y_b^k}(\pi(g))\delta^k_j
   =\dfrac{\partial y_a^i}{\partial y_b^j}(\pi(g)).
\]
   Consequently one can construct a smooth vector field $V$ on the whole $G=\bigcup_{a\in\Lambda}\pi^{-1}(U_a)$ from $V|_{\pi^{-1}(U_a)}:=V_a$ for $a\in\Lambda$. 
   Besides, this $V$ is $\pi$-related to $A$ by virtue of (2).\par
   
   (iii). 
   Denote by $\frak{g}'$ the real Lie algebra of right invariant vector fields on $G$.
   For $X\in\frak{g}$ we define a right invariant vector field $X'$ on $G$ and a smooth vector field $X^*$ on $G/H$ by 
\[
\begin{split}
   & \mbox{$X'_g\tilde{f}:=\dfrac{d}{dt}\Big|_{t=0}\tilde{f}\bigl(\exp(-tX)g\bigr)$ for $g\in G$ and $\tilde{f}\in\mathcal{C}^\infty(G)$},\\
   & \mbox{$X^*_pf:=\dfrac{d}{dt}\Big|_{t=0}f\bigl(\tau_{\exp(-tX)}(p)\bigr)$ for $p\in G/H$ and $f\in\mathcal{C}^\infty(G/H)$},
\end{split}   
\]
respectively (see Corollary \ref{cor-1.1.7} for $\tau_{\exp(-tX)}$).
   Then, the mapping $\frak{g}\ni X\mapsto X'\in\frak{g}'$ is a Lie algebra isomorphism, and the mapping $\frak{g}\ni X\mapsto X^*\in\frak{X}(G/H)$ is a Lie algebra homomorphism. 
   Moreover, $X'$ is $\pi$-related to $X^*$ for every $X\in\frak{g}$. 
\end{proof}

\section[Invariant complex structures on homogeneous spaces]{Invariant complex structures on homogeneous spaces and linear transformations of Lie algebras}\label{sec-10.2}

   We first prove the following lemma, and afterwards demonstrate Theorem \ref{thm-10.2.2}:
\begin{lemma}\label{lem-10.2.1}
   Let $G$ be a Lie group, let $\jmath$ be a linear transformation of $\frak{g}$, and let $\hat{\jmath}$ be a tensor field of type $(1,1)$ on $G$. 
   Suppose that $(\hat{\jmath}X)_g=\hat{\jmath}_gX_g=(\jmath X)_g$ for all $(g,X)\in G\times\frak{g}$. 
   Then, the tensor $\hat{\jmath}$ is of class $C^\infty$.
\end{lemma}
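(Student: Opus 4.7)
The plan is to exploit the fact that left-invariant vector fields on a Lie group furnish a global smooth frame, and that a tensor field of type $(1,1)$ is smooth as soon as its components in any such frame are smooth. Since $\hat{\jmath}X = \jmath X$ is, by hypothesis, a (left-invariant, hence smooth) vector field for every $X \in \frak{g}$, the task reduces to translating this information into smoothness of the components in a local coordinate chart.

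More concretely, I would proceed as follows. First, fix a real basis $\{E_1,\dots,E_N\}$ of $\frak{g}$; then $\{(E_1)_g,\dots,(E_N)_g\}$ is a real basis of $T_gG$ at every $g\in G$, so the $E_i$ form a smooth global frame on $G$. Write $\jmath E_i = \sum_{k=1}^N c_i^k E_k$ with $c_i^k\in\mathbb{R}$. Next, pick an arbitrary coordinate neighborhood $(U,(x^1,\dots,x^N))$ of $G$ and express the coordinate fields in terms of the frame, $\partial/\partial x^j = \sum_{i=1}^N a_j^i E_i$ on $U$, where the $a_j^i:U\to\mathbb{R}$ are smooth (they are the entries of the Jacobian of the change of frames between the smooth frames $\{\partial/\partial x^j\}$ and $\{E_i\}$). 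Similarly write $E_k = \sum_{\ell=1}^N b_k^\ell\,\partial/\partial x^\ell$ on $U$ with $b_k^\ell\in\mathcal{C}^\infty(U)$.

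Now apply the pointwise linear map $\hat{\jmath}_g$ to $(\partial/\partial x^j)_g$ for $g\in U$. Using linearity at the point $g$ and the hypothesis $\hat{\jmath}_g(X_g)=(\jmath X)_g$ for $X\in\frak{g}$, one computes
\[
   \hat{\jmath}_g\Big(\Big(\dfrac{\partial}{\partial x^j}\Big)_{\!g}\Big)
   =\sum_{i=1}^Na_j^i(g)\,\hat{\jmath}_g\bigl((E_i)_g\bigr)
   =\sum_{i=1}^Na_j^i(g)(\jmath E_i)_g
   =\sum_{i,k,\ell=1}^Na_j^i(g)\,c_i^k\,b_k^\ell(g)\Big(\dfrac{\partial}{\partial x^\ell}\Big)_{\!g}.
\]
Hence the components of $\hat{\jmath}$ in the chart $(U,(x^1,\dots,x^N))$ are the functions $g\mapsto\sum_{i,k}a_j^i(g)c_i^kb_k^\ell(g)$, which are smooth on $U$ as sums of products of smooth functions. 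Since the chart was arbitrary, $\hat{\jmath}$ is a tensor field of class $C^\infty$ on $G$.

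There is no real obstacle here; the only delicate point is making sure the hypothesis is used correctly at a single point $g$. The argument boils down to the observation that the hypothesis specifies $\hat{\jmath}$ on the smooth global frame $\{E_i\}$, together with the standard fact that the change-of-frame matrices between any local coordinate frame and the left-invariant frame are smooth. If one prefers, the same statement can be phrased slightly more abstractly: the hypothesis means $\hat{\jmath}(E_i)=\jmath E_i\in\frak{X}(G)$ is smooth for each $i$, and a type $(1,1)$ tensor field is smooth iff its action on a smooth global frame produces smooth vector fields, which is exactly the situation at hand.
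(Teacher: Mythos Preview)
Your proof is correct and takes essentially the same approach as the paper: both fix a basis $\{E_k\}$ of $\frak{g}$, write $\jmath E_k=\sum_\ell c_k{}^\ell E_\ell$ with constant $c_k{}^\ell$, and use that the left-invariant fields form a smooth global frame. The only cosmetic difference is that the paper bypasses local coordinates by directly showing $(\hat{\jmath}U)\tilde{h}\in\mathcal{C}^\infty(G)$ for arbitrary $U=\sum_k\tilde{f}_kE_k\in\frak{X}(G)$ and $\tilde{h}\in\mathcal{C}^\infty(G)$, whereas you compute the chart components $\sum_{i,k}a_j^i c_i^k b_k^\ell$; the content is the same.
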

\begin{proof}
   Take a real basis $\{E_k\}_{k=1}^N$ of $\frak{g}$ and express $\jmath E_k\in\frak{g}$ as  $\jmath E_k=\sum_{\ell=1}^Nc_k{}^\ell E_\ell$, $c_k{}^\ell\in\mathbb{R}$. 
   Any vector $U\in\frak{X}(G)$ is expressed as $U=\sum_{k=1}^N\tilde{f}_kE_k$ ($\tilde{f}_k\in\mathcal{C}^\infty(G)$), and then the supposition enables us to show that for all $g\in G$ and $\tilde{h}\in\mathcal{C}^\infty(G)$,
\[
\begin{split}
   \mbox{$\bigl((\hat{\jmath}U)\tilde{h}\bigr)(g)
   =(\hat{\jmath}U)_g\tilde{h}
   =\bigl(\sum_{k=1}^N\tilde{f}_k(g)(\jmath E_k)_g\bigr)\tilde{h}$}
  &=\mbox{$\bigl(\sum_{k,\ell=1}^N\tilde{f}_k(g)c_k{}^\ell(E_\ell)_g\bigr)\tilde{h}$}\\
  &=\mbox{$\sum_{k,\ell=1}^N\tilde{f}_k(g)c_k{}^\ell(E_\ell\tilde{h})(g)
   =\bigl(\sum_{k,\ell=1}^N\tilde{f}_k\cdot c_k{}^\ell\cdot(E_\ell\tilde{h})\bigr)(g)$}.
\end{split}
\] 
   The last term is a smooth function on $G$, so the tensor $\hat{\jmath}$ is of class $C^\infty$.
\end{proof}

   Now, let us demonstrate 
\begin{theorem}[{cf.\ Koszul \cite[Paragraph 2]{Ks}}]\label{thm-10.2.2} 
   Let $G$ be a $($real$)$ Lie group which satisfies the second countability axiom, let $H$ be a closed subgroup of $G$, let $\pi$ denote the projection of $G$ onto $G/H$, and let $o:=\pi(e)$. 
   Then, the following two items {\rm (I)} and {\rm (II)} hold$:$
\begin{enumerate}
\item[{\rm (I)}] 
   Suppose the homogeneous space $G/H$ to admit a $G$-invariant complex structure $J$.
   Then, there exists a linear transformation $\jmath:\frak{g}\to\frak{g}$ satisfying the following five conditions$:$ 
   \begin{enumerate}
   \item[{\rm (c.1)}]
      $\jmath Z=0$ for all $Z\in\frak{h}$,
   \item[{\rm (c.2)}] 
      $\jmath^2 X=-X$ $(\bmod{\frak{h}})$ for all $X\in\frak{g}$,
   \item[{\rm (c.3)}] 
      $\jmath\bigl(\operatorname{Ad}z(X)\bigr)=\operatorname{Ad}z(\jmath X)$ $(\bmod{\frak{h}})$ for all $(z,X)\in H\times\frak{g}$,
   \item[{\rm (c.4)}] 
      $[\jmath X,\jmath Y]-[X,Y]-\jmath[\jmath X,Y]-\jmath[X,\jmath Y]=0$ $(\bmod{\frak{h}})$ for all $X,Y\in\frak{g}$,
   \item[{\rm (c.5)}] 
      $(d\pi)_e(\jmath X)_e=J_o\bigl((d\pi)_eX_e\bigr)$ for all $X\in\frak{g}$. 
   \end{enumerate}
\item[{\rm (II)}] 
   Suppose that there exists a linear transformation $\jmath:\frak{g}\to\frak{g}$ satisfying the above four conditions {\rm (c.1)} through {\rm (c.4)}. 
   Then, $G/H$ admits a unique $G$-invariant complex structure $J$ so that $\jmath$ is related to $J$ by {\rm (c.5)}.   
\end{enumerate}
   Here $G/H$ is an $n$-dimensional real analytic manifold in view of Theorem {\rm \ref{thm-1.1.2}}.   
\end{theorem}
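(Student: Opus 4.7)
The core tool is the surjection $F:\frak{g}\to T_o(G/H)$, $X\mapsto(d\pi)_eX_e$, whose kernel is $\frak{h}$ by Lemma \ref{lem-1.1.13}, so $F$ induces an isomorphism $\bar F:\frak{g}/\frak{h}\xrightarrow{\sim} T_o(G/H)$. The plan is to view the correspondence $J\leftrightarrow\jmath$ as the correspondence between $G$-invariant $(1,1)$-tensors on $G/H$ and $\operatorname{Ad}H$-equivariant endomorphisms of $\frak{g}/\frak{h}$, transported to $\frak{g}$ via any linear lift with $\jmath|_\frak{h}:=0$. For direction (I), given $J$ I set $\bar\jmath:=\bar F^{-1}\circ J_o\circ\bar F$, choose any vector space complement $\frak{m}$ of $\frak{h}$, and let $\jmath:\frak{g}\to\frak{g}$ be the unique linear map with $\jmath|_\frak{h}=0$ whose projection $\frak{m}\to\frak{g}/\frak{h}$ agrees with $\bar\jmath$. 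Then (c.1) and (c.5) are built in; (c.2) follows by applying $F$ and using $J_o^2=-\operatorname{id}$; for (c.3) I differentiate $\pi\circ L_z=\tau_z\circ\pi$ at $e$ (using $zH=H$) to obtain $F\circ\operatorname{Ad}z=(d\tau_z)_o\circ F$ for $z\in H$, combine with $G$-invariance $(d\tau_z)_o\circ J_o=J_o\circ(d\tau_z)_o$, and read off that $\jmath(\operatorname{Ad}z(X))-\operatorname{Ad}z(\jmath X)\in\ker F=\frak{h}$.

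\textbf{Direction (II), algebraic and smoothness part.} Conversely, given $\jmath$ satisfying (c.1)--(c.4), I define $J_o(F(X)):=F(\jmath X)$ (well-defined by (c.1), squaring to $-\operatorname{id}$ by (c.2)), then extend $G$-invariantly by $J_p:=(d\tau_g)_o\circ J_o\circ(d\tau_{g^{-1}})_p$ for $p=\tau_g(o)$. Independence of the representative $g$ follows from (c.3) together with $F\circ\operatorname{Ad}z=(d\tau_z)_o\circ F$. For smoothness, introduce the tensor $\hat\jmath$ of type $(1,1)$ on $G$ satisfying $\hat\jmath_g X_g=(\jmath X)_g$ for all $X\in\frak{g}$; this is smooth by Lemma \ref{lem-10.2.1}. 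A direct check using $(d\pi)_g\circ(dL_g)_e=(d\tau_g)_o\circ F$ gives $(d\pi)_g\circ\hat\jmath_g=J_{\pi(g)}\circ(d\pi)_g$ on $T_gG$, so composing with the real analytic local sections $\sigma_\alpha:U_\alpha\to G$ from Theorem \ref{thm-1.1.2} exhibits $J$ locally as a composition of smooth maps. Uniqueness of $J$ is forced by (c.5) and $G$-invariance.

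\textbf{The main obstacle: (c.4) is equivalent to integrability of $J$.} This is the only substantive step, and the plan is to convert integrability into an algebraic involutivity condition via complexification. Set $\frak{m}^{1,0}:=\frak{h}_\mathbb{C}+\operatorname{span}_\mathbb{C}\{X-i\jmath X:X\in\frak{g}\}\subset\frak{g}_\mathbb{C}$; using (c.2), one checks that $\bar F_\mathbb{C}$ identifies $\frak{m}^{1,0}/\frak{h}_\mathbb{C}$ with the $(+i)$-eigenspace of $J_o$ on $T_o(G/H)_\mathbb{C}$, i.e.\ the holomorphic tangent space at $o$. By $G$-invariance, the Newlander--Nirenberg theorem reduces integrability of $J$ to involutivity of this holomorphic tangent distribution at $o$, which translates to $[\frak{m}^{1,0},\frak{m}^{1,0}]\subset\frak{m}^{1,0}$. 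Expanding
\[
[X-i\jmath X,\,Y-i\jmath Y]=\bigl([X,Y]-[\jmath X,\jmath Y]\bigr)-i\bigl([X,\jmath Y]+[\jmath X,Y]\bigr)
\]
and requiring the result to lie in $\frak{m}^{1,0}$ modulo $\frak{h}_\mathbb{C}$ is equivalent to $\jmath\bigl([X,Y]-[\jmath X,\jmath Y]\bigr)\equiv [X,\jmath Y]+[\jmath X,Y]\pmod{\frak{h}}$, which upon applying $\jmath$ once more and invoking (c.2) is precisely (c.4). The principal difficulty lies in this careful bookkeeping between real and imaginary parts, but once performed both directions close: for (I) the integrability of $J$ forces the complex involutivity and hence (c.4), while for (II) the condition (c.4) produces involutivity of $\frak{m}^{1,0}$, hence (via Newlander--Nirenberg) a genuine complex structure $J$.
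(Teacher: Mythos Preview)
Your treatment of (c.1)--(c.3), (c.5), the construction of $J$ from $\jmath$, and smoothness via $\hat\jmath$ and local sections all agree with the paper. The route you take for (c.4) versus integrability---via the complex subalgebra $\frak{m}^{1,0}$ and Newlander--Nirenberg---is genuinely different from the paper's, which stays entirely real and never names $\frak{m}^{1,0}$.

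However, the step you flag as the ``principal difficulty'' is not where the difficulty actually lies. The algebraic equivalence between (c.4) and $[\frak{m}^{1,0},\frak{m}^{1,0}]\subset\frak{m}^{1,0}$ is indeed mechanical bookkeeping. The real work is the sentence you pass over: ``integrability of $J$ \dots\ translates to $[\frak{m}^{1,0},\frak{m}^{1,0}]\subset\frak{m}^{1,0}$.'' Integrability is a statement about brackets of vector fields on $G/H$; your condition is about the Lie-algebra bracket, i.e., brackets of \emph{left-invariant} fields on $G$. These are not directly comparable, because left-invariant fields are in general not $\pi$-projectable (right-invariant ones are; cf.\ Lemma~\ref{lem-10.1.1}-(iii)). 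One clean fix in your framework: pull $T^{1,0}(G/H)$ back to the left-invariant complex distribution $\mathcal{D}:=(d\pi_\mathbb{C})^{-1}\bigl(T^{1,0}(G/H)\bigr)$ on $G$, verify $\mathcal{D}_e=\frak{m}^{1,0}$, and then invoke the general fact for submersions that a distribution on the base is involutive if and only if its preimage (which here contains the vertical) is; the latter, being left-invariant, is involutive exactly when $\frak{m}^{1,0}$ is a subalgebra. The paper's device is different: it forms $\hat{\frak{s}}(U,V)=[\hat\jmath U,\hat\jmath V]-[U,V]-\hat\jmath[\hat\jmath U,V]-\hat\jmath[U,\hat\jmath V]$ on $G$, observes via (c.2) that $\hat{\frak{s}}$ is $\mathcal{C}^\infty(G)$-bilinear \emph{modulo} $\mathcal{C}^\infty(G)\frak{h}$, and uses this mod-$\frak{h}$ tensoriality to pass between right-invariant fields (projectable, where $\hat{\frak{s}}$ pushes down to the Nijenhuis tensor of $J$) and left-invariant fields (where $\hat{\frak{s}}$ is literally the expression in (c.4)). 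Either method closes the gap; your write-up as it stands asserts the bridge rather than building it.
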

\begin{proof}
   (I). 
   Let $J$ be a $G$-invariant complex structure on $G/H$.
   Take a real vector subspace $\frak{m}\subset\frak{g}$ so that 
\begin{equation}\label{eq-1}\tag*{\textcircled{1}}
   \frak{g}=\frak{m}\oplus\frak{h},
\end{equation} 
and define a surjective linear mapping $F:\frak{g}\to T_o(G/H)$ by 
\begin{equation}\label{eq-2}\tag*{\textcircled{2}}
   \mbox{$F(X):=(d\pi)_eX_e$ for $X\in\frak{g}$}.
\end{equation} 
   Then Lemma \ref{lem-1.1.13} implies that
\begin{equation}\label{eq-3}\tag*{\textcircled{3}}
   \frak{h}=\ker(F).
\end{equation} 
   From \ref{eq-1} and \ref{eq-3} we deduce that the linear mapping $F:\frak{m}\to T_o(G/H)$ is injective, so that 
\[
   \mbox{$F:\frak{m}\to T_o(G/H)$ is a linear isomorphism}
\] 
by virtue of $\dim_\mathbb{R}\frak{m}=\dim_\mathbb{R}T_o(G/H)$.
   For this reason one can define a linear mapping $\jmath:\frak{g}\to\frak{m}$ ($\subset\frak{g}$) as follows: 
\begin{equation}\label{eq-4}\tag*{\textcircled{4}}
   \mbox{$\jmath X:=(F|_\frak{m})^{-1}\bigl(J_o(F(X))\bigr)$ for $X\in\frak{g}$}.
\end{equation} 
   Let us prove that this $\jmath$ satisfies the five conditions (c.1) through (c.5), from now on.\par

   (c.1) is immediate from \ref{eq-4} and \ref{eq-3}.\par

   (c.2). 
   For any $X\in\frak{g}$ we obtain
\[
\begin{split}
   \jmath^2X
  &\stackrel{\ref{eq-4}}{=}\bigl(((F|_\frak{m})^{-1}\circ J_o\circ F)\circ((F|_\frak{m})^{-1}\circ J_o\circ F)\bigr)(X)\\
  &=(F|_\frak{m})^{-1}\bigl(F(-X)\bigr) \quad\mbox{($\because$ $F\circ(F|_\frak{m})^{-1}=\operatorname{id}$, $J_o^2=-\operatorname{id}$ on $T_o(G/H)$)}\\
  &=(F|_\frak{m})^{-1}\bigl(F(-X_m-X_h)\bigr)
   \stackrel{\ref{eq-3}}{=}(F|_\frak{m})^{-1}\bigl(F(-X_m)\bigr)
   =-X_m
   =-X \,\,(\bmod{\frak{h}})
\end{split} 
\] 
by a direct computation.
   Here we have expressed the $X\in\frak{g}=\frak{m}\oplus\frak{h}$ as $X=X_m+X_h$ ($X_m\in\frak{m}$, $X_h\in\frak{h}$).\par
   
   (c.3). 
   For any $(z,u)\in H\times T_o(G/H)$ one has 
\[
\begin{split}
   F\bigl((F|_\frak{m})^{-1}\bigl((d\tau_z)_ou\bigr)-\operatorname{Ad}z\bigl((F|_\frak{m})^{-1}u\bigr)\bigr)
  &=(d\tau_z)_ou-F\bigl( \operatorname{Ad}z\bigl((F|_\frak{m})^{-1}u\bigr)\bigr)
   \stackrel{\ref{eq-2}}{=}(d\tau_z)_ou-(d\pi)_e\bigl(\operatorname{Ad}z\bigl((F|_\frak{m})^{-1}u\bigr)\bigr)_e\\
  &=(d\tau_z)_ou-(d\tau_z)_o\bigl((d\pi)_e\bigl((F|_\frak{m})^{-1}u\bigr)_e\bigr)
   \stackrel{\ref{eq-2}}{=}(d\tau_z)_ou-(d\tau_z)_ou
   =0.
\end{split} 
\] 
   Accordingly it follows from \ref{eq-3} that 
\begin{equation}\label{eq-5}\tag*{\textcircled{5}}
   \mbox{$(F|_\frak{m})^{-1}\bigl((d\tau_z)_ou\bigr)=\operatorname{Ad}z\bigl((F|_\frak{m})^{-1}u\bigr)$ ($\bmod{\frak{h}}$) for all $(z,u)\in H\times T_o(G/H)$}.
\end{equation} 
   Therefore for any $(z,X)\in H\times\frak{g}$ we conclude 
\[
\begin{split}
   \jmath\bigl(\operatorname{Ad}z(X)\bigr)
  &\stackrel{\ref{eq-4}}{=}(F|_\frak{m})^{-1}\bigl(J_o(F(\operatorname{Ad}z(X)))\bigr)
   \stackrel{\ref{eq-2}}{=}(F|_\frak{m})^{-1}\bigl(J_o\bigl((d\pi)_e(\operatorname{Ad}z(X))_e\bigr)\bigr)
   =(F|_\frak{m})^{-1}\bigl(J_o\bigl((d\tau_z)_o((d\pi)_eX_e)\bigr)\bigr)\\
  &=(F|_\frak{m})^{-1}\bigl((d\tau_z)_o\bigl(J_o((d\pi)_eX_e)\bigr)\bigr)
   \stackrel{\ref{eq-5}}{=}\operatorname{Ad}z\bigl((F|_\frak{m})^{-1}\bigl(J_o((d\pi)_eX_e)\bigr)\bigr) \,\,(\bmod{\frak{h}})\\
  &\stackrel{\ref{eq-2}, \ref{eq-4}}{=}\operatorname{Ad}z(\jmath X) \,\,(\bmod{\frak{h}})
\end{split} 
\]
since $J$ is $G$-invariant and $\pi(z)=o$.\par

   (c.5). 
   Let us verify (c.5) before proving (c.4).
   For any $X\in\frak{g}$ one shows that 
\[
\begin{split}
   (d\pi)_e(\jmath X)_e
  &\stackrel{\ref{eq-4}}{=}(d\pi)_e\bigl((F|_\frak{m})^{-1}\bigl(J_o(F(X))\bigr)\bigr)_e
   \stackrel{\ref{eq-2}}{=}F\bigl((F|_\frak{m})^{-1}\bigl(J_o(F(X))\bigr)\bigr)
   =J_o\bigl(F(X)\bigr) \quad\mbox{($\because$ $F\circ(F|_\frak{m})^{-1}=\operatorname{id}$ on $T_o(G/H)$)}\\
  &\stackrel{\ref{eq-2}}{=}J_o\bigl((d\pi)_eX_e\bigr).
\end{split}
\]   
   Hence (c.5) holds.\par
   
   (c.4).   
   First, let us construct a smooth tensor field $\hat{\jmath}$ of type $(1,1)$ on $G$. 
   Define a linear isomorphism $\alpha:\frak{g}\to T_eG$ by 
\[
   \mbox{$\alpha(X):=X_e$ for $X\in\frak{g}$}.
\]   
   Using this $\alpha$ and the $\jmath$ in \ref{eq-4}, we define a linear mapping $\hat{\jmath}_g:T_gG\to T_gG$ ($g\in G$) by
\[
   \mbox{$\hat{\jmath}_gu:=\bigl((dL_g)_e\circ\alpha\circ\jmath\circ\alpha^{-1}\circ (dL_{g^{-1}})_g\bigr)(u)$ for $u\in T_gG$}.
\]   
   From this $\hat{\jmath}_g$ we construct a tensor field $\hat{\jmath}$ of type $(1,1)$ on $G$ as follows:
\[
   \mbox{$(\hat{\jmath}U)_g:=\hat{\jmath}_gU_g$ for $g\in G$ and $U\in\frak{X}(G)$}.
\]
   Then, it turns out that 
\begin{equation}\label{eq-6}\tag*{\textcircled{6}}
   \mbox{$(\hat{\jmath}X)_g=\hat{\jmath}_gX_g=(\jmath X)_g$ for all $(g,X)\in G\times\frak{g}$},
\end{equation} 
so that the tensor $\hat{\jmath}$ is of class $C^\infty$ in terms of Lemma \ref{lem-10.2.1}.
   Next, let us clarify a property of this $\hat{\jmath}$.
   For any $g\in G$ and $U\in\frak{X}(G)$, there exists a unique $X\in\frak{g}$ such that $U_g=X_g$, and then we have 
\[
\begin{split}
   (d\pi)_g(\hat{\jmath}U)_g
  &=(d\pi)_g\bigl(\hat{\jmath}_gX_g\bigr)
   \stackrel{\ref{eq-6}}{=}(d\pi)_g\bigl((dL_g)_e(\jmath X)_e\bigr)
   =(d\tau_g)_o\bigl((d\pi)_e(\jmath X)_e\bigr)
   \stackrel{{\rm (c.5)}}{=}(d\tau_g)_o\bigl(J_o\bigl((d\pi)_eX_e\bigr)\bigr)\\
  &=J_{\pi(g)}\bigl((d\pi)_g\bigl( (dL_g)_eX_e \bigr)\bigr)
   =J_{\pi(g)}\bigl((d\pi)_gU_g\bigr)
\end{split} 
\] 
(because $\jmath X\in\frak{g}$ and $J$ is $G$-invariant). 
   That is to say, 
\[
   \mbox{$(d\pi)_g(\hat{\jmath}U)_g=J_{\pi(g)}\bigl((d\pi)_gU_g\bigr)$ for all $(g,U)\in G\times\frak{X}(G)$}. 
\]
   Accordingly, for an arbitrary $V\in\frak{P}(G)$ it follows that  
\begin{equation}\label{eq-7}\tag*{\textcircled{7}}
   \mbox{$\hat{\jmath}V$ is a projectable vector field on $G$, and $\pi_*(\hat{\jmath}V)=J(\pi_*V)$},
\end{equation} 
where $\frak{P}(G)$ denotes the Lie subalgebra of $\frak{X}(G)$ generated by projectable vector fields on $G$.
   Now, let us define a skew-symmetric, smooth tensor field $S$ of type $(1,2)$ on $G/H$ and a skew-symmetric (real) bilinear mapping $\hat{\frak{s}}:\frak{X}(G)\times\frak{X}(G)\to\frak{X}(G)$ by 
\[
\left\{
\begin{split}
   & \mbox{$S(A_1,A_2):=[JA_1,JA_2]-[A_1,A_2]-J[JA_1,A_2]-J[A_1,JA_2]$ for $A_1,A_2\in\frak{X}(G/H)$},\\
   & \mbox{$\hat{\frak{s}}(U_1,U_2):=[\hat{\jmath}U_1,\hat{\jmath}U_2]-[U_1,U_2]-\hat{\jmath}[\hat{\jmath}U_1,U_2]-\hat{\jmath}[U_1,\hat{\jmath}U_2]$ for $U_1,U_2\in\frak{X}(G)$},
\end{split}\right.
\]
respectively.
   Then, \ref{eq-7} and Lemma \ref{lem-10.1.1}-(i) enable us to assert that   
\[
   \mbox{$S(\pi_*V,\pi_*W)=\pi_*\bigl(\hat{\frak{s}}(V,W)\bigr)$ for all $V,W\in\frak{P}(G)$}.
\] 
   Furthermore, since the Nijenhuis tensor $S$ of $J$ vanishes we have $\pi_*\bigl(\hat{\frak{s}}(V,W)\bigr)=0$, and then 
\begin{equation}\label{eq-8}\tag*{\textcircled{8}}
   \hat{\frak{s}}(V,W)\in\mathcal{C}^\infty(G)\frak{h}
\end{equation} 
for all $V,W\in\frak{P}(G)$.\footnote{Let us show \ref{eq-8} for the sake of completeness. 
   Take real bases $\{E_i\}_{i=1}^n$ of $\frak{m}$ and $\{E_s\}_{s=n+1}^N$ of $\frak{h}$. 
   Then, $\{(E_k)_g\}_{k=1}^N$ is a real basis of $T_gG$ for each $g\in G$; and any vector $U\in\frak{X}(G)$ is expressed as $U=\sum_{k=1}^N\tilde{f}_kE_k$ ($\tilde{f}_k\in\mathcal{C}^\infty(G)$). 
   If it is projectable and $\pi_*U=0$, then it follows that $0=(d\pi)_gU_g=\sum_{k=1}^N\tilde{f}_k(g)((d\pi)_g(E_k)_g)=\sum_{i=1}^n\tilde{f}_i(g)((d\pi)_g(E_i)_g)$ for all $g\in G$, so that $\tilde{f}_1=\cdots=\tilde{f}_n=0$ because $(d\pi)_g(E_1)_g,\dots,(d\pi)_g(E_n)_g$ is linearly independent for each $g\in G$.
   Hence $U=\sum_{s=k+1}^N\tilde{f}_sE_s\in\mathcal{C}^\infty(G)\frak{h}$ if it is projectable and $\pi_*U=0$.}
   Here $\mathcal{C}^\infty(G)\frak{h}$ stands for the submodule of $\frak{X}(G)$ generated by smooth functions $\tilde{f}:G\to\mathbb{R}$ and vectors $Y\in\frak{h}$. 
   From \ref{eq-8} we are going to conclude that $\hat{\frak{s}}(U_1,U_2)\in\mathcal{C}^\infty(G)\frak{h}$ for all $U_1,U_2\in\frak{X}(G)$. 
   Denote by $\frak{g}'$ the Lie algebra of right invariant vector fields on $G$, and recall that $\frak{g}'\subset\frak{P}(G)$ (cf.\ Lemma \ref{lem-10.1.1}-(iii)). 
   On the one hand; \ref{eq-8} yields 
\[
   \mbox{$\hat{\frak{s}}(X_1',X_2')\in\mathcal{C}^\infty(G)\frak{h}$ for all $X_1',X_2'\in\frak{g}'$}.
\]   
   On the other hand; for any $X'\in\frak{g}'$ it follows from \ref{eq-7} and $J^2=-\operatorname{id}$ that $\pi_*(\hat{\jmath}^2X')=\pi_*(-X')$, so that $\hat{\jmath}^2X'+X'\in\mathcal{C}^\infty(G)\frak{h}$. 
   Thus for given $X_1',X_2'\in\frak{g}'$ and smooth function $\tilde{f}:G\to\mathbb{R}$ we have 
\[
   \hat{\frak{s}}(\tilde{f}X_1',X_2')=\tilde{f}\hat{\frak{s}}(X_1',X_2')+(X_2'\tilde{f})(X_1'+\hat{\jmath}^2X_1')=\tilde{f}\hat{\frak{s}}(X_1',X_2') \,\,(\bmod{\,}\mathcal{C}^\infty(G)\frak{h}).
\]
   Consequently we show that 
\begin{equation}\label{eq-9}\tag*{\textcircled{9}}
   \mbox{$\hat{\frak{s}}(U_1,U_2)\in\mathcal{C}^\infty(G)\frak{h}$ for all $U_1,U_2\in\frak{X}(G)$} 
\end{equation} 
because $\hat{\frak{s}}:\frak{X}(G)\times\frak{X}(G)\to\frak{X}(G)$ is skew-symmetric bilinear and $\frak{X}(G)$ is generated by smooth functions $\tilde{f}:G\to\mathbb{R}$ and elements $X'\in\frak{g}'$.  
   For any $X,Y\in\frak{g}$ ($\subset\frak{X}(G)$), in view of \ref{eq-9} one sees that 
\[
   \mathcal{C}^\infty(G)\frak{h}
   \ni\hat{\frak{s}}(X,Y)
   =[\hat{\jmath}X,\hat{\jmath}Y]-[X,Y]-\hat{\jmath}[\hat{\jmath}X,Y]-\hat{\jmath}[X,\hat{\jmath}Y]
   \stackrel{\ref{eq-6}}{=}[\jmath X,\jmath Y]-[X,Y]-\jmath[\jmath X,Y]-\jmath[X,\jmath Y]
   \in\frak{g},
\]   
and therefore $[\jmath X,\jmath Y]-[X,Y]-\jmath[\jmath X,Y]-\jmath[X,\jmath Y]\in\bigl(\frak{g}\cap\mathcal{C}^\infty(G)\frak{h}\bigr)\subset\frak{h}$.
   Hence (c.4) holds. 
   This completes the proof of (I).\par

   (II). 
   Now, let us suppose that a linear transformation $\jmath:\frak{g}\to\frak{g}$ satisfies the following four conditions: 
\begin{enumerate}
\item[(c.1)]
   $\jmath Z=0$ for all $Z\in\frak{h}$,
\item[(c.2)] 
   $\jmath^2 X=-X$ $(\bmod{\frak{h}})$ for all $X\in\frak{g}$,
\item[(c.3)] 
   $\jmath\bigl(\operatorname{Ad}z(X)\bigr)=\operatorname{Ad}z(\jmath X)$ $(\bmod{\frak{h}})$ for all $(z,X)\in H\times\frak{g}$,
\item[(c.4)] 
   $[\jmath X,\jmath Y]-[X,Y]-\jmath[\jmath X,Y]-\jmath[X,\jmath Y]=0$ $(\bmod{\frak{h}})$ for all $X,Y\in\frak{g}$.
\end{enumerate}
   We want to construct a $G$-invariant complex structure $J$ on $G/H$ from this $\jmath$.
   For a vector $u\in T_o(G/H)$, we choose an $X\in\frak{g}$ so that $u=(d\pi)_eX_e$, and put
\begin{equation}\label{eq-a}\tag{a} 
   J_ou=J_o\bigl((d\pi)_eX_e\bigr):=(d\pi)_e(\jmath X)_e.   
\end{equation} 
   Lemma \ref{lem-1.1.13} and (c.1) assure that this \eqref{eq-a} is independent of the choice of $X$ because $\jmath:\frak{g}\to\frak{g}$ is linear. 
   Thus $J_o$ is a linear transformation of the vector space $T_o(G/H)$. 
   Moreover, (c.2) and Lemma \ref{lem-1.1.13} imply that $(J_o)^2=-\operatorname{id}$ on $T_o(G/H)$. 
   Using this $J_o$ we define a complex structure $J_{\pi(g)}$ on $T_{\pi(g)}(G/H)$ ($g\in G$) by
\begin{equation}\label{eq-b}\tag{b} 
   \mbox{$J_{\pi(g)}w:=(d\tau_g)_o\bigl(J_o((d\tau_{g^{-1}})_{\pi(g)}w)\bigr)$ for $w\in T_{\pi(g)}(G/H)$}.
\end{equation} 
   This is well-defined in terms of \eqref{eq-a}, (c.3) and Lemma \ref{lem-1.1.13}; and besides, it is immediate from \eqref{eq-b} that $J$ is $G$-invariant. 
   Therefore one can assert that the $J$ is a $G$-invariant complex structure on $G/H$, if $J$ is of class $C^\infty$ and its Nijenhuis tensor $S$ is vanishes.\par

   (class $C^\infty$). 
   Let us prove that the tensor $J$ is of class $C^\infty$. 
   The arguments below will be similar to the arguments in the latter half of the proof of (I). 
   Define a linear mapping $\tilde{\jmath}_g:T_gG\to T_gG$ ($g\in G$) by
\[
   \mbox{$\tilde{\jmath}_gu:=\bigl((dL_g)_e\circ\alpha\circ\jmath\circ\alpha^{-1}\circ(dL_{g^{-1}})_g\bigr)(u)$ for $u\in T_gG$},
\]
and define a tensor field $\tilde{\jmath}$ of type $(1,1)$ on $G$ by
\[
   \mbox{$(\tilde{\jmath}U)_g:=\tilde{\jmath}_gU_g$ for $g\in G$ and $U\in\frak{X}(G)$}.
\]
   Then, it turns out that 
\begin{equation}\label{eq-c}\tag{c} 
   \mbox{$(\tilde{\jmath}X)_g=\tilde{\jmath}_gX_g=(\jmath X)_g$ for all $(g,X)\in G\times\frak{g}$},
\end{equation} 
so that the tensor $\tilde{\jmath}$ is of class $C^\infty$ due to Lemma \ref{lem-10.2.1}. 
   Moreover, it follows from \eqref{eq-c} and (c.2) that $\tilde{\jmath}^2X=-X$ ($\bmod\frak{h}$) for all $X\in\frak{g}$, and hence 
\begin{equation}\label{eq-d}\tag{d} 
   \mbox{$\tilde{\jmath}^2U=-U$ ($\bmod{\,}\mathcal{C}^\infty(G)\frak{h}$) for all $U\in\frak{X}(G)$}
\end{equation} 
because $\frak{X}(G)$ is generated by smooth functions $\tilde{f}:G\to\mathbb{R}$ and vectors $X\in\frak{g}$. 
   For any $g\in G$ and $U\in\frak{X}(G)$, there exists a unique $X\in\frak{g}$ such that $U_g=X_g$, and then 
\[
\begin{split}
   (d\pi)_g(\tilde{\jmath}U)_g
  &=(d\pi)_g\bigl(\tilde{\jmath}_gX_g\bigr)
   \stackrel{\eqref{eq-c}}{=}(d\pi)_g\bigl((dL_g)_e(\jmath X)_e\bigr)
   =(d\tau_g)_o\bigl((d\pi)_e(\jmath X)_e\bigr)
   \stackrel{\eqref{eq-a}}{=}(d\tau_g)_o\bigl(J_o\bigl((d\pi)_eX_e\bigr)\bigr)\\
  &=(d\tau_g)_o\bigl(J_o\bigl((d\tau_{g^{-1}})_{\pi(g)}((d\pi)_gX_g)\bigr)\bigr)
   \stackrel{\eqref{eq-b}}{=}J_{\pi(g)}\bigl((d\pi)_gX_g\bigr)
   =J_{\pi(g)}\bigl((d\pi)_gU_g\bigr)
\end{split}
\] 
because $\jmath X\in\frak{g}$. 
   Accordingly, for an arbitrary $V\in\frak{P}(G)$ we assert that  
\begin{equation}\label{eq-e}\tag{e}
   \mbox{$\tilde{\jmath}V$ is a projectable vector field on $G$, and $\pi_*(\tilde{\jmath}V)=J(\pi_*V)$}.
\end{equation} 
   Now, for each point $p\in G/H$, one can find a coordinate neighborhood $\bigl(U,(y^1,\dots,y^n)\bigr)$ of class $C^\omega$ of $G/H$ and a coordinate neighborhood $\bigl(\pi^{-1}(U),(x^1,\dots,x^n,x^{n+1},\dots,x^N)\bigr)$ of class $C^\omega$ of $G$ such that $p\in U$ and $x^i=y^i\circ\pi$ on $\pi^{-1}(U)$ for all $1\leq i\leq n$; moreover, there exists a real analytic mapping $\sigma:U\to G$ such that $\pi\bigl(\sigma(q)\bigr)=q$ for all $q\in U$. 
   Then, $x^i=y^i\circ\pi$ and \eqref{eq-e} yield  
\[
   J\Big(\dfrac{\partial}{\partial y^i}\Big)
   =J\Big(\pi_*\Big(\dfrac{\partial}{\partial x^i}\Big)\Big)
   =\pi_*\Big(\tilde{\jmath}\Big(\dfrac{\partial}{\partial x^i}\Big)\Big)
\] 
for all $1\leq i\leq n$.
   This and $x^i=y^i\circ\pi$ imply that $J_i{}^j\circ\pi=\tilde{\jmath}_i{}^j$ on $\pi^{-1}(U)$ for all $1\leq i,j\leq n$, where $J(\partial/\partial y^i)=\sum_{j=1}^nJ_i{}^j(\partial/\partial y^j)$ and $\tilde{\jmath}(\partial/\partial x^i)=\sum_{k=1}^N\tilde{\jmath}_i{}^k(\partial/\partial x^k)$.
   Furthermore, it follows from $\pi\circ\sigma=\operatorname{id}$ that  
\[
   \mbox{$J_i{}^j=\tilde{\jmath}_i{}^j\circ\sigma$ on $U$ ($1\leq i,j\leq n$)}.
\] 
   Consequently the tensor $J$ is of class $C^\infty$, since $\sigma:U\to\pi^{-1}(U)$ is real analytic and $\tilde{\jmath}_i{}^j:\pi^{-1}(U)\to\mathbb{R}$ is smooth.\par

   ($S=0$). 
   Let us show that the Nijenhuis tensor $S$ of $J$ vanishes.
   For any $X_1,X_2\in\frak{g}$ we obtain 
\[
   [\tilde{\jmath}X_1,\tilde{\jmath}X_2]-[X_1,X_2]-\tilde{\jmath}[\tilde{\jmath}X_1,X_2]-\tilde{\jmath}[X_1,\tilde{\jmath}X_2]
   \stackrel{\eqref{eq-c}}{=}[\jmath X_1,\jmath X_2]-[X_1,X_2]-\jmath[\jmath X_1,X_2]-\jmath[X_1,\jmath X_2]
   \in\frak{h}
\]
from (c.4). 
   Accordingly \eqref{eq-d} implies that 
\begin{equation}\label{eq-f}\tag{f}
   \mbox{$[\tilde{\jmath}U_1,\tilde{\jmath}U_2]-[U_1,U_2]-\tilde{\jmath}[\tilde{\jmath}U_1,U_2]-\tilde{\jmath}[U_1,\tilde{\jmath}U_2]\in\mathcal{C}^\infty(G)\frak{h}$ for all $U_1,U_2\in\frak{X}(G)$}
\end{equation} 
because $\frak{X}(G)$ is generated by smooth functions $\tilde{f}:G\to\mathbb{R}$ and vectors $X\in\frak{g}$.
   For given $A,B\in\frak{X}(G/H)$, Lemma \ref{lem-10.1.1}-(ii) enables us to find $V,W\in\frak{P}(G)$ satisfying $A=\pi_*V,B=\pi_*W$, respectively.
   Then Lemma \ref{lem-10.1.1}-(i), combined with \eqref{eq-e} and \eqref{eq-f}, yields 
\[
   S(A,B)
   =[JA,JB]-[A,B]-J[JA,B]-J[A,JB]
   =\pi_*\bigl([\tilde{\jmath}V,\tilde{\jmath}W]-[V,W]-\tilde{\jmath}[\tilde{\jmath}V,W]-\tilde{\jmath}[V,\tilde{\jmath}W]\big)
   =0.
\] 
   Consequently the $J$ in \eqref{eq-b} is a $G$-invariant complex structure on $G/H$. 
   Besides, $\jmath$ is related to $J$ by (c.5); indeed \eqref{eq-a} assures that $(d\pi)_e(\jmath X)_e=J_o\bigl((d\pi)_eX_e\bigr)$ for all $X\in\frak{g}$. 
   The uniqueness of $J$ follows from (c.5), $G$-invariability and Lemma \ref{lem-1.1.13}. 
   This completes the proof of Theorem \ref{thm-10.2.2}.             
\end{proof}

\begin{remark}\label{rem-10.2.3}
   Theorem \ref{thm-10.2.2}-(II) assures the uniqueness of $J$ for each $\jmath$; but in contrast, (I) does not assure the uniqueness of $\jmath$ for any $J$.   
\end{remark}

   Modifying Theorem \ref{thm-10.2.2} slightly, one can assure the uniqueness of $\jmath$ in Theorem \ref{thm-10.2.2}-(I).
\begin{proposition}\label{prop-10.2.4}
   In the setting of Theorem {\rm \ref{thm-10.2.2}}$;$ let  $\frak{m}$ be a real vector subspace of $\frak{g}$ so that 
\[
   \frak{g}=\frak{m}\oplus\frak{h}.
\]
   Then, the following two items {\rm (I)} and {\rm (II)} hold$:$
\begin{enumerate}
\item[{\rm (I)}] 
   Suppose the homogeneous space $G/H$ to admit a $G$-invariant complex structure $J$.
   Then, there exists a unique linear mapping $\jmath:\frak{g}\to\frak{m}$ satisfying the following five conditions$:$ 
   \begin{enumerate}
   \item[{\rm (c.1)}]
      $\jmath Z=0$ for all $Z\in\frak{h}$,
   \item[{\rm (c.2)}] 
      $\jmath^2 X=-X$ $(\bmod{\frak{h}})$ for all $X\in\frak{g}$,
   \item[{\rm (c.3)}] 
      $\jmath\bigl(\operatorname{Ad}z(X)\bigr)=\operatorname{Ad}z(\jmath X)$ $(\bmod{\frak{h}})$ for all $(z,X)\in H\times\frak{g}$,
   \item[{\rm (c.4)}] 
      $[\jmath X,\jmath Y]-[X,Y]-\jmath[\jmath X,Y]-\jmath[X,\jmath Y]=0$ $(\bmod{\frak{h}})$ for all $X,Y\in\frak{g}$,
   \item[{\rm (c.5)}] 
      $(d\pi)_e(\jmath X)_e=J_o\bigl((d\pi)_eX_e\bigr)$ for all $X\in\frak{g}$. 
   \end{enumerate}
\item[{\rm (II)}] 
   Suppose that there exists a linear mapping $\jmath:\frak{g}\to\frak{m}$ satisfying the above four conditions {\rm (c.1)} through {\rm (c.4)}. 
   Then, $G/H$ admits a unique $G$-invariant complex structure $J$ so that $\jmath$ is related to $J$ by {\rm (c.5)}.   
\end{enumerate}
\end{proposition}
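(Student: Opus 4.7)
My plan is to derive Proposition \ref{prop-10.2.4} almost directly from Theorem \ref{thm-10.2.2}, exploiting the extra rigidity supplied by the fixed complement $\frak{m}$.

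For part (I), existence of $\jmath$ will be obtained by reusing the very construction employed inside the proof of Theorem \ref{thm-10.2.2}-(I). Define $F:\frak{g}\to T_o(G/H)$ by $F(X):=(d\pi)_eX_e$; Lemma \ref{lem-1.1.13} and the decomposition $\frak{g}=\frak{m}\oplus\frak{h}$ show that $F|_\frak{m}:\frak{m}\to T_o(G/H)$ is a linear isomorphism, so one may set
\[
   \jmath X:=(F|_\frak{m})^{-1}\bigl(J_o(F(X))\bigr)\qquad\text{for } X\in\frak{g}.
\]
By construction $\jmath$ takes values in $\frak{m}$, and the verifications of (c.1)--(c.5) are exactly those carried out in the proof of Theorem \ref{thm-10.2.2}-(I). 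For uniqueness, suppose $\jmath_1,\jmath_2:\frak{g}\to\frak{m}$ both satisfy (c.5). Then $(d\pi)_e(\jmath_1X-\jmath_2X)_e=0$ for every $X\in\frak{g}$, so Lemma \ref{lem-1.1.13} gives $\jmath_1X-\jmath_2X\in\frak{h}$; combined with $\jmath_iX\in\frak{m}$ and $\frak{m}\cap\frak{h}=\{0\}$ this forces $\jmath_1=\jmath_2$.

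For part (II), any linear mapping $\jmath:\frak{g}\to\frak{m}$ is in particular a linear transformation of $\frak{g}$, and (c.1)--(c.4) are exactly the hypotheses of Theorem \ref{thm-10.2.2}-(II). That theorem therefore furnishes a unique $G$-invariant complex structure $J$ on $G/H$ related to $\jmath$ by (c.5), which is the desired conclusion.

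I do not foresee a substantive obstacle here: the whole point of pinning down $\frak{m}$ in advance is that the ambiguity ``modulo $\frak{h}$'' present in Theorem \ref{thm-10.2.2}-(I) collapses, and the construction $(F|_\frak{m})^{-1}\circ J_o\circ F$ is already implicit in the earlier proof. The only step worth writing out with any care is the uniqueness argument in (I), which hinges solely on (c.5), Lemma \ref{lem-1.1.13}, and the direct-sum decomposition; everything else is a citation of Theorem \ref{thm-10.2.2}.
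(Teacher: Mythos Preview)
Your proposal is correct and matches the paper's approach exactly: the paper's proof of Proposition~\ref{prop-10.2.4} is literally the one-line reference ``cf.\ the proof of Theorem~\ref{thm-10.2.2},'' and the construction $\jmath:=(F|_\frak{m})^{-1}\circ J_o\circ F$ you describe is precisely the one used there (with the same choice of complement $\frak{m}$). Your uniqueness argument via (c.5), Lemma~\ref{lem-1.1.13}, and $\frak{m}\cap\frak{h}=\{0\}$ is the intended reason uniqueness now holds, and your reduction of (II) to Theorem~\ref{thm-10.2.2}-(II) is exactly right.
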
   
\begin{proof}
   cf.\ the proof of Theorem \ref{thm-10.2.2}.
\end{proof}

\section{Invariant pseudo-K\"{a}hlerian structures on homogeneous spaces}\label{sec-10.3}
   By Theorems \ref{thm-10.2.2} and \ref{thm-9.1.1} we conclude 
\begin{theorem}[{cf.\ Dorfmeister-Guan \cite{DG3}}]\label{thm-10.3.1} 
   Let $G$ be a $($real$)$ Lie group which satisfies the second countability axiom, let $H$ be a closed subgroup of $G$, let $\pi$ denote the projection of $G$ onto $G/H$, and let $o:=\pi(e)$. 
   Then, the following two items {\rm (I)} and {\rm (II)} hold$:$
\begin{enumerate}
\item[{\rm (I)}] 
   Suppose the homogeneous space $G/H$ to admit a $G$-invariant complex structure $J$ and a $G$-invariant symplectic form $\Omega$ such that  
\begin{equation}\label{eq-10.3.2}
   \mbox{$\Omega(JA,JB)=\Omega(A,B)$ for all $A,B\in\frak{X}(G/H)$}.
\end{equation}
   Then, there exist a linear transformation $\jmath:\frak{g}\to\frak{g}$ and a unique skew-symmetric bilinear form $\omega:\frak{g}\times\frak{g}\to\mathbb{R}$ satisfying the following ten conditions$:$ 
   \begin{enumerate}
   \item[{\rm (c.1)}]
      $\jmath Z=0$ for all $Z\in\frak{h}$,
   \item[{\rm (c.2)}] 
      $\jmath^2 X=-X$ $(\bmod{\frak{h}})$ for all $X\in\frak{g}$,
   \item[{\rm (c.3)}] 
      $\jmath\bigl(\operatorname{Ad}z(X)\bigr)=\operatorname{Ad}z(\jmath X)$ $(\bmod{\frak{h}})$ for all $(z,X)\in H\times\frak{g}$,
   \item[{\rm (c.4)}] 
      $[\jmath X,\jmath Y]-[X,Y]-\jmath[\jmath X,Y]-\jmath[X,\jmath Y]=0$ $(\bmod{\frak{h}})$ for all $X,Y\in\frak{g}$,
   \item[{\rm (c.5)}] 
      $(d\pi)_e(\jmath X)_e=J_o\bigl((d\pi)_eX_e\bigr)$ for all $X\in\frak{g};$
   \item[{\rm (s.1)}] 
      $\omega([X_1,X_2],X_3)+\omega([X_2,X_3],X_1)+\omega([X_3,X_1],X_2)=0$ for all $X_1,X_2,X_3\in\frak{g}$,
   \item[{\rm (s.2)}]
      $\frak{h}=\{Z\in\frak{g} \,|\, \mbox{$\omega(Z,X)=0$ for all $X\in\frak{g}$}\}$,       
   \item[{\rm (s.3)}]
      $\omega\bigl(\operatorname{Ad}z(X),\operatorname{Ad}z(Y)\bigr)=\omega(X,Y)$ for all $z\in H$ and $X,Y\in\frak{g}$,
   \item[{\rm (s.4)}]
      $\omega(X,Y)=\Omega_o\bigl((d\pi)_eX_e,(d\pi)_eY_e\bigr)$ for all $X,Y\in\frak{g};$
   \item[{\rm (c.s)}]
      $\omega(\jmath X,\jmath Y)=\omega(X,Y)$ for all $X,Y\in\frak{g}$.   
   \end{enumerate}
\item[{\rm (II)}] 
   Suppose that there exist a linear transformation $\jmath:\frak{g}\to\frak{g}$ and a skew-symmetric bilinear form $\omega:\frak{g}\times\frak{g}\to\mathbb{R}$ satisfying the above eight conditions {\rm (c.1)} through {\rm (c.4)}, {\rm (s.1)} through {\rm (s.3)}, and {\rm (c.s)}. 
   Then, $G/H$ admits a unique $G$-invariant complex structure $J$ and a unique $G$-invariant symplectic form $\Omega$ so that \eqref{eq-10.3.2} holds, $\jmath$ is related to $J$ by {\rm (c.5)}, and $\omega$ is related to $\Omega$ by {\rm (s.4)}.   
\end{enumerate}
   Here $G/H$ is a real analytic manifold in view of Theorem {\rm \ref{thm-1.1.2}}, and we identify the real constants with the real-valued constant functions on $G$.   
\end{theorem}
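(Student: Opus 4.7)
The plan is to reduce Theorem \ref{thm-10.3.1} to Theorems \ref{thm-10.2.2} and \ref{thm-9.1.1} together with a short bookkeeping argument that translates the compatibility relation \eqref{eq-10.3.2} into the algebraic compatibility (c.s) and vice versa.

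For Part (I), I would first apply Theorem \ref{thm-10.2.2}-(I) to the $G$-invariant complex structure $J$ to obtain a linear transformation $\jmath:\frak{g}\to\frak{g}$ satisfying (c.1) through (c.5), and independently apply Theorem \ref{thm-9.1.1}-(I) to the $G$-invariant symplectic form $\Omega$ to obtain the unique skew-symmetric bilinear form $\omega:\frak{g}\times\frak{g}\to\mathbb{R}$ satisfying (s.1) through (s.4). The uniqueness of $\omega$ claimed in the statement is nothing but the uniqueness delivered by Theorem \ref{thm-9.1.1}-(I). It remains only to verify (c.s). For any $X,Y\in\frak{g}$, combining (s.4), (c.5), and the hypothesis \eqref{eq-10.3.2} at the origin $o=\pi(e)$ yields
\[
\omega(\jmath X,\jmath Y)
\stackrel{{\rm (s.4)}}{=}\Omega_o\bigl((d\pi)_e(\jmath X)_e,(d\pi)_e(\jmath Y)_e\bigr)
\stackrel{{\rm (c.5)}}{=}\Omega_o\bigl(J_o((d\pi)_eX_e),J_o((d\pi)_eY_e)\bigr)
\stackrel{\eqref{eq-10.3.2}}{=}\Omega_o\bigl((d\pi)_eX_e,(d\pi)_eY_e\bigr)
\stackrel{{\rm (s.4)}}{=}\omega(X,Y).
\]

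For Part (II), I would invoke Theorem \ref{thm-10.2.2}-(II) to obtain the unique $G$-invariant complex structure $J$ on $G/H$ related to $\jmath$ by (c.5), and Theorem \ref{thm-9.1.1}-(II) to obtain the unique $G$-invariant symplectic form $\Omega$ on $G/H$ related to $\omega$ by (s.4). The only remaining task is to establish \eqref{eq-10.3.2}. I would first check it pointwise at $o$: given $u,v\in T_o(G/H)$, Lemma \ref{lem-1.1.13} produces $X,Y\in\frak{g}$ with $u=(d\pi)_eX_e$ and $v=(d\pi)_eY_e$, and then
\[
\Omega_o(J_ou,J_ov)
=\Omega_o\bigl((d\pi)_e(\jmath X)_e,(d\pi)_e(\jmath Y)_e\bigr)
=\omega(\jmath X,\jmath Y)
\stackrel{{\rm (c.s)}}{=}\omega(X,Y)
=\Omega_o(u,v)
\]
by (c.5), (s.4), and the hypothesis (c.s).

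To pass from $o$ to a general point $p=\pi(g)\in G/H$, I would use the $G$-invariance of both $J$ and $\Omega$: for $w_1,w_2\in T_p(G/H)$, writing $u_i:=(d\tau_{g^{-1}})_pw_i\in T_o(G/H)$, the relations $(d\tau_g)_o\circ J_o=J_p\circ(d\tau_g)_o$ and $\Omega_p\bigl((d\tau_g)_o\cdot,(d\tau_g)_o\cdot\bigr)=\Omega_o(\cdot,\cdot)$ reduce the identity $\Omega_p(J_pw_1,J_pw_2)=\Omega_p(w_1,w_2)$ to the already verified identity $\Omega_o(J_ou_1,J_ou_2)=\Omega_o(u_1,u_2)$. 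Uniqueness of the pair $(J,\Omega)$ is immediate from the individual uniqueness statements in Theorems \ref{thm-10.2.2}-(II) and \ref{thm-9.1.1}-(II).

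There is no serious obstacle here; the theorem is essentially a synthesis of the two preceding characterizations, and the compatibility condition (c.s) is exactly the algebraic shadow of \eqref{eq-10.3.2} under the correspondences $J\leftrightarrow\jmath$ and $\Omega\leftrightarrow\omega$. The mildest care that needs to be taken is in Part (I), where one must remember (cf.\ Remark \ref{rem-10.2.3}) that $\jmath$ need not be unique, so the assertion only claims existence of $\jmath$; happily (c.s) is a property that any such $\jmath$ inherits from $J$ via the computation above, regardless of the choice.
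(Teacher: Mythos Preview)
Your proposal is correct and takes exactly the same approach as the paper, which simply states that the result follows from Theorems \ref{thm-10.2.2} and \ref{thm-9.1.1}; you have supplied precisely the routine bookkeeping (the two displayed computations linking \eqref{eq-10.3.2} with (c.s)) that the paper leaves implicit.
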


   Here are comments on Theorem \ref{thm-10.3.1}.
\begin{remark}\label{rem-10.3.3}
\begin{enumerate}
\item[]
\item 
   By virtue of \eqref{eq-10.3.2} one can construct a $G$-invariant pseudo-K\"{a}hler metric ${\sf g}$ on $G/H$ from 
\[
   \mbox{${\sf g}(A,B):=\Omega(A,JB)$ for $A,B\in\frak{X}(G/H)$}.
\] 
\item
   We refer to Dorfmeister-Guan \cite[Section 1.2]{DG3} for Theorem \ref{thm-10.3.1}. 
   Remark that the paper \cite{DG3} has been created earlier than the paper \cite{DG1}, but \cite{DG3} is published later than \cite{DG1}.
\end{enumerate}
\end{remark}

\section[Elliptic orbits and homogeneous pseudo-K\"{a}hler manifolds]{Elliptic orbits and homogeneous pseudo-K\"{a}hler manifolds of semisimple Lie groups}\label{sec-10.4}
   In this section we will confirm that there is no essential difference between elliptic orbits and homogeneous pseudo-K\"{a}hler manifolds of semisimple Lie groups. 
   The setting of Section \ref{sec-10.4} is as follows:
\begin{itemize}
\item
   $G$ is a connected, real semisimple Lie group,
\item 
   $\frak{g}_\mathbb{C}$ is the complexification of the (real) Lie algebra $\frak{g}=\operatorname{Lie}(G)$,
\item
   $\overline{\sigma}$ is the conjugation of $\frak{g}_\mathbb{C}$ with respect to $\frak{g}$.   
\end{itemize}

\subsection{A pseudo-K\"{a}hlerian structure on an elliptic adjoint orbit}\label{subsec-10.4.1}
   The main purpose of this subsection is to prove 
\begin{proposition}\label{prop-10.4.1}
   Let $T$ be any elliptic element of $\frak{g}$, and let $L:=C_G(T)$.
   Then, the homogeneous space $G/L$ admits a $G$-invariant complex structure $J$ and a $G$-invariant symplectic form $\Omega$ such that 
\[
   \mbox{$\Omega(JA,JB)=\Omega(A,B)$ for all $A,B\in\frak{X}(G/L)$}.
\]
   Therefore $G/L$ is a simply connected, homogeneous pseudo-K\"{a}hler manifold of $G$. 
   Here $G/L$ is a real analytic manifold in view of Theorem {\rm \ref{thm-1.1.2}}, and we identify the real constants with the real-valued constant functions on $G$. 
\end{proposition}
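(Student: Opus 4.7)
The strategy is to apply Theorem \ref{thm-10.3.1}-(II), so the task reduces to constructing a linear transformation $\jmath:\frak{g}\to\frak{g}$ and a skew-symmetric bilinear form $\omega:\frak{g}\times\frak{g}\to\mathbb{R}$ satisfying (c.1)--(c.4), (s.1)--(s.3) and (c.s), after noting that $L=C_G(T)$ is a connected closed subgroup of $G$ by Lemma \ref{lem-7.3.3}. Set up the usual spectral decomposition of $T$ as in Lemma \ref{lem-7.2.8}: $\frak{g}_\mathbb{C}=\frak{u}^+\oplus\frak{l}_\mathbb{C}\oplus\frak{u}^-$ with $\frak{l}_\mathbb{C}=\frak{c}_{\frak{g}_\mathbb{C}}(T)=\frak{g}^0$, $\frak{u}^\pm=\bigoplus_{\lambda>0}\frak{g}^{\pm\lambda}$, together with the real decomposition $\frak{g}=\frak{l}\oplus\frak{u}$ where $\frak{l}=\frak{c}_\frak{g}(T)=\operatorname{Lie}(L)$ and $\frak{u}=\{V+\overline{\sigma}(V):V\in\frak{u}^+\}$.

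I will define $\jmath$ by $\jmath|_\frak{l}:=0$ and $\jmath(V+\overline{\sigma}(V)):=i(V-\overline{\sigma}(V))$ for $V\in\frak{u}^+$, noting that $i(V-\overline{\sigma}(V))=(iV)+\overline{\sigma}(iV)$ lies in $\frak{u}\subset\frak{g}$. Condition (c.1) is immediate, and (c.2) follows from a one-line computation showing $\jmath^2=-\operatorname{id}$ on $\frak{u}$. For (c.3), observe that for $z\in L$ the map $\operatorname{Ad}z$ preserves $\frak{u}^+$ by Lemma \ref{lem-7.2.8}-(2$'$) and commutes with $\overline{\sigma}$ because $z\in G$ (Lemma \ref{lem-7.2.8}-(6)); hence $\operatorname{Ad}z$ commutes with $\jmath$ on both $\frak{l}$ and $\frak{u}$. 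For (c.4) I will decompose $X=X_\frak{l}+X_\frak{u}$, $Y=Y_\frak{l}+Y_\frak{u}$ according to $\frak{g}=\frak{l}\oplus\frak{u}$. The cross-terms $[X_\frak{l},Y_\frak{u}]$ and $[X_\frak{u},Y_\frak{l}]$ lie in $\frak{u}$ and $\jmath$ commutes with $\operatorname{ad}(X_\frak{l})$ on $\frak{u}$, so the $(\frak{l},\frak{u})$- and $(\frak{u},\frak{l})$-pieces cancel; the pure $\frak{u}$-piece vanishes by direct computation using $[\frak{u}^+,\frak{u}^+]\subset\frak{u}^+$, $[\frak{u}^-,\frak{u}^-]\subset\frak{u}^-$; and the $(\frak{l},\frak{l})$-piece reduces to $-[X_\frak{l},Y_\frak{l}]\in\frak{l}=\frak{h}$. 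Thus (c.4) holds modulo $\frak{h}$.

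For $\omega$ I will take the classical Kirillov--Kostant--Souriau form $\omega(X,Y):=B_\frak{g}(T,[X,Y])$. Condition (s.1) is the Jacobi identity; (s.2) follows from $B_\frak{g}$ being non-degenerate and $\operatorname{ad}$-invariant, as $\omega(Z,X)=B_\frak{g}([T,Z],X)$ for all $X$ forces $[T,Z]=0$, i.e.\ $Z\in\frak{l}$; (s.3) follows from $\operatorname{Ad}$-invariance of $B_\frak{g}$ together with $\operatorname{Ad}z(T)=T$ for $z\in L$. The compatibility (c.s) is the main computation: for $X,Y\in\frak{u}$ written as $V+\overline{\sigma}(V)$ and $W+\overline{\sigma}(W)$, the difference $[\jmath X,\jmath Y]-[X,Y]$ equals $-2[V,W]-2[\overline{\sigma}(V),\overline{\sigma}(W)]$, and both summands lie in $\frak{u}^+\oplus\frak{u}^-$, so by Lemma \ref{lem-7.2.8}-(4$'$) the Killing form pairs them to $0$ against $T\in\frak{l}_\mathbb{C}$. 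The cases where $X$ or $Y$ lies in $\frak{l}$ are immediate since $\jmath$ annihilates $\frak{l}$ and $\omega(\frak{l},\frak{g})=0$ by the $\operatorname{ad}$-invariance argument.

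Applying Theorem \ref{thm-10.3.1}-(II) then furnishes the desired $G$-invariant $J$ and $\Omega$ with $\Omega(JA,JB)=\Omega(A,B)$, and Proposition \ref{prop-7.3.4} supplies simple-connectedness. The main obstacle I anticipate is the bookkeeping of (c.4): one must verify that the integrability tensor, which a priori is only required to vanish modulo $\frak{h}$, really does so under the decomposition $\frak{g}=\frak{l}\oplus\frak{u}$. The crucial observations that make this routine rather than miraculous are that $\jmath$ commutes with $\operatorname{ad}(\frak{l})$ on $\frak{u}$ and that $[\frak{u}^\pm,\frak{u}^\pm]\subset\frak{u}^\pm$, both of which are consequences of the eigenspace calculus of the elliptic element $T$ recorded in Lemma \ref{lem-7.2.8}.
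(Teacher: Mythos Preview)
Your proposal is correct and follows essentially the same route as the paper: reduce to Theorem~\ref{thm-10.3.1}-(II), take $\omega(X,Y)=B_\frak{g}(T,[X,Y])$, and build $\jmath$ from the $\pm i$-action on $\frak{u}^\pm$. The only organizational difference is that the paper first defines a complex-linear $\jmath_\mathbb{C}$ on $\frak{g}_\mathbb{C}=\frak{u}^+\oplus\frak{l}_\mathbb{C}\oplus\frak{u}^-$ by $V^++Z+V^-\mapsto iV^+-iV^-$, checks the conditions there, and then restricts to $\frak{g}$ via $\overline{\sigma}\circ\jmath_\mathbb{C}=\jmath_\mathbb{C}\circ\overline{\sigma}$; your formula $\jmath(V+\overline{\sigma}(V))=i(V-\overline{\sigma}(V))$ is exactly this restriction (the paper records it explicitly in Remark~\ref{rem-10.4.2}), and your direct verifications of (c.4) and (c.s) on $\frak{g}$ amount to the same bracket identities the paper checks on $\frak{g}_\mathbb{C}$.
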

\begin{proof}
   By Proposition \ref{prop-7.3.4} and Theorem \ref{thm-10.3.1}-(II), it is enough to show that there exist a linear transformation $\jmath:\frak{g}\to\frak{g}$ and a skew-symmetric bilinear form $\omega:\frak{g}\times\frak{g}\to\mathbb{R}$ satisfying the eight conditions (c.1) through (c.4), (s.1) through (s.3), and (c.s) in Theorem \ref{thm-10.3.1}.
   Taking the Killing form $B_\frak{g}$ of $\frak{g}$ we define a skew-symmetric bilinear form $\omega:\frak{g}\times\frak{g}\to\mathbb{R}$ by
\begin{equation}\label{eq-1}\tag*{\textcircled{1}}
   \mbox{$\omega(X,Y):=B_\frak{g}(T,[X,Y])$ for $X,Y\in\frak{g}$}.
\end{equation}
   Then, one knows that this $\omega$ satisfies the conditions (s.1) through (s.3) by the proof of Lemma \ref{lem-9.2.3}. 
   For this reason, the rest of proof is to construct a linear transformation $\jmath:\frak{g}\to\frak{g}$ satisfying the (c.1) through (c.4) and (c.s).
   We quote the notation $\frak{l}_\mathbb{C}$, $\frak{u}^\pm$ and $\frak{l}$ from Lemma \ref{lem-7.2.8}; and first define a complex linear transformation $\jmath_\mathbb{C}$ of $\frak{g}_\mathbb{C}=\frak{u}^+\oplus\frak{l}_\mathbb{C}\oplus\frak{u}^-$ by
\[
   \mbox{$\jmath_\mathbb{C}(V^++Z+V^-):=iV^++(-i)V^-$ for $V^\pm\in\frak{u}^\pm$, $Z\in\frak{l}_\mathbb{C}$},
\]
where $i=\sqrt{-1}$.
   Then, we deduce 
\begin{equation}\label{eq-2}\tag*{\textcircled{2}}
   \overline{\sigma}\circ\jmath_\mathbb{C}=\jmath_\mathbb{C}\circ\overline{\sigma}
\end{equation}
by Lemma \ref{lem-7.2.8}-(5$'$).
   Moreover,  
\begin{enumerate}
\item[(c.1)$'$]
   $\jmath_\mathbb{C}(Z)=0$ for all $Z\in\frak{l}_\mathbb{C}$.
\item[(c.2)$'$]
   For any $V^\pm\in\frak{u}^\pm$ and $Z\in\frak{l}_\mathbb{C}$ we see that $\jmath_\mathbb{C}^2(V^++Z+V^-)=-(V^++V^-)=-(V^++Z+V^-)$ ($\bmod\frak{l}_\mathbb{C}$), and thus $\jmath_\mathbb{C}^2(W)=-W$ ($\bmod\frak{l}_\mathbb{C}$) for all $W\in\frak{g}_\mathbb{C}$.
\item[(c.3)$'$] 
   Lemma \ref{lem-7.2.8}-(2$'$) implies that for every $z\in L$, $V^\pm\in\frak{u}^\pm$ and $Z\in\frak{l}_\mathbb{C}$,
\[
   \jmath_\mathbb{C}\bigl(\operatorname{Ad}z(V^++Z+V^-)\bigr)
   =i\operatorname{Ad}z(V^+)-i\operatorname{Ad}z(V^-)
   =\operatorname{Ad}z(iV^+-iV^-)
   =\operatorname{Ad}z\bigl(\jmath_\mathbb{C}(V^++Z+V^-)\bigr);
\]
and $\jmath_\mathbb{C}\bigl(\operatorname{Ad}z(W)\bigr)=\operatorname{Ad}z\bigl(\jmath_\mathbb{C}(W)\bigr)$ for all $(z,W)\in L\times\frak{g}_\mathbb{C}$.
\item[(c.4)$'$]
   For given $V_a^\pm\in\frak{u}^\pm$ and $Z_a\in\frak{l}_\mathbb{C}$ ($a=1,2$), we obtain 
\[
\begin{split}
  &[\jmath_\mathbb{C}(V_1^++Z_1+V_1^-),\jmath_\mathbb{C}(V_2^++Z_2+V_2^-)]
   -[V_1^++Z_1+V_1^-,V_2^++Z_2+V_2^-]\\
  &\qquad
   -\jmath_\mathbb{C}\bigl([\jmath_\mathbb{C}(V_1^++Z_1+V_1^-),V_2^++Z_2+V_2^-]\bigr)
   -\jmath_\mathbb{C}\bigl([V_1^++Z_1+V_1^-,\jmath_\mathbb{C}(V_2^++Z_2+V_2^-)]\bigr)\\
 =&-[Z_1,Z_2]\in\frak{l}_\mathbb{C} 
\end{split}
\] 
by a direct computation with Lemma \ref{lem-7.2.8}-(3$'$).
   So, we conclude that $[\jmath_\mathbb{C}(W_1),\jmath_\mathbb{C}(W_2)]-[W_1,W_2]-\jmath_\mathbb{C}\bigl([\jmath_\mathbb{C}(W_1),W_2]\bigr)-\jmath_\mathbb{C}\bigl([W_1,\jmath_\mathbb{C}(W_2)]\bigr)\in\frak{l}_\mathbb{C}$ for all $W_1,W_2\in\frak{g}_\mathbb{C}$.
\item[(c.s)$'$]
   Fix any $V_a^\pm\in\frak{u}^\pm$ and $Z_a\in\frak{l}_\mathbb{C}$ ($a=1,2$). 
   On the one hand; Lemma \ref{lem-7.2.8}-(3$'$), (4$'$) and $T\in\frak{l}_\mathbb{C}$ allow us to have
\begin{multline*}
   B_{\frak{g}_\mathbb{C}}\bigl(T,[\jmath_\mathbb{C}(V_1^++Z_1+V_1^-),\jmath_\mathbb{C}(V_2^++Z_2+V_2^-)]\bigr)
   =B_{\frak{g}_\mathbb{C}}\bigl(T,[iV_1^+-iV_1^-,iV_2^+-iV_2^-]\bigr)\\
   =B_{\frak{g}_\mathbb{C}}\bigl(T,[V_1^+,V_2^-]+[V_1^-,V_2^+]\bigr)
   =B_{\frak{g}_\mathbb{C}}\bigl(T,[V_1^++V_1^-,V_2^++V_2^-]\bigr).
\end{multline*}On the other hand; $[T,Z_a]=0$ and $B_{\frak{g}_\mathbb{C}}([P,Q],R)=-B_{\frak{g}_\mathbb{C}}(Q,[P,R])$ yield 
\[
\begin{split}
  &B_{\frak{g}_\mathbb{C}}\bigl(T,[V_1^++Z_1+V_1^-,V_2^++Z_2+V_2^-]\bigr)
   =-B_{\frak{g}_\mathbb{C}}\bigl([V_1^++Z_1+V_1^-,T],V_2^++Z_2+V_2^-\bigr)\\
  &=-B_{\frak{g}_\mathbb{C}}\bigl([V_1^++V_1^-,T],V_2^++Z_2+V_2^-\bigr)
   =B_{\frak{g}_\mathbb{C}}\bigl(T,[V_1^++V_1^-,V_2^++Z_2+V_2^-]\bigr)\\
  &=B_{\frak{g}_\mathbb{C}}\bigl([V_2^++Z_2+V_2^-,T],V_1^++V_1^-\bigr) 
   =B_{\frak{g}_\mathbb{C}}\bigl([V_2^++V_2^-,T],V_1^++V_1^-\bigr) 
   =B_{\frak{g}_\mathbb{C}}\bigl(T,[V_1^++V_1^-,V_2^++V_2^-]\bigr).
\end{split} 
\] 
   Consequently $B_{\frak{g}_\mathbb{C}}\bigl(T,[\jmath_\mathbb{C}(V_1^++Z_1+V_1^-),\jmath_\mathbb{C}(V_2^++Z_2+V_2^-)]\bigr)=B_{\frak{g}_\mathbb{C}}\bigl(T,[V_1^++Z_1+V_1^-,V_2^++Z_2+V_2^-]\bigr)$; and it follows that $B_{\frak{g}_\mathbb{C}}\bigl(T,[\jmath_\mathbb{C}(W_1),\jmath_\mathbb{C}(W_2)]\bigr)=B_{\frak{g}_\mathbb{C}}\bigl(T,[W_1,W_2]\bigr)$ for all $W_1,W_2\in\frak{g}_\mathbb{C}$.
\end{enumerate} 
   Accordingly $\jmath:=\jmath_\mathbb{C}|_\frak{g}$ is a real linear transformation of $\frak{g}$ and satisfies the conditions (c.1) through (c.4) and (c.s), because of $\frak{g}=\{X\in\frak{g}_\mathbb{C} \,|\, \overline{\sigma}(X)=X\}$ and $\frak{l}=\{Y\in\frak{l}_\mathbb{C} \,|\, \overline{\sigma}(Y)=Y\}$.
\end{proof} 

\begin{remark}\label{rem-10.4.2}
   Here are comments on the proof of Proposition \ref{prop-10.4.1}. 
   One can realize the linear transformation $\jmath=\jmath_\mathbb{C}|_\frak{g}$ of $\frak{g}=\frak{l}\oplus\frak{u}$ by setting
\[
   \mbox{$\jmath\bigl(Y+V+\overline{\sigma}(V)\bigr):=iV-i\overline{\sigma}(V)$ for $Y\in\frak{l}$ and $V\in\frak{u}^+$}.
\]
   cf.\ Lemma \ref{lem-7.2.8}-(i), (iii).   
\end{remark}

\subsection{A realization of homogeneous pseudo-K\"{a}hler manifolds as elliptic adjoint orbits}\label{subsec-10.4.2}
   We are going to inductively prove that any homogeneous pseudo-K\"{a}hler manifold of $G$ is an elliptic orbit of $G$ (see Theorem \ref{thm-10.4.7}).\par

   Let $H$ be a closed subgroup of the connected real semisimple Lie group $G$. 
   Suppose that the homogeneous space $G/H$ admits a $G$-invariant complex structure $J$ and a $G$-invariant symplectic form $\Omega$ such that 
\[
   \mbox{$\Omega(JA,JB)=\Omega(A,B)$ for all $A,B\in\frak{X}(G/H)$}.
\] 
   Then, there exist a linear transformation $\jmath:\frak{g}\to\frak{g}$ and a unique skew-symmetric bilinear form $\omega:\frak{g}\times\frak{g}\to\mathbb{R}$ satisfying the ten conditions in Theorem \ref{thm-10.3.1}-(I).
   Moreover, there exists a unique $S\in\frak{g}$ such that 
\begin{enumerate}
\item[(i)]
   $\omega(X,Y)=B_\frak{g}(S,[X,Y])$ for all $X,Y\in\frak{g}$,
\item[(ii)] 
   $C_G(S)_0\subset H\subset C_G(S)$
\end{enumerate}
by Lemma \ref{lem-9.2.1} and Proposition \ref{prop-9.2.2}. 
   Here $B_\frak{g}$ is the Killing form of $\frak{g}$.
   Let us remark $\frak{h}=\frak{c}_\frak{g}(S)$, denote by $\jmath_\mathbb{C}$ the complex linear extension of $\jmath$ to $\frak{g}_\mathbb{C}$, and prove  
\begin{lemma}\label{lem-10.4.3}
   Let $\frak{h}_\mathbb{C}:=\frak{c}_{\frak{g}_\mathbb{C}}(S)$, $\frak{q}^+:=\{V\in\frak{g}_\mathbb{C} \,|\, \mbox{$\jmath_\mathbb{C}(V)=iV$ $(\bmod{\frak{h}_\mathbb{C}})$}\}$ and $\frak{q}^-:=\{V\in\frak{g}_\mathbb{C} \,|\, \mbox{$\jmath_\mathbb{C}(V)=-iV$ $(\bmod{\frak{h}_\mathbb{C}})$}\}$.
   Then, it follows that for each $s=\pm$,
\begin{enumerate}
\item[{\rm (1)}]
   $[\frak{h}_\mathbb{C},\frak{q}^s]\subset\frak{q}^s;$ $\operatorname{Ad}z(\frak{q}^s)\subset\frak{q}^s$ for all $z\in H$,
\item[{\rm (2)}]
   $\frak{q}^s$ is a complex subalgebra of $\frak{g}_\mathbb{C}$, 
\item[{\rm (3)}] 
   $\frak{q}^+\cap\frak{q}^-=\frak{h}_\mathbb{C}$,
\item[{\rm (4)}]
   $\overline{\sigma}(\frak{h}_\mathbb{C})\subset\frak{h}_\mathbb{C}$, $\overline{\sigma}(\frak{q}^+)\subset\frak{q}^-$ and $\overline{\sigma}(\frak{q}^-)\subset\frak{q}^+$, 
\item[{\rm (5)}]
   $\frak{q}^++\frak{q}^-=\frak{g}_\mathbb{C}$,
\item[{\rm (6)}] 
   $\dim_\mathbb{C}\frak{q}^s-\dim_\mathbb{C}\frak{h}_\mathbb{C}=\dim_\mathbb{C}\frak{g}_\mathbb{C}-\dim_\mathbb{C}\frak{q}^s$,
\item[{\rm (7)}]
   $\frak{h}_\mathbb{C}=\{Z\in\frak{g}_\mathbb{C} \,|\, \mbox{$B_{\frak{g}_\mathbb{C}}(S,[Z,W])=0$ for all $W\in\frak{g}_\mathbb{C}$}\}$,
\item[{\rm (8)}]
   $B_{\frak{g}_\mathbb{C}}\bigl(S,[\frak{q}^s,\frak{q}^s]\bigr)=\{0\}$.
\end{enumerate}   
\end{lemma}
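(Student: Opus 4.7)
The plan is to work entirely in $\frak{g}_\mathbb{C}$, using that $\frak{h}_\mathbb{C}=\frak{c}_{\frak{g}_\mathbb{C}}(S)$ is literally the complexification of $\frak{h}=\frak{c}_\frak{g}(S)$ (because $\operatorname{ad}S$ is $\mathbb{R}$-linear on $\frak{g}$), that $\jmath_\mathbb{C}$ is the $\mathbb{C}$-linear extension of the $\mathbb{R}$-linear $\jmath$, and that $B_{\frak{g}_\mathbb{C}}$ is the complexification of $B_\frak{g}$. The first observation is that $\jmath_\mathbb{C}$ annihilates $\frak{h}_\mathbb{C}$ (by (c.1) and $\mathbb{C}$-linearity) and satisfies $\jmath_\mathbb{C}^2\equiv-\operatorname{id}\pmod{\frak{h}_\mathbb{C}}$ (by (c.2)), so it descends to a $\mathbb{C}$-linear endomorphism of $\frak{g}_\mathbb{C}/\frak{h}_\mathbb{C}$ whose square is $-\operatorname{id}$; this endomorphism is diagonalisable over $\mathbb{C}$ with eigenvalues $\pm i$, and items (3) and (5) are just the disjointness and direct-sum assertions for the $\pm i$-eigenspace decomposition pulled back to $\frak{g}_\mathbb{C}$.

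For (4), since $\overline{\sigma}$ fixes $S\in\frak{g}$ it preserves $\frak{h}_\mathbb{C}$; and since $\jmath$ is $\mathbb{R}$-linear on $\frak{g}$, a direct check on vectors $X+iY$ with $X,Y\in\frak{g}$ gives $\overline{\sigma}\circ\jmath_\mathbb{C}=\jmath_\mathbb{C}\circ\overline{\sigma}$, which combined with $\overline{\sigma}(iV)=-i\overline{\sigma}(V)$ yields $\overline{\sigma}(\frak{q}^\pm)\subset\frak{q}^\mp$. Item (6) then follows from $\dim_\mathbb{C}\frak{q}^+=\dim_\mathbb{C}\frak{q}^-$ (a consequence of (4) because $\overline{\sigma}$ is a real linear involution interchanging $\frak{q}^\pm$ and each $\frak{q}^s$ is a complex subspace) combined with (3), (5), and inclusion--exclusion. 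Item (7) is the complexification of (s.2): by Lemma \ref{lem-9.2.1}-(2) the right-hand side of (7) is the complexification of $\frak{h}$, which is $\frak{h}_\mathbb{C}$.

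For (1) I would first derive the infinitesimal form of (c.3): applying (c.3) with $z=\exp tZ$ for $Z\in\frak{h}$ writes $\jmath\bigl(\operatorname{Ad}(\exp tZ)X\bigr)-\operatorname{Ad}(\exp tZ)(\jmath X)\in\frak{h}$ for all $t$, and differentiating at $t=0$ (using that $\frak{h}$ is a closed subspace of $\frak{g}$) gives $\jmath[Z,X]\equiv[Z,\jmath X]\pmod{\frak{h}}$; complexifying yields $\jmath_\mathbb{C}[h,V]\equiv[h,\jmath_\mathbb{C} V]\pmod{\frak{h}_\mathbb{C}}$ for $h\in\frak{h}_\mathbb{C}$, $V\in\frak{g}_\mathbb{C}$, from which $[\frak{h}_\mathbb{C},\frak{q}^s]\subset\frak{q}^s$ is immediate, and the $H$-invariance comes directly from (c.3). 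For (2) I would feed $V_1,V_2\in\frak{q}^s$ into the $\mathbb{C}$-linear extension of (c.4), write $\jmath_\mathbb{C} V_i=siV_i+h_i$ with $h_i\in\frak{h}_\mathbb{C}$, expand all brackets, and use (1) to replace $\jmath_\mathbb{C}[h_1,V_2]$ and $\jmath_\mathbb{C}[V_1,h_2]$ by $si[h_1,V_2]$ and $si[V_1,h_2]$ modulo $\frak{h}_\mathbb{C}$; after cancellation the computation collapses to $-2[V_1,V_2]-2si\jmath_\mathbb{C}[V_1,V_2]\in\frak{h}_\mathbb{C}$, i.e.\ $\jmath_\mathbb{C}[V_1,V_2]\equiv si[V_1,V_2]\pmod{\frak{h}_\mathbb{C}}$, giving $[V_1,V_2]\in\frak{q}^s$. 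Finally (8) uses the $\mathbb{C}$-linear extension of (c.s), $B_{\frak{g}_\mathbb{C}}(S,[\jmath_\mathbb{C} V,\jmath_\mathbb{C} W])=B_{\frak{g}_\mathbb{C}}(S,[V,W])$: substituting $\jmath_\mathbb{C} V=siV+h_V$ and $\jmath_\mathbb{C} W=siW+h_W$ and expanding produces $B_{\frak{g}_\mathbb{C}}(S,[V,W])=-B_{\frak{g}_\mathbb{C}}(S,[V,W])$, because by (7) every term containing $h_V$ or $h_W$ vanishes.

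The main obstacle I expect is the bookkeeping in step (2): several $\operatorname{mod}\frak{h}_\mathbb{C}$ terms need to be tracked simultaneously, and the correctness hinges on already having the infinitesimal form of (c.3) from (1) available to rewrite $\jmath_\mathbb{C}$ applied to brackets with $\frak{h}_\mathbb{C}$. The rest of the lemma is essentially formal manipulation with the complexification, the eigenspace decomposition, and the conjugate-linear involution $\overline{\sigma}$.
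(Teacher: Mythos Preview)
Your proposal is correct and follows essentially the same route as the paper: both complexify the conditions (c.1)--(c.4), (c.s) on $\jmath$ to obtain the analogous conditions (c.1)$'$--(c.4)$'$, (c.s)$'$ for $\jmath_\mathbb{C}$ modulo $\frak{h}_\mathbb{C}$, then read off each item---(1) from (c.3)$'$, (2) from (c.4)$'$ together with (1), (3) and (5) from (c.1)$'$ and (c.2)$'$, (4) from $\overline{\sigma}(S)=S$ and $\overline{\sigma}\circ\jmath_\mathbb{C}=\jmath_\mathbb{C}\circ\overline{\sigma}$, (6) by dimension count, (7) from nondegeneracy of $B_{\frak{g}_\mathbb{C}}$, and (8) from (c.s)$'$ and (7). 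Your write-up is actually more explicit than the paper's for (2), where the paper simply asserts ``From (1) and (c.4)$'$ we obtain $[\frak{q}^s,\frak{q}^s]\subset\frak{q}^s$'' without displaying the cancellation you carry out.
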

\begin{proof}
   Since $\jmath:\frak{g}\to\frak{g}$ satisfies the conditions in Theorem \ref{thm-10.3.1}-(I), we conclude that the complex linear transformation $\jmath_\mathbb{C}:\frak{g}_\mathbb{C}\to\frak{g}_\mathbb{C}$ satisfies
\begin{enumerate}
   \item[(c.1)$'$]
      $\jmath_\mathbb{C}(Z)=0$ for all $Z\in\frak{h}_\mathbb{C}$,
   \item[(c.2)$'$] 
      $\jmath_\mathbb{C}^2(W)=-W$ $(\bmod{\frak{h}_\mathbb{C}})$ for all $W\in\frak{g}_\mathbb{C}$,
   \item[(c.3)$'$]
      $\jmath_\mathbb{C}\bigl(\operatorname{ad}Z(W)\bigr)=\operatorname{ad}Z\bigl(\jmath_\mathbb{C}(W)\bigr)$ $(\bmod{\frak{h}_\mathbb{C}})$ for all $(Z,W)\in\frak{h}_\mathbb{C}\times\frak{g}_\mathbb{C}$,\par
      $\jmath_\mathbb{C}\bigl(\operatorname{Ad}z(W)\bigr)=\operatorname{Ad}z\bigl(\jmath_\mathbb{C}(W)\bigr)$ $(\bmod{\frak{h}_\mathbb{C}})$ for all $(z,W)\in H\times\frak{g}_\mathbb{C}$,
   \item[(c.4)$'$] 
      $[\jmath_\mathbb{C}(W_1),\jmath_\mathbb{C}(W_2)]-[W_1,W_2]-\jmath_\mathbb{C}\bigl([\jmath_\mathbb{C}(W_1),W_2]\bigr)-\jmath_\mathbb{C}\bigl([W_1,\jmath_\mathbb{C}(W_2)]\bigr)\in\frak{h}_\mathbb{C}$ for all $W_1,W_2\in\frak{g}_\mathbb{C}$,
   \item[(c.s)$'$]
      $B_{\frak{g}_\mathbb{C}}\bigl(S,[\jmath_\mathbb{C}(W_1),\jmath_\mathbb{C}(W_2)]\bigr)=B_{\frak{g}_\mathbb{C}}\bigl(S,[W_1,W_2]\bigr)$ for all $W_1,W_2\in\frak{g}_\mathbb{C}$.
\end{enumerate}
   Note that $\frak{h}_\mathbb{C}$ is a complex subalgebra of $\frak{g}_\mathbb{C}$ and $\overline{\sigma}\circ\jmath_\mathbb{C}=\jmath_\mathbb{C}\circ\overline{\sigma}$.\par
   
   (1) is a consequence of (c.3)$'$.\par
   
   (2). 
   It is clear that $\frak{q}^s$ is a complex vector subspace of $\frak{g}_\mathbb{C}$. 
   From (1) and (c.4)$'$ we obtain $[\frak{q}^s,\frak{q}^s]\subset\frak{q}^s$. 
   Thus $\frak{q}^s$ is a complex subalgebra of $\frak{g}_\mathbb{C}$.\par
   
   (3).
   For each $V\in\frak{q}^+\cap\frak{q}^-$ there exist $Z_+,Z_-\in\frak{h}_\mathbb{C}$ such that $iV+Z_+=\jmath_\mathbb{C}(V)=-iV+Z_-$.
   Therefore one shows $V=(i/2)(Z_+-Z_-)\in\frak{h}_\mathbb{C}$, and $\frak{q}^+\cap\frak{q}^-\subset\frak{h}_\mathbb{C}$.
   The converse inclusion $\frak{h}_\mathbb{C}\subset\frak{q}^+\cap\frak{q}^-$ follows from (c.1)$'$.\par
   
   (4). 
   From $\frak{h}_\mathbb{C}=\frak{c}_{\frak{g}_\mathbb{C}}(S)$ and $\overline{\sigma}(S)=S$ we deduce that $\overline{\sigma}(\frak{h}_\mathbb{C})\subset\frak{h}_\mathbb{C}$, which leads to $\overline{\sigma}(\frak{q}^s)\subset\frak{q}^{-s}$ since $\overline{\sigma}\circ\jmath_\mathbb{C}=\jmath_\mathbb{C}\circ\overline{\sigma}$ and $\frak{q}^s=\{V\in\frak{g}_\mathbb{C} \,|\, \mbox{$\jmath_\mathbb{C}(V)=siV$ $(\bmod{\frak{h}_\mathbb{C}})$}\}$.\par
   
   (5).
   For an arbitrary $W\in\frak{g}_\mathbb{C}$, it follows from (c.2)$'$ that $W=(1/2)\bigl((W-i\jmath_\mathbb{C}(W))+(W+i\jmath_\mathbb{C}(W))\bigr)\in\frak{q}^++\frak{q}^-$, so that $\frak{g}_\mathbb{C}\subset\frak{q}^++\frak{q}^-$.
   Hence $\frak{q}^++\frak{q}^-=\frak{g}_\mathbb{C}$.\par
   
   (6). 
   A direct computation yields $\dim_\mathbb{C}\frak{g}_\mathbb{C}\stackrel{(5)}{=}\dim_\mathbb{C}(\frak{q}^++\frak{q}^-)=\dim_\mathbb{C}\frak{q}^++\dim_\mathbb{C}\frak{q}^--\dim_\mathbb{C}\frak{q}^+\cap\frak{q}^-\stackrel{(3)}{=}\dim_\mathbb{C}\frak{q}^++\dim_\mathbb{C}\frak{q}^--\dim_\mathbb{C}\frak{h}_\mathbb{C}\stackrel{(4)}{=}2\dim_\mathbb{C}\frak{q}^s-\dim_\mathbb{C}\frak{h}_\mathbb{C}$.\par
   
   (7). 
   For a given $Z\in\frak{g}_\mathbb{C}$, $Z\in\frak{h}_\mathbb{C}=\frak{c}_{\frak{g}_\mathbb{C}}(S)$ if and only if $\operatorname{ad}S(Z)=0$ if and only if $0=B_{\frak{g}_\mathbb{C}}([S,Z],W])=B_{\frak{g}_\mathbb{C}}(S,[Z,W])$ for all $W\in\frak{g}_\mathbb{C}$ (because $B_{\frak{g}_\mathbb{C}}$ is non-degenerate). 
   Thus (7) holds.\par

   (8). 
   For any $V_1,V_2\in\frak{q}^s$ ($s=\pm$), there exist $Z_1,Z_2\in\frak{h}_\mathbb{C}$ such that $\jmath_\mathbb{C}(V_1)=siV_1+Z_1$, $\jmath_\mathbb{C}(V_2)=siV_2+Z_2$. 
   Then
\[
\begin{split}
   B_{\frak{g}_\mathbb{C}}\bigl(S,[V_1,V_2]\bigr)
  &\stackrel{\mbox{\small{(c.s)$'$}}}{=}B_{\frak{g}_\mathbb{C}}\bigl(S,[\jmath_\mathbb{C}(V_1),\jmath_\mathbb{C}(V_2)]\bigr)
   =B_{\frak{g}_\mathbb{C}}\bigl(S,-s^2[V_1,V_2]+si[V_1,Z_2]+si[Z_1,V_2]+[Z_1,Z_2]\bigr)\\
  &\stackrel{(7)}{=}B_{\frak{g}_\mathbb{C}}\bigl(S,-s^2[V_1,V_2]\bigr)
   =-B_{\frak{g}_\mathbb{C}}\bigl(S,[V_1,V_2]\bigr).
\end{split} 
\]
   This implies that $B_{\frak{g}_\mathbb{C}}\bigl(S,[V_1,V_2]\bigr)=0$, and so $B_{\frak{g}_\mathbb{C}}\bigl(S,[\frak{q}^s,\frak{q}^s]\bigr)=\{0\}$.
\end{proof}

   Set $\frak{h}_\mathbb{C}$, $\frak{q}^\pm$ as in Lemma \ref{lem-10.4.3}.
   In view of Lemma \ref{lem-10.4.3}-(2), (6), (7), (8) we see that $\frak{q}^+$ is a complex subalgebra of the complex semisimple Lie algebra $\frak{g}_\mathbb{C}$ and is a weak polarization of $S$. 
   Thus, 
\begin{enumerate}
\item
   $\frak{q}^+$ is a complex parabolic subalgebra of $\frak{g}_\mathbb{C}$ and includes a complex Borel subalgebra $\frak{b}'$ of $\frak{g}_\mathbb{C}$,
\item
   $[S,\frak{q}^+]$ is an ideal of $\frak{q}^+$.
\end{enumerate} 
   cf.\ Theorem 2.2 in Ozeki-Wakimoto \cite[p.447]{OW}.
   In general, the intersection of two complex Borel subalgebras is not empty and includes a Cartan subalgebra in a complex semisimple Lie algebra. 
   Hence there exists a Cartan subalgebra $\frak{c}_\mathbb{C}'$ of $\frak{g}_\mathbb{C}$ such that $\frak{c}_\mathbb{C}'\subset\frak{b}'\cap\overline{\sigma}(\frak{b}')$, and then
\begin{equation}\label{eq-10.4.4}
   \frak{c}_\mathbb{C}'
   \subset\bigl(\frak{b}'\cap\overline{\sigma}(\frak{b}')\bigr)
   \subset(\frak{q}^+\cap\frak{q}^-)
   =\frak{h}_\mathbb{C}
   =\frak{c}_{\frak{g}_\mathbb{C}}(S) 
\end{equation}
by Lemma \ref{lem-10.4.3}-(4), (3). 
   This enables us to assert that $S\in\frak{c}_\mathbb{C}'$ and $S$ is a semisimple element of $\frak{g}$. 
   Moreover,
   
\begin{proposition}\label{prop-10.4.5}
   There exists an elliptic element $T\in\frak{g}$ such that $\frak{h}=\frak{c}_\frak{g}(T)$ and
\[
\begin{array}{lllll}
   \frak{h}_\mathbb{C}=\frak{g}^0, &
   [S,\frak{q}^+]=\bigoplus_{\lambda>0}\frak{g}^\lambda, & 
   [S,\frak{q}^-]=\bigoplus_{\lambda>0}\frak{g}^{-\lambda}, &
   \frak{q}^+=\bigoplus_{\nu\geq0}\frak{g}^\nu, &    
   \frak{q}^-=\bigoplus_{\nu\geq0}\frak{g}^{-\nu},   
\end{array}
\]
where $\frak{g}^\lambda:=\{W\in\frak{g}_\mathbb{C} \,|\, \operatorname{ad}T(W)=i\lambda W\}$ for $\lambda\in\mathbb{R}$.
   Here we refer to Lemma {\rm \ref{lem-10.4.3}} for $\frak{h}_\mathbb{C}$, $\frak{q}^\pm$.
\end{proposition}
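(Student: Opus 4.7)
Plan: The strategy is to realise $\frak{q}^+$ as the non-negative part of an elliptic grading $\frak{g}_\mathbb{C}=\bigoplus_{\nu}\frak{g}^\nu$. First I would record the Levi structure of $\frak{q}^+$. Because $\operatorname{ad}S$ is semisimple with kernel $\frak{h}_\mathbb{C}$ and stabilises $\frak{q}^+$ (which contains $S$), semisimplicity yields $\frak{q}^+=\frak{h}_\mathbb{C}\oplus[S,\frak{q}^+]$, and similarly $\frak{q}^-=\frak{h}_\mathbb{C}\oplus[S,\frak{q}^-]$. Thus $\frak{q}^+$ is a complex parabolic subalgebra of $\frak{g}_\mathbb{C}$ with Levi $\frak{h}_\mathbb{C}$ and nilradical $[S,\frak{q}^+]$, and the pair $(\frak{q}^+,\frak{q}^-)$ consists of opposite parabolics conjugate under $\overline{\sigma}$.

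Next, invoking the standard structure theory of parabolic subalgebras (cf.\ Remark \ref{rem-8.1.8}(iv)), I would choose a Cartan subalgebra $\frak{c}_\mathbb{C}$ of $\frak{h}_\mathbb{C}$---which is automatically a Cartan subalgebra of $\frak{g}_\mathbb{C}$ since $\frak{h}_\mathbb{C}$ is a Levi---and a system of simple roots $\Pi$ adapted to $\frak{q}^+$. This produces a grading element $\xi\in\frak{z}(\frak{h}_\mathbb{C})$ (a suitable combination of fundamental coweights) such that $\operatorname{ad}\xi$ has real eigenvalues, zero exactly on $\frak{h}_\mathbb{C}$, strictly positive on $[S,\frak{q}^+]$, and strictly negative on $[S,\frak{q}^-]$.

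The third step is to render $\xi$ purely imaginary. Because $\overline{\sigma}(\frak{q}^+)=\frak{q}^-$ and $\overline{\sigma}$ preserves $\frak{z}(\frak{h}_\mathbb{C})$, a short calculation with anti-linearity shows that $-\overline{\sigma}(\xi)$ is another grading element for $\frak{q}^+$ with the same eigenvalue sign pattern: on a simultaneous eigenvector $W\in[S,\frak{q}^+]$ with $\operatorname{ad}\xi(W)=aW$, the identity $\operatorname{ad}\overline{\sigma}(\xi)(W)=\overline{\sigma}(\operatorname{ad}\xi(\overline{\sigma}(W)))$ together with $\overline{\sigma}(W)\in[S,\frak{q}^-]$ (where $\xi$ acts by some $b<0$) gives $\operatorname{ad}\overline{\sigma}(\xi)(W)=bW$. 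Setting $\xi':=(\xi-\overline{\sigma}(\xi))/2\in\frak{z}(\frak{h}_\mathbb{C})$, the eigenvalue of $\operatorname{ad}\xi'$ on $W$ equals $(a-b)/2>0$; hence $\xi'$ has zero eigenspace exactly $\frak{h}_\mathbb{C}$ and positive (resp.\ negative) eigenspaces exactly $[S,\frak{q}^+]$ (resp.\ $[S,\frak{q}^-]$), and satisfies $\overline{\sigma}(\xi')=-\xi'$. Define $T:=i\xi'$. Then $\overline{\sigma}(T)=-i\overline{\sigma}(\xi')=i\xi'=T$, so $T\in\frak{g}$, and $\operatorname{ad}T=i\operatorname{ad}\xi'$ has purely imaginary eigenvalues, making $T$ elliptic. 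The four identifications $\frak{h}_\mathbb{C}=\frak{g}^0$, $[S,\frak{q}^\pm]=\bigoplus_{\lambda>0}\frak{g}^{\pm\lambda}$, and $\frak{q}^\pm=\bigoplus_{\nu\geq0}\frak{g}^{\pm\nu}$ then read off directly, and $\frak{c}_\frak{g}(T)=\frak{g}\cap\frak{c}_{\frak{g}_\mathbb{C}}(T)=\frak{g}\cap\frak{g}^0=\frak{g}\cap\frak{h}_\mathbb{C}=\frak{h}$.

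The principal obstacle I foresee is the second step: producing the grading element $\xi$ with the required integer eigenvalue pattern, since it requires identifying $\frak{q}^+$ as a standard parabolic relative to a chosen Cartan subalgebra and set of simple roots, and then assembling $\xi$ from fundamental coweights attached to $\Pi\setminus\Pi_0$ (where $\Pi_0$ indexes the Levi). The averaging argument in the third step must also be executed carefully to rule out eigenvalue cancellation, but as noted the eigenvalues of $\xi$ and of $-\overline{\sigma}(\xi)$ share the same strict sign on each nilradical, so $\xi'$ does not degenerate and the kernel stays equal to $\frak{h}_\mathbb{C}$.
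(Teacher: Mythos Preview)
Your proposal is correct and follows essentially the same strategy as the paper: construct a real-diagonalizable grading element in $\frak{z}(\frak{h}_\mathbb{C})$ whose eigenspace decomposition is $\frak{h}_\mathbb{C}\oplus[S,\frak{q}^+]\oplus[S,\frak{q}^-]$, anti-symmetrize it against $\overline{\sigma}$, and multiply by $i$ to obtain $T\in\frak{g}$. The only difference is the specific grading element chosen---the paper takes $Z:=\sum_{\delta\in\triangle^+-\blacktriangle}H_\delta$ relative to a $\overline{\sigma}$-stable Cartan subalgebra $\frak{c}_\mathbb{C}$ coming from a real Cartan $\frak{c}\ni S$ of $\frak{g}$, and verifies the sign pattern $\alpha(Z)>0$ for $\alpha\in\triangle^+-\blacktriangle$ by a direct Weyl-group computation, whereas your sum of fundamental coweights has the positivity built in and does not require the Cartan to be $\overline{\sigma}$-stable; both routes lead to the same averaging step $\xi\mapsto(\xi-\overline{\sigma}(\xi))/2$ (the paper writes $T:=i(Z-\overline{\sigma}(Z))$).
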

\begin{proof}
   We are going to prepare some notation first.
   Since $S\in\frak{g}$ is semisimple, there exists a (real) Cartan subalgebra $\frak{c}\subset\frak{g}$ containing $S$. 
   Denote by $\frak{c}_\mathbb{C}$ the complex vector subspace of $\frak{g}_\mathbb{C}$ generated by $\frak{c}$, by $\triangle=\triangle(\frak{g}_\mathbb{C},\frak{c}_\mathbb{C})$ the root system of $\frak{g}_\mathbb{C}$ relative to $\frak{c}_\mathbb{C}$, by $\frak{g}_\alpha$ the root subspace of $\frak{g}_\mathbb{C}$ for $\alpha\in\triangle$, and by $H_\alpha$ ($\alpha\in\triangle$) the unique element of $\frak{c}_\mathbb{C}$ such that $\alpha(X)=B_{\frak{g}_\mathbb{C}}(H_\alpha,X)$ for all $X\in\frak{c}_\mathbb{C}$. 
   Taking $S\in\frak{c}_\mathbb{C}$ and $\overline{\sigma}(\frak{c}_\mathbb{C})=\frak{c}_\mathbb{C}$ into account, we define a symmetric closed subset $\blacktriangle\subset\triangle$ and an involutive transformation $\overline{\sigma}^*:\triangle\to\triangle$ by 
\begin{equation}\label{eq-0}\tag*{\textcircled{0}}
\begin{array}{ll}
   \blacktriangle:=\{\gamma\in\triangle \,|\, \gamma(S)=0\}, 
 & \mbox{$(\overline{\sigma}^*\alpha)(X):=\overline{\alpha(\overline{\sigma}(X))}$ for $\alpha\in\triangle$ and $X\in\frak{c}_\mathbb{C}$},
\end{array}
\end{equation}
respectively.
   Then it turns out that $\frak{g}_\mathbb{C}=\frak{c}_\mathbb{C}\oplus\bigoplus_{\alpha\in\triangle}\frak{g}_\alpha$, $\frak{c}_{\frak{g}_\mathbb{C}}(S)=\frak{c}_\mathbb{C}\oplus\bigoplus_{\gamma\in\blacktriangle}\frak{g}_\gamma$, and $\frak{h}_\mathbb{C}=\frak{c}_{\frak{g}_\mathbb{C}}(S)$ is a complex reductive Lie algebra including $\frak{c}_\mathbb{C}$.
   In addition, since $S$ is semisimple and $[S,\frak{q}^\pm]\subset\frak{q}^\pm$ we have $\frak{q}^\pm=\frak{c}_{\frak{q}^\pm}(S)\oplus\operatorname{ad}S(\frak{q}^\pm)=\frak{h}_\mathbb{C}\oplus[S,\frak{q}^\pm]$, and there exist subsets $\triangle_q^+,\triangle_q^-\subset\triangle$ such that
\begin{equation}\label{eq-1}\tag*{\textcircled{1}}
\begin{array}{lll}
   \triangle=\triangle_q^+\amalg\blacktriangle\amalg\triangle_q^-, 
   & [S,\frak{q}^+]=\bigoplus_{\delta\in\triangle_q^+}\frak{g}_\delta, 
   & [S,\frak{q}^-]=\bigoplus_{\beta\in\triangle_q^-}\frak{g}_\beta
\end{array}
\end{equation}
by $[\frak{c}_\mathbb{C},\frak{q}^\pm]\subset\frak{q}^\pm$ and Lemma \ref{lem-10.4.3}-(5), (3).
   Remark that the cardinal number $|\triangle_q^+|$ is equal to $|\triangle_q^-|$ (because $\bigoplus_{\delta\in\triangle_q^+}\frak{g}_\delta=[S,\frak{q}^+]=\overline{\sigma}([S,\frak{q}^-])=\bigoplus_{\beta\in\triangle_q^-}\frak{g}_{\overline{\sigma}^*\beta}$ follows from \ref{eq-1}, $\overline{\sigma}(S)=S$ and Lemma \ref{lem-10.4.3}-(4)), and that $\overline{\sigma}^*(\triangle_q^-)=\triangle_q^+$, $\overline{\sigma}^*(\triangle_q^+)=\triangle_q^-$ and $\overline{\sigma}^*(\blacktriangle)=\blacktriangle$.
\par

   Now, $\frak{c}_\mathbb{C}',\frak{c}_\mathbb{C}$ are Cartan subalgebras of $\frak{h}_\mathbb{C}$ and $\frak{h}_\mathbb{C}$ is a complex reductive Lie algebra. 
   cf.\ \eqref{eq-10.4.4}.
   For this reason there exists an inner automorphism $\psi$ of $\frak{h}_\mathbb{C}$ such that $\psi(\frak{c}_\mathbb{C}')=\frak{c}_\mathbb{C}$. 
   One can regard this $\psi$ as an inner automorphism of $\frak{g}_\mathbb{C}$, so $\frak{b}:=\psi(\frak{b}')$ is a complex Borel subalgebra of $\frak{g}_\mathbb{C}$. 
   Besides, it follows from $[\frak{h}_\mathbb{C},\frak{b}']\subset[\frak{h}_\mathbb{C},\frak{q}^+]\subset\frak{q}^+$ that $\psi(\frak{b}')\subset\frak{q}^+$, so that 
\begin{equation}\label{eq-2}\tag*{\textcircled{2}}
   \frak{c}_\mathbb{C}\subset\frak{b}\subset\frak{q}^+
\end{equation}
by $\frak{c}_\mathbb{C}=\psi(\frak{c}_\mathbb{C}')\subset\psi(\frak{b}')$.
   Relative to this Borel subalgebra $\frak{b}\subset\frak{g}_\mathbb{C}$, we fix the set $\triangle^+$ ($\subset\triangle(\frak{g}_\mathbb{C},\frak{c}_\mathbb{C})$) of positive roots and put $\triangle^-:=-\triangle^+$.   
   Then $\frak{b}\subset\frak{q}^+=\frak{h}_\mathbb{C}\oplus[S,\frak{q}^+]$ and \ref{eq-1} yield $\triangle^+-\blacktriangle\subset\triangle_q^+$. 
   Accordingly we conclude 
\[
\begin{array}{ll}
   \triangle^+-\blacktriangle=\triangle_q^+, & \triangle^--\blacktriangle=\triangle_q^-
\end{array}   
\] 
from $\triangle=(\triangle^+-\blacktriangle)\amalg\blacktriangle\amalg(\triangle^--\blacktriangle)$, \ref{eq-1}, $|\triangle^+-\blacktriangle|=|\triangle^--\blacktriangle|$ and $|\triangle_q^+|=|\triangle_q^-|$. 
   Summarizing the statements above we show that 
\begin{equation}\label{eq-3}\tag*{\textcircled{3}}
\left\{\begin{array}{@{}l}
   \begin{array}{lll}
     \frak{h}_\mathbb{C}=\frak{c}_\mathbb{C}\oplus\bigoplus_{\gamma\in\blacktriangle}\frak{g}_\gamma, 
   & [S,\frak{q}^+]=\bigoplus_{\delta\in\triangle^+-\blacktriangle}\frak{g}_\delta, 
   & [S,\frak{q}^-]=\bigoplus_{\beta\in\triangle^--\blacktriangle}\frak{g}_\beta,
   \end{array}\\
   \begin{array}{lllll}
   \frak{g}_\mathbb{C}=\frak{c}_{\frak{g}_\mathbb{C}}(S)\oplus\operatorname{ad}S(\frak{g}_\mathbb{C}), &\frak{c}_{\frak{g}_\mathbb{C}}(S)=\frak{h}_\mathbb{C}, & \operatorname{ad}S(\frak{g}_\mathbb{C})=[S,\frak{q}^+]\oplus[S,\frak{q}^-], & \frak{q}^+=\frak{h}_\mathbb{C}\oplus[S,\frak{q}^+], & \frak{q}^-=\frak{h}_\mathbb{C}\oplus[S,\frak{q}^-],
   \end{array}\\
   \begin{array}{lll}
   \overline{\sigma}^*(\blacktriangle)=\blacktriangle, & \overline{\sigma}^*(\triangle^+-\blacktriangle)=\triangle^--\blacktriangle, & \overline{\sigma}^*(\triangle^--\blacktriangle)=\triangle^+-\blacktriangle.
   \end{array}
   \end{array}\right.
\end{equation}
   Let us put $Z:=\sum_{\delta\in\triangle^+-\blacktriangle}H_\delta$. 
   Then $Z$ belongs to $\frak{c}_\mathbb{C}$ and it follows form $\big[\frak{h}_\mathbb{C},[S,\frak{q}^+]\big]\subset[S,\frak{q}^+]$, $\big[[S,\frak{q}^+],[S,\frak{q}^+]\big]\subset[S,\frak{q}^+]$ that for each $\alpha\in\triangle$,
\[
   \mbox{$\alpha(Z)$ is }
   \begin{cases}
    \mbox{the zero} & \mbox{if $\alpha\in\blacktriangle$},\\
    \mbox{a positive real number} & \mbox{if $\alpha\in\triangle^+-\blacktriangle$},\\
    \mbox{a negative real number} & \mbox{if $\alpha\in\triangle^--\blacktriangle$},\\    
   \end{cases}
\]
   cf.\ Corollary 5.101 in Knapp \cite[p.330]{Kn}.\footnote{
   Indeed; let $\langle\alpha,\beta\rangle:=B_{\frak{g}_\mathbb{C}}(H_\alpha,H_\beta)$ for $\alpha,\beta\in\triangle$. 
   Define $\zeta$ (resp.\ $w_\alpha$) in a similar way to \eqref{eq-8.1.3} (resp.\ \eqref{eq-8.1.4}).\par
   $\bullet$ In case of $\alpha\in\blacktriangle$, it follows from $E_\alpha-E_{-\alpha}\in\frak{h}_\mathbb{C}$ and $\big[\frak{h}_\mathbb{C},[S,\frak{q}^+]\big]\subset[S,\frak{q}^+]$ that $\operatorname{Ad}w_\alpha(\frak{g}_\delta)=\frak{g}_{\zeta([w_\alpha])\delta}$, $\zeta([w_\alpha])\delta\in\triangle^+-\blacktriangle$ for all $\delta\in\triangle^+-\blacktriangle$. 
   Therefore $\alpha(Z)=\alpha\bigl(\sum_{\delta\in\triangle^+-\blacktriangle}H_\delta\bigr)=\sum_{\delta\in\triangle^+-\blacktriangle}\langle\alpha,\delta\rangle=\sum_{\delta\in\triangle^+-\blacktriangle}\big\langle\zeta([w_\alpha])\alpha,\zeta([w_\alpha])\delta\big\rangle=-\sum_{\delta\in\triangle^+-\blacktriangle}\big\langle\alpha,\zeta([w_\alpha])\delta\big\rangle=-\alpha(Z)$. 
   This implies that $\alpha(Z)=0$.\par
   
   $\bullet$ Suppose that $\alpha\in\triangle^+-\blacktriangle$.
   If $\delta'\in\triangle^+-\blacktriangle$ and $\langle\delta',\alpha\rangle<0$, then one has $2\langle\delta',\alpha\rangle/\langle\alpha,\alpha\rangle=-1$, $-2$ or $-3$ (because of $\delta'\neq-\alpha$) and accordingly $\zeta([w_\alpha])\delta'=\delta'+\alpha$, $\delta'+2\alpha$ or $\delta'+3\alpha$.
   At any rate $\zeta([w_\alpha])\delta'$ belongs to $\triangle^+-\blacktriangle$ since $E_\alpha,E_{\delta'}\in[S,\frak{q}^+]$ and $\big[[S,\frak{q}^+],[S,\frak{q}^+]\big]\subset[S,\frak{q}^+]$.
   Besides, one shows $\big\langle\zeta([w_\alpha])\delta',\alpha\big\rangle=-\langle\delta',\alpha\rangle>0$.
   Consequently it turns out that
\[
\begin{split}
   \alpha(Z)
   &=\mbox{$\sum_{\delta'\in\triangle^+-\blacktriangle\mbox{ with }\langle\delta',\alpha\rangle<0}\langle\delta',\alpha\rangle
    +\sum_{\delta_0\in\triangle^+-\blacktriangle\mbox{ with }\langle\delta_0,\alpha\rangle=0}\langle\delta_0,\alpha\rangle
    +\sum_{\delta_a\in\triangle^+-\blacktriangle\mbox{ with }\langle\delta_a,\alpha\rangle>0}\langle\delta_a,\alpha\rangle$}\\
   &=\mbox{$\sum_{\delta'\in\triangle^+-\blacktriangle\mbox{ with }\langle\delta',\alpha\rangle<0}\big\langle\delta'+\zeta([w_\alpha])\delta',\alpha\big\rangle
    +\sum_{\delta\in\triangle^+-\blacktriangle\mbox{ with }\langle\delta,\alpha\rangle>0,\, \zeta([w_\alpha])\delta\not\in\triangle^+-\blacktriangle}\langle\delta,\alpha\rangle$}\\
   &=\mbox{$\sum_{\delta\in\triangle^+-\blacktriangle\mbox{ with }\langle\delta,\alpha\rangle>0,\, \zeta([w_\alpha])\delta\not\in\triangle^+-\blacktriangle}\langle\delta,\alpha\rangle$}
   >0.
\end{split}
\]
   Here we remark that $\alpha\in\triangle^+-\blacktriangle$, $\langle\alpha,\alpha\rangle>0$, $\zeta([w_\alpha])\alpha\not\in\triangle^+-\blacktriangle$.\par

   $\bullet$ If $\alpha\in\triangle^--\blacktriangle$, then we conclude $\alpha(Z)<0$ from $-\alpha\in\triangle^+-\blacktriangle$.   
} 
   Therefore \ref{eq-3} and \ref{eq-0} assure that for each $\alpha\in\triangle$, $\overline{(\overline{\sigma}^*\alpha)(Z)}=(\overline{\sigma}^*\alpha)(Z)$ and  
\begin{equation}\label{eq-4}\tag*{\textcircled{4}}
   \mbox{$\alpha\bigl(Z-\overline{\sigma}(Z)\bigr)$ is }
   \begin{cases}
    =0 & \mbox{if $\alpha\in\blacktriangle$},\\
    >0 & \mbox{if $\alpha\in\triangle^+-\blacktriangle$},\\
    <0 & \mbox{if $\alpha\in\triangle^--\blacktriangle$}.\\    
   \end{cases}
\end{equation}
   Setting $T:=i\bigl(Z-\overline{\sigma}(Z)\bigr)\in\frak{c}_\mathbb{C}$, we demonstrate that $T=iZ+\overline{\sigma}(iZ)$ is an element of $\frak{g}=\{W\in\frak{g}_\mathbb{C} \,|\, \overline{\sigma}(W)=W\}$; moreover, $T$ is elliptic, $\frak{h}_\mathbb{C}=\frak{g}^0$, $[S,\frak{q}^\pm]=\bigoplus_{\lambda>0}\frak{g}^{\pm\lambda}$ and $\frak{q}^\pm=\bigoplus_{\nu\geq0}\frak{g}^{\pm\nu}$ by \ref{eq-4}, \ref{eq-3}. 
   In addition, it follows from  $\frak{c}_{\frak{g}_\mathbb{C}}(S)=\frak{h}_\mathbb{C}=\frak{g}^0=\frak{c}_{\frak{g}_\mathbb{C}}(T)$ that $\frak{h}=\frak{c}_\frak{g}(S)=\bigl(\frak{g}\cap\frak{c}_{\frak{g}_\mathbb{C}}(S)\bigr)=\bigl(\frak{g}\cap\frak{c}_{\frak{g}_\mathbb{C}}(T)\bigr)=\frak{c}_\frak{g}(T)$. 
\end{proof}

   We will state Theorem \ref{thm-10.4.7} after proving 
\begin{lemma}\label{lem-10.4.6}
   Let $T$ have the properties in Proposition {\rm \ref{prop-10.4.5}}.
   Then, $H$ coincides with $C_G(T)$. 
\end{lemma}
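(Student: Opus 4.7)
The first inclusion $C_G(T) \subset H$ will be almost immediate: Proposition~\ref{prop-10.4.5} yields $\frak{h} = \frak{c}_\frak{g}(T)$, so $H$ and $C_G(T)$ share the same Lie algebra, hence $H_0 = C_G(T)_0$; and Lemma~\ref{lem-7.3.3} guarantees that $C_G(T)$ is connected, so $C_G(T) = C_G(T)_0 = H_0 \subset H$.

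For the reverse inclusion $H \subset C_G(T)$, I will fix an arbitrary $z \in H$ and show $\operatorname{Ad}z(T) = T$. The key input is Lemma~\ref{lem-10.4.3}-(1), which tells us that $\operatorname{Ad}z$ preserves both $\frak{q}^+$ and $\frak{q}^-$. By Proposition~\ref{prop-10.4.5} these coincide with $\bigoplus_{\nu \geq 0}\frak{g}^{\nu}$ and $\bigoplus_{\nu \geq 0}\frak{g}^{-\nu}$, the two opposite complex parabolic subalgebras of $\frak{g}_\mathbb{C}$ associated to the elliptic element $T$. To exploit this parabolic structure I will pass to the complex adjoint group $G_\mathbb{C} := \operatorname{Int}(\frak{g}_\mathbb{C})$, which is a connected complex semisimple Lie group; since $G$ is connected and semisimple, the adjoint map $\operatorname{Ad} : G \to G_\mathbb{C}$ is well-defined, and we may regard $\operatorname{Ad}z$ as an element of $G_\mathbb{C}$. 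To set up the Chapter~\ref{ch-8} machinery I will invoke Lemma~\ref{lem-7.2.4} to choose a Cartan decomposition $\frak{g} = \frak{k} \oplus \frak{p}$ with $T \in \frak{k}$; then $\frak{g}_u := \frak{k} \oplus i\frak{p}$ is a compact real form of $\frak{g}_\mathbb{C}$ containing $T$, which supplies the Cartan involution of $\frak{g}_\mathbb{C}$ required there. The preservation properties of $\operatorname{Ad}z$ then translate into $\operatorname{Ad}z \in N_{G_\mathbb{C}}(\frak{q}^+) \cap N_{G_\mathbb{C}}(\frak{q}^-) = Q^+ \cap Q^-$.

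At this stage Proposition~\ref{prop-8.2.1}-(vii) finishes the job: $Q^+ \cap Q^- = L_\mathbb{C} = C_{G_\mathbb{C}}(T)$, whence $\operatorname{Ad}z(T) = T$ and $z \in C_G(T)$. The main subtlety is just the bookkeeping involved in viewing $\operatorname{Ad}z$ simultaneously as a linear transformation of $\frak{g}_\mathbb{C}$ (where the preservation of $\frak{q}^\pm$ is visible from Theorem~\ref{thm-10.3.1} and Lemma~\ref{lem-10.4.3}) and as an element of the complex group $G_\mathbb{C}$ (where Proposition~\ref{prop-8.2.1}-(vii) is applied). This is unproblematic but is the only step where the real-versus-complex distinction must be monitored; once the compact real form $\frak{g}_u$ above is fixed, the Chapter~\ref{ch-8} setup applies verbatim and the argument concludes.
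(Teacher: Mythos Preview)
Your proposal is correct and follows essentially the same route as the paper: both establish $C_G(T)\subset H$ via $\frak{h}=\frak{c}_\frak{g}(T)$ and the connectedness of $C_G(T)$ (Lemma~\ref{lem-7.3.3}), and both obtain $H\subset C_G(T)$ by passing to the adjoint group of $\frak{g}_\mathbb{C}$, using Lemma~\ref{lem-10.4.3}-(1) and Proposition~\ref{prop-10.4.5} to place $\operatorname{Ad}z$ in $Q^+\cap Q^-$, and then invoking Proposition~\ref{prop-8.2.1}-(vii). The paper spells out slightly more explicitly the step from $\operatorname{Ad}z\in C_{\hat{G}_\mathbb{C}}(T)$ to $\operatorname{Ad}z(T)=T$ (via $\operatorname{Ad}z\circ\operatorname{ad}T\circ(\operatorname{Ad}z)^{-1}=\operatorname{ad}T$ and the triviality of the center of $\frak{g}_\mathbb{C}$), which is exactly the bookkeeping you flagged.
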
 
\begin{proof}
   At the beginning of this subsection one has known $C_G(S)_0\subset H\subset C_G(S)$.
   Therefore Proposition \ref{prop-10.4.5} and Lemma \ref{lem-7.3.3} give rise to 
\[
   C_G(T)=C_G(S)_0\subset H\subset C_G(S).
\]
   Hence, the rest of proof is to confirm that $H\subset C_G(T)$. 
   Let us denote by $\hat{G}$ the adjoint group of $\frak{g}$, set $\hat{H}:=\operatorname{Ad}H$, and identify $\frak{g}$ with $\hat{\frak{g}}$ via $\frak{g}\ni X\mapsto\operatorname{ad}X\in\hat{\frak{g}}$.  
   Our first aim is to prove 
\begin{equation}\label{eq-1}\tag*{\textcircled{1}}
   \hat{H}\subset C_{\hat{G}}(T). 
\end{equation}
   Set $\hat{G}_\mathbb{C}$ as the adjoint group of $\frak{g}_\mathbb{C}$.  
   In view of Lemma \ref{lem-10.4.3}-(1) and Proposition \ref{prop-10.4.5} we see that 
\[
   \mbox{$\hat{H}\subset\bigl(N_{\hat{G}_\mathbb{C}}(\bigoplus_{\nu\geq0}\frak{g}^\nu)\cap N_{\hat{G}_\mathbb{C}}(\bigoplus_{\nu\geq0}\frak{g}^{-\nu})\bigr)$},
\]
where we identify $\frak{g}_\mathbb{C}=\hat{\frak{g}}_\mathbb{C}$ in a similar way.
   That, together with Proposition \ref{prop-8.2.1}-(vii), yields $\hat{H}\subset C_{\hat{G}_\mathbb{C}}(T)$, and hence $\hat{H}\subset(\hat{G}\cap C_{\hat{G}_\mathbb{C}}(T))=C_{\hat{G}}(T)$.
   Thus \ref{eq-1} holds.    
   From now on, let us confirm that $H\subset C_G(T)$. 
   For any $h\in H$, it follows that $\operatorname{Ad}h\in\hat{H}$, and \ref{eq-1} implies $\operatorname{Ad}h\circ\operatorname{ad}T\circ(\operatorname{Ad}h)^{-1}=\operatorname{ad}T$.
   Hence we have $\operatorname{Ad}h(T)=T$, and $H\subset C_G(T)$.  
\end{proof}
   
    By summarizing the statements above and by Proposition \ref{prop-7.3.4} we conclude 
\begin{theorem}[{cf.\ Dorfmeister-Guan \cite[p.335]{DG1}}]\label{thm-10.4.7}
   Let $G$ be a connected real semisimple Lie group, and let $H$ be a closed subgroup of $G$. 
   Suppose the homogeneous space $G/H$ to admit a $G$-invariant complex structure $J$ and a $G$-invariant symplectic form $\Omega$ such that  
\[
   \mbox{$\Omega(JA,JB)=\Omega(A,B)$ for all $A,B\in\frak{X}(G/H)$}.
\] 
   Then, there exists an elliptic element $T\in\frak{g}$ satisfying 
\[
   H=C_G(T).
\]   
   Therefore any homogeneous pseudo-K\"{a}hler manifold of $G$ is an elliptic adjoint orbit, and it is always simply connected. 
\end{theorem}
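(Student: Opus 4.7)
The plan is to assemble Theorem \ref{thm-10.4.7} directly from the machinery already built up in Sections \ref{sec-10.1}--\ref{sec-10.4}, so only a short bookkeeping argument is needed. First, I would apply Theorem \ref{thm-10.3.1}-(I) to the pseudo-K\"{a}hlerian data $(J,\Omega)$: this produces a linear transformation $\jmath:\frak{g}\to\frak{g}$ and a skew-symmetric bilinear form $\omega:\frak{g}\times\frak{g}\to\mathbb{R}$ satisfying the Jacobi-type identity (s.1), $\frak{h}=\{Z\in\frak{g} \,|\, \omega(Z,\,\cdot\,)=0\}$, $H$-invariance, and compatibility $\omega(\jmath X,\jmath Y)=\omega(X,Y)$.

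Next, since $\frak{g}$ is semisimple, Lemma \ref{lem-9.2.1} converts $\omega$ into an element $S\in\frak{g}$ with $\omega(X,Y)=B_\frak{g}(S,[X,Y])$ and $\frak{c}_\frak{g}(S)=\frak{h}$, and Proposition \ref{prop-9.2.2} upgrades the Lie algebra equality to the sandwich $C_G(S)_0\subset H\subset C_G(S)$. This is the point where $G/H$ becomes effectively a piece of a semisimple orbit. Then I would transport everything to the complexification: set $\frak{h}_\mathbb{C}:=\frak{c}_{\frak{g}_\mathbb{C}}(S)$ and define $\frak{q}^\pm:=\{V\in\frak{g}_\mathbb{C} \,|\, \jmath_\mathbb{C}(V)=\pm iV \pmod{\frak{h}_\mathbb{C}}\}$, where $\jmath_\mathbb{C}$ is the $\mathbb{C}$-linear extension. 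Lemma \ref{lem-10.4.3} then records that $\frak{q}^\pm$ are complex subalgebras of $\frak{g}_\mathbb{C}$ with $\frak{q}^+\cap\frak{q}^-=\frak{h}_\mathbb{C}$, $\frak{q}^++\frak{q}^-=\frak{g}_\mathbb{C}$, $\overline{\sigma}(\frak{q}^\pm)=\frak{q}^\mp$, and $B_{\frak{g}_\mathbb{C}}(S,[\frak{q}^\pm,\frak{q}^\pm])=0$; that is, $\frak{q}^+$ is a weak polarization of $S$ in the sense of Ozeki--Wakimoto.

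With this setup in hand, Proposition \ref{prop-10.4.5} produces an elliptic element $T\in\frak{g}$ such that $\frak{h}_\mathbb{C}=\frak{g}^0$ and $\frak{q}^\pm=\bigoplus_{\nu\geq 0}\frak{g}^{\pm\nu}$, where $\frak{g}^\lambda$ is the $i\lambda$-eigenspace of $\operatorname{ad}T$. Passing to real forms gives $\frak{h}=\frak{c}_\frak{g}(T)$, which via Lemma \ref{lem-7.3.3} identifies the identity component as $C_G(S)_0=C_G(T)$. The final step is Lemma \ref{lem-10.4.6}: using that $H$ normalizes both $\frak{q}^+$ and $\frak{q}^-$ (the Ad-invariance of $\jmath$ from condition (c.3)), Proposition \ref{prop-8.2.1}-(vii) forces $\operatorname{Ad}(H)\subset C_{\hat{G}_\mathbb{C}}(T)$, and hence $H\subset C_G(T)$; combined with $C_G(T)=C_G(S)_0\subset H$ this yields the desired equality $H=C_G(T)$. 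Simply-connectedness of $G/H=G/C_G(T)$ is then immediate from Proposition \ref{prop-7.3.4}.

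The main obstacle in the argument is really already overcome in Proposition \ref{prop-10.4.5}: the subtle step is producing the elliptic $T$ whose root-space decomposition matches $\frak{q}^\pm$ exactly, starting only from the semisimple element $S$ and the abstract weak polarization $\frak{q}^+$. Once one chooses a Cartan subalgebra of $\frak{g}_\mathbb{C}$ sitting inside $\frak{b}'\cap\overline{\sigma}(\frak{b}')\subset\frak{h}_\mathbb{C}$ for a Borel $\frak{b}'\subset\frak{q}^+$, the root system computation in \ref{eq-4} pins down the positivity datum, and $T=i(Z-\overline{\sigma}(Z))$ drops out with $Z$ a sum of coroots; this is the only nontrivial piece, and everything else in the proof of Theorem \ref{thm-10.4.7} is just citing previously established results in the correct order.
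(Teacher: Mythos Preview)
Your proposal is correct and follows essentially the same approach as the paper: the theorem is assembled by citing Theorem~\ref{thm-10.3.1}-(I), Lemma~\ref{lem-9.2.1} and Proposition~\ref{prop-9.2.2}, then Lemma~\ref{lem-10.4.3}, Proposition~\ref{prop-10.4.5}, Lemma~\ref{lem-10.4.6}, and Proposition~\ref{prop-7.3.4} in exactly this order, and you have identified the one substantive step (the construction of $T$ in Proposition~\ref{prop-10.4.5}) correctly. One minor terminological point: in the paper $Z=\sum_{\delta\in\triangle^+-\blacktriangle}H_\delta$ is a sum of the root vectors $H_\delta$ (defined via the Killing form), not of the coroots $H_\delta^*$, but this does not affect the argument.
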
 

\section{Invariant complex structures on an elliptic orbit}\label{sec-10.5}
   
   It is known that there are several kinds of invariant complex structures on an elliptic adjoint orbit.
   One can understand that from the following example:
\begin{example}[$G/C_G(T)=SU(2,1)/S(U(1)\times U(1)\times U(1))$]\label{ex-10.5.1}
   Let $G:=SU(2,1)=\{X\in SL(3,\mathbb{C}) \,|\, {}^t\!XI_{2,1}\overline{X}=I_{2,1}\}$ and
\[
   T:=
   \left(\begin{array}{cc|c}
      i & 0 & 0\\
      0 & 0 & 0\\ \hline
      0 & 0 & -i
   \end{array}\right)\!,
\]
where $I_{2,1}=\begin{pmatrix} -1 & 0 & 0\\ 0 & -1 & 0\\ 0 & 0 & 1\end{pmatrix}$. 
   Then it turns out that
\[
   \frak{g}=\frak{su}(2,1)=
   \left\{\begin{array}{@{}c|l@{}}
   \left(\begin{array}{cc|c}
    ia_1 &  b+ic & ix-y\\
   -b+ic &  ia_2 & iz-w\\ \hline
   -ix-y & -iz-w & ia_3
   \end{array}\right)
   & \begin{array}{@{}r@{}} a_1,a_2,a_3,b,c,x,y,z,w\in\mathbb{R},\\ a_1+a_2+a_3=0\end{array}
   \end{array}\right\}  
\] 
and $T$ is an elliptic element of $\frak{g}$; besides, 
\[
\begin{array}{l}
   C_G(T)
   =\left\{
   \left(\begin{array}{cc|c}
    \epsilon_1 & 0   & 0\\
    0   & \epsilon_2 & 0\\ \hline
    0   & 0   & \epsilon_3
   \end{array}\right)\!\in G\right\}
   =S(U(1)\times U(1)\times U(1)),\\ 
   \operatorname{ad}T(\frak{g})=
   \left\{\begin{array}{@{}c|l@{}}
   \left(\begin{array}{cc|c}
    0    &  b+ic & ix-y\\
   -b+ic &  0    & iz-w\\ \hline
   -ix-y & -iz-w & 0
   \end{array}\right)
   & b,c,x,y,z,w\in\mathbb{R}
   \end{array}\right\}    
\end{array}
\]
and $\frak{g}=\frak{c}_\frak{g}(T)\oplus\operatorname{ad}T(\frak{g})$. 
   Now, let us define linear mappings $\jmath_1,\jmath_2,\jmath_3,\jmath_4,\jmath_5,\jmath_6:\frak{g}\to\operatorname{ad}T(\frak{g})$ by   
\allowdisplaybreaks{
\begin{align*}
&  \jmath_1
   \left(\begin{array}{cc|c}
    ia_1 &  b+ic & ix-y\\
   -b+ic &  ia_2 & iz-w\\ \hline
   -ix-y & -iz-w & ia_3
   \end{array}\right)
   :=
   \left(\begin{array}{cc|c}
    0    &  ib-c  & -x-iy\\
   ib+c  &  0     & -z-iw\\ \hline
   -x+iy &  -z+iw & 0
   \end{array}\right)\!,
   \qquad \jmath_2:=-\jmath_1,\\
&  \jmath_3
   \left(\begin{array}{cc|c}
    ia_1 &  b+ic & ix-y\\
   -b+ic &  ia_2 & iz-w\\ \hline
   -ix-y & -iz-w & ia_3
   \end{array}\right)
   :=
   \left(\begin{array}{cc|c}
    0    &  ib-c  & -x-iy\\
   ib+c  &  0     & z+iw\\ \hline
   -x+iy &  z-iw  & 0
   \end{array}\right)\!,
   \qquad \jmath_4:=-\jmath_3,\\  
&  \jmath_5
   \left(\begin{array}{cc|c}
    ia_1 &  b+ic & ix-y\\
   -b+ic &  ia_2 & iz-w\\ \hline
   -ix-y & -iz-w & ia_3
   \end{array}\right)
   :=
   \left(\begin{array}{cc|c}
    0    &  ib-c  & x+iy\\
   ib+c  &  0     & z+iw\\ \hline
    x-iy &  z-iw  & 0
   \end{array}\right)\!,
   \qquad \jmath_6:=-\jmath_5,
\end{align*}}respectively. 
   By a direct computation we see that all the mappings $\jmath_a$ ($1\leq a\leq 6$) satisfy the conditions (c.1) through (c.4) in Proposition \ref{prop-10.2.4}, and therefore the elliptic orbit $G/C_G(T)=SU(2,1)/S(U(1)\times U(1)\times U(1))$ admits six $G$-invariant complex structures $J_a$. 
   Incidentally, if $\Omega$ is a $G$-invariant symplectic form on $G/C_G(T)$ constructed from $\omega(X,Y):=B_\frak{g}(T,[X,Y])$ ($X,Y\in\frak{g}$), then all $(J_a,\Omega)$ are $G$-invariant pseudo-K\"{a}hlerian structures on $G/C_G(T)$ and the signatures of pseudo-K\"{a}hler metrics ${\sf g}_a(A,B):=\Omega(A,J_aB)$ ($A,B\in\frak{X}(G/C_G(T))$) are as follows:
\begin{center}
\begin{tabular}{|c|c|c|c|c|} \hline
   Signature & $(+,+,+,+,+,+)$ & $(-,-,+,+,+,+)$ & $(-,-,-,-,+,+)$ & $(-,-,-,-,-,-)$ \\\hline 
   & ${\sf g}_6$ & ${\sf g}_1$,  ${\sf g}_4$ & ${\sf g}_2$,  ${\sf g}_3$ & ${\sf g}_5$
   \\\hline
\end{tabular}
\end{center}
   This implies that ${\sf g}_6$ is a $G$-invariant K\"{a}hler metric on $G/C_G(T)$.    
\end{example}

\chapter{Homogeneous holomorphic vector bundles over elliptic orbits}\label{ch-11}
   In this chapter we deal with continuous representations of real semisimple Lie groups concerning homogeneous holomorphic vector bundles over elliptic orbits. 
   Here the definition of continuous representation is as follows:
\begin{definition}\label{def-11.0.1}
   Let $G$ be a Lie group, $\mathcal{V}$ a Fr\'{e}chet space over $\mathbb{C}$, and $\varrho:G\to GL(\mathcal{V})$, $g\mapsto\varrho(g)$, a homomorphism, where $GL(\mathcal{V})$ is the general linear group on $\mathcal{V}$ and it does not matter whether $\varrho$ is continuous here. 
   Then, $\varrho$ is called a {\it continuous representation}\index{continuous representation@continuous representation\dotfill} of $G$ on $\mathcal{V}$, if the mapping $\pi_\varrho:G\times\mathcal{V}\to\mathcal{V}$, $(g,\xi)\mapsto\varrho(g)\xi$, is continuous.
\end{definition}   

\section{A realization of elliptic orbits as domains in complex flag manifolds}\label{sec-11.1}
   In this section we realize elliptic (adjoint) orbits as domains in complex flag manifolds.\par
 
   Let $G_\mathbb{C}$ be a connected complex semisimple Lie group, let $G$ be a connected closed subgroup of $G_\mathbb{C}$ such that $\frak{g}$ is a real form of $\frak{g}_\mathbb{C}$, and let $T$ be a non-zero elliptic element of $\frak{g}$. 
   Let us define closed subgroups $L\subset G$ and $L_\mathbb{C}\subset G_\mathbb{C}$ by $L:=C_G(T)$ and $L_\mathbb{C}:=C_{G_\mathbb{C}}(T)$, respectively, and set
\begin{equation}\label{eq-11.1.1}
   \begin{array}{llll}
   \mbox{$\frak{g}^\lambda:=\{X\in\frak{g}_\mathbb{C} \,|\, \operatorname{ad}T(X)=i\lambda X\}$ for $\lambda\in\mathbb{R}$}, &
   \frak{u}^\pm:=\bigoplus_{\lambda>0}\frak{g}^{\pm\lambda}, & U^\pm:=\exp\frak{u}^\pm, & Q^\pm:=N_{G_\mathbb{C}}(\bigoplus_{\nu\geq 0}\frak{g}^{\pm\nu}),
   \end{array}
\end{equation}
where $\exp:\frak{g}_\mathbb{C}\to G_\mathbb{C}$ is the exponential mapping.
   Then $\operatorname{Ad}G(T)=G/L$ is an elliptic orbit, and $G_\mathbb{C}/Q^\pm$ are complex flag manifolds due to Proposition \ref{prop-8.2.1}-(iii). 
   By use of the mapping $\iota$ in Lemma \ref{lem-11.1.2}-(2) below, we realize $G/L$ as a simply connected domain in $G_\mathbb{C}/Q^\pm$.  

\begin{lemma}\label{lem-11.1.2}
   Let $s=+$ or $-$. 
\begin{enumerate}
\item[{\rm (1)}]
   $L$ coincides with $G\cap Q^s$. 
\item[{\rm (2)}]
   $\iota:G/L\to G_\mathbb{C}/Q^s$, $gL\mapsto gQ^s$, is a $G$-equivariant real analytic diffeomorphism of $G/L$ onto a simply connected domain in $G_\mathbb{C}/Q^s$.   
\item[{\rm (3)}]
   $GQ^s$ is a domain in $G_\mathbb{C}$.
\end{enumerate}   
\end{lemma}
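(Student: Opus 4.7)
The plan is to prove the three items in order, with (1) being the crux and (2), (3) following essentially formally. For (1), the easy inclusion $L\subset G\cap Q^s$ comes from $L\subset L_\mathbb{C}\subset Q^s$ via Proposition \ref{prop-8.2.1}-(iii). For the converse, I will exploit the semidirect decomposition $Q^s=L_\mathbb{C}\ltimes U^s$ together with the facts that $L_\mathbb{C}$ is connected (Lemma \ref{lem-8.0.1}-(a)) and that $T$ is central in $\frak{l}_\mathbb{C}=\frak{c}_{\frak{g}_\mathbb{C}}(T)$. Given $g\in G\cap Q^s$, I write $g=\ell u$ with $\ell\in L_\mathbb{C}$ and $u=\exp U\in U^s$. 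Connectedness of $L_\mathbb{C}$ and centrality of $T$ give $\operatorname{Ad}\ell(T)=T$, while the Taylor expansion of $\operatorname{Ad}u=e^{\operatorname{ad}U}$ together with $[\frak{u}^s,\frak{l}_\mathbb{C}]\subset\frak{u}^s$ and $[\frak{u}^s,\frak{u}^s]\subset\frak{u}^s$ from Lemma \ref{lem-8.0.1}-(3$'$) produces $\operatorname{Ad}u(T)=T+W$ with $W\in\frak{u}^s$; since $\operatorname{Ad}\ell$ preserves $\frak{u}^s$ by Lemma \ref{lem-8.0.1}-(2$'$), I obtain $\operatorname{Ad}g(T)=T+W'$ for some $W'\in\frak{u}^s$. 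Finally, $g\in G$ forces $\operatorname{Ad}g(T)\in\frak{g}$, hence $\overline{\sigma}\bigl(\operatorname{Ad}g(T)\bigr)=\operatorname{Ad}g(T)$, which combined with $\overline{\sigma}(T)=T$ yields $\overline{\sigma}(W')=W'\in\frak{u}^s\cap\frak{u}^{-s}=\{0\}$ (Lemma \ref{lem-8.0.1}-(1), (5$'$)). Thus $\operatorname{Ad}g(T)=T$, i.e., $g\in L$.

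For (2), well-definedness and injectivity of $\iota$ are immediate from (1); $G$-equivariance and real analyticity follow from the corresponding properties of the projections $G\to G/L$ and $G_\mathbb{C}\to G_\mathbb{C}/Q^s$ and the real analytic inclusion $G\hookrightarrow G_\mathbb{C}$. To promote $\iota$ to an open embedding I will check that $(d\iota)_{eL}:\frak{g}/\frak{l}\to\frak{g}_\mathbb{C}/\frak{q}^s$ is a real linear isomorphism. Its kernel is $(\frak{g}\cap\frak{q}^s)/\frak{l}$; decomposing $X\in\frak{g}\cap\frak{q}^s$ along $\frak{g}_\mathbb{C}=\frak{l}_\mathbb{C}\oplus\frak{u}^+\oplus\frak{u}^-$ and applying $\overline{\sigma}$ exactly as in (1) forces the $\frak{u}^s$-component to be both $\overline{\sigma}$-fixed and in $\frak{u}^s$, hence zero, so $\frak{g}\cap\frak{q}^s=\frak{l}$ and the kernel is trivial. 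A direct count
\[
\dim_\mathbb{R}\frak{g}/\frak{l}=\dim_\mathbb{C}\frak{g}_\mathbb{C}-\dim_\mathbb{C}\frak{l}_\mathbb{C}=2\dim_\mathbb{C}\frak{u}^s=\dim_\mathbb{R}\frak{g}_\mathbb{C}/\frak{q}^s,
\]
using Lemma \ref{lem-8.0.1}-(1) and the fact that $\overline{\sigma}$ swaps $\frak{u}^\pm$, then gives surjectivity. By $G$-equivariance this local diffeomorphism at the origin spreads to every point, so $\iota$ is an open injective immersion, i.e., an embedding onto an open subset $\iota(G/L)\subset G_\mathbb{C}/Q^s$. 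Connectedness of the image comes from connectedness of $G$, and its simple connectedness from Proposition \ref{prop-7.3.4}.

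Finally, (3) is essentially bookkeeping: $GQ^s=\pi^{-1}\bigl(\iota(G/L)\bigr)$ for the projection $\pi:G_\mathbb{C}\to G_\mathbb{C}/Q^s$, so $GQ^s$ is open by (2), and it is connected as the image of the connected space $G\times Q^s$ under the continuous multiplication $G_\mathbb{C}\times G_\mathbb{C}\to G_\mathbb{C}$ (recall $Q^s$ is connected by Proposition \ref{prop-8.2.1}-(iii)). The genuine obstacle in the whole argument is the nontrivial inclusion $G\cap Q^s\subset L$ in (1): the compact analogue $L_u=G_u\cap Q^s$ in Proposition \ref{prop-8.2.1}-(ii) was proved using compactness of $G_u$ and a Weyl-chamber argument that is unavailable for a general real form, so the replacement I sketched must instead rely on the semidirect decomposition $Q^s=L_\mathbb{C}\ltimes U^s$, centrality of $T$ in $\frak{l}_\mathbb{C}$, and the $\overline{\sigma}$-antisymmetry between $\frak{u}^+$ and $\frak{u}^-$.
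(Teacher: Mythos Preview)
Your proof is correct and follows essentially the same approach as the paper. For (1), both you and the paper use the semidirect decomposition $Q^s=L_\mathbb{C}\ltimes U^s$, the Taylor expansion of $\operatorname{Ad}\exp(\,\cdot\,)$ on $T$, and the $\overline{\sigma}$-swap of $\frak{u}^+$ and $\frak{u}^-$ to force the unipotent contribution to vanish; the only cosmetic difference is that the paper concludes $\exp Y=e$ (hence $x\in L_\mathbb{C}\cap G=L$) whereas you conclude $\operatorname{Ad}g(T)=T$ directly, and your remark that $\operatorname{Ad}\ell(T)=T$ needs connectedness of $L_\mathbb{C}$ is superfluous since $L_\mathbb{C}=C_{G_\mathbb{C}}(T)$ by definition. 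Parts (2) and (3) match the paper's argument, with your treatment of $(d\iota)_{eL}$ simply spelling out what the paper compresses into the one-line dimension count $\dim_\mathbb{R}G/L=\dim_\mathbb{R}\frak{u}^+=\dim_\mathbb{R}G_\mathbb{C}/Q^s$.
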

\begin{proof}
   (1). 
   By Lemma \ref{lem-7.2.8}-(2) and $L=C_G(T)$ one has $L\subset N_{G_\mathbb{C}}(\bigoplus_{\nu\geq 0}\frak{g}^{s\nu})=Q^s$; thus $L\subset G\cap Q^s$. 
   Let us confirm that the converse inclusion also holds. 
   Take any $x\in G\cap Q^s$. 
   Proposition \ref{prop-8.2.1}-(iii), (i) and $x\in Q^s$ imply that $Q^s=N_{G_\mathbb{C}}(\frak{l}_\mathbb{C}\oplus\frak{u}^s)$ and there exists a unique $(z,Y)\in L_\mathbb{C}\times\frak{u}^s$ satisfying  
\[
   x=z\exp Y.
\] 
   We want to show that $\exp Y=e$ (the unit element of $G_\mathbb{C}$). 
   Let $\overline{\sigma}$ denote the conjugation of $\frak{g}_\mathbb{C}$ with respect to $\frak{g}$. 
   On the one hand, $x\in G$, $T\in\frak{l}=\frak{l}_\mathbb{C}\cap\frak{g}$, $L_\mathbb{C}=C_{G_\mathbb{C}}(T)$ and $Q^s=N_{G_\mathbb{C}}(\frak{l}_\mathbb{C}\oplus\frak{u}^s)$ yield
\[
   \frak{g}
   \ni\operatorname{Ad}x^{-1}(T)
   =(\operatorname{Ad}\exp(-Y)z^{-1})T
   =\operatorname{Ad}\exp(-Y)T
   \in\frak{l}_\mathbb{C}\oplus\frak{u}^s.
\] 
   On the other hand, $\operatorname{Ad}x^{-1}(T)\in\frak{g}$ implies that $\operatorname{Ad}x^{-1}(T)=\overline{\sigma}\bigl(\operatorname{Ad}x^{-1}(T)\bigr)$, so that $\operatorname{Ad}\exp(-Y)T=\overline{\sigma}\bigl(\operatorname{Ad}\exp(-Y)T\bigr)\in\overline{\sigma}(\frak{l}_\mathbb{C}\oplus\frak{u}^s)\subset\frak{l}_\mathbb{C}\oplus\frak{u}^{-s}$ by Lemma \ref{lem-7.2.8}-(5$'$). 
   Consequently we assert that  
\[
   \operatorname{Ad}\exp(-Y)T\in(\frak{l}_\mathbb{C}\oplus\frak{u}^s)\cap(\frak{l}_\mathbb{C}\oplus\frak{u}^{-s})=\frak{l}_\mathbb{C}.
\]
   Therefore $\frak{l}_\mathbb{C}\ni-T+\operatorname{Ad}\exp(-Y)T=\sum_{n=1}^\infty(1/n!)(-\operatorname{ad}Y)^nT\in\frak{u}^s$, and hence  
\[
   \operatorname{Ad}\exp(-Y)T=T.
\]   
   This yields $\exp Y\in(L_\mathbb{C}\cap U^s)=\{e\}$. 
   From $\exp Y=e$ we obtain $x=z\exp Y=z\in(L_\mathbb{C}\cap G)=L$, and $G\cap Q^s\subset L$. 
   For this reason $L=G\cap Q^s$ holds.\par
   
   (2). 
   We conclude (2) from (1), $\dim_\mathbb{R}G/L=\dim_\mathbb{R}\frak{u}^+=\dim_\mathbb{R}G_\mathbb{C}/Q^s$ and Proposition \ref{prop-7.3.4}.\par
 
   (3).
   Denote by $\pi_\mathbb{C}$ the projection of $G_\mathbb{C}$ onto $G_\mathbb{C}/Q^s$. 
   It is immediate from (2) that $GQ^s=\pi_\mathbb{C}^{-1}\bigl(\iota(G/L)\bigr)$ is an open subset of $G_\mathbb{C}$. 
   Moreover, $GQ^s$ is connected because the product mapping $G\times Q^s\ni(g,q)\mapsto gq\in GQ^s$ is surjective continuous and both $G$ and $Q^s$ are connected.  
\end{proof}

\begin{remark}\label{rem-11.1.3}
\begin{enumerate}
\item[]
\item[(i)]
   Henceforth, we assume that the elliptic orbit $G/L$ is a simply connected domain in the complex flag manifold $G_\mathbb{C}/Q^-$ via the $G$-equivariant mapping $\iota:G/L\to G_\mathbb{C}/Q^-$, $gL\mapsto gQ^-$. 
   By inducing a $G$-invariant complex structure $J$ on $G/L=\iota(G/L)$ from $G_\mathbb{C}/Q^-=(G_\mathbb{C}/Q^-,J_-)$, we consider $G/L$ as a homogeneous complex manifold of $G$. 
   Here we refer to Remark \ref{rem-1.2.3} for the $G_\mathbb{C}$-invariant complex structure $J_-$ on $G_\mathbb{C}/Q^-$.
\item[(ii)]
   In general, there are several kinds of invariant complex structures on the elliptic orbit $G/L$ (e.g.\ Example \ref{ex-10.5.1}).
   In this chapter we deal with the complex structure $J$ on $G/L$ induced by $\iota:G/L\to G_\mathbb{C}/Q^-$, $gL\mapsto gQ^-$.
\end{enumerate}
\end{remark}

\section{Homogeneous holomorphic vector bundles over elliptic orbits}\label{sec-11.2}
   The setting of Section \ref{sec-11.1} remains valid in this section.\par
   
   Let ${\sf V}$ be a finite-dimensional complex vector space, and let $\rho:Q^-\to GL({\sf V})$, $q\mapsto\rho(q)$, be a holomorphic homomorphism. 
   Then, one can take the homogeneous holomorphic vector bundle $G_\mathbb{C}\times_\rho{\sf V}$ over the complex flag manifold $G_\mathbb{C}/Q^-$ associated with $\rho$, and its restriction $\iota^\sharp(G_\mathbb{C}\times_\rho{\sf V})$ to the domain $G/L\subset G_\mathbb{C}/Q^-$. 
   Moreover, one may assume that 
\begin{equation}\label{eq-11.2.1}
\begin{split}
&  \mathcal{V}_{G_\mathbb{C}/Q^-}
   =\left\{\begin{array}{@{}c|c@{}}
   h:G_\mathbb{C}\to{\sf V} 
   & \begin{array}{@{}l@{}} \mbox{(i) $h$ is holomorphic},\\ \mbox{(ii) $h(aq)=\rho(q)^{-1}\bigl(h(a)\bigr)$ for all $(a,q)\in G_\mathbb{C}\times Q^-$}\end{array}
   \end{array}\right\},\\
&  \mathcal{V}_{G/L}
   =\left\{\begin{array}{@{}c|c@{}}
   \psi:GQ^-\to{\sf V} 
   & \begin{array}{@{}l@{}} \mbox{(i) $\psi$ is holomorphic},\\ \mbox{(ii) $\psi(xq)=\rho(q)^{-1}\bigl(\psi(x)\bigr)$ for all $(x,q)\in GQ^-\times Q^-$}\end{array}
   \end{array}\right\}
\end{split}
\end{equation}   
are the complex vector spaces of holomorphic cross-sections of the bundles $G_\mathbb{C}\times_\rho{\sf V}$ and $\iota^\sharp(G_\mathbb{C}\times_\rho{\sf V})$, respectively (cf.\ Chapter \ref{ch-3}).
   Let us define a homomorphism $\varrho:G\to GL(\mathcal{V}_{G/L})$, $g\mapsto\varrho(g)$, as follows:
\begin{equation}\label{eq-11.2.2}
   \mbox{$\bigl(\varrho(g)\psi\bigr)(x):=\psi(g^{-1}x)$ for $\psi\in\mathcal{V}_{G/L}$ and $x\in GQ^-$}.
\end{equation}   
   In this section, we first prove that this $\varrho$ is a continuous representation of the Lie group $G$ on $\mathcal{V}_{G/L}$, next show that every $K$-finite vector $\varphi\in\mathcal{V}_{G/L}$ (for the continuous representation $\varrho$) can be continued analytically from $U^+\cap GQ^-$ to $U^+$, and finally provide a sufficient condition for the vector space $\mathcal{V}_{G/L}$ to be finite-dimensional.

\begin{remark}\label{rem-11.2.3}
\begin{enumerate}
\item[]
\item[(1)]
   For the sake of simplicity, we write $\iota^\sharp(G_\mathbb{C}\times_\rho{\sf V})$, $\mathcal{V}_{G_\mathbb{C}/Q^-}$, and $\mathcal{V}_{G/L}$ for $(G_\mathbb{C}\times_\rho{\sf V})_{G/L}$, $\mathcal{V}(G_\mathbb{C}\times_\rho{\sf V})$, and $\mathcal{V}(G_\mathbb{C}\times_\rho{\sf V})_{G/L}$, respectively. 
   cf.\ \eqref{eq-2.5.1}, \eqref{eq-3.2.3}, \eqref{eq-3.2.6}.  
\item[(2)] 
   Corollary \ref{cor-8.2.3}-(1) implies that $G_\mathbb{C}/Q^-$ is a connected compact complex manifold. 
   Thus, one knows $\dim_\mathbb{C}\mathcal{V}_{G_\mathbb{C}/Q^-}<\infty$. 
   e.g.\ Kodaira \cite[p.161]{Kod}.
\end{enumerate}
\end{remark}

\subsection{A continuous representation of a Lie group}\label{subsec-11.2.1}
   We equip the complex vector space $\mathcal{V}_{G/L}$ with the Fr\'{e}chet metric $d$ in \eqref{eq-4.1.3}, and hereafter consider $\mathcal{V}_{G/L}$ as a Fr\'{e}chet space over $\mathbb{C}$ (cf.\ Proposition \ref{prop-4.3.1}).
   Our goal in this subsection is to prove that $\pi_\varrho:G\times\mathcal{V}_{G/L}\to\mathcal{V}_{G/L}$, $(g,\psi)\mapsto\varrho(g)\psi$, is continuous (see Proposition \ref{prop-11.2.8}).
   We are going to verify three lemmas first, and then obtain the goal.
\begin{lemma}\label{lem-11.2.4}
   The following two items hold for a given $\psi\in\mathcal{V}_{G/L}:$ 
\begin{enumerate}
\item[{\rm (1)}]
   For any $\epsilon>0$ and any non-empty compact subset $E\subset GQ^-$, there exists an open neighborhood $U$ of the unit $e\in G$ such that $g\in U$ implies $d_E\bigl(\varrho(g)\psi,\psi\bigr)<\epsilon$. 
   Here we refer to \eqref{eq-4.1.2} for $d_E$. 
\item[{\rm (2)}]
   The mapping $G\ni g\mapsto\varrho(g)\psi\in\mathcal{V}_{G/L}$ is continuous at $e\in G$, namely, for every $\epsilon>0$ there exists an open neighborhood $U$ of $e\in G$ such that $g\in U$ implies $d\bigl(\varrho(g)\psi,\psi\bigr)<\epsilon$.  
\end{enumerate} 
\end{lemma}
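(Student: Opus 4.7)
The plan is to reduce both parts to a single uniform-continuity statement: the map $\Phi:G\times GQ^{-}\to{\sf V}$ defined by $\Phi(g,x):=\psi(g^{-1}x)-\psi(x)$ is continuous (because group multiplication is continuous on $G_{\mathbb{C}}$ and $\psi$ is continuous on the open set $GQ^{-}$) and satisfies $\Phi(e,x)=0$ for every $x\in GQ^{-}$. Part (1) then follows from a standard tube-lemma / covering argument, and part (2) follows from (1) by exploiting the shape of the Fr\'{e}chet metric $d$ in \eqref{eq-4.1.3}.

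For (1), fix $\epsilon>0$ and a non-empty compact $E\subset GQ^{-}$. For each $x\in E$, continuity of $\Phi$ at $(e,x)$ yields an open neighborhood $U_{x}\subset G$ of $e$ and an open neighborhood $W_{x}\subset GQ^{-}$ of $x$ such that $\|\Phi(g,y)\|<\epsilon$ whenever $(g,y)\in U_{x}\times W_{x}$. The collection $\{W_{x}\}_{x\in E}$ is an open cover of $E$, so by compactness one extracts a finite subcover $W_{x_{1}},\dots,W_{x_{k}}$. Setting $U:=\bigcap_{j=1}^{k}U_{x_{j}}$ gives an open neighborhood of $e\in G$ with the property that for every $g\in U$ and every $y\in E$, one has $\|\psi(g^{-1}y)-\psi(y)\|=\|\Phi(g,y)\|<\epsilon$. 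In view of \eqref{eq-4.1.2} and \eqref{eq-11.2.2} this is exactly $d_{E}\bigl(\varrho(g)\psi,\psi\bigr)\leq\epsilon$, which proves (1) (adjusting $\epsilon$ by a factor if one insists on strict inequality).

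For (2), the idea is to truncate the series defining $d$. Given $\epsilon>0$, first choose $N\in\mathbb{N}$ so large that $\sum_{n>N}2^{-n}=2^{-N}<\epsilon/2$; this handles the tail because each summand in \eqref{eq-4.1.3} is bounded by $2^{-n}$. For the head, set $E:=\bigcup_{n=1}^{N}E_{n}$, which is a non-empty compact subset of $GQ^{-}$, and apply part (1) with tolerance $\epsilon/2$ to obtain an open neighborhood $U$ of $e\in G$ with $d_{E}(\varrho(g)\psi,\psi)\leq\epsilon/2$ for all $g\in U$. Since each $E_{n}\subset E$, we have $d_{E_{n}}(\varrho(g)\psi,\psi)\leq\epsilon/2$ for $1\leq n\leq N$, and using the elementary inequality $t/(1+t)\leq t$ for $t\geq 0$ together with $\sum_{n=1}^{N}2^{-n}<1$ we obtain
\[
d(\varrho(g)\psi,\psi)\leq\sum_{n=1}^{N}\dfrac{1}{2^{n}}d_{E_{n}}(\varrho(g)\psi,\psi)+\sum_{n>N}\dfrac{1}{2^{n}}<\dfrac{\epsilon}{2}+\dfrac{\epsilon}{2}=\epsilon
\]
for every $g\in U$, which is (2).

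There is no real obstacle here: part (1) is a textbook tube-lemma argument and part (2) is a mechanical truncation using the explicit form of the Fr\'{e}chet metric. The only point that merits a line of care is confirming that $\Phi$ is genuinely continuous on $G\times GQ^{-}$; this rests on $GQ^{-}$ being open in $G_{\mathbb{C}}$ (Lemma \ref{lem-11.1.2}-(3)), on the restriction of the multiplication $G\times GQ^{-}\to GQ^{-}$ being continuous, and on $\psi$ being continuous (indeed holomorphic) on $GQ^{-}$.
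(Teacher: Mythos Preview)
Your proof is correct and follows essentially the same approach as the paper: a compactness/covering argument based on the joint continuity of $(g,x)\mapsto\psi(g^{-1}x)$ for part (1), and the equivalence of the Fr\'{e}chet metric with uniform convergence on compact sets for part (2). The only cosmetic differences are that you work directly with the difference $\Phi(g,x)=\psi(g^{-1}x)-\psi(x)$ (avoiding the paper's intermediate triangle inequality via $\psi(y)$) and that you spell out the tail-truncation of $d$ explicitly rather than citing Proposition~\ref{prop-4.3.1}-(3).
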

\begin{proof}
   (1).
   The mapping $G\times GQ^-\ni(g,q)\mapsto g^{-1}q\in GQ^-$ is continuous, $\psi:GQ^-\to{\sf V}$ is continuous, and so the mapping $f:G\times GQ^-\to{\sf V}$, $(g,q)\mapsto\psi(g^{-1}q)$, is continuous. 
   Therefore, for each $y\in E$ there exist an open neighborhood $U_y$ of $e\in G$ and an open neighborhood $O_y'$ of $y\in GQ^-$ such that $(g,z')\in U_y\times O_y'$ implies
\begin{equation}\label{eq-1}\tag*{\textcircled{1}}
   \|\psi(g^{-1}z')-\psi(y)\|=\|f(g,z')-f(e,y)\|<\epsilon/4
\end{equation}
because $f$ is continuous at $(e,y)$.
   Here $\|\cdot\|$ is a norm on the vector space ${\sf V}$. 
   Since $O_y'$ is an open neighborhood of $y\in GQ^-$ and $\psi:GQ^-\to{\sf V}$ is continuous at the $y$, one can choose an open neighborhood $O_y$ of $y\in O_y'$ so that $z\in O_y$ implies
\begin{equation}\label{eq-2}\tag*{\textcircled{2}}
   \|\psi(y)-\psi(z)\|<\epsilon/4.
\end{equation}
   Since $E\subset\bigcup_{y\in E}O_y$ and $E\subset GQ^-$ is compact, there exist finite elements $y_1,y_2,\dots,y_k\in E$ satisfying $E\subset\bigcup_{j=1}^kO_{y_j}$.
   In this setting, we put $U:=\bigcap_{j=1}^kU_{y_j}$, and see that $U$ is an open neighborhood of $e\in G$. 
   Furthermore, for an arbitrary $(g,w)\in U\times E$, there exists a $1\leq i\leq k$ such that $w\in O_{y_i}\subset O_{y_i}'$, and it follows from $g\in(\bigcap_{j=1}^kU_{y_j})\subset U_{y_i}$, \ref{eq-1} and \ref{eq-2} that 
\[
   \big\|\bigl(\varrho(g)\psi\bigr)(w)-\psi(w)\big\|
   \stackrel{\eqref{eq-11.2.2}}{=}
   \|\psi(g^{-1}w)-\psi(w)\|
   \leq\|\psi(g^{-1}w)-\psi(y_i)\|+\|\psi(y_i)-\psi(w)\|
   <\epsilon/4+\epsilon/4=\epsilon/2.
\] 
   This and \eqref{eq-4.1.2} assure that $d_E\bigl(\varrho(g)\psi,\psi\bigr)\leq\epsilon/2<\epsilon$ for all $g\in U$.
   Hence (1) holds.\par
   
   (2) follows by (1) and Proposition \ref{prop-4.3.1}-(3).
\end{proof}
   
   Lemma \ref{lem-11.2.4}-(2) leads to
\begin{corollary}\label{cor-11.2.5}
   For each $\psi\in\mathcal{V}_{G/L}$, the mapping $G\ni g\mapsto\varrho(g)\psi\in\mathcal{V}_{G/L}$ is continuous.
\end{corollary}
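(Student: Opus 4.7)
The plan is to upgrade the continuity at the identity $e\in G$ (established in Lemma~\ref{lem-11.2.4}-(2)) to continuity at an arbitrary point $g_0\in G$ by exploiting the translation structure of the right action of $G$ on $GQ^-$ and the fact that, by Proposition~\ref{prop-4.3.1}-(3), the topology on $\mathcal{V}_{G/L}$ is the topology of uniform convergence on compact sets (so it suffices to control the seminorms $d_E$ in \eqref{eq-4.1.2} for arbitrary non-empty compact $E\subset GQ^-$).

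Fix $\psi\in\mathcal{V}_{G/L}$ and $g_0\in G$. I would argue that for every $\epsilon>0$ and every non-empty compact subset $E\subset GQ^-$ there is an open neighborhood $V$ of $g_0\in G$ with $d_E(\varrho(g)\psi,\varrho(g_0)\psi)<\epsilon$ for all $g\in V$; this suffices by Proposition~\ref{prop-4.3.1}-(3) together with Lemma~\ref{lem-11.2.4}-(1)-style passage from a single compact set to the whole Fréchet metric (or by the equivalence with the locally convex topology in that proposition).

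The key observation is the change-of-variables identity
\[
   d_E\bigl(\varrho(g)\psi,\varrho(g_0)\psi\bigr)
   =\sup\bigl\{\|\psi(g^{-1}y)-\psi(g_0^{-1}y)\|:y\in E\bigr\}
   =\sup\bigl\{\|\psi(g^{-1}g_0z)-\psi(z)\|:z\in\widetilde{E}\bigr\}
   =d_{\widetilde{E}}\bigl(\varrho(g_0^{-1}g)\psi,\psi\bigr),
\]
where $\widetilde{E}:=g_0^{-1}E$ is compact in $GQ^-$ because left translation by $g_0^{-1}$ is a homeomorphism of $G_\mathbb{C}$ that preserves $GQ^-$ (cf.\ Lemma~\ref{lem-11.1.2}-(3)). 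Apply Lemma~\ref{lem-11.2.4}-(1) to the compact set $\widetilde{E}$ and the given $\epsilon$ to obtain an open neighborhood $U$ of $e\in G$ such that $h\in U$ implies $d_{\widetilde{E}}(\varrho(h)\psi,\psi)<\epsilon$. Setting $V:=g_0U$, which is an open neighborhood of $g_0$, yields $d_E(\varrho(g)\psi,\varrho(g_0)\psi)<\epsilon$ for every $g\in V$, as required.

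There is no real obstacle here; the argument is essentially the standard passage from continuity at the identity to continuity everywhere for a group action, and it hinges only on the right-translation invariance of the compact sets in $GQ^-$ together with the explicit formula $(\varrho(g)\psi)(x)=\psi(g^{-1}x)$ from \eqref{eq-11.2.2}. If anywhere the care is needed, it is in checking that $g_0^{-1}E\subset GQ^-$ (which follows at once from $GQ^-$ being $G$-invariant under left translation, i.e.\ $g_0^{-1}(GQ^-)=GQ^-$), and in invoking Proposition~\ref{prop-4.3.1}-(3) to pass from the uniform-on-compacta formulation back to the Fréchet metric $d$ in the final statement.
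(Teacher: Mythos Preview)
Your argument is correct and follows the same standard principle as the paper's proof---upgrading continuity at $e$ to continuity at an arbitrary $g_0$ via translation---but the execution differs slightly. The paper applies Lemma~\ref{lem-11.2.4}-(2) directly to the translated vector $\psi_0:=\varrho(g_0)\psi\in\mathcal{V}_{G/L}$ and uses the homomorphism identity $\varrho(g)\psi=\varrho(gg_0^{-1})\psi_0$, working with the Fr\'{e}chet metric $d$ throughout and the right translate $U=U'g_0$; you instead keep $\psi$ fixed, translate the compact set to $\widetilde{E}=g_0^{-1}E$, apply Lemma~\ref{lem-11.2.4}-(1) at the seminorm level, and use the left translate $V=g_0U$. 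The paper's route is marginally cleaner since it avoids the extra appeal to Proposition~\ref{prop-4.3.1}-(3), but your change-of-variables computation is equally valid and perhaps more transparent about why the translation works.
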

\begin{proof}
   Fix any $g_0\in G$ and $\epsilon>0$. 
   Since $\psi_0:=\varrho(g_0)\psi$ is an element of $\mathcal{V}_{G/L}$, Lemma \ref{lem-11.2.4}-(2) enables us to obtain an open neighborhood $U'$ of $e\in G$ such that $g'\in U'$ implies $d\bigl(\varrho(g')\psi_0,\psi_0\bigr)<\epsilon$. 
   Setting $U:=R_{g_0}(U')$ one can assert that $U$ is an open neighborhood of $g_0\in G$; moreover, $g\in U$ implies 
\[
   d\bigl(\varrho(g)\psi,\varrho(g_0)\psi\bigr)
   =d\bigl(\varrho(gg_0^{-1})\psi_0,\psi_0\bigr)
   <\epsilon
\]
because of $gg_0^{-1}\in U'$.
   Thus the mapping $G\ni g\mapsto\varrho(g)\psi\in\mathcal{V}_{G/L}$ is continuous at the point $g_0$.
\end{proof}

   The following lemma, together with Corollary \ref{cor-11.2.5}, tells us that $\pi_\varrho:(g,\psi)\mapsto\varrho(g)\psi$ is a separately continuous linear action of $G$ on $\mathcal{V}_{G/L}$:
\begin{lemma}\label{lem-11.2.6}
   For each $g\in G$, the linear mapping $\varrho(g):\mathcal{V}_{G/L}\to\mathcal{V}_{G/L}$, $\psi\mapsto\varrho(g)\psi$, is uniformly continuous.   
\end{lemma}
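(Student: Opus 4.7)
My plan is to reduce uniform continuity of the linear map $\varrho(g)$ to its continuity at the zero vector, exploiting the translation invariance of the Fr\'{e}chet metric $d$ established in Lemma \ref{lem-4.1.4}-(2). Concretely, for any $\psi_1,\psi_2\in\mathcal{V}_{G/L}$, the linearity of $\varrho(g)$ together with the identity $d(\xi_1+\xi_3,\xi_2+\xi_3)=d(\xi_1,\xi_2)$ yields
\[
   d\bigl(\varrho(g)\psi_1,\varrho(g)\psi_2\bigr)
   =d\bigl(\varrho(g)(\psi_1-\psi_2),0\bigr),
   \qquad
   d(\psi_1,\psi_2)=d(\psi_1-\psi_2,0).
\]
Hence it will suffice to show that for every $\epsilon>0$ there exists $\delta>0$ such that $d(\eta,0)<\delta$ implies $d\bigl(\varrho(g)\eta,0\bigr)<\epsilon$; uniform continuity of $\varrho(g)$ on $\mathcal{V}_{G/L}$ then follows automatically.

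For the continuity at $0$, I intend to invoke Lemma \ref{lem-4.1.9}, which tells us that the metric topology on $\mathcal{V}_{G/L}$ coincides with the topology of uniform convergence on compact subsets of $\pi_\mathbb{C}^{-1}(G/L)=GQ^-$. Thus it is enough to prove that, for each non-empty compact subset $E\subset GQ^-$ and each $\epsilon>0$, one can find a non-empty compact subset $F\subset GQ^-$ and a $\delta>0$ with $d_F(\eta,0)<\delta$ forcing $d_E\bigl(\varrho(g)\eta,0\bigr)<\epsilon$. The key observation will be that the left translation $L_{g^{-1}}:G_\mathbb{C}\to G_\mathbb{C}$ is a homeomorphism that carries $GQ^-$ into itself (since $g^{-1}G\subset G$); therefore $F:=g^{-1}E$ is a non-empty compact subset of $GQ^-$. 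From the definition \eqref{eq-11.2.2} and the change of variable $z=g^{-1}y$ one then computes
\[
   d_E\bigl(\varrho(g)\eta,0\bigr)
   =\sup_{y\in E}\bigl\|\eta(g^{-1}y)\bigr\|
   =\sup_{z\in g^{-1}E}\|\eta(z)\|
   =d_F(\eta,0),
\]
so that the choice $\delta:=\epsilon$ completes the argument.

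There is no serious obstacle in this scheme: the entire proof is essentially tautological once one combines the translation invariance of $d$ with the characterization of its topology via seminorms $d_E$ indexed by compact sets, and once one notes that $g^{-1}E$ remains inside $GQ^-$. The only minor point worth double-checking will be the left-invariance of $GQ^-$ under $G$, which is immediate from $GQ^-=\pi_\mathbb{C}^{-1}\bigl(\iota(G/L)\bigr)$ and the $G$-equivariance of $\iota$ supplied by Lemma \ref{lem-11.1.2}-(2), (3).
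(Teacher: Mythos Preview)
Your proposal is correct and follows essentially the same approach as the paper: both arguments hinge on the identity $d_E\bigl(\varrho(g)\psi_1,\varrho(g)\psi_2\bigr)=d_{g^{-1}E}(\psi_1,\psi_2)$ obtained via the substitution $z=g^{-1}y$, together with the observation that $g^{-1}E$ is again compact in $GQ^-$. The only organizational difference is that you first reduce to continuity at $0$ via the translation invariance of $d$, whereas the paper applies the same computation directly to an arbitrary pair $\psi_1,\psi_2$ (since the seminorms $d_E$ are themselves translation invariant, your reduction step is harmless but not needed).
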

\begin{proof}
   By virtue of Proposition \ref{prop-4.3.1}-(3) it suffices to prove that $\mathcal{V}_{G/L}\ni\psi\mapsto\varrho(g)\psi\in\mathcal{V}_{G/L}$ is uniformly continuous in the topology of uniform convergence on compact sets.
   For any $\epsilon>0$ and any non-empty compact subset $E\subset GQ^-$, we set $E':=g^{-1}E$ and $\delta:=\epsilon$. 
   Then, $E'$ is a non-empty compact subset of $GQ^-$ and $\delta>0$. 
   In addition, it follows from \eqref{eq-4.1.2} and \eqref{eq-11.2.2} that $d_{E'}(\psi_1,\psi_2)<\delta$ and $\psi_1,\psi_2\in\mathcal{V}_{G/L}$ imply
\[
\begin{split}
   d_E\bigl(\varrho(g)\psi_1,\varrho(g)\psi_2\bigr)
  &=\sup\big\{\|\psi_1(g^{-1}y)-\psi_2(g^{-1}y)\|:y\in E\big\}\\
  &=\sup\big\{\|\psi_1(z)-\psi_2(z)\|:z\in g^{-1}E\big\}
   =d_{E'}(\psi_1,\psi_2)<\delta=\epsilon.
\end{split}  
\]   
   Hence, the mapping $\mathcal{V}_{G/L}\ni\psi\mapsto\varrho(g)\psi\in\mathcal{V}_{G/L}$ is uniformly continuous.
\end{proof}

   We will show Proposition \ref{prop-11.2.8} after proving 
\begin{lemma}\label{lem-11.2.7}
   For any non-empty compact subset $C$ of $G$ and any open neighborhood $\mathcal{B}$ of $0\in\mathcal{V}_{G/L}$, there exists an open neighborhood $\mathcal{A}$ of $0\in\mathcal{V}_{G/L}$ such that $\varrho(g)\psi\in\mathcal{B}$ for all $(g,\psi)\in C\times\mathcal{A}$.
\end{lemma}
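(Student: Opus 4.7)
The plan is to use the description of the topology on $\mathcal{V}_{G/L}$ as the topology of uniform convergence on compact subsets of $GQ^-$ provided by Proposition~\ref{prop-4.3.1}-(3); under this description the lemma becomes a uniform bound on how the translations $\psi\mapsto\varrho(g)\psi=\psi\circ L_{g^{-1}}$ behave in the seminorm $\psi\mapsto\sup_{y\in K}\|\psi(y)\|$ when $g$ ranges over the compact set $C$.

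First I would translate $\mathcal{B}$ into seminorm language: since the metric topology on $\mathcal{V}_{G/L}$ coincides with the topology of uniform convergence on compact sets, one can choose $\epsilon>0$ and a non-empty compact subset $K\subset GQ^-$ (for instance a finite union of the compacts $E_n$ used to define the Fr\'{e}chet metric \eqref{eq-4.1.3}) such that
\[
   \{\psi\in\mathcal{V}_{G/L}:\sup_{y\in K}\|\psi(y)\|<\epsilon\}\subset\mathcal{B}.
\]

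Next, I would make the key geometric observation that $C^{-1}K:=\{g^{-1}y:g\in C,\,y\in K\}$ is a compact subset of $GQ^-$, since it is the image of the compact set $C^{-1}\times K$ under the continuous product map $G\times GQ^-\ni(a,b)\mapsto ab\in GQ^-$ (the target lies in $GQ^-$ because $GQ^-$ is stable under left $G$-translation). I would then set
\[
   \mathcal{A}:=\{\psi\in\mathcal{V}_{G/L}:\sup_{z\in C^{-1}K}\|\psi(z)\|<\epsilon\},
\]
which is an open neighborhood of $0\in\mathcal{V}_{G/L}$ by Proposition~\ref{prop-4.3.1}-(3) applied to the compact set $C^{-1}K$. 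For any $(g,\psi)\in C\times\mathcal{A}$ the computation
\[
   \sup_{y\in K}\|(\varrho(g)\psi)(y)\|\stackrel{\eqref{eq-11.2.2}}{=}\sup_{y\in K}\|\psi(g^{-1}y)\|=\sup_{z\in g^{-1}K}\|\psi(z)\|\leq\sup_{z\in C^{-1}K}\|\psi(z)\|<\epsilon
\]
then gives $\varrho(g)\psi\in\mathcal{B}$, as required.

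There is no essential obstacle here; the only work consists in the bookkeeping between the three equivalent descriptions of the topology on $\mathcal{V}_{G/L}$ (Fr\'{e}chet metric, uniform convergence on compact sets, locally convex topology from countably many seminorms) supplied by Proposition~\ref{prop-4.3.1}, combined with the elementary compactness argument that $C^{-1}K$ is compact. The latter is precisely where the hypothesis that $C$ is compact is used, and without it the naive argument would fail because the translated compacts $g^{-1}K$ could escape every fixed compact subset of $GQ^-$.
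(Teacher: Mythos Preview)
Your proof is correct and takes a genuinely different, more elementary route from the paper. The paper argues abstractly via the Baire category theorem (Proposition~\ref{prop-4.4.1}): it uses the separate continuity established in Corollary~\ref{cor-11.2.5} and Lemma~\ref{lem-11.2.6} to write $\mathcal{V}_{G/L}$ as a countable union of closed sets $\mathcal{F}_n=\bigcap_{g\in C}\varrho(g)^{-1}(n\mathcal{D}_r)$, finds one with nonempty interior, and then translates and scales to obtain $\mathcal{A}$. This is essentially the uniform boundedness principle and would work for any separately continuous linear action of a locally compact group on a Fr\'{e}chet space. You, by contrast, exploit the concrete form of the action $(\varrho(g)\psi)(x)=\psi(g^{-1}x)$ together with the description of the topology as uniform convergence on compact sets: the key inclusion $g^{-1}K\subset C^{-1}K$ for all $g\in C$ reduces everything to the compactness of $C^{-1}K$ in $GQ^-$. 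Your argument is shorter, gives $\mathcal{A}$ explicitly rather than via an existence argument, and bypasses both Baire category and the preliminary separate-continuity lemmas; the paper's approach is more portable, since it does not depend on the translation structure of $\varrho$.
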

\begin{proof}
   For $\epsilon>0$ we set an open neighborhood $\mathcal{B}_\epsilon$ of $0\in\mathcal{V}_{G/L}$ as $\mathcal{B}_\epsilon:=\{\psi\in\mathcal{V}_{G/L} \,|\, d(0,\psi)<\epsilon\}$, and put $\mathcal{D}_\epsilon:=\overline{\mathcal{B}_\epsilon}$ (the closure of $\mathcal{B}_\epsilon$ in $\mathcal{V}_{G/L}$).\par
   
   Since $\mathcal{B}$ is an open neighborhood of $0\in\mathcal{V}_{G/L}$ and the addition $\mathcal{V}_{G/L}\times\mathcal{V}_{G/L}\ni(\psi_1,\psi_2)\mapsto\psi_1+\psi_2\in\mathcal{V}_{G/L}$ is continuous at $(0,0)$, there exists an $r>0$ such that 
\begin{equation}\label{eq-a}\tag{a}
   \mathcal{D}_r+\mathcal{D}_r\subset\mathcal{B}.
\end{equation}
   Lemma \ref{lem-4.1.4}-(3) assures that
\begin{equation}\label{eq-b}\tag{b}
   \mbox{$t\mathcal{B}_r\subset\mathcal{B}_r$, $t\mathcal{D}_r\subset\mathcal{D}_r$ for all $-1\leq t\leq 1$},
\end{equation}
and it follows from \eqref{eq-b} that  
\begin{equation}\label{eq-c}\tag{c}
   \mathcal{B}_r\subset 2\mathcal{B}_r\subset\cdots\subset n\mathcal{B}_r\subset(n+1)\mathcal{B}_r\subset\cdots.
\end{equation}
   Here $\lambda\mathcal{B}_r$ means $\{\lambda\psi \,|\, \psi\in\mathcal{B}_r\}$ for $\lambda\in\mathbb{R}$.
   Furthermore, one can show 
\begin{equation}\label{eq-d}\tag{d}
   \mbox{$\mathcal{V}_{G/L}=\bigcup_{n=1}^\infty n\mathcal{B}_r$}.
\end{equation}
   Indeed; for any $\psi_0\in\mathcal{V}_{G/L}$, the mapping $\mathbb{C}\ni\alpha\mapsto\alpha\psi_0\in \mathcal{V}_{G/L}$ is continuous at $0\in\mathbb{C}$, and therefore there exists an $m\in\mathbb{N}$ such that $(1/m)\psi_0\in\mathcal{B}_r$, since $\mathcal{B}_r$ is an open neighborhood of $0\in\mathcal{V}_{G/L}$ and $\displaystyle{\lim_{n\to\infty}(1/n)=0}$. 
   Hence $\psi_0=m\bigl((1/m)\psi_0\bigr)\in m\mathcal{B}_r$. 
   This yields $\mathcal{V}_{G/L}\subset\bigcup_{n=1}^\infty n\mathcal{B}_r$, and \eqref{eq-d} follows.\par
   
   Now, let us define 
\begin{equation}\label{eq-1}\tag*{\textcircled{1}}
   \mbox{$\mathcal{F}_n:=\bigcap_{g\in C}\{\psi\in\mathcal{V}_{G/L} \,|\, \varrho(g)\psi\in n\mathcal{D}_r\}$}
\end{equation}
for $n\in\mathbb{N}$.
   For each $g\in C$, Lemma \ref{lem-11.2.6} ensures that $\{\psi\in\mathcal{V}_{G/L} \,|\, \varrho(g)\psi\in n\mathcal{D}_r\}=\varrho(g)^{-1}(n\mathcal{D}_r)$ is a closed subset of $\mathcal{V}_{G/L}$ because $n\mathcal{D}_r\subset\mathcal{V}_{G/L}$ is closed. 
   Thus it follows from \ref{eq-1} that 
\begin{equation}\label{eq-2}\tag*{\textcircled{2}}
   \mbox{$\mathcal{F}_n$ is a closed subset of $\mathcal{V}_{G/L}$ for each $n\in\mathbb{N}$}.
\end{equation}
   We want to show $\mathcal{V}_{G/L}=\bigcup_{n=1}^\infty\mathcal{F}_n$.
   For an arbitrary $\psi_0\in\mathcal{V}_{G/L}$, the mapping $G\ni g\mapsto\varrho(g)\psi_0\in\mathcal{V}_{G/L}$ is continuous by Corollary \ref{cor-11.2.5}. 
   Accordingly $\{\varrho(g)\psi_0 \,|\, g\in C\}$ is a compact subset of $\mathcal{V}_{G/L}$. 
   This, combined with \eqref{eq-d} and \eqref{eq-c}, enables us to find a $k\in\mathbb{N}$ such that $\{\varrho(g)\psi_0 \,|\, g\in C\}\subset k\mathcal{B}_r$.
   Then, $\mathcal{B}_r\subset\mathcal{D}_r$ and \ref{eq-1} give rise to $\psi_0\in\mathcal{F}_k\subset\bigcup_{n=1}^\infty\mathcal{F}_n$. 
   For this reason we conclude  
\begin{equation}\label{eq-3}\tag*{\textcircled{3}}
   \mbox{$\mathcal{V}_{G/L}=\bigcup_{n=1}^\infty\mathcal{F}_n$}.
\end{equation}
   By \ref{eq-2}, \ref{eq-3} and Proposition \ref{prop-4.4.1}, there exist an $N\in\mathbb{N}$, a $\psi_N\in\mathcal{F}_N$ and an open subset $\mathcal{O}_N\subset\mathcal{V}_{G/L}$ which satisfy 
\begin{equation}\label{eq-4}\tag*{\textcircled{4}}
   \psi_N\in\mathcal{O}_N\subset\mathcal{F}_N.
\end{equation}
   Setting $\mathcal{A}':=\mathcal{O}_N-\psi_N$, we see that $\mathcal{A}'$ is an open neighborhood of $0\in\mathcal{V}_{G/L}$. 
   Moreover, for any $(g,\psi')\in C\times\mathcal{A}'$, it follows from \ref{eq-4}, \ref{eq-1} and \eqref{eq-a} that 
\[
   \varrho(g)\psi'
   =\varrho(g)\bigl(\psi'+\psi_N)+\varrho(g)(-\psi_N)
   \in\varrho(g)(\mathcal{F}_N)+\varrho(g)(-\psi_N)
   \subset N\mathcal{D}_r+N\mathcal{D}_r
   \subset N\mathcal{B},
\]
where we note that $\varrho(g)(-\psi_N)=-\varrho(g)(\psi_N)\in -\varrho(g)(\mathcal{F}_N)\subset -N\mathcal{D}_r=N(-\mathcal{D}_r)\subset N\mathcal{D}_r$ due to \eqref{eq-b}.
   Hence we can deduce the conclusion from $\mathcal{A}:=(1/N)\mathcal{A}'$. 
\end{proof}

   Let us show 
\begin{proposition}\label{prop-11.2.8}
   The $\varrho$ in \eqref{eq-11.2.2} is a continuous representation of the Lie group $G$ on the Fr\'{e}chet space $\mathcal{V}_{G/L}$.
   Here we refer to \eqref{eq-11.2.1} for $\mathcal{V}_{G/L}$, and equip $\mathcal{V}_{G/L}$ with the Fr\'{e}chet metric $d$ in \eqref{eq-4.1.3}.
\end{proposition}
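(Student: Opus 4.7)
The plan is to establish joint continuity of $\pi_\varrho$ at an arbitrary point $(g_0,\psi_0)\in G\times\mathcal{V}_{G/L}$ by the standard triangle-inequality splitting
\[
   d\bigl(\varrho(g)\psi,\varrho(g_0)\psi_0\bigr)
   \leq d\bigl(\varrho(g)\psi,\varrho(g)\psi_0\bigr)+d\bigl(\varrho(g)\psi_0,\varrho(g_0)\psi_0\bigr),
\]
and handling the two pieces with the separate-continuity results already at hand. The point of the decomposition is that the second summand involves only variation in $g$ (with $\psi_0$ fixed), while the first summand involves only variation in $\psi$ but, crucially, is \emph{not} simply controlled by one application of Lemma \ref{lem-11.2.6}, because the dependence on $g$ must be uniform in a neighborhood of $g_0$.

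First I would dispose of the second summand by a direct appeal to Corollary \ref{cor-11.2.5}: the orbit map $G\ni g\mapsto\varrho(g)\psi_0\in\mathcal{V}_{G/L}$ is continuous at $g_0$, so there is an open neighborhood $U'$ of $g_0$ on which $d(\varrho(g)\psi_0,\varrho(g_0)\psi_0)<\epsilon/2$. Next, for the first summand, I would use the translation invariance of the Fr\'echet metric (Lemma \ref{lem-4.1.4}-(2)) together with the $\mathbb{C}$-linearity of $\varrho(g)$ to rewrite
\[
   d\bigl(\varrho(g)\psi,\varrho(g)\psi_0\bigr)
   =d\bigl(\varrho(g)(\psi-\psi_0),0\bigr).
\]
Now Lemma \ref{lem-11.2.7} is precisely the equicontinuity statement that converts this into a joint estimate: choosing a compact neighborhood $C$ of $g_0$ in $G$ (which exists since $G$ is a Lie group, hence locally compact) and taking $\mathcal{B}:=\{\eta\in\mathcal{V}_{G/L}\mid d(0,\eta)<\epsilon/2\}$, Lemma \ref{lem-11.2.7} yields an open neighborhood $\mathcal{A}$ of $0\in\mathcal{V}_{G/L}$ such that $\varrho(g)\eta\in\mathcal{B}$ for every $(g,\eta)\in C\times\mathcal{A}$. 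Translating by $\psi_0$, the set $\psi_0+\mathcal{A}$ is an open neighborhood of $\psi_0$, and for $(g,\psi)\in C\times(\psi_0+\mathcal{A})$ we get $d(\varrho(g)(\psi-\psi_0),0)<\epsilon/2$.

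Setting $U:=U'\cap C^\circ$ (an open neighborhood of $g_0$) and combining the two estimates, for all $(g,\psi)\in U\times(\psi_0+\mathcal{A})$ the triangle-inequality splitting gives $d(\varrho(g)\psi,\varrho(g_0)\psi_0)<\epsilon$. This establishes continuity of $\pi_\varrho$ at $(g_0,\psi_0)$, and since $(g_0,\psi_0)$ was arbitrary, $\pi_\varrho$ is continuous on $G\times\mathcal{V}_{G/L}$; by Definition \ref{def-11.0.1}, $\varrho$ is then a continuous representation. The substantive content of the argument is already concentrated in Lemma \ref{lem-11.2.7}, whose proof via the Baire category theorem (Proposition \ref{prop-4.4.1}) was the real work; the present proposition is essentially a formal assembly of the preceding three results, and I would expect no further obstacle beyond being careful with the translation-invariance step, which is why the explicit hypothesis (3) on $d$ in Section \ref{sec-6.1} (namely $d(\xi_1,\xi_2)=d(\xi_1+\xi_3,\xi_2+\xi_3)$) is invoked.
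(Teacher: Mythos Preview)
Your proof is correct and follows essentially the same approach as the paper: both use the decomposition $\varrho(g)\psi=\varrho(g)(\psi-\psi_0)+\varrho(g)\psi_0$, handle the second piece via Corollary \ref{cor-11.2.5}, and handle the first piece via the equicontinuity Lemma \ref{lem-11.2.7} applied on a compact neighborhood of $g_0$. The only cosmetic difference is that you work directly with the metric $d$ and $\epsilon$-balls (invoking the translation invariance Lemma \ref{lem-4.1.4}-(2) explicitly), whereas the paper phrases the same argument in terms of abstract open neighborhoods and the continuity of addition in the topological vector space $\mathcal{V}_{G/L}$.
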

\begin{proof}
   Let us prove that $\pi_\varrho:G\times\mathcal{V}_{G/L}\to\mathcal{V}_{G/L}$, $(g,\psi)\mapsto\varrho(g)\psi$, is continuous.
   Take any element $(g_0,\psi_0)\in G\times\mathcal{V}_{G/L}$ and any open neighborhood $\mathcal{O}$ of $\pi_\varrho(g_0,\psi_0)=\varrho(g_0)\psi_0\in\mathcal{V}_{G/L}$.  
   Since the addition $\mathcal{V}_{G/L}\times\mathcal{V}_{G/L}\ni(\psi_1,\psi_2)\mapsto\psi_1+\psi_2\in\mathcal{V}_{G/L}$ is continuous at $\bigl(0,\varrho(g_0)\psi_0\bigr)$, there exist open neighborhoods $\mathcal{B}$ of $0\in\mathcal{V}_{G/L}$ and $\mathcal{U}$ of $\varrho(g_0)\psi_0\in\mathcal{V}_{G/L}$ such that
\begin{equation}\label{eq-a}\tag{a}
   \mathcal{B}+\mathcal{U}\subset\mathcal{O}.
\end{equation}
   Corollary \ref{cor-11.2.5} assures that $G\ni g\mapsto\varrho(g)\psi_0\in\mathcal{V}_{G/L}$ is continuous at $g_0$, so there exists an open neighborhood $U'$ of $g_0\in G$ such that 
\begin{equation}\label{eq-b}\tag{b}
   \mbox{$\varrho(g')\psi_0\in\mathcal{U}$ for all $g'\in U'$}.
\end{equation}
   Besides, since $G$ is a locally compact Hausdorff space, there exists an open neighborhood $U$ of $g_0\in G$ so that 
\begin{equation}\label{eq-c}\tag{c}
   \mbox{$\overline{U}\subset U'$ and the closure $\overline{U}$ is a compact subset of $G$}.
\end{equation}
   By \eqref{eq-c} and Lemma \ref{lem-11.2.7}, there exists an open neighborhood $\mathcal{A}$ of $0\in\mathcal{V}_{G/L}$ such that    
\begin{equation}\label{eq-d}\tag{d}
   \mbox{$\varrho(g)\psi\in\mathcal{B}$ for all $(g,\psi)\in\overline{U}\times\mathcal{A}$}.
\end{equation}
   Putting $\mathcal{V}:=\mathcal{A}+\psi_0$, we assert that $\mathcal{V}$ is an open neighborhood of $\psi_0\in\mathcal{V}_{G/L}$.
   In addition, for any $(g,\psi)\in U\times\mathcal{V}$ we obtain 
\[
   \pi_\varrho(g,\psi)
   =\varrho(g)\psi
   =\varrho(g)(\psi-\psi_0)+\varrho(g)\psi_0
   \in\varrho(U)(\mathcal{A})+\varrho(U)\psi_0
   \subset\mathcal{B}+\mathcal{U}
   \subset\mathcal{O} 
\]
from \eqref{eq-d}, \eqref{eq-c}, \eqref{eq-b} and \eqref{eq-a}. 
   Consequently, $\pi_\varrho$ is continuous at $(g_0,\psi_0)$.
\end{proof}

\subsection{$K$-finite vectors}\label{subsec-11.2.2}
   Since the element $T\in\frak{g}$ is elliptic, Lemma \ref{lem-7.2.4} enables us to have a Cartan decomposition $\frak{g}=\frak{k}\oplus\frak{p}$ such that
\[
   T\in\frak{k},
\]
where $\frak{k}$ is a maximal compact subalgebra of $\frak{g}$. 
   Noting that the center $Z(G)$ of $G$ is finite due to $Z(G)\subset Z(G_\mathbb{C})$ and that $\frak{g}_u:=\frak{k}\oplus i\frak{p}$ is a compact real form of  $\frak{g}_\mathbb{C}$, we denote by $K$ and $G_u$ the maximal compact subgroups of $G$ and $G_\mathbb{C}$ corresponding to the subalgebras $\frak{k}\subset\frak{g}$ and $\frak{g}_u\subset\frak{g}_\mathbb{C}$, respectively. 
   In addition, let $\overline{\theta}$ be the (anti-holomorphic) Cartan involution of $G_\mathbb{C}$ so that 
\[
   G_u=\{g_u\in G_\mathbb{C} \,|\, \overline{\theta}(g_u)=g_u\}. 
\]      
   Fix a maximal torus $i\frak{h}_\mathbb{R}$ of the compact semisimple Lie algebra $\frak{g}_u$ containing the $T$, and take the (non-zero) root system $\triangle$ of $\frak{g}_\mathbb{C}$ relative to $\frak{h}_\mathbb{C}$, where $\frak{h}_\mathbb{C}$ is the complex vector subspace of $\frak{g}_\mathbb{C}$ generated by $i\frak{h}_\mathbb{R}$.
   For each $\alpha\in\triangle$, we denote by $\frak{g}_\alpha$ the root subspace of $\frak{g}_\mathbb{C}$, and suppose vectors $E_{\pm\alpha}\in\frak{g}_{\pm\alpha}$ to satisfy \eqref{eq-8.1.1}.
   Letting $\blacktriangle=\{\gamma\in\triangle \,|\, \gamma(T)=0\}$, we are going to demonstrate three lemmas and two propositions.

\begin{lemma}\label{lem-11.2.9}
   Let $\frak{k}_\mathbb{C}$ be the complex subalgebra of $\frak{g}_\mathbb{C}$ generated by $\frak{k}$.
   For a root $\beta\in\triangle-\blacktriangle$, the following {\rm (a)}, {\rm (b)} and {\rm (c)} are equivalent$:$
\[
\begin{array}{lll}   
   \mbox{{\rm (a)} $\frak{g}_\beta\subset\frak{k}_\mathbb{C}$}, & \mbox{{\rm (b)} $E_\beta\in\frak{k}_\mathbb{C}$}, & \mbox{{\rm (c)} $(E_\beta-E_{-\beta})\in\frak{k}$}.
\end{array}
\]
   Therefore, $w_\beta=\exp(\pi/2)(E_\beta-E_{-\beta})$ belongs to $K\cap N_{G_u}(i\frak{h}_\mathbb{R})$ whenever one of the conditions {\rm (a)}, {\rm (b)} and {\rm (c)} holds.
\end{lemma}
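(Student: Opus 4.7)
The plan is to establish the equivalences via the cycle (a) $\Leftrightarrow$ (b) and (b) $\Leftrightarrow$ (c), and then to deduce the final assertion by combining (c) with material already set up in Subsection \ref{subsec-8.1.3}. Since $\frak{g}_\beta=\operatorname{span}_\mathbb{C}\{E_\beta\}$, the equivalence (a) $\Leftrightarrow$ (b) is immediate, so the substance lies in the equivalence of (b) and (c).

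For (b) $\Rightarrow$ (c), the first point I would make is that the complex subalgebra $\frak{k}_\mathbb{C}$ is stable under the Cartan involution $\overline{\theta}_*$: it is generated by $\frak{k}\subset\frak{g}_u$, and $\overline{\theta}_*$ is a conjugate-linear involution fixing $\frak{g}_u$ pointwise. From \eqref{eq-8.1.2} we have $\overline{\theta}_*(E_\beta)=-E_{-\beta}$, hence $E_\beta\in\frak{k}_\mathbb{C}$ forces $E_{-\beta}\in\frak{k}_\mathbb{C}$ and therefore $(E_\beta-E_{-\beta})\in\frak{k}_\mathbb{C}$. On the other hand, \eqref{eq-8.1.1} gives $(E_\beta-E_{-\beta})\in\frak{g}_u$. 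It then remains to observe that $\frak{k}_\mathbb{C}\cap\frak{g}_u=\frak{k}$: indeed, for $X,Y\in\frak{k}$ the condition $X+iY\in\frak{g}_u$ together with $\overline{\theta}_*$-invariance of $\frak{g}_u$ yields $X-iY=X+iY$, hence $Y=0$. This gives (c).

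For (c) $\Rightarrow$ (b), the key calculation is the following. Since $T\in\frak{k}$ and $(E_\beta-E_{-\beta})\in\frak{k}$, we have $[T,E_\beta-E_{-\beta}]\in\frak{k}$. On the other hand, $T\in\frak{h}_\mathbb{C}$ gives $[T,E_{\pm\beta}]=\pm\beta(T)E_{\pm\beta}$, so
\[
[T,E_\beta-E_{-\beta}]=\beta(T)(E_\beta+E_{-\beta}).
\]
Writing $\beta(T)=i\lambda$ with $\lambda\in\mathbb{R}$ (because $T\in i\frak{h}_\mathbb{R}$), and using $\beta\not\in\blacktriangle$ to conclude $\lambda\neq 0$, I can divide by $\lambda$ to obtain $i(E_\beta+E_{-\beta})\in\frak{k}$. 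Combining the two vectors in $\frak{k}$,
\[
2E_\beta=(E_\beta-E_{-\beta})-i\cdot i(E_\beta+E_{-\beta})\in\frak{k}+i\frak{k}=\frak{k}_\mathbb{C},
\]
as required. The final assertion then follows at once: Subsection \ref{subsec-8.1.3} already shows $[w_\beta]\in\mathcal{W}=N_{G_u}(i\frak{h}_\mathbb{R})/C_{G_u}(i\frak{h}_\mathbb{R})$, so $w_\beta\in N_{G_u}(i\frak{h}_\mathbb{R})$, while (c) yields $w_\beta=\exp(\pi/2)(E_\beta-E_{-\beta})\in\exp\frak{k}\subset K$.

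The only genuine obstacle is the implication (c) $\Rightarrow$ (b), where the hypothesis $\beta\not\in\blacktriangle$ must be used essentially: without it, $\beta(T)=0$ and the bracket calculation produces no information, so one cannot recover the companion vector $i(E_\beta+E_{-\beta})$. The remaining steps are routine bookkeeping with the Chevalley normalization \eqref{eq-8.1.1}, \eqref{eq-8.1.2} and the interaction between $\frak{k}_\mathbb{C}$ and $\frak{g}_u$.
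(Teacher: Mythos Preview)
Your proof is correct and follows essentially the same route as the paper: the equivalence (a)$\Leftrightarrow$(b) is dismissed as trivial, (b)$\Rightarrow$(c) is obtained from $\overline{\theta}_*$-stability of $\frak{k}_\mathbb{C}$ together with \eqref{eq-8.1.2} and the identification $\frak{k}=\{X\in\frak{k}_\mathbb{C}\,|\,\overline{\theta}_*(X)=X\}$ (equivalently your $\frak{k}_\mathbb{C}\cap\frak{g}_u=\frak{k}$), and (c)$\Rightarrow$(b) is exactly the bracket computation $[T,E_\beta-E_{-\beta}]=\beta(T)(E_\beta+E_{-\beta})$ with $\beta(T)\neq 0$. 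The final membership $w_\beta\in K\cap N_{G_u}(i\frak{h}_\mathbb{R})$ is likewise read off from (c) and Subsection~\ref{subsec-8.1.3}, just as in the paper.
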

\begin{proof}
   Since (a)$\Leftrightarrow$(b) is obvious, we only confirm (b)$\Leftrightarrow$(c). 
   cf.\ Subsection \ref{subsec-8.1.3} for $w_\beta\in N_{G_u}(i\frak{h}_\mathbb{R})$.\par
   
   (b)$\Rightarrow$(c). 
   This follows by \eqref{eq-8.1.2}, $\overline{\theta}_*(\frak{k}_\mathbb{C})\subset\frak{k}_\mathbb{C}$ and $\frak{k}=\{X\in\frak{k}_\mathbb{C} \,|\, \overline{\theta}_*(X)=X\}$.
   Here we remark that \eqref{eq-8.1.2} always holds for any vector $E_\alpha$ with \eqref{eq-8.1.1}.\par
   
   (c)$\Rightarrow$(b).
   Suppose that $(E_\beta-E_{-\beta})\in\frak{k}$.
   Then, from $T\in\frak{k}$ one obtains
\[
   \beta(T)(E_\beta+E_{-\beta})
   =[T,E_\beta-E_{-\beta}]\in[\frak{k},\frak{k}]\subset\frak{k};
\]
and so $0\neq\beta(T)\in i\mathbb{R}$ yields $(E_\beta+E_{-\beta})\in i\frak{k}$.
   Hence $E_\beta=(1/2)(E_\beta-E_{-\beta}+E_\beta+E_{-\beta})\in\frak{k}+i\frak{k}\subset\frak{k}_\mathbb{C}$. 
\end{proof}  

   Let $\Pi_\triangle$ be a fundamental root system of $\triangle$ satisfying \eqref{eq-8.1.5}, and let $\triangle^+$ be the set of positive roots relative to $\Pi_\triangle$.
   Let us suppose that $\triangle^+-\blacktriangle$ consists of $r$-roots $\beta_1,\beta_2,\dots,\beta_r$ ($r=\dim_\mathbb{C}\frak{u}^+$). 
   Then, it turns out that $\{E_{\beta_j}\}_{j=1}^r$ is a complex basis of $\frak{u}^+=\bigoplus_{\alpha\in\triangle^+-\blacktriangle}\frak{g}_\alpha=\bigoplus_{j=1}^r\frak{g}_{\beta_j}$, and Proposition \ref{prop-8.2.1}-(i) allows us to identify $U^+$ with $\mathbb{C}^r$ via 
\begin{equation}\label{eq-11.2.10} 
   U^+\ni\exp(z^1E_{\beta_1}+z^2E_{\beta_2}+\cdots+z^rE_{\beta_r})\leftrightarrow(z^1,z^2,\dots,z^r)\in\mathbb{C}^r.
\end{equation}
   Remark that $z^1,z^2,\dots,z^r$ is the canonical coordinates of the first kind associated with $\{E_{\beta_j}\}_{j=1}^r\subset\frak{u}^+$.   
   Setting $\omega_j:=\beta_j(-iT)$ for $1\leq j\leq r$, one has $\beta_j(T)=i\omega_j$, $\omega_j>0$ and 
\begin{equation}\label{eq-11.2.11}
   \mbox{$\operatorname{Ad}(\exp tT)E_{\beta_j}=e^{i\omega_jt}E_{\beta_j}$ ($1\leq j\leq r$)}
\end{equation}
for all $t\in\mathbb{R}$.
   About these $\omega_1,\omega_2,\dots,\omega_r>0$ we assert

\begin{lemma}\label{lem-11.2.12}
   For a given $\vartheta\in\mathbb{R}$, the number of non-negative integer solutions $(n_1,n_2,\dots,n_r)$ to the equation
\[
   \vartheta=\omega_1n_1+\omega_2n_2+\cdots+\omega_rn_r
\] 
is only finite or zero. 
\end{lemma}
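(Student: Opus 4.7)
The plan is to exploit the strict positivity of each $\omega_{j}$, which comes from $\beta_{j}\in\triangle^{+}-\blacktriangle$ together with the normalisation \eqref{eq-8.1.5}, i.e.\ $\omega_{j}=\beta_{j}(-iT)>0$ for every $1\leq j\leq r$. Because the $\omega_{j}$ are uniformly bounded below away from zero, any non-negative integer combination $\omega_{1}n_{1}+\cdots+\omega_{r}n_{r}$ cannot be made to equal a fixed real number in more than finitely many ways.

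First I would set $\omega_{\min}:=\min\{\omega_{j}\,|\,1\leq j\leq r\}$ and observe that $\omega_{\min}>0$, since the minimum of finitely many positive real numbers is itself positive. Next I would split into three cases on the sign of $\vartheta$. If $\vartheta<0$, then no solution can exist, since each summand $\omega_{j}n_{j}$ is non-negative and so $\omega_{1}n_{1}+\cdots+\omega_{r}n_{r}\geq 0>\vartheta$. If $\vartheta=0$, then from $\omega_{j}n_{j}\geq 0$ one deduces $\omega_{j}n_{j}=0$ for every $j$, and $\omega_{j}>0$ forces $n_{j}=0$; so in this case there is a single solution, namely the zero tuple. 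If $\vartheta>0$, then any non-negative integer solution $(n_{1},\dots,n_{r})$ satisfies
\[
   \omega_{\min}\sum_{j=1}^{r}n_{j}\leq\sum_{j=1}^{r}\omega_{j}n_{j}=\vartheta,
\]
so each coordinate obeys $0\leq n_{j}\leq\vartheta/\omega_{\min}$. The set of such tuples is contained in the finite product $\prod_{j=1}^{r}\{0,1,\dots,\lfloor\vartheta/\omega_{\min}\rfloor\}$, hence is finite.

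There is essentially no obstacle here: the argument is a direct application of the positivity of the $\omega_{j}$ and is independent of the root-theoretic machinery of Section \ref{sec-8.1}. The only point worth stating explicitly is why $\omega_{j}>0$, which has already been recorded immediately before the statement (the ordering of $\Pi_{\triangle}$ was chosen via \eqref{eq-8.1.5} so that $\beta(-iT)\geq 0$ on $\triangle^{+}$, and $\beta_{j}\not\in\blacktriangle$ rules out equality). Consequently the proof will be a short paragraph of two or three lines.
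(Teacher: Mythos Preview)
Your proof is correct and follows essentially the same approach as the paper: both use the strict positivity of the $\omega_j$ to bound each coordinate of a solution. The only cosmetic difference is that the paper bounds each $n_k$ individually by $\vartheta/\omega_k$ (omitting the separate case analysis on the sign of $\vartheta$), whereas you introduce $\omega_{\min}$ to get a uniform bound; either way the solution set lands inside a finite box.
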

\begin{proof}
   If $(n_1,n_2,\dots,n_r)$ is a non-negative integer solution to the equation, then it follows from $\omega_j>0$ ($1\leq j\leq r$) that $\vartheta-\omega_kn_k=\omega_1n_1+\cdots+\omega_{k-1}n_{k-1}+\omega_{k+1}n_{k+1}+\cdots+\omega_rn_r\geq 0$, so that $0\leq n_k\leq\vartheta/\omega_k$, $n_k\in\mathbb{Z}$ for all $1\leq k\leq r$.
\end{proof}

   Now, let $(\mathcal{V}_{G/L})_K$ be the set of $K$-finite vectors in $\mathcal{V}_{G/L}$ for the continuous representation $\varrho$ of $G$ on $\mathcal{V}_{G/L}$, that is, 
\begin{equation}\label{eq-11.2.13}
   (\mathcal{V}_{G/L})_K:=\{\varphi\in\mathcal{V}_{G/L} \,|\, \dim_\mathbb{C}\operatorname{span}_\mathbb{C}\{\varrho(k)\varphi : k\in K\}<\infty\}.
\end{equation}
   Note that $(\mathcal{V}_{G/L})_K$ is a $\varrho(K)$-invariant complex vector subspace of $\mathcal{V}_{G/L}$. 
   With this notation \eqref{eq-11.2.13} we show

\begin{lemma}\label{lem-11.2.14}
\begin{enumerate}
\item[]
\item[{\rm (1)}]
    For each $\varphi\in(\mathcal{V}_{G/L})_K$ we set a $\varrho(K)$-invariant complex vector subspace $\mathcal{V}_\varphi\subset\mathcal{V}_{G/L}$ as 
\[
   \mathcal{V}_\varphi:=\operatorname{span}_\mathbb{C}\{\varrho(k)\varphi : k\in K\}.
\] 
   Then, there exist a complex basis $\{\varphi_a\}_{a=1}^{k_\varphi}$ of $\mathcal{V}_\varphi$ and $\mu_1,\mu_2,\dots,\mu_{k_\varphi}\in\mathbb{R}$ such that 
\[
   \varrho(\exp tT)\varphi_a=e^{i\mu_at}\varphi_a
\]
for all $1\leq a\leq k_\varphi=\dim_\mathbb{C}\mathcal{V}_\varphi$ and $t\in\mathbb{R}$.
\item[{\rm (2)}]
   There exist a complex basis $\{{\sf v}_b\}_{b=1}^m$ of ${\sf V}$ and $\theta_1,\theta_2,\dots,\theta_m\in\mathbb{R}$ such that 
\[
   \rho(\exp tT){\sf v}_b=e^{i\theta_bt}{\sf v}_b
\]
for all $1\leq b\leq m=\dim_\mathbb{C}{\sf V}$ and $t\in\mathbb{R}$.
\end{enumerate}
\end{lemma}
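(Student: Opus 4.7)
The plan is to prove both parts by the same Weyl unitary trick: the relevant representation, restricted to the one-parameter subgroup generated by $T$, preserves some Hermitian inner product on the ambient finite-dimensional space, so it is diagonalizable with eigenvalues on the unit circle.

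First I would record the compactness inputs. By construction $T\in\frak{k}$, so $\{\exp tT:t\in\mathbb{R}\}\subset K$; moreover $K$ is compact since $Z(G)\subset Z(G_\mathbb{C})$ is discrete in the connected complex semisimple group $G_\mathbb{C}$, hence finite, and $K$ is a connected Lie group with compact Lie algebra $\frak{k}$. For part (2) I would further note that $T\in\frak{l}_u=\frak{l}_\mathbb{C}\cap\frak{g}_u\subset\frak{q}^-\cap\frak{g}_u$, and since $G_u$ is compact, the closure $\overline{\{\exp tT\}}^{G_u}$ is a compact torus sitting inside $L_u\subset Q^-$ (this is also a direct consequence of Lemma \ref{lem-7.2.1}-(2) applied inside the compact semisimple group $G_u$, whose center is finite).

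For (1), the subspace $\mathcal{V}_\varphi\subset\mathcal{V}_{G/L}$ is finite-dimensional and $\varrho(K)$-invariant by construction; by Proposition \ref{prop-4.1.8} its subspace topology coincides with the unique Hausdorff vector-space topology on $\mathbb{C}^{k_\varphi}$, and the restriction map $K\times\mathcal{V}_\varphi\to\mathcal{V}_\varphi$ is continuous by Proposition \ref{prop-11.2.8}. Averaging any Hermitian inner product on $\mathcal{V}_\varphi$ against a Haar measure on the compact group $K$ yields a $K$-invariant Hermitian inner product; with respect to it, $t\mapsto\varrho(\exp tT)|_{\mathcal{V}_\varphi}$ is a continuous one-parameter group of unitary operators, hence equals $e^{tA}$ for some skew-Hermitian $A$. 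Choosing an orthonormal eigenbasis of $A$, say $A\varphi_a=i\mu_a\varphi_a$ with $\mu_a\in\mathbb{R}$, yields $\varrho(\exp tT)\varphi_a=e^{i\mu_a t}\varphi_a$ and finishes (1). For (2), I would run exactly the same argument on ${\sf V}$ using the compact torus $\overline{\{\exp tT\}}^{G_u}\subset Q^-$ in place of $K$: since $\rho:Q^-\to GL({\sf V})$ is holomorphic, its restriction to this compact torus is continuous, Haar-averaging produces an invariant Hermitian inner product making $\rho(\exp tT)$ unitary, and diagonalization gives the desired basis $\{{\sf v}_b\}$ and real numbers $\theta_b$.

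I do not expect a serious obstacle here: the core is a textbook application of Weyl's unitary trick plus the spectral theorem for a one-parameter unitary group on a finite-dimensional inner product space. The only points needing a little care are the compactness assertions for $K$ and for $\overline{\{\exp tT\}}^{G_u}$ (which rest on $Z(G)$ being finite and on Lemma \ref{lem-7.2.1}-(2)) and the continuity of the restricted representations on the finite-dimensional spaces $\mathcal{V}_\varphi$ and ${\sf V}$ (which follow from Proposition \ref{prop-11.2.8} and from the holomorphy of $\rho$, respectively).
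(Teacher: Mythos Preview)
Your proposal is correct and follows essentially the same line as the paper's proof: both rely on the fact that $S^1=\{\exp tT:t\in\mathbb{R}\}$ lies in a compact group acting on the relevant finite-dimensional space, and then diagonalize. The paper is a bit more concise---it observes (via Lemma~\ref{lem-7.2.1}-(2) and the finiteness of $Z(G)$) that $S^1$ is itself a compact torus, and then directly invokes the decomposition of $\mathcal{V}_\varphi$ (resp.\ ${\sf V}$) into $1$-dimensional $\varrho(S^1)$- (resp.\ $\rho(S^1)$-) invariant subspaces, using $S^1\subset K$ for (1) and $S^1\subset L\subset Q^-$ for (2)---whereas you spell out the Weyl unitary trick that underlies this decomposition. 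Your detour through the closure $\overline{\{\exp tT\}}^{G_u}$ in part (2) is harmless but unnecessary, since $S^1$ is already compact.
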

\begin{proof}
   Since the center $Z(G)$ is finite and $T\neq 0$, Lemma \ref{lem-7.2.1} implies that $S^1=\{\exp tT : t\in\mathbb{R}\}$ is a $1$-dimensional torus.\par

   (1). 
   It follows from $T\in\frak{k}$ that $S^1\subset K$.
   Therefore, since $\mathcal{V}_\varphi$ is $\varrho(K)$-invariant and $k_\varphi=\dim_\mathbb{C}\mathcal{V}_\varphi<\infty$, one can decompose $\mathcal{V}_\varphi$ into a direct sum of $1$-dimensional $\varrho(S^1)$-invariant complex vector subspaces: $\mathcal{V}_\varphi=\mathcal{V}_1\oplus\mathcal{V}_2\oplus\cdots\oplus\mathcal{V}_{k_\varphi}$. 
   Hence there exist a complex basis $\{\varphi_a\}_{a=1}^{k_\varphi}$ of $\mathcal{V}_\varphi$ and $\mu_1,\mu_2,\dots,\mu_{k_\varphi}\in\mathbb{R}$ such that $\varphi_a\in\mathcal{V}_a$ and
\[
   \varrho(\exp tT)\varphi_a=e^{i\mu_at}\varphi_a
\]
for all $1\leq a\leq k_\varphi=\dim_\mathbb{C}\mathcal{V}_\varphi$ and $t\in\mathbb{R}$.\par

   (2). 
   One can conclude (2) by arguments similar to those above, $S^1\subset L\subset Q^-$, ${\sf V}$ being $\rho(Q^-)$-invariant and $m=\dim_\mathbb{C}{\sf V}<\infty$.
\end{proof}

   We are in a position to demonstrate
\begin{proposition}\label{prop-11.2.15}
   Let $\varphi\in(\mathcal{V}_{G/L})_K$ and $\mathcal{V}_\varphi=\operatorname{span}_\mathbb{C}\{\varrho(k)\varphi : k\in K\}$.
\begin{enumerate}
\item[{\rm (i)}]
   Let $\{\varphi_a\}_{a=1}^{k_\varphi}$ and $\{{\sf v}_b\}_{b=1}^m$ be the bases of $\mathcal{V}_\varphi$ and ${\sf V}$ in Lemma {\rm \ref{lem-11.2.14}}, respectively. 
   For $x\in GQ^-$ we express $\varphi_a(x)\in{\sf V}$ as
\[
    \varphi_a(x)=\varphi_a^1(x){\sf v}_1+\varphi_a^2(x){\sf v}_2+\cdots+\varphi_a^m(x){\sf v}_m.
\]
   Then, for each $1\leq a\leq k_\varphi$ and $1\leq b\leq m$, there exists a unique polynomial $($holomorphic$)$ function $\varphi_a^b{}'=\varphi_a^b{}'(z^1,\dots,z^r)$ on $U^+=\mathbb{C}^r$ of finite degree such that 
\[
   \mbox{$\varphi_a^b=\varphi_a^b{}'$ on $U^+\cap GQ^-$}.
\]
   Here $U^+$ is identified with $\mathbb{C}^r$ via \eqref{eq-11.2.10}, and $z^1,\dots,z^r$ is the canonical coordinates of the first kind associated with the basis $\{E_{\beta_j}\}_{j=1}^r\subset\frak{u}^+$.
\item[{\rm (ii)}]
   For a given $\phi\in\mathcal{V}_\varphi$ there exists a unique holomorphic mapping $\phi':U^+\to{\sf V}$ such that $\phi=\phi'$ on $U^+\cap GQ^-$.
\end{enumerate}
\end{proposition}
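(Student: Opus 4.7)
The plan is to derive a scaling functional equation for each component function $\varphi_a^b$ by exploiting the $\exp tT$-eigenvector properties from Lemma \ref{lem-11.2.14}, and then to invoke Lemma \ref{lem-11.2.12} to force the Taylor expansion at the origin to be a polynomial of bounded degree. First I would use \eqref{eq-11.2.10} to identify $U^+$ with $\mathbb{C}^r$, noting that $e \in U^+ \cap GQ^-$ so a full polydisk around the origin lies in this intersection. For $u = \exp\bigl(\sum_j z^j E_{\beta_j}\bigr)$ and any $t \in \mathbb{R}$, the commutation identity
\[
\exp(-tT)\,u = \exp\Bigl(\textstyle\sum_j e^{-i\omega_j t} z^j E_{\beta_j}\Bigr)\,\exp(-tT)
\]
follows from \eqref{eq-11.2.11}; the second factor lies in $L_\mathbb{C}\subset Q^-$, and since $\exp(-tT)\in G$ preserves $GQ^-$ under left translation, the first factor (a $U^+$-element) stays in $U^+\cap GQ^-$ whenever $u$ does.

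Combining this decomposition with property \eqref{eq-11.2.1}-(ii) of $\varphi_a$ and the eigenvector relations $\varrho(\exp tT)\varphi_a = e^{i\mu_a t}\varphi_a$, $\rho(\exp tT){\sf v}_b = e^{i\theta_b t}{\sf v}_b$, a component-wise comparison in the basis $\{{\sf v}_b\}$ yields the scaling identity
\[
\varphi_a^b\bigl(\exp\textstyle\sum_j e^{-i\omega_j t}z^j E_{\beta_j}\bigr) \;=\; e^{i(\mu_a - \theta_b)t}\,\varphi_a^b\bigl(\exp\textstyle\sum_j z^j E_{\beta_j}\bigr)
\]
for every $t \in \mathbb{R}$. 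Writing $f(z) := \varphi_a^b(\exp \sum_j z^j E_{\beta_j})$ and Taylor-expanding $f(z) = \sum_{\bm n} c_{\bm n}(z^1)^{n_1}\cdots(z^r)^{n_r}$ on a polydisk about the origin, matching Fourier modes in $t$ forces $c_{\bm n} = 0$ unless $\omega_1 n_1 + \cdots + \omega_r n_r = \theta_b - \mu_a$. Since each $\omega_j > 0$, Lemma \ref{lem-11.2.12} bounds the admissible multi-indices to a finite (possibly empty) set, so $f$ agrees with a polynomial $\varphi_a^b{}'$ of finite degree on a neighborhood of the origin; this polynomial is the candidate for the extension to all of $U^+ = \mathbb{C}^r$.

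The main obstacle I anticipate is promoting the equality $\varphi_a^b = \varphi_a^b{}'$ from a neighborhood of the origin to the whole of $U^+ \cap GQ^-$. On the connected component of the origin the identity theorem closes the argument at once; the delicate point is that $U^+ \cap GQ^-$ need not be obviously connected, and for this I would exhibit $U^+ \cap GQ^-$ as the preimage under the biholomorphism $U^+ \to U^+Q^-/Q^-$ (Proposition \ref{prop-8.2.1}-(iv)) of the open set $\iota(G/L) \cap (U^+Q^-/Q^-) \subset G_\mathbb{C}/Q^-$, and exploit the connectedness of $\iota(G/L) = G/L$ together with the fact that the scaling flow $t \mapsto (e^{-i\omega_1 t}z^1,\dots,e^{-i\omega_r t}z^r)$ preserves each component of $U^+ \cap GQ^-$: the difference $f - \varphi_a^b{}'$ then becomes a holomorphic function on $U^+\cap GQ^-$ satisfying the same scaling relation and vanishing in a neighborhood of the origin, which is enough to conclude that it vanishes on the whole component of the origin by the identity theorem; connectedness of $U^+\cap GQ^-$ itself (inherited from the simply connected elliptic orbit realised as the ``generalized unit ball'' inside the opposite big cell) then finishes (i).

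Finally, statement (ii) follows immediately by $\mathbb{C}$-linearity once (i) is settled: writing $\phi = \sum_a \lambda_a \varphi_a$ with $\lambda_a \in \mathbb{C}$ and setting $\phi' := \sum_b \bigl(\sum_a \lambda_a \varphi_a^b{}'\bigr){\sf v}_b$, this $\phi'$ is a holomorphic mapping $U^+ \to {\sf V}$ agreeing with $\phi$ on $U^+\cap GQ^-$, and uniqueness is immediate from the identity theorem on the connected manifold $U^+$.
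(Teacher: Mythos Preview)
Your approach is essentially the paper's: derive the conjugation-scaling identity from Lemma \ref{lem-11.2.14} and \eqref{eq-11.2.11}, Taylor-expand on a polydisk $P$ about $e$, invoke Lemma \ref{lem-11.2.12} to force all but finitely many coefficients to vanish, and then extend the resulting polynomial to $U^+=\mathbb{C}^r$ via the global canonical coordinates; part (ii) then follows by linearity exactly as you say.

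You are in fact more careful than the paper on one point. The paper stops once the polynomial is obtained on $P$ and simply extends it to $U^+$, without explicitly arguing that the equality $\varphi_a^b=\varphi_a^b{}'$ propagates from $P$ to all of $U^+\cap GQ^-$. You correctly flag this and reduce it (via the identity theorem) to connectedness of $U^+\cap GQ^-$; however, your proposed justification---that the simply connected elliptic orbit is realised as a ``generalized unit ball'' inside the big cell---is not a proof in the stated generality, since $\iota(G/L)$ need not lie entirely inside $U^+Q^-/Q^-$, and the intersection of two connected open sets need not be connected. So on this particular step you and the paper are equally incomplete, but you deserve credit for isolating the issue.
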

\begin{proof}
   (i). 
   By Lemma \ref{lem-11.1.2}-(3), $U^+\cap GQ^-$ is an open neighborhood of $e\in U^+$. 
   Hence the theorem of identity assures the uniqueness of $\varphi_a^b{}'$, where we remark that the restriction $\varphi_a^b|_{U^+\cap GQ^-}$ is holomorphic since $U^+$ is a regular complex submanifold of $G_\mathbb{C}$. 
   From now on, let us confirm the existence of $\varphi_a^b{}'$.
   Since $\varphi_a^b:U^+\cap GQ^-\to\mathbb{C}$ is holomorphic, we can find an $R>0$ so that the following (a1) and (a2) hold for $P:=\{u\in U^+ : |z^j(u)|<R, \, 1\leq j\leq r\}:$
\begin{enumerate}
\item[(a1)]
   $P$ is an open subset of $U^+\cap GQ^-$ containing $e$, and
\item[(a2)] 
   on $P$ we can express $\varphi_a^b|_{U^+\cap GQ^-}$ as 
\[
   \varphi_a^b(z^1,z^2,\dots,z^r)
   =\sum_{n_1,n_2,\dots,n_r\geq 0}\alpha_{n_1n_2\cdots n_r}(z^1)^{n_1}(z^2)^{n_2}\cdots(z^r)^{n_r}
\]
(the Taylor expansion of $\varphi_a^b|_{U^+\cap GQ^-}$ at $e=(0,0,\dots,0)$).   
\end{enumerate}
   Remark, it follows from \eqref{eq-11.2.11} that $sPs^{-1}\subset P$ for all $s\in S^1=\{\exp tT : t\in\mathbb{R}\}$.
   For any $t\in\mathbb{R}$ and $u\in P$ we obtain 
\allowdisplaybreaks{
\begin{align*}
   \mbox{$\sum_{b=1}^me^{i\theta_bt}\varphi_a^b(u){\sf v}_b$}
  &=\mbox{$\rho(\exp tT)\bigl(\sum_{b=1}^m\varphi_a^b(u){\sf v}_b\bigr)$ \quad ($\because$ Lemma \ref{lem-11.2.14}-(2))}\\
  &=\rho(\exp tT)\bigl(\varphi_a(u)\bigr)
   =\varphi_a\bigl(u\exp(-tT)\bigr) \quad\mbox{($\because$ $\varphi_a\in\mathcal{V}_{G/L}$, \eqref{eq-11.2.1}-(ii))}\\
  &\!\!\!\!\stackrel{\eqref{eq-11.2.2}}{=}\bigl(\varrho(\exp tT)\varphi_a\bigr)\bigl((\exp tT)u\exp(-tT)\bigr)
  =(e^{i\mu_at}\varphi_a)\bigl((\exp tT)u\exp(-tT)\bigr) \quad\mbox{($\because$ Lemma \ref{lem-11.2.14}-(1))}\\
 &=\mbox{$\sum_{b=1}^me^{i\mu_at}\varphi_a^b\bigl((\exp tT)u\exp(-tT)\bigr){\sf v}_b$}.
\end{align*}}This provides us with
\begin{equation}\label{eq-1}\tag*{\textcircled{1}}
   e^{i(\theta_b-\mu_a)t}\varphi_a^b(u)=\varphi_a^b\bigl((\exp tT)u\exp(-tT)\bigr).
\end{equation}
   If $u=\exp(z^1E_{\beta_1}+z^2E_{\beta_2}+\cdots+z^rE_{\beta_r})$, then it follows from (a2), \ref{eq-1} and \eqref{eq-11.2.11} that   
\begin{multline*}
   \sum_{n_1,n_2,\dots,n_r\geq 0}e^{i(\theta_b-\mu_a)t}\alpha_{n_1n_2\cdots n_r}(z^1)^{n_1}(z^2)^{n_2}\cdots(z^r)^{n_r}
   =e^{i(\theta_b-\mu_a)t}\varphi_a^b(z^1,z^2,\dots,z^r)\\
   =e^{i(\theta_b-\mu_a)t}\varphi_a^b(u)   
   =\varphi_a^b\bigl((\exp tT)u\exp(-tT)\bigr)
   =\varphi_a^b(e^{i\omega_1t}z^1,e^{i\omega_2t}z^2,\dots,e^{i\omega_rt}z^r)\\
   =\sum_{n_1,n_2,\dots,n_r\geq 0}e^{i(\omega_1n_1+\omega_2n_2+\cdots+\omega_rn_r)t}\alpha_{n_1n_2\cdots n_r}(z^1)^{n_1}(z^2)^{n_2}\cdots(z^r)^{n_r}.
\end{multline*}
   Therefore we see that 
\[
   e^{i(\theta_b-\mu_a)t}\alpha_{n_1n_2\cdots n_r}=e^{i(\omega_1n_1+\omega_2n_2+\cdots+\omega_rn_r)t}\alpha_{n_1n_2\cdots n_r}
\]
for all $t\in\mathbb{R}$ and $n_1,n_2,\dots,n_r\geq 0$.   
   Differentiating this equation at $t=0$ we deduce 
\begin{equation}\label{eq-2}\tag*{\textcircled{2}}
   (\theta_b-\mu_a)\alpha_{n_1n_2\cdots n_r}=(\omega_1n_1+\omega_2n_2+\cdots+\omega_rn_r)\alpha_{n_1n_2\cdots n_r}.
\end{equation}
   Here Lemma \ref{lem-11.2.12} implies that the number of non-negative integer solutions $(n_1,n_2,\dots,n_r)$ to the equation 
\[
   \theta_b-\mu_a=\omega_1n_1+\omega_2n_2+\cdots+\omega_rn_r
\]
is only finite or zero, so that the number of the non-zero coefficients $\alpha_{n_1n_2\cdots n_r}$ is only finite. 
   Consequently $\varphi_a^b(z^1,z^2,\dots,z^r)=\sum_{n_1,n_2,\dots,n_r\geq 0}\alpha_{n_1n_2\cdots n_r}(z^1)^{n_1}(z^2)^{n_2}\cdots(z^r)^{n_r}$ must be a polynomial function on the open subset $P\subset U^+$ of finite degree.
   Moreover, one can extend it as a polynomial function on $U^+$ of finite degree, since $z^1,z^2,\dots,z^r$ is a global coordinate system in $U^+$.\par
   
   (ii). 
   For any $\phi\in\mathcal{V}_\varphi$ there exist $\alpha_1,\dots,\alpha_{k_\varphi}\in\mathbb{C}$ such that $\phi=\sum_{a=1}^{k_\varphi}\alpha_a\varphi_a$. 
   Hence (ii) follows from (i).
\end{proof}

   Proposition \ref{prop-11.2.15}-(ii) leads to
\begin{corollary}\label{cor-11.2.16}
   For any $\varphi\in(\mathcal{V}_{G/L})_K$, there exists a unique holomorphic mapping $\varphi':U^+\to{\sf V}$ such that $\varphi=\varphi'$ on $U^+\cap GQ^-$.
   Here we refer to \eqref{eq-11.2.13} for $(\mathcal{V}_{G/L})_K$.  
\end{corollary}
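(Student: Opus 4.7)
The plan is to derive the corollary almost directly from Proposition 11.2.15-(ii), which has already done the substantive analytic work. Given $\varphi\in(\mathcal{V}_{G/L})_K$, the hypothesis of $K$-finiteness means exactly that $\mathcal{V}_\varphi:=\operatorname{span}_\mathbb{C}\{\varrho(k)\varphi:k\in K\}$ is finite-dimensional, and since $\varrho(e)\varphi=\varphi$, the vector $\varphi$ itself lies in $\mathcal{V}_\varphi$. Thus $\varphi$ plays the role of the element $\phi\in\mathcal{V}_\varphi$ in Proposition 11.2.15-(ii), which hands us the holomorphic mapping $\varphi':U^+\to\mathsf{V}$ satisfying $\varphi=\varphi'$ on $U^+\cap GQ^-$.

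For uniqueness, I would appeal to the theorem of identity. By Proposition 8.2.1-(i) the exponential mapping $\exp:\mathfrak{u}^+\to U^+$ is a biholomorphism, so $U^+$ is a connected complex manifold (identified with $\mathbb{C}^r$ via \eqref{eq-11.2.10}); by Lemma 11.1.2-(3) $GQ^-$ is a domain in $G_\mathbb{C}$ containing $e$, so $U^+\cap GQ^-$ is a non-empty open subset of $U^+$. Any two holomorphic extensions of $\varphi|_{U^+\cap GQ^-}$ to $U^+$ must therefore agree on all of $U^+$. This uniqueness is also already implicit in Proposition 11.2.15-(i), where each coefficient function $\varphi_a^b{}'$ is produced as a polynomial of finite degree in $z^1,\dots,z^r$ and is therefore uniquely determined by its restriction to any non-empty open set.

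There is essentially no obstacle: the corollary is a clean specialisation of Proposition 11.2.15-(ii) to the case $\phi=\varphi$. The genuine content — namely that the Taylor coefficients $\alpha_{n_1\cdots n_r}$ of each $\varphi_a^b$ are forced to vanish except for finitely many index tuples $(n_1,\dots,n_r)$ satisfying $\theta_b-\mu_a=\omega_1 n_1+\cdots+\omega_r n_r$, together with the finiteness provided by Lemma 11.2.12 — has already been established in Proposition 11.2.15, and Corollary 11.2.16 merely names the resulting extension for the distinguished element $\varphi$ itself.
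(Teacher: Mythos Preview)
Your proposal is correct and matches the paper's approach exactly: the paper simply states that Proposition~11.2.15-(ii) leads to the corollary, and you have spelled out the (one-line) reason, namely that $\varphi=\varrho(e)\varphi\in\mathcal{V}_\varphi$ so Proposition~11.2.15-(ii) applies with $\phi=\varphi$. Your added remarks on uniqueness via the identity theorem are correct and make explicit what the paper leaves implicit.
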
 

   Recalling that $\blacktriangle=\{\gamma\in\triangle \,|\, \gamma(T)=0\}$, we establish the following proposition which will play a role in the next subsection:
\begin{proposition}\label{prop-11.2.17}
   Suppose that the fundamental root system $\Pi_\triangle$ satisfies not only \eqref{eq-8.1.5} but also  
\[
   \mbox{$\frak{g}_\beta\subset\frak{k}_\mathbb{C}$ for all $\beta\in\Pi_\triangle-\blacktriangle$}.
\]
   Then, for each $\varphi\in(\mathcal{V}_{G/L})_K$ there exists a unique $h\in\mathcal{V}_{G_\mathbb{C}/Q^-}$ such that $\varphi=h$ on $GQ^-$.
\end{proposition}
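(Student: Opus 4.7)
\medskip
\noindent\textbf{Proof proposal.} The plan is to extend $\varphi$ step by step, first to a neighborhood of the identity via $K$-finiteness and the basic cell $U^+Q^-$, then to the translated cells $w_\beta^{-1}U^+Q^-$ for $\beta\in\Pi_\triangle-\blacktriangle$, and finally to the whole $G_\mathbb{C}$ by Hartogs-type analytic continuation along the Bruhat stratification. Uniqueness is immediate: if $h_1,h_2\in\mathcal{V}_{G_\mathbb{C}/Q^-}$ both restrict to $\varphi$ on $GQ^-$, then $h_1-h_2$ is a holomorphic map on the connected $G_\mathbb{C}$ vanishing on the non-empty open set $GQ^-$ (cf.\ Lemma \ref{lem-11.1.2}-(3)), hence $h_1=h_2$.

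For existence, I would first apply Corollary \ref{cor-11.2.16} to obtain a holomorphic $\varphi':U^+\to\mathsf{V}$ with $\varphi=\varphi'$ on $U^+\cap GQ^-$. Since Proposition \ref{prop-8.2.1}-(iv) says $U^+\times Q^-\to U^+Q^-$ is biholomorphic, the formula $\hat{\varphi}(uq):=\rho(q)^{-1}\varphi'(u)$ defines a holomorphic, $\rho$-equivariant extension $\hat{\varphi}$ of $\varphi|_{U^+Q^-\cap GQ^-}$ to the dense domain $U^+Q^-$ (which equals $N^+Q^-$ by Lemma \ref{lem-8.1.11}-(2)). Next, for each $\beta\in\Pi_\triangle-\blacktriangle$, the hypothesis $\frak{g}_\beta\subset\frak{k}_\mathbb{C}$ together with Lemma \ref{lem-11.2.9} yields $w_\beta\in K$. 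Since $(\mathcal{V}_{G/L})_K$ is $\varrho(K)$-invariant, $\varrho(w_\beta)\varphi\in(\mathcal{V}_{G/L})_K$, so Corollary \ref{cor-11.2.16} supplies a holomorphic $\varphi'_\beta:U^+\to\mathsf{V}$ with $\varphi'_\beta=\varrho(w_\beta)\varphi$ on $U^+\cap GQ^-$. Setting $\hat{\varphi}_\beta(w_\beta^{-1}uq):=\rho(q)^{-1}\varphi'_\beta(u)$ extends $\varphi$ holomorphically and $\rho$-equivariantly to $w_\beta^{-1}U^+Q^-$.

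To glue these pieces, I would use the theorem of identity. By Lemma \ref{lem-8.3.27}-(3), the overlap $U^+Q^-\cap w_\beta^{-1}U^+Q^-$ is a dense domain (hence connected) in $G_\mathbb{C}$, and it meets $GQ^-$ in a non-empty open subset. A direct substitution, using that $\varrho(w_\beta)\varphi(x)=\varphi(w_\beta^{-1}x)$ and the equivariance of $\varphi$ under $Q^-$, shows that both $\hat{\varphi}$ and $\hat{\varphi}_\beta$ coincide with $\varphi$ on that common open subset of $GQ^-$; by the theorem of identity they agree on the entire overlap. The analogous check for two different $\beta,\beta'$ yields a well-defined holomorphic, $\rho$-equivariant map $\tilde{\varphi}$ on the open set $\Omega:=U^+Q^-\cup\bigcup_{\beta\in\Pi_\triangle-\blacktriangle}w_\beta^{-1}U^+Q^-$, which contains the open set $O$ of Theorem \ref{thm-8.3.17} (since $U^+_{[w_\beta]}\subset U^+$, using Remark \ref{rem-8.3.26}).

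Finally, I would choose a complex basis of $\mathsf{V}$ to view $\tilde{\varphi}|_O$ as an $m$-tuple of scalar holomorphic functions and apply Theorem \ref{thm-8.3.17} coordinatewise to obtain a holomorphic map $h:G_\mathbb{C}\to\mathsf{V}$ agreeing with $\tilde{\varphi}$ on $O$. The $\rho$-equivariance $h(xq)=\rho(q)^{-1}h(x)$ holds for fixed $q\in Q^-$ on the dense open set $O$ (where it is a property of $\tilde{\varphi}$), so by the identity theorem it holds on $G_\mathbb{C}$; thus $h\in\mathcal{V}_{G_\mathbb{C}/Q^-}$. Since $h=\varphi$ on the non-empty open subset $U^+Q^-\cap GQ^-$ of the domain $GQ^-$, the identity theorem again gives $h=\varphi$ throughout $GQ^-$. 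The expected main obstacle is the compatibility argument on the cell overlaps: one must argue carefully that the two different formulas for the extension agree on a set where the original $\varphi$ is defined, and that this set is a non-empty open subset of a connected overlap so that the theorem of identity applies. Once this is in place, the passage from $O$ to $G_\mathbb{C}$ is precisely Theorem \ref{thm-8.3.17}, and $\rho$-equivariance propagates by another application of the identity theorem.
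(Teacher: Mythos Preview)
Your proposal is correct and follows essentially the same route as the paper: extend $\varphi$ to $U^+Q^-$ via Corollary~\ref{cor-11.2.16} and Proposition~\ref{prop-8.2.1}-(iv), use $w_\beta\in K$ (Lemma~\ref{lem-11.2.9}) and $\varrho(K)$-invariance of $(\mathcal{V}_{G/L})_K$ to extend to each $w_\beta^{-1}U^+Q^-$, glue via Lemma~\ref{lem-8.3.27}-(3) and the identity theorem to cover $O$, and finish with Theorem~\ref{thm-8.3.17}. The only cosmetic differences are that the paper checks compatibility on the single common intersection $GQ^-\cap U^+Q^-\cap\bigcap_\beta w_\beta^{-1}U^+Q^-$ rather than pairwise, and it deduces $\rho$-equivariance of $h$ from $\varphi=h|_{GQ^-}$ and $\varphi\in\mathcal{V}_{G/L}$ rather than from the equivariance of $\tilde{\varphi}$ on $O$; both variants work for the same reason (density plus identity theorem).
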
   
\begin{proof}
   The uniqueness of $h$ comes from the theorem of identity, Lemma \ref{lem-11.1.2}-(3) and $G_\mathbb{C}$ being connected.
   So, let us prove the existence of $h$. 
   Fix an arbitrary $\varphi\in(\mathcal{V}_{G/L})_K$.
   By Corollary \ref{cor-11.2.16} there exists a unique holomorphic mapping $\varphi':U^+\to{\sf V}$ so that $\varphi=\varphi'$ on $U^+\cap GQ^-$. 
   Then, Proposition \ref{prop-8.2.1}-(iv) enables us to construct a holomorphic mapping $\varphi'':U^+Q^-\to{\sf V}$ from 
\[
   \mbox{$\varphi''(uq):=\rho(q)^{-1}\bigl(\varphi'(u)\bigr)$ for $(u,q)\in U^+\times Q^-$}.
\]
   Here it follows from $\varphi\in\mathcal{V}_{G/L}$, \eqref{eq-11.2.1}-(ii) and $(U^+Q^-\cap GQ^-)=(U^+\cap GQ^-)Q^-$ that 
\begin{equation}\label{eq-1}\tag*{\textcircled{1}}
   \mbox{$\varphi=\varphi''$ on $U^+Q^-\cap GQ^-$}.
\end{equation}
   For every $\beta\in\Pi_\triangle-\blacktriangle$, the supposition and Lemma \ref{lem-11.2.9} assure $w_\beta\in K$, and thus $\varrho(w_\beta)\varphi$ belongs to $(\mathcal{V}_{G/L})_K$ since $(\mathcal{V}_{G/L})_K$ is $\varrho(K)$-invariant. 
   Consequently, for each $\beta\in\Pi_\triangle-\blacktriangle$ there exists a unique holomorphic mapping $(\varrho(w_\beta)\varphi)'':U^+Q^-\to{\sf V}$ such that
\begin{equation}\label{eq-2}\tag*{\textcircled{2}}
   \mbox{$\varrho(w_\beta)\varphi=(\varrho(w_\beta)\varphi)''$ on $U^+Q^-\cap GQ^-$},
\end{equation}
where we remark that $U^+Q^-$ is connected (cf.\ Proposition \ref{prop-8.2.1}-(iv)).
   Taking these $\varphi'',(\varrho(w_\beta)\varphi)'':U^+Q^-\to{\sf V}$ ($\beta\in\Pi_\triangle-\blacktriangle$) into account, we define a holomorphic mapping $\hat{\varphi}$ of $D:=U^+Q^-\cup(\bigcup_{\beta\in\Pi_\triangle-\blacktriangle}w_\beta^{-1}U^+Q^-)$ into ${\sf V}$ as follows:
\begin{equation}\label{eq-3}\tag*{\textcircled{3}}
   \hat{\varphi}(x)
   :=\begin{cases} \varphi''(x) & \mbox{if $x\in U^+Q^-$},\\ 
                   (\varrho(w_\beta)\varphi)''(w_\beta x) & \mbox{if $x\in w_\beta^{-1}U^+Q^-$ ($\beta\in\Pi_\triangle-\blacktriangle$)}.
     \end{cases}
\end{equation}
   Here $D=U^+Q^-\cup(\bigcup_{\beta\in\Pi_\triangle-\blacktriangle}w_\beta^{-1}U^+Q^-)$ is a dense domain in $G_\mathbb{C}$ by Corollary \ref{cor-8.3.16}-(i) and Lemma \ref{lem-8.3.19}. 
   We need to confirm that \ref{eq-3} is well-defined. 
   For any $y\in GQ^-\cap U^+Q^-\cap(\bigcap_{\beta\in\Pi_\triangle-\blacktriangle}w_\beta^{-1}U^+Q^-)$ one has $w_\beta y\in U^+Q^-$, $w_\beta y\in KGQ^-\subset GQ^-$, and 
\[
   (\varrho(w_\beta)\varphi)''(w_\beta y)
   \stackrel{\ref{eq-2}}{=}(\varrho(w_\beta)\varphi)(w_\beta y)
   \stackrel{\eqref{eq-11.2.2}}{=}\varphi(y)
   \stackrel{\ref{eq-1}}{=}\varphi''(y).
\]
   Thus \ref{eq-3} is well-defined by the theorem of identity and Lemma \ref{lem-8.3.27}-(3). 
   In addition, from the above computation we deduce 
\begin{equation}\label{eq-4}\tag*{\textcircled{4}}
   \mbox{$\varphi=\hat{\varphi}$ on $D\cap GQ^-$}.
\end{equation}
   Now, Lemma \ref{lem-8.3.22}-(2) and Remark \ref{rem-8.3.26} imply that the domain $D$ of $\hat{\varphi}$ includes the $O$ in Theorem \ref{thm-8.3.17}. 
   Therefore there exists a unique holomorphic mapping $h:G_\mathbb{C}\to{\sf V}$ such that 
\[
   \mbox{$\hat{\varphi}=h$ on $D$}
\]
by Theorem \ref{thm-8.3.17}-(ii). 
   This $h$ satisfies 
\begin{equation}\label{eq-5}\tag*{\textcircled{5}}
\begin{array}{ll}
   \mbox{$\varphi=h$ on $GQ^-$}, & \mbox{$h(aq)=\rho(q)^{-1}\bigl(h(a)\bigr)$ for all $(a,q)\in G_\mathbb{C}\times Q^-$}.
\end{array}
\end{equation}
   Indeed; since $GQ^-$ is connected, it follows from $\hat{\varphi}=h|_D$, \ref{eq-4} and the theorem of identity that $\varphi=h$ on $GQ^-$. 
   Furthermore, it follows from $\varphi=h|_{GQ^-}$ that for all $(x,q)\in GQ^-\times Q^-$ 
\[
\begin{split}
   h(xq)
   =\varphi(xq)
  &=\rho(q)^{-1}\bigl(\varphi(x)\bigr) \quad\mbox{($\because$ $\varphi\in\mathcal{V}_{G/L}$, \eqref{eq-11.2.1}-(ii))}\\
  &=\rho(q)^{-1}\bigl(h(x)\bigr).
\end{split}
\] 
   This and the theorem of identity imply that $h(aq)=\rho(q)^{-1}\bigl(h(a)\bigr)$ for all $(a,q)\in G_\mathbb{C}\times Q^-$, since $GQ^-\subset G_\mathbb{C}$ is open. 
   Accordingly \ref{eq-5} holds.
   By \eqref{eq-11.2.1} and \ref{eq-5} we conclude $h\in\mathcal{V}_{G_\mathbb{C}/Q^-}$ and this proposition.
\end{proof}

\subsection{A sufficient condition for $\mathcal{V}_{G/L}$ to be finite-dimensional}\label{subsec-11.2.3}
   In order to state Theorem \ref{thm-11.2.18}, let us fix its setting.
\begin{itemize}
\item 
   $G_\mathbb{C}$ is a connected complex semisimple Lie group,
\item
   $G$ is a connected closed subgroup of $G_\mathbb{C}$ such that $\frak{g}$ is a real form of $\frak{g}_\mathbb{C}$,
\item
   $T$ is a non-zero elliptic element of $\frak{g}$, 
\item
   $\frak{g}=\frak{k}\oplus\frak{p}$ is a Cartan decomposition of $\frak{g}$ with $T\in\frak{k}$, 
\item 
   $i\frak{h}_\mathbb{R}$ is a maximal torus of $\frak{g}_u:=\frak{k}\oplus i\frak{p}$ containing $T$, 
\item
   $\triangle=\triangle(\frak{g}_\mathbb{C},\frak{h}_\mathbb{C})$ is the root system of $\frak{g}_\mathbb{C}$ relative to $\frak{h}_\mathbb{C}$, where $\frak{h}_\mathbb{C}$ is the complex vector subspace of $\frak{g}_\mathbb{C}$ generated by $i\frak{h}_\mathbb{R}$,
\item
   $\frak{g}_\alpha$ is the root subspace of $\frak{g}_\mathbb{C}$ for $\alpha\in\triangle$,
\item
   $L=C_G(T)$,
\item
   $Q^-=N_{G_\mathbb{C}}(\bigoplus_{\nu\geq 0}\frak{g}^{-\nu})$, where $\frak{g}^\lambda=\{X\in\frak{g}_\mathbb{C} \,|\, \operatorname{ad}T(X)=i\lambda X\}$ for $\lambda\in\mathbb{R}$,
\item 
   $\frak{k}_\mathbb{C}$ is the complex subalgebra of $\frak{g}_\mathbb{C}$ generated by $\frak{k}$,
\item
   ${\sf V}$ is a finite-dimensional complex vector space, 
\item
   $\rho:Q^-\to GL({\sf V})$, $q\mapsto\rho(q)$, is a holomorphic homomorphism,
\item
   $\mathcal{V}_{G_\mathbb{C}/Q^-}$ and $\mathcal{V}_{G/L}$ are the complex vector spaces defined by 
\[
\begin{split}
&  \mathcal{V}_{G_\mathbb{C}/Q^-}
   :=\left\{\begin{array}{@{}c|c@{}}
   h:G_\mathbb{C}\to{\sf V} 
   & \begin{array}{@{}l@{}} \mbox{(i) $h$ is holomorphic},\\ \mbox{(ii) $h(aq)=\rho(q)^{-1}\bigl(h(a)\bigr)$ for all $(a,q)\in G_\mathbb{C}\times Q^-$}\end{array}
   \end{array}\right\},\\
&  \mathcal{V}_{G/L}
   :=\left\{\begin{array}{@{}c|c@{}}
   \psi:GQ^-\to{\sf V} 
   & \begin{array}{@{}l@{}} \mbox{(i) $\psi$ is holomorphic},\\ \mbox{(ii) $\psi(xq)=\rho(q)^{-1}\bigl(\psi(x)\bigr)$ for all $(x,q)\in GQ^-\times Q^-$}\end{array}
   \end{array}\right\},
\end{split}
\]
respectively. 
\end{itemize}
   In the setting above we establish 

\begin{theorem}[{cf.\ \cite{B}\footnote{We improve the proof of Theorem 3.1 in \cite{B}.}}]\label{thm-11.2.18}
   Suppose that {\rm (S)} there exists a fundamental root system $\Pi_\triangle$ of $\triangle$  satisfying 
\begin{enumerate}
\item[{\rm (s1)}]
   $\alpha(-iT)\geq 0$ for all $\alpha\in\Pi_\triangle$, and 
\item[{\rm (s2)}]
   $\frak{g}_\beta\subset\frak{k}_\mathbb{C}$ for every $\beta\in\Pi_\triangle$ with $\beta(T)\neq 0$.  
\end{enumerate} 
   Then, the complex vector space $\mathcal{V}_{G_\mathbb{C}/Q^-}$ is linear isomorphic to $\mathcal{V}_{G/L}$ via 
\[
   \mbox{$F:\mathcal{V}_{G_\mathbb{C}/Q^-}\to\mathcal{V}_{G/L}$, $h\mapsto h|_{GQ^-}$};
\]
and therefore $\dim_\mathbb{C}\mathcal{V}_{G/L}=\dim_\mathbb{C}\mathcal{V}_{G_\mathbb{C}/Q^-}<\infty$.
   Here $h|_{GQ^-}$ stands for the restriction of $h$ to $GQ^-$ $(\subset G_\mathbb{C})$.
\end{theorem}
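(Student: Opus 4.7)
The plan is to establish that $F$ is a well-defined linear mapping that is both injective and surjective; finite-dimensionality then follows from Remark 11.2.3-(2), which tells us $\dim_\mathbb{C}\mathcal{V}_{G_\mathbb{C}/Q^-}<\infty$ because $G_\mathbb{C}/Q^-$ is a compact complex manifold.

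First I would check the easy parts. Well-definedness is immediate: if $h\in\mathcal{V}_{G_\mathbb{C}/Q^-}$ then $h|_{GQ^-}$ is holomorphic on the open subset $GQ^-\subset G_\mathbb{C}$ (which is a domain by Lemma \ref{lem-11.1.2}-(3)) and inherits the equivariance condition \eqref{eq-11.2.1}-(ii); linearity is obvious. For injectivity, suppose $h\in\mathcal{V}_{G_\mathbb{C}/Q^-}$ satisfies $h|_{GQ^-}=0$. Since $GQ^-$ is a non-empty open subset of the connected complex manifold $G_\mathbb{C}$, the theorem of identity gives $h\equiv 0$ on $G_\mathbb{C}$, so $\ker F=\{0\}$.

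The main content is surjectivity, which I propose to establish by a density-plus-closedness argument. The suppositions (s1) and (s2) are precisely the hypotheses of Proposition \ref{prop-11.2.17} (since $\Pi_\triangle-\blacktriangle=\{\beta\in\Pi_\triangle : \beta(T)\neq 0\}$), so that proposition tells us that every $K$-finite vector $\varphi\in(\mathcal{V}_{G/L})_K$ lies in the image of $F$; in other words,
\[
  (\mathcal{V}_{G/L})_K\subset F(\mathcal{V}_{G_\mathbb{C}/Q^-}).
\]
Now $F(\mathcal{V}_{G_\mathbb{C}/Q^-})$ is a complex vector subspace of the Fr\'{e}chet space $\mathcal{V}_{G/L}$ of dimension at most $\dim_\mathbb{C}\mathcal{V}_{G_\mathbb{C}/Q^-}<\infty$, so by Proposition \ref{prop-4.1.8}-(2) it is a closed subset of $\mathcal{V}_{G/L}$. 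On the other hand, since $T\in\frak{k}$ forces $Z(G)$ to be finite (because $Z(G)\subset Z(G_\mathbb{C})$ and $G_\mathbb{C}$ semisimple), the subgroup $K$ is a compact Lie group; combined with Proposition \ref{prop-11.2.8}, the restriction $\varrho|_K$ is a continuous representation of $K$ on $\mathcal{V}_{G/L}$, so Proposition \ref{prop-6.2.1} yields that $(\mathcal{V}_{G/L})_K$ is dense in $\mathcal{V}_{G/L}$. Taking closures in
\[
  (\mathcal{V}_{G/L})_K\subset F(\mathcal{V}_{G_\mathbb{C}/Q^-})\subset\mathcal{V}_{G/L}
\]
then forces $F(\mathcal{V}_{G_\mathbb{C}/Q^-})=\mathcal{V}_{G/L}$, proving surjectivity.

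The only genuinely non-trivial step is the inclusion $(\mathcal{V}_{G/L})_K\subset F(\mathcal{V}_{G_\mathbb{C}/Q^-})$, but this is already carried by Proposition \ref{prop-11.2.17}, whose proof in turn rests on the analytic continuation result Theorem \ref{thm-8.3.17} and on the polynomial structure of $K$-finite vectors along $U^+$ established in Proposition \ref{prop-11.2.15}. Hence the anticipated obstacle --- extending a holomorphic section defined only on the (generally proper) open subset $GQ^-$ to the whole complex flag manifold --- has already been absorbed into those earlier results, and what remains here is simply the packaging via density of $K$-finite vectors and closedness of the finite-dimensional image.
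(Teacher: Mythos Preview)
Your proof is correct and matches the paper's argument essentially line for line: injectivity via the identity theorem on the connected $G_\mathbb{C}$, the inclusion $(\mathcal{V}_{G/L})_K\subset F(\mathcal{V}_{G_\mathbb{C}/Q^-})$ from Proposition~\ref{prop-11.2.17}, closedness of the finite-dimensional image via Proposition~\ref{prop-4.1.8}-(2), and density of $K$-finite vectors from Propositions~\ref{prop-11.2.8} and~\ref{prop-6.2.1}. One small exposition point: the finiteness of $Z(G)$ comes purely from $Z(G)\subset Z(G_\mathbb{C})$ (as your parenthetical says) and has nothing to do with $T\in\frak{k}$; in fact the paper simply takes $K$ to be a maximal compact subgroup of $G$ by definition, so its compactness needs no separate justification.
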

\begin{proof}
   Needless to say, the mapping $F:\mathcal{V}_{G_\mathbb{C}/Q^-}\to\mathcal{V}_{G/L}$, $h\mapsto h|_{GQ^-}$, is complex linear. 
   Lemma \ref{lem-11.1.2}-(3) and the theorem of identity imply that $F$ is injective because $G_\mathbb{C}$ is connected. 
   Consequently, the rest of proof is to demonstrate that $F$ is surjective, cf.\ Remark \ref{rem-11.2.3}-(2).
   Fix an arbitrary $\psi\in\mathcal{V}_{G/L}$. 
   By Propositions \ref{prop-11.2.8} and \ref{prop-6.2.1}, and by \eqref{eq-11.2.13} we deduce that $(\mathcal{V}_{G/L})_K$ is a dense subset of $\mathcal{V}_{G/L}=(\mathcal{V}_{G/L},d)$. 
   So, there exists a sequence $\{\varphi_n\}_{n=1}^\infty\subset(\mathcal{V}_{G/L})_K$ satisfying  
\[
   \lim_{n\to\infty}d(\psi,\varphi_n)=0.
\]
   On the one hand; the supposition (S) and Proposition \ref{prop-11.2.17} assure that $(\mathcal{V}_{G/L})_K\subset F(\mathcal{V}_{G_\mathbb{C}/Q^-})$, and thus 
\[
   \{\varphi_n\}_{n=1}^\infty\subset F(\mathcal{V}_{G_\mathbb{C}/Q^-}).
\] 
   On the other hand; since $F:\mathcal{V}_{G_\mathbb{C}/Q^-}\to\mathcal{V}_{G/L}$ is injective linear, Proposition \ref{prop-4.1.8}-(2) and $\dim_\mathbb{C}\mathcal{V}_{G_\mathbb{C}/Q^-}<\infty$ enable us to see that  
\[
   \mbox{$F(\mathcal{V}_{G_\mathbb{C}/Q^-})$ is a closed subset of $\mathcal{V}_{G/L}$}.
\] 
   Therefore $\displaystyle{\psi=\lim_{n\to\infty}\varphi_n\in F(\mathcal{V}_{G_\mathbb{C}/Q^-})}$, and hence $F$ is surjective.
\end{proof}

\begin{remark}\label{rem-11.2.19}
   If the supposition (S) in Theorem \ref{thm-11.2.18} holds for the elliptic orbit $G/L$, then one can clarify several properties of $G/L$---for example,
\begin{enumerate}
\item 
   any holomorphic function on $G/L$ is constant,
\item 
   the group ${\rm Hol}(G/L)$ of holomorphic automorphisms of $G/L$ is a (finite-dimensional) Lie group,   
\end{enumerate}
and so on.
\end{remark}

   Let us give examples which satisfy the supposition (S) in Theorem \ref{thm-11.2.18}, and give examples which do not so.\par
   
   The first example is 
\begin{example}[$G/L=G_{2(2)}/(SL(2,\mathbb{R})\cdot T^1)$]\label{ex-11.2.20}
   Let $\frak{g}_\mathbb{C}$ be the exceptional complex simple Lie algebra $(\frak{g}_2)_\mathbb{C}$ of the type $G_2$. 
   Assume that the Dynkin diagram of $\triangle=\triangle(\frak{g}_\mathbb{C},\frak{h}_\mathbb{C})$ is as follows (cf.\ Bourbaki \cite[p.289]{BrL}):
\begin{center}
\unitlength=1mm
\begin{picture}(25,9)
\put(6,8){$\alpha_1$}
\put(7,5){\circle{2}}
\put(7,0){$3$}
\put(8,5){\line(1,1){4}}
\put(8,5){\line(1,-1){4}}
\put(8.5,5.5){\line(1,0){14.5}}
\put(8.5,4.5){\line(1,0){14.5}}
\put(8,5){\line(1,0){15}}
\put(23,8){$\alpha_2$}
\put(24,5){\circle{2}}
\put(24,0){$2$}
\put(0,4){$\frak{g}_\mathbb{C}$:}
\end{picture}
\end{center} 
   Then $\Pi_\triangle=\{\alpha_1,\alpha_2\}$, and the set $\triangle^+$ of positive roots is 
\begin{equation}\label{eq-1}\tag*{\textcircled{1}}
   \triangle^+
   =\{3\alpha_1+2\alpha_2, 3\alpha_1+\alpha_2, 2\alpha_1+\alpha_2, \alpha_1+\alpha_2, \alpha_1, \alpha_2\}.
\end{equation}
   Let us fix a non-compact real form $\frak{g}\subset\frak{g}_\mathbb{C}$.
   Taking Chevalley's canonical basis $\{H_{\alpha_1}^*,H_{\alpha_2}^*\}\amalg\{E_\alpha\,|\,\alpha\in\triangle\}$ of $\frak{g}_\mathbb{C}$ we first construct a compact real form $\frak{g}_u\subset\frak{g}_\mathbb{C}$ from 
\[
\begin{array}{ll}
   \frak{h}_\mathbb{R}:=\operatorname{span}_\mathbb{R}\{H_{\alpha_1}^*,H_{\alpha_2}^*\}, & 
   \frak{g}_u:=i\frak{h}_\mathbb{R}\oplus\bigoplus_{\alpha\in\triangle}\operatorname{span}_\mathbb{R}\{E_{\alpha}-E_{-\alpha}\}\oplus\operatorname{span}_\mathbb{R}\{i(E_{\alpha}+E_{-\alpha})\},
\end{array}
\]
and denote by $\{Z_1,Z_2\}$ ($\subset\frak{h}_\mathbb{R}$) the dual basis of $\Pi_\triangle=\{\alpha_1,\alpha_2\}$. 
   By use of this $Z_2$ we next construct an involutive automorphism $\theta$ of the complex Lie algebra $\frak{g}_\mathbb{C}$ from
\begin{equation}\label{eq-2}\tag*{\textcircled{2}}
   \theta:=\exp\pi\operatorname{ad}(iZ_2).
\end{equation}
   Since $\theta(\frak{g}_u)\subset\frak{g}_u$ one can get a non-compact real form $\frak{g}\subset\frak{g}_\mathbb{C}$ by setting 
\[
\begin{array}{lll}
  \frak{k}:=\{X\in\frak{g}_u \,|\, \theta(X)=X\}, & i\frak{p}:=\{Y\in\frak{g}_u \,|\, \theta(Y)=-Y\}, & \frak{g}:=\frak{k}\oplus\frak{p}.
\end{array}
\] 
   Here we remark that $\frak{g}_u=\frak{k}\oplus i\frak{p}$, $\frak{k}=\frak{sp}(1)\oplus\frak{sp}(1)$, $\frak{g}=\frak{g}_{2(2)}$ and 
\begin{equation}\label{eq-3}\tag*{\textcircled{3}}
   \frak{k}_\mathbb{C}=\{V\in\frak{g}_\mathbb{C} \,|\, \theta(V)=V\},
\end{equation}
where $\frak{k}_\mathbb{C}$ stands for the complex subalgebra of $\frak{g}_\mathbb{C}$ generated by $\frak{k}$.
\begin{center}
\unitlength=1mm
\begin{picture}(35,6)
\put(6,5){$\alpha_1$}
\put(7,2){\circle{2}}
\put(18,5){$-3\alpha_1-2\alpha_2$}
\put(24,2){\circle{2}}
\put(0,1){$\frak{k}_\mathbb{C}$:}
\end{picture}
\end{center} 
   In this setting, each $T\in i\frak{h}_\mathbb{R}$ is an elliptic element of $\frak{g}$ and we know that for $\frak{l}:=\frak{c}_\frak{g}(T)$,
\begin{center}
\begin{tabular}{ll}
     (A) $\frak{l}=\frak{sl}(2,\mathbb{R})\oplus\frak{t}^1$ in case of $T=i(Z_1-2Z_2)$, 
   & (B) $\frak{l}=\frak{sl}(2,\mathbb{R})\oplus\frak{t}^1$ in case of $T=i(Z_1-3Z_2)$. 
\end{tabular}
\end{center} 
   cf.\ Proposition 5.5 in \cite[p.1157]{Bn}.\par
   
   Case (A). 
   Let $T:=i(Z_1-2Z_2)$ and $\Pi_A:=\{2\alpha_1+\alpha_2,-3\alpha_1-2\alpha_2\}$. 
   Then $\Pi_A$ is a fundamental root system of $\triangle$ by \ref{eq-1}.
\begin{center}
\unitlength=1mm
\begin{picture}(45,9)
\put(6,8){$2\alpha_1+\alpha_2$}
\put(17,5){\circle{2}}
\put(18,5){\line(1,1){4}}
\put(18,5){\line(1,-1){4}}
\put(18.5,5.5){\line(1,0){14.5}}
\put(18.5,4.5){\line(1,0){14.5}}
\put(18,5){\line(1,0){15}}
\put(28,8){$-3\alpha_1-2\alpha_2$}
\put(34,5){\circle{2}}
\put(0,4){$\Pi_A$:}
\end{picture}
\end{center}
   From a direct computation with $\alpha_k(Z_j)=\delta_{k,j}$ we obtain  
\[
\begin{array}{ll}
   (2\alpha_1+\alpha_2)(-iT)=0, & (-3\alpha_1-2\alpha_2)(-iT)=1\geq0.
\end{array}
\]
   This assures that $\Pi_A$ satisfies the condition (s1) in Theorem \ref{thm-11.2.18}. 
   Moreover, \ref{eq-2} yields $\theta(E_{-3\alpha_1-2\alpha_2})=E_{-3\alpha_1-2\alpha_2}$, and so \ref{eq-3} yields $\frak{g}_{-3\alpha_1-2\alpha_2}\subset\frak{k}_\mathbb{C}$.
   Therefore $\Pi_A$ satisfies the condition (s2) also. 
   Hence, the supposition (S) in Theorem \ref{thm-11.2.18} holds in this case.\par
   
   Case (B). 
   Let $T:=i(Z_1-3Z_2)$ and $\Pi_B:=\{\alpha_1,-3\alpha_1-\alpha_2\}$. 
   Then, one can conclude that the supposition (S) in Theorem \ref{thm-11.2.18} holds, by arguments similar to those above, 
\begin{center}
\unitlength=1mm
\begin{picture}(45,9)
\put(15,8){$\alpha_1$}
\put(17,5){\circle{2}}
\put(18,5){\line(1,1){4}}
\put(18,5){\line(1,-1){4}}
\put(18.5,5.5){\line(1,0){14.5}}
\put(18.5,4.5){\line(1,0){14.5}}
\put(18,5){\line(1,0){15}}
\put(28,8){$-3\alpha_1-\alpha_2$}
\put(34,5){\circle{2}}
\put(0,4){$\Pi_B$:}
\end{picture}
\end{center}
and $\alpha_1(-iT)=1$, $(-3\alpha_1-\alpha_2)(-iT)=0$. 
\end{example}

\begin{remark}\label{rem-11.2.21}
   In case of Example \ref{ex-11.2.20}-(A), $\Pi':=\{-2\alpha_1-\alpha_2,3\alpha_1+\alpha_2\}$ is another fundamental root system of $\triangle$, and 
\[
\begin{array}{ll}
   (-2\alpha_1-\alpha_2)(-iT)=0, & (3\alpha_1+\alpha_2)(-iT)=1.
\end{array}
\] 
   Thus $\Pi'$ satisfies the condition (s1) in Theorem \ref{thm-11.2.18}. 
   However, it follows from \ref{eq-2} that $\theta(E_{3\alpha_1+\alpha_2})=-E_{3\alpha_1+\alpha_2}$, so that the condition (s2) cannot hold for this $\Pi'$.
\end{remark}

   The second example is 
\begin{example}[$G/L=SU(2,1)/S(U(1)\times U(1,1))$]\label{ex-11.2.22}
   Let 
\[
\begin{array}{ll}
   G_\mathbb{C}:=SL(3,\mathbb{C})=\{g\in GL(3,\mathbb{C}) \,|\, \det g=1\}, & G:=SU(2,1)=\{X\in G_\mathbb{C} \,|\, {}^t\!XI_{2,1}\overline{X}=I_{2,1}\},
\end{array}
\]
where $I_{2,1}=\begin{pmatrix} -1 & 0 & 0\\ 0 & -1 & 0\\ 0 & 0 & 1\end{pmatrix}$. 
   Then one has 
\[
   \frak{g}=
   \left\{\begin{array}{@{}c|l@{}}
   \left(\begin{array}{cc|c}
    ia_1 &  b+ic & ix-y\\
   -b+ic &  ia_2 & iz-w\\ \hline
   -ix-y & -iz-w & ia_3
   \end{array}\right)
   & \begin{array}{@{}r@{}} a_1,a_2,a_3,b,c,x,y,z,w\in\mathbb{R},\\ a_1+a_2+a_3=0\end{array}
   \end{array}\right\}   
\]
and obtains a Cartan decomposition $\frak{g}=\frak{k}\oplus\frak{p}$, 
\[
\begin{array}{ll}
   \frak{k}=
   \left\{
   \left(\begin{array}{cc|c}
    ia_1 &  b+ic & 0\\
   -b+ic &  ia_2 & 0\\ \hline
       0 &     0 & ia_3
   \end{array}\right)\!\in\frak{g}
   \right\}
   \!=\frak{s}\bigl(\frak{u}(2)\oplus\frak{u}(1)\bigr),
&
   \frak{p}=
   \left\{
   \left(\begin{array}{cc|c}
       0 &     0 & ix-y\\
       0 &     0 & iz-w\\ \hline
   -ix-y & -iz-w & 0
   \end{array}\right)\!\in\frak{g}
   \right\}\!.
\end{array}   
\]
   Setting $\frak{g}_u:=\frak{k}\oplus i\frak{p}$ and 
\[
   \frak{h}_\mathbb{R}
   :=
   \left\{\begin{array}{@{}c|l@{}}
   \left(\begin{array}{cc|c}
    a_1 &    0 & 0\\
      0 &  a_2 & 0\\ \hline
      0 &    0 & a_3
   \end{array}\right)
   & \begin{array}{@{}r@{}} a_1,a_2,a_3\in\mathbb{R},\\ a_1+a_2+a_3=0\end{array}
   \end{array}\right\}\!,
\]
we assert that $\frak{g}_u$ is a compact real form of $\frak{g}_\mathbb{C}=\frak{sl}(3,\mathbb{C})$ and 
\[
   \frak{g}_u
   =
   \left\{\begin{array}{@{}c|l@{}}
   \left(\begin{array}{cc|c}
    ia_1 &  b+ic & x+iy\\
   -b+ic &  ia_2 & z+iw\\ \hline
   -x+iy & -z+iw & ia_3
   \end{array}\right)
   & \begin{array}{@{}r@{}} a_1,a_2,a_3,b,c,x,y,z,w\in\mathbb{R},\\ a_1+a_2+a_3=0\end{array}
   \end{array}\right\}
   =\frak{su}(3);
\]   
besides, $i\frak{h}_\mathbb{R}$ is a maximal torus of $\frak{g}_u$.
   Remark that each $T\in i\frak{h}_\mathbb{R}$ is an elliptic element of $\frak{g}=\frak{su}(2,1)$ due to $i\frak{h}_\mathbb{R}\subset\frak{k}$.\par

   Now, let us define complex linear mappings $\alpha_1,\alpha_2:\frak{h}_\mathbb{C}\to\mathbb{C}$ by 
\[
\begin{array}{ll}
   \alpha_1\!\left(
   \left(\begin{array}{cc|c}
    \epsilon_1 &    0 & 0\\
      0 &  \epsilon_2 & 0\\ \hline
      0 &    0 & \epsilon_3
   \end{array}\right) \right)\!:=\epsilon_1-\epsilon_2,   
&
   \alpha_2\!\left(
   \left(\begin{array}{cc|c}
    \epsilon_1 &    0 & 0\\
      0 &  \epsilon_2 & 0\\ \hline
      0 &    0 & \epsilon_3
   \end{array}\right) \right)\!:=\epsilon_2-\epsilon_3,
\end{array}   
\] 
respectively.
   Then $\Pi_\triangle:=\{\alpha_1,\alpha_2\}$ is a fundamental root system of $\triangle(\frak{g}_\mathbb{C},\frak{h}_\mathbb{C})$.
\begin{center}
\unitlength=1mm
\begin{picture}(25,9)
\put(6,8){$\alpha_1$}
\put(7,5){\circle{2}}
\put(7,0){$1$}
\put(8,5){\line(1,0){15}}
\put(23,8){$\alpha_2$}
\put(24,5){\circle{2}}
\put(24,0){$1$}
\put(0,4){$\frak{g}_\mathbb{C}$:}
\end{picture}
\end{center} 
   By setting  
\[
\begin{array}{ll}
   Z_1:=
   \dfrac{1}{\,3\,}\!
   \left(\begin{array}{cc|c}
      2 &  0 & 0\\
      0 & -1 & 0\\ \hline
      0 &  0 & -1
   \end{array}\right)\!,
&
   Z_2:=
   \dfrac{1}{\,3\,}\!
   \left(\begin{array}{cc|c}
      1 &  0 & 0\\
      0 &  1 & 0\\ \hline
      0 &  0 & -2
   \end{array}\right)\!,
\end{array}
\]
we have $\frak{h}_\mathbb{R}=\operatorname{span}_\mathbb{R}\{Z_1,Z_2\}$ and $\alpha_k(Z_j)=\delta_{k,j}$ ($k,j=1,2$).\par

   $\bullet$ Case $T=iZ_1$. 
   Let $T:=iZ_1$. 
   Then it follows from $T\in i\frak{h}_\mathbb{R}$ that $T$ is an elliptic element of $\frak{g}=\frak{su}(2,1)$. 
   From a direct computation with $\alpha_k(Z_j)=\delta_{k,j}$ we obtain 
\[
\begin{array}{ll}
   \alpha_1(-iT)=1, & \alpha_2(-iT)=0.
\end{array}
\] 
   Hence $\Pi_\triangle=\{\alpha_1,\alpha_2\}$ satisfies the condition (s1) in Theorem \ref{thm-11.2.18}. 
   Since
\[
   \frak{g}_{\alpha_1}
   =
   \left\{\begin{array}{@{}c|l@{}}
   \left(\begin{array}{cc|c}
    0 & \epsilon & 0\\
    0 &        0 & 0\\ \hline
    0 &        0 & 0
   \end{array}\right)
   & \epsilon\in\mathbb{C}
   \end{array}\right\}
   \subset\frak{k}_\mathbb{C},
\]
it satisfies the condition (s2) also. 
   For this reason the supposition (S) in Theorem \ref{thm-11.2.18} holds for the $T=iZ_1$.
   Incidentally, $L:=C_G(T)=S(U(1)\times U(1,1))$ and $G/L=SU(2,1)/S(U(1)\times U(1,1))$.\par
   
   $\bullet$ Case $T=iZ_2$. 
   Let $T:=iZ_2$. 
   Then $T$ is an elliptic element of $\frak{g}$, and one has 
\[
\begin{array}{ll}
   \alpha_1(-iT)=0, & \alpha_2(-iT)=1,
\end{array}
\] 
so $\Pi_\triangle=\{\alpha_1,\alpha_2\}$ satisfies the condition (s1) in Theorem \ref{thm-11.2.18}. 
   However, the condition (s2) cannot hold for this $T=iZ_2$ because 
\[
   \frak{g}_{\alpha_2}
   =
   \left\{\begin{array}{@{}c|l@{}}
   \left(\begin{array}{cc|c}
    0 & 0 & 0\\
    0 & 0 & \epsilon\\ \hline
    0 & 0 & 0
   \end{array}\right)
   & \epsilon\in\mathbb{C}
   \end{array}\right\}
   \subset\frak{p}_\mathbb{C}.
\]   
   Incidentally, $G/L=SU(2,1)/S(U(2)\times U(1))$ and is a symmetric bounded domain in $\mathbb{C}^2$.   
\end{example} 

   The third example is 
\begin{example}\label{ex-11.2.23}
   The supposition (S) in Theorem \ref{thm-11.2.18} cannot hold for any symmetric bounded domain $D$ in $\mathbb{C}^n$ at all.\par
   
   Let us explain the reason why. 
   In order to do so, we take an elliptic orbit $G/L=G/C_G(T)$ in the setting of Theorem \ref{thm-11.2.18}, and put $\frak{u}:=\operatorname{ad}T(\frak{g})$.
   Since $\operatorname{ad}T:\frak{g}\to\frak{g}$ is semisimple, $\frak{g}$ is decomposed into $\frak{g}=\frak{l}\oplus\frak{u}$; and furthermore, it is decomposed into
\[
   \frak{g}=(\frak{k}\cap\frak{l})\oplus(\frak{p}\cap\frak{l})\oplus(\frak{k}\cap\frak{u})\oplus(\frak{p}\cap\frak{u})
\]
because of $T\in\frak{k}$.
   Then, Lemma \ref{lem-11.2.9} implies that 
\[
   \frak{k}\cap\frak{u}\neq\{0\}
\]
is a necessary condition for the (s2) to hold.
   However, if $G/L$ is a symmetric bounded domain in $\mathbb{C}^n$ (where $G$ is the identity component of ${\rm Hol}(G/L)$), then it follows that 
\[
\begin{array}{llll}
   (\frak{k}\cap\frak{l})=\frak{k}, & (\frak{p}\cap\frak{l})=\{0\}, & (\frak{k}\cap\frak{u})=\{0\}, & (\frak{p}\cap\frak{u})=\frak{p}.
\end{array}
\]
   For this reason the supposition (S) cannot hold. 
\end{example} 

   We end this chapter with stating 
\begin{remark}\label{rem-11.2.24}
   For each complex flag manifold $G_\mathbb{C}/Q^-$ one can determine the complex Lie algebra $\mathcal{O}(T^{1,0}(G_\mathbb{C}/Q^-))$ of holomorphic vector fields on $G_\mathbb{C}/Q^-$ by Theorem 7.1 in Onishchik \cite[pp.52--53]{On}. 
   Accordingly we deduce that 
\begin{enumerate}
\item 
  $G/L=G_{2(2)}/(SL(2,\mathbb{R})\cdot T^1)$ and $\mathcal{O}(T^{1,0}(G/L))=(\frak{g}_2)_\mathbb{C}$ in case of Example \ref{ex-11.2.20}-(A),
\item 
  $G/L=G_{2(2)}/(SL(2,\mathbb{R})\cdot T^1)$ and $\mathcal{O}(T^{1,0}(G/L))=\frak{so}(7,\mathbb{C})$ in case of Example \ref{ex-11.2.20}-(B),
\item 
  $G/L=SU(2,1)/S(U(1)\times U(1,1))$ and $\mathcal{O}(T^{1,0}(G/L))=\frak{sl}(3,\mathbb{C})$ in case of Example \ref{ex-11.2.22} with $T=iZ_1$
\end{enumerate}
from Theorem \ref{thm-11.2.18}.
\end{remark} 

\printindex
   
\end{document}